\let\save@mathaccent\mathaccent
\newcommand*\if@single[3]{%
  \setbox0\hbox{${\mathaccent"0362{#1}}^H$}%
  \setbox2\hbox{${\mathaccent"0362{\kern0pt#1}}^H$}%
  \ifdim\ht0=\ht2 #3\else #2\fi
  }
\newcommand*\rel@kern[1]{\kern#1\dimexpr\macc@kerna}
\newcommand*\widebar[1]{\@ifnextchar^{{\wide@bar{#1}{0}}}{\wide@bar{#1}{1}}}
\newcommand*\wide@bar[2]{\if@single{#1}{\wide@bar@{#1}{#2}{1}}{\wide@bar@{#1}{#2}{2}}}
\newcommand*\wide@bar@[3]{%
  \begingroup
  \def\mathaccent##1##2{%
    \let\mathaccent\save@mathaccent
    \if#32 \let\macc@nucleus\first@char \fi
    \setbox\z@\hbox{$\macc@style{\macc@nucleus}_{}$}%
    \setbox\tw@\hbox{$\macc@style{\macc@nucleus}{}_{}$}%
    \dimen@\wd\tw@
    \advance\dimen@-\wd\z@
    \divide\dimen@ 3
    \@tempdima\wd\tw@
    \advance\@tempdima-\scriptspace
    \divide\@tempdima 10
    \advance\dimen@-\@tempdima
    \ifdim\dimen@>\z@ \dimen@0pt\fi
    \rel@kern{0.6}\kern-\dimen@
    \if#31
      \overline{\rel@kern{-0.6}\kern\dimen@\macc@nucleus\rel@kern{0.4}\kern\dimen@}%
      \advance\dimen@0.4\dimexpr\macc@kerna
      \let\final@kern#2%
      \ifdim\dimen@<\z@ \let\final@kern1\fi
      \if\final@kern1 \kern-\dimen@\fi
    \else
      \overline{\rel@kern{-0.6}\kern\dimen@#1}%
    \fi
  }%
  \macc@depth\@ne
  \let\math@bgroup\@empty \let\math@egroup\macc@set@skewchar
  \mathsurround\z@ \frozen@everymath{\mathgroup\macc@group\relax}%
  \macc@set@skewchar\relax
  \let\mathaccentV\macc@nested@a
  \if#31
    \macc@nested@a\relax111{#1}%
  \else
    \def\gobble@till@marker##1\endmarker{}%
    \futurelet\first@char\gobble@till@marker#1\endmarker
    \ifcat\noexpand\first@char A\else
      \def\first@char{}%
    \fi
    \macc@nested@a\relax111{\first@char}%
  \fi
  \endgroup
}
\Crefname{theorem}{Theorem}{Theorems}
\Crefname{lemma}{Lemma}{Lemmas}
\def\title#1{{\LARGE\bf  \begin{center} #1 \vspace{0pt} \end{center}  } }
\newcommand{\domain}{\Theta}
\begin{document}
 
 \begin{center}

  {\bf{\LARGE{Local Poisson Deconvolution for Discrete Signals  
  }}}

  \vspace*{.2in}

\vspace*{.2in}

{\large{
\begin{tabular}{cccc}
Shayan Hundrieser$^{\mathsection,\dagger,\ast}$ & Tudor Manole$^{\diamond, \ast}$ & Danila Litskevich$^\dagger$ 
& Axel Munk$^{\dagger, \ddagger}$
\end{tabular}
}}

\vspace{.15in}

\begin{tabular}{c}
	$^\mathsection$University of Twente, Department of Applied Mathematics \\
	$^\dagger$University of G\"ottingen, Institute for Mathematical Stochastics \\
	$^\diamond$Massachusetts Institute of Technology, Statistics and Data Science Center \\
  $^\ddagger$University Medical Center G\"ottingen, Cluster of Excellence ``Multiscale Bioimaging: \\from Molecular Machines to Networks of Excitable Cells'' (MBExC)   \\[0.15in]
\end{tabular}
\begin{tabular}{cc}
   				\texttt{s.hundrieser@utwente.nl}, \texttt{tmanole@mit.edu},  \\
   				 \texttt{danila.litskevich@gmail.com}, \texttt{munk@math.uni-goettingen.de}
\end{tabular}
\phantom{\footnote{Email}}

\blfootnote{$^\ast$ \!These authors contributed equally.}

\vspace{.15in}

\today

\vspace*{.15in}

\begin{abstract}
	We analyze the statistical problem of recovering an atomic signal, modeled as a discrete uniform distribution $\mu$, from a binned Poisson convolution model. 
	This question is motivated, among others, by super-resolution laser microscopy 
	applications, where precise estimation of $\mu$ provides insights into spatial formations of cellular protein assemblies.  
	Our main results quantify the {\it local} minimax risk of estimating $\mu$  
	for a broad class of smooth convolution kernels.   
	This local perspective enables us to sharply
	quantify  optimal estimation rates as a function of the clustering structure
	of the underlying signal. Moreover, our results are expressed under a
	 multiscale loss
	function, which reveals that
	different parts of the underlying signal can be recovered at different rates
	depending on their local geometry. Overall, these results paint an optimistic perspective
	on the Poisson deconvolution problem, showing that accurate recovery 
	is achievable under a much broader class of signals than suggested
	by existing global minimax analyses. Beyond Poisson deconvolution, 
	our results also allow us to establish the local minimax
	rate  of parameter estimation in   Gaussian mixture models with uniform weights. 
    
  We apply our methods to experimental super-resolution microscopy data to 
  identify the location and configuration of individual DNA origamis. In addition, we complement our findings with numerical experiments on runtime and statistical recovery that showcase the practical performance of our estimators and their trade-offs. 
\end{abstract}
\end{center}

\keywords{Local Minimax Estimation, Wasserstein Distance, Gaussian Mixture Models, \quad Microscopy,  Orthogonal Polynomials, Tchebycheff Systems}

\allowdisplaybreaks

 \pagenumbering{Roman}

\newpage
  \begingroup
 \hypersetup{linkcolor=black}
 {\renewcommand{\baselinestretch}{0.75} 
 \addtocontents{toc}{\protect\setcounter{tocdepth}{2}}
   \tableofcontents}
 \endgroup
 \newpage
 
 \pagenumbering{arabic}
\section{Introduction}\label{sec:intro}
Many fundamental tasks in the physical sciences 
and related areas are mathematically described as statistical inverse problems, wherein the objective is to reconstruct an underlying signal from indirect and noisy observations.
A~prominent example is the {\it Poisson deconvolution problem}~\citep{shepp1982maximum}, which
arises when the signal of interest is corrupted by convolutional blur, and is observed through
rare and discrete observations, such as photon emissions. 
This setting arises in a variety of metrology, sensing, and imaging applications, 
such as photon-limited medical imaging~\citep{willett2003platelets}, fluorescence  microscopy~\citep{aspelmeier2015modern}, 
collider physics~\citep{kuusela2015}, %
and telescopy \citep{bertero2009image, schmitt2012multichannel}.

Traditional statistical works on  deconvolution problems typically assume smoothness of the 
underlying signal, for instance in the H\"older or Sobolev sense \citep{carroll1988optimal,stefanski1990deconvolving,fan1991optimal, johnstone2004wavelet, bissantz2007non, meister2009deconvolution, pensky2009functional, dattner2011deconvolution,rousseau2024wasserstein}. Under these assumptions, minimax convergence rates for recovering the signal 
can be inherently slow, decaying for instance at a logarithmic rate in the case
of Gaussian deconvolution.
These rates reflect the worst-case ill-posed nature of such inverse problems,
and have led to the conventional theoretical view of deconvolution as a 
notoriously difficult statistical task. This pessimistic perspective stands in contrast to 
practice, where statistical deconvolution is routinely carried out  
with great success. This gap between theory and practice has long been noted, e.g., in the context of recovery of piecewise constant signals \citep{boysen2009} or from a Bayesian perspective \citep{efron2016empirical}, and can be attributed to the fact that typical
minimax analyses are carried out---by mathematical convenience---over excessively large function classes,
which are not reflective of the actual hardness of the   problem.

In this work, we aim to quantify this gap by analyzing a discrete Poisson deconvolution model that naturally arises in many applications, including modern optical imaging. It serves as a prototypical model, which is amenable to a {\it local} minimax analysis. 
The local perspective on minimax  problems is one of several
possible refinements of the usual minimax criterion in order to better reflect
the difficulty of an estimation problem. 
Concretely, in a local  minimax  analysis, the goal
is to characterize the worst-case estimation
risk over small balls of the parameter space, 
much like in the classical H\'ajek-Le Cam theory for semiparametric
models~\citep{hajek1972, vandervaart2002}. 
In nonparametric settings, local minimax rates of convergence
have been established in a variety of estimation problems, such as shape-constrained regression~\citep{meyer2000,chatterjee2015} and high-dimensional Gaussian mean estimation~\citep{neykov2022},
as well as a variety of hypothesis testing problems
(e.g.\,\cite{valiant2017,balakrishnan2019,wei2020a}).  
Closest to our work are local minimax estimation rates
for finite mixture models~\citep{gassiat2014,heinrich2018,wu2020},
which we discuss in further detail below.

Our analysis will involve two different types of locality. 
On the one hand, as in the aforementioned works, we will
sharply characterize the hardness of the discrete Poisson deconvolution problem 
over small balls of the parameter space, 
highlighting 
that the optimal estimation rate is highly heterogeneous
across different regions of the space, largely depending on the geometric
structure of the signal. These results will confirm that
the global worst-case rates of convergence are achieved by adversarial 
distributions which are atypical in practice. On the other hand, 
our results will rely on a multiscale loss function which is itself local, 
in the sense that it will allow
us to show that different parts of the underlying signal can be recovered
with varying degrees of accuracy, depending on the region of the parameter space
where this signal lies. 
It is worth emphasizing that these {\it doubly local} minimax rates will be achieved
by estimators which are entirely {\it adaptive}, in the sense that they will not require
any prior knowledge on the structure of the signal. Such estimators, and their computation will be addressed in detail.

We defer a concrete description of our model to Section~\ref{sec:model} below, 
however we begin by highlighting its main features.
We will make the assumption that the signal to be recovered is a \emph{finite superposition of uniformly weighted point sources}:
\begin{align}\label{eq:discretePopulation}
\mu = \frac{1}{k} \sum_{i=1}^{k} \delta_{\theta_i},
\end{align}
where the parameters $\theta_i\in \bbR^d$ are unknown.
In the most general setting, we impose no separation assumptions on these points, and we aim
to recover $\mu$ from observations of its convolution
with a smooth kernel, corrupted with Poissonian noise. 

Our inspiration for this model comes
from the field of super-resolution photonic imaging, which broadly refers
to a set of microscopy technologies that
are effectively diffraction-unlimited, and are able to make
observations at the unprecedented scale of tens of nanometers~\citep{schermelleh2019}. 
Such devices are routinely used to resolve {\it discrete} collections of light sources; 
for instance, resulting from photon emissions of subcellular fluorescent proteins after laser excitation~\citep{hell2003}. 
In such cases, the atoms $\theta_i$  
 represent the $k$ light sources
of interest (representing individual proteins), which can be viewed as indistinguishable in shape and size
for all intents and purposes of the imaging application at hand. 
Recovering the measure
 $\mu$ is equivalent to recovering
the unordered collection of these proteins. 

From a broader perspective, we argue that  the smallest scale of any imaging application can always be thought of as taking the form of signal~\eqref{eq:discretePopulation}, 
in the sense that any object under observation is ultimately composed of discrete
``particles'' which are effectively indistinguishable
in shape and size. While this perspective is impractical 
when observations are made at coarse scales, 
the ongoing technological advancements of a variety of scientific imaging technologies 
increasingly allow practitioners to make observations at such fine scales that
the discrete nature of the signal becomes relevant to characterize. 
Our model is tailored to these applications, and we expect it to continue
becoming more relevant as resolution improvements  are made
across scientific domains;
for further examples in super-resolution microscopy, see for instance~\cite{balzarotti2017nanometer,weber2021minsted}. 
To give a second example beyond microscopy, we note that the uniform measure~\eqref{eq:discretePopulation} 
can be used to model observations in particle accelerators, which      
consist of discrete collections of particle jets \citep{komiske2019}.  
Due to the limited resolution of detectors used in these experiments, 
the particle jets are measured with convolutional blur, 
and the deconvolution problem therefore arises prominently (and is typically
referred to as {\it unfolding}---see for instance~\cite{panaretos2011}).
 Model~\eqref{eq:discretePopulation}  is also natural
for astrophysical imaging, which often involves discrete stellar sources \citep{bertero2009image}, and the uniformity condition is reasonable when the stars in the observation window exhibit similar intensities. 

Our work is certainly not the first to study the behavior
of deconvolution problems under finer structural assumptions.
Perhaps the most common existing approach 
is to assume that the signal of interest admits a sparse representation in an appropriate
basis. For instance, if the signal admits a sparse
Fourier or wavelet representation, then the deconvolution problem can be reduced to a sparse
linear inverse problem which can be solved via compressive sensing methods~\citep{fan2002wavelet, johnstone2004wavelet, candes2006near,donoho2006compressed, raginsky2010compressed,giryes2014sparsity}. 
These approaches are orthogonal to our work: Although 
$k$-atomic measures $\mu$ are sparse in the spatial domain, they 
do not generally admit a sparse representation
in any countable basis which renders the deconvolution problem linear. 
In particular, model~\eqref{eq:discretePopulation} is nonlinear
when viewed as a function of the parameters $\theta_i$. 
Discrete deconvolution problems of this type
have been the object of study in at least two distinct literatures. 
The first is the literature on finite mixture models, which 
is the main source of inspiration for our work, and which we will discuss at length in Section~\ref{sec:mixtures} below.
The second is the {\it mathematical} theory of super-resolution, 
which is concerned with the recovery of discrete signals from  measurements 
under a low-pass filter (that is, under convolution with a band-limited
kernel); for instance, see~\cite{donoho1992superresolution, candes2013super, candes2014towards, tang2013compressed, moitra2015}. Like us, this line of work was  motivated by super-resolution 
advances in imaging technologies. Unlike our work, however, their restriction 
to band-limited kernels makes the problem so ill-posed that 
consistent recovery of the underlying signal is impossible without further separation assumptions. 
In contrast, we focus on convolution kernels which are not band-limited for which
recovery is possible without separation assumptions. Further, this allows us to provide a local minimax analysis of this problem, under a local
loss function which exhibits the heterogeneity in estimating different
subsets of the discrete signal.

\subsection{Problem Setting}
\label{sec:model} 

\begin{figure}[t!]
	\centering
	\includegraphics[width = 0.85\textwidth]{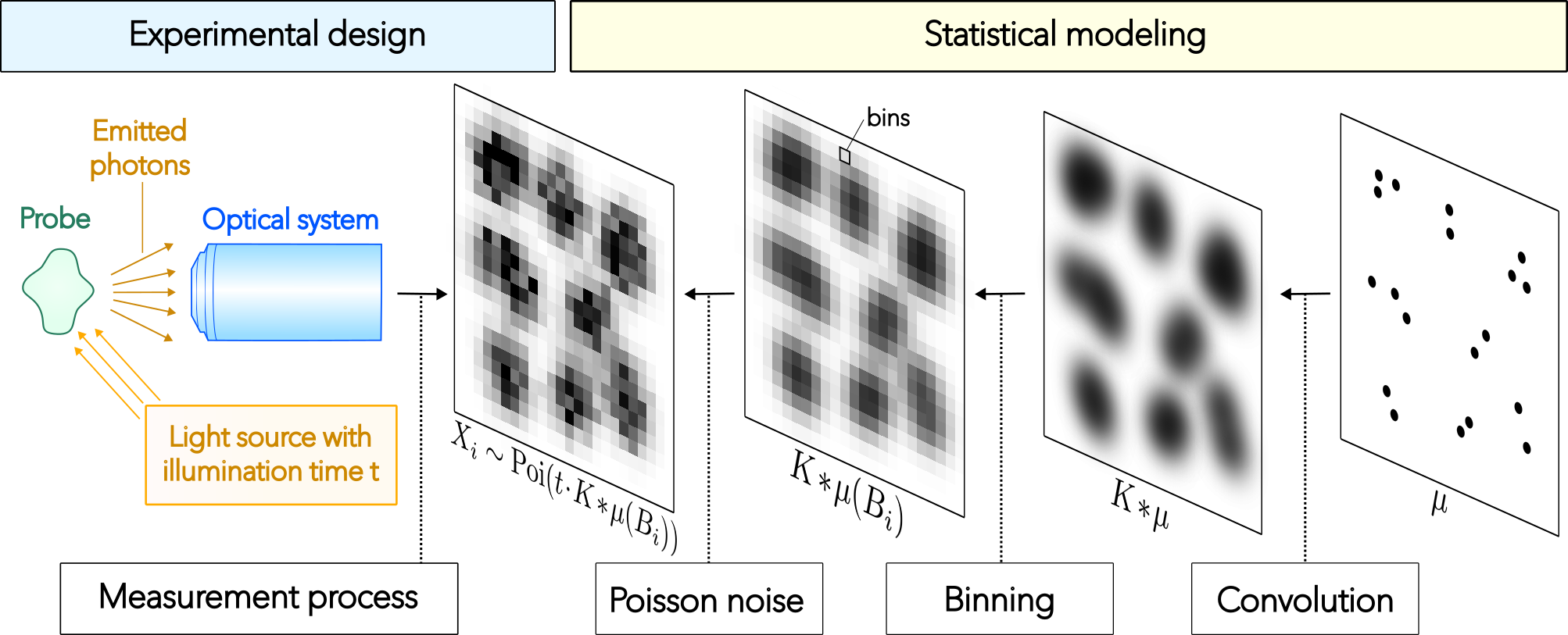}
	\caption{An example of the discrete Poisson deconvolution model \eqref{eq:model} arising in photon-limited fluorescence microscopy: In the physical experiment, a probe is illuminated for time $t>0$, causing the emission of photons which are recorded through an optical system on a binned detection domain. The statistical model \eqref{eq:model} assumes that measurements are generated by a uniform distribution $\mu$ on finitely many points which are convolved with a kernel $K$, discretized onto certain bins (corresponding to the detection domain), and corrupted by Poissonian
	 noise  proportional to the illumination time.}
	\label{fig:model}
\end{figure}

We now turn to formalizing our Poisson deconvolution model, using 
super-resolution microscopy as a running motivation. 
Concretely, our focus will be on the so-called 
{\it semiclassical} Poisson deconvolution model~\citep{mandel1959,kulaitis2021}, 
which is applicable, in particular, to scanning-mode super-resolution fluorescence microscopy
techniques such as Stimulation Emission Depletion (STED) 
microscopy~\citep{hell1994breaking, klar2000fluorescence,hell2003}. 
In such experimental setups, photon counts are collected sequentially 
over a finite set of bins $\{B_1, \dots, B_m\}$ which partition an observation 
window $\Omega\subseteq \RR^d$  into discrete regions, 
and each count is modeled as an independent Poisson-distributed random variable \citep{munk2020statistical}, 
see Figure \ref{fig:model}. Formally, we consider observations of the form:\begin{align}\label{eq:model}
	X_i \sim \textup{Poi}\left( t\cdot K\ast \mu(B_i) \right), \quad i = 1, \dots, m,
\end{align} 
where $K\colon \RR^d\to [0,\infty)$ is the convolution kernel associated with the microscope and\footnote{Here and throughout 
the manuscript, we use the following nonstandard notation:
for a map $f:\bbR^d\to\bbR$ and a set Borel $B \subseteq \bbR^d$, 
we write $f(B) = \int_B f(x)dx$. In particular, $f(B)$ does not denote
the direct image.} we write
$K\ast \mu(B_i) = \int_{B_i} K\ast \mu(x') \dif x' = \int_{B_i}\int_{\Theta}K(x' - x)\dif \mu(x)\dif x'$ to denote the (unscaled) intensity for bin $B_i$. Here, the atoms of $\mu$ are all assumed to lie within some (probe) domain $\Theta\subseteq \RR^d$ and 
the (known) real number $t > 0$ is to be understood as a proxy for the sample
size of the problem; physically it can be interpreted as
the laser illumination time of the experiment, or 
as a quantity proportional to the number of light pulses sent onto the probe.  Further, while we typically assume that $\Theta \subseteq \Omega$, this assumption is not necessary in all cases. 

It is worth noting that the semiclassical
model can be refined even further by incorporating
additional stochastic terms to model artifacts arising from detector noise \citep{aspelmeier2015modern}. 
We omit such effects to focus on the 
main features arising from deconvolution. 
We also stress that, while our main motivation comes from optics, 
many features of this model are more widely applicable. 
For instance, as we have already emphasized, the uniform
measure $\mu$ can serve as a natural model for particle jets in 
collider physics, which are  modeled precisely through the binned Poisson deconvolution
model that we described above~\citep{vandyk2014,kuusela2015}.

For theoretical purposes, we treat the number of  atoms $k$  as being known and fixed
throughout most the paper. Although unrealistic in most practical applications, 
the local minimax estimation risk for this problem
is challenging to characterize even when $k$ is known, 
and provides 
 important insights into the more general problem of estimating $\mu$ when $k$ is unknown. 
We postpone a detailed analysis under unknown $k$ to a separate work, but provide some discussion in Section~\ref{sec:discussion}.

Throughout our development, we place
the following two structural assumptions on the model. %

\begin{assumption}\label{ass:bins}
	The sets $\Theta, \Omega \subseteq \RR^d$, for $d\in \NN$, are convex and compact with non-empty interior, and the bins $\{B_i\}_{i = 1}^m$ %
	 form a (disjoint) partition of $\Omega$, such that each bin $B_i$ has positive Lebesgue measure and fulfills $\diam(B_i)\leq C m^{-1/d}$, with a constant $C=C(\Omega)>0$ only dependent on $\Omega$. 
\end{assumption}
\begin{assumption}\label{ass:kernel}One of the following settings hold for the probability kernel $K\colon \RR^d\to [0,\infty)$ in~\eqref{eq:model}.%
	\begin{enumerate}
		\item The kernel $K$ is compactly-supported, $h$-regular for $h\in\{1, \dots, k\}$ (see Definition~\ref{def:regularKernel}), and it holds that $\Theta + \supp(K) = \{\theta +x \,\colon \theta \in \Theta, x\in \supp(K)\} \subseteq \Omega$. 
		\item The kernel $K$ is given by a centered Gaussian density with a (symmetric) positive definite covariance matrix  $\Sigma\in \RR^{d\times d}$, i.e., $K(x) = ((2\pi)^d \det(\Sigma))^{-1/2}\exp(-\frac{1}{2}x^\top \Sigma^{-1}x)$. %
	\end{enumerate}
\end{assumption}

Assumption \ref{ass:kernel}(i) is perhaps the most relevant one 
for microscopy applications, since unbounded convolutional
kernels are often truncated  in practice. The condition
$\Theta + \supp(K) \subseteq \Omega$, which is made for technical
convenience, ensures that the blurred
signal $K\star\mu$ is always supported in the observation domain $\Omega$.
For unbounded kernels, such a condition is not possible, in which case 
it becomes necessary to characterize the tail behavior of the convolution
 $K\ast \mu$ based on the values
 that it takes on within the observation domain. 
We are able to carry out such an analysis for the special
case of Gaussian kernels satisfying   Assumption~\ref{ass:kernel}(ii). 
Such kernels also sometimes used
as an approximation of the point-spread function of STED microscopes~\citep{vondiezmann2017}.

\subsection{Theory: Main Results}

We begin by deriving the global minimax rate of estimation in model~\eqref{eq:model}, which will
serve as a benchmark for our subsequent local minimax rates. 
To this end, we adopt the Wasserstein distance~\citep{villani2009}
as a loss function for quantifying the risk of parameter estimation
over $\calU_k(\Theta)= \textstyle\{ \frac{1}{k}\sum_{i = 1}^{k} \delta_{\theta_i} \colon \theta_i \in \Theta\}$. For uniform measures $\mu = (1/k)\sum_{i=1}^{k} \delta_{\theta_i}\in \calU_k(\Theta)$ 
and $\nu = (1/k)\sum_{i=1}^{k} \delta_{\eta_i} \in \calU_k(\Theta)$ it can be defined as follows 
(see Section~\ref{subsec:notation} for a more general definition): 
$$W_p(\mu,\nu) = \min_{\sigma\in \calS(k)}\begin{cases} \left(\frac 1 k\sum_{i = 1}^{k} \|\theta_{\sigma(i)} - \eta_i\|^p\right)^{1/p} & \text{ if } p \in [1, \infty),\\
\max_{1 \leq i \leq k} \|\theta_{\sigma(i)} - \eta_i\| & \text{ if } p = \infty,
\end{cases}$$
where $\calS(k)$ is the permutation group on $\{1,\dots,k\}$ and $\|\cdot\|$ denotes the Euclidean norm. 
Our use of the Wasserstein distance is motivated 
by the fact that it equips $\calU_k(\Theta)$ with a metric
which is well-defined between discrete measures that are mutually singular,
and is consistent with the Euclidean metric on the ground space $\Theta\subseteq \RR^d$. 
Similar considerations have motivated the wide adoption of the 1-Wasserstein distance
in   mixture modeling problems, where it was first introduced in the works of~\cite{nguyen2013,nguyen2015}; see also \cite{bing2023} for an axiomatic justification in such problems. 

In what follows, we denote by $\calE_{t,m}(\domain)$ the set of estimators for~$\mu$, i.e., Borel-measurable functions $\hat\mu_{t,m}$ of an observation $(X_1,\dots,X_m)$ from model \eqref{eq:model}.

\begin{theorem}[Global minimax risk]
	\label{thm:global_minimax_risk}
	Fix $d,k\in \NN$ and $\Theta, \Omega \subseteq \RR^d$. Let $K$ be a kernel such that Assumptions~\ref{ass:bins} and~\ref{ass:kernel} are met. 
	Let $t \geq  1$ and $m \in \bbN$ satisfy  $m \geq  t^{(d+\gamma)/2}$
	for a fixed constant $\gamma > 0$. 
	Then, the minimax risk is   bounded as\footnote{Throughout this work we write $a\lesssim b$ to denote that there exists a positive constant $C>0$ such that $a\leq Cb$, and we write $a \asymp b$ whenever $a\lesssim b \lesssim a$. }%
	\begin{align}\label{eq:globalMinimaxRisk}
		\inf_{\hat\mu_{t,m} \in \calE_{t,m}(\domain)} \sup_{\mu\in \calU_k(\domain)} \bbE_\mu W_1(\hat\mu_{t,m},\mu) \asymp t^{-\frac 1 {2k}},
	\end{align}  
	where the implicit constants   depend only 
	on $\Theta, \Omega, \gamma, d, k, K$.
\end{theorem}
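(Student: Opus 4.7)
I would prove matching upper and lower bounds. The central principle is that, for measures with uniform weights, a $k$-atomic measure is determined as an unordered multiset by its first $k$ power sums (or multivariate moment analogues), as opposed to the $2k-1$ moments needed for arbitrary weights. This is exactly what is responsible for the exponent $1/(2k)$ rather than the $1/(4k-2)$ that would appear under general mixing weights.

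\textbf{Lower bound via two-point moment matching.} I would construct two measures $\mu_0, \mu_1 \in \calU_k(\Theta)$, both supported in a ball of radius $\epsilon$ around a fixed interior point of $\Theta$, whose first $k-1$ moments coincide but whose $k$-th moments differ, and for which $W_1(\mu_0,\mu_1) \asymp \epsilon$. By a Taylor expansion of the smooth kernel $K$ to order $k-1$ at each bin center, the matching of moments yields $|K\ast\mu_0(x) - K\ast\mu_1(x)| \lesssim \epsilon^k$ uniformly in $x$. Using the bound $\chi^2(\textup{Poi}(\lambda_0),\textup{Poi}(\lambda_1)) \leq (\lambda_0-\lambda_1)^2/\lambda_1$ and summing over bins, the total $\chi^2$ divergence between the observation laws scales as $t\epsilon^{2k}$. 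Choosing $\epsilon \asymp t^{-1/(2k)}$ keeps the divergence bounded, and Le Cam's two-point method then delivers the lower bound.

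\textbf{Upper bound via a stability inequality and a moment-matching estimator.} The core analytic ingredient is a quantitative injectivity inequality of the form
\begin{equation*}
W_1(\mu,\nu) \lesssim \|K\ast\mu - K\ast\nu\|_{L^2(\Omega)}^{1/k} \quad \text{for all } \mu, \nu \in \calU_k(\Theta),
\end{equation*}
which encodes the $k$-degree ill-posedness of the deconvolution and exploits the $h$-regularity of $K$ (in case (i)) or the analyticity of the Gaussian (in case (ii)). Granted this, I would take $\hat\mu_{t,m}$ to be a minimizer over $\nu \in \calU_k(\Theta)$ of the empirical surrogate $\sum_{i=1}^m (K\ast\nu(B_i) - X_i/t)^2$. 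Bernstein-type concentration for Poisson sums yields $\|K\ast\hat\mu_{t,m} - K\ast\mu\|_{L^2(\Omega)} = O_{\bbP}(t^{-1/2})$, where the hypothesis $m \geq t^{(d+\gamma)/2}$ guarantees that the bin discretization error is dominated by the stochastic fluctuations. Combining with the stability inequality gives $W_1(\hat\mu_{t,m},\mu) = O_{\bbP}(t^{-1/(2k)})$, and a standard tail-integration argument (using compactness of $\Theta$ to uniformly bound $W_1$) upgrades this to the claimed bound on $\bbE_\mu W_1(\hat\mu_{t,m},\mu)$.

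\textbf{Main obstacle.} The hardest step is establishing the stability inequality with the sharp exponent $1/k$, uniformly over $\calU_k(\Theta)$. The map $\mu \mapsto K\ast\mu$ degenerates quantitatively as atoms cluster, and extracting the precise $k$-th power relationship between Wasserstein distance and $L^2$-proximity of convolved intensities requires a careful Tchebycheff-system argument applied to translates of $K$---this is exactly what the $h$-regularity condition is engineered to enable. The Gaussian case demands additional work, since $K\ast\mu$ is supported outside $\Omega$; here one combines analyticity with a tail-control extension argument to bound the full $L^2(\bbR^d)$-norm by its restriction to $\Omega$. Matching the lower-bound exponent is what makes the uniformity of the mixture weights decisive: it cuts the number of independent moments from $2k-1$ to $k$, halving the effective degree of ill-posedness.
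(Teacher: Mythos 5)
Your overall architecture matches the paper's: a two-point lower bound built from two $k$-atomic uniform measures in an $\epsilon$-ball whose first $k-1$ moments agree (this is exactly Lemma~\ref{lem:moment_independence} and the construction in Lemma~\ref{lem:constructionHypothesis}), and an upper bound obtained by combining a stability inequality of the form $W_1^k(\mu,\nu)\lesssim \|K\star(\mu-\nu)\|_{L^2}$ for uniform measures (the paper's Theorem~\ref{thm:moment_comparison} plus Theorem~\ref{thm:momentL2-bounds_all}, and Proposition~\ref{cor:Lp_Omega_Rd} for the $L^2(\Omega)$-to-$L^2(\bbR^d)$ passage in the Gaussian case) with a $t^{-1/2}$ rate for the fitted intensity. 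Your least-squares estimator is a reasonable substitute for the paper's method of moments / MLE, though the step "Bernstein-type concentration yields $O_\bbP(t^{-1/2})$ in $L^2(\Omega)$" hides a uniform empirical-process bound over the class $\{K\star\nu:\nu\in\calU_k(\Theta)\}$, which the paper supplies via local bracketing entropy (Lemmas~\ref{lem:chaining_ours}--\ref{lem:local_bracketing}). One misattribution: $h$-regularity is not what powers the upper-bound stability inequality (Remark~\ref{rmk:discussionMoM} notes it is not needed there); it is engineered solely for the lower bound.

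The genuine gap is in your lower bound for compactly-supported kernels. With $\mu_0,\mu_1$ supported in a small ball and $K$ compactly supported, the sets $\supp(K\star\mu_0)$ and $\supp(K\star\mu_1)$ differ near their boundaries, so there exist bins with $\lambda_1^{(i)}=0$ but $\lambda_0^{(i)}>0$, making $\chi^2(\mathrm{Poi}(\lambda_0^{(i)}),\mathrm{Poi}(\lambda_1^{(i)}))=\infty$; even away from exact vanishing, summing $(\lambda_0^{(i)}-\lambda_1^{(i)})^2/\lambda_1^{(i)}$ with only the uniform bound $|K\star\mu_0-K\star\mu_1|\lesssim\epsilon^k$ in the numerator produces essentially $\epsilon^{2k}\int_\Omega (K\star\mu_1)^{-1}$, which diverges wherever the intensity decays to zero. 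The fix is twofold and is precisely what the paper does: use the Hellinger distance (always finite, and satisfying $H^2(\bP_{\mu_0}^t,\bP_{\mu_1}^t)\leq \tfrac t2 H^2(K\star\mu_0,K\star\mu_1)$, Lemma~\ref{lem:hellinger_bound_for_minimax}), and replace the uniform $\epsilon^k$ bound by the pointwise Taylor remainder $\epsilon^k D^k\overline K(x-t\epsilon\theta_i)$, so that the numerator decays where the denominator does; the finiteness of $\int (D^k\overline K)^2/\sum_j\overline K$ is exactly the content of the $k$-regularity condition (Definition~\ref{def:regularKernel}). Without this refinement your argument establishes the lower bound only for kernels bounded below on $\Omega$ (e.g.\ the Gaussian), not for the compactly-supported class in Assumption~\ref{ass:kernel}(i).
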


The upper bound in equation~\eqref{eq:globalMinimaxRisk} 
is achieved by 
a method-of-moments estimator when the kernel $K$ is compactly-supported
(Proposition~\ref{prop:mm_rate}), 
and by a maximum likelihood estimator (MLE)
when the kernel is Gaussian (Proposition~\ref{prop:mle_gaussian}). 
The minimax lower bound is developed in Proposition~\ref{prop:minimax_lower_bound}. Notably, since the low-dimensional setting is of main interest in our work, we make no attempt to sharply characterize the dependence of the minimax estimation risk  
on  $d$. Nevertheless, our proof permits making all constants explicit, and the lower bound is independent of $d$.

The main takeaway from 
Theorem~\ref{thm:global_minimax_risk}
is that optimal recovery of $\mu$ is largely driven by the number of components $k$,
 and degrades exponentially as $k$ increases. This aligns with the classical statistical deconvolution 
literature~\citep{stefanski1990deconvolving,fan1991optimal,dedecker2013minimax} which 
suffers from
logarithmic minimax rates for estimating smooth measures $\mu$ 
(roughly corresponding to $k = \infty$). 
The rate in \eqref{eq:globalMinimaxRisk} is also related to the minimax rate of estimating the components of a Gaussian mixture with $k$ possibly non-uniformly weighted components;
we provide a detailed comparison to this literature
in~\Cref{sec:mixtures}.

While Theorem~\ref{thm:global_minimax_risk}  sharply quantifies the minimax rate in terms of $t$, the condition on the number of bins $m$ is likely sub-optimal and is imposed to  ensure that the statistical error dominates the discretization error. 
Nevertheless, 
it is also relevant to assess the estimability of $\mu$ when $m$ is small relative to $t$. 
This regime turns out to be more subtle, since model~\eqref{eq:model} may not even be identifiable when $m$ is small; for instance, this occurs when $K$ is compactly-supported on a set of sufficiently small diameter. Remarkably, however, we show in Section~\ref{sec:mle_bounded}  
for the univariate setting that model~\eqref{eq:model}
is identifiable when $m$ is larger than an explicit constant that depends on $k$ and  $K$, provided that the kernel $K$
satisfies a condition that we term {\it root regularity} (Definition~\ref{def:root_regularity}), which includes the univariate Gaussian kernel. Under this condition, we show that the MLE is consistent even when $m$ is bounded (Proposition \ref{prop:consistency_LSE_MLE_finite_m}), which aligns with some of our numerical evidence (\Cref{sec:simulations}).  Our proof strategy is based on controlling the number of zero-sets of weighted sums of shifted functionals defined in terms of the kernel $K$. In the multivariate setting this is known to be much more difficult and is far beyond the scope of this paper. 

Shifting our focus back to our minimax analysis, we note that  the slow minimax rate of Theorem~\ref{thm:global_minimax_risk}
arises from the regime where all atoms of $\mu$ collapse into a single point.
In fact, the least-favorable choices of $\mu$ appearing in 
our minimax lower bound have atoms which are all within a distance of order 
$t^{-1/2k}$ of each other. This situation is clearly 
overly pessimistic for most practical purposes.  
To overcome this deficiency, we will now derive {\it local} minimax bounds
which show that 
the rate significantly improves when 
some of the atoms of $\mu$ are separated by a distance larger than $t^{-1/2k}$. 
We will also show that different parts of the measure $\mu$ can be estimated
at different rates, depending on the local
separation of atoms within these respective regions.

We formulate these results by replacing the parameter
space $\calU_k(\Theta)$ with an $\infty$-Wasserstein ball %
\begin{align*}
	\calU_k(\Theta; \mu_0, \rho)\coloneqq \big\{\mu \in\calU_k(\Theta) \;\colon W_\infty(\mu, \mu_0)< \rho\big\},
\end{align*}
of radius $\rho > 0$, 
centered at a measure  $\mu_0 \in \calU_k(\Theta)$
which will be chosen to admit some separation among its atoms.  Concretely, given
$1 \leq k_0 \leq k$  and a vector 
$r = (r_1,\dots,r_{k_0}) \in \bbN^{k_0}$ satisfying\footnote{We use the convention
$\bbN = \{1,2,\dots\}$, thus  $r_i\geq 1$ for all $i$. Furthermore, 
we write $|r| = \sum_i r_i$.} $|r|\coloneqq \sum_{i=1}^{k_0} r_i = k$,
we will choose $\mu_0$ to be any element of the set
\begin{align*}
	\calU_{k,k_0}(\Theta; r,  \delta) \coloneqq \left\{  \mu_0 = \textstyle \frac 1 k \sum_{j=1}^{k_0}r_j\delta_{\theta_{0j}} \in \calU_{k}(\Theta) :
	\min_{i\neq j} 
		\|\theta_{0i} - \theta_{0j}\| \geq \delta  \displaystyle \right\},
		\quad \text{where } \delta > 0.
\end{align*}
Heuristically, given  $\rho \ll \delta$ 
and a measure $\mu_0 \in \calU_{k,k_0}(\Theta;r,\delta)$, 
the ball $\calU_{k}(\Theta;\mu_0,\rho)$
consists of $k$-atomic measures
whose atoms are arranged such that $r_j$ of them 
cluster around $\theta_{0j}$, for each $j=1,\dots,k_0$,
and such that the clusters are roughly pairwise $\delta$-separated.

 We also adopt a loss function which,
unlike the Wasserstein distance, is adapted to the local separation structure of $\mu_0$. For its definition, let $\mu_0 = (1/k) \sum_{j=1}^{k_0} r_j \delta_{\theta_{0j}}$ 
and define the Voronoi partition of~$\RR^d$ generated by the support points of~$\mu_0$:
\begin{equation}
\label{eq:voronoi}
V_j \coloneqq V_j(\mu_0) \coloneqq  \big\{ \theta \in \RR^d \;\colon \|\theta - \theta_{0j}\| \leq \|\theta - \theta_{0l}\|,~ \forall l \neq j
\big\}\textstyle\backslash \bigcup_{h\leq j-1} V_h,\quad j=1,\dots, k_0.
\end{equation}
For any $j=1,\dots,k_0$, and any $\mu\in \calU_k(\Theta)$, we define the conditional measure 
$\mu_{V_j} = \mu(\cdot\cap V_j) / \mu(V_j)$ when $V_j$ is nonempty, and $\mu_{V_j}=0$ otherwise.\footnote{When
taking Wasserstein distances between conditional measures, 
we adopt the conventions $W_1(0,0) = 0$ and $W_1(\mu_{V_j},0) = \infty$ if $V_j$ is nonempty.} 
The local Wasserstein divergence with respect to $\mu_0$ between two probability measures $\mu, \nu\in \calU_k(\Theta)$ is then defined as 
\begin{align*}%
	\widebar \calD_{\mu_0}(\mu,\nu) = 1 \wedge \sum_{j=1}^{k_0}  W_1^{r_j}(\mu_{V_j}, \nu_{V_j}).
\end{align*}
Our first local minimax bound considers the case where the  separation parameter $\delta > 0$
between clusters 
is held fixed. For technical purposes, we will constrain the centering measure $\mu_0$
to be supported in any reduced domain $\Theta_\delta$ such that $\Theta_\delta + B(0,\delta) \subseteq \Theta$. 
 
\begin{theorem}[Local minimax risk for separated clusters]
\label{thm:local_minimax_risk_separated}
Assume the same conditions as in Theorem~\ref{thm:global_minimax_risk}. 
 Given
$1 \leq k_0 \leq k$, let $r \in \bbN^{k_0}$ be such that $|r|=k$. 
Given $\delta \in (0,\diam(\Theta)/2k)$, let 
  $\mu_0 \in \calU_{k,k_0}(\Theta_\delta;r,\delta)$ and $\epsilon\in (t^{-\frac 1 {2k}}, \delta/4]$.
  Then, 
 \begin{align}\label{eq:localMinimaxRisk}
	\inf_{\hat\mu_{t,m} \in \calE_{t,m}(\domain)} 
	 \sup_{\mu\in \calU_k(\domain;\mu_0,\epsilon)} \bbE_\mu \widebar  \calD_{\mu_0}(\hat\mu_{t,m},\mu)
\asymp t^{-\frac 1 {2}},
\end{align}
and, writing $r^* = \max_j r_j$, 
 \begin{align}\label{eq:localMinimaxRisk_W}
	\inf_{\hat\mu_{t,m} \in \calE_{t,m}(\domain)} 
	 \sup_{\mu\in \calU_k(\domain;\mu_0,\epsilon)} \bbE_\mu W_1(\hat\mu_{t,m},\mu)
\asymp t^{-\frac 1 {2r^*}},
\end{align}
where the implicit constants in the above displays depend only 
on $\Theta$, $\Omega$, $\delta$, $\gamma$, $c$, $d$, $K$, and $k$.
\end{theorem}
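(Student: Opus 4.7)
My plan is to show that the $W_\infty$-ball $\calU_k(\Theta;\mu_0,\epsilon)$, with $\epsilon\leq \delta/4$, effectively decouples into $k_0$ independent local Poisson deconvolution sub-problems, one per cluster: the $j$-th sub-problem consists in recovering the $r_j$ atoms of $\mu$ lying in a ball of radius $\epsilon$ around $\theta_{0j}$. Applying Theorem~\ref{thm:global_minimax_risk} to a sub-problem with $r_j$ atoms yields a local $W_1$ rate of $t^{-1/(2r_j)}$. The divergence $\widebar\calD_{\mu_0}$ is designed precisely so that $W_1^{r_j}(\mu_{V_j},\nu_{V_j}) \asymp t^{-1/2}$ uniformly in $j$, whence summing across clusters yields \eqref{eq:localMinimaxRisk}. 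Taking a maximum across clusters instead gives the $W_1$ rate $t^{-1/(2r^*)}$ of \eqref{eq:localMinimaxRisk_W}.

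\textbf{Upper bound.} Since $W_\infty(\mu,\mu_0) < \epsilon$ and the atoms of $\mu_0$ are $\delta$-separated with $\epsilon \leq \delta/4$, every $\mu \in \calU_k(\Theta;\mu_0,\epsilon)$ has exactly $r_j$ atoms within the $\epsilon$-ball around $\theta_{0j}$ for each $j$, and these sub-configurations lie in disjoint regions. I would first apply the method-of-moments estimator of Proposition~\ref{prop:mm_rate} (compactly supported case) or the MLE of Proposition~\ref{prop:mle_gaussian} (Gaussian case), both of which achieve the global rate $t^{-1/(2k)}$; since $\epsilon > t^{-1/(2k)}$, the output is within $O(\epsilon)$ of $\mu$ in $W_\infty$ with high probability, and can therefore be partitioned into $k_0$ groups that are consistent with the Voronoi cells $V_j$. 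On each cluster, the problem reduces to a local Poisson deconvolution problem with $r_j$ atoms: contributions from other clusters to bins near cluster $j$ are smooth functions of parameters at distance $\geq \delta/2$ (and hence essentially constant over the local parameter class), so after an appropriate offset the effective local model is a standard global deconvolution problem with $r_j$ atoms. Applying Theorem~\ref{thm:global_minimax_risk} with $k$ replaced by $r_j$ on each cluster and aggregating yields both upper bounds.

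\textbf{Lower bound.} For \eqref{eq:localMinimaxRisk_W} I would run a two-point Le Cam argument focused on the single cluster $j^*$ attaining $r_{j^*}=r^*$. Fix $\mu = \mu_0$ and construct an alternative $\mu'\in \calU_k(\Theta;\mu_0,\epsilon)$ that coincides with $\mu$ on every cluster except $j^*$, and whose restriction to the $\epsilon$-ball around $\theta_{0j^*}$ is a uniformly weighted $r^*$-atomic configuration sharing the first $2r^*-1$ moments with the $r^*$-fold atom at $\theta_{0j^*}$, while being at $W_1$-distance of order $t^{-1/(2r^*)}$ from $\mu_{V_{j^*}}$. Such moment-matched, uniformly-weighted perturbations are precisely the objects produced by the Tchebycheff-system construction underlying Proposition~\ref{prop:minimax_lower_bound}. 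Expanding $K\ast(\mu-\mu')$ to order $2r^*-1$ and using smoothness of $K$ in Assumption~\ref{ass:kernel}, the moment matching forces $K\ast\mu(B_i)-K\ast\mu'(B_i) = O(t^{-r^*/(2r^*)}) = O(t^{-1/2})$ per bin, so the KL divergence between the joint Poisson laws remains $O(1)$; Le Cam's inequality then delivers \eqref{eq:localMinimaxRisk_W}. For \eqref{eq:localMinimaxRisk} I would repeat this perturbation simultaneously and independently on every cluster (each at its own rate $t^{-1/(2r_j)}$), so that $W_1^{r_j}(\mu_{V_j},\mu'_{V_j}) \asymp t^{-1/2}$ holds for every $j$ and the divergence is of order $k_0 \cdot t^{-1/2} \asymp t^{-1/2}$.

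\textbf{Main obstacle.} The central technical difficulty is rigorously decoupling the clusters in both directions. On the upper-bound side, one must show that the likelihood contribution from bins near cluster $j$ depends only weakly on the atoms of the other clusters; this is straightforward for compactly supported kernels via the support condition in Assumption~\ref{ass:kernel}(i), but for the Gaussian kernel it requires quantitative tail estimates on $K\ast\mu$ at distances of order $\delta-2\epsilon$ together with local Lipschitz control uniform in $\mu$. On the lower-bound side, constructing moment-matched alternatives that simultaneously remain uniformly weighted, stay in $\calU_k(\Theta;\mu_0,\epsilon)$, and preserve exactly $r_{j^*}$ atoms in the cluster is delicate, since uniform weights rule out the free mass manipulations available in general finite-mixture lower bounds; the Chebyshev-system machinery is exactly what enforces moment matching under the uniform-weight constraint. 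Finally, ensuring that the upper bound is attained by an \emph{adaptive} estimator---one that does not know $k_0$ or the multiplicities $r_j$---requires that the clustering pre-processing at the global rate $t^{-1/(2k)} \leq \epsilon$ correctly recovers the Voronoi structure uniformly over the ball, which follows from the gap $\epsilon \leq \delta/4$.
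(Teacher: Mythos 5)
Your lower bound strategy (Le Cam's two-point method with a moment-matched, uniformly-weighted perturbation localized to a cluster achieving $r_{j^*}=r^*$) is essentially what the paper does in Proposition~\ref{prop:minimax_lower_bound}$(ii)$ and Lemma~\ref{lem:constructionHypothesis}. One correction: for $r^*$-atomic uniform measures the paper matches only the first $r^*-1$ moments, not the first $2r^*-1$; matching $2r^*-1$ moments would force the two measures to coincide by Lemma~\ref{lem:moment_identifiability}, and $r^*-1$ matching moments already force $H^2(K\star\mu_\epsilon, K\star\nu_\epsilon)\lesssim\epsilon^{2r^*}$, which is what delivers the rate $t^{-1/(2r^*)}$.

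Your upper bound, however, takes a fundamentally different route from the paper's and has a genuine gap. The paper never decouples the clusters. It applies the \emph{same} estimator that achieves the global rate (method of moments under Assumption~\ref{ass:kernel}$(i)$, MLE under Assumption~\ref{ass:kernel}$(ii)$), which gives $\bbE\, M_k^2(\hat\mu,\mu)\lesssim t^{-1}$ uniformly over $\calU_k(\Theta)$ (Propositions~\ref{prop:mm_rate} and~\ref{prop:mle_gaussian}). The local rate improvement near $\mu_0$ then comes entirely from the local polynomial-root stability bound (Proposition~\ref{prop:sharp_poly_stability}, Theorem~\ref{thm:moment_comparison}$(ii)$, packaged as Corollary~\ref{cor:blackbox_bound}$(ii)$), which asserts $\calD_{\mu_0}(\hat\mu,\mu)\lesssim M_k(\hat\mu,\mu)$ when both $\hat\mu$ and $\mu$ lie near $\mu_0$. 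In other words, the same $O(t^{-1/2})$ moment error translates into a sharper atom error when the roots are clustered; there is no cluster-by-cluster re-estimation and adaptivity is automatic.

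The specific obstruction to your decoupling is the offset bias. Plugging coarse estimates of the other-cluster atoms (accurate to $W_\infty$-error $O(t^{-1/(2k)})$) into the intensity and subtracting leaves a systematic bias of order $|B_i|\,t^{-1/(2k)}$ in each bin, and hence a bias of order $t^{-1/(2k)}$ in each cluster-$j$ local moment estimate (since the Lipschitz modulus of $\theta\mapsto K\star\delta_\theta(B_i)$ scales with $|B_i|$ and the bins sum to $|\Omega|$). This dwarfs the $O(t^{-1/2})$ Poisson fluctuation. Feeding a moment error of $t^{-1/(2k)}$ into the $r_j$-atom Ostrowski bound $W_1^{r_j}\lesssim\|f_\mu-f_\nu\|_*$ (Lemma~\ref{lem:ostrowski}) yields atom error $O(t^{-1/(2kr_j)})$, which for $r_j>1$ is \emph{worse} than the $t^{-1/(2k)}$ you started with, and iterating the scheme produces no contraction. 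Your decoupling would work only if the offset is exactly zero, e.g.\ for compactly supported $K$ with $\delta$ exceeding $2\,\mathrm{diam}(\supp K)$; but Theorem~\ref{thm:local_minimax_risk_separated} makes no such assumption on $\delta$. Salvaging the decoupling route would require either abandoning the naïve offset in favor of the paper's joint moment analysis, or a genuinely iterative contraction argument akin to analyzing the MLE directly — substantially different from what you sketch.
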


The upper bounds of Theorem~\ref{thm:local_minimax_risk_separated}
are again  attained under Assumption \ref{ass:kernel}$(i)$ by the method of moments and under Assumption \ref{ass:kernel}$(ii)$
by the maximum likelihood estimator.
In both cases, the estimators are adaptive to the local structure of the problem, 
in the sense that they do not require knowledge of the local model parameters $k_0, r, \mu_0,$
or $\delta$.  

Under the assumptions of
Theorem~\ref{thm:local_minimax_risk_separated}, the balls $\calU_k(\Theta;\mu_0,\epsilon)$
  consist of measures $\mu$ whose $k$ atoms form $k_0$ well-separated clusters. 
Recall that the integers $r_1,\dots,r_{k_0}$ denote the number of atoms of $\mu$ lying
in each of these clusters, and $r^*$ denotes the size of the largest
such cluster. Under this setting, the second assertion of 
Theorem~\ref{thm:local_minimax_risk_separated} implies
that the local minimax rate of estimation under the $W_1$ distance is $t^{-1/2r^*}$. 
When $k_0=1$, so that no clustering structure is present, 
one must have $r^*=k$, and 
  Theorem~\ref{thm:local_minimax_risk_separated} recovers the global minimax rate
of Theorem~\ref{thm:global_minimax_risk}. When $k_0=k$, so that
 all atoms of $\mu$ are separated, one must have $r^*=1$, and the 
local minimax rate reduces to the parametric rate $t^{-1/2}$.
The general rate in equation~\eqref{eq:localMinimaxRisk_W} 
interpolates between these two extremes; in particular, it
implies that there exists an estimator
$\hat\mu_{t,m}$ whose atoms converge to those of $\mu$ at the rate $t^{-1/2r^*}$,
up to relabeling. 

More strongly, the first assertion of Theorem~\ref{thm:local_minimax_risk_separated}
implies  that there exists an estimator
$\hat\mu_{t,m}$ such that for any
cluster $j=1,\dots,k_0$, and for any atom $\theta_i \in V_j$ of $\mu$
lying in that cluster, there exists an atom $\hat\theta_\ell$ of $\hat\mu_{t,m}$ 
such that
$$\bbE\|\hat\theta_\ell - \theta_i\| \lesssim t^{-1/2r_j}.$$
This implies that many of the atoms
of $\hat\mu_{t,m}$
may converge
faster than the rate $t^{-1/2 r^*}$ implied by the 1-Wasserstein distance. 
In particular, the convergence
behavior of the various atoms of $\hat\mu_{t,m}$ is {\it heterogeneous}, 
and depends on the local structure of $\mu$. This behavior could not have been anticipated from the Wasserstein minimax bound~\eqref{eq:localMinimaxRisk_W}. 
An example of these heterogeneous convergence
rates is given in Figure~\ref{fig:clustering56}(a).
 
\begin{figure}[t]
	\centering
	\includegraphics[width = \textwidth]{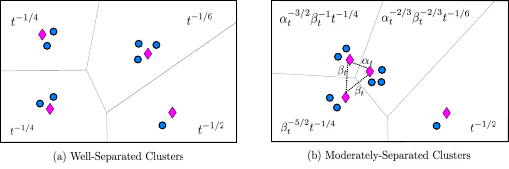}
	\caption{Illustrations of Theorem~\ref{thm:local_minimax_risk_separated} (left)
	and Theorem~\ref{thm:local_minimax_risk_nosep} (right).
	Pink diamonds represent the atoms of a measure $\mu_0$, blue atoms represent the atoms
	of a measure $\mu \in \calU_k(\Theta;\mu_0,c_2\epsilon_t)$, and grey lines
	represent the Voronoi cells generated by the atoms of $\mu_0$. 
	The distance between the pink atoms is either indicated
	by rates $\alpha_t,\beta_t \geq \delta_t$, or is constant.  
	Theorems~\ref{thm:local_minimax_risk_separated}--\ref{thm:local_minimax_risk_nosep}
	 prove that there exists
	an estimator of $\mu$ whose atoms converge to those of $\mu$ at the rates indicated
	in each Voronoi cell, whereas Theorem~\ref{thm:global_minimax_risk}
	would have implied the rate $t^{-1/16}$ for each of these atoms. 
	Notice that the rates in the right-hand figure are never slower 
	than $t^{-1/16}$ under the stated conditions on $\alpha_t$ and $\beta_t$.
}
	\label{fig:clustering56}
\end{figure}

One deficiency of Theorem~\ref{thm:local_minimax_risk_separated} is the
fact that it forces the $k_0$ clusters to be separated
by a constant distance $\delta>0$. 
As an example, it is natural to hope that  a measure $\mu$ whose atoms
are all separated by a slowly-decaying rate, such as $(\log t)^{-1}$, 
is nearly estimable at  the parametric rate, yet
Theorem~\ref{thm:local_minimax_risk_separated} does not imply a rate better
than $t^{-1/2k}$ in this regime. Our next result
will provide a sharper bound for such situations.
In this case, the appropriate generalization of the local
Wasserstein divergence is given by:
\begin{align}\label{eq:local_Wasserstein}
 \calD_{\mu_0}(\mu,\nu) = 1 \wedge \sum_{j=1}^{k_0}  \delta_j(\mu_0)
	\cdot  W_1^{r_j}(\mu_{V_j}, \nu_{V_j}),
	\quad \text{where } 
	\delta_j(\mu_0) = \prod_{\substack{1 \leq i \leq k_0 \\ i \neq j}} \|\theta_{0i} - \theta_{0j}\|^{r_i}.
\end{align}
Notice that $\widebar \calD_{\mu_0} \asymp \calD_{\mu_0}$
when the atoms of $\mu_0$ are separated by a constant, thus
Theorem~\ref{thm:local_minimax_risk_separated} continues
to hold when $\widebar\calD_{\mu_0}$ is replaced by $\calD_{\mu_0}$. 
Hence, there will be no loss of  generality in replacing $\widebar\calD_{\mu_0}$
by~$\calD_{\mu_0}$ throughout the remainder of the manuscript. 
Our final main result is stated as follows.
\begin{theorem}[Local minimax risk for approaching clusters]
\label{thm:local_minimax_risk_nosep}
Under the same conditions as Theorem~\ref{thm:local_minimax_risk_separated},
there exist constants $c_1,c_2 > 0$ such that 
if $\epsilon_t = c_1t^{-1/2k}$,
$\delta_t = c_2\epsilon_t^{ 1 / {2(k+1)}}$, then
\begin{align} 
	\inf_{\hat\mu_{t,m} \in \calE_{t,m}(\domain)} 
	\sup_{\mu_0\in \calU_{k,k_0}(\Theta; r,\delta_t)}
    \sup_{\mu\in \calU_k(\domain;\mu_0,\epsilon_t)} \bbE_\mu \calD_{\mu_0}(\hat\mu_{t,m},\mu)
\asymp t^{-\frac 1 {2}},
\end{align}
where $c_1,c_2$, and the implicit constants in the above display depend  
on $\Theta$, $\Omega$,  $\gamma$,  $d$, $K$, $k$.    
\end{theorem}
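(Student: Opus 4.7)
The plan is to establish the matching upper and lower bounds at the parametric rate $t^{-1/2}$ by adapting the arguments underlying \Cref{thm:local_minimax_risk_separated} to the regime of vanishing inter-cluster separation $\delta_t$. The upper bound will be achieved by the \emph{same} adaptive estimator as in \Cref{thm:local_minimax_risk_separated} (method of moments under Assumption~\ref{ass:kernel}(i), MLE under Assumption~\ref{ass:kernel}(ii)), and the lower bound will follow from a two-point Le Cam construction applied at a worst-case centering measure $\mu_0 \in \calU_{k,k_0}(\Theta;r,\delta_t)$.

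For the upper bound, the scalings $\epsilon_t = c_1 t^{-1/2k}$ and $\delta_t = c_2 \epsilon_t^{1/(2(k+1))}$ imply $\epsilon_t/\delta_t \to 0$, so for every $\mu \in \calU_k(\Theta;\mu_0,\epsilon_t)$ the Voronoi cells $V_j(\mu_0)$ cleanly localize the atoms of $\mu$ into $k_0$ groups of sizes $r_j$. I would then establish the weighted localized risk bound
\begin{equation*}
\bbE_\mu\, W_1^{r_j}(\hat\mu_{V_j},\mu_{V_j}) \;\lesssim\; \frac{t^{-1/2}}{\delta_j(\mu_0)}, \qquad j=1,\dots,k_0,
\end{equation*}
which, multiplied by $\delta_j(\mu_0)$ and summed over $j$, yields the desired $t^{-1/2}$ rate for $\bbE_\mu \calD_{\mu_0}(\hat\mu_{t,m},\mu)$. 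The rationale is that, localized around the cluster center $\theta_{0j}$, the convolved signal $K\ast\mu - K\ast\mu_0$ admits a Taylor--moment expansion whose leading contribution is the $r_j$-th local moment of $\mu_{V_j}-\mu_{0,V_j}$ multiplied by a coefficient of size $\asymp \delta_j(\mu_0)$, the influence of the other $k_0-1$ clusters being absorbed into this single factor. Consequently a $t^{-1/2}$-level perturbation of the bin intensities propagates into a $t^{-1/2}/\delta_j(\mu_0)$ perturbation of the $r_j$-th local moment, which a Hadamard--Tchebycheff stability estimate then converts into the same perturbation of $W_1^{r_j}(\hat\mu_{V_j},\mu_{V_j})$.

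For the lower bound, given $\mu_0 \in \calU_{k,k_0}(\Theta;r,\delta_t)$ and a cluster index $j$, I would construct a competitor $\mu_1 \in \calU_k(\Theta;\mu_0,\epsilon_t)$ that agrees with $\mu_0$ on every cluster $\ell \neq j$, whose $r_j$ atoms on $V_j$ are a Tchebycheff-type perturbation of those of $\mu_{0,V_j}$ of size $\eta_t \asymp t^{-1/(2r_j)}/\delta_j(\mu_0)^{1/r_j}$, chosen so that all local moments of $\mu_{1,V_j}$ of order strictly less than $r_j$ coincide with those of $\mu_{0,V_j}$. A Taylor expansion of $K\ast(\mu_1-\mu_0)$ around $\theta_{0j}$ then has leading size $\delta_j(\mu_0)\,\eta_t^{r_j} \asymp t^{-1/2}$, so a direct Poisson KL computation yields $\mathrm{KL}(P_{\mu_1}\,\|\,P_{\mu_0}) = O(1)$ and the two hypotheses are statistically indistinguishable. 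Since $\calD_{\mu_0}(\mu_0,\mu_1) \asymp \delta_j(\mu_0)\,\eta_t^{r_j} \asymp t^{-1/2}$, Le Cam's two-point lemma delivers the matching lower bound, and taking the supremum over $\mu_0$ yields the stated minimax conclusion.

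The main obstacle is establishing the weighted bound in the first display with no parasitic dependence on $\delta_t$ beyond the polynomial factor $\delta_j(\mu_0)$. As $\delta_t \to 0$ the moment-to-parameter map becomes increasingly ill-conditioned, and one must show that the polynomial dependence on $\delta_j(\mu_0)$ is the true conditioning, with no spurious $\log(1/\delta_t)$ or $\delta_t^{-\alpha}$ corrections leaking in. The specific exponent $1/(2(k+1))$ in $\delta_t = c_2\epsilon_t^{1/(2(k+1))}$ is dictated by balancing the Taylor remainder of the convolved signal inside the ball $\calU_k(\Theta;\mu_0,\epsilon_t)$ against the worst-case conditioning factor $\delta_t^{-(k-r_j)}$, so that the localization error $\epsilon_t$ remains subdominant uniformly in $j$ and $r$. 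I expect the bulk of the technical work to consist in proving this uniform-in-$\mu_0$ Vandermonde-type stability bound for the weighted moment functionals associated with the kernel $K$.
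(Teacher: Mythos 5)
Your lower bound is essentially the paper's argument: a two-point Le Cam construction in which the atoms of one cluster are perturbed so that local moments up to order $r_j-1$ match, giving $\calD_{\mu_0}(\mu_0,\mu_1)\asymp \delta_j(\mu_0)\eta_t^{r_j}$ while the models remain statistically indistinguishable; since the supremum over $\mu_0\in\calU_{k,k_0}(\Theta;r,\delta_t)$ permits choosing a centering measure with one cluster at constant distance from the rest (so $\max_j\delta_j(\mu_0)\gtrsim 1$), this yields the rate $t^{-1/2}$. One caveat: your ``direct Poisson KL computation'' is not available for the compactly supported kernels of Assumption~\ref{ass:kernel}(i), where the supports of $K\star\mu_0$ and $K\star\mu_1$ differ near the boundary and the KL divergence can be infinite. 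The paper instead tensorizes the Hellinger distance (Lemma~\ref{lem:hellinger_bound_for_minimax}) and controls the Taylor remainder via the $h$-regularity condition of Definition~\ref{def:regularKernel}, which bounds $\int (D^{s}\overline K)^2/\sum_j\overline K(\cdot-\epsilon\theta_j)$; this is exactly the ingredient your sketch omits.

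The upper bound is where there is a genuine gap. The paper does \emph{not} prove your intermediate display $\bbE_\mu W_1^{r_j}(\hat\mu_{V_j},\mu_{V_j})\lesssim t^{-1/2}/\delta_j(\mu_0)$ by a per-cluster Taylor expansion of the convolved signal; it bounds the \emph{global} moment error $\bbE M_k^2(\hat\mu_{t,m},\mu)\lesssim t^{-1}$ (Propositions~\ref{prop:mm_rate} and~\ref{prop:mle_gaussian}, unchanged from the separated case and independent of $\delta_t$) and then invokes a purely \emph{deterministic} stability inequality, $\calD_{\mu_0}(\mu,\nu)\leq C\,M_k(\mu,\nu)$ for $\mu,\nu\in\calU_k(\Theta;\mu_0,c\delta^{k+1})$ with $C$ uniform over $\delta\in(0,\delta_0)$ (Theorem~\ref{thm:moment_comparison}(ii), via the clustered-root Ostrowski bound of Proposition~\ref{prop:sharp_poly_stability} proved with Rouch\'e's theorem). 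The factor $\delta_j(\mu_0)$ does not arise as ``a coefficient of size $\asymp\delta_j(\mu_0)$'' in a Taylor expansion of $K\star\mu-K\star\mu_0$ near $\theta_{0j}$ --- the influence of the other clusters on the signal near cluster $j$ is governed by the decay of $K$, not by $\prod_{i\neq j}\|\theta_{0i}-\theta_{0j}\|^{r_i}$. That product appears because $f_\nu(\theta_{i_0})=\prod_i(\theta_{i_0}-\eta_i)$ factors over clusters; i.e., it is a property of the map from global moments (equivalently, polynomial coefficients) to clustered roots. Your own closing paragraph correctly identifies that a ``uniform-in-$\mu_0$ Vandermonde-type stability bound'' is the bulk of the work, but the proposal neither supplies it nor points at the right object: the required lemma is a root-perturbation bound for monic polynomials with roots clustered at scale $\delta$, valid on a ball of radius $c\delta^{k+1}$ in coefficient space --- which is also the true source of the exponent $1/(2(k+1))$ in $\delta_t$, rather than a balance between a Taylor remainder and a conditioning factor $\delta_t^{-(k-r_j)}$. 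Without that lemma the upper bound does not close.
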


Theorem~\ref{thm:local_minimax_risk_nosep} extends the upper
bounds of Theorem~\ref{thm:local_minimax_risk_separated}
to the case where the atoms of $\mu_0$ can approach 
each other at a rate   slower than $\delta_t \asymp t^{-1/4k(k+1)}$.  
This rate is motivated by technical considerations, 
and we do not rule out the possibility
of extending
Theorem~\ref{thm:local_minimax_risk_nosep} to smaller values of~$\delta_t$. 
Theorem~\ref{thm:local_minimax_risk_nosep} also provides 
a lower bound on the minimax estimation risk, but
unlike Theorem~\ref{thm:local_minimax_risk_separated}, 
the lower bound is now taken over the {\it worst-case}  ball
$\calU_k(\domain;\mu_0,\epsilon_t )$
as $\mu_0$ ranges over $\calU_{k,k_0}(\Theta; r,\delta_t)$. 
We discuss this limitation further in Remark~\ref{rmk:MinimaxLowerBoundsDiscussion}. %

To interpret the convergence rate stated in Theorem~\ref{thm:local_minimax_risk_nosep}, 
let us provide an explicit example. 
Consider a two-point source with one ``double loading'',
$$\mu_0 = \frac 1 3 \delta_{-\theta_{01}} + \frac 2 3 \delta_{\theta_{02}},
\quad \text{with } k_0=2, ~~ k = 3, ~~\theta_{01}=-\frac 1 {\log t},~~\theta_{02} = \frac 1 {\log t}.$$
Then, any $\mu \in \calU_k(\Theta; \mu_0,\epsilon_t)$ 
will have one atom, say $\theta_1$, close to $
\theta_{01}$, and two atoms, say $\theta_2,\theta_3$, close to $\theta_{02}$. 
Theorem~\ref{thm:local_minimax_risk_nosep} implies that there exists an estimator
$\hat\mu_{t,m} = (1/k) \sum_{i=1}^k \delta_{\htheta_i}$ such that,
up to relabeling its atoms,
$$\bbE\|\hat \theta_1 - \theta_1\| \lesssim (\log t)^2t^{-1/2},
\quad \bbE\|\hat\theta_i -\theta_i\| \lesssim  \sqrt{\log t}\cdot t^{-1/4},
\quad i=2,3.$$
Meanwhile, Theorems~\ref{thm:global_minimax_risk}--\ref{thm:local_minimax_risk_separated}
cannot imply a convergence rate faster than $t^{-1/6}$ for estimating each atom of $\mu$. 
We provide a second example in Figure~\ref{fig:clustering56}(b).

\subsection{Estimators and Data Analysis}\label{subsec:estimators}

The upper bounds of our main results are achieved adaptively by simple
moment- and likelihood-based methods.
Neither of these estimators requires any explicit  regularization,
which  is a reflection 
of the parametric nature of the problem at hand, at least when $d$ and $k$
are fixed.
We briefly describe the two estimators here, deferring  a thorough description to Sections~\ref{sec:moments}--\ref{sec:mle}.  

Our moment estimator is based on a strategy 
which is widely-used in the mixture modeling literature~\citep{lindsay1989moment,wu2020}. 
Given observations $X_1,\dots,X_m$, the method consists of (i)
constructing a consistent estimator of a collection of moments of $K\star\mu$, 
(ii) deducing consistent estimators for a collection of moments of $\mu$, and finally, 
(iii) using these moment estimates to reconstruct a fitted mixing measure $\hat \mu_n$. 
This general approach has previously been used successfully in parametric
mixture models for which the kernel  admits an orthogonal basis of polynomials, 
such as the Gaussian density
 or other 
exponential families with quadratic variance function~\citep{wei2023}. One of our contributions
is to extend these ideas to generic compactly-supported kernels. 
Furthermore, our step (iii) involves a new reconstruction method
based on elementary symmetric polynomials, which leverages the uniformity
of the underlying mixing measures.  

Our second estimator is the traditional
MLE, which is given by any solution
to the problem
\begin{equation}
\label{eq:MLE}
\argmax_{\mu\in \calU_k(\Theta)} \sum_{i=1}^m \Big(X_i \log(K\star\mu(B_i)) - tK\star\mu(B_i)\Big).
\end{equation}
Although this optimization problem is convex in the intensity parameters $K\star\mu(B_i)$, it is nonconvex
when parametrized by the atoms of $\mu$. We adopt the EM algorithm as a heuristic
for approximating the above maximum, though we do not pursue a theoretical
analysis of its convergence properties. 

A \texttt{python} implementation 
of the EM algorithm and moment estimator for image data is publicly available.\footnote{\href{https://github.com/dlitskevich/poisson-deconvolution}{https://github.com/dlitskevich/poisson-deconvolution}} 
Based on our numerical evidence (cf.\,Appendix \ref{sec:simulations}),  
the EM algorithm  initialized with the method of moments estimator achieves the best performance. 
In \Cref{fig:origami_intro}, we illustrate an application of this method
to an experimental STED dataset of DNA origami samples by \cite{proksch2018multiscale}, and recover the locations of $k = 70$   fluorophores, i.e., light-emitting sources in the probe.
The data is preprocessed according to a procedure 
described in Appendix \ref{app:application}. 
As we discuss further in Section~\ref{sec:applications}, 
the approximately even spacing of fitted atoms in this figure is consistent
with the underlying geometric arrangement of DNA origami structures,
suggesting that our simple estimators can already be effective for such imaging
tasks when the number of atoms $k$ is moderate.
 
\begin{figure}[H]
	\centering
	 \includegraphics[width=0.8 \textwidth, trim={0 0 0 0}, clip]{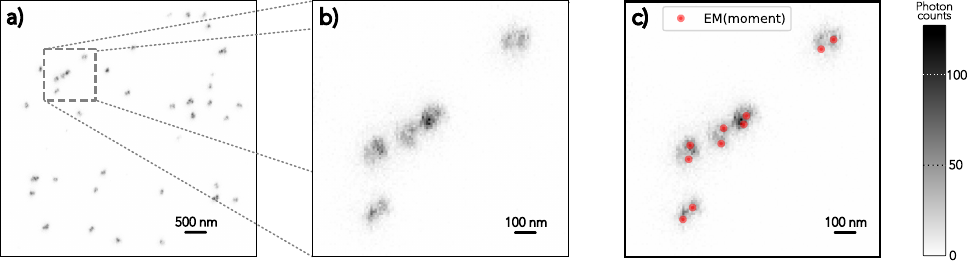}
	 \caption{Experimental STED data of a DNA  sample from \cite{proksch2018multiscale}.  \textbf{a)} Raw STED data measurement ($600\times 600$ pixel) of total width and height $6000$nm ($1$ pixel width and height corresponding to $10$nm) and \textbf{b)} $5$-fold zoom-in.  \textbf{c)} Maximum likelihood estimator computed for $k = 70$ by an EM algorithm (see \Cref{app:EM} for the specific algorithm) initialized with the method of moments estimator for the full image using some image partition and denoising step (red dots, detailed in \Cref{sec:applications} and Appendix \ref{app:application}).}%
	\label{fig:origami_intro}
\end{figure}

\subsection{Connections to Gaussian Mixture Models and Further Implications}
\label{sec:mixtures}
Our Poisson deconvolution  model~\eqref{eq:model} 
bears many connections to the literature on finite Gaussian mixture models, which we now discuss. 
In a $k$-component location Gaussian mixture model, one 
observes i.i.d. random variables
 \begin{equation}
\label{eq:mixture_model}
Y_1,\dots,Y_n \sim K\star \mu,~~\text{for some } \mu\in \calP_k(\Theta),
\end{equation}
where $\calP_k(\Theta)$ is the set of $k$-atomic probability measures
on $\Theta$, and $K$ is the $N(0,\Sigma)$ density, for some known strictly positive definite matrix
$\Sigma \in \bbR^{d\times d}$. The unknown parameter of the model
is the measure $\mu$, which  is typically referred to as a {\it 
finite mixing measure}. 

Gaussian mixture models have a long history in statistics and theoretical 
computer science, and we refer to~\cite{moitra2018,fruhwirth-schnatter2019,chen2023} 
for recent surveys. Closest 
to our work is the literature on {\it parameter estimation}
in finite mixture models,  
where the main goal is to estimate the mixing measure $\mu$. 
The study of optimal convergence rates for this problem
dates back to work of~\cite{chen1995optimal}, and was recently resolved by~\cite{heinrich2018}, 
who established that for any fixed $\mu_0\in \calP_{k_0}(\Theta)$,
and $\epsilon_0 > 0$ sufficiently small, it holds that
\begin{align}
\label{eq:heinrich_kahn}
\inf_{\hat\mu_{n}} %
 \sup_{\substack{\mu\in \calP_k(\domain) \\ W_1(\mu,\mu_0)\leq \epsilon_0}} \bbE_\mu   W_1(\hat\mu_n,\mu)
\asymp n^{-\frac 1 {4(k-k_0-1)-2}},
\end{align}
where the infimum is taken over all estimators of $\mu$ under model~\eqref{eq:mixture_model}. 
\cite{heinrich2018} proved that the above rate is achieved by a
Kolmogorov-Smirnov minimum-distance estimator.
This method has the downside of being  challenging to compute, since it involves a nonconvex optimization
problem, however
\cite{wu2020}   showed that a computationally simple estimator based
on
the method of moments is also minimax optimal. 
These two results were limited to one-dimensional Gaussian mixture models, 
but were extended to general fixed dimension by~\cite{wei2023}, 
and to the high-dimensional regime by~\cite{doss2023}.
\cite{manole2022refined} further showed that, in the one-dimensional case, 
the minimax rate of the above display continues to hold if the Wasserstein distance
is replaced by a stronger loss function, which is of similar nature
as the local Wasserstein divergence~$\calD_{\mu_0}$ introduced in the previous subsection.
In parallel to these developments, a large body of work 
in the theoretical computer science literature, initiated by~\cite{dasgupta1999}, 
was devoted to the study of 
moment-based estimators for Gaussian mixture models in varying degrees of generality;
for instance, see~\cite{belkin2010,moitra2010,kalai2010,anandkumar2012,hsu2013,ge2015}, and references therein.
 The goal in many of these works is not to sharply characterize 
 the minimax rate of estimation itself, but to identify conditions under which
 the minimax rate depends polynomially on the various problem parameters.

There are three structural differences between
our discrete Poisson deconvolution model~\eqref{eq:model} and the mixture
model~\eqref{eq:mixture_model}. %
First, our model 
requires the mixing measure $\mu$ to be uniform, an assumption which
is too restricrive for generic mixture models but is perfectly suited to the imaging
applications we have in mind. 
To the best of our knowledge, 
the minimax rate for parameter estimation in   Gaussian mixture models with uniform weights
has only been derived in the special case $k=2$, where it is known to 
scale as $n^{-1/4}$~\citep{wu2021,ho2022_fellerpaper}.
This rate is polynomially faster 
than the minimax rate for unknown weights, which is $n^{-1/6}$ by equation~\eqref{eq:heinrich_kahn}.
At first glance, this rate improvement 
is not surprising, since the uniformity assumption reduces
the number of parameters of the model. What is surprising, however, 
is that the same conclusion does not carry over when the mixture weights
are known but non-uniform. Indeed,~\cite{ho2022_fellerpaper} showed that the 
minimax rate of estimating a two-component Gaussian mixture model with 
{\it known} mixture weights is
generally $n^{-1/6}$, except in the special case where the weights are equal,
when the aforementioned rate $n^{-1/4}$ holds.
Thus, the improvements enabled by uniformity cannot be explained by a reduction
in the degrees-of-freedom of the model, and are rather due to finer structural
properties of uniform mixture models. %
We return to this point
in Remark~\ref{rem:moments_known_weights}. A similar discrepancy between the 
uniform and non-uniform settings is known to occur 
for Gaussian mixtures when the covariance matrix $\Sigma$
is unknown, both for minimax estimation~\citep{manole2020,ho2022_fellerpaper} 
and minimax detection~\citep{verzelen2017}, but once again, these results are limited
to the case of two-component mixtures.

One of the main technical contributions in this work is to
develop a toolbox for analyzing mixing measures with uniform weights,
even when the number of components is greater than two. 
Our results hinge upon a connection to 
the theory of elementary symmetric polynomials, which 
we develop in Section~\ref{sec:moments};
for other recent uses of such polynomials in statistical
applications, we refer to~\cite{gao2018} and \cite{han2024}. 
Though our results are motivated by the Poisson deconvolution model~\eqref{eq:model}, 
they have immediate implications for mixture modeling, which we state~next.
\begin{corollary}[Minimax rate of estimating uniform Gaussian mixtures]
\label{cor:mixtures}
Fix $d,k\in \NN$. Let the kernel $K$ satisfy Assumption~\ref{ass:kernel}(ii), and let
$\Theta \subseteq \bbR^d$ be a compact set with nonempty interior. 
Let $\calE_n(\Theta)$ be the set of estimators of $\mu\in \calU_k(\Theta)$ under
model~\eqref{eq:mixture_model}. Then, the following assertions hold.
\begin{enumerate}
\item (Global Minimax Risk) It holds that
$$\inf_{\hat\mu_n\in \calE_n(\Theta)} \sup_{\mu\in \calU_k(\Theta)} \bbE_\mu W_1(\hat\mu_n,\mu) 
\asymp n^{-\frac 1 {2k}},$$
where the implicit constants depend on $\Theta, \Sigma,d,k$.
\item (Local Minimax Risk) 
Let $k,k_0,r$ be defined as in Theorem~\ref{thm:local_minimax_risk_nosep} and $\calD_{\mu_0}$ as in \eqref{eq:local_Wasserstein}.  
Then, there exist constants $c_1,c_2 > 0$ such that 
if $\epsilon_n = c_1n^{-1/2k}$ and
$\delta_n = c_2\epsilon_n^{ 1 / {2(k+1)}}$, then it holds that 
$$\inf_{\hat\mu_{n} \in \calE_{n}(\Theta)} 
  \sup_{\mu_0\in \calU_{k,k_0}(\Theta; r, \delta_n)} 
  \sup_{\mu\in \calU_k(\Theta;\mu_0,\epsilon_n)} 
  \bbE_\mu \calD_{\mu_0}(\hat\mu_{n},\mu)
\asymp n^{-\frac 1 {2}},$$
where $c_1,c_2,$ and the implicit constants in the above display
depend on $\Theta, \Sigma,d,k$.  
\end{enumerate}
\end{corollary}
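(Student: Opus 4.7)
The plan is to reduce the Gaussian mixture model~\eqref{eq:mixture_model} to the binned Poisson model~\eqref{eq:model} via Poissonization, and then invoke Theorems~\ref{thm:global_minimax_risk} and~\ref{thm:local_minimax_risk_nosep} with the identification $t = n$. Concretely, I would fix $\gamma > 0$ and choose $m \asymp n^{(d+\gamma)/2}$, then pick an observation window $\Omega = [-R,R]^d$ with $R \asymp \sqrt{\log n}$ large enough that $\mathbb{P}_{Y\sim K\star\mu}(Y \notin \Omega)$ is polynomially small in $n$ uniformly over $\mu \in \calU_k(\Theta)$, and partition $\Omega$ into $m$ congruent cubes $\{B_1,\dots,B_m\}$, which satisfy Assumption~\ref{ass:bins}. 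Given i.i.d.\ samples $Y_1,\dots,Y_n \sim K\star\mu$, form the bin counts $X_i = |\{j \leq n : Y_j \in B_i\}|$. A standard Poissonization step (replacing $n$ by $N \sim \mathrm{Poi}(n)$, which costs only a constant factor in the minimax risk by classical de-Poissonization arguments) yields independent $\mathrm{Poi}(n\cdot K\star\mu(B_i))$ bin counts, matching model~\eqref{eq:model} up to tail mass in $\Omega^c$ which is polynomially small. Applying the MLE from Proposition~\ref{prop:mle_gaussian} then attains rates $n^{-1/2k}$ and $n^{-1/2}$ by Theorems~\ref{thm:global_minimax_risk} and~\ref{thm:local_minimax_risk_nosep}, establishing the upper bounds in parts (i)--(ii).

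\textbf{Lower bounds.} These cannot be transferred directly from the binned Poisson lower bounds, since an i.i.d.\ sample is strictly more informative than any binning. However, the lower bounds in Proposition~\ref{prop:minimax_lower_bound} and in the lower-bound part of Theorem~\ref{thm:local_minimax_risk_nosep} are Le Cam (or Fano) arguments based on controlling chi-squared divergences between Poisson likelihoods, and the same constructions carry through with only cosmetic changes using the tensor identity
\begin{equation*}
\chi^2\!\left((K\star\mu_1)^{\otimes n}, (K\star\mu_2)^{\otimes n}\right) = \bigl(1+\chi^2(K\star\mu_1, K\star\mu_2)\bigr)^n - 1.
\end{equation*}
Since $(1+x)^n - 1 \lesssim nx$ whenever $nx = O(1)$, this produces the same order of divergence as its Poisson counterpart with $t=n$ (the Poisson chi-squared being $e^{t\chi^2(K\star\mu_1, K\star\mu_2)}-1$ in the limit $m\to\infty$). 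Consequently, the same least-favorable constructions used in the Poisson proofs---uniform-weight $k$-atomic measures $\mu_1,\mu_2$ whose Gaussian convolutions agree to sufficiently high order, built via the elementary symmetric polynomial machinery of Section~\ref{sec:moments}---satisfy $\chi^2(K\star\mu_1, K\star\mu_2) = O(1/n)$ when their atoms are separated at scale $n^{-1/2k}$ (resp.\ at the local scale driving Theorem~\ref{thm:local_minimax_risk_nosep}), and Le Cam's inequality yields the global lower bound $n^{-1/2k}$ and, via the corresponding composite construction, the local lower bound $n^{-1/2}$.

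\textbf{Main obstacle.} The principal technical subtlety lies in the lower bound for part (ii): the composite supremum over $\mu_0 \in \calU_{k,k_0}(\Theta; r, \delta_n)$ appearing in Theorem~\ref{thm:local_minimax_risk_nosep} must be preserved after replacing Poisson chi-squared divergences by their i.i.d.\ form, and one must verify that the moment-matching construction keeps $\chi^2(K\star\mu_1, K\star\mu_2)$ of order $1/n$ uniformly across the relevant family of centers $\mu_0$, without adverse interaction with the Gaussian tail bounds used for the truncation to $\Omega$. All remaining steps are routine adaptations of the Poisson analysis with $t \leftrightarrow n$.
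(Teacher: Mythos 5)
Your lower-bound argument coincides in substance with the paper's: the paper establishes exactly this i.i.d.\ transfer in Remark~\ref{rmk:MinimaxLowerBoundsDiscussion} and Appendix~\ref{app:pf:rmk:MinimaxLowerBoundsDiscussion}, reusing the two-point moment-matching construction of Lemma~\ref{lem:constructionHypothesis} (which applies because the Gaussian kernel is $h$-regular for every $h$) and tensorizing the \emph{Hellinger} distance via $H^2(\bP_1^{\otimes n},\bP_2^{\otimes n})=1-(1-H^2(\bP_1,\bP_2))^n$, rather than the $\chi^2$ identity you quote. Since the per-sample bound delivered by Lemma~\ref{lem:constructionHypothesis} is a Hellinger bound, the Hellinger route is the more direct one (your $\chi^2$ version would need the additional equivalence $\chi^2\asymp H^2$ for finite Gaussian mixtures, available from Theorem~\ref{thm:equivalences} or \cite{jia2023}), but this is cosmetic. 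Also note that the composite supremum over $\mu_0$ you single out as the main obstacle is harmless: a lower bound under $\sup_{\mu_0}\sup_{\mu}$ only requires exhibiting a single well-chosen $\mu_0\in\calU_{k,k_0}(\Theta;r,\delta_n)$ and a two-point family inside its ball, exactly as in the Poisson case.

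Where you genuinely diverge from the paper is the upper bound. The paper neither Poissonizes nor bins: it runs the method of moments directly on the i.i.d.\ sample, setting $\hat m_\alpha=\frac1n\sum_{i=1}^n\psi_\alpha(Y_i)$ with the polynomials of Theorem~\ref{thm:moment_polynomials}, obtaining $\bbE|\hat m_\alpha-m_\alpha(\mu)|^2\lesssim n^{-1}$ from a one-line variance bound under sub-Gaussianity of $K\star\mu$, and then invoking the black-box bound of Corollary~\ref{cor:blackbox_bound}. This is shorter and sidesteps both the de-Poissonization bookkeeping and the MLE machinery. Your reduction to model~\eqref{eq:model} is viable in principle, but as written it has a flaw: you let the observation window grow as $R\asymp\sqrt{\log n}$, whereas the constant in Proposition~\ref{prop:mle_gaussian}(i) depends on $\Omega$, so it is not uniform along your sequence of windows. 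The fix is to observe that for the Gaussian kernel no tail capture is needed at all---Proposition~\ref{prop:mle_gaussian}(i) and the underlying equivalence $\|K\star(\mu-\nu)\|_{L^p(\bbR^d)}\asymp\|K\star(\mu-\nu)\|_{L^q(\Omega)}$ of Proposition~\ref{cor:Lp_Omega_Rd} hold for any \emph{fixed} compact $\Omega$ with nonempty interior---so one should simply bin a fixed window and discard the samples falling outside it. With that correction, and a properly stated one-sided Poissonization (e.g.\ retaining $N\wedge n$ points with $N\sim\mathrm{Poi}(n-C\sqrt{n\log n})$ and absorbing the event $\{N>n\}$ into the bounded loss), your route also delivers the stated upper bounds.
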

We provide a proof sketch of Corollary~\ref{cor:mixtures} in Appendix~\ref{subsec:proof:intro_mixtures}. 
Corollary~\ref{cor:mixtures}$(i)$ establishes the global minimax estimation
rate for uniform Gaussian mixture models with any fixed number
of components, recovering the results of~\cite{ho2022_fellerpaper,wu2021}
in the case $k=2$. In general, we find that the rate of convergence is nearly
quadratically faster than that of equation~\eqref{eq:heinrich_kahn} with $k_0=1$ for 
unknown weights. We also find that the rate coincides
with that of Theorem~\ref{thm:global_minimax_risk}, 
when making the natural identification between
the sample size $n$ and the illumination time $t$.

Turning now to the second assertion of Corollary~\ref{cor:mixtures}, consider
the special case where $\mu_0$ is taken to be a fixed
element of $\calU_{k,k_0}(\Theta;r,\delta)$ with
$\delta > 0$ fixed. 
In this case, one has $W_1^{r^*} \lesssim \calD_{\mu_0}$
with $r^* = \max_j r_j$, 
and Corollary~\ref{cor:mixtures}$(ii)$ then implies 
that 
\begin{equation}
\label{eq:step_compare_to_HK}
 \inf_{\hat\mu_{n} \in \calE_{n}(\Theta)}  
  \sup_{\mu\in \calU_k(\Theta;\mu_0,\epsilon_n)}  \bbE_\mu  \big[W_1(\hat\mu_n,\mu) \big]
  \lesssim n^{-\frac 1 {2r^*}}.
\end{equation}
Notice that for a given number $k_0$ of clusters, 
the maximal possible value of $r^*$ is $k-k_0+1$. Therefore, 
when $\delta $ is held fixed, the following upper bound always holds true: 
\begin{equation}
\label{eq:compare_to_HK}
\inf_{\hat\mu_{n} \in \calE_{n}(\Theta)}  
  \sup_{\mu\in \calU_k(\Theta;\mu_0,\epsilon_n)} 
  \bbE_\mu \big[W_1(\hat\mu_{n},\mu)\big]
\lesssim n^{-\frac 1 {2(k-k_0+1)}}.
\end{equation}
When written in this form, equation~\eqref{eq:compare_to_HK}
is directly comparable to the minimax rate
for non-uniform Gaussian mixtures in equation~\eqref{eq:heinrich_kahn}, 
which has the same qualitative behavior but is essentially quadratically
slower. As in the global minimax bounds discussed earlier, this quadratic
improvement is enabled by the uniformity restriction. 
However, our results   paint a   finer picture. First,
equation~\eqref{eq:step_compare_to_HK} shows that the upper bound
$n^{-\frac 1 {2(k-k_0+1)}}$ can be replaced by the generally faster rate  
$n^{-\frac 1 {2r^*}}$. Second, Corollary~\ref{cor:mixtures}$(ii)$
is stated under the divergence $\calD_{\mu_0}$, which implies
heterogeneous rates of convergence for the atoms of $\hat\mu_n$ which
cannot be deduced from the Wasserstein distance. Finally, 
Corollary~\ref{cor:mixtures}$(ii)$ allows for the $k_0$ clusters to approach each other
at a vanishing rate, a setting which is not captured
by equation~\eqref{eq:heinrich_kahn}. 

A second distinction between the signal recovery model~\eqref{eq:model}
and the mixture model~\eqref{eq:mixture_model} lies in the fact that the density $K\star\mu$
is only sampled (with Poisson noise) over a compact domain $\Omega$ 
in the former.
Inference for $\mu$ thus has to be performed based on a noisy realization of the 
truncated intensity function $K\star\mu|_\Omega$, a situation which is reflective
of realistic applications. While this truncation has no effect
for compact kernels satisfying Assumption~\ref{ass:kernel}(i), it 
has a nontrivial effect for the Gaussian kernel. 
We will indeed see that the presence of truncation causes the usual method of moments
to be  inconsistent for the signal recovery model with Gaussian kernel, 
in sharp contrast to Gaussian finite mixture models, where the method of moments
is well-known to be minimax optimal~\citep{wu2020}. 
While it may be possible to derive a modified moment
estimator tailored to the Gaussian kernel, we take a different
approach and analyze the maximum likelihood estimator (MLE). 
Our analysis of the MLE  will rely on 
stability bounds which relate the Wasserstein distance
over the space of  mixing measures to the $L^2(\Omega)$ 
distance between the corresponding mixture densities, which
in turn can be controlled using empirical process theory arguments. 
To obtain such a stability bound, one of our main technical observations will be that 
the $L^p(\Omega)$ and $L^p(\bbR^d)$ distances are  equivalent over the space 
finite of Gaussian mixture densities---a fact which appears to be new, and possibly of independent
interest. Concretely, we will show in Proposition~\ref{cor:Lp_Omega_Rd} for any $p, q \in [1,\infty]$ 
and any set $\Omega \subseteq \bbR^d$ with non-empty interior and bounded $\Theta \subseteq \RR^d$ that %
\begin{equation}
\label{eq:Lp_equivalence}
\|K\star(\mu-\nu)\|_{L^p(\bbR^d)} \asymp \|K\star(\mu-\nu)\|_{L^q(\Omega)},
~~\text{for all } \mu,\nu\in \calP_k(\Theta), 
\end{equation}
where $K$ is a Gaussian kernel satisfying Assumption~\ref{ass:kernel}$(ii)$, 
and the implicit constants in the above display depends on $\Omega$, $\Theta$, $K$ and, in particular, on $k$.

Our proof of equation~\eqref{eq:Lp_equivalence} also implies several other identities
between statistical divergences over the space of Gaussian mixtures densities, which
are not all needed for our analysis but have been sought-after in recent literature. 
For instance,~\cite{jia2023} recently showed
that the squared Hellinger, Kullback-Leibler, and $\chi^2$ distances
are equivalent over the space of Gaussian mixture densities (even for mixing
measures which are not finite\footnote{See also~\cite{doss2023} for analogous results
in the case
of finite mixing measures}), and their work posed the open question
of whether the Total Variation (TV) distance and Hellinger distances are also equivalent. 
Our work turns out to show that this is indeed the case, 
at least for finite mixing measures.
In fact, these metrics are even equivalent to the $L^\infty(\bbR^d)$ distance,
as shown next.
\begin{corollary}[Equivalence of TV and Hellinger for finite Gaussian mixtures]
\label{cor:TV_Hellinger_equiv}
Let $k,d\in \NN$, and let $\Theta \subseteq \bbR^d$ be a compact set. %
Let $K$ be the $N(0,\Sigma)$ density, for a positive definite matrix
$\Sigma \in \bbR^{d\times d}$. Then, 
for all $\mu,\nu\in \calP_k(\Theta)$,
$$\mathrm{TV}(K\star\mu,K\star\nu) \asymp H(K\star \mu,K\star\nu) 
\asymp \|K\star(\mu-\nu)\|_{L^\infty(\bbR^d)},
$$
where the implicit constants depend only on $\Theta,\Sigma,d,k$. 
\end{corollary}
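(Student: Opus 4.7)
The plan is to combine Proposition~\ref{cor:Lp_Omega_Rd}, which gives the equivalence of $L^p$ norms of $K\star(\mu-\nu)$ over $\bbR^d$ for finite Gaussian mixtures, with standard inequalities relating the total variation and Hellinger distances. Writing $h = K\star(\mu-\nu)$ throughout, I first establish $\mathrm{TV}(K\star\mu, K\star\nu) \asymp \|h\|_{L^\infty(\bbR^d)}$: by the $L^1$ representation of total variation, $\mathrm{TV}(K\star\mu, K\star\nu) = \tfrac{1}{2}\|h\|_{L^1(\bbR^d)}$, and applying Proposition~\ref{cor:Lp_Omega_Rd} with $p = 1$, $q = \infty$, and $\Omega = \bbR^d$ (which has non-empty interior) yields the equivalence with constants depending only on $\Theta, \Sigma, d, k$. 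Next, Le Cam's classical bound $\mathrm{TV}(f, g) \leq \sqrt{2}\, H(f, g)$ immediately gives $H \gtrsim \mathrm{TV} \asymp \|h\|_{L^\infty}$.

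The remaining and main task is the reverse bound $H \lesssim \|h\|_{L^\infty}$, or equivalently $H \lesssim \mathrm{TV}$. The naive estimate $H^2 \leq 2\mathrm{TV}$ is not tight enough, as it only yields $H \lesssim \mathrm{TV}^{1/2}$. To sharpen this, I would start from the identity $H^2 = \int h^2/(\sqrt{f}+\sqrt{g})^2\,dx$ and the inequality $(\sqrt{f}+\sqrt{g})^2 \geq (f+g)/2$, which reduces the task to bounding $\int h^2/(f+g)\, dx$ by a constant multiple of $\|h\|_{L^\infty}^2$. A key observation is that, for mixing measures supported in the compact set $\Theta$, the density $f+g$ enjoys a uniform Gaussian lower bound $f(x)+g(x) \geq c_1 e^{-c_2\|x\|^2}$, and $h$ itself has Gaussian decay, permitting a bulk--tail decomposition at some radius $R$: on the bulk $B_R$, one has $\int_{B_R} h^2/(f+g)\,dx \leq c_R^{-1}\|h\|_{L^2(\bbR^d)}^2 \asymp c_R^{-1}\|h\|_{L^\infty(\bbR^d)}^2$ by a further use of Proposition~\ref{cor:Lp_Omega_Rd}, while on $B_R^c$ the integrand is controlled by an integrable Gaussian envelope.

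The main obstacle is that a direct balance of the bulk and tail contributions over $R$ yields only a sub-optimal exponent $H \lesssim \|h\|_{L^\infty}^{\alpha}$ with some $\alpha < 1$ depending on the eigenvalue ratio of $\Sigma$. To close this gap, I plan to invoke the equivalence $H^2 \asymp \chi^2$ over Gaussian mixtures from~\cite{jia2023} and reduce the problem to $\chi^2(K\star\mu, K\star\nu) \lesssim \mathrm{TV}^2(K\star\mu, K\star\nu)$, which I would prove via a compactness argument on the parameter space. Indeed, the ratio $\chi^2/\mathrm{TV}^2$ is continuous on $\calU_k(\Theta)^2 \setminus \{\mu = \nu\}$, and a Taylor expansion of both $\chi^2$ and $\mathrm{TV}^2$ in the direction of approach shows that they scale at the same quadratic rate near the diagonal. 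Together with the compactness of $\calU_k(\Theta)^2$, this should imply boundedness of $\chi^2/\mathrm{TV}^2$ by a constant depending only on $\Theta, \Sigma, d, k$, thereby yielding the reverse bound and completing the chain $\mathrm{TV} \asymp H \asymp \|h\|_{L^\infty}$.
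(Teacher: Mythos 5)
Your steps for the easy direction are fine: the identity $\mathrm{TV}(K\star\mu,K\star\nu)=\tfrac12\|K\star(\mu-\nu)\|_{L^1(\bbR^d)}$, the application of Proposition~\ref{cor:Lp_Omega_Rd} with $\Omega=\bbR^d$ to get $\|\cdot\|_{L^1}\asymp\|\cdot\|_{L^\infty}$, and Le Cam's bound $\mathrm{TV}\lesssim H$ together give $\|K\star(\mu-\nu)\|_{L^\infty}\lesssim H$. That matches the paper.

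The gap is in your argument for the reverse direction $H\lesssim\|K\star(\mu-\nu)\|_{L^\infty}$. You correctly identify (and then abandon) the bulk--tail decomposition as yielding a sub-optimal exponent, and you then propose to deduce $\chi^2\lesssim\mathrm{TV}^2$ from compactness of $\calP_k(\Theta)^2$ plus continuity of the ratio off the diagonal, with the diagonal controlled by ``a Taylor expansion \ldots in the direction of approach.'' This last step conceals the entire difficulty. The set of directions of approach to the diagonal is not a single compact cone: it includes degenerate limits in which several atoms of $\mu$ and $\nu$ collapse onto the same point, and in those regimes both $\chi^2$ and $\mathrm{TV}^2$ vanish at a variable order --- up to $2(2k-1)$ in the parameter perturbation, not at a fixed quadratic rate. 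Establishing that the ratio remains bounded \emph{uniformly over all such degenerations} is precisely the hard analytic content of the problem; the delicate coarse-graining arguments of \cite{heinrich2018} and the moment-tensor analysis of \cite{doss2023} exist because a bare compactness argument does not close this loop. Your example of a ``Taylor expansion showing quadratic scaling'' works pointwise at a generic $\mu_0$ with $k$ distinct atoms, but the limit is not uniform as you let the base point itself degenerate.

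The paper sidesteps this entirely by citing the explicit bound $H(K\star\mu,K\star\nu)\lesssim M_{2k-1}(\mu,\nu)$ from \citet[Theorem~4.2]{doss2023} (extended to general $\Sigma$ by a change of variables), then chaining this with Theorem~\ref{thm:momentL2-bounds_all}$(i)$ and Proposition~\ref{cor:Lp_Omega_Rd} to get $H\lesssim M_{2k-1}\lesssim\|K\star(\mu-\nu)\|_{L^2(\bbR^d)}\asymp\|K\star(\mu-\nu)\|_{L^\infty(\bbR^d)}$. In other words, the moment distance $M_{2k-1}$ is exactly the invariant that uniformizes the degeneration rate at the diagonal. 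If you want to repair your proposal, the cleanest fix is to replace the compactness argument with a citation to that result (which the paper proves you may use, as the argument is explicit and non-asymptotic), rather than attempting to re-derive a version of it via soft compactness.

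One small side note: you write $\calU_k(\Theta)^2$ in the compactness argument, but the statement is over $\calP_k(\Theta)$, which also includes non-uniform mixing measures; the paper's proof correctly works at this level of generality.
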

Corollary~\ref{cor:TV_Hellinger_equiv} resolves the open problem of~\cite{jia2023}
for finite mixing measures, though we  emphasize
that extending this result to infinite mixing measures 
would likely require arguments of a different nature.
The proof appears in Appendix~\ref{app:pfs_intro}, building upon results
which will be developed in Section~\ref{sec:mle}. 

Finally, we discuss a third aspect of the Poisson deconvolution model~\eqref{eq:model}
which differs from mixture models: the presence of binning. When the number of bins $m$ is sufficiently large with respect
to the illumination time $t$, 
the discretization effect due to binning is of low order, and can effectively be ignored
in our analysis; this is the perspective that we have taken in 
Theorems~\ref{thm:global_minimax_risk}--\ref{thm:local_minimax_risk_nosep}, 
where we assumed that $m$ is greater than a polynomial of $t$. 
On the other hand, we provide an analysis of the MLE
in Section~\ref{sec:mle_bounded} when $m$ is fixed, 
in which case the effect of discretization plays a significant role.

\subsection{Outline} 

The manuscript is structured as follows. In \Cref{sec:moments} we formulate and analyze the method of moments estimator. \Cref{sec:mle} is concerned with the statistical analysis of the maximum likelihood estimator (MLE) for $m\to \infty$.  \Cref{sec:minimaxLowerBound} proves minimax lower bounds for our statistical model for regular kernels. In \Cref{sec:mle_bounded} we establish novel identifiability results based for finitely many bins, i.e., for $m$ bounded and confirm consistency of the MLE. In \Cref{sec:applications} we apply our methodology to STED microscopy data. %
We close in \Cref{sec:discussion}  with a discussion of possible extensions of our work. Nearly all proofs, additional numerical simulations and additional technical results are relegated to different appendices for a more streamlined exposition.

\subsection{Notation}\label{subsec:notation}

Throughout the manuscript we employ the following notation. We write $\lambda$ to denote the Lebesgue measure on $(\RR^d, \calB^d)$. We denote the collection of (resp.\ probability) measures on a Borel measurable subset $\calX\subseteq \RR^d$ by $\calM(\calX)$ (resp.\ $\calP(\calX)$), Further, we use the non-standard notation $\calM_k(\calX)$ (resp.\ $\calP_{k}(\calX)$) to denote (resp.\ probability) measures with at most $k\in \NN$ support points in $\calX$. Given a measurable map $f\colon \calX\to \calY$ between measurable spaces, the push-forward of a measure $\mu \in \calP(\calX)$ under $f$ is defined as $f_{\#}\mu\coloneqq \mu(f^{-1}(\cdot))$. 
We denote by $B(x,r)$ and $B_\infty(x,r)$ the open ball with center $x\in \RR^d$ and radius $r>0$ under the Euclidean and maximum norm, respectively. 
For a measure $\mu\in \calM(\KK)$ for $\KK\in \{\RR,\CC\}$ we denote the $\alpha$-th moment by $m_\alpha(\mu)\coloneqq  \int x^\alpha \dif \mu(x)$ for $\alpha\in \NN_0$ if it exists. Similarly, for a multivariate measure $\mu \in \calM(\RR^d)$ for $d\geq 1$ and a multi-index $\alpha=(\alpha_1, \dots, \alpha_d)\in \NN_0^d$ we denote the $\alpha$-th moment of $\mu$ by $m_\alpha(\mu)\coloneqq \int x^\alpha \dif \mu(x) = \int \prod_{i = 1}^{d} x^{\alpha_i}\dif \mu(x)$ provided it exists. 
For a random variable $X\sim \mu$ on $\RR^d$ we also interchangeably use the notation
$\EE[X], \EE_{\mu}[X],$ or $\EE_{X\sim \mu}[X]$ to denote its expectation (vector). 
Moreover, given two probability measures $\mu, \nu$ on $\RR^d$, we define
the $p$-Wasserstein distances for $p \in [1, \infty]$ by
\begin{align}\label{eq:def_Wasserstein}
	W_p(\mu, \nu) \coloneqq \begin{cases}\inf_{\pi\in\Pi(\mu, \nu)}%
	\left(\int \|x-y\|^p \dif \pi(x,y)\right)^{1/p} & \text{ if } p \in [1,\infty),\\
	\inf_{\pi\in \Pi(\mu, \nu)} \mathrm{ess\, sup}_{(x,y)\in \supp(\pi)}\|x-y\| &\text{ if } p = \infty,
	\end{cases} 
\end{align}
where $\Pi(\mu, \nu)$ denotes the set of couplings between $\mu$ and $\nu$.  
The Hausdorff distance between $A,B\subseteq \RR^d$ is defined as 
$d_H(A,B) \coloneqq \max\left\{ \textstyle \sup_{x \in A} \inf_{y\in B} \|x-y\|, \sup_{y\in B} \inf_{x \in A} \|x-y\| \displaystyle \right\}$, 
which we may use to quantify the difference between $\supp(\mu)$ and $\supp(\nu)$ for $\mu, \nu \in \calU_k(\RR^d)$. If $\mu$ and $\nu$ are
 also absolutely continuous with respect to some reference measure $\rho$ (often $\lambda$), with respective densities $f = d\mu/d\rho$ and 
$g = d\nu/d\rho$, and $\Omega\subseteq \RR^d$ with $\xi\in \calM(\Omega)$ we denote the $L^p(\Omega, \rho, \xi)$-distance $\|\mu - \nu\|_{L^p(\Omega, \rho, \xi)}  = (\int_{\Omega} |f - g|^p \dif \xi)^{1/p}$ for $p\in [1,\infty)$ and $\|\mu - \nu\|_{L^\infty(\Omega, \rho, \xi)}  = \textup{ess sup}_{x\in \Omega} |f(x) - g(x)|$ where null-sets are taken with respect to $\xi$. We abbreviate $L^p(\Omega, \rho, \xi)$ by  $L^p(\Omega)$ (resp.\ $\ell^p(\Omega)$) when $\rho = \lambda$ and $\xi = \lambda|_{\Omega}$ under $\lambda(\Omega)>0$ (resp.\ $\xi = \sum_{\omega \in \Omega} \delta_{\omega}$ when $|\Omega|<\infty$). Further, the squared Hellinger distance and 
$\chi^2$-divergence respectively are defined by 
$H^2(\mu, \nu) = \frac{1}{2}\int (\sqrt f -\sqrt g)^2\, \dif \rho$ and $
\chi^2(\mu,\nu) = \int (f-g)^2/g\,\dif\rho.$ 
By abuse of notation, may also plug $f$ and $g$ into the respective distances above. 
Given a field $\bbK$, we denote the ring of $m$-variable polynomials with coefficients in $\bbK$, with respect to indeterminates
$x_1,\dots,x_m$, by $\bbK[x_1,\dots,x_m]$. We also denote by $\bbK[x_1,\dots,x_m]_k$ the set of polynomials in $\bbK[x_1,\dots,x_m]$ of degree at most $k$. The collection of permutations on $\{1, \dots, l\}$ for $l\in \NN$ is denoted by $\calS(l)$. For a metric space $(X,d)$, a subset $A \subseteq X$, and $u > 0$, 
we denote by $N(u,A,d)$ the $u$-covering number of $A$ with respect to the metric
$d$, which is defined as the smallest integer $N$ for which there
exists $x_1,\dots,x_N \in X$ such that for all $a \in A$, there exists $i\in\{1,\dots,N\}$
such that $d(a,x_i) \leq u$. %
When $\Omega\subseteq \RR^d$ with $\lambda(\Omega)>0$ and $X = L^p(\Omega)$ for some $p\in [1,\infty]$ and $A$ is a Borel-measurable subset of $X$, 
we denote by
$N_{[]}(u,A,L^p(\Omega))$ the $u$-bracketing number of $A$ under the $L^p(\Omega)$-norm, 
which is defined as the smallest integer $N$ for which there exist functions
$\ell_1,u_1,\dots,\ell_N,u_N\in L^p(\Omega)$
 such that $ 0 \leq u_i - \ell_i$ with $\|u_i - \ell_i\|_{L^p(\Omega)} < u$ for all $i$, and such that
  for any $f\in A$, there exists $i\in\{1,\dots,N\}$
for which $\ell_i\leq f \leq u_i$ over $\Omega$. Finally, the probability distribution of the observed vector $(X_1,\dots,X_m)$ for our model \eqref{eq:model} is denoted by 
$\bP_\mu^t = \otimes_{i=1}^m P^{X_i}$.

\section{The Method of Moments Estimator}
\label{sec:moments}

We begin with the analysis of a moment-based estimator. 
As we discussed in Section~\ref{sec:intro},
moment-based methods are widely-used in the mixture modeling literature
due to their computational feasability and minimax optimality. We will see that 
these properties are shared by a suitable moment method for the Poisson deconvolution model~\eqref{eq:model}.

In short, the method of moments consists of three steps: 
First, estimating finitely many moments of the convolution $K\star\mu$ based on the random data, 
second, using these estimates to form unbiased estimators
of the underlying mixing distribution $\mu$,
and third, recovering the measure $\mu$ based on the estimated moments. 
Crucial for the last step is that the discrete mixing measure can be uniquely identified based on finitely many moments. To this end, we establish in \Cref{subsec:idenMoment} 
a series of moment identifiability results
 for the particular setting where $\mu$ is assumed to be a uniform distribution on $k\in \NN$ points. Further, we provide in \Cref{subsec:method_of_moments} a procedure to recover moments of the mixing distribution $\mu$ based on moments of the mixture $K\ast \mu$ and the kernel $K$. In conjunction with a novel quantitative moment comparison bound (\Cref{subsec:momentCompBound}), we then characterize the statistical convergence rate of our method of moment estimators in \Cref{subsec:method_of_moments}. 

Since our work is motivated by imaging applications, 
we will be particularly interested in the situation where the underlying observation domain $\Omega$
is two-dimensional. In this case, we will prefer to view
$\Omega$ as a subset of the complex plane $\bbC$ rather than a subset of $\bbR^2$. 
This perspective will turn out to be fruitful for computational purposes,
 as the method of moment estimator will then take on a particularly concise form.  

\subsection{Identifiability from Moments}\label{subsec:idenMoment}

We first recall a well-known moment identifiability result
for   univariate distributions supported on finitely many points, which is due to \cite{lindsay1989moment};
see also~\cite{wu2020}.

\begin{lemma}[Moment identifiability]\label{lem:momentIdentifiability_RealCompl}
Given $k\in \NN$, two probability measures $\mu, \nu \in \calP_k(\CC)$ are identical if and only if $m_\alpha(\mu) = m_\alpha(\nu)$ for each $\alpha\in \{1, \dots, 2k-1\}$. 
\end{lemma}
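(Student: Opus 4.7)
The plan is to reduce this lemma to a classical Vandermonde invertibility argument. The forward direction is immediate: if $\mu = \nu$, then all moments of every order coincide. For the reverse direction, I would proceed by contraposition: assume $\mu \neq \nu$ and exhibit some $\alpha \in \{1, \ldots, 2k-1\}$ at which the moments differ.

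Writing $\mu$ and $\nu$ as atomic measures each supported on at most $k$ points of $\CC$, I would enumerate the union of their supports as distinct points $\zeta_1, \ldots, \zeta_s \in \CC$ with $s \leq 2k$. Then the signed measure $\mu - \nu$ takes the form $\sum_{i=1}^s c_i \delta_{\zeta_i}$ for real coefficients $c_1, \ldots, c_s$, not all of which are zero (since $\mu \neq \nu$). The fact that $\mu$ and $\nu$ are both probability measures gives $\sum_{i=1}^s c_i = 0$, which is the ``$\alpha = 0$'' moment equation, while the hypothesis of equal first $2k-1$ moments gives $\sum_{i=1}^s c_i \zeta_i^\alpha = 0$ for $\alpha = 1, \ldots, 2k-1$.

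Together these are $2k$ homogeneous linear equations in $s \leq 2k$ unknowns. Restricting to the subsystem indexed by $\alpha = 0, 1, \ldots, s-1$, the coefficient matrix is the square Vandermonde matrix $V = (\zeta_i^\alpha)_{0 \leq \alpha \leq s-1,\, 1 \leq i \leq s}$, with determinant $\prod_{1 \leq i < j \leq s}(\zeta_j - \zeta_i)$, which is nonzero because the $\zeta_i$ are pairwise distinct. Hence $c_1 = \cdots = c_s = 0$, contradicting $\mu \neq \nu$.

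There is essentially no hard step in this argument — it rests entirely on invertibility of Vandermonde matrices over $\CC$. The only bookkeeping point is to verify that $s \leq 2k$, so that the $2k$ available moment equations (including the trivial $\alpha = 0$ normalization) contain a square Vandermonde subsystem of size $s \times s$; this is ensured by the crude union bound $s \leq k_\mu + k_\nu \leq 2k$ on the cardinality of the combined support. It is worth noting that the threshold $2k-1$ is tight: two distinct measures in $\calP_k(\CC)$ can agree on all moments of order $0, 1, \ldots, 2k-2$, as witnessed by appropriate pairs of Gauss-type quadrature rules.
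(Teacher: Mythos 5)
Your proof is correct, and it is the same underlying argument as the one the paper invokes: the paper does not prove this lemma directly but cites Lemma~4.1 of Wu and Yang (2020), a polynomial-interpolation argument over $\bbR$ that they say ``readily extends to $\CC$.'' Existence and uniqueness of the degree-$(s{-}1)$ interpolating polynomial at $s$ distinct nodes is exactly the invertibility of the $s\times s$ Vandermonde matrix you write down, so your linear-algebra phrasing and the paper's interpolation phrasing are two faces of the same coin; your version makes the ``extends to $\CC$'' step completely explicit, since the Vandermonde determinant $\prod_{i<j}(\zeta_j-\zeta_i)$ is nonzero for any pairwise-distinct complex nodes. Your bookkeeping ($s\le 2k$, hence the $s$ rows $\alpha=0,\dots,s{-}1$ are all among the available moment equations once $m_0$ is included) is right, and your remark on tightness of the $2k-1$ threshold is also correct by a degree-of-freedom count on $\RR\subset\CC$.
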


A proof of this result over $\bbR$ can be found in Lemma 4.1 of \cite{wu2020}, 
using polynomial interpolation techniques which readily extend to $\CC$. %
The quantity $2k-1$ can naturally be interpreted as the number
of degrees of freedom of a $k$-atomic measure,
consisting of  $k$  support points
and $k-1$ unconstrained probability masses.
In the following, we provide a refinement of Lemma \ref{lem:momentIdentifiability_RealCompl} for measures which are uniformly distributed on $k$ points, and show that $k\in \NN$ moments suffice for identifiability. %

\begin{lemma}[Moment identifiability for uniforms]\label{lem:moment_identifiability}
Given $k \in \NN$, two probability measures $\mu,\nu\in \calU_{k}(\CC)$ are identical if and only if $m_\alpha(\mu)= m_\alpha(\nu)$ for each $\alpha \in \{1, \dots, k\}$. 
\end{lemma}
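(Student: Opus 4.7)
The ``only if'' direction is immediate. For the ``if'' direction, write $\mu = \frac{1}{k}\sum_{i=1}^k \delta_{\theta_i}$ and $\nu = \frac{1}{k}\sum_{i=1}^k \delta_{\eta_i}$ for some (not necessarily distinct) $\theta_1,\dots,\theta_k,\eta_1,\dots,\eta_k \in \CC$, and observe that the hypothesis $m_\alpha(\mu)=m_\alpha(\nu)$ for $\alpha = 1,\dots,k$ translates into equality of the power sums
\[
p_\alpha(\theta_1,\dots,\theta_k) \;\coloneqq\; \sum_{i=1}^k \theta_i^\alpha \;=\; \sum_{i=1}^k \eta_i^\alpha \;=\; p_\alpha(\eta_1,\dots,\eta_k), \qquad \alpha = 1,\dots,k.
\]
The plan is to deduce equality of the \emph{elementary symmetric polynomials} $e_\alpha$ in both collections from this, and then use unique factorization in $\CC[z]$.

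The tool is Newton's identities, which state that for every $1 \leq n \leq k$,
\[
n\, e_n \;=\; \sum_{\alpha=1}^{n} (-1)^{\alpha-1}\, e_{n-\alpha}\, p_\alpha,
\]
with the convention $e_0=1$. Since the right-hand side for $n$ depends only on $p_1,\dots,p_n$ and on $e_1,\dots,e_{n-1}$, an easy induction on $n$ shows that $e_n(\theta_1,\dots,\theta_k) = e_n(\eta_1,\dots,\eta_k)$ for every $n \in \{1,\dots,k\}$. Crucially, the divisibility by $n$ poses no problem in characteristic zero, which is where the uniform-weight assumption pays off: it is exactly this reduction from $2k-1$ to $k$ degrees of freedom that makes the first $k$ moments sufficient.

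Equality of $e_1,\dots,e_k$ in both $k$-tuples means that the monic polynomials
\[
P_\theta(z) \;=\; \prod_{i=1}^k (z-\theta_i) \;=\; \sum_{\alpha=0}^{k} (-1)^\alpha e_\alpha(\theta) z^{k-\alpha}, \qquad P_\eta(z) \;=\; \prod_{i=1}^k (z-\eta_i),
\]
coincide. By unique factorization in $\CC[z]$, the roots of $P_\theta$ and $P_\eta$ agree as multisets, i.e.\ there exists $\sigma \in \calS(k)$ with $\eta_i = \theta_{\sigma(i)}$ for every $i$. Hence $\mu=\nu$. I do not expect any genuine obstacle; the only point worth noting is that Newton's identities must be applied in the recursive form above (solving for $e_n$ given $p_1,\dots,p_n$), rather than the version that additionally involves $p_{n+1},\dots$, since we only have access to the first $k$ power sums.
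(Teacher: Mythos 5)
Your proof is correct, but it takes a different final step than the paper. The paper's proof also starts from the fact that the first $k$ power sums generate the ring of symmetric polynomials (citing Macdonald), but it uses this only to conclude that matching moments up to order $k$ forces matching moments up to order $2k-1$, and then finishes by invoking the preceding Lemma~\ref{lem:momentIdentifiability_RealCompl} (Lindsay's identifiability of $k$-atomic measures from $2k-1$ moments, proved via polynomial interpolation). You instead close the argument internally: Newton's identities in the recursive form $n e_n = \sum_{\alpha=1}^n (-1)^{\alpha-1} e_{n-\alpha} p_\alpha$ give equality of $e_1,\dots,e_k$, Vieta's formulas then identify the two monic polynomials $\prod_i(z-\theta_i)$ and $\prod_i(z-\eta_i)$, and unique factorization in $\CC[z]$ identifies the root multisets. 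This is exactly the machinery the paper deploys later (equations~\eqref{eq:vieta_formula}--\eqref{eq:Newton_identity} and Lemma~\ref{lem:moment_to_measure}), so your route is consistent with the paper's toolbox; it is more self-contained (no appeal to the $2k-1$-moment result) at the cost of being slightly longer, while the paper's version is shorter given that Lemma~\ref{lem:momentIdentifiability_RealCompl} is already on the table. Your remarks about characteristic zero and about using the form of Newton's identities that solves for $e_n$ from $p_1,\dots,p_n$ only are both apt.
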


\begin{remark}
\label{rem:moments_known_weights}
Once again, it is tempting to interpret Lemma~\ref{lem:moment_identifiability} 
through a degrees-of-freedom argument, by noting that a  uniform
measure on $k$ support points only has $k$ degrees of freedom, and should hence
be identifiable from $k$ moments. 
Interestingly, however, uniformity of the weights plays a
crucial for the above identifiability result: If the weights were non-uniform but fixed, 
 the space of measures would also admit $k$ degrees of freedom, but the first $k$ moments 
 would not sufficient for identifiability. For instance,   the measures $(3/4)\delta_{-1} + (1/4) \delta_{3}$ and $(3/4)\delta_{1} + (1/4)\delta_{-3}$ on $\RR$ admit identical first and second moments but are clearly different. 
\end{remark}

Lemma \ref{lem:moment_identifiability} is implicit in the work of~\cite{gao2018}, 
and was already shown by \citet[Lemma 4]{zaheer2017deep} for the setting where $\mu, \nu \in \calU_k([0,1])$. We provide a simple proof which naturally extends to $k$-uniform measures on  $\CC$. 

\begin{proof}[Proof of Lemma \ref{lem:moment_identifiability}]
First note for $\alpha>k$ that $m_\alpha(\mu) =\frac{1}{k} \sum_{i = 1}^k x_i^\alpha$ is a symmetric\footnote{This means that the sum $\sum_{i = 1}^k x_i^p$ is invariant with respect to permutations of its variables.} polynomial in $\{x_1, \dots, x_k\}$. Since the ring of symmetric polynomials with rational coefficients is equal to the rational polynomial ring $\QQ[\sum_{i = 1}^k x_i^1, \dots, \sum_{i = 1}^k x_i^k]$ \citep[pp.24]{macdonald2004}, there exists an algebraic representation of  $m_\alpha(\mu)$ in terms of the moments $m_1(\mu), \dots, m_k(\mu)$. Hence, matching moments  up to order $k$  of $\mu, \nu\in \calU_k(\CC)$  is equivalent to matching moments up to order $2k-1$. By Lemma \ref{lem:momentIdentifiability_RealCompl} the latter is equivalent to equality of $\mu$ and $\nu$.  
\end{proof}

Extending upon the previous result for the real line and the complex plane, we also state an identifiability result for uniform distributions in the multivariate setting in $\RR^d$ for $d\in \NN$. 

\begin{lemma}[Multivariate moment identifiability for uniforms]\label{lem:multivariateIdentifiability}
 For $d,k\in\NN$, two probability measures $\mu, \nu\in \calU_{k}(\RR^d)$ are identical if $m_\alpha(\mu) = m_{\alpha}(\nu)$ for each $\alpha \in \NN^d_0$ with $|\alpha|\leq k$. 
	\end{lemma}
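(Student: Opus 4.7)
The plan is to reduce the claim to the univariate moment identifiability result of Lemma \ref{lem:moment_identifiability} by means of a standard Cramér-Wold projection argument. The key observation is that any linear projection of a $k$-atomic uniform measure on $\RR^d$ remains a $k$-atomic uniform measure on $\RR$ (allowing repetition among atoms, which is permitted by the definition of $\calU_k$), and its univariate moments are determined by the multivariate mixed moments of the original measure up to the same total degree.

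Concretely, write $\mu = \frac 1 k \sum_{i=1}^k \delta_{\theta_i}$ and $\nu = \frac 1 k \sum_{i=1}^k \delta_{\eta_i}$. For any direction $u\in \RR^d$, consider the linear map $\pi_u\colon \RR^d \to \RR,\ x\mapsto \langle u,x\rangle$. Then $(\pi_u)_\#\mu = \frac 1 k \sum_{i=1}^k \delta_{\langle u,\theta_i\rangle}$ and $(\pi_u)_\#\nu = \frac 1 k \sum_{i=1}^k \delta_{\langle u,\eta_i\rangle}$ both belong to $\calU_k(\RR)$. By the multinomial theorem, for each $\alpha \in \{1,\dots,k\}$,
\begin{equation*}
m_\alpha\big((\pi_u)_\#\mu\big) \;=\; \EE_{X\sim\mu}\big[\langle u,X\rangle^\alpha\big] \;=\; \sum_{\beta\in \NN_0^d,\,|\beta|=\alpha} \binom{\alpha}{\beta} u^\beta m_\beta(\mu),
\end{equation*}
and analogously for $\nu$. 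Since by hypothesis $m_\beta(\mu) = m_\beta(\nu)$ for every multi-index $\beta$ with $|\beta|\leq k$, the first $k$ moments of the univariate measures $(\pi_u)_\#\mu$ and $(\pi_u)_\#\nu$ coincide. Lemma \ref{lem:moment_identifiability} then yields $(\pi_u)_\#\mu = (\pi_u)_\#\nu$ for every $u\in \RR^d$, and the Cramér-Wold device implies $\mu = \nu$.

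I do not expect any genuine obstacle here: the argument is essentially an unravelling of definitions combined with the univariate case. The only point deserving care is that the projected atoms $\langle u,\theta_i\rangle$ may collide for particular directions $u$, so $(\pi_u)_\#\mu$ need not have $k$ \emph{distinct} support points. This is not an issue because $\calU_k(\RR)$ as defined in the paper permits repetition among its atoms, and Lemma \ref{lem:moment_identifiability} applies verbatim in that generality. One could alternatively avoid Cramér-Wold entirely and argue directly that the $\binom{k+d}{d}$ mixed moments of order at most $k$ uniquely determine the multiset $\{\theta_1,\dots,\theta_k\}$ via symmetric function theory, extending the argument used in the proof of Lemma \ref{lem:moment_identifiability}; but the projection route is shorter and more transparent.
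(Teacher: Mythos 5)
Your proof is correct and follows essentially the same route as the paper's: project onto an arbitrary direction, use the multinomial expansion to see that the first $k$ univariate moments of the projections agree, apply Lemma~\ref{lem:moment_identifiability}, and conclude via the Cram\'er--Wold device. Even your remarked alternative via symmetric function theory matches the paper's second proof of this lemma.
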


	In \Cref{app:pf_thm_moment_comparison}, 
	we provide a proof which is inspired by \citet[Lemma A.1(i)]{wei2023} that employs the Cramer-Wold device to reduce the multivariate analysis to the real-line case. We also provide an alternative proof based on a general representational theorem for multivariate symmetric functions \cite[p.2 and Theorem 2.1]{chen2022representation}.

\subsection{Moment Comparison Inequalities for Uniform Measures}\label{subsec:momentCompBound}

The previous subsection provides qualitative conditions under
which a $k$-atomic uniform measure can be identified from finitely many moments. 
In the following we \emph{quantitatively} relate the (local) Wasserstein distance between two measures $\mu, \nu\in\calU_k(\Theta)$, where $\Theta \subseteq \RR^d$ for $d \in \NN$ or $\Theta\subseteq \CC$, to their the moment difference
\begin{align}\label{eq:def_moment_difference}
	M_k(\mu,\nu) \coloneqq \begin{cases}
		\sum_{\alpha=1}^{k} |m_\alpha(\mu) - m_\alpha(\nu)| &\text{ if } \Theta \subseteq \RR \text{ or }
		\Theta \subseteq \CC,\\
		\sum_{\substack{\alpha \in \NN_0^d\backslash\{0\}\\1\leq \|\alpha\|_1\leq k}} |m_\alpha(\mu) - m_\alpha(\nu)| &\text{ if } \Theta \subseteq \RR^d \text{ with } d\geq 2.
	\end{cases}
\end{align}
Our main comparison bound is stated as follows.

\begin{theorem}
\label{thm:moment_comparison}
Let $\Theta\subseteq \RR^d$ for $d \in \NN$ or $\Theta \subseteq \CC$ be a bounded subset and let $k \in \NN$. Then, the following two assertions hold. 
\begin{enumerate}
\item[(i)] (Global comparison bound) There exists a  constant $C = C(\domain,d,k)> 0$ such that for all 
$\mu,\nu \in \calU_k(\domain)$ it holds 
$$W_1^k(\mu, \nu) \leq C M_k(\mu,\nu).$$
\item[(ii)] (Local comparison bound) 
Let $k_0 \in \{1, \dots, k\}$ and $r\in \NN^{k_0}$ with $|r|= k$. Then, there exist  constants $C,c,\delta_0 > 0$ which depend on $\Theta, d, k$ such that for all $\delta \in (0,\delta_0)$, $\mu_0 \in \calU_{k,k_0}(\Theta; r, \delta)$, and $\mu, \nu \in \calU_{k}(\Theta; \mu_0, c\delta^{k+1})$, it holds that
\begin{equation}\label{eq:sharp_poly_stability}
\calD_{\mu_0} (\mu,\nu) \leq C M_k(\mu,\nu).
\end{equation}
\end{enumerate}
\end{theorem}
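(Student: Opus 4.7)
The plan is to identify each $\mu = \frac{1}{k}\sum_{i=1}^k\delta_{\theta_i} \in \calU_k(\Theta)$ with the monic polynomial $P_\mu(z) = \prod_{i=1}^k (z-\theta_i)$ whose roots are the atoms of $\mu$. By Newton's identities, the coefficients of $P_\mu$---i.e., the signed elementary symmetric polynomials in $\theta_1,\dots,\theta_k$---are polynomial (and hence, on the bounded set $\Theta$, Lipschitz) functions of the power sums $k\, m_\alpha(\mu)$ for $\alpha=1,\dots,k$. Therefore the coefficient differences between $P_\mu$ and $P_\nu$ are controlled by $M_k(\mu,\nu)$. A classical polynomial root perturbation inequality (e.g.\ Ostrowski's theorem) then yields a matching $\sigma \in \calS(k)$ such that $\max_i|\theta_{\sigma(i)}-\eta_i| \leq C\, M_k(\mu,\nu)^{1/k}$, whence $W_1^k(\mu,\nu) \leq W_\infty^k(\mu,\nu) \leq C\, M_k(\mu,\nu)$. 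This argument applies verbatim over $\CC$. For the multivariate case $d\ge 2$, one reduces to the univariate one via a Cram\'er--Wold device in the spirit of Lemma \ref{lem:multivariateIdentifiability}: for a generic direction $u\in\RR^d$, $\|u\|=1$, the pushforward $u_\#\mu \in \calU_k(\RR)$ has univariate moments whose differences are controlled by $M_k(\mu,\nu)$ via the multinomial expansion, and choosing a $u$ whose projections separate the atoms of $\mu_0$ (or averaging over a finite collection of such directions) lifts the one-dimensional matching to $\RR^d$.

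\textbf{Part (ii) --- local bound.} The key idea is to construct test polynomials that isolate each cluster. For $j\in\{1,\dots,k_0\}$ and $\alpha\in\{1,\dots,r_j\}$, define the Hermite-type polynomial
$$Q_{j,\alpha}(x) \;=\; (x-\theta_{0j})^\alpha \prod_{i\ne j}\left(\frac{x-\theta_{0i}}{\theta_{0j}-\theta_{0i}}\right)^{r_i},$$
which has degree $\alpha + (k-r_j) \leq k$ and vanishes to order $r_l$ at each $\theta_{0l}$ with $l\ne j$. Two facts drive the argument. First, since the coefficients of $Q_{j,\alpha}$ are of size $\lesssim \delta_j(\mu_0)^{-1}$ (the denominator is exactly $\delta_j(\mu_0)$ up to sign) and the constant term cancels upon integrating against the probability measure difference $\mu-\nu$, we obtain
$$\left|\int Q_{j,\alpha}\, d(\mu-\nu)\right| \;\le\; \frac{C}{\delta_j(\mu_0)}\, M_k(\mu,\nu).$$
Second, the hypothesis $\mu,\nu\in\calU_k(\Theta;\mu_0,c\delta^{k+1})$ localizes the atoms of $\mu$ and $\nu$ within $c\delta^{k+1}$ of those of $\mu_0$; a Taylor expansion of $Q_{j,\alpha}$ around each cluster center then shows that the integral above equals $(r_j/k) \int_{V_j}(x-\theta_{0j})^\alpha\, d(\mu_{V_j}-\nu_{V_j})(x)$ up to a negligible error, comprising (a) a near-cluster remainder of order $\delta^{k}\cdot |x-\theta_{0j}|^\alpha$ on $V_j$ and (b) cross-cluster contributions on $V_l$ with $l\ne j$ carrying the small factor $|x-\theta_{0l}|^{r_l}/\delta_j(\mu_0) = O(\delta^{(k+1)r_l}/\delta_j(\mu_0))$. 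The exponent $k+1$ in the radius $c\delta^{k+1}$ is calibrated precisely so that both errors are absorbed into $M_k(\mu,\nu)/\delta_j(\mu_0)$. Passing from shifted to ordinary moments via a bounded binomial change of variables (since $|\theta_{0j}|$ is bounded) yields $M_{r_j}(\mu_{V_j},\nu_{V_j}) \leq C\,\delta_j(\mu_0)^{-1}\, M_k(\mu,\nu)$. Applying part (i) to the $r_j$-atomic uniforms $\mu_{V_j},\nu_{V_j}$ gives $W_1^{r_j}(\mu_{V_j},\nu_{V_j}) \leq C\, M_{r_j}(\mu_{V_j},\nu_{V_j})$, and multiplying by $\delta_j(\mu_0)$ and summing over $j$ finishes the proof.

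\textbf{Main obstacle.} The most delicate step is the error bookkeeping in the Taylor expansion of part (ii): one must simultaneously control the near-cluster Taylor remainders and the cross-cluster vanishing factors, uniformly across $j=1,\dots,k_0$ and $\alpha=1,\dots,r_j$, and verify that the closeness scale $c\delta^{k+1}$ is exactly what is needed for both error sources to be dominated by the target bound $M_k(\mu,\nu)/\delta_j(\mu_0)$. The global part (i) is in comparison a standard polynomial root perturbation argument, so the real technical content of the theorem lies in calibrating these local errors.
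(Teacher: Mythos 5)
Your part (i) is correct and is essentially the paper's argument: Vieta/Newton identities make the coefficients of $f_\mu(z)=\prod_i(z-\theta_i)$ Lipschitz in the first $k$ moments (Lemma~\ref{lem:coeff_lip}), Ostrowski's perturbation theorem converts coefficient proximity into root proximity (Lemma~\ref{lem:ostrowski}), and the multivariate case is reduced to $d=1$ by slicing. Part (ii), however, departs from the paper — which stays on the polynomial side and proves a \emph{local} Ostrowski bound (Proposition~\ref{prop:sharp_poly_stability}) by first getting a Hausdorff-type estimate and then matching root multiplicities cluster-by-cluster via Rouch\'e's theorem — and your dual route via the test polynomials $Q_{j,\alpha}$ has a genuine gap in the error bookkeeping. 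The cross-cluster term $\int_{V_l}Q_{j,\alpha}\,d(\mu-\nu)$ for $l\neq j$ is \emph{not} negligible relative to $M_k(\mu,\nu)$. Since $Q_{j,\alpha}$ vanishes only to order $r_l$ at $\theta_{0l}$, its Taylor expansion there starts with a term $c_{r_l}(x-\theta_{0l})^{r_l}$ whose coefficient $c_{r_l}$ is generically of order one (not small), so this cross term equals an order-one multiple of the $r_l$-th centered local moment difference on cluster $l$ — one of the very unknowns the theorem must control. Bounding it instead in absolute value by $O(\delta^{(k+1)r_l}/\delta_j(\mu_0))$, as you propose, produces a \emph{fixed} error depending only on $\delta$; such an error cannot be "absorbed into $M_k(\mu,\nu)/\delta_j(\mu_0)$" because the inequality must hold when $M_k(\mu,\nu)$ is arbitrarily small. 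Concretely, with $k=2$, $\theta_{01}=0$, $\theta_{02}=\delta$, $\mu=\tfrac12(\delta_a+\delta_{\delta+b})$, $\nu=\tfrac12(\delta_{a'}+\delta_{\delta+b'})$, one computes
\begin{equation*}
\int Q_{1,1}\,d(\mu-\nu)=\tfrac12(a-a')-\tfrac12(b-b')-\tfrac{1}{2\delta}\bigl[(a^2-a'^2)+(b^2-b'^2)\bigr],
\end{equation*}
so the "error" contains $-\tfrac12(b-b')$, which is exactly the first local moment difference on the second cluster and is of the same order as the quantity being extracted. Taking $\mu|_{V_1}=\nu|_{V_1}$ but $\mu|_{V_2}\neq\nu|_{V_2}$ shows your claimed identity fails unless the cross term is itself already controlled by $M_k(\mu,\nu)$ — which is the statement being proved.

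The argument is therefore circular as written: the equations for cluster $j$ couple, with order-one coefficients, to the unknown local moments of order $\geq r_l$ on every other cluster. A repair would require either a genuine elimination/bootstrap across clusters (in the spirit of the coarse-graining inductions of Heinrich--Kahn and Wu--Yang, which is considerably more delicate than the two-line absorption you describe), or switching to the paper's route, where the multiplicity-matching is handled once and for all by the Rouch\'e argument inside Proposition~\ref{prop:sharp_poly_stability}. Two smaller omissions: the constant in part (i) applied to the conditional measures $\mu_{V_j},\nu_{V_j}$ must be tracked under the recentering at $\theta_{0j}$ (harmless, but should be said), and part (ii) in dimension $d\geq 2$ cannot use a generic Cram\'er--Wold direction — one needs projections that preserve the cluster separation of $\mu_0$ (cf.\ Lemma~\ref{lem:EquivalenceWassersteinSlicedWasserstein_Local}), which your proposal does not address.
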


Let us now provide the main ideas behind the proof of Theorem~\ref{thm:moment_comparison}, deferring most technical
details to Appendix~\ref{app:pf_thm_moment_comparison}. For the regime where $\Theta \subseteq \RR$ or $\Theta \subseteq \CC$ we will show that the two claims of Theorem~\ref{thm:moment_comparison} can  
be reduced to the problem of deriving {\it stability bounds} for the roots 
of complex polynomials under perturbations of their coefficients, which in turn 
are well-studied. The multivariate real setting for $d\geq 2$ then follows by reducing the problem via projections to the univariate setting, using an approach inspired by \cite{wei2023}
and~\cite{doss2023}. 

To elaborate on the  complex case, let
$\mu = ( 1 / k) \sum_{i=1}^k \delta_{\theta_i}$ %
be an arbitrary measure in $\calU_k(\domain)$
with $\domain\subseteq\bbC$. 
We associate to $\mu$ the unique monic polynomial $f_\mu \in \bbC[z]$ 
 whose roots are given by the atoms of~$\mu$:
\begin{align}\label{eq:polys_pf_moment_comparison}
f_\mu(z) = \prod_{i=1}^k (z-\theta_i),\quad z \in \bbC.
\end{align}
The coefficients of $f_\mu$ are closely-related to the complex moments of $\mu$, 
since, by   {\it Vieta's formula}, one has the expansion
\begin{align}
\label{eq:vieta_formula}
f_\mu(z) = z^k + \sum_{j=1}^k a_j(\mu) z^{k-j}, \quad \text{with } a_j(\mu) = (-1)^j e_j(\theta_1, \dots, \theta_k),
  \end{align}
where we define the so-called {\it elementary symmetric polynomials}
\begin{align}\label{eq:elementary_symmetric_polynomials}
	e_0(\theta_1, \dots, \theta_k) \coloneqq 1 \quad \text{ and } 
	\quad  e_j(\theta_1, \dots, \theta_k) &\coloneqq \sum_{1\leq i_1< i_2 < \dots <  i_j \leq k} \;\; \prod_{\ell =1}^j \theta_{i_\ell}.
\end{align}
Since the polynomials $e_j$ are symmetric, they admit an algebraic representation in terms of the 
moments $m_1(\mu), \dots, m_k(\mu)$. This representation can be made explicit using
 {\it Newton's identities} %
which assert for any $l \in \{1, \dots, k\}$ that
 \begin{align}\label{eq:Newton_identity}
 	e_l(\theta_1, \dots, \theta_k) 
 	&= \frac{k}{l} \sum_{j=1}^{l} (-1)^{j-1} e_{l-j}(\theta_1, \dots, \theta_k)  m_j(\mu).
 \end{align}
Combining the preceding displays, we deduce that the coefficients of $f_\mu$ can be 
represented algebraically by the first $k$ moments of $\mu$. 
This observation leads to the following simple Lemma. 
In what follows, we denote by %
$\|\cdot\|_*$ the norm on $\bbC[z]$ defined by
$$\|g\|_* = \sum_{j=0}^k |b_j|,\quad \text{for any polynomial } g(z) = \sum_{k=0}^k b_j z^{k-j}\in \bbC[z].$$
\begin{lemma}\label{lem:coeff_lip}Let $\Theta \subseteq \CC$  be a bounded set and let $k\in \NN$. Then, there exists a constant $C = C(\domain,k) > 0$ such that for any $\mu,\nu\in \calU_k(\domain)$, 
$$\|f_\mu-f_\nu\|_* \leq C M_k(\mu,\nu).$$
\end{lemma}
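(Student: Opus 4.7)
The plan is to exploit Vieta's formula \eqref{eq:vieta_formula}, which expresses the coefficients $a_j(\mu)$ of $f_\mu$ in terms of the elementary symmetric polynomials $e_j$ evaluated at the atoms of $\mu$, and then to invoke Newton's identities \eqref{eq:Newton_identity} to write each $e_j$ as a polynomial in the moments $m_1(\mu), \dots, m_j(\mu)$ with coefficients depending only on $k$. Since $\Theta$ is bounded, the moments of any $\mu \in \calU_k(\Theta)$ lie in a fixed compact set, on which such a polynomial is automatically Lipschitz; this will immediately yield the claim.

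Concretely, I would first establish by induction on $l \in \{1, \dots, k\}$ the existence of a polynomial $P_l \in \QQ[x_1, \dots, x_l]$, depending only on $k$ and $l$ (and not on $\mu$), such that $e_l(\theta_1, \dots, \theta_k) = P_l(m_1(\mu), \dots, m_l(\mu))$ for every $\mu = (1/k)\sum_{i=1}^k \delta_{\theta_i} \in \calU_k(\CC)$. The base case $l = 1$ is just $e_1 = k\, m_1$. For the inductive step, substituting $e_0 = 1$ and the induction hypotheses $e_{l-j} = P_{l-j}(m_1, \dots, m_{l-j})$ for $j = 1, \dots, l-1$ into the right-hand side of \eqref{eq:Newton_identity} exhibits $e_l$ as a polynomial in $m_1, \dots, m_l$ with rational coefficients depending only on $k$.

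Next, set $R = \sup_{\theta \in \Theta} |\theta|$, which is finite by boundedness of $\Theta$. Any $\mu \in \calU_k(\Theta)$ satisfies $|m_j(\mu)| \leq R^j$ for $1 \leq j \leq k$, so the moment vectors of $\mu$ and $\nu$ both lie in the compact polydisk $D = \prod_{j=1}^k \{z \in \CC : |z| \leq R^j\}$. Each polynomial $P_l$ has partial derivatives which are themselves polynomials in finitely many variables, and hence are bounded on $D$, so $P_l$ is Lipschitz on $D$ with some constant $L_l = L_l(\Theta, k)$. Therefore
$$
|a_l(\mu) - a_l(\nu)| \;=\; |e_l(\mu) - e_l(\nu)| \;\leq\; L_l \sum_{j=1}^{l} |m_j(\mu) - m_j(\nu)| \;\leq\; L_l \, M_k(\mu, \nu).
$$
Summing over $l = 1, \dots, k$ and using that $f_\mu$ and $f_\nu$ are both monic yields
$$
\|f_\mu - f_\nu\|_* \;=\; \sum_{l=1}^k |a_l(\mu) - a_l(\nu)| \;\leq\; C\, M_k(\mu, \nu), \qquad C \;=\; \sum_{l=1}^k L_l,
$$
with $C$ depending only on $\Theta$ and $k$, which is the claim.

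I expect no serious obstacle here: the lemma is a quantitative reformulation of the classical algebraic fact that elementary symmetric polynomials are polynomial functions of power sums, and the only bookkeeping is to verify that the Lipschitz constant of each $P_l$ depends on $\Theta$ only through $\sup_{\theta \in \Theta} |\theta|$, which is the content of the compactness argument above.
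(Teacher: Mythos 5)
Your proof is correct and takes essentially the same route as the paper: both arguments combine Vieta's formula with Newton's identities and then use boundedness of the moments over the compact domain to get a Lipschitz bound on the coefficients, the only difference being that the paper runs the induction directly on the differences $|e_l(\theta_1,\dots,\theta_k)-e_l(\eta_1,\dots,\eta_k)|$ while you factor through an explicit polynomial $P_l$ in the moments and bound its gradient on a polydisk. This is purely a matter of bookkeeping, not of substance.
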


The proof of Lemma~\ref{lem:coeff_lip} is a simple consequence of 
equations~\eqref{eq:polys_pf_moment_comparison}--\eqref{eq:Newton_identity} 
and is deferred to Appendix~\ref{app:pf_lem_coeff_lip}. 
In view of this result, the proof of Theorem~\ref{thm:moment_comparison}(i) reduces to bounding the (local) Wasserstein distance between
$\mu$ and $\nu$ by the corresponding coefficient distance $\|f_\mu-f_\nu\|_*$. 
This amounts to a {\it stability bound}, wherein we require the roots of a polynomial to be 
stable under variations of its coefficients.
Stability bounds of this type are classical in the numerical analysis 
literature, dating back at least to the work of~\cite{ostrowski1940}; 
see for instance \cite{galantai2008} and references therein for a survey. 
We state Ostrowski's result below, in its form appearing in~\citet[p.\,276]{ostrowski1973}. 
\begin{lemma}[\citealt{ostrowski1973}]\label{lem:ostrowski}
	Let $\Theta \subseteq \bbC$  be a bounded set and let $k\in \NN$. Then, there exists a constant $C = C(\domain,k) > 0$ such that for any $\mu,\nu\in \calU_k(\domain)$, 
\begin{equation}
\label{eq:ostrowki}
W_1^k(\mu,\nu) \leq W_k^k(\mu, \nu) \leq C \|f_\mu-f_\nu\|_*.
\end{equation}
\end{lemma}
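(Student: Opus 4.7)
The first inequality $W_1^k(\mu,\nu) \leq W_k^k(\mu,\nu)$ is immediate from the monotonicity of $p$-Wasserstein distances in $p$, or equivalently from Jensen's inequality applied to any coupling. The content of the Lemma is the stability bound $W_k^k(\mu,\nu) \leq C\|f_\mu-f_\nu\|_*$. Writing $\mu = (1/k)\sum_{i=1}^k \delta_{\alpha_i}$ and $\nu = (1/k)\sum_{i=1}^k \delta_{\beta_i}$ with $\alpha_i, \beta_i \in \domain$, and using the identity $W_k^k(\mu,\nu) = \min_{\sigma \in \calS(k)} \frac{1}{k}\sum_{i=1}^k |\alpha_i - \beta_{\sigma(i)}|^k$, the plan is to produce a permutation $\sigma \in \calS(k)$ satisfying the uniform bound
\begin{equation}\label{eq:sigma_goal}
\max_{1 \leq i \leq k} |\alpha_i - \beta_{\sigma(i)}| \leq C(\domain,k)\, \|f_\mu - f_\nu\|_*^{1/k}.
\end{equation}
Raising to the $k$-th power and averaging then yields the Lemma.

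The starting point is an elementary pointwise estimate: for each root $\alpha_i$ of $f_\mu$,
\begin{equation*}
\prod_{j=1}^k |\alpha_i - \beta_j| \;=\; |f_\nu(\alpha_i)| \;=\; |f_\nu(\alpha_i) - f_\mu(\alpha_i)| \;\leq\; \|f_\mu - f_\nu\|_* \cdot \max(1,|\alpha_i|)^k \;\leq\; C(\domain,k)\, \|f_\mu - f_\nu\|_*,
\end{equation*}
where the penultimate inequality follows by expanding $f_\mu - f_\nu$ in its monomial basis and applying the triangle inequality, and the last uses that $\alpha_i \in \domain$ is bounded. Taking $k$-th roots gives $\min_{j} |\alpha_i - \beta_j| \leq C'\, \|f_\mu - f_\nu\|_*^{1/k}$, so every root of $f_\mu$ lies within the desired distance of \emph{some} root of $f_\nu$, and symmetrically.

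The main obstacle is promoting this pointwise estimate to a genuine matching: a priori, several roots of $f_\mu$ could all be close to the same root of $f_\nu$ while other roots of $f_\nu$ have no near partner. I would resolve this by induction on $k$. At each step, choose the pair $(\alpha_i, \beta_j)$ minimizing $|\alpha_i - \beta_j|$, define $\sigma(i) = j$, and factor: write $f_\mu(z) = (z-\alpha_i)\tilde f_\mu(z)$ and $f_\nu(z) = (z-\beta_j)\tilde f_\nu(z)$, where $\tilde f_\mu, \tilde f_\nu$ are monic polynomials of degree $k-1$ whose roots lie in $\domain$. Applying the inductive hypothesis to $\tilde f_\mu, \tilde f_\nu$ requires controlling $\|\tilde f_\mu - \tilde f_\nu\|_*$ in terms of $\|f_\mu - f_\nu\|_*$; this follows by expressing the coefficients of $\tilde f_\mu, \tilde f_\nu$ in terms of the coefficients of $f_\mu, f_\nu$ and the factored roots $\alpha_i, \beta_j$ (via synthetic division), together with the boundedness of $\domain$ and the pointwise estimate of the previous paragraph which ensures $|\alpha_i - \beta_j|$ is small.

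The hard part is controlling the inductive blow-up of constants: the polynomial division introduces factors depending on $\domain$ and on the intermediate distances $|\alpha_i - \beta_j|$, and one must verify that only $k$ applications incur a multiplicative loss absorbed into the final constant $C(\domain,k)$. A cleaner alternative, which avoids tracking the inductive constants, is a Hall's marriage argument: define the bipartite graph on vertex sets $\{\alpha_i\}$ and $\{\beta_j\}$ with an edge whenever $|\alpha_i - \beta_j| \leq C_0\|f_\mu-f_\nu\|_*^{1/k}$, and show that for every subset $I \subseteq \{1,\dots,k\}$ one has $|N(I)| \geq |I|$ by applying the pointwise estimate to the polynomial $\prod_{i \in I}(z-\alpha_i)$ viewed as a factor of $f_\mu$ on the one hand, and $f_\nu$ on the other, to conclude the existence of at least $|I|$ close roots of $f_\nu$. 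Either route produces the permutation required in~\eqref{eq:sigma_goal} and closes the proof.
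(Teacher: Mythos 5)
The paper does not prove this lemma---it cites Ostrowski (1973, p.\,276) directly---so your proposal is an attempt at an independent proof. The first inequality and the pointwise estimate $|f_\nu(\alpha_i)| \leq C\|f_\mu-f_\nu\|_*$ are both correct, and the overall strategy of producing a permutation with $\max_i |\alpha_i - \beta_{\sigma(i)}| \lesssim \|f_\mu-f_\nu\|_*^{1/k}$ is the right target. However, both of your proposed routes for constructing that matching have genuine gaps.

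The inductive route does not preserve the exponent. Take $f_\mu(z)=z^k$, $f_\nu(z)=z^k-\epsilon$: the closest pair is at distance $\epsilon^{1/k}$, and after synthetic division the quotient polynomials satisfy $\|\tilde f_\mu-\tilde f_\nu\|_* \asymp \epsilon^{1/k}$, not $\asymp \epsilon$. Your inductive hypothesis at degree $k-1$ then yields a matching of the remaining roots only within distance $\|\tilde f_\mu-\tilde f_\nu\|_*^{1/(k-1)} \asymp \epsilon^{1/(k(k-1))}$, which is strictly weaker than $\epsilon^{1/k}$. Iterating, the recursion bottoms out at $\epsilon^{1/k!}$, which gives $W_k^k \lesssim \|f_\mu-f_\nu\|_*^{1/(k-1)!}$ rather than $\lesssim \|f_\mu-f_\nu\|_*$. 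The ``inductive blow-up'' you flag as a concern to be checked is not merely an issue of constants---the exponent itself deteriorates, and the argument does not yield the lemma.

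The Hall's-marriage route is closer to the standard proof, but the verification of Hall's condition that you sketch does not close. Suppose $|N(I)|<|I|$ and write $f_\nu = g\,h$ with $g=\prod_{j\in N(I)}(z-\beta_j)$, $h=\prod_{j\notin N(I)}(z-\beta_j)$. The pointwise estimate and the threshold condition give, for each $i\in I$, $|g(\alpha_i)| \lesssim \epsilon^{|N(I)|}$. But $g$ has degree $|N(I)|<|I|$ and the points $\alpha_i$ ($i\in I$) may cluster arbitrarily, so a degree-$|N(I)|$ polynomial can indeed be uniformly small at all of them; there is no contradiction to extract by elementary counting. What is actually needed is a topological ingredient: one forms the connected components of the union of $\epsilon$-disks around the roots and shows via Rouch\'e's theorem that $f_\mu$ and $f_\nu$ have the same number of roots in each component, exactly as in Bhatia's Theorem~1.5---which the paper invokes as Lemma~\ref{lem:hausdorff_to_wasserstein} in its proof of Proposition~\ref{prop:sharp_poly_stability}. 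That Rouch\'e argument is the missing step, and without it neither of your routes reaches the stated bound.
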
 
Theorem~\ref{thm:moment_comparison}(i) is now a direct consequence of Lemmas~\ref{lem:coeff_lip}--\ref{lem:ostrowski}. 
In order to prove claim~(ii), we require a local version of Lemma~\ref{lem:ostrowski}, 
which will allow us to reduce the exponent $k$ of equation~\eqref{eq:ostrowki} whenever the atoms of $\mu$ and $\nu$ are clustered.
Various local versions of Ostrowski's result have been derived in the literature, 
such as the bounds of~\cite{ostrowski1970}, \cite{beauzamy1999}, and~\cite{galantai2008}, which improve the exponent $k$ whenever the roots have multiplicities greater than one. 
However, we are not aware of any  existing local bounds which merely make the a priori assumption that the roots are
clustered. 
We establish such a result in the following Proposition, which may be of 
independent interest.  
\begin{proposition}\label{prop:sharp_poly_stability}
Let $\Theta \subseteq \bbC$ be a bounded set,  let
  $1\leq k_0\leq k$ and choose $r\in \NN^{k_0}$ with $|r| = k$. 
Then, there exist positive constants 
$C,c,\delta_0 > 0$  depending on $\Theta, k$ such that for all $\delta \in (0,\delta_0)$, $\mu_0 \in \calU_{k,k_0}(\Theta; r, \delta)$, and $\mu, \nu \in \calU_{k}(\Theta; \mu_0, c\delta^{k+1})$, it holds that 
\begin{equation}\label{eq:sharp_poly_stability}
\calD_{\mu_0} (\mu,\nu) \leq C \|f_\mu-f_\nu\|_*.
\end{equation}
\end{proposition}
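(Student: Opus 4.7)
The plan is to factor both $f_\mu$ and $f_\nu$ according to the cluster structure induced by $\mu_0$, reduce the claimed inequality to a bound on the ``cluster coefficient'' differences, and then extract those coefficients via Cauchy integration over a contour separating one cluster from the others.

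First I would fix $c,\delta_0>0$ small enough (in terms of $\Theta, k$) so that, for any $\mu \in \calU_k(\Theta;\mu_0,c\delta^{k+1})$ with $\delta<\delta_0$, the measure $\mu$ has exactly $r_j$ atoms in the Voronoi cell $V_j$ of $\theta_{0j}$, for every $j=1,\dots,k_0$. This permits defining $g_{\mu,j}(z)$ as the monic polynomial of degree $r_j$ with roots equal to the atoms of $\mu$ in $V_j$, and $h_{\mu,j}(z)=\prod_{i\neq j} g_{\mu,i}(z)$, giving the factorization $f_\mu=g_{\mu,j}\,h_{\mu,j}$ (and analogously for $\nu$). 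I also set $\tilde g_{\mu,j}(u)=g_{\mu,j}(u+\theta_{0j})$, which is monic of degree $r_j$ in $u$ with roots in $B(0,c\delta^{k+1})$. Applying Lemma~\ref{lem:ostrowski} locally to $(\tilde g_{\mu,j},\tilde g_{\nu,j})$ on this small disk (whose diameter is $O(1)$) gives $W_{r_j}^{r_j}(\mu_{V_j},\nu_{V_j})\leq C\,\|\tilde g_{\mu,j}-\tilde g_{\nu,j}\|_{*,u}$ with $C=C(\Theta,r_j)$, where $\|\cdot\|_{*,u}$ is the coefficient norm in the shifted variable. Since $W_1\leq W_{r_j}$, it therefore suffices to show
$$\sum_{j=1}^{k_0}\delta_j(\mu_0)\,\|\tilde g_{\mu,j}-\tilde g_{\nu,j}\|_{*,u}\;\lesssim\;\|f_\mu-f_\nu\|_{*}.$$

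The key step, and the main obstacle, is to extract each coefficient $c_m^{(j)}(\mu)=[\tilde g_{\mu,j}]_{u^m}$ from $f_\mu$. For this I would use Cauchy's formula on $\Gamma_j=\partial B(\theta_{0j},\delta/4)$:
$$c_m^{(j)}(\mu)-c_m^{(j)}(\nu)=\frac{1}{2\pi i}\oint_{\Gamma_j}\frac{g_{\mu,j}(z)-g_{\nu,j}(z)}{(z-\theta_{0j})^{m+1}}\,dz.$$
On $\Gamma_j$ one verifies by a direct computation (using the cluster separation and the small cluster radius) that $|h_{\mu,j}(z)|,|h_{\nu,j}(z)|\asymp\delta_j(\mu_0)$ uniformly. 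Decomposing
$$g_{\mu,j}-g_{\nu,j}=\frac{f_\mu-f_\nu}{h_{\mu,j}}+g_{\nu,j}\cdot\frac{h_{\nu,j}-h_{\mu,j}}{h_{\mu,j}h_{\nu,j}},$$
the first summand contributes a term of order $\|f_\mu-f_\nu\|_*/\delta_j(\mu_0)$, while the second couples cluster $j$ to the other clusters through the telescoping identity $h_{\mu,j}-h_{\nu,j}=\sum_{i\neq j}(g_{\mu,i}-g_{\nu,i})\prod_{l\neq i,j}(\cdot)$. Substituting and using $|\theta_{0i}-\theta_{0j}|\geq\delta\gg c\delta^{k+1}$ produces a coupled system of inequalities of schematic form
$$|c_m^{(j)}(\mu)-c_m^{(j)}(\nu)|\;\lesssim\;\frac{\|f_\mu-f_\nu\|_*}{\delta^{m}\,\delta_j(\mu_0)}\;+\;\sum_{i\neq j}\sum_{m'=0}^{r_i-1}\varepsilon_{jm,im'}\,|c_{m'}^{(i)}(\mu)-c_{m'}^{(i)}(\nu)|,$$
with coupling coefficients $\varepsilon_{jm,im'}$ that are small positive powers of $\delta$ because the cluster radius $c\delta^{k+1}$ is much smaller than the inter-cluster distance $\delta$.

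The hard part is closing this system to recover the required bound $\delta_j(\mu_0)\|\tilde g_{\mu,j}-\tilde g_{\nu,j}\|_{*,u}\lesssim \|f_\mu-f_\nu\|_*$. My plan is to introduce weights $w_{jm}$ compatible with the cluster scales (roughly $w_{jm}=\delta_j(\mu_0)\,\delta^{-(r_j-m-1)}$ or a similar choice adapted to the exponents appearing in $\varepsilon_{jm,im'}$), so that the weighted system becomes a strict contraction provided $c$ and $\delta_0$ are chosen sufficiently small. Solving the resulting fixed-point inequality yields the desired cluster-coefficient bound, after which summing over $j$ and invoking the local Ostrowski estimate above gives $\calD_{\mu_0}(\mu,\nu)\leq C\|f_\mu-f_\nu\|_*$. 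The rest of the argument is bookkeeping (verifying the comparability of $|h_{\mu,j}|$ with $\delta_j(\mu_0)$ on $\Gamma_j$, and the passage from the shifted coefficient norm to the actual $W_{r_j}^{r_j}$ distance via Lemma~\ref{lem:ostrowski}); the genuine difficulty lies in setting up the weights correctly so that the coupled system contracts.
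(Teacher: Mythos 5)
Your route is genuinely different from the paper's (which proves a Hausdorff-type version of the bound by evaluating $f_\nu$ at the roots of $f_\mu$, and then upgrades it to a Wasserstein bound via a homotopy $h_t=(1-t)f_\mu+tf_\nu$, Rouch\'e's theorem, and Bhatia's root-matching lemma), but as written it has a quantitative gap that already appears in the \emph{direct} term of your Cauchy estimate, before the coupled system even enters. For a cluster with $r_j\geq 2$ you must control the coefficient differences $|c_m^{(j)}(\mu)-c_m^{(j)}(\nu)|$ for $m=1,\dots,r_j-1$, and your contour bound on $\Gamma_j=\partial B(\theta_{0j},\delta/4)$ gives, for the term $\oint_{\Gamma_j}\frac{(f_\mu-f_\nu)}{h_{\mu,j}(z)(z-\theta_{0j})^{m+1}}dz$, only $\lesssim \delta^{-m}\,\|f_\mu-f_\nu\|_*/\delta_j(\mu_0)$, because the best uniform bound on the numerator over $\Gamma_j$ is $C(\Theta,k)\|f_\mu-f_\nu\|_*$. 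Feeding this into the local Ostrowski step as you state it (with a constant depending only on the $O(1)$ diameter of $\Theta$, i.e.\ $W_1^{r_j}(\mu_{V_j},\nu_{V_j})\lesssim\sum_m|c_m^{(j)}(\mu)-c_m^{(j)}(\nu)|$) yields $\delta_j(\mu_0)W_1^{r_j}(\mu_{V_j},\nu_{V_j})\lesssim\delta^{-(r_j-1)}\|f_\mu-f_\nu\|_*$, which blows up as $\delta\to0$; no choice of weights in the coupled system can repair this, since the loss is in the inhomogeneous term, not the coupling. The only way I see to rescue the scheme is to use a \emph{scale-aware} form of Ostrowski's bound, in which the $m$-th coefficient difference enters with an extra factor $R^m$ where $R=c\delta^{k+1}$ is the cluster radius; that factor exactly cancels the $\delta^{-m}$ from Cauchy. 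You do not invoke this, and it is the essential missing ingredient.

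Separately, the contraction step is not actually set up so that it closes. Estimating your coupling coefficients on $\Gamma_j$ gives $\varepsilon_{jm,im'}\lesssim \delta^{(r_j-m)-(r_i-m')}$, which is a \emph{negative} power of $\delta$ whenever $r_i-m'>r_j-m$ (e.g.\ $r_j=1$, $r_i=3$, $m=m'=0$), so the unweighted system is not a contraction; and with your proposed weights $w_{jm}=\delta_j(\mu_0)\delta^{-(r_j-m-1)}$ the rescaled coupling coefficient becomes $\asymp\delta_j(\mu_0)/\delta_i(\mu_0)$, which need not be small either. So both halves of the argument — the direct term and the fixed-point closure — require substantive additional ideas. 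You may find it instructive that the paper sidesteps coefficient extraction entirely: the pointwise identity $|f_\nu(\theta_{i_0})|=|f_\nu(\theta_{i_0})-f_\mu(\theta_{i_0})|\leq C\|f_\mu-f_\nu\|_*$ combined with the lower bound $\prod_{i\neq j_0}|\theta_{i_0}-\eta_i|\gtrsim\delta_{j_0}(\mu_0)$ gives the cluster-wise product bound in one line, and the passage from a Hausdorff-type to a Wasserstein-type conclusion is handled by a root-counting (Rouch\'e) argument rather than by polynomial factorization.
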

When $k_0 = 1$, Proposition~\ref{prop:sharp_poly_stability} reduces to
Ostrowski's global result in Lemma~\ref{lem:ostrowski}, however
it provides a strictly sharper bound when the underlying roots 
of $f_\mu$ and $f_\nu$ are clustered around $k_0 > 1$ points. When the different clusters are separated by a fixed constant $\delta>0$, the exponent $k$ in Ostrowski's result improves to $r_j$ where $r_j/k$ is the mass assigned to the $j$-th cluster of $\mu_0$. Moreover, even when the clusters are collapsing, i.e., $\delta \searrow 0$, Proposition~\ref{prop:sharp_poly_stability}
still provides a sharper upper bound than Lemma~\ref{lem:ostrowski} 
as long as $\mu$ and $\nu$ tend to $\mu_0$ 
at a faster rate than  $\delta^{k+1}$. The proof of Proposition~\ref{prop:sharp_poly_stability} appears
in Appendix~\ref{app:pf_prop_sharp_poly_stability}, and is inspired by~\cite{ostrowski1973} and \cite{bhatia2007}.

We close this subsection with a black-box bound for generic estimators of $k$-atomic measures which relates the expected moment difference between an estimator and its population counterpart to the respective expected (local) Wasserstein distance. This enables us to quantify the statistical error for the method of moments estimator and the maximum likelihood estimator for our deconvolution model \eqref{eq:model} without additional case distinctions for the global and local rate. The proof is stated in \Cref{app:pf:blackbox_bound} and utilizes our global and local stability bounds from Theorem \ref{thm:moment_comparison}.

\begin{corollary}\label{cor:blackbox_bound}
	Let $d,k\in \NN$, and let $\Theta\subseteq \RR^d$  or $\Theta \subseteq \CC$ be a bounded subset. Further, let $\hat \mu \in\calU_k(\RR^d)$ or $\hat \mu \in\calU_k(\CC)$ be a corresponding estimator for $\mu\in \calU_k(\Theta)$. Let 
	$\epsilon^2  \geq	\EE\left[M_{k}^2(\hat \mu , \mu)\right]$, and 
	assume that $\hat \mu\in \calU_k(\Theta)$ almost surely or that there exists $\rho > 0$ such that $\EE\left[W_1^{2k}(\hat \mu , \mu)\mathds{1}(M_{k}(\hat \mu, \mu)\geq \rho)\right]
	\leq \epsilon^2.$
		Then, the following two assertions hold. 
		\begin{enumerate}
			\item[(i)] (Global error) There exists a  constant $C = C(\Theta,  \rho, d,k)> 0$ such that  
			\begin{align*}
				\EE\left[W_1^k(\hat \mu , \mu)\right]\leq C \epsilon. %
			\end{align*}
			\item[(ii)] (Local error) Let $1\leq k_0 \leq k$ and $r\in \NN^{k_0}$ with $|r| = k$, and define $r^* \coloneqq \max_{i = 1, \dots, k_0} r_i$ and $r_*\coloneqq \min_{i=1, \dots, k_0}r_i$. Then, there exist constants $C, c, \delta_0>0$ depending on $\Theta, d, k, \rho$ such that for all $\delta \in (\epsilon^{1/(2k(k+1))},\delta_0)$ and $\mu_0\in \calU_{k,k_0}(\Theta; r, \delta)$ it holds for $\mu \in \calU_{k}(\Theta; \mu_0, c\delta^{k+1})$ that %
			\begin{align*}
				 \delta^{k-r_*}\EE\left[ W_1^{r^*}(\hat \mu, \mu) \right]+\EE\left[\calD_{\mu_0}(\hat\mu, \mu)\right] &\leq C \epsilon. 
			\end{align*}
		\end{enumerate}
	\end{corollary}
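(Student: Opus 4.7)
The plan is to combine the pointwise moment comparison inequalities of Theorem~\ref{thm:moment_comparison} with a truncation argument on the event $\{M_k(\hat\mu,\mu)<\eta\}$ for an appropriate threshold $\eta$, handling the complement either via the uniform boundedness of $W_1$ under $\hat\mu\in\calU_k(\Theta)$ or via the prescribed Wasserstein tail assumption. For part~(i), when $\hat\mu\in \calU_k(\Theta)$ almost surely, Theorem~\ref{thm:moment_comparison}(i) yields $W_1^k(\hat\mu,\mu)\leq CM_k(\hat\mu,\mu)$ pointwise, and Jensen's inequality gives the claim. If instead only the tail assumption holds, split at $\{M_k<\rho\}$: on this event the moments of $\hat\mu$ up to order $k$ are within $\rho$ of those of $\mu$, so they are uniformly bounded, and classical root-magnitude bounds applied to the monic polynomial $f_{\hat\mu}$ of~\eqref{eq:polys_pf_moment_comparison} confine the support of $\hat\mu$ to a bounded enlargement $\Theta'\supseteq\Theta$ depending on $\Theta,k,\rho$; this allows the use of Theorem~\ref{thm:moment_comparison}(i) with $\Theta'$. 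On $\{M_k\geq\rho\}$, Cauchy--Schwarz and the tail assumption give $\EE[W_1^k\mathds{1}(M_k\geq\rho)]\leq \sqrt{\EE[W_1^{2k}\mathds{1}(M_k\geq\rho)]}\leq \epsilon$.

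The heart of part~(ii) is the pointwise \emph{structural inequality}
\begin{align*}
\delta^{k-r_*} W_1^{r^*}(\hat\mu,\mu) \leq C\,\calD_{\mu_0}(\hat\mu,\mu),\qquad \hat\mu,\mu\in \calU_k(\Theta;\mu_0,c\delta^{k+1}).
\end{align*}
I would derive this by first observing that on this $W_\infty$-ball and for $\delta$ sufficiently small (so that $c\delta^{k+1}<\delta/2$), each measure has exactly $r_j$ atoms in the Voronoi cell $V_j$, which lets us restrict the transport to respect the partition and obtain $W_1^{r^*}(\hat\mu,\mu)\leq \sum_j W_1^{r^*}(\hat\mu_{V_j},\mu_{V_j})$ by Jensen. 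For the cell with $r_j=r^*$, the prefactor $\delta^{k-r_*}$ is absorbed using $\delta^{r_j-r_*}\leq 1$ together with $\delta^{k-r_j}\leq \delta_j(\mu_0)$. For cells with $r_j<r^*$, split $W_1^{r^*}=W_1^{r^*-r_j}W_1^{r_j}$ and bound $W_1^{r^*-r_j}(\hat\mu_{V_j},\mu_{V_j})\leq (2c\delta^{k+1})^{r^*-r_j}$ via the $W_\infty$-ball; the elementary exponent inequality $(k-r_*)+(k+1)(r^*-r_j)\geq k-r_j$ (which holds because $r_j\geq r_*$ and $r_j\leq r^*$) then lets $\delta_j(\mu_0)$ absorb the remaining $\delta$-power.

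To deploy this inequality, choose the threshold $\eta=c''\delta^{k(k+1)}$ with $c''$ small. On the good event $G=\{M_k<\eta\}$, Theorem~\ref{thm:moment_comparison}(i) yields $W_1(\hat\mu,\mu)\lesssim \eta^{1/k}\lesssim \delta^{k+1}$ and therefore $W_\infty(\hat\mu,\mu)\leq kW_1(\hat\mu,\mu)\lesssim \delta^{k+1}$, so that a triangle inequality with $W_\infty(\mu,\mu_0)<c\delta^{k+1}$ places $\hat\mu$ in the ball required by Theorem~\ref{thm:moment_comparison}(ii) (at the price of enlarging the constant $c$). Combining the structural inequality with Theorem~\ref{thm:moment_comparison}(ii) then yields $\delta^{k-r_*}W_1^{r^*}(\hat\mu,\mu)+\calD_{\mu_0}(\hat\mu,\mu)\leq C M_k(\hat\mu,\mu)$ on $G$, which integrates to $C\epsilon$ by Jensen. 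On $G^c$, Chebyshev's inequality gives $\mathbb{P}(G^c)\leq \epsilon^2/\eta^2$, and the hypothesis $\delta\geq \epsilon^{1/(2k(k+1))}$ is exactly what makes $\eta^2\geq (c'')^2\epsilon$, so $\mathbb{P}(G^c)\lesssim \epsilon$. Using $\calD_{\mu_0}\leq 1$ gives $\EE[\calD_{\mu_0}\mathds{1}_{G^c}]\lesssim \epsilon$ directly. For the Wasserstein term on $G^c$, when $\hat\mu\in\calU_k(\Theta)$ a.s.\ the trivial bound $W_1^{r^*}\leq \diam(\Theta)^{r^*}$ times $\mathbb{P}(G^c)$ suffices; otherwise, subdivide $G^c$ at $\rho$, use the root-magnitude bound on $\{\eta\leq M_k<\rho\}$ as in part~(i), and on $\{M_k\geq\rho\}$ apply H\"older's inequality with exponents $(2k/r^*,2k/(2k-r^*))$ combined with $\mathbb{P}(M_k\geq\rho)\leq \epsilon^2/\rho^2$ to obtain $\EE[W_1^{r^*}\mathds{1}(M_k\geq\rho)]\leq C\epsilon^2$.

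The main obstacle is the exponent bookkeeping in the structural inequality combined with the coordination between the threshold $\eta$ and the permitted range of $\delta$: the lower bound $\delta\geq \epsilon^{1/(2k(k+1))}$ is precisely what is needed for $\eta^{1/k}\lesssim \delta^{k+1}$, i.e., for the global $W_1$-control from Theorem~\ref{thm:moment_comparison}(i) to propagate to the local $W_\infty$-ball constraint required by Theorem~\ref{thm:moment_comparison}(ii). A secondary subtlety is that the H\"older argument on the far tail $\{M_k\geq\rho\}$ only produces the right rate $\epsilon$ after incorporating the Chebyshev bound on $\mathbb{P}(M_k\geq\rho)$; using the tail assumption alone (without Chebyshev) would only yield $\epsilon^{r^*/k}$, which is insufficient when $r^*<k$.
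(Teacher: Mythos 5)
Your proposal is correct and follows essentially the same route as the paper: part (i) is the paper's argument verbatim (global comparison bound plus support confinement via the moment bound on $\{M_k<\rho\}$ and the tail hypothesis on its complement), and your ``structural inequality'' $\delta^{k-r_*}W_1^{r^*}\lesssim \calD_{\mu_0}$ is precisely the content of the paper's Lemma~\ref{lem:inequalityWassersteinLocal}, deployed inside the same truncation-plus-Chebyshev skeleton, with the good event parametrized by $M_k<c''\delta^{k(k+1)}$ rather than by $W_\infty<c\delta^{k+1}/2$ (an equivalent choice, since both reduce to $\PP(\text{bad})\lesssim\epsilon^2/\delta^{2k(k+1)}\leq\epsilon$ under $\delta\geq\epsilon^{1/(2k(k+1))}$). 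The only substantive deviation is your handling of the $W_1^{r^*}$ tail, where the paper simply writes $W_1^{r^*}\lesssim\delta^{-k+r^*}W_1^{k}$ on the bad event and reuses part~(i), while your three-way split with H\"older also works but is more laborious.
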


\begin{remark}\label{rmk:Hausdorff_bound}
	The Hausdorff distance between the atoms of two $k$-atomic uniform measures is strictly weaker than the $W_1$ distance between the respective uniform measures (Lemma~\ref{lem:RelationHausdorffAndWasserstein}). This is a consequence of the fact that the Hausdorff distance does not take into account the correct multiplicity of atoms per cluster. 
	Hence, our global and local bound from Corollary \ref{cor:blackbox_bound} also remains valid when $W_k(\mu, \nu)$ is replaced by $d_{H}(\supp(\mu), \supp(\nu))$, and the global and local minimax rates in Wasserstein risk (Theorems~\ref{thm:global_minimax_risk}--\ref{thm:local_minimax_risk_nosep}) immediately yield upper bounds for the Hausdorff risk. The validity of the corresponding minimax lower bound is explained in Remark~\ref{rmk:MinimaxLowerBoundsDiscussion}$(iii)$ and proved in \Cref{app:pf:rmk:MinimaxLowerBoundsDiscussion}.
\end{remark}

Building on the error bound established for estimators of $k$-atomic uniform measures, we now proceed with the statistical analysis of the method of moments estimator.

\subsection{Statistical Performance of the Method of Moments Estimator}
\label{sec:method_of_moments}\label{subsec:method_of_moments}
Let $(X_1,\dots,X_m) \sim \bP_\mu^t$ be an observation
from model~\eqref{eq:model}. 
Our aim in this section is to derive an estimator for $\mu$ 
based on estimated moments, and to show that it 
achieves the minimax risk described in Theorems~\ref{thm:global_minimax_risk}--\ref{thm:local_minimax_risk_nosep}, for compact kernels satisfying Assumption~\ref{ass:kernel}(i).

By Lemma~\ref{lem:moment_identifiability}, the measure $\mu$ is uniquely identified by its moments up to order $k$. 
Building upon this observation, we construct an estimator of $\mu$ 
in two steps.   Using~\cref{thm:moment_polynomials} below, we first show that, under mild moment assumptions on the kernel $K$,
there exist consistent estimators based on $(X_1, \dots, X_m)$  for the moments of $\mu$ in the regime $t, m\to \infty$. Given these moments estimators, we then construct our method of moment estimator for $\mu$, inspired by the denoising approach introduced by~\cite{wu2020,wei2023}.

\begin{theorem}
\label{thm:moment_polynomials}
Let $\bbK = \RR^d$ for $d\in \NN$ or $\bbK = \CC$ (for which we set $d = 1$) and consider a probability kernel $K: \bbK \to [0, \infty)$ admitting finite multivariate
moments of all orders. Then the following assertions hold.
\begin{enumerate}
	\item For each $\alpha\in \NN^d_0$, there exists a unique monic 
	 polynomial $\psi_\alpha\colon \bbK\to \bbC$, which is real-valued
	 when $\bbK = \bbR^d$, and
	 with leading term $x^\alpha$, such that for every $\mu \in \calP(\bbK)$ with 
	 finite moment of order $\alpha$, it holds that 
	\begin{align*}
	\EE_{V\sim K\ast \mu}[\psi_\alpha(V)] = \EE_{U\sim  \mu}[U^\alpha]. %
	\end{align*}
	\item If $\bbK = \bbC$ and $K$ is rotationally symmetric around the origin, then
	for each $\alpha\in \bbN_0^d$, it 
	holds that $\psi_\alpha(z) = z^\alpha$ for each $z\in \CC$ and in particular $\EE_{V\sim K\ast \mu}[V^{\alpha}] = \EE_{U\sim  \mu}[U^\alpha]$. 
\end{enumerate}
\end{theorem}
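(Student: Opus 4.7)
The natural starting point is to write $V = U + W$ with $U \sim \mu$ and $W \sim K$ independent, and expand via the (multi-)binomial theorem to obtain
$$\EE_{V \sim K\star\mu}[V^\beta] = \sum_{\gamma \leq \beta} \binom{\beta}{\gamma}\, m_\gamma(\mu)\, m_{\beta-\gamma}(K), \qquad \beta \in \NN_0^d,$$
where $\leq$ is componentwise in the multivariate case, $\binom{\beta}{\gamma} = \prod_i \binom{\beta_i}{\gamma_i}$, and the identity is justified by the moment assumptions on $K$ and $\mu$. I would then posit the ansatz $\psi_\alpha(x) = \sum_{\beta \leq \alpha} c_\beta x^\beta$, substitute into $\EE[\psi_\alpha(V)]$, and swap the order of summation to collect the coefficient of $m_\gamma(\mu)$ for each $\gamma \leq \alpha$. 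Demanding $\EE[\psi_\alpha(V)] = m_\alpha(\mu)$ for every admissible $\mu$ then becomes the triangular linear system
$$\sum_{\beta:\, \gamma \leq \beta \leq \alpha} c_\beta \binom{\beta}{\gamma}\, m_{\beta-\gamma}(K) = \mathds{1}(\gamma = \alpha), \qquad \gamma \leq \alpha.$$

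Since $m_0(K) = 1$, this system has unit diagonal entries, and solving it by descending recursion starting from $\gamma = \alpha$ yields $c_\alpha = 1$ (so monicity and the leading term $x^\alpha$ are automatic) and uniquely determines each lower coefficient $c_\beta$ from the previously computed ones. This gives both existence and uniqueness in assertion (i); moreover when $\bbK = \RR^d$, the kernel moments $m_{\beta-\gamma}(K)$ are real, so the entire recursion stays in $\RR$ and $\psi_\alpha$ is real-valued. To conclude uniqueness rigorously, one also needs that the reduction from the polynomial identity to the linear system is reversible: if $\sum_{\gamma \leq \alpha} m_\gamma(\mu)\, b_\gamma = 0$ holds for every $\mu$ with finite $\alpha$-th moment, then all $b_\gamma$ vanish, which follows by plugging in sufficiently many point masses $\delta_x$ and exploiting the linear independence of the monomials.

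For assertion (ii), rotational symmetry gives $e^{i\theta}W \stackrel{d}{=} W$ for every $\theta \in \RR$, hence $m_\alpha(K) = \EE[(e^{i\theta}W)^\alpha] = e^{i\alpha\theta}\, m_\alpha(K)$ for every $\theta$, which forces $m_\alpha(K) = 0$ whenever $\alpha \geq 1$. Substituted into the binomial identity, this collapses the sum to $\EE[V^\alpha] = m_\alpha(\mu)$, so $\psi_\alpha(z) = z^\alpha$ fulfills the defining identity and, by the uniqueness established in (i), must coincide with the polynomial sought. The only delicate point in the argument is the multi-index bookkeeping that keeps the partial order and binomial coefficients consistent; once set up, the triangular recursion unwinds mechanically and requires no analytic input beyond finiteness of the relevant moments.
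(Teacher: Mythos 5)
Your proof is correct and takes essentially the same route as the paper's: the paper defines $\tilde\psi_\beta(z) = \int x^\beta K(x-z)\,dx$, observes via the binomial theorem that these form a monic triangular basis of the polynomial ring, and inverts the change of basis, which is precisely your triangular linear system phrased in slightly different language. Your equivariance argument for part (ii) is the same as the paper's polar-coordinate computation, and you spell out the uniqueness step (plugging in point masses $\delta_x$) a bit more explicitly than the paper does, but there is no substantive difference.
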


Our proof of~\cref{thm:moment_polynomials} 
appears in Appendix~\ref{app:pf_thm_moment_polynomials}, and 
provides a constructive procedure for obtaining the polynomial $\psi_\alpha$ 
based on all moments  of $K$ up to order~$|\alpha|$. 
In Appendix~\ref{subsec:implementationPolynomials}, we describe a numerical implementation of this procedure for the settings $\KK \in \{\RR, \CC\}$,
which can be used to derive the polynomials
$\{\psi_i\}$ corresponding to any sufficiently concentrated kernel $K$.

Special cases of this result are well-known
in the literature. For instance, on the real line when~$K$ is a standard Gaussian kernel, the 
 (probabilist's) Hermite polynomials 
\begin{equation}
\label{eq:hermite_system}
H_j(z) = (-1)^j e^{z^2/2} \frac{d^j}{dz^j} e^{-z^2/2}, \quad j=1,2,\dots,
\end{equation}
form an orthogonal basis of the space~$L^2(Kd\calL)$, 
and are the unique collection of polynomials satisfying  the properties of 
\cref{thm:moment_polynomials}, see \cite{wu2020_book}.  
Similar polynomial systems have been derived when~$K$ belongs to a natural exponential family
with quadratic variance~\citep{morris1982,morris1983}; see~\cite{wei2023}
for an overview.
A  result akin to \cref{thm:moment_polynomials} for general kernels $K$
may already exist  in the literature, but we are unaware of a reference.
This level of generality allows us to apply our methodology 
to general classes of kernels, which is important for microscopy applications. %

Theorem~\ref{thm:moment_polynomials} implies that, given a random 
vector $(X_1,\dots,X_m) \sim \bP_\mu^t$ arising from model~\eqref{eq:model}, 
the random variables
\begin{align}\label{eq:moment_estimator}
\hat m_\alpha =  \sum_{i=1}^m \psi_\alpha(\gamma_i)\frac{X_i}{t}
, \quad \alpha \in \NN_0^d, |\alpha|\leq k,
\end{align}
with $\gamma_i\in B_i\subseteq \RR^d$ fixed but otherwise arbitrary,
are suitable estimators for the moments up to order $k$ of $\mu$. Our \emph{general method of moment estimator} for $\mu$ is then defined as
	\begin{align}\label{eq:method_of_moments_general}
		 \hat \mu_{t,m,\Theta}^{\mathrm{MM}} \in \argmin_{\tilde \mu \in\calU_k(\Theta)}\sum_{\substack{\alpha\in \NN_0^d, |\alpha|\leq k}}|m_\alpha(\tilde \mu) - \hat m_{\alpha}|^2  \quad \text{ if } \Theta \subseteq \RR^d, d \in \NN.
	\end{align}
By compactness of $\Theta$, the collection $\calU_k(\Theta)$ of $k$-uniform measures is compact in the $W_1$-topology and, hence, by continuity of the objective function, a minimizer in equation~\eqref{eq:method_of_moments_general} always exists. Furthermore, by identifiability of $k$-atomic uniform measures in terms of moments up to order \(k\) (Lemma \ref{lem:multivariateIdentifiability}), it follows that if $\hat{m}_{\alpha}$ closely approximates the moment $m_{\alpha}(\mu)$ for all
$|\alpha| \leq k$, then $\hat{\mu}^{\mathrm{MM}}$ is close to $\mu$. 
This justifies our construction. 
Nevertheless, under additional separation conditions on the support points of $\mu$ along a (known) principal axis, fewer moment estimators would suffice to identify a $k$-atomic mixture 
(cf.\,\citet[Lemma 1]{bing2024} and \cite{lindsay1993multivariate}).

Moreover, in the one- and two-dimensional setting, the latter being particularly relevant in our microscopy context, we may instead interpret the set $\Theta$ as a subset of $\CC$. The benefit of this perspective is that it allows us to use exactly $k$ complex moment estimators in equation~\eqref{eq:method_of_moments_general}, for which a unique solution always exists. Specifically, the following observation reveals that the complex method of moment estimator is computable by finding the roots of a suitable monic polynomial of degree $k$,
without needing to solve the minimization problem~\eqref{eq:method_of_moments_general}.

\begin{lemma}\label{lem:moment_to_measure}
Let $k\in \NN$, and $m=(m_1,\dots,m_k) \in \bbC^k$. Then, there exists a unique
measure $\mu\in \calU_k(\bbC)$ such that $\int z^j d\mu(z) = m_j$ for all $j\in \{1, \dots, k\}$. 
The atoms of $\mu$ are the $k$ roots of the polynomial 
\begin{align*}%
	\poly_m(z) =  z^k + \sum_{j=1}^k %
	(-1)^{j}\symm_{j} z^{k-j},\quad z\in \bbC,
\end{align*}
where $\{\symm_j\}_{j=0}^k$ are recursively defined by $\symm_0 = 1$ and $\symm_l = \symm_l(m)
= \frac{k}{l} \sum_{j=1}^{l} (-1)^{j-1} \symm_{l-j}(m)   m_j$. 
 Furthermore,
 there exists a positive constant $C=C(k)>0$ such that 
  $\supp(\mu)\subseteq B(0,C\|m\|_\infty^k)$. 
\end{lemma}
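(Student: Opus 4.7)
The plan is to establish existence by explicit construction of $\mu$ as the empirical measure on the roots of $\poly_m$, verify the moment identities via Vieta's formula and Newton's identity \eqref{eq:Newton_identity}, deduce uniqueness from Lemma~\ref{lem:moment_identifiability}, and finally estimate the support radius via a Cauchy-type bound on the roots of monic polynomials.

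Concretely, let $\theta_1, \ldots, \theta_k$ denote the $k$ complex roots of $\poly_m$ counted with multiplicity (which exist by the fundamental theorem of algebra), and define $\mu \coloneqq \frac{1}{k}\sum_{i=1}^k \delta_{\theta_i} \in \calU_k(\CC)$. Applying Vieta's formula to the factorization $\poly_m(z) = \prod_{i=1}^k(z-\theta_i)$ and comparing the coefficient of $z^{k-l}$ against the definition of $\poly_m$, I obtain $e_l(\theta_1,\ldots,\theta_k) = \symm_l$ for every $l = 0, 1, \ldots, k$, where $e_l$ is the elementary symmetric polynomial of \eqref{eq:elementary_symmetric_polynomials}. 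Next, Newton's identity \eqref{eq:Newton_identity} applied to $\mu$ reads
$$e_l(\theta_1, \ldots, \theta_k) = \frac{k}{l}\sum_{j=1}^l (-1)^{j-1} e_{l-j}(\theta_1,\ldots,\theta_k) \, m_j(\mu),$$
so substituting $e_{l-j}(\theta) = \symm_{l-j}$ yields $\symm_l = \frac{k}{l}\sum_{j=1}^l (-1)^{j-1}\symm_{l-j} m_j(\mu)$. Comparing with the defining recursion $\symm_l = \frac{k}{l}\sum_{j=1}^l (-1)^{j-1}\symm_{l-j} m_j$ and subtracting gives $\sum_{j=1}^l (-1)^{j-1}\symm_{l-j}(m_j - m_j(\mu)) = 0$ for every $l \leq k$. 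A short induction on $l$ then forces $m_l(\mu) = m_l$: the base case $l=1$ reads $\symm_0(m_1 - m_1(\mu)) = 0$, and at step $l$ all terms with $j < l$ vanish by the inductive hypothesis, leaving $(-1)^{l-1}\symm_0(m_l - m_l(\mu)) = 0$. Uniqueness then follows immediately from Lemma~\ref{lem:moment_identifiability}.

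For the support estimate, I would invoke Cauchy's classical bound: any root $z$ of a monic polynomial $z^k + a_{k-1}z^{k-1} + \cdots + a_0 \in \CC[z]$ satisfies $|z| \leq 1 + \max_j |a_j|$. In our case $|a_{k-j}| = |\symm_j|$, so it suffices to bound the $|\symm_j|$ inductively via their defining recursion. Setting $M = \|m\|_\infty$, the estimate $|\symm_l| \leq \frac{k}{l}\sum_{j=1}^l |\symm_{l-j}|\,M$ combined with induction on $l$ yields $|\symm_l| \leq C_l(k) M^l$ for constants depending only on $l$ and $k$. Substituting back into Cauchy's bound gives the claimed inclusion $\supp(\mu) \subseteq B(0, C(k)\|m\|_\infty^k)$.

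The main obstacle I anticipate is the moment-matching induction, which requires carefully aligning the recursion defining $\symm_l$ with Newton's identity and tracking the alternating signs in the difference equation. Once the structural parallel between the two recursions is made precise, all remaining steps reduce to routine applications of Vieta's formula and classical polynomial root estimates.
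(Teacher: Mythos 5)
Your proposal is correct and follows essentially the same route as the paper: construct $\mu$ from the roots of $\poly_m$, identify $\poly_m$ with $f_\mu$ via Vieta's formula so that $e_l(\theta)=\symm_l$, match moments through Newton's identities, invoke Lemma~\ref{lem:moment_identifiability} for uniqueness, and bound the roots via an inductive estimate $|\symm_l|\leq C(k,l)\|m\|_\infty^l$ combined with Cauchy's bound. The only difference is cosmetic: you spell out the induction showing $m_l(\mu)=m_l$ by subtracting the two recursions, a step the paper leaves implicit.
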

Hence, for $\Theta \subseteq \RR^2 \simeq \CC$ and moment estimators $(\hat m_1, \dots, \hat m_k)\in \CC^k$ defined in \eqref{eq:moment_estimator} with suitable $\gamma_i \in B_i$ for $i = 1, \dots, m$, we denote by $(\hat \theta_1,\dots,\hat\theta_k)\in \CC^k \simeq (\RR^2)^k$ the collection of $\RR^2$-embedded complex roots of $\poly_{\hat m}$. The \emph{complex method of moments estimator} of $\mu$  is then defined~as  
\begin{align}\label{eq:method_of_moments_complex}
	 \hat\mu_{t,m,\CC}^{\mathrm{MM}} \coloneqq \frac 1 k \sum_{i=1}^k \delta_{\hat\theta_i}.
\end{align}
Likewise, for the real line setting $\Theta \subseteq \RR\subseteq \CC$ we may define the \emph{real method of moments estimator} as the push-forward under the projection of the complex method of moment estimator onto the reals, 
\begin{align}
	\label{eq:method_of_moments_real_line}
	\hat\mu_{t,m,\RR}^{\mathrm{MM}} \coloneqq \frac 1 k \sum_{i=1}^k \delta_{\Re(\hat\theta_i)}.
\end{align} 
We point out that the real and complex method of moments estimators from \eqref{eq:method_of_moments_complex}-\eqref{eq:method_of_moments_real_line} may differ from the general estimator in \eqref{eq:method_of_moments_general} as their atoms may not be contained within $\Theta$. Nonetheless, they all achieve similar statistical error bounds, as shown next. 
\begin{proposition}\label{prop:mm_rate}
	Let the sets $\Omega,\Theta$ satisfy Assumption~\ref{ass:bins}. Let $K$ be a compactly-supported kernel satisfying Assumption \ref{ass:kernel}$(i)$. 
	Then, there are constants $C = C(\Theta,\Omega,d,K,k) > 0$,  $\rho = \rho(\Theta,d,K,k) > 0$ such that the method of moment estimator $\hat \mu_{t,m}=\hat \mu^{\mathrm{MM}}_{t,m, \Theta}$ and the complex method of moment estimator $\hat \mu_{t,m}=\hat \mu^{\mathrm{MM}}_{t,m, \CC}$ (for $ d= 2$)
	satisfies
	\begin{align}\label{eq:mm_rate}
		\sup_{\mu\in \calU_k(\Theta)}\EE\left[ M_k^2(\hat\mu_{t,m},\mu)  + W_k^{2k}(\hat \mu_{t,m}, \mu)\mathds{1}(M_{k}(\hat \mu_{t,m}, \mu)\geq \rho)\right] \leq C 
	\Big(t^{-1/2} + m^{-1/d}\Big)^2.
	\end{align}
\end{proposition}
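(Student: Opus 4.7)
The plan is to reduce both terms on the left-hand side of~\eqref{eq:mm_rate} to control of the raw moment estimators $\hat m_\alpha = t^{-1}\sum_{i=1}^m \psi_\alpha(\gamma_i)X_i$ via a bias-variance decomposition, followed by Rosenthal-type moment inequalities for the Wasserstein tail.

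First, I would bound $\EE[|\hat m_\alpha - m_\alpha(\mu)|^2]$ for each multi-index $|\alpha|\leq k$. By Theorem~\ref{thm:moment_polynomials}, $m_\alpha(\mu) = \int \psi_\alpha(x) K\star\mu(x)\,dx$, and Assumption~\ref{ass:kernel}$(i)$ ensures $\supp(K\star\mu)\subseteq \Omega$, so this integral equals $\sum_i \int_{B_i}\psi_\alpha(x)K\star\mu(x)\,dx$. The bias $\sum_i\int_{B_i}(\psi_\alpha(\gamma_i)-\psi_\alpha(x))K\star\mu(x)\,dx$ is then $O(m^{-1/d})$ by the Lipschitz norm of the polynomial $\psi_\alpha$ on the bounded set $\Omega$ together with the diameter bound of Assumption~\ref{ass:bins}. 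Since $X_i\sim\textup{Poi}(\lambda_i)$ with $\lambda_i=tK\star\mu(B_i)$ and $\sum_i\lambda_i\leq t$, the variance $t^{-2}\sum_i\psi_\alpha(\gamma_i)^2\lambda_i$ is $O(1/t)$, giving $\EE[|\hat m_\alpha - m_\alpha(\mu)|^2] \leq C(t^{-1/2}+m^{-1/d})^2$. I then pass from moments of $\hat m$ to $M_k^2(\hat\mu,\mu)$ in two ways: for the complex estimator, Lemma~\ref{lem:moment_to_measure} guarantees $m_\alpha(\hat\mu_{t,m,\CC}^{\mathrm{MM}}) = \hat m_\alpha$ exactly, yielding the bound at once; for the general estimator, feasibility of $\mu$ in the minimization~\eqref{eq:method_of_moments_general} combined with the triangle inequality gives $M_k^2(\hat\mu_{t,m,\Theta}^{\mathrm{MM}},\mu)\lesssim \sum_\alpha|\hat m_\alpha - m_\alpha(\mu)|^2$, achieving the same bound.

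The Wasserstein tail is handled separately for the two estimators. Since $\hat\mu_{t,m,\Theta}^{\mathrm{MM}}\in\calU_k(\Theta)$ almost surely, $W_k\leq\diam(\Theta)$, and Markov's inequality applied to the $M_k^2$ bound above gives the claimed rate for any fixed $\rho>0$. The complex case is more delicate: the roots of $\poly_{\hat m}$ may escape $\Theta$, but Lemma~\ref{lem:moment_to_measure} bounds $\supp(\hat\mu_{t,m,\CC}^{\mathrm{MM}})\subseteq B(0,C\|\hat m\|_\infty^k)$, leading to the polynomial estimate $W_k^{2k}(\hat\mu,\mu)\leq C_k(1+\|\hat m\|_\infty^{2k^2})$. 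Closing the tail then requires the higher-order bounds $\EE[M_k^4]\lesssim(t^{-1/2}+m^{-1/d})^4$ and $\EE[\|\hat m\|_\infty^{q}]\lesssim 1$ uniformly in $t,m\geq 1$ for $q=4k^2$. Both follow from Rosenthal's inequality applied to the centered sum $\hat m_\alpha - \EE[\hat m_\alpha] = t^{-1}\sum_i\psi_\alpha(\gamma_i)(X_i-\lambda_i)$, using the standard centered-Poisson moment estimate $\EE[|X-\lambda|^{2p}]\lesssim\lambda^p+\lambda$ in conjunction with $\sum_i\lambda_i\leq t$ and $\max_i\lambda_i\lesssim t/m$ (the latter via Assumption~\ref{ass:bins}). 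A Cauchy-Schwarz step then yields $\EE[W_k^{2k}\mathds{1}(M_k\geq\rho)]\leq \EE[W_k^{4k}]^{1/2}\Pr(M_k\geq\rho)^{1/2}\lesssim\rho^{-2}(t^{-1/2}+m^{-1/d})^2$ for a suitable $\rho=\rho(\Theta,d,K,k)>0$.

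I expect the principal technical obstacle to be the Rosenthal-type bookkeeping against the non-uniform Poisson intensities $\lambda_i$, which must be tracked carefully to achieve the $(t^{-1/2}+m^{-1/d})^{2p}$ scaling of the higher-order moments of $\hat m_\alpha$ that ultimately makes the Cauchy-Schwarz argument close at the desired $(t^{-1/2}+m^{-1/d})^2$ rate. The remaining arguments are either direct algebraic manipulations of the raw moment estimators or consequences of the structural identities already established in Lemmas~\ref{lem:moment_to_measure} and \ref{lem:coeff_lip}.
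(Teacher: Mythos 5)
Your proposal is essentially correct and takes the same first half of the argument as the paper: the bias-variance decomposition of $\hat m_\alpha$, the Lipschitz bound for the $O(m^{-1/d})$ bias, the Poisson variance computation for the $O(t^{-1/2})$ stochastic error, the minimizer argument for $\hat\mu^{\mathrm{MM}}_{t,m,\Theta}$, and the exact identity $m_\alpha(\hat\mu^{\mathrm{MM}}_{t,m,\CC})=\hat m_\alpha$ via Lemma~\ref{lem:moment_to_measure}. The divergence is in the Wasserstein tail control for the complex estimator. Where you apply Rosenthal's inequality to obtain polynomial moment bounds $\EE[M_k^4]\lesssim(t^{-1/2}+m^{-1/d})^4$ and $\EE[\|\hat m\|_\infty^{4k^2}]\lesssim 1$ uniformly in $t,m$, then close via Markov and Cauchy-Schwarz, the paper instead uses \emph{exponential} Poisson tail bounds: it picks $\rho$ large enough that $\PP(M_k(\hat\mu_{t,m},\mu)\geq\rho)^{1/2}\lesssim e^{-ct}\lesssim t^{-1}$, and bounds $\EE[W_k^{4k}]\lesssim 1$ by a dyadic decomposition of the event $\{\|\hat m\|_\infty\in(\ell\rho/k,(\ell+1)\rho/k]\}$ combined with Lemma~\ref{lem:moment_to_measure} and the same Poisson concentration bound. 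The paper's route gives a much sharper tail but requires a specific threshold $\rho$; your Rosenthal route buys generality (it would extend to other sub-exponential noise models with finite moments of all orders without re-deriving the Chernoff bound) at the cost of a polynomial rather than exponential tail, which is still ample for the claimed $(t^{-1/2}+m^{-1/d})^2$ rate. For the general estimator $\hat\mu^{\mathrm{MM}}_{t,m,\Theta}$, the paper picks $\rho$ so large that $M_k<\rho$ deterministically on $\calU_k(\Theta)\times\calU_k(\Theta)$ and the indicator vanishes identically, which is cleaner than your Markov step but equivalent in effect.

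One point you should make sure to argue explicitly if you write this out: the Rosenthal bound on $\sum_i\lambda_i^{p/2}$ that you invoke requires $\max_i\lambda_i\lesssim t/m$, which needs not just $\diam(B_i)\lesssim m^{-1/d}$ but also boundedness of $K\star\mu$ over $\Omega$ so that $\lambda_i=tK\star\mu(B_i)\lesssim t\cdot\lambda(B_i)\lesssim t\,(\diam B_i)^d$. This holds because $K$ is Lipschitz and compactly supported under Assumption~\ref{ass:kernel}$(i)$, but it should be stated. Similarly, $\EE[\|\hat m\|_\infty^{q}]\lesssim 1$ requires splitting $\hat m_\alpha=(\hat m_\alpha-\EE\hat m_\alpha)+\EE\hat m_\alpha$ and noting that $\EE\hat m_\alpha=\sum_i\psi_\alpha(\gamma_i)K\star\mu(B_i)$ is uniformly bounded; again true but worth stating.
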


The proof of Proposition \ref{prop:mm_rate} is stated in Appendix \ref{subsec:proof:mm_rate} and relies on quantitative error bounds for the moment estimators. We discuss some aspects to our analysis in the following remark. 

\begin{remark}\label{rmk:discussionMoM}
	\begin{enumerate}
		\item Together with our black-box bound (Corollary \ref{cor:blackbox_bound}),
		we deduce from Proposition~\ref{prop:mm_rate}
that the method of moments achieves the upper bounds of Theorems~\ref{thm:global_minimax_risk}--\ref{thm:local_minimax_risk_separated} for compactly-supported
kernels satisfying Assumption~\ref{ass:kernel}$(i)$. In addition, we also show in Appendix \ref{subsec:proof:mm_rate} that the real method of moment estimator, defined as a real projection of its complex counterpart, fulfills an analogous upper bound with $d = 1$.  
Thus, the method of moment estimators are minimax optimal for regular kernels
when $m\gtrsim t^{d/2}$. Notably, 
the condition of $h$-regularity in Assumption~\ref{ass:kernel}$(i)$
is not needed for the upper bound, though faster estimation rates could be possible for these~cases. 
		\item The central computational challenge for the method of moment estimator for the univariate and bivariate setting is to find the complex roots of the polynomial $\poly_{\hat m}$ defined in Lemma~\ref{lem:moment_to_measure}. %
		 This is a well-studied subject in numerical optimization, e.g., \cite{McNamee2007Numerical, mcnamee2013numerical} with iterative algorithms by \cite{jenkins1972algorithm} in Fortran and \cite{zeng2004algorithm} in Matlab or in the \texttt{scipy}-Python-package. %
		For the multivariate setting, we suggest employing general gradient descent procedures with different initializers to solve \eqref{eq:method_of_moments_general}.
		 In simulations we observe that the method of moments estimator 
		 can be computed extremely fast, see \Cref{sec:simulations}.
		\item Our analysis could be extended to sub-exponential  kernels with unbounded support by enlarging the observation domain $\Omega$ proportional to $\log(t+1)$. This analysis would involve a slight reformulation of Assumption \ref{ass:bins} to $\max_{i = 1, \dots, m}\diam(B_i)\leq C(\Omega_m) \log(t+1)m^{-1/d}$ to capture the enlarged domain. However, this comes at the cost of poly-logarithmic inflation of the  discretization error $m^{-1/d}$. Without domain enlargement, simple moment estimators as in \eqref{eq:moment_estimator} will be  inconsistent, causing the complex method of moments estimator to be inconsistent as well according to Lemma \ref{lem:moment_to_measure}. %
	\end{enumerate}
\end{remark}

\section{Maximum Likelihood Estimation}
\label{sec:mle}

Thus far, we have derived upper bounds on the risk of the method of moments estimator
for compactly-supported  kernels $K$,  assuming
that the atoms of $\mu$ are well-separated from the boundary of the observation
domain~$\Omega$. Under this condition, the convolution
$K\star\mu$ is itself supported  
within~$\Omega$, a property which allowed us to construct unbiased estimators
of the moments of $K\star\mu$, and hence of $\mu$. 
Although this analysis captures many situations of interest, 
it does not
extend to cases where~$K\star\mu$ is not supported within the image domain
$\Omega$, and hence precludes kernels with unbounded support.
Our goal in this section is to show, 
perhaps surprisingly, that optimal estimation of $\mu$
is still possible. We focus specifically on the Gaussian kernel~$K$, and
show that the maximum likelihood estimator (MLE) of $\mu$ is minimax optimal. 
Our upper bounds  also extend to compactly-supported kernels
of the type considered in the previous section, at the expense of polylogarithmic
factors. 

Given a probe space $\Theta \subseteq \bbR^d$, 
an image space $\Omega \subseteq \bbR^d$, 
and a uniform measure $\mu\in \calU_k(\Omega)$, 
let $(X_1,\dots,X_m) \sim \bP_\mu^t$ be an observation drawn from model~\eqref{eq:model}. 
The MLE $\hat\mu_{t,m}^{\mathrm{MLE}} =: \hat\mu_{t,m}$ 
of $\mu$ is defined as any solution to the optimization problem 
\begin{align}\label{eq:MLE}
	\argmax_{\tilde \mu\in \calU_k(\domain)}  \sum_{i=1}^m \Big(X_i \log \big(tK\star\tilde\mu(B_i)\big) - t K\star\tilde\mu (B_i)\Big).
\end{align} 
Notice that the log-likelihood function in the above display indeed admits a maximizer, since
it is continuous, and $\calU_k(\Omega)$ is compact, in the~$W_1$ topology. 
If it admits more than one maximizer, 
$\hat\mu_{t,m}$ may be chosen arbitrarily among them.
The main result of this section is stated as follows. 
\begin{proposition} 
	\label{prop:mle_gaussian}
	Let $k\in \NN$, and let the sets $\Omega,\Theta$ satisfy Assumption~\ref{ass:bins}. 
Then, the following holds.
\begin{enumerate}
\item (Gaussian kernel) 
	Let $K$ be a Gaussian kernel satisfying Assumption~\ref{ass:kernel}(ii).
	Then, there exists a constant $C = C(\Theta,\Omega,d, K,k) > 0$ such that
$$\sup_{\mu\in \calU_k(\Theta)} \bbE_\mu M_k^2(\hat\mu_{t,m},\mu) \leq C 
\Big(t^{-1/2} + m^{-1/d}\sqrt{\log m}\Big)^2.$$
\item (Compactly-supported kernels) 
	Let $K$ be a compactly-supported kernel satisfying Assumption~\ref{ass:kernel}(i).
	Then, there exists a constant $C = C(\Theta,\Omega,d,K,k) > 0$ such that
$$\sup_{\mu\in \calU_k(\Theta)} \bbE_\mu M_k^2(\hat\mu_{t,m},\mu) \leq C 
\Big((\log t/t)^{1/2} + m^{-1/d}\Big)^2.$$
\end{enumerate}
\end{proposition}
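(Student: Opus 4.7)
The proof breaks into two pieces: (a) an empirical-process rate for the Poisson MLE on an $L^2$-type distance between the intensities $K\star\hat\mu_{t,m}$ and $K\star\mu$, and (b) a deterministic stability inequality converting that $L^2$ bound into a bound on~$M_k$.

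For step (a), I would use the Poisson Hellinger identity
\[
	2H^2(\bP_\mu^t,\bP_\nu^t) = t\sum_{i=1}^m \big(\sqrt{K\star\mu(B_i)}-\sqrt{K\star\nu(B_i)}\big)^2,
\]
and apply classical Wong--Shen/Birg\'e--Massart bracketing MLE arguments to the $dk$-dimensional parametric class $\mathcal{F}=\{K\star\mu:\mu\in\calU_k(\Theta)\}$. Lipschitz continuity of $\mu\mapsto K\star\mu$ (with constant controlled by $\|\nabla K\|_\infty$) and the compactness of $\Theta$ give a bracketing entropy $\log N_{[]}(\epsilon,\mathcal{F},L^2(\Omega))\lesssim \log(1/\epsilon)$, hence a parametric MLE rate. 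A bin-by-bin conversion from Hellinger distance to $L^2(\Omega)$ distance is valid because bin intensities are uniformly bounded above on their relevant range, and a Riemann-sum discretization $K\star\mu(B_i)\approx |B_i|\,K\star\mu(x_i)$ accounts for the binning error. Together this yields
\[
	\bbE_\mu\|K\star\hat\mu_{t,m} - K\star\mu\|_{L^2(\Omega)}^2 \lesssim \tfrac{1}{t} + m^{-2/d}\log m.
\]
For compactly-supported kernels the Poisson log-integrand $X_i\log(K\star\mu(B_i))$ is not uniformly bounded since $K\star\mu(B_i)$ may vanish near $\partial\supp K$; truncating the intensity at level $\eta\asymp 1/t$ absorbs this at the cost of an extra $\log t$ factor in the statistical term, matching~(ii).

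For step (b), the key stability inequality is
\[
	M_k(\mu,\nu) \;\lesssim\; \|K\star\mu - K\star\nu\|_{L^2(\Omega)},\qquad \mu,\nu\in\calU_k(\Theta),
\]
with constant depending only on $\Theta,\Omega,K,d,k$. For the Gaussian kernel, I would first apply Proposition~\ref{cor:Lp_Omega_Rd} to replace the domain $\Omega$ by $\bbR^d$, and then invoke a compactness-plus-identifiability argument: the maps $\mu\mapsto(m_\alpha(\mu))_{|\alpha|\leq k}$ and $\mu\mapsto K\star\mu$ are continuous on the compact set $\calU_k(\Theta)$, and the latter is injective (Gaussian mixtures are identifiable), so the ratio $M_k(\mu,\nu)/\|K\star\mu-K\star\nu\|_{L^2(\bbR^d)}$ is uniformly bounded. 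For the compactly-supported case no domain extension is needed since $K\star\mu$ is already supported in $\Omega$ by Assumption~\ref{ass:kernel}(i), and Lemma~\ref{lem:multivariateIdentifiability} plays the analogous role. Combining (a) and (b), and applying Jensen's inequality, yields the claimed squared bounds.

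\textbf{Main obstacle.} The most delicate point is quantifying the moment-stability constant uniformly over $\calU_k(\Theta)\times\calU_k(\Theta)$, including configurations where atoms of $\mu$ and $\nu$ coalesce---a qualitative compactness argument does not automatically show the ratio remains bounded as support points collide. The natural remedy, consistent with Section~\ref{sec:moments}, is to reparametrize the problem by the elementary symmetric polynomials of the atoms so that $\calU_k(\Theta)$ becomes a compact set in coefficient space without the degeneracies caused by coincident roots, and then use the polynomial-stability machinery already assembled in Theorem~\ref{thm:moment_comparison} to close the argument. This reparametrization, together with Proposition~\ref{cor:Lp_Omega_Rd}, constitutes the substantive technical content of the proof; the remaining ingredients (MLE rates, bracketing entropy, discretization) are adaptations of standard tools.
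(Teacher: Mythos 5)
Your high-level plan is correct: the proof does indeed split into an empirical-process bound on $\|K\star(\hat\mu_{t,m}-\mu)\|_{L^2(\Omega)}$ and a deterministic stability inequality converting that to $M_k$, and you correctly identify Proposition~\ref{cor:Lp_Omega_Rd} as the mechanism for passing from $\Omega$ to $\bbR^d$. However, there are two gaps that would prevent this plan from closing.

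\textbf{Gap 1: The global bracketing entropy bound cannot produce the sharp $t^{-1/2}$ rate in part (i).} You write that $\log N_{[]}(\epsilon,\calF,L^2(\Omega))\lesssim\log(1/\epsilon)$, ``hence a parametric MLE rate'' of order $1/t$. But plugging $\log(1/u)$ into the Le~Cam equation $\sqrt{t}\gamma^2 \gtrsim \int_0^\gamma\sqrt{\log(1/u)}\,du \asymp \gamma\sqrt{\log(1/\gamma)}$ forces $\gamma\gtrsim\sqrt{\log t/t}$, i.e., an extra $\log t$ factor. This is precisely the logarithmic inflation the paper singles out as the obstacle (Section~\ref{sec:mle_bracketing}), and it is why part (ii) does carry $\log t$. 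To obtain the clean $t^{-1/2}$ in part (i), the paper uses the \emph{local} bracketing bound of Lemma~\ref{lem:local_bracketing}, which gives $\log N_{[]}(u,\calF_m(\gamma;f_0),L^2(\Omega))\lesssim \gamma/u$ in the relevant regime, so that $\calJ_{[]}(c_0\gamma,\cdot)\lesssim \gamma + m^{-1/d}\sqrt{\log m}$. Crucially, this local bound is not standard: it hinges on the $L^2(\Omega)\asymp L^\infty(\bbR^d)\asymp M_{2k-1}$ equivalences for finite Gaussian mixtures (Theorem~\ref{thm:equivalences}), which convert $L^2$ bracketing into $L^\infty$ covering and then into Euclidean covering of a moment ball. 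Without this chain of equivalences the local bound is out of reach, and the $\log t$ survives even in the Gaussian case.

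\textbf{Gap 2: The stability inequality $M_k\lesssim\|K\star(\mu-\nu)\|_{L^2}$ needs a quantitative tool that your plan does not supply.} Your compactness-plus-injectivity argument shows the ratio is finite away from the diagonal but says nothing as $\nu\to\mu$; you acknowledge this. The proposed remedy --- reparametrizing by elementary symmetric polynomials and invoking the machinery of Theorem~\ref{thm:moment_comparison} --- is aimed at the wrong inequality. That theorem controls $W_1^k$ (and $\calD_{\mu_0}$) \emph{from above} by $M_k$, using root-stability of monic polynomials; it does not give a lower bound on an $L^2$ distance between convolutions in terms of moments. The paper's actual tool for the $M_k\lesssim L^2(\bbR^d)$ bound is Theorem~\ref{thm:momentL2-bounds_all}, which expands $K\star(\mu-\nu)$ in an explicit orthonormal system (Gaussian-weighted Hermite polynomials in case (i), rescaled Legendre polynomials in case (ii)), reads off lower-triangular moment relations, and inverts them with explicit control on singular values. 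The coefficient-space reparametrization you suggest removes the degeneracy from coalescing atoms, but one still needs to show the map from moments to $K\star\mu$ has a uniformly lower-bounded modulus of injectivity, which is not automatic from compactness and requires exactly this kind of quantitative decomposition. Note also that this step is deliberately carried out over all of $\calP_k(\Theta)$, not just $\calU_k(\Theta)$, so the symmetric-polynomial structure specific to uniform measures does not enter at this stage.
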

By combining Proposition~\ref{prop:mle_gaussian}(i) with
the black-box bound of Corollary~\ref{cor:blackbox_bound},
we deduce that the MLE $\hat\mu_{t,m}$ achieves the upper bounds stated
in Theorems~\ref{thm:global_minimax_risk}--\ref{thm:local_minimax_risk_nosep} for Gaussian kernels,
under the Wasserstein distance $W_1$ or the local Wasserstein divergence $\calD_{\mu_0}$,
provided $m\geq t^{1/d + \gamma}$ for an arbitrarily
small constant $\gamma > 0$.
By Proposition~\ref{prop:mle_gaussian}(ii), the same conclusion holds
for compactly-supported kernels, at the expense of polylogarithmic (in $t$)
factors, though we emphasize that the method of moments is also
applicable to these kernels (and does not involve redundant polylogarithmic
factors).

In the remainder of this section, we describe
the main elements involved in the proof of Proposition~\ref{prop:mle_gaussian}$(i)$.
We also discuss some elements of Proposition~\ref{prop:mle_gaussian}$(ii)$, deferring
most details to Appendix~\ref{app:pfs_mle}.
The proof consists of three steps, which are organized as follows.
\begin{itemize}
\item In Section~\ref{sec:mle_moment_L2}, we prove that the moment distance $M_k$ 
over the space of finite mixing measures is bounded from
above by the $L^2(\bbR^d)$ distance between the corresponding densities.
In particular, it will follow that
$$M_k(\hmu_{t,m},\mu) \lesssim \|K\star(\hmu_{t,m}-\mu)\|_{L^2(\bbR^d)},$$
for both types of kernels under consideration.
\item In Section~\ref{sec:mle_Lp_equiv}, we show that for Gaussian kernels, the $L^2(\bbR^d)$
distance on the right-hand side of the above display can in fact be replaced
by the $L^2(\Omega)$ norm. This fact is useful because the 
Poisson deconvolution model~\eqref{eq:model}
involves observations of the {\it truncated} density $K\star\mu |_\Omega$, 
rather than the full density $K\star\mu$. We additionally show that
all $L^p(\Omega)$ 
norms are equivalent over the space of Gaussian mixtures with at most $k$ components across values of $p\in[1,\infty]$.
\item In Section~\ref{sec:mle_bracketing}, focusing again on Gaussian kernels, we  
show that the $L^2(\Omega)$ risk of $K\star\hat\mu_{t,m}$ is 
characterized by the solution to a Le Cam-type equation, involving
the local $L^2(\Omega)$ bracketing integral of the class
of Gaussian mixture densities. We  then 
sharply bound this bracketing integral,
using the $L^p(\Omega)$ norm equivalence
proven in the previous item.
\end{itemize}
The uniformity of $\mu$ does not play a
role in any of these steps, thus the majority of our intermediary results
will be stated for arbitrary finite mixing measures $\mu\in\calP_k(\Omega)$
in the interest of generality. Uniformity
merely plays a role in translating the moment bound of Proposition~\ref{prop:mle_gaussian}
into the (local) Wasserstein bounds of Theorems~\ref{thm:global_minimax_risk}--\ref{thm:local_minimax_risk_separated}, via the stability bound of Theorem~\ref{thm:moment_comparison}.

\subsection{Moment-$L^2(\bbR^d)$ Comparison Bounds} 
\label{sec:mle_moment_L2}
The following result relates the moment distance $M_k$ over the space
of mixing measures to the corresponding $L^2(\bbR^d)$ distance
between the mixture densities.  
\begin{theorem}\label{thm:momentL2-bounds_all}
Let $k\in \bbN$, and let  $\Theta$ be a bounded set contained in  
$B_\infty(0,r)$ for some $r\geq 1$. Then, the following assertions hold.
	\begin{enumerate}
		\item (Gaussian kernels) If  $K$ is a Gaussian density satisfying
		Assumption~\ref{ass:kernel}(ii), then
		 there exists a constant $C= C(\Sigma,d,k,r)>0$ such that
		for any $\mu,\nu\in\calP_k(\Theta)$, 
		\begin{align*}
			M_k(\mu, \nu) \leq M_{2k-1}(\mu,\nu) \leq C\|K\ast (\mu - \nu)\|_{L^2(\RR^d)}.
		\end{align*}
		\item (Compactly-supported kernels) If $K\in L^2(\bbR^d)$ is compactly-supported in $B_\infty(0,1)$, 
		then there exists a positive constant $C=C(d,K,k)>0$ such that 
		for all $\mu,\nu\in\calP_k(\Theta)$, 
		\begin{align*}
M_k(\mu, \nu) \leq M_{2k-1}(\mu,\nu) \leq C  r^{2k-1+d/2} \|K\ast (\mu - \nu)\|_{L^2(\bbR^d)}.
		\end{align*}
	\end{enumerate}
\end{theorem}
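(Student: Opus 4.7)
In both parts, let $\eta = \mu - \nu$ be the signed measure, which is supported in $\Theta \subseteq B_\infty(0,r)$, has at most $2k$ atoms, total variation at most $2$, and zero total mass. Since $M_k(\mu,\nu) \leq M_{2k-1}(\mu,\nu) = \sum_{1 \leq |\alpha| \leq 2k-1} |m_\alpha(\eta)|$, it suffices to bound each $|m_\alpha(\eta)|$ for $|\alpha| \leq 2k-1$ by $\|K\ast\eta\|_{L^2(\bbR^d)}$. The common starting point is Theorem~\ref{thm:moment_polynomials}$(i)$, which guarantees, for each multi-index $\alpha$, a polynomial $\psi_\alpha$ of degree $|\alpha|$ with leading term $x^\alpha$ and remaining coefficients determined by moments of $K$, satisfying the identity
\begin{equation*}
m_\alpha(\eta) = \int_{\bbR^d} \psi_\alpha(x)\, (K \ast \eta)(x) \, dx.
\end{equation*}

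For part (ii), since $\supp(K) \subseteq B_\infty(0,1)$ and $\supp(\eta) \subseteq B_\infty(0,r)$ with $r \geq 1$, the convolution $K \ast \eta$ is supported in the compact set $B_\infty(0, r+1) \subseteq B_\infty(0, 2r)$. Applying Cauchy--Schwarz directly yields
\begin{equation*}
|m_\alpha(\eta)| \leq \|\psi_\alpha\|_{L^2(B_\infty(0, 2r))} \cdot \|K \ast \eta\|_{L^2(\bbR^d)}.
\end{equation*}
Since $\psi_\alpha$ is a polynomial of degree $|\alpha|$ whose coefficients are controlled by the bounded moments of $K$, an elementary estimate gives $\|\psi_\alpha\|_{L^2(B_\infty(0,2r))} \leq C(d,K,k)\, r^{|\alpha| + d/2}$. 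Summing over $1 \leq |\alpha| \leq 2k-1$ produces the stated bound with constant proportional to $r^{2k-1+d/2}$.

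For part (i), the Cauchy--Schwarz approach collapses because $\psi_\alpha \notin L^2(\bbR^d)$ and $K \ast \eta$ has Gaussian tails spread over all of $\bbR^d$. I would instead pass to the frequency domain via Plancherel's identity,
\begin{equation*}
\|K \ast \eta\|_{L^2(\bbR^d)}^2 = (2\pi)^{-d} \int_{\bbR^d} e^{-\xi^\top \Sigma \xi} |\hat\eta(\xi)|^2 \, d\xi,
\end{equation*}
and exploit that $\hat\eta$ is entire, uniformly bounded by $\|\eta\|_{TV} \leq 2$, and satisfies $\partial^\alpha \hat\eta(0) = (-i)^{|\alpha|} m_\alpha(\eta)$. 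The problem thus reduces to bounding the low-order Taylor coefficients of $\hat\eta$ at the origin by the Gaussian-weighted $L^2$ norm appearing above. I would carry this out through a compactness argument on the compact parameter space $\calP_k(\Theta)^2$ equipped with the weak topology: both $(\mu,\nu) \mapsto M_{2k-1}(\mu-\nu)$ and $(\mu,\nu) \mapsto \|K\ast(\mu-\nu)\|_{L^2(\bbR^d)}$ are continuous, and since $\hat K$ vanishes nowhere, $\|K\ast(\mu-\nu)\|_{L^2} > 0$ whenever $\mu \neq \nu$. Off the diagonal, the quotient is a continuous function on a compact set, and hence bounded. Sequences collapsing to the diagonal are handled by a renormalization $\tilde\eta_n = \eta_n / M_{2k-1}(\eta_n)$ followed by passage to the limit; the finite atomic structure of $\eta_n$ and the injectivity of Gaussian convolution ensure that $\|K\ast\tilde\eta_n\|_{L^2}$ cannot tend to zero while $M_{2k-1}(\tilde\eta_n) = 1$.

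\textbf{Main obstacle.} The delicate case is part (i), specifically the degenerate regime $\mu_n - \nu_n \to 0$ in which both sides of the desired inequality vanish simultaneously. Sequences of the form $\eta_n = \frac{1}{n}(\delta_0 - \delta_{1/n})$ illustrate that weak limits of $\eta_n$ can be zero while $\eta_n$ carries nontrivial infinitesimal moment information (behaving like a distributional derivative). Rigorously ruling out blow-up of the ratio $M_{2k-1}(\eta_n) / \|K\ast\eta_n\|_{L^2}$ in such regimes is the heart of the proof. An alternative, more quantitative route would expand $\hat\eta$ in Hermite polynomials orthogonal with respect to $e^{-\xi^\top \Sigma \xi}$ and invert the triangular transition matrix between Hermite and Taylor coefficients, but controlling the infinite tails of this inversion uniformly over the class of signed measures requires care and is where the dependence of $C$ on $(\Sigma,d,k,r)$ enters implicitly.
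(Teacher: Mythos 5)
Your proof of part~$(ii)$ is correct and takes a genuinely different route from the paper. The paper expands $K\ast(\mu-\nu)$ in Legendre polynomials on a box $[-R,R]^d$, relates the coefficients to moments of $K\ast\mu$, and inverts a triangular moment matrix. You instead use the moment polynomial $\psi_\alpha$ from Theorem~\ref{thm:moment_polynomials} directly, observe that $K\ast\eta$ is supported in $B_\infty(0,r+1)$, and apply Cauchy--Schwarz. Your approach is more direct and avoids the Legendre machinery entirely, at the mild cost of not making the constant as explicit as the paper's (which tracks a singular value of the moment matrix); both give the same qualitative $r$-dependence. This is a nice simplification.

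Your proof of part~$(i)$, however, has a genuine gap, and it is precisely the one you flag as the ``main obstacle.'' The claim that ``off the diagonal, the quotient is a continuous function on a compact set, and hence bounded'' is false as stated: the complement of the diagonal in $\calP_k(\Theta)^2$ is open, not compact, and a continuous function on an open set can certainly blow up at the boundary. Your proposed fix — renormalize $\tilde\eta_n = \eta_n / M_{2k-1}(\eta_n)$ and pass to a limit — does not resolve this, because $\|\tilde\eta_n\|_{TV}$ is unbounded as $\eta_n \to 0$ (already for $\mu_n = \delta_0$, $\nu_n = \delta_{1/n}$, where $M_{2k-1}(\eta_n) \asymp 1/n$ but the TV norm of $\tilde\eta_n$ is $\asymp n$). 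Such sequences have no weak limit in the space of signed measures — the ``limit'' is a derivative of a Dirac delta, i.e., a genuine distribution — so the passage to the limit and the appeal to injectivity of Gaussian convolution on measures break down. Ruling out blow-up here is exactly the content of the theorem, so the compactness scheme is circular unless supplemented by a delicate Taylor-expansion / coarse-graining argument in the style of Heinrich--Kahn. The ``alternative, more quantitative route'' you mention — expanding in Gaussian-weighted Hermite polynomials and inverting the (triangular) transition matrix to moments — is in fact what the paper does, and it avoids the degeneracy issue entirely because it expresses both $M_{2k-1}$ and $\|K\ast\eta\|_{L^2}$ in terms of the same underlying moment coordinates, reducing the comparison to a linear-algebraic fact that holds pointwise rather than only in the limit.
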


The proof of both assertions
 relies on expressing the 
respective $L^2$-norm on the right-hand side in terms 
of suitable orthonormal systems. 
For assertion~$(i)$, we use as 
our orthonormal system the collection of Hermite polynomials 
weighted with a Gaussian kernel, a technique 
which has been used since the works 
of~\cite{lepski1999,ingster2001,cai2011} for   bounding
statistical distances between Gaussian mixtures. 
We also emphasize that~\citet[Lemma~4.4]{doss2023} establish a related inequality,
which coincides with assertion~$(i)$ in the univariate case. 
To prove assertion $(ii)$, we instead consider the Legendre 
polynomials, which form the classical orthogonal polynomial family with respect
to the indicator function of a compact interval (see \Cref{app:orthoPolynomials}). 
This polynomial family was also recently used by~\cite{kim2024a} 
to establish lower bounds on the Hellinger distance between
scale mixtures of continuous uniform distributions. 
A detailed proof of Theorem~\ref{thm:momentL2-bounds_all}
appears in Appendix~\ref{app:pf_thm_momentL2-bounds_all}.

By combining Theorem~\ref{thm:momentL2-bounds_all} with 
the moment comparison bound of Theorem~\ref{thm:moment_comparison}
for uniform finite mixing measures,
and its analogue for general finite mixing measures~\citep{wu2020}, we 
deduce that there exists a constant $C' > 0$ such that
\begin{align}
\label{eq:W_L2_unif}
W_1^k(\mu,\nu) &\leq C' \|K\star(\mu-\nu)\|_{L^2(\bbR^d)},\quad \text{for all }
\mu,\nu\in\calU_k(\Theta),\\*  
\label{eq:W_L2_general}
W_1^{2k-1}(\mu,\nu) &\leq C' \|K\star(\mu-\nu)\|_{L^2(\bbR^d)},\quad \text{for all }
\mu,\nu\in\calP_k(\Theta).
\end{align}
While equation~\eqref{eq:W_L2_unif} is new, 
bounds such as equation~\eqref{eq:W_L2_general}
are at the heart of many past analyses of maximum likelihood
estimation in finite mixture models.
In particular,~\cite{heinrich2018} proved a version of equation~\eqref{eq:W_L2_general} 
when $d=1$, with the $L^2(\bbR^d)$ distance replaced by the Kolmogorov-Smirnov distance.
\cite{wei2023} also proved a similar bound for general dimension $d$ 
with the $L^2(\bbR^d)$ distance replaced by the Total Variation distance.
We will see in the following subsection that the $L^2(\bbR^d)$ and Total Variation distance
are in fact equivalent
 for finite Gaussian mixture models, thus equation~\eqref{eq:W_L2_general}
recovers the result of \cite{wei2023}
for the 
important case of the Gaussian kernel. 
On the other hand, their results do not
apply to the compactly-supported kernels considered in Theorem~\ref{thm:momentL2-bounds_all}(ii), since they preclude mixture families
whose  mixture components can have non-overlapping support.

We also emphasize that
 the works of~\cite{heinrich2018,wei2023} are based on subtle asymptotic arguments, involving 
 Taylor expansions of the mixture parameters ordered according to
 a so-called ``coarse-graining'' ultrametric tree. 
 While such arguments have the advantage of being applicable to rather general
 mixture density families 
 (known as {\it strongly identifiable} mixtures, 
 which need not be of convolution type), their asymptotic nature prohibits 
 an explicit dependence of the constant~$C'$ on the various problem parameters.
 In contrast, our proof of the above equations is comparatively simple, and is based on entirely non-asymptotic
 arguments. The scaling of~$C'$ can therefore be made explicit, in principle. 
 For instance, when $d = 1$, a direct computation (which we provide in Appendix~\ref{app:pf_remark_constants}) 
 yields that the constant~$C$ in the statement of 
Theorem~\ref{thm:momentL2-bounds_all} is bounded from above by 
 $C(k)(1+ \int|x|^{2k+1} K(x) \dif x)^{2k}$. 
 Further, for $d = 2$ and a rotationally invariant kernel that is compactly-supported within $B(0,\tau)$, we also confirm in Appendix~\ref{app:pf_remark_constants} that,
  for $\mu, \nu \in \calP(\CC)$ supported within $B(0,r)$,  the $\ell_2$-moment distance is bounded above~as: 
		\begin{align*}
			M_{2k-1}(\mu, \nu)\leq \sqrt{\pi(2k-1)} \left((r+\tau) \vee (r+\tau)^{2k}\right) \|K\ast (\mu - \nu)\|_{L^2(\RR^2)}.
		\end{align*}
In particular, such explicit constants in terms of the kernel enable the use of truncation arguments to extend~\Cref{thm:momentL2-bounds_all}$(ii)$ to general sub-exponential  kernels at the cost of an additional logarithmic term on the right-hand side. 
  
\subsection{$L^p$ Equivalences for Finite Gaussian Mixtures}
\label{sec:mle_Lp_equiv}

We now focus our attention on Gaussian kernels $K$
satisfying Assumption~\ref{ass:kernel}(ii).
Our first goal in this subsection is to show that the $L^2(\bbR^d)$ 
norm appearing in Theorem~\ref{thm:momentL2-bounds_all}(i)
can in fact be controlled by 
the restricted norm $L^2(\Omega)$, 
over the compact observation domain $\Omega\subseteq \bbR^d$.
We will then also deduce some additional equivalences
between divergences on the space of finite Gaussian mixtures, which 
are of independent interest.
Our arguments will stem from the following Proposition.
\begin{proposition}
\label{prop:Lp_to_lp}
	Let $d,k\in \NN$ and let $K$ be a Gaussian kernel satisfying Assumption~\ref{ass:kernel}(ii). 
	Furthermore, given a bounded set $\calA\subseteq \RR^d$,
	consider the regular grid with anchor point $a\in \calA$ and scaling $s>0$, 
	\begin{align}\label{eq:grid}
		\calG_a\coloneqq  \calG_a(\Sigma, d,k,s) \coloneqq \left\{ a + \frac{s}{2k}\Sigma i \;\colon  i\in \{0, \dots, 2k-1\}^d \right\}.
	\end{align}
	Then, for every $p,q\in [1,\infty]$ and compact $\domain \subseteq \RR^d$  there is a constant $C=C(\domain, \Sigma, \calA, d, k, p,q,s)>0$ such that for all $\mu,\nu \in \calP_k(\domain)$ and $a\in \calA$ it holds 
	\begin{align*}
		\|K\ast (\mu - \nu)\|_{L^p(\RR^d)} \leq C \|K\ast (\mu - \nu)\|_{\ell^q(\calG_a)}.
	\end{align*} 
\end{proposition}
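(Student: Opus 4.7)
The plan is to regard $K\ast(\mu-\nu)$ as an element of a finite-dimensional family parameterized by at most $2k$ atoms in $\Theta$ and $2k$ real weights summing to zero, and then to exploit compactness. Both $\|K\ast(\mu-\nu)\|_{L^p(\bbR^d)}$ and $\|K\ast(\mu-\nu)\|_{\ell^q(\calG_a)}$ are continuous, positively homogeneous-of-degree-one functionals of these parameters, so it suffices to prove (a) that the grid evaluation is \emph{identifying}, i.e.\ $\|K\ast(\mu-\nu)\|_{\ell^q(\calG_a)} = 0$ forces $\mu=\nu$, and (b) that all relevant parameters live in a compact set (for which boundedness of $\calA$ is essential, since otherwise the grid can escape to infinity while $\|K\ast(\mu-\nu)\|_{L^p(\RR^d)}$ stays fixed, making the ratio diverge).

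\textbf{Key step: identifiability on the grid.} Write $\sigma = \mu-\nu$ as a signed measure with distinct atoms $z_1,\dots,z_N\in\Theta$ ($N\leq 2k$) and weights $c_1,\dots,c_N\in\RR$. Using the Gaussian identity $K(x-z) = K(x)\exp(z^\top\Sigma^{-1}x - \tfrac12 z^\top\Sigma^{-1}z)$, at a grid point $x = a + \tfrac{s}{2k}\Sigma i$ with $i\in\{0,\dots,2k-1\}^d$,
$$\frac{(K\ast\sigma)(x)}{K(x)} \;=\; \sum_{j=1}^N \hat c_j(a)\prod_{\ell=1}^d r_{j,\ell}^{\,i_\ell},$$
where $r_{j,\ell} = \exp(\tfrac{s}{2k}z_j^{(\ell)}) > 0$ and $\hat c_j(a) = c_j\exp(z_j^\top\Sigma^{-1}a - \tfrac12 z_j^\top\Sigma^{-1}z_j)$. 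Since the $z_j$ are distinct in $\RR^d$, the vectors $(r_{j,1},\dots,r_{j,d})$ are distinct in $\RR^d_{>0}$. I then build Lagrange-type interpolants in the span of the monomials $\{X_1^{i_1}\cdots X_d^{i_d}:0\leq i_\ell\leq 2k-1\}$ as follows: pick a linear form $L(X) = \sum_\ell \alpha_\ell X_\ell$ with generic $\alpha\in\RR^d$ so that $L(r_1),\dots,L(r_N)$ are pairwise distinct, and set
$$P_j(X) \;=\; \prod_{l\neq j}\frac{L(X) - L(r_l)}{L(r_j) - L(r_l)},\qquad j=1,\dots,N.$$
Each $P_j$ has total degree at most $N-1\leq 2k-1$, so it lies in the required monomial span, and satisfies $P_j(r_i)=\delta_{ij}$. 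Dualizing the evaluation map gives $\hat c_j(a)=0$ for all $j$, hence $c_j=0$ and $\sigma=0$.

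\textbf{Compactness step.} Parameterize $\sigma$ by $(z,c)\in\Theta^{2k}\times\{c\in\RR^{2k}:\sum_i c_i = 0\}$, padding with zero weights when fewer than $2k$ atoms are present. Continuity of $(z,c,a) \mapsto \|K\ast\sigma_{z,c}\|_{L^p(\RR^d)}$ follows from the smoothness and Gaussian decay of $K$ (after a routine dominated-convergence argument, with $p=\infty$ handled directly by uniform bounds), while continuity of $(z,c,a)\mapsto \|K\ast\sigma_{z,c}\|_{\ell^q(\calG_a)}$ is immediate from pointwise continuity of the convolution. Both are positively homogeneous of degree one in $c$. Restricting to the compact slice $\calK := \Theta^{2k}\times\{c:\sum_i c_i=0,\ \|c\|_2 = 1\}\times\overline{\calA}$, compactness and continuity give $M := \sup_\calK \|K\ast\sigma\|_{L^p(\RR^d)}<\infty$, while the identifiability step forces $\|K\ast\sigma\|_{\ell^q(\calG_a)} > 0$ throughout $\calK$, hence $m := \inf_\calK \|K\ast\sigma\|_{\ell^q(\calG_a)}>0$. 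Homogeneity then extends the bound $\|K\ast\sigma\|_{L^p(\RR^d)}\leq (M/m)\|K\ast\sigma\|_{\ell^q(\calG_a)}$ to all $\mu,\nu\in\calP_k(\Theta)$ and all $a\in\calA$.

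\textbf{Main obstacle.} The crux is the multivariate grid identifiability. In one dimension it reduces to an ordinary Vandermonde with distinct positive nodes $r_j$; in $d\geq 2$, however, the atoms $z_j$ may share coordinates, so the naive tensor-product Vandermonde argument can fail. The generic-linear-form trick bypasses this by projecting the distinct points $r_j\in\RR^d$ onto a line where they remain separated, reducing the problem to a one-dimensional interpolation of degree at most $N-1\leq 2k-1$ that fits within the allowed monomial span of bidegrees $\leq 2k-1$. The rest of the argument is a standard compactness-and-continuity proof; making the constant $C$ fully explicit in $(\Theta,\Sigma,\calA,d,k,p,q,s)$ would require quantifying $M$ and $m$, but the statement only needs existence.
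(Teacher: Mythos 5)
Your identifiability step is correct — the Gaussian identity, the reduction to a multivariate moment system $\sum_j \hat c_j\, r_j^i = 0$ for $i\in\{0,\dots,2k-1\}^d$, and the generic-linear-form trick to build a Lagrange interpolant within the span all work, and the dual argument $\sum_j \hat c_j P_{j_0}(r_j)=\hat c_{j_0}$ correctly yields $\sigma=0$. However, the compactness step has a genuine gap. On the slice $\calK=\Theta^{2k}\times\{c:\sum_i c_i=0,\ \|c\|_2=1\}\times\overline{\calA}$, the claim that $\inf_\calK\|K\ast\sigma\|_{\ell^q(\calG_a)}>0$ is false: the map $(z,c)\mapsto\sigma_{z,c}$ is not injective, and the zero signed measure lies on the unit sphere. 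For instance, taking $z_1=z_2=\theta\in\Theta$, $c_1=-c_2=1/\sqrt 2$, and $c_3=\dots=c_{2k}=0$ gives $\|c\|_2=1$, $\sum_ic_i=0$, but $\sigma_{z,c}=0$, so both $\|K\ast\sigma\|_{\ell^q(\calG_a)}$ and $\|K\ast\sigma\|_{L^p(\RR^d)}$ vanish at this point. Thus $m=0$ and $M/m$ is undefined, and the argument does not exclude that the ratio $\|K\ast\sigma\|_{L^p}/\|K\ast\sigma\|_{\ell^q}$ blows up as the atoms of $\mu$ and $\nu$ merge and cancel. The same degeneracy persists if you parameterize directly by $(\mu,\nu,a)\in\calP_k(\Theta)^2\times\overline\calA$: the ratio is $0/0$ on the diagonal $\mu=\nu$, and compactness alone does not give the required extension.

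This is not a cosmetic issue. A quantitative version of your interpolation would need coefficient bounds on the Lagrange polynomials $P_j$, which involve $\prod_{l\neq j}|L(r_j)-L(r_l)|^{-1}$ and therefore diverge precisely when the atoms approach one another — exactly the boundary the compactness argument fails to control. The paper's proof avoids this by never normalizing: it rewrites grid evaluations as real-order moments of a transformed atomic measure and then invokes a quantitative moment-comparison inequality (Lemma~\ref{lem:ContinuousMomentBound}(ii)) proved via Newton divided-difference interpolation, whose coefficient bounds depend only on the diameter of $\Theta$ and not on the separation of the atoms. That separation-free constant is what carries the estimate through the degenerate locus; your argument would need an analogue of it, at which point the compactness wrapper becomes superfluous.
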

Proposition~\ref{prop:Lp_to_lp} shows, somewhat surprisingly, that
the $L^p(\bbR^d)$ metrics over the space of finite Gaussian mixture densities
are bounded by the distance between finitely many density evaluations, a~fact which is fundamentally tied to the structure of the Gaussian kernel.
The proof of Proposition~\ref{prop:Lp_to_lp} appears in \Cref{app:pf_prop_Lp_to_lp}, where
our main insight is the fact that the evaluations of 
Gaussian mixture densities, where $\Sigma$ is the covariance of the Gaussian kernel, can be expressed as follows
$$K\star\mu(\Sigma x) = m_x(\tilde\mu),\quad
\text{where }  \quad 
	\tilde \mu \coloneqq \sum_{l = 1}^{k} w_l\exp\left(  -\frac{1}{2} \theta_l^\top \Sigma^{-1}\theta_l\right) \delta_{\exp( \theta_l)},$$
for any $k$-atomic measure $\mu=  \sum_{l=1}^k w_l \delta_{\theta_i}$, 
where $m_x(\tilde\mu)$ denotes the real moment of order~$x$ of the 
measure $\tilde\mu$.
It follows that the $L^p(\bbR^d)$ norm of the difference $K\star(\mu-\nu)$
depends only on the real moments  of the $k$-atomic
measures $\tilde \mu$ and $\tilde \nu$, which we show are all 
quantitatively controlled by 
their first  $2k-1$ integer moments. 
In turn, these integer moments are controlled by finitely many evaluations 
of $K\star(\mu-\nu)$, by again leveraging the above identity. 
 
As a first implication of 
Proposition~\ref{prop:Lp_to_lp}, 
notice carefully that the constant $C$ in Proposition~\ref{prop:Lp_to_lp}
does not depend on the anchor point $a\in \calA$. The bound thus continues
to hold for a {\it random} anchor point $a$, uniformly distributed in $\calA$. 
By choosing $\calA$ appropriately, and taking an expectation over such a random anchor point, we obtain the following. 
\begin{proposition}
\label{cor:Lp_Omega_Rd}
Assume the setting of Proposition \ref{prop:Lp_to_lp}. 
Further, let $\Omega\subseteq \RR^d$ be a set with non-empty interior. Then, for any $p,q\in [1,\infty]$, there is a constant $C = C(\Theta, \Omega, \Sigma,d,k,p,q) > 0$ such that %
for any $\mu,\nu\in\calP_k(\Theta)$ it holds  
$$\|K\star(\mu-\nu)\|_{L^p(\bbR^d)}\leq C \|K\star(\mu-\nu)\|_{L^q(\Omega)}.$$
\end{proposition}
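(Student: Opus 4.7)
The plan is to deduce the result from Proposition~\ref{prop:Lp_to_lp} by an averaging argument over the anchor point $a \in \calA$. The crucial feature of that proposition is that its constant is uniform in $a$; raising its inequality to the $q$-th power and integrating against the uniform distribution on $\calA$ will convert the discrete $\ell^q(\calG_a)$ norm into the continuous $L^q$ norm on a suitable subset of $\Omega$.

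To set this up, use that $\Omega$ has non-empty interior to fix $x_0 \in \Omega$ and $\rho > 0$ with $B(x_0, 2\rho) \subseteq \Omega$. Choose the scaling parameter $s > 0$ in~\eqref{eq:grid} small enough that each of the finitely many grid offsets $\tfrac{s}{2k}\Sigma i$, for $i \in \{0,\dots,2k-1\}^d$, lies in $B(0,\rho)$. Then take $\calA = B(x_0, \rho)$; this guarantees that for every $a \in \calA$ the entire translated grid $\calG_a$ is contained in $B(x_0, 2\rho) \subseteq \Omega$. With these choices, Proposition~\ref{prop:Lp_to_lp} yields a constant $C_0 > 0$ depending only on $\Theta, \Omega, \Sigma, d, k, p, q$ such that for every $a \in \calA$ and all $\mu,\nu \in \calP_k(\Theta)$,
\begin{equation*}
\|K \star (\mu - \nu)\|_{L^p(\bbR^d)} \leq C_0 \, \|K \star (\mu - \nu)\|_{\ell^q(\calG_a)}.
\end{equation*}

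For $q \in [1,\infty)$, I would raise both sides to the $q$-th power, integrate in $a$ over $\calA$, and use that the left-hand side is constant in $a$ to obtain
\begin{equation*}
|\calA| \cdot \|K \star (\mu - \nu)\|_{L^p(\bbR^d)}^q \leq C_0^q \sum_{i \in \{0,\dots,2k-1\}^d} \int_{\calA} \bigl| K \star (\mu - \nu)(a + \tfrac{s}{2k}\Sigma i) \bigr|^q \, da.
\end{equation*}
For each $i$, the linear change of variables $y = a + \tfrac{s}{2k}\Sigma i$ turns the inner integral into $\int_{\calA + \frac{s}{2k}\Sigma i} |K \star (\mu - \nu)(y)|^q \, dy$, which by construction is bounded by $\int_\Omega |K \star (\mu - \nu)(y)|^q \, dy$. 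The sum then contributes at most $(2k)^d$ copies of $\|K\star(\mu-\nu)\|_{L^q(\Omega)}^q$; dividing by $|\calA|$ and taking $q$-th roots gives the claimed bound. The case $q = \infty$ is even more direct: applying Proposition~\ref{prop:Lp_to_lp} with $q = \infty$ and any fixed $a \in \calA$, the right-hand side equals $C_0 \max_{x \in \calG_a} |K \star (\mu - \nu)(x)|$, which is at most $C_0 \|K \star (\mu - \nu)\|_{L^\infty(\Omega)}$ since $K \star (\mu - \nu)$ is continuous and every grid point lies in $\Omega$.

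The main subtlety, rather than a serious obstacle, is the geometric step of choosing $s$ and $\calA$ so that every translated copy of the grid sits inside $\Omega$. This succeeds precisely because the grid is a fixed finite point set that shrinks to $\{a\}$ as $s \downarrow 0$, while $\Omega$ has non-empty interior and hence contains a ball into which any sufficiently small grid embeds. All remaining manipulations reduce to Fubini, a linear change of variables, and a division by $|\calA|$, and the dependence of the final constant on $\Theta, \Omega, \Sigma, d, k, p, q$ is inherited directly from Proposition~\ref{prop:Lp_to_lp} together with the factors $(2k)^{d/q}$ and $|\calA|^{-1/q}$.
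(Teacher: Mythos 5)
Your proposal is correct and follows essentially the same averaging-over-anchor-points argument as the paper: apply Proposition~\ref{prop:Lp_to_lp} uniformly in $a$, integrate the $q$-th power over $\calA$, change variables, and bound the translated integrals by the $L^q(\Omega)$ norm (with the $q=\infty$ case handled directly). The only cosmetic difference is that the paper takes $\calA$ to be a small parallelepiped whose grid translates tile a sub-box of $\Omega$ exactly, whereas you use a ball and absorb the $(2k)^d$ overlapping copies into the constant; both are fine.
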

The proof appears in Appendix~\ref{app:pf_cor_Lp_Omega_Rd}. 
Combining this result with Theorem~\ref{thm:momentL2-bounds_all},
we deduce that the moment distance $M_{2k-1}$ between mixing measures
is bounded from above by the 
$L^2(\Omega)$ distance between the corresponding mixture densities, 
a fact which will play an important role in obtaining the convergence
rate of the MLE in the following subsection.
Before turning to this, however, we discuss another implication of 
Proposition~\ref{prop:Lp_to_lp}. 
Proposition~\ref{cor:Lp_Omega_Rd} shows that for any $\mu,\nu\in\calP_k(\Theta)$
and $p,q \in [1,\infty]$, one has 
$$\|K\star(\mu-\nu)\|_{L^p(\bbR^d)} \lesssim 
\|K\star(\mu-\nu)\|_{L^q(\Omega)} \leq \|K\star(\mu-\nu)\|_{L^q(\bbR^d)},$$
which implies that the $L^p(\bbR^d)$ metrics are equivalent over the space
of finite Gaussian mixtures. By leveraging 
this identity together with Theorem~\ref{thm:momentL2-bounds_all}, 
we arrive at the following chain of equivalences, 
some of which were already announced in 
Corollary~\ref{cor:TV_Hellinger_equiv} of the introduction.
\begin{theorem}[Equivalence of divergences between Gaussian mixtures]
\label{thm:equivalences}
Let $k\in \NN$, and let $K$ be a Gaussian kernel satisfying Assumption~\ref{ass:kernel}(ii). 
Let $\Theta \subseteq \bbR^d$ be a compact set with nonempty interior. 
Then, for any $p \in [1,\infty]$, it holds that
\begin{equation}
\|K\star(\mu-\nu)\|_{L^p(\bbR^d)} \asymp H(K\star\mu,K\star\nu) \asymp M_{2k-1}(\mu,\nu),
\quad \text{for all } \mu,\nu\in\calP_k(\Theta).
\end{equation}
In particular, it holds that
\begin{equation}
\mathrm{TV}(K\star\mu,K\star\nu) \asymp H(K\star\mu,K\star\nu),
\quad \text{for all } \mu,\nu\in\calP_k(\Theta).
\end{equation}
In each of the above displays, the implicit constants depend only on $\Sigma, \Theta,k,p$.
\end{theorem}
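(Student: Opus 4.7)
The statement packages three pairwise equivalences on $\calP_k(\Theta)\times\calP_k(\Theta)$: between different $L^p(\RR^d)$-norms of $K\star(\mu-\nu)$, between these and the moment distance $M_{2k-1}(\mu,\nu)$, and between these and the Hellinger distance $H(K\star\mu,K\star\nu)$. I will establish them sequentially, treating each as a bridge that inherits its constants from the one proven previously.

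I begin with the $L^p$--$L^q$ equivalence, which is essentially immediate from Proposition~\ref{cor:Lp_Omega_Rd}: for any $p,q\in[1,\infty]$, picking any bounded set $\Omega'\subseteq\RR^d$ with non-empty interior (say an open ball containing $\Theta$), that Proposition gives
\[
\|K\star(\mu-\nu)\|_{L^p(\RR^d)} \leq C\,\|K\star(\mu-\nu)\|_{L^q(\Omega')} \leq C\,\|K\star(\mu-\nu)\|_{L^q(\RR^d)},
\]
and swapping $p$ with $q$ produces the reverse inequality. For the moment equivalence $L^p\asymp M_{2k-1}$, the bound $M_{2k-1}(\mu,\nu)\lesssim\|K\star(\mu-\nu)\|_{L^2(\RR^d)}$ is exactly Theorem~\ref{thm:momentL2-bounds_all}$(i)$, and the previous step promotes the right-hand side to any $L^p$-norm. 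For the reverse direction, I would apply Proposition~\ref{prop:Lp_to_lp} to dominate $\|K\star(\mu-\nu)\|_{L^p(\RR^d)}$ by the $\ell^\infty$-norm of $K\star(\mu-\nu)$ over a fixed finite grid of $(2k)^d$ points; at each grid point the (multivariate) Hermite expansion
\[
K\star\mu(x) \;=\; K(x)\sum_{\alpha\in\NN_0^d}\frac{H_\alpha(x)}{\alpha!}\,m_\alpha(\mu)
\]
represents the density evaluation as a weighted sum of the moments of $\mu$. The low-order contributions $|\alpha|\leq 2k-1$ are immediately dominated by $M_{2k-1}(\mu,\nu)$, whereas the high-order tail is controlled by the fact (essentially Lemma~\ref{lem:momentIdentifiability_RealCompl} together with the symmetric-function reduction used in the proof of Lemma~\ref{lem:moment_identifiability}) that each moment of order $|\alpha|\geq 2k$ is an explicit polynomial in the first $2k-1$ moments; differencing and Taylor-expanding this polynomial on the compact image of the moment map will yield the desired linear control by $M_{2k-1}(\mu,\nu)$.

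The Hellinger equivalence will then combine the previous two steps with the classical inequalities $\mathrm{TV}\leq \sqrt{2}\,H$ and $H^2\leq \mathrm{TV}$. The first, together with $\mathrm{TV}=\tfrac12\|\cdot\|_{L^1}$ and the $L^p$-equivalence, immediately gives $\|K\star(\mu-\nu)\|_{L^p(\RR^d)}\lesssim H(K\star\mu,K\star\nu)$. For the reverse, $H^2\leq \mathrm{TV}$ only delivers a square-root bound; to upgrade to a linear one I will split $\RR^d$ into a bounded bulk on which $K\star\mu$ and $K\star\nu$ are uniformly bounded below (so that $(\sqrt{f}+\sqrt{g})^{-2}$ is bounded and the squared Hellinger integrand is comparable to $(f-g)^2$) and a Gaussian tail whose contribution is controlled by the exponential decay of $K\star(\mu+\nu)$ outside a sufficiently large bulk. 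The bulk contribution feeds into $\|K\star(\mu-\nu)\|_{L^2}^2$ and, through the $L^p$-chain, into $M_{2k-1}(\mu,\nu)^2$, producing the linear bound $H\lesssim \|K\star(\mu-\nu)\|_{L^p(\RR^d)}$.

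The hardest part will be the reverse direction of the moment step: converting the Hermite tail into a \emph{uniformly} linear estimate in $M_{2k-1}$, rather than the merely fractional $W_1$-based bound one would obtain from Theorem~\ref{thm:moment_comparison} alone. This amounts to a quantitative, uniform-Lipschitz version of the moment identifiability of $\calP_k(\Theta)$, and is where the special algebraic structure of $k$-atomic measures (and, in the near-diagonal Hellinger step, the Gaussian tail decay of $K$) is essential.
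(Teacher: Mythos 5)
Your overall architecture—chaining the $L^p$-equivalences from Proposition~\ref{cor:Lp_Omega_Rd}, the moment bound from Theorem~\ref{thm:momentL2-bounds_all}$(i)$, and then bridging to Hellinger—mirrors the paper's proof for most of the cycle, and the $L^p$-$L^q$ equivalence and the direction $M_{2k-1}\lesssim \|K\star(\mu-\nu)\|_{L^2}\lesssim H$ are handled correctly. However, the step $H\lesssim\|K\star(\mu-\nu)\|_{L^p}$ (equivalently $H\lesssim M_{2k-1}$), which you correctly identify as the hard direction, has a genuine gap that the bulk-tail decomposition cannot close. Writing $\epsilon=\|K\star(\mu-\nu)\|_{L^\infty}\asymp M_{2k-1}(\mu,\nu)$, on a fixed bounded bulk $\{\|x\|\leq R_0\}$ you indeed get $H^2_{\mathrm{bulk}}\lesssim\|K\star(\mu-\nu)\|_{L^2}^2\lesssim\epsilon^2$, since the densities are bounded below there. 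But on the tail $T=\{\|x\|>R_0\}$ the only pointwise control available from the Gaussian structure is $(\sqrt f-\sqrt g)^2\leq\min\{\epsilon,\,f+g\}$; splitting at the level set $\{f+g\asymp\epsilon\}$ and using the Gaussian decay of $f+g$ gives at best
\[
H^2_T \;\leq\; \tfrac12\int_T\min\{\epsilon, f+g\}\;\lesssim\;\epsilon\cdot\bigl(\log(1/\epsilon)\bigr)^{d/2},
\]
which is of order $\epsilon$ (up to logs), not $\epsilon^2$. Enlarging the bulk to shrink the tail further degrades the lower bound on $f+g$ over the bulk, so the two contributions cannot simultaneously be brought below $\epsilon^2$. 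Thus the decomposition yields only $H\lesssim\sqrt{M_{2k-1}}\cdot\mathrm{polylog}$, the same square-root degradation you were trying to avoid.

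The paper sidesteps this entirely by invoking the Hellinger upper bound of~\citet[Theorem~4.2]{doss2023}, which states $H(K_{\mathrm{Id}}\star\mu,K_{\mathrm{Id}}\star\nu)\lesssim\max_{1\leq\ell\leq 2k-1}\|\bM_\ell(\mu)-\bM_\ell(\nu)\|_F$ for finite mixing measures; the general $\Sigma$ case then follows by a spectral change of variables, and the moment-tensor Frobenius/operator norms are controlled by $M_{2k-1}$ via the slicing lemmas (Lemmas~\ref{lem:sliced_multivariate_moment_bound} and~\ref{lem:moment_sliced_moment}). That external result encodes a genuinely nonlocal cancellation property of finite Gaussian mixtures—roughly, that the Hellinger integrand over the tail, where densities are exponentially small, is controlled by the fact that the \emph{mixture} (not each density evaluation separately) is algebraically determined by finitely many moments. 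This cannot be recovered from pointwise bounds of the type $|f-g|\leq\epsilon$ and Gaussian decay alone; some finite-dimensional parametric structure of $\calP_k(\Theta)$ must enter. You would need either to reproduce the Doss et al.\ argument or find a substitute of comparable strength before the bulk-tail plan can be made rigorous.

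A secondary observation: your proposed direct bound $\|K\star(\mu-\nu)\|_{L^p}\lesssim M_{2k-1}$ via the Hermite expansion and dominating high-order moments by low-order ones is in fact an alternate route to the claim the paper gets indirectly through the chain $\|\cdot\|_{L^2}\lesssim H\lesssim M_{2k-1}$. It is plausible and consonant with the tools the paper develops (Lemma~\ref{lem:realMoment_lowIntegerMoments} provides exactly the quantitative Lipschitz control of high moments by $M_{2k-1}$), but as written it is superfluous once the Hellinger direction is secured, since the equivalences close the cycle regardless.
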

The proof appears in Appendix~\ref{app:pf_thm_equivalences}. 
As we already discussed in the introduction,
 Theorem~\ref{thm:equivalences} complements the work of~\cite{jia2023}, 
 who establish the equivalences
\begin{equation}
\label{eq:equiv_zeyu}
H^2(K\star\mu,K\star\nu) \asymp \KL(K\star\mu,K\star\nu) 
\asymp \chi^2(K\star\mu,K\star\nu),
\quad \text{for all } \mu,\nu \in \calP(\Theta).
\end{equation}
Furthermore,~\cite{doss2023} showed that the above divergences
are equivalent to $M_{2k-1}^2(\mu,\nu)$  under the additional
restriction that $\mu$ and $\nu$ are $k$-atomic. 
\cite{jia2023} posed the open question of whether the squared $\TV$ distance 
is also equivalent to the above divergences. Theorem~\ref{thm:equivalences}
resolves this problem for the case of finite Gaussian mixtures, and additionally
shows that all squared $L^p(\bbR^d)$ distances are  on this same scale. 
Let us emphasize, however, that our proofs rely heavily on the finiteness
of the supports of $\mu,\nu$, and the implicit constants in Theorem~\ref{thm:equivalences}
may depend exponentially on $k$. Thus, the result cannot easily be extended
to infinite Gaussian mixtures. In fact,   it cannot hold true
in such generality: it was pointed out by~\cite{jia2023}
that the $L^2(\bbR^d)$ and Hellinger distances
are not equivalent  over the space of infinite Gaussian mixture models.

\subsection{Convergence Rate of the MLE: Proof of Proposition~\ref{prop:mle_gaussian}}
\label{sec:mle_bracketing} 
We are now in a position to complete the proof of Proposition~\ref{prop:mle_gaussian}. 
We will prove assertion~$(i)$, concerning Gaussian kernels, and we defer the proof of assertion~$(ii)$ to Appendix~\ref{app:pf_mle_ii}. 
In view of Theorem~\ref{thm:momentL2-bounds_all} and Proposition~\ref{cor:Lp_Omega_Rd},
the MLE $\hat\mu_{t,m}$ satisfies the bound
\begin{equation}
\label{eq:pf_prop_mle_step}
\bbE M_{2k-1}^2(\hat\mu_{t,m}, \mu) \lesssim 
\bbE\|K\star (\hat\mu_{t,m}-\mu)\|^2_{L^2(\bbR^d)}
\lesssim \bbE\|K\star (\hat\mu_{t,m}-\mu)\|^2_{L^2(\Omega)},
\end{equation}
thus we are left with bounding the convergence rate of the 
fitted mixture density $K\star \hat\mu_{t,m}$ under the   $L^2(\Omega)$ norm. 
We prove such an upper bound next, beginning with several definitions. 
Given a function class $A \subseteq L^2(\Omega)$, 
 define the bracketing integral
$$\calJ_{[]}(\delta,A,L^2(\Omega)) = \int_0^\delta \left(\sqrt{\log N_{[]}(u,A,L^2(\Omega))}\vee 1\right)du,\quad \text{for all } \delta > 0.$$ 
Recall that $B_1,\dots,B_m$ denotes a partition of size $m$
 of the domain $\Omega$ satisfying Assumption \ref{ass:bins}, and define the histogram operator
$\Pi_m f(x) = \sum_{i=1}^m f(B_i)I_{B_i}(x)/\lambda(B_i),$ for any $f\in L^2(\bbR^d)$. 
Finally, define the function classes
\begin{align*}
\calF &= \{K\star\mu:\mu\in\calU_k(\Theta)\},\\
\calF_m(\gamma;f_0) &= \left\{ \Pi_m f: f\in\calF, \|\Pi_m(f-f_0)\|_{L^2(\Omega)}\leq \gamma\right\},
\quad \text{for any } \gamma > 0, f_0 \in\calF.
\end{align*}
We then have the following result.
\begin{lemma}
\label{lem:chaining_ours}
Let $d,k\in \NN$, let $\Theta, \Omega$ be compact sets satisfying Assumption \ref{ass:bins}, and let $K$ be a Gaussian kernel satisfying Assumption~\ref{ass:kernel}(ii).
Then, there exist constants $C,C_0,c_0 > 0$ 
depending only on $\Omega,\Theta,\Sigma, d,k$ such that the following
assertion holds: If
there exists $\gamma_{t,m} > 0$ such that for all $\gamma \geq \gamma_{t,m}$,   
\begin{align}
\label{eq:le_cam_bracketing_condition_ours} 
\sqrt t \gamma^2 \geq 
C_0 \sup_{f_0\in\calF}\calJ_{[]}(c_0\gamma, \calF_m(\gamma;f_0),L^2(\Omega)),
\end{align}
then,
$$\sup_{\mu\in\calU_k(\Theta)}\bbE_\mu \big\|  K\star(\hat\mu_{t,m}-\mu) \big\|_{L^2(\Omega)}^2 \leq C \left(\frac 1 {\sqrt t} + \gamma_{t,m} + m^{-1/d}\right)^2.$$
\end{lemma}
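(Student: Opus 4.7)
The plan is to adapt the classical bracketing-entropy and peeling machinery for MLE convergence rates (in the style of Birg\'e--Massart and van de Geer) to the binned Poisson observation model at hand. Three ingredients are needed: a basic inequality coming from the MLE, a localized maximal inequality for the centered Poisson empirical process, and a discretization bound for the histogram operator $\Pi_m$.

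First, rearranging the defining inequality for $\hat\mu_{t,m}$ in~\eqref{eq:MLE} yields the basic inequality
\begin{align*}
\sum_{i=1}^m t K\star\mu(B_i) \log\frac{K\star\mu(B_i)}{K\star\hat\mu_{t,m}(B_i)} - t\bigl(K\star\mu(B_i) - K\star\hat\mu_{t,m}(B_i)\bigr)
\leq \sum_{i=1}^m \bigl(X_i - tK\star\mu(B_i)\bigr)\log\frac{K\star\hat\mu_{t,m}(B_i)}{K\star\mu(B_i)}.
\end{align*}
Under Assumption~\ref{ass:kernel}(ii), the densities $\{K\star\mu : \mu\in\calU_k(\Theta)\}$ are uniformly bounded above and below on the compact domain $\Omega$ by positive constants depending only on $\Omega,\Theta,\Sigma,d,k$, so a second-order Taylor expansion of $x\mapsto x\log x - x + 1$ converts the left-hand side into a quantity of order $t\,\|\Pi_m K\star(\hat\mu_{t,m}-\mu)\|_{L^2(\Omega)}^2$. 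The right-hand side is a centered Poisson stochastic process indexed by $\calF$, which I will denote by $\calW_t(\hat\mu_{t,m})$.

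Next, I would apply a peeling argument: for integer $j\geq 0$, on the event $\|\Pi_m K\star(\hat\mu_{t,m}-\mu)\|_{L^2(\Omega)}\in[2^{j-1}\gamma, 2^j \gamma)$ with $\gamma\geq\gamma_{t,m}$, the basic inequality gives a lower bound of order $t\,2^{2j}\gamma^2$ on the deterministic side and an upper bound of $\sup_{f\in\calF_m(2^j\gamma;\,K\star\mu)}|\calW_t(f)|$ on the stochastic side. A Bernstein-type maximal inequality for the centered Poisson process, with bracket envelopes scaling linearly in the $L^2(\Omega)$-radius and per-bin Poisson variance of order $t/m$, controls this supremum by a chaining bound proportional to $\sqrt t \cdot \calJ_{[]}\bigl(c_0 \cdot 2^j\gamma,\,\calF_m(2^j\gamma;\,K\star\mu),\,L^2(\Omega)\bigr)$ plus a lower-order sub-exponential remainder. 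The defining condition~\eqref{eq:le_cam_bracketing_condition_ours} guarantees this chaining bound is dominated by $\sqrt t\,2^{2j}\gamma^2/C_0$, so each shell has probability at most $\exp(-c\, t\, 2^{2j}\gamma^2)$. Summing over $j$ and integrating the tail yields
\begin{align*}
\sup_{\mu\in\calU_k(\Theta)} \bbE_\mu \bigl\|\Pi_m K\star(\hat\mu_{t,m}-\mu)\bigr\|_{L^2(\Omega)}^2 \lesssim \frac{1}{t} + \gamma_{t,m}^2.
\end{align*}

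Finally, the discretization error is controlled by smoothness of the Gaussian convolution: $K\star\mu$ is uniformly Lipschitz on $\Omega$ with constant depending only on $\Sigma,\Theta,k$, and Assumption~\ref{ass:bins} ensures $\diam(B_i)\lesssim m^{-1/d}$, whence $\|\Pi_m K\star\mu - K\star\mu\|_{L^\infty(\Omega)}\lesssim m^{-1/d}$ uniformly in $\mu\in\calU_k(\Theta)$. The triangle inequality in $L^2(\Omega)$, combined with the bound from the preceding paragraph, then gives $\bbE_\mu\|K\star(\hat\mu_{t,m}-\mu)\|_{L^2(\Omega)}^2 \lesssim (1/\sqrt t + \gamma_{t,m} + m^{-1/d})^2$, as claimed. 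The main technical obstacle will be the Bernstein-type chaining step in the peeling argument: the observations are \emph{heterogeneous} Poisson counts rather than i.i.d.\ samples, so the maximal inequality must be set up with per-bin variance proxies of order $t/m$ and envelopes compatible with the logarithmic log-ratio $\log(K\star\hat\mu_{t,m}/K\star\mu)$; the constants $C_0,c_0$ in~\eqref{eq:le_cam_bracketing_condition_ours} must be tracked carefully through this step. Once it is in place, the peeling decomposition and integration of tails proceed along standard lines.
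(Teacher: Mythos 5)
Your proposal has the right structure—basic inequality from the MLE, Hellinger/$L^2$ comparison via the uniform lower and upper bounds on $\{K\star\mu\}_{\mu\in\calU_k(\Theta)}$, peeling over $L^2(\Omega)$-shells, a localized maximal inequality whose size is controlled by the Le Cam condition~\eqref{eq:le_cam_bracketing_condition_ours}, integration of exponential tails, and finally the $\Pi_m$-discretization bound of order $m^{-1/d}$. This matches the paper's decomposition at every structural point. The paper's Lemma~\ref{lem:Pi_error} is exactly your Lipschitz discretization argument, and the peeling-plus-tail-integration is carried out the same way.

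The genuine divergence is in how the stochastic term is controlled, and it is not a cosmetic difference. You propose to treat $\sum_i (X_i - tK\star\mu(B_i))\log(\cdot)$ as a centered \emph{heterogeneous Poisson} process and run a Bernstein-type bracketing chain directly, flagging this as the main obstacle because off-the-shelf maximal inequalities in van der Vaart--Wellner and Gin\'e--Nickl are stated for i.i.d.\ empirical processes. The paper avoids this obstacle altogether by \emph{de-Poissonizing}: since the vector $(X_1,\dots,X_m)$ is the bin count of a Poisson point process with intensity $t\,\Pi_m f_0$, one writes $X_j = \sum_{i=1}^T \mathds{1}(Y_i\in B_j)$ where $T\sim\mathrm{Poi}(tZ)$ with $Z = \Pi_m f_0(\Omega)\asymp 1$ and $Y_1,Y_2,\dots$ are i.i.d.\ with density $\Pi_m f_0/Z$. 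The basic inequality then splits into a conditionally i.i.d.\ empirical process $\bbG_\tau$ plus a deterministic KL term weighted by $(Zt-T)/t$; one conditions on $T=\tau$, restricts to $|\tau-Zt|\le t^{3/4}$ by Poisson concentration, absorbs the KL remainder, and applies the \emph{standard} Talagrand concentration and bracketing-chaining bounds (Lemma~\ref{lem:talagrand}) verbatim. This buys you direct access to classical empirical-process machinery at the price of an extra conditioning/concentration step; your direct-Poisson route would buy you cleaner bookkeeping at the price of having to prove (or import from, say, the Poisson-process concentration literature) a Talagrand-type inequality with the per-bin variance proxies you describe. Both routes are viable, but the paper's is the more economical given the lemmas already available in the references, and it sidesteps the heterogeneity issue you correctly identified as the sticking point of your plan.
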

Lemma~\ref{lem:chaining_ours} shows that the $L^2(\Omega)$ convergence rate of the 
estimator $K\star\hmu_{t,m}$ is governed by the $L^2(\Omega)$
bracketing entropy of the class of finite Gaussian mixture densities. 
The proof is given in Appendix~\ref{app:pf_lem_chaining_ours},
and uses the well-known fact that estimating the intensity function
$\Pi_m[K\star\mu]$ of our inhomogeneous 
Poisson point process model~\eqref{eq:model} 
is closely connected to the task of estimating the probability density
$\Pi_m[K\star\mu]/\Pi_m[K\star\mu](\Omega)$ from i.i.d.~observations~\citep{hohage2016inverse}. 
Indeed, our proof is inspired by classical work on nonparametric 
maximum likelihood density estimation~\citep{wong1995,vandegeer2000}, 
with appropriate formalism to handle the presence of
Poisson fluctuations and the lack of normalization of the
intensity functions involved. Notice that these classical density estimation
results are formulated with respect to the Hellinger loss function, 
which is equivalent to the $L^2(\Omega)$ norm in our setting (cf. Proposition~\ref{cor:Lp_Omega_Rd}
and Theorem~\ref{thm:equivalences}).

To obtain a convergence rate from Lemma~\ref{lem:chaining_ours}, it remains to   
bound the {\it local} bracketing integral appearing in the Le Cam-type
 equation~\eqref{eq:le_cam_bracketing_condition_ours}.
Although the {local} bracketing
entropy is bounded
above by the {\it global} 
bracketing entropy, which is well-studied for Gaussian mixtures~\citep{ghosal2001}, 
such upper bounds typically lead to rates containing redundant logarithmic factors.
For further discussion of this point, we refer to~\cite{doss2023},
who were the first to provide sharp bounds on the {\it local} Hellinger metric entropy
(without bracketing)
of finite Gaussian mixture families. 
Their approach was to use the equivalence $H(K\star\mu,K\star\nu)\asymp M_{2k-1}(\mu,\nu)$
discussed in the previous subsection,
which reduces the problem of computing Hellinger
{\it covering} numbers for Gaussian mixture densities to that of computing 
Euclidean covering numbers over the moment space, which in turn
can be done by elementary means (at least when $d$ is fixed). 
In principle, such an equivalence is  insufficient for bounding {\it bracketing} 
numbers, which require some level of pointwise control
(e.g.~\citet[Theorem 2.7.17]{vandervaart2023}). 
Fortunately, however, we have shown in Theorem~\ref{thm:equivalences}
that the $L^2(\Omega)$ distance is equivalent to the $L^\infty(\bbR^d)$ distance
for finite Gaussian mixtures, thus the local $L^2(\Omega)$-bracketing
entropies involved in Lemma~\ref{lem:chaining_ours} are actually equivalent 
to $L^\infty(\bbR^d)$ bracketing entropies, which are simply
$L^\infty(\bbR^d)$ metric entropies. 
Since the $L^\infty(\bbR^d)$ distance is further equivalent to the moment distance
$M_{2k-1}$, we can bound these $L^\infty(\bbR^d)$ metric entropies using 
a similar strategy as~\cite{doss2023}, with appropriate modifications to account
for the discretization induced by the operator $\Pi_m$.
These considerations ultimately lead to the following. 
\begin{lemma}
\label{lem:local_bracketing}
Let  $\Sigma\in\bbR^{d\times d}$
be a positive definite matrix, and let $K$ be the $N(0,\Sigma)$ density. 
Then, there exists constants $c_1, C_1>0$ that only depend on $\Omega,\Theta, \Sigma, d,k$ such that
for any $u,\gamma > 0$, 
$$\sup_{f_0\in\calF}\log N_{[]}(u, \calF_m(\gamma;f_0),L^2(\Omega)) \leq C_1
\begin{cases}
\gamma/u,  & \gamma > u\geq c_1 m^{-1/d},\\
\log(1/u), & \mathrm{otherwise}.
\end{cases}$$
\end{lemma}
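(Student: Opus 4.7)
The plan is to reduce $L^2(\Omega)$-bracketing of $\calF_m(\gamma;f_0)$ to Euclidean covering of a ball in the finite-dimensional moment space, leveraging the equivalences from Section~\ref{sec:mle_Lp_equiv}. Theorem~\ref{thm:equivalences} together with Proposition~\ref{cor:Lp_Omega_Rd} gives, for $\mu,\nu\in\calU_k(\Theta)$,
\begin{equation*}
M_{2k-1}(\mu,\nu) \asymp \|K\star(\mu-\nu)\|_{L^\infty(\bbR^d)} \asymp \|K\star(\mu-\nu)\|_{L^2(\Omega)},
\end{equation*}
so the relevant geometry on $\calF$ is controlled by the Euclidean geometry of the moment vectors $m(\mu) = (m_\alpha(\mu))_{|\alpha|\leq 2k-1} \in \bbR^{N_k}$, for some $N_k=N_k(d,k)$. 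Since $K$ is Gaussian, $K\star\mu$ has partial derivatives bounded uniformly in $\mu\in\calU_k(\Theta)$; combined with Assumption~\ref{ass:bins} this yields $\|\Pi_m(K\star\mu) - K\star\mu\|_{L^\infty(\Omega)} \lesssim m^{-1/d}$.

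First, I will translate the local constraint defining $\calF_m(\gamma;f_0)$. Writing $f_0=K\star\mu_0$ and $f=K\star\mu$, the discretization bound and the triangle inequality yield $\|K\star(\mu-\mu_0)\|_{L^2(\Omega)} \lesssim \gamma + m^{-1/d}$, and hence $\|m(\mu)-m(\mu_0)\|_2 \leq R$ with $R \asymp \gamma + m^{-1/d}$ by the equivalence above. Second, I will build the brackets. Fix $u>0$ and cover the Euclidean $R$-ball around $m(\mu_0)$ in $\bbR^{N_k}$ by balls of radius $u/C$, where $C$ absorbs the equivalence constants. For each cover point $\mu_i$, the pair $[\Pi_m(K\star\mu_i) - u,\, \Pi_m(K\star\mu_i) + u]$ is a valid $L^\infty(\Omega)$-bracket, hence an $L^2(\Omega)$-bracket of width at most $2u\sqrt{\lambda(\Omega)}$; and any $\Pi_m f\in\calF_m(\gamma;f_0)$ whose moment vector lies within $u/C$ of $m(\mu_i)$ satisfies $\|K\star(\mu-\mu_i)\|_{L^\infty(\bbR^d)} \leq u$, a property that persists after applying $\Pi_m$, so the bracket contains it. This reduces $L^2(\Omega)$-bracketing to Euclidean covering in $\bbR^{N_k}$.

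A standard volumetric estimate then gives $N(u/C, B_R, \|\cdot\|_2) \leq (C'' R/u)^{N_k}$. In the regime $u \geq c_1 m^{-1/d}$ one has $R \lesssim \gamma$, so $\log N_{[]}(u) \lesssim N_k \log(\gamma/u) \leq N_k (\gamma/u)$ using $\log x \leq x$ for $x\geq 1$ (valid since $\gamma > u$), which matches the first branch after a rescaling of $u$. In the complementary regime I instead cover the entire compact moment space, obtaining $\log N_{[]}(u) \lesssim N_k \log(1/u)$, matching the second branch. The most delicate point will be ensuring that the equivalence constants from Theorem~\ref{thm:equivalences}, which may depend exponentially on $k$ and on $\Sigma$, fold cleanly into the stated constants $c_1, C_1 = C_1(\Omega,\Theta,\Sigma,d,k)$, and that the threshold separating the two regimes is indeed a uniform constant multiple of $m^{-1/d}$ independent of $\gamma$.
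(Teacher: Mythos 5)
Your proposal is correct and follows essentially the same route as the paper's proof: both reduce the local $L^2(\Omega)$-bracketing number to a Euclidean covering number of a ball in the finite-dimensional moment space via the equivalences $M_{2k-1}\asymp \|\cdot\|_{L^\infty(\bbR^d)}\asymp\|\cdot\|_{L^2(\Omega)}$ of Theorem~\ref{thm:equivalences}, handle the binning through the Lipschitz bound $\|\Pi_m f - f\|\lesssim m^{-1/d}$, and conclude with a volumetric estimate plus $\log(1+x)\leq x$ in the first regime and a global entropy bound in the second. The only differences are cosmetic (you cover an $\ell_2$-ball of moment vectors up to order $2k-1$ where the paper covers an $\ell_1$-ball of moments up to order $k$, and you obtain the $\log(1/u)$ branch from the compact moment space rather than citing the known Hellinger entropy bound for Gaussian mixtures).
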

The proof appears in Appendix~\ref{app:pf_lem_local_bracketing}.
In particular, it follows from here that%
\begin{align*}
\sup_{f_0\in\calF}\calJ_{[]}(c_0\gamma,\calF_m(\gamma;f_0),L^2(\Omega))  
 \lesssim m^{-1/d}\sqrt{\log m} + \gamma,~~\text{for all } \gamma >0,
\end{align*}
thus, the Le Cam-type equation~\eqref{eq:le_cam_bracketing_condition_ours}
is solved for all 
$$\gamma \gtrsim \gamma_{t,m}:= t^{-\frac 1 2} + \sqrt{t^{-\frac 1 2}m^{-1/d}\sqrt{\log m}}.$$
Continuing from equation~\eqref{eq:pf_prop_mle_step}, and applying Lemma~\ref{lem:chaining_ours}
together with the above display, we finally obtain
$$\bbE M_k^2(\hat\mu_{t,m}, \mu)
\lesssim \bbE\|K\star(\hat\mu_{t,m}-\mu)\|_{L^2(\Omega)}^2
\lesssim (\gamma_{t,m} + t^{-1/2} + m^{-1/d})^2
\lesssim (t^{-\frac 1 2} + m^{-1/d}\sqrt{\log m})^2.$$
Proposition~\ref{prop:mle_gaussian}$(i)$ thus follows.
The proof of Proposition~\ref{prop:mle_gaussian}$(ii)$
follows  along similar lines, however we do not
know of an analogue of Theorem~\ref{thm:equivalences}
for generic compact kernels $K$, thus we have not found a way to 
sharply bound the local bracketing entropies that arise in this case.
Our proof instead bounds the global bracketing entropies, and
incurs polylogarithmic factors in $t$ (which are likely redundant, and 
do not arise in our analysis of the method of moments). We defer a proof sketch to
 Appendix~\ref{app:pf_mle_ii}.

\section{Minimax Lower Bounds}\label{sec:minimaxLowerBound}

Complementary to our upper bounds on the statistical performance of the method of moments and the MLE, we now establish matching minimax lower bounds in terms of parameter $t$. More precisely, we derive global and local minimax lower bounds for the set of estimators of $\mu$, denoted by $\calE_{t,m}(\domain)$, 
i.e., the set of Borel-measurable functions $\hat\mu_{t,m}$ of 
an observation $(X_1,\dots,X_m) \sim \bP_\mu^t$  drawn from model~\eqref{eq:model}. Our main result (Proposition \ref{prop:minimax_lower_bound}) in this context is stated for $k$-regular kernels.

\begin{definition}[Regular kernels]\label{def:regularKernel}
	A Lipschitz probability kernel $K\colon \RR^d \to \RR$ is called $s$-regular for $s \in \NN$
	if there exists  $\sigma \in \mathbb{S}^{d-1}$ such that $K(x)= \tilde K((\textup{id}-\sigma \sigma^\top)x) \overline K(\sigma^\top x)$ for all $x\in \RR^d$ where $\tilde K\colon \textup{span}(\sigma)^{\perp}\subseteq \RR^d \to [0,\infty]$ and $\overline K\colon \RR\to [0,\infty)$ are probability kernels, $\overline K$ is $s$-times continuously differentiable, and  for any collection $\theta_\iota\leq \dots \leq \theta_s\in \RR$ with $\iota = \mathds{1}(s\neq 1)$, $\theta_\iota< 0$, and $\theta_s>0$ there is $\tau>0$ with 
	\begin{align*}
				 \max_{\substack{i \in \{1,\dots, s\}}}\sup_{\substack{\epsilon \in (0,\tau)}}\sup_{t\in [0,1]}\int_{\RR} \frac{\left(D^{s}\overline  K(y - t \epsilon \theta_i) \right)^2}{\sum_{j = \iota}^{s} \overline K(y - \epsilon \theta_j)}\dif y < \infty,
	\end{align*}
	where we define $0/0 = 0$ and $c/0 =+\infty$ for $c>0$. 
\end{definition}
Our notion of regular kernels requires that the kernel $K$ is sufficiently smooth and decays appropriately near the boundary of its support. In particular, it is a strictly weaker condition than the formalism of $(s,2)$-smoothness introduced by \cite{heinrich2018}, which is imposed as part of their Assumption $A(s,\theta_0)$, or Assumption 2.1 by \cite{wei2023}, as it does not rule out compact kernels. 

The following lemma provides sufficient conditions for the kernel to be $k$-regular. Its proof is detailed in Appendix~\ref{app:prf:lem:regularKernelSufficientCondition}.

\begin{lemma}\label{lem:regularKernelSufficientCondition}
	Let $\sigma \in \SS^{d-1}$ and consider probability kernels $\overline K\colon \RR\to [0,\infty)$ and $\tilde K\colon \textup{span}(\sigma)^{\perp}\subseteq \RR^d \to [0,\infty)$. Then, $K(x)\coloneqq \tilde K((\textup{id}-\sigma \sigma^\top)x) \overline K(\sigma^\top x)$ is an $s$-regular probability kernel for $s\in \NN$ if~one of the following settings are met.
	\begin{enumerate}
		\item The kernel $\overline K$ is $s$-times continuously differentiable, compactly-supported with $\supp(\overline K) = [-\gamma, \gamma]$, there exists some $\delta >0$ such that  $\min_{x\in [-\gamma + \delta, \gamma-\delta]} K(x) >0$ and on $[0,\delta)$ the functions $\overline K_-(x) \coloneqq \overline K(-\gamma + x)$ and $\overline K_+(x)\coloneqq K(\gamma-x)$ are proportional to a function in 
		\begin{align*}
			\mathcal{F}_\delta\coloneqq \left\{ f \colon [0, \delta)\to [0,\infty) \;\colon\; f(x) \propto \exp\left( -\alpha/x^\beta \right)x^\rho, \begin{array}{l}
				(\alpha,\beta,\rho)\in (0,\infty)^2\times \RR \text{ or } \\
				(\alpha,\beta,\rho)\in  \{0\}^2 \times (2s-1,\infty)
			  \end{array}\right\}.
		\end{align*}
		\item The kernel $\overline K$ is given by a centered normal density with positive variance.
	\end{enumerate}
\end{lemma}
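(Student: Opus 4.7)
The plan is to verify the three defining ingredients of Definition~\ref{def:regularKernel}: Lipschitzness of $K$, the product factorization along $\sigma$, and the integrability condition. The factorization is built into the definition of $K$; Lipschitzness follows from the $C^1$ regularity of $\overline K$ (guaranteed since $s\geq 1$) together with boundedness and Lipschitzness of $\tilde K$, which are straightforward in each setting. The substantive task is to bound, uniformly in $\epsilon \in (0,\tau)$, $t \in [0,1]$, and $i \in \{1,\dots,s\}$, the integral
$$I(\epsilon, t, i) \coloneqq \int_{\RR} \frac{(D^s \overline K(y - t\epsilon\theta_i))^2}{\sum_{j=\iota}^s \overline K(y - \epsilon\theta_j)}\, \dif y,$$
for some $\tau > 0$ depending on the fixed tuple $(\theta_\iota,\dots,\theta_s)$.

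For case~(i), I would fix $\tau < \delta/(4\max_j |\theta_j|)$ and split the integration domain into a central region $C = \{y : y - t\epsilon\theta_i \in [-\gamma + \delta/2,\,\gamma - \delta/2]\}$, two boundary collars near $\pm\gamma + t\epsilon\theta_i$, and the exterior where $D^s \overline K$ vanishes. On $C$, the choice of $\tau$ guarantees that every denominator term is bounded below by the interior positivity constant, while $D^s \overline K$ is bounded on the same compact set; hence the $C$-contribution is $O(1)$. On the upper collar, parametrize by $\xi = \gamma + t\epsilon\theta_i - y \in (0,\delta)$. Since $\theta_s \geq t\theta_i$, the $j=s$ denominator term has distance-to-boundary at least $\xi$, so lower-bounding the sum by that single term yields a pointwise bound of the form $(D^s \overline K_+(\xi))^2/\overline K_+(\xi + \eta)$ with $\eta = \epsilon(\theta_s - t\theta_i) \geq 0$. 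Using the profile $\overline K_+(\xi) \propto \exp(-\alpha/\xi^\beta)\xi^\rho$ and Fa\`a di Bruno's formula, one obtains $D^s \overline K_+(\xi) = \overline K_+(\xi) R_s(\xi)$ with $R_s$ rational in $\xi$ and $1/\xi$ when $\alpha,\beta > 0$, and $D^s \overline K_+(\xi) = c\,\xi^{\rho - s}$ when $(\alpha,\beta)=(0,0)$. In the exponential regime the ratio then carries a factor $\exp\bigl(-2\alpha/\xi^\beta + \alpha/(\xi+\eta)^\beta\bigr) \leq \exp(-\alpha/\xi^\beta)$, which dominates every polynomial singularity as $\xi \to 0^+$; in the polynomial regime the ratio is bounded by $C\xi^{\rho - 2s}$, whose integrability on $(0,\delta)$ is equivalent to the hypothesis $\rho > 2s - 1$. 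The lower collar is handled symmetrically using $j=\iota$.

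For case~(ii), $\overline K$ is strictly positive on $\RR$ and no boundary analysis is required. Using the identity $D^s \overline K(x) = \sigma^{-s} H_s(x/\sigma)\overline K(x)$ where $H_s$ is the $s$-th probabilist's Hermite polynomial, and lower-bounding the denominator by $\overline K(y - \epsilon\theta_s)$, the integrand is at most
$$\sigma^{-2s}\, H_s^2\bigl((y - t\epsilon\theta_i)/\sigma\bigr)\cdot \frac{\overline K(y - t\epsilon\theta_i)^2}{\overline K(y - \epsilon\theta_s)}.$$
Completing the square in the Gaussian exponents shows that the quotient is an $\epsilon$-dependent constant times a Gaussian density in $y$ with variance $\sigma^2/2$; integrating against the polynomial $H_s^2$ yields a finite integral that is uniformly bounded for $\epsilon$ in any bounded interval.

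The main obstacle lies in case~(i) at the critical configurations where $\eta = 0$---namely $t=1$ with $\theta_i = \theta_s$, or symmetrically $\theta_i = \theta_\iota$---at which the numerator and the dominant denominator term share the same boundary point, removing the $\epsilon$-cushion between their supports. At these configurations all smoothness must come from the profile of $\overline K$ at its boundary, and this is precisely what the family $\calF_\delta$ is engineered to supply: the exponential profile's super-polynomial decay absorbs all derivative blow-ups of $R_s$, while the polynomial profile's threshold $\rho > 2s - 1$ is exactly the critical exponent for integrability of the singular ratio $\xi^{\rho-2s}$ near the boundary.
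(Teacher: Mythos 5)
Your proof is correct and follows essentially the same architecture as the paper's: for case~(ii) both arguments use the identity $D^s\overline K = \overline K\cdot(\text{polynomial})$ and complete the square in the Gaussian exponents; for case~(i) both split $\bbR$ into an interior region where the denominator sum is bounded below by a positivity constant, two boundary collars, and the exterior where $D^s\overline K$ vanishes. The one step where you genuinely diverge is the collar estimate. The paper first establishes that $(D^s\overline K)^2$ is monotone on a one-sided neighborhood of each support endpoint (justified via finiteness of the zero set of exponential-polynomials), which lets it replace the numerator's argument $y-t\epsilon\theta_i$ by the denominator's argument $y-\epsilon\theta_s$; the whole collar contribution then reduces to the single $\epsilon$-free integral of $(D^s\overline K)^2/\overline K$ over the collar, whose finiteness is checked once from the profile. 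You instead retain the offset $\eta=\epsilon(\theta_s-t\epsilon\theta_i)/\epsilon\cdot\epsilon\ge 0$ between the two arguments and bound the two-argument ratio pointwise from the explicit profile, via $\exp\bigl(\alpha/(\xi+\eta)^\beta\bigr)\le\exp\bigl(\alpha/\xi^\beta\bigr)$ in the exponential regime and $(\xi+\eta)^{-\rho}\le\xi^{-\rho}$ in the polynomial one. Both routes exploit the same features of $\calF_\delta$ and land on the same critical threshold $\rho>2s-1$; yours trades the paper's monotonicity observation for a slightly longer direct computation, and avoids having to prove that monotonicity. Two small points to tidy: the Gaussian ratio $\overline K(y-a)^2/\overline K(y-b)$ is proportional to a Gaussian of variance $\sigma^2$, not $\sigma^2/2$ (harmless, since you only need integrability against a polynomial); and on the portion of the collar where $\xi+\eta\ge\delta$ the profile formula for $\overline K_+$ no longer applies, but there the argument $y-\epsilon\theta_s$ lies in the interior positivity region, so that subcase is absorbed into the $O(1)$ bound --- worth a sentence in a full write-up.
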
 
Based on this formalism of regular kernels, we now state our  minimax lower bounds,
whose proofs are given in~\Cref{app:pf:prop:minimax_lower_bounds}. 

\begin{proposition}
\label{prop:minimax_lower_bound}
Let 
$d,k\in \NN$ and $t> 1$. 
Let $\Theta \subseteq \RR^d$   be a set with non-empty   interior. 
Further, consider a probability kernel $K\colon \RR^d \to \RR$  
which is $h$-regular for $h=1,\dots,k$. 
Given $m\in \NN$, let $B_1, \dots, B_m$ be a partition of the detection domain $\Omega\subseteq \RR^d$. Then, the following assertions~hold.
\begin{enumerate}
	\item (Global lower bound) There exists a constant $C= C(\Theta, K, k)>0$ such that 
\begin{align*}
	\inf_{\hat \mu_{t,m}\in \calE_{t,m}(\Theta)} \sup_{\substack{\mu \in \calU_{k}(\domain)}} \EE W_1(\hat \mu_{t,m}, \mu) \geq C t^{-\frac{1}{2k}}.
\end{align*} 
	\item (Local lower bound) Let $1 \leq k_0 \leq k$, $r\in\NN^{k_0}$ with $|r| = k$, and  $c>0$. 
	Then, for any $\delta\in (t^{-1/2k},\diam(\Theta)/2k)$ 
	and every $\mu_0= \frac{1}{k}\sum_{j = 1}^{k_0} r_j\delta_{\theta_{0j}}\in \calU_{k,k_0}(\Theta; r, \delta)$ with $\supp(\mu_0)+B(0,\delta/2)\subseteq \Theta$,
	there exists a positive constant $C = C(c, \diam(\Theta),  K, k)>0$ such that 
	for all $j=1,\dots,k_0$, 
	\begin{align*}
		\inf_{\hat \mu_{t,m}\in \calE_{t,m}(\Theta)}& \sup_{\substack{\mu \in \calU_{k}(\Theta; \mu_0,c t^{-1/{2r_j}})}} \EE \calD_{\mu_0}(\hat \mu_{t,m}, \mu) \geq C  \delta_j(\mu_0)  t^{-\frac{1}{2}}.
	\end{align*}
	Furthermore, there exists a fixed choice
	of $\delta \in (0,\diam(\Theta)/2k)$, and a constant $c' > 0$, 
	both of which depend only on $\Theta,k,k_0$, for which
	there exists $\mu_0\in\calU_{k,k_0}(\Theta; r,  \delta)$ with $\supp(\mu_0)+B(0,\delta/2)\subseteq \Theta$ and $\max_j \delta_j(\mu_0) \geq c'$.  
\end{enumerate}
\end{proposition}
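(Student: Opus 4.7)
The plan is to invoke Le Cam's two-point method in both parts, reducing each minimax lower bound to a Hellinger estimate between two product Poisson laws. The key non-asymptotic inequality is
\[
H^2(\bbP_{\mu}^t, \bbP_{\nu}^t) \;\leq\; \frac{t}{2} \int_{\RR^d} \frac{(K\star(\mu-\nu)(x))^2}{K\star\mu(x) + K\star\nu(x)}\,\dif x,
\]
which follows from $H^2(\textup{Poi}(\lambda),\textup{Poi}(\eta)) \leq (\lambda-\eta)^2/(2(\lambda+\eta))$, tensorization of Hellinger across bins, and a bin-wise Cauchy--Schwarz bound to pass from the binned sum to the continuous integral. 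The right-hand side does not depend on the partition $\{B_i\}$, so the discretization (and therefore $m$) plays no role. I exploit the product structure $K(x)=\tilde K((\textup{id}-\sigma\sigma^\top)x)\,\overline K(\sigma^\top x)$ from Definition~\ref{def:regularKernel} by placing all perturbations along the axis $\sigma$, so that the orthogonal coordinate integrates out via $\int\tilde K=1$ and the squared-ratio integral reduces to a univariate quantity which the regularity hypothesis on $\overline K$ is designed to bound.

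\textbf{Global bound.} Fix any interior point $\theta_\star\in\Theta$ and set $\epsilon=c\,t^{-1/(2k)}$ for $c>0$ small. Take $\mu_i = \frac{1}{k}\sum_{l=1}^k \delta_{\theta_\star + \epsilon z_l^{(i)}\sigma}$ for $i\in\{0,1\}$, where $(z_l^{(0)})_l,(z_l^{(1)})_l\subseteq [-1,1]$ are two $k$-tuples whose first $k-1$ power sums agree but whose $k$-th power sums differ. Such tuples exist classically: for any two monic polynomials of degree $k$ with real distinct roots in $[-1,1]$ that differ only in their constant term, the elementary symmetric polynomials $e_1,\ldots,e_{k-1}$ coincide and Newton's identities then force the first $k-1$ power sums to coincide as well. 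A direct computation gives $W_1(\mu_0,\mu_1)\gtrsim \epsilon$ since the two root sets are separated by a fixed constant. In the split coordinates $y_1=\sigma^\top(x-\theta_\star)$ and $y_2=(\textup{id}-\sigma\sigma^\top)(x-\theta_\star)$, Taylor-expanding $\overline K$ to order $k$ and invoking the moment matching cancels all terms of order below $k$, leaving a remainder of the form $(\epsilon^k/k!)\tilde K(y_2)\sum_{j,l}\pm (z_l^{(j)})^k\overline K^{(k)}(y_1-t_l^{(j)}\epsilon z_l^{(j)})$ with $t_l^{(j)}\in[0,1]$. Squaring, applying Cauchy--Schwarz, dominating the denominator by one $k$-tuple's contribution, and integrating in $y_2$ reduces the bound to a finite sum of univariate integrals of the form $\int(\overline K^{(k)}(y-t\epsilon\theta_i))^2/\sum_j\overline K(y-\epsilon\theta_j)\,\dif y$, each controlled uniformly in $\epsilon<\tau$ by the $k$-regularity of $K$. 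This gives $H^2\lesssim t\epsilon^{2k}=c^{2k}$, below $1$ for small $c$, and Le Cam delivers the rate $t^{-1/(2k)}$.

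\textbf{Local bound.} Fix $\mu_0\in\calU_{k,k_0}(\Theta;r,\delta)$ and $j\in\{1,\ldots,k_0\}$. I construct two perturbations $\mu^\pm$ that differ from $\mu_0$ only within cluster~$j$, replacing its mass $r_j\delta_{\theta_{0j}}/k$ by $\frac{1}{k}\sum_{l=1}^{r_j}\delta_{\theta_{0j}+\epsilon w_l^\pm\sigma}$, where $\epsilon=c\,t^{-1/(2r_j)}$ and $(w_l^+)_l,(w_l^-)_l$ are two $r_j$-tuples in $[-1,1]$ with matching first $r_j-1$ power sums but distinct $r_j$-th power sum (same construction as above with $k$ replaced by $r_j$ --- this is why the hypothesis demands $h$-regularity for \emph{every} $h=1,\ldots,k$). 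For $c$ small relative to $\delta$ the perturbed atoms lie in $\Theta$ and inside the Voronoi cell $V_j(\mu_0)$, so $\mu^\pm\in\calU_k(\Theta;\mu_0,c\,t^{-1/(2r_j)})$. The contributions of the other clusters cancel in $\mu^+-\mu^-$ and are nonnegative in the denominator $K\star\mu^++K\star\mu^-$, hence can be dropped; the univariate Taylor-plus-regularity argument of the global case applied with $s=r_j$ then yields $H^2(\bbP_{\mu^+}^t,\bbP_{\mu^-}^t)\lesssim t\epsilon^{2r_j}\leq c^{2r_j}$. Since the perturbation only affects cluster $j$, $\calD_{\mu_0}(\mu^+,\mu^-)=\delta_j(\mu_0)W_1^{r_j}(\mu^+_{V_j},\mu^-_{V_j})\asymp \delta_j(\mu_0)\epsilon^{r_j}\asymp \delta_j(\mu_0)\,t^{-1/2}$. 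Combining the pseudo-triangle inequality $\calD_{\mu_0}(\mu^+,\mu^-)\leq 2^{k-1}(\calD_{\mu_0}(\mu^+,\hat\mu)+\calD_{\mu_0}(\hat\mu,\mu^-))$ (which follows from the triangle inequality for $W_1$ combined with $(a+b)^{r_j}\leq 2^{r_j-1}(a^{r_j}+b^{r_j})$) with standard Le Cam yields the claimed bound.

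\textbf{Existence of a good $\mu_0$ and main obstacle.} For the final claim, since $\Theta$ is convex with nonempty interior, for some constant $\delta=\delta(\Theta,k_0)>0$ one can place $k_0$ centers in $\Theta$ at mutual Euclidean distance of order $\delta$ with $\supp(\mu_0)+B(0,\delta/2)\subseteq\Theta$; for such a configuration $\delta_j(\mu_0)=\prod_{i\neq j}\|\theta_{0i}-\theta_{0j}\|^{r_i}\asymp 1$ for every $j$, proving the existence of the desired $c'$. The main technical obstacle throughout is the control of the univariate Fisher-information-type integral $\int(\overline K^{(s)}(y-t\epsilon\theta_i))^2/\sum_j\overline K(y-\epsilon\theta_j)\,\dif y$, which may diverge for generic smooth kernels near the boundary of $\supp(\overline K)$, as $\overline K$ and its derivatives can vanish there at incompatible rates; Definition~\ref{def:regularKernel} is engineered precisely to guarantee this integral remains finite uniformly in $\epsilon\in(0,\tau)$ and $t\in[0,1]$, and the sign pattern $\theta_\iota<0<\theta_s$ it imposes is automatic in the polynomial-root constructions above since the moment-matched tuples are essentially centered.
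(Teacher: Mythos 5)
Your proposal is correct and follows essentially the same route as the paper: Le Cam's two-point method, the Hellinger tensorization bound $H^2(\bbP_\mu^t,\bbP_\nu^t)\leq \tfrac{t}{2}H^2(K\star\mu,K\star\nu)$, moment-matched $r_j$-atomic uniform perturbations along the regularity direction $\sigma$ (obtained by perturbing the constant term of the root polynomial), and the Taylor-plus-regularity control of the Fisher-information-type integral, together with the same geometric construction of a $\mu_0$ with $\max_j\delta_j(\mu_0)\gtrsim 1$. The only cosmetic deviation is that you handle the non-metric loss via a quasi-triangle inequality for $\calD_{\mu_0}$, whereas the paper applies Le Cam to the genuine metric $\calD_{\mu_0}^{1/r^*}$; both are standard and equivalent here.
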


\begin{remark}\label{rmk:MinimaxLowerBoundsDiscussion}A few comments on the minimax lower bounds are in order. 
	\begin{enumerate} 
	\item Proposition~\ref{prop:minimax_lower_bound}$(i)$
	  implies the lower bound of Theorem~\ref{thm:global_minimax_risk}, whereas
	Proposition~\ref{prop:minimax_lower_bound}$(ii)$, with $\delta$ held constant,
	 implies the lower bounds of Theorems~\ref{thm:local_minimax_risk_separated}--\ref{thm:local_minimax_risk_nosep}.
	
	\item Contrary to the lower bound of Theorem~\ref{thm:local_minimax_risk_nosep},
	which is merely stated for the {\it worst case} choice of 
	$\mu_0 \in \calU_{k,k_0}(\Theta;r,\delta)$,
	Proposition~\ref{prop:minimax_lower_bound}$(ii)$ yields
	lower bounds for many instances of $\mu_0$. 
    The main condition in this result is for the quantity
     $\max_{j=1, \dots, k_0}\delta_j(\mu)$ to remain
      strictly positive, meaning that $\mu_0$ is required to have at least one atom which is
     distinctly separated from the others, even if the remaining atoms are allowed to converge. 
     Together with the upper bounds established in Propositions \ref{prop:mm_rate} and \ref{prop:mle_gaussian}, Proposition~\ref{prop:minimax_lower_bound}
     implies local and instance-specific    minimax risk bounds.  
     The only regime left open by this result is one in which
     all clusters are approaching each other, i.e., $\delta_j(\mu)\to 0$ for all $j=1,\dots, k_0$.  
		\item The proof of Proposition \ref{prop:minimax_lower_bound} is based on Le Cam's two-point method (see, e.g., \citealt{tsybakov2009nonparametric}). More precisely, for the local lower bound we construct two families of mixture distributions $K\ast \mu_\epsilon$ and $K\ast \nu_\epsilon$ where both $\mu_\epsilon, \nu_\epsilon$ are $\epsilon$-close to $\mu_0$ with respect to $W_{\infty}$ while fulfilling $\calD_{\mu_0}(\mu_\epsilon, \nu_\epsilon)\gtrsim \delta_j\epsilon^{r_j}$ and all (multivariate) moments of $\mu$ and $\nu$ of order up to $r_j-1$ coincide.
Using the $r_j$-regularity of the kernel $K$,
it then follows that $H^2(K\ast \mu_\epsilon, K\ast \nu\epsilon)\lesssim \epsilon^{r_j}$. The global analysis then follows by choosing $\mu_0$ as a single Dirac measure. 
Related strategies have been adopted in the previous studies of finite
mixture models~\citep{heinrich2018,wu2020}.
		\item The construction of the lower bound also yields a local minimax lower bound for the Wasserstein and the Hausdorff distance. Specifically, under the same set of assumptions as in Proposition \ref{prop:minimax_lower_bound} $(ii)$ we obtain for $r^* = \max_{j = 1, \dots, k_0}r_j$ with $C = C(c,\diam(\Theta), K,k)$ the lower bound  
		\begin{align*}
			\inf_{\hat \mu_{t,m}\in \calE_{t,m}(\Theta)} \sup_{\substack{\mu \in \calU_{k}(\Theta; \mu_0,c t^{-1/{2r^*}})}} \EE W_1(\hat \mu_{t,m}, \mu) \geq C t^{\frac{1}{2r^*}},
		\end{align*} 
and it also remains true if $W_1$ is replaced by the Hausdorff distance $d_H$ between the supports of $\hat \mu_{t,m}$ and $\mu$. 
		We provide a formal proof for both risks in  \Cref{app:pf:rmk:MinimaxLowerBoundsDiscussion}. Combined with Lemma~\ref{lem:hausdorff_to_wasserstein} this confirms that the (local)  Hausdorff minimax rate in $t$ aligns with (local) Wasserstein minimax rate.  
\item Our technique also implies minimax lower bounds for i.i.d.\ random variables $X_1, \dots, X_n$ based on $K\ast \mu$, asserting the same convergence rate as in Proposition \ref{prop:minimax_lower_bound} but with $t$ replaced by $n$, i.e., global convergence rates of atoms of order $n^{-1/2k}$ and local rates for atoms of order $n^{-1/2r}$ where $r$ denotes the maximum number of mass assigned to a cluster of $\mu_0$, see \Cref{app:pf:rmk:MinimaxLowerBoundsDiscussion}. We are only aware of the works by \cite{ho2022_fellerpaper} and \cite{wu2021} who treat the (high-dimensional) local analysis of two-component uniform Gaussian mixtures $(k = 2)$ with $k_0=1$  and $r = 2$, asserting a minimax lower bound of order $n^{-1/4}$. %
		Our Proposition \ref{prop:minimax_lower_bound} extends this to general $k$ but with fixed dimension. Notably, without the uniformity constraint on the underlying weights, estimation of a $k$-atomic mixing measure $\mu$ cannot be realized faster than $n^{-1/(4k-2)}$ and locally nearby a (fixed)  $k_0$-atomic measure $\mu_0$ no faster than $n^{-1/(4k-4k_0+2)}$ \citep{heinrich2018,wu2020,wei2023}. We also point out that \cite{ho2022_fellerpaper} established for $k= 2$ and $k_0=1$ that if the weights were known but non-uniform, then the convergence the local minimax rate is given by $n^{-1/6}$, matching the rate as if no knowledge on the weight was available. This further emphasizes the statistical benefits of the uniformity constraint. 
		\item The requirement that the kernel is sufficiently smooth is necessary in order to establish such lower bounds. For non-smooth kernels, e.g., a uniform distribution, it is known from the deconvolution literature \citep{caillerie2013deconvolution, dedecker2013minimax, dedecker2015improved} that faster rates, potentially even parametric rates, can be realized, e.g., using Fourier inversion estimators. We leave the sharp analysis of the minimax risk in terms of the number $k$ of atoms and the degree of smoothness for the kernel to future work. 

	\end{enumerate}
\end{remark}

\section{Identifiability and Consistency  under Finite Discretization}
\label{sec:mle_bounded}

Up to this point, our focus has been on statistical procedures that achieve the minimax estimation rate in the regime of increasing bin precision, i.e., when the bin count $m$ grows relative to the illumination time $t$. In this section, we examine the regime where $m$ remains fixed as $t$ increases. Specifically, we establish the consistency of the maximum likelihood estimator for specific kernels (Proposition \ref{prop:consistency_LSE_MLE_finite_m}).
This is achieved through new
 identifiability results (\Cref{thm:identifiabilityFromFunctionals}). Central to our analysis is a new condition which we call \emph{root-regularity} (see Definition \ref{def:root_regularity}), which is currently restricted to the univariate setting $d = 1$.  Extensions to higher dimensions are discussed in Remark~\ref{rmk:extensions_finiteM}. 
\begin{proposition}[Consistency]\label{prop:consistency_LSE_MLE_finite_m}
	Let $\delta>0$ be fixed. Let $\Theta, \Omega\subseteq \RR$ be sets satisfying Assumption \ref{ass:bins} and let $K\colon \RR\to (0,\infty)$ be a probability kernel such that the function $x\in \RR\mapsto \int_{x}^{x+\delta} K(t)\dif t$ is $(2k,l)$-root-regular (see Definition \ref{def:root_regularity} below) for $(k,l)\in \NN\times \NN_0$. Then, for a compact class $\tilde \calP_k(\Theta)\subseteq \calP_k(\Theta)$, every $\mu \in \tilde \calP_k(\Theta)$, and a fixed number $m\geq 2k+l$ of distinct bins of the shape $B_j=[s_j,s_j+\delta)$ for $s_1< \dots < s_{m}$ it~follows that the  maximum likelihood estimator for the model \eqref{eq:model}, 
	\begin{align*}
		\hat \mu_{t,m} \coloneqq \hat \mu_{t,m}^{\textup{MLE}} = \argmax_{\mu \in \tilde \calP_k(\Theta)}\ell_t(\mu), \quad   \ell_t(\mu) = \sum_{j=1}^m \log p_{t  \int_{B_j}K\star \mu(x)\dif x}(X_j),\quad \mu\in \calM_k(\domain)
	\end{align*}
	is a.s.\ consistent in estimating $\mu$ as $t\to \infty$, i.e., $W_1(\hat \mu_{t,m}, \mu) \to 0$ a.s.	
\end{proposition}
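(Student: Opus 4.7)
The plan is to prove consistency by the classical Wald-type M-estimation route: (i) bin-level identifiability, (ii) pointwise convergence of the rescaled log-likelihood to a criterion uniquely maximized at $\mu$, and (iii) upgrading this to almost sure $W_1$-consistency using compactness of $\tilde\calP_k(\Theta)$.

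\textbf{Step 1 (identifiability at the bin level).} Setting $g(x) \coloneqq \int_x^{x+\delta} K(u)\,du$, Fubini gives $K\star\nu(B_j) = \int g(s_j - \theta)\,d\nu(\theta)$ for every $\nu \in \calP_k(\Theta)$. Consequently, if $\mu,\nu \in \tilde\calP_k(\Theta)$ and $K\star\mu(B_j) = K\star\nu(B_j)$ for all $j=1,\dots,m$, then $\int g(s_j-\theta)\,d(\mu-\nu)(\theta)=0$ for each $j$. Since $\mu-\nu$ is a signed measure supported on at most $2k$ points with zero total mass, the assumed $(2k,l)$-root-regularity of $g$ together with $m\geq 2k+l$ forces $\mu=\nu$ through Theorem~\ref{thm:identifiabilityFromFunctionals}.

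\textbf{Step 2 (pointwise limit of the rescaled log-likelihood).} Let $\lambda_j(\nu) \coloneqq K\star\nu(B_j)$ and $\lambda_j^\star \coloneqq \lambda_j(\mu)$. Since $K>0$ is continuous and $\Theta,\{s_j\}$ are bounded, there exist constants $0 < c \leq C < \infty$ with $c \leq \lambda_j(\nu) \leq C$ uniformly over $\nu\in\tilde\calP_k(\Theta)$ and $j$, so all logarithms below are well defined and bounded. Dropping $\nu$-independent terms,
\begin{align*}
\frac{1}{t}\bigl(\ell_t(\nu)-\ell_t(\mu)\bigr) \;=\; \sum_{j=1}^m \frac{X_j}{t}\log\frac{\lambda_j(\nu)}{\lambda_j^\star} \;-\; \sum_{j=1}^m \bigl(\lambda_j(\nu)-\lambda_j^\star\bigr).
\end{align*}
The strong law of large numbers for Poisson variables yields $X_j/t \to \lambda_j^\star$ a.s.\ as $t\to\infty$, so
\begin{align*}
\frac{1}{t}\bigl(\ell_t(\nu)-\ell_t(\mu)\bigr) \;\longrightarrow\; L(\nu) \;\coloneqq\; -\sum_{j=1}^m \mathrm{KL}\bigl(\textup{Poi}(\lambda_j^\star)\,\|\,\textup{Poi}(\lambda_j(\nu))\bigr) \;\leq\; 0,
\end{align*}
with equality iff $\lambda(\nu)=\lambda(\mu)$ componentwise, which by Step~1 is equivalent to $\nu=\mu$.

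\textbf{Step 3 (uniform convergence and conclusion).} The map $\Phi\colon\tilde\calP_k(\Theta)\to\RR^m$, $\Phi(\nu)=(\lambda_j(\nu))_{j=1}^m$, is $W_1$-continuous (by continuity and boundedness of $K$ on the relevant compact set) and has image contained in a fixed compact set $\Lambda \subset [c,C]^m$. Since $t^{-1}(\ell_t(\nu)-\ell_t(\mu))$ depends on $\nu$ only through $\Phi(\nu)$ via a function that is jointly continuous in $(X/t,\Phi(\nu))$, and $X/t\to\lambda^\star$ a.s., continuity on the compact product set yields
\begin{align*}
\sup_{\nu\in\tilde\calP_k(\Theta)}\,\Big|\,t^{-1}\bigl(\ell_t(\nu)-\ell_t(\mu)\bigr)-L(\nu)\,\Big|\;\xrightarrow{t\to\infty}\;0 \quad\text{a.s.}
\end{align*}
Because $\hat\mu_{t,m}$ maximizes $\ell_t$, this forces $L(\hat\mu_{t,m})\geq o(1)$ a.s. By compactness of $\tilde\calP_k(\Theta)$ in the $W_1$-topology and continuity of $L$, every subsequential $W_1$-limit $\nu^\infty$ of $\hat\mu_{t,m}$ must satisfy $L(\nu^\infty)=0$, whence $\nu^\infty=\mu$ by Step~2. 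Hence $W_1(\hat\mu_{t,m},\mu)\to 0$ a.s.

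\textbf{Main obstacle.} The delicate ingredient is the identifiability in Step~1, which rests entirely on the root-regularity hypothesis and is packaged by Theorem~\ref{thm:identifiabilityFromFunctionals}. Everything else is essentially classical: the uniform law of large numbers reduces to a finite-dimensional statement because the log-likelihood factors through the finite vector $\Phi(\nu)$, while the strict positivity of $K$ on a compact region gives the uniform lower bound on the Poisson intensities that is needed to handle the logarithmic terms.
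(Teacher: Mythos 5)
Your proposal is correct and follows essentially the same route as the paper: the strong law of large numbers for $X_j/t$, compactness of $\tilde\calP_k(\Theta)$ in the $W_1$-topology to extract convergent subsequences, identification of any subsequential limit via the strict maximization of the Poisson log-likelihood at the true bin intensities, and finally the bin-level identifiability supplied by the root-regularity hypothesis through Theorem~\ref{thm:identifiabilityFromFunctionals}. The only cosmetic difference is that you phrase the limiting criterion as a Poisson Kullback--Leibler divergence and establish uniform convergence explicitly, whereas the paper passes to the limit directly in the basic inequality $\ell_t(\hat\mu_{t,m})\geq\ell_t(\mu)$ along a convergent subsequence and invokes the unique maximization of $x\mapsto -x+c\log x$ at $x=c$ termwise.
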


The proof of Proposition \ref{prop:consistency_LSE_MLE_finite_m} is provided in \Cref{app:subsec:consistency_MLE_finite_m} and relies on compactness of the probe space in conjunction with identifiability of the mixing measure in terms of $m$ distinct bin evaluations. 

This result confirms that for certain classes of kernels, the underlying mixing measure can be indeed recovered even when only finitely many bin evaluations are available. The key conceptual insight here is that the class $\mathcal{M}_k(\Theta)$ has only finite many degrees of freedom, and that the bins encode enough information about the mixing distributions. This informational sufficiency is characterized by the order $(k, l) \in \mathbb{N} \times \mathbb{N}_0$ of the root-regularity of the kernel. For example, we show $(k,l)$ root-regularity under a Gaussian kernel with $l = k+1$ (Proposition \ref{prop:rootRegularityGaussian}) while for other analytic kernels such as the Cauchy kernel that $l = 2(2k-1)+1$ (Example \ref{expl:rationalKernels}). In the following we develop our arguments concerning root-regularity and its connection to identifiability.

\begin{definition}[Root regularity]\label{def:root_regularity}
	A function $f\colon \RR\to \RR$ is called \emph{$(k,l)$-root-regular} with $k\in \NN$, $l\in \NN_0$ if for any $k$ distinct points $x_1, \dots, x_{k}\in \RR$ and coefficients $(a_1, \dots, a_{k})\in\RR^k\backslash\{0\}$ the function $t\in \RR \mapsto \sum_{i = 1}^{k} a_i f(t-x_i)$ admits at most $k+l-1$ zeros.
\end{definition}

The notion of $(k,l)$-root-regularity for $l\in \NN$ can be interpreted as a relaxation of the formalism of Tchebycheff-systems, see \cite{karlin1966tchebycheff}, and as such it exhibits strong connections to the concept of \emph{strict totally positivity} \citep{karlin1968total,gasca2013total}, \emph{P\'olya frequency functions} \citep{karlin1961moment,polya1925aufgaben}, and \emph{sign regularity} \citep{lehmann2005testing} which have extensive applications in numerics, statistics and probability theory. Indeed, according to \citet[Chapter 1, Theorem 4.1]{karlin1966tchebycheff}, if a function is $\Psi$ is $(k,0)$-root-regular, then it follows for any collection of $k$ distinct points $x_1 < \dots < x_{k}\in \RR$ and $t_1<\dots < t_{k}\in\RR$ for any $k'\in\{1, \dots, k\}$ that 
\begin{align*}
	\det\begin{pmatrix}
		\Psi(t_1 - x_1) & \cdots & \Psi(t_{k'} - x_{1})\\
		\vdots & \ddots & \vdots \\
		\Psi(t_{1} - x_{k'}) & \cdots & \Psi(t_{k'} - x_{k'})
	\end{pmatrix}\neq 0,
\end{align*}
which is precisely the definition for $\Psi$ to be a Tchebycheff-system on $\RR$. 
The following result, extends this insight to general $(k,l)$-root-regular functions for $l \in \NN_0$.

\begin{lemma}\label{lem:fullRankRRFunction}
	Let $\Psi\colon \RR\to \RR$ be a continuous, $(k,l)$-root-regular function with $k\in \NN$ and $l\in \NN_0$. Then, for any collection of distinct points $x_1,  \dots, x_{k'}\in \RR$ and distinct $t_1,  \dots ,t_{k +l}\in \RR$ with $k'\leq k$ it follows that 
	\begin{align}\label{eq:FullRankProperty}
	\textup{Rank}\begin{pmatrix}
		\Psi(t_1-x_1) & \cdots & \Psi(t_{k+l}-x_1)\\
		\vdots & \ddots & \vdots \\
		\Psi(t_1-x_{k'}) & \cdots & \Psi(t_{k+l}-x_{k'})
	\end{pmatrix}=k'.
	\end{align}
\end{lemma}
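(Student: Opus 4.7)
The plan is to argue by contradiction, reducing to the defining inequality of $(k,l)$-root-regularity via a zero-padding observation.

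First, observe that the matrix in~\eqref{eq:FullRankProperty} has $k'$ rows, so its rank is automatically at most $k'$; it therefore suffices to show that these rows are linearly independent. I would assume for contradiction that they are not, so that there exist coefficients $a_1, \dots, a_{k'} \in \RR$, not all zero, with
\[
\sum_{i=1}^{k'} a_i \, \Psi(t_j - x_i) = 0 \qquad \text{for every } j = 1, \dots, k+l.
\]
Equivalently, the function $g(t) \coloneqq \sum_{i=1}^{k'} a_i \, \Psi(t - x_i)$ has at least the $k+l$ distinct zeros $t_1, \dots, t_{k+l}$.

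To invoke $(k,l)$-root-regularity, which is phrased for a sum over exactly $k$ source points, I would pad the data as follows. If $k' < k$, choose additional points $x_{k'+1}, \dots, x_k \in \RR$, distinct from one another and from $x_1, \dots, x_{k'}$ (possible since $\RR$ is uncountable), and set $a_{k'+1} = \dots = a_k = 0$. The padded coefficient vector $(a_1, \dots, a_k)$ still lies in $\RR^k \setminus \{0\}$, the points $x_1,\dots,x_k$ are distinct, and the associated function $\sum_{i=1}^{k} a_i \, \Psi(t - x_i)$ coincides identically with $g$. By $(k,l)$-root-regularity, $g$ therefore admits at most $k+l-1$ zeros, contradicting the $k+l$ distinct zeros $t_1, \dots, t_{k+l}$ exhibited above.

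The only substantive point in the argument is the padding observation itself: although the definition of $(k,l)$-root-regularity is stated for exactly $k$ source points, it extends verbatim to any $k' \le k$ source points by appending zero-weighted dummy shifts, since this keeps the coefficient vector in $\RR^k \setminus \{0\}$ and leaves the function unchanged. Once this is noted, the lemma follows as a one-line consequence of the definition, and I do not anticipate any further obstacle; in particular, the continuity hypothesis on $\Psi$ is not needed for this argument, as we only compare the count of zeros of $g$ with the bound guaranteed by root-regularity.
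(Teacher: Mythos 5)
Your argument is correct: row rank equals rank, so a rank deficit would yield a nontrivial vanishing combination $\sum_{i=1}^{k'} a_i\Psi(t_j-x_i)=0$ at all $k+l$ distinct points $t_j$, and the zero-padding observation (append $k-k'$ further distinct shifts with zero weights, keeping the coefficient vector in $\RR^k\setminus\{0\}$ and the function unchanged) lets you invoke $(k,l)$-root-regularity to cap the number of zeros at $k+l-1$, a contradiction. You are also right that continuity of $\Psi$ plays no role here. The paper takes a genuinely different route: it argues by induction on $k'$, using the inductive hypothesis to extract a nonsingular $(k'-1)\times(k'-1)$ submatrix on columns $t_1',\dots,t_{k'-1}'$, then expanding the bordered determinant $D(t)=\det\bigl((\Psi(t_j'-x_i))\,\big|\,(\Psi(t-x_i))_i\bigr)$ along its last column as $\sum_{i=1}^{k'}a_i\Psi(t-x_i)$ with $a_{k'}\neq 0$; root-regularity bounds the zeros of $D$ by $k+l-1$, of which $k'-1$ are already consumed by the $t_j'$, so some remaining $t_j$ gives $D(t_j)\neq 0$ and hence a nonsingular $k'\times k'$ submatrix. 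Note that the paper's cofactor expansion also applies root-regularity to a combination of only $k'\le k$ shifts, so it relies on the same padding convention you make explicit. Your proof is shorter and avoids the induction and determinant bookkeeping; the paper's version constructs an explicit greedy column selection, but since rank $k'$ already guarantees the existence of $k'$ linearly independent columns (which is all that the downstream Theorem~\ref{thm:identifiabilityFromFunctionals} uses), nothing is lost by your more direct argument.
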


Based on this result on the rank of the matrix consisting of translated evaluations of the function~$\Psi$, we formulate a general identifiability result for measures in $\calM_k(\RR)$. 

\begin{theorem}[Identifiability from root-regular functions]\label{thm:identifiabilityFromFunctionals}
	Let $\mu, \nu$ be two measures on $\RR$ with at most $k$ support points and let $\Psi\colon \RR\to \RR$ be a $(2k,l)$-root-regular function with $l\in \NN_0$. If~for distinct $t_1, \dots, t_{2k+l}\in \RR$ the equality 
	\begin{align}\label{eq:idenfiableCondition}
		\int \Psi(t_j-x) \dif\mu(x) = \int \Psi(t_j-y) \dif\nu(y)
	\end{align} is met for all $j \in \{1, \dots, 2k+l\}$, then $\mu = \nu$. 
\end{theorem}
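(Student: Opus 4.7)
The strategy is to reduce the identifiability claim to a rank condition on the matrix of translated evaluations $M = (\Psi(t_j - z_i))_{i,j}$, which is precisely the content of Lemma~\ref{lem:fullRankRRFunction}. Assume for contradiction that $\mu \neq \nu$. Since both measures have at most $k$ atoms, the signed measure $\mu - \nu$ is supported on at most $2k$ points. Discarding atoms of zero mass, I would write
\[
\mu - \nu = \sum_{i=1}^{m'} c_i\,\delta_{z_i},
\]
where $z_1, \dots, z_{m'}$ are pairwise distinct, $c_i \neq 0$, and $1 \leq m' \leq 2k$. Substituting this decomposition into the hypothesis~\eqref{eq:idenfiableCondition} rewrites the assumed moment matching as the linear system
\[
\sum_{i=1}^{m'} c_i\,\Psi(t_j - z_i) \;=\; 0, \qquad j = 1, \dots, 2k+l.
\]

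Next, I would invoke Lemma~\ref{lem:fullRankRRFunction} with the $(2k,l)$-root-regular function $\Psi$, the $m' \leq 2k$ distinct support points $z_1, \dots, z_{m'}$ (playing the role of the $x_i$'s in the lemma, with $k' = m'$ and $k$ replaced by $2k$), and the $2k+l$ distinct evaluation points $t_1, \dots, t_{2k+l}$. The lemma yields that the $m' \times (2k+l)$ matrix $M$ has rank exactly $m'$, equivalently that its rows are linearly independent in $\RR^{2k+l}$. The display above is precisely the statement $c^\top M = 0$ for $c = (c_1,\dots,c_{m'})^\top$, and full row rank of $M$ forces $c = 0$, contradicting $m' \geq 1$. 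This contradiction yields $\mu = \nu$.

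The only substantive step is the appeal to Lemma~\ref{lem:fullRankRRFunction}, in which the analytic content of $(2k,l)$-root-regularity---a bound on the number of zeros of the real linear combinations $t \mapsto \sum_i a_i\,\Psi(t - z_i)$---is converted into a full-rank statement for the matrix of translates. Once this bridge has been crossed in the proof of that lemma, the identifiability theorem follows by elementary linear algebra. I expect no further obstacle at this stage: the argument works uniformly in $l \in \NN_0$ and requires no structural assumption on $\mu$ and $\nu$ beyond their atomic cardinality, and the count $2k+l$ of evaluation points is exactly what is needed to saturate the rank bound that root-regularity provides.
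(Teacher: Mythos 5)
Your proof is correct. Both you and the paper funnel everything through Lemma~\ref{lem:fullRankRRFunction}, but you exploit the full-rank conclusion in a dual way. You form the signed measure $\mu-\nu=\sum_{i=1}^{m'}c_i\delta_{z_i}$ with $m'\leq 2k$ nonzero atoms and observe that the hypothesis says $c^\top M=0$ for the $m'\times(2k+l)$ matrix $M=(\Psi(t_j-z_i))$; full row rank then kills $c$ directly, i.e.\ you use triviality of the left null space. The paper instead works with the union of supports $S=\{x_1,\dots,x_p\}$, extracts from the rank statement an invertible $p\times p$ submatrix, and uses it to write each indicator vector $(\mathds{1}(x_i\leq t))_i$ as a linear combination of the columns $(\Psi(t_{j_s}-x_i))_i$; the hypothesis then transfers to equality of the distribution functions $F_\mu(t)=F_\nu(t)$ for all $t$. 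Your route is shorter and avoids the interpolation step entirely; the paper's route has the mild virtue of producing the conclusion constructively (equality of CDFs) without arguing by contradiction, but the two arguments are elementary linear-algebra consequences of the same lemma and neither buys anything substantive over the other. The only point worth flagging is that Lemma~\ref{lem:fullRankRRFunction} is stated for \emph{continuous} $\Psi$ while the theorem does not explicitly assume continuity; this affects the paper's own proof identically, so it is not a gap in your argument relative to theirs.
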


We provide the proof in \Cref{app:pf:thm:identifiabilityFromFunctionals}. A few comments are in order.

\begin{remark}\begin{enumerate}
	\item Recalling our MLE consistency result in Proposition \ref{prop:consistency_LSE_MLE_finite_m}, its proof involves identifying two $k$-atomic measures $\mu, \nu$ based on $\int_{B_j} K\ast \mu(x)\dif x = \int_{B_j} K \ast \nu(x) \dif x$ for $j = 1, \dots, m$ where $m\geq 2k+l$ is the number of distinct bins. To this end we employ Theorem \ref{thm:identifiabilityFromFunctionals} with the function $\Psi(x)= \int_x^{x+\delta}K(t)\dif t$. With this choice, the above equality is equivalent to \eqref{eq:idenfiableCondition} with $t_j \coloneqq s_j$ for all $j=\{1, \dots, m\}$, and the identifiability follows. 	
	\item The notion of root-regularity prohibits that distinct translations of a function are linearly dependent and thus demands that a function is sufficiently heterogenous. For instance, the constant function $\Psi(x)= 1$ for $x\in \RR$ is not root-regular, and the integral of translations of $\Psi$ only provides information about the total mass of the underlying measure $\mu$ but no information about its support points. 
	\item \Cref{thm:identifiabilityFromFunctionals} quantifies a trade-off in heterogeneity of the function $\Psi$ and the number of integrals of translations of $\Psi$ which are required to be known. If the functional $\Psi$ is $(2k,0)$-root-regular, then  exactly $2k$  integrals are required to identify a $k$-atomic measure $\mu$. A $k$-atomic measure is characterized by the location of $k$-support points on $\RR$ and $k$ weights, hence the space of $k$-atomic measure on $\RR$ admits $2k$ degrees of freedom (over $\RR$). Insofar, for a $(2k,0)$-root-regular function \Cref{thm:identifiabilityFromFunctionals} provides the smallest possible number of integrals required. %
	Meanwhile, if $\Psi$ is $(2k,l)$-root-regular for $l\in \NN$, then exactly $l$ more integrals of translations are required to overcome the lack of heterogeneity of the function $\Psi$. 
\end{enumerate}
\end{remark}

In the following we detail some example classes for the formalism of root-regularity.  We begin with a characterization of root-regularity for functions whose derivative is root-regular. 

\begin{lemma}\label{lem:rootregViaDerivative}
	Let $\Psi\colon \RR\to (0,\infty)$ be a differentiable function such that its derivative $\Psi'$ is $(k,l)$-root-regular on $\RR$ for $k\in \NN$, $l\in \NN_0$. Then, it follows that $\Psi$ is $(k,l+1)$-root-regular. 
\end{lemma}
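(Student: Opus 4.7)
My plan is to reduce the claim to a direct application of Rolle's theorem applied to the differentiated linear combination. Fix any $k$ distinct points $x_1,\dots,x_k\in\RR$ and any nonzero coefficient vector $(a_1,\dots,a_k)\in\RR^k\setminus\{0\}$. I would define
\[
F(t) = \sum_{i=1}^k a_i\,\Psi(t-x_i),\qquad t\in\RR,
\]
and observe, by differentiating under the (finite) sum, that $F$ is differentiable with
\[
F'(t) = \sum_{i=1}^k a_i\,\Psi'(t-x_i).
\]
The goal is then to show that $F$ has at most $k+l$ zeros, which is exactly the bound $k+(l+1)-1$ needed to conclude that $\Psi$ is $(k,l+1)$-root-regular.

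The main step is as follows: suppose $F$ has $N$ distinct zeros $t_1<t_2<\dots<t_N$. Since $F$ is continuously differentiable, Rolle's theorem applied on each interval $(t_j,t_{j+1})$ produces a zero $s_j$ of $F'$ with $s_j\in(t_j,t_{j+1})$. The points $s_1,\dots,s_{N-1}$ are necessarily distinct, so $F'$ has at least $N-1$ distinct zeros. By the hypothesis that $\Psi'$ is $(k,l)$-root-regular, the combination $F'$ has at most $k+l-1$ zeros, so $N-1\leq k+l-1$, i.e., $N\leq k+l$, as desired.

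One small subtlety I would address: the argument implicitly assumes that $F'$ is not identically zero (otherwise ``at most $k+l-1$ zeros'' would be violated by definition, since an identically zero function has uncountably many zeros, which exceeds $k+l-1$). Indeed, if $F'\equiv 0$ then $\Psi'$ would itself fail the $(k,l)$-root-regularity hypothesis on the chosen $(x_i,a_i)$, contradicting the assumption. I do not anticipate any genuine obstacle; the argument is a clean one-line consequence of Rolle's theorem once the setup is in place, and the positivity hypothesis $\Psi>0$ is not needed for the conclusion (it is presumably retained in the statement because it is the natural setting where $\Psi$ arises as an integrated kernel, cf.\ the application in Proposition~\ref{prop:consistency_LSE_MLE_finite_m}).
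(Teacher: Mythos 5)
Your proof is correct and rests on the same key idea as the paper's — Rolle's theorem applied between consecutive zeros of the linear combination $F(t) = \sum_i a_i\Psi(t-x_i)$. The only difference is cosmetic: you state the bound directly ($N$ zeros of $F$ force at least $N-1$ zeros of $F'$, and the hypothesis caps the latter at $k+l-1$, hence $N\leq k+l$), while the paper runs a proof by contradiction. Interestingly, your version is actually tighter: the paper assumes for contradiction that the combination has $k+l+2$ distinct roots, whereas to conclude $(k,l+1)$-root-regularity (i.e.\ at most $k+l$ zeros) it would suffice — and is in fact necessary for a tight conclusion — to assume $k+l+1$ distinct roots. Read literally, the paper's contradiction establishes only $(k,l+2)$-root-regularity; your direct argument gives exactly the claimed order. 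Your side remarks are also on point: the positivity hypothesis on $\Psi$ is indeed unused in this lemma, and the observation that $F'\not\equiv 0$ follows automatically from the $(k,l)$-root-regularity of $\Psi'$ is a correct (if not strictly necessary) sanity check. One tiny nit: Rolle's theorem needs only differentiability, not \emph{continuous} differentiability, which is consistent with the lemma's hypothesis; your invocation of ``continuously differentiable'' is an overstatement that doesn't affect the argument.
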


The previous lemma permits us to infer identifiability of mixtures with a root-regular kernel based on finitely many bin evaluations. Indeed, if the function $x\mapsto K(x+\delta) - K(x)$ for fixed $\delta>0$ is root-regular, then Lemma \ref{lem:rootregViaDerivative} it follows that the kernel-binning $x\mapsto \int_{x}^{x+\delta} K(t)\dif t$ is also root-regular. 
We now proceed with relevant classes of  root-regular kernels.

\begin{proposition}[Root regularity of Gaussian kernel]\label{prop:rootRegularityGaussian}
	Let $K\colon \RR\to (0,\infty)$ %
	 be the Gaussian kernel with variance $\sigma^2>0$.
	Then, the bin integral function $x\mapsto\int_{x}^{x+\delta}K(t)\dif t$ for $\delta>0$ is $(k,k+1)$-root-regular for every $k\in \NN$.
\end{proposition}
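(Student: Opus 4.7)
The plan is to invoke Lemma~\ref{lem:rootregViaDerivative}: since $\Psi(t) = \int_t^{t+\delta} K(u)\,\dif u$ is strictly positive (as the Gaussian $K$ is positive) and differentiable, it suffices to prove that $\Psi'(t) = K(t+\delta) - K(t)$ is $(k,k)$-root-regular, and the lemma then promotes this to $(k,k+1)$-root-regularity of $\Psi$. To this end, I would fix distinct points $x_1,\dots,x_k \in \RR$ and $(a_1,\dots,a_k) \in \RR^k \setminus \{0\}$, and rewrite
\[
h(t) \coloneqq \sum_{i=1}^{k} a_i \Psi'(t-x_i) = \sum_{i=1}^{k} a_i K(t-(x_i-\delta)) - \sum_{i=1}^{k} a_i K(t-x_i) = \sum_{j=1}^{N} c_j K(t-y_j),
\]
where $\{y_1 < \dots < y_N\}$ are the distinct elements of $\{x_i - \delta\} \cup \{x_i\}$, so $N \leq 2k$, and the coefficients $c_j \in \RR$ collect overlapping contributions.

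The next step is to confirm $h \not\equiv 0$, so that its zeros are isolated. Taking Fourier transforms, $\widehat h(\xi) = \widehat K(\xi)(e^{i\delta\xi}-1)\sum_{i} a_i e^{-i x_i \xi}$; since $\widehat K(\xi) = e^{-\sigma^2 \xi^2/2}$ is non-vanishing and the trigonometric polynomial at the distinct frequencies $\{x_i\}$ is not identically zero, I conclude $h \not\equiv 0$, and in particular at least one $c_j$ is non-zero. To count the real zeros of $h$, I would factor out the Gaussian envelope using $K(t-y_j) = (2\pi\sigma^2)^{-1/2} e^{-y_j^2/(2\sigma^2)} e^{-t^2/(2\sigma^2)} e^{y_j t/\sigma^2}$, which gives
\[
h(t) = \frac{e^{-t^2/(2\sigma^2)}}{\sqrt{2\pi\sigma^2}} \sum_{j=1}^{N} \tilde c_j\, e^{\lambda_j t}, \qquad \tilde c_j \coloneqq c_j\, e^{-y_j^2/(2\sigma^2)},\ \ \lambda_j \coloneqq y_j/\sigma^2.
\]
Because the pre-factor is strictly positive, the real zeros of $h$ coincide with those of the exponential sum on the right. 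This sum has at most $N \leq 2k$ non-zero terms at pairwise distinct real exponents, so by the classical bound for real exponential sums with distinct exponents---provable by induction on the number of non-zero terms via Rolle's theorem---it admits at most $2k-1$ real zeros. This yields the desired $(k,k)$-root-regularity of $\Psi'$.

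The main obstacle to watch for is possible coefficient cancellation in the $c_j$ when the sets $\{x_i\}$ and $\{x_i - \delta\}$ overlap. Such cancellations are harmless, however, since they only reduce the effective number of Gaussian translates and therefore tighten the zero bound, so the conclusion $2k-1$ is preserved. Beyond this bookkeeping, the argument rests on two entirely classical ingredients---the non-vanishing of the Gaussian Fourier transform and the Rolle-based zero bound for real exponential sums---and would extend in principle to any smooth positive kernel whose Fourier transform is non-vanishing and whose translates enjoy an analogous exponential-polynomial structure.
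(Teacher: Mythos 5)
Your proposal is correct and follows essentially the same route as the paper: reduce from $\Psi$ to $\Psi'$ via Lemma~\ref{lem:rootregViaDerivative}, factor out the positive Gaussian envelope $\exp(-t^2/(2\sigma^2))$, and invoke the classical Rolle/Descartes-type bound that a real exponential sum with at most $2k$ non-trivial terms at distinct exponents has at most $2k-1$ real zeros. The only substantive difference is that you spell out, via Fourier transforms, why the resulting exponential polynomial is not identically zero when $(a_1,\dots,a_k)\neq 0$; the paper leaves this implicit (and indeed its stated condition ``$\sum_i a_i \neq 0$'' appears to be a typo for ``$(a_1,\dots,a_k)\neq 0$''), so your extra check, though it could be replaced by the simpler observation that the top exponent $x_{i^*}+\delta$ (with $i^*$ the largest index having $a_{i^*}\neq 0$) has coefficient $a_{i^*}e^{-(x_{i^*}+\delta)^2/(2\sigma^2)}\neq 0$, actually patches a small omission in the paper's argument.
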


\begin{proposition}[Root regularity for rational kernels]\label{prop:rationalFunctionRootRegular}
	Let $K\colon \RR\to \RR, x\mapsto P(x)/Q(x)$ be a rational probability kernel where $P$ and $Q$ are polynomials of degree $p$ and $q$, respectively.
	Then, the bin integral function $x\mapsto \int_x^{x+\delta} K(t)\dif t$ for $\delta>0$ is $(k,(2k-1)q + p+1)$-root-regular for every $k\in \NN$. 
\end{proposition}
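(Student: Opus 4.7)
My plan is to invoke Lemma~\ref{lem:rootregViaDerivative} applied to $\Psi(x)=\int_x^{x+\delta}K(t)\,\dif t$. Since $K=P/Q$ is a nonzero nonnegative rational function, it vanishes on at most a finite set, so $\Psi>0$ on $\RR$, meeting the positivity hypothesis of the lemma. As $\Psi'(x)=K(x+\delta)-K(x)$, it then suffices to show that $\Psi'$ is $(k,(2k-1)q+p)$-root-regular.

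To this end, fix distinct $x_1,\dots,x_k\in\RR$ and $(a_1,\dots,a_k)\in\RR^k\setminus\{0\}$ and define
\[
g(t)\;=\;\sum_{i=1}^k a_i\bigl[K(t-x_i+\delta)-K(t-x_i)\bigr].
\]
Placing $g$ over the common denominator $\tilde Q(t)=\prod_{j=1}^k Q(t-x_j+\delta)Q(t-x_j)$ yields $g=N/\tilde Q$, where the $i$th summand of $N$ is
\[
a_i\bigl[P(t-x_i+\delta)Q(t-x_i)-P(t-x_i)Q(t-x_i+\delta)\bigr]\prod_{j\neq i}Q(t-x_j+\delta)Q(t-x_j),
\]
a polynomial of degree at most $(p+q)+2(k-1)q = p+(2k-1)q$. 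Hence $\deg N\leq p+(2k-1)q$, and since $K\geq 0$ forces $Q$ to have no real roots in its reduced form, $\tilde Q>0$ on $\RR$ and the real zeros of $g$ coincide with those of $N$.

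The main obstacle is to verify $N\not\equiv 0$, which I would argue via the Fourier transform. Since $K\in L^1(\RR)$, so is $g$, and a direct computation gives
\[
\hat g(\xi) \;=\; \hat K(\xi)\,(e^{i\xi\delta}-1)\sum_{i=1}^k a_i e^{-i\xi x_i}.
\]
Here $\hat K$ is continuous with $\hat K(0)=1$, so $\hat K(\xi)\neq 0$ in a neighborhood of $0$; the factor $(e^{i\xi\delta}-1)/\xi\to i\delta\neq 0$ as $\xi\to 0$; and the exponential polynomial $\sum_i a_i e^{-i\xi x_i}$ is nontrivial by linear independence of the distinct exponentials $e^{-i\xi x_i}$. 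Consequently $\hat g\not\equiv 0$, so $N\not\equiv 0$, and $N$ has at most $p+(2k-1)q$ real zeros. Since $p+(2k-1)q \leq k+(2k-1)q+p-1$ for $k\geq 1$, this shows $\Psi'$ is $(k,(2k-1)q+p)$-root-regular, and Lemma~\ref{lem:rootregViaDerivative} then delivers the claim.
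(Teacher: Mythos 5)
Your proof is correct and follows the same overall strategy as the paper: bring $\Psi'(x)=K(x+\delta)-K(x)$ into play via Lemma~\ref{lem:rootregViaDerivative}, clear the rational denominators of $\sum_i a_i\Psi'(t-x_i)$ to obtain a polynomial numerator $N$ of degree at most $p+(2k-1)q$, and rule out $N\equiv 0$ via a Fourier-transform argument. Two small points of comparison. First, you explicitly track that the $k$ shifts of $\Psi'$ become $2k$ shifts of $K$, yielding a common denominator $\prod_j Q(t-x_j+\delta)Q(t-x_j)$ of $2k$ factors and thus the degree $p+(2k-1)q$; this is the calculation the conclusion $(2k-1)q+p+1$ requires, whereas the paper's written-out argument only displays $k$ shifts of $K$ (degree $p+(k-1)q$), so your version is the more careful rendering of the intended computation. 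Second, the paper dispatches the nontriviality of $N$ by invoking its standalone Lemma~\ref{lem:shiftedAnalyticFunctionsLinearlyIndependent}, which itself is proved via Fourier transform; you instead compute $\hat g(\xi)=\hat K(\xi)(e^{i\xi\delta}-1)\sum_i a_i e^{-i\xi x_i}$ directly and argue the three factors cannot vanish simultaneously near $\xi=0$. The two nontriviality arguments are essentially the same idea, yours being concrete and self-contained, the paper's being modular (and reusable in Proposition~\ref{prop:rootRegularityGaussian}). Your final bookkeeping inequality $p+(2k-1)q\leq k+(2k-1)q+p-1$ correctly shows your (slightly stronger) root count implies the advertised order of root-regularity.
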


\begin{example}\label{expl:rationalKernels}Based on Proposition \ref{prop:rationalFunctionRootRegular} the following two classes of kernels are root-regular. %
	\begin{enumerate}
		\item For the Cauchy kernel $K(x) \propto (1+\sigma x^2)^{-1}$ with $\sigma>0$ it follows for $\delta>0$ that the bin integral function $x \mapsto \int_x^{x+\delta} K(t)\dif t$ is $(k,2(2k-1)+1)$-root-regular for every $k\in\NN$. 
		\item For the student-$t$ kernel $K(x) \propto (1 + x^2/n)^{-\frac{n+1}{2}}$ with an odd number $n\in \NN$ of degrees of freedom the bin integral function is $x \mapsto \int_x^{x+\delta} K(t)\dif t$ is $(k,(2k-1)(n+1)+1)$-root-regular for every $k \in \NN$. 
	\end{enumerate}
\end{example}

Notably, it is not clear whether the above derived orders of root-regularity are optimal and if potential restrictions of the points $x_1, \dots, x_k$ in the definition of root-regularity lead to improvements. For instance, based on prior work by \cite{dette1996sign} it is known that the kernel $K(x) \propto (1+x)^{-r}$ for $r>0$ forms a Tchebycheff system on $\RR_+$, i.e., it is $(2k,0)$-root-regular for every $k\in \NN$. Now, if the densities of $K\ast \mu$ and $K\ast \nu$ were to match at finitely many distinct points $t_1, \dots, t_{2k}>0$, then we could infer that $\mu = \nu$. However, based on our model \eqref{eq:model} we only have access to the bin evaluations of $K\ast \mu$ and $K\ast \nu$.

We close this section with a short discussion on potential extensions. 
\begin{remark}\label{rmk:extensions_finiteM}
	\begin{enumerate}
		\item Extending upon our consistency result for the maximum likelihood, it would be worthwhile to investigate the precise convergence rate. Based on numerical evidence we conjecture that the MLE achieves for Gaussian kernels the same estimation rate as for the regime where $m \to \infty$. However, such an analysis remains out of scope due to a lack of quantitative stability bounds, say for $\mu, \nu \in \calP_k(\Theta)$ of the type
		\begin{align}\label{eq:quantitativeBinsIneq}
			\|K\ast (\mu - \nu)\|_{L^2(\RR)} \lesssim \sum_{i = 1}^{m} \left| \int_{B_i} K\ast \mu(x) - K\ast\nu(x)\dif x \right|.
		\end{align}
		Deducing this from the notion of root-regularity is not possible because the latter represents a qualitative property. Moreover, the non-explicit nature of the Gaussian c.d.f.\ hinders an algebraic approach similar to Proposition \ref{prop:Lp_to_lp}. %

		\item All results of this section are stated for the univariate setting since the concept of root-regularity, i.e., quantitative properties for zero-sets of polynomials or sums of exponential functions is best understood in dimension one.  For the multivariate setting, zero sets of analytic functions are not just single points but analytic varieties. For a sensible notion of root-regularity for a function $f\colon \RR^d \to \RR$ we anticipate that a good understanding of zero sets of weighted functions $t \mapsto \sum_{i = 1}^{k} a_i f(t- x_i)$ is necessary. We leave this for future work. 

	\end{enumerate}
\end{remark}

\section{Application}\label{sec:applications}

We apply our methods to determine the location and orientation of DNA origami samples from experimental STED data as reported in \cite{proksch2018multiscale} (see \Cref{fig:origami_data_2}a--c). These samples are precisely designed DNA probes that form a parallelogram structure equipped with fluorophores located along lines at two opposite ends. The distance between these lines is known to be $71$\,nm. Hence, they form an ideal test bed for our purposes as the ground truth is approximately known.

\begin{figure}[b!]
	\centering
	\quad\;\includegraphics[width=\textwidth, trim={0 0 0 0}, clip]{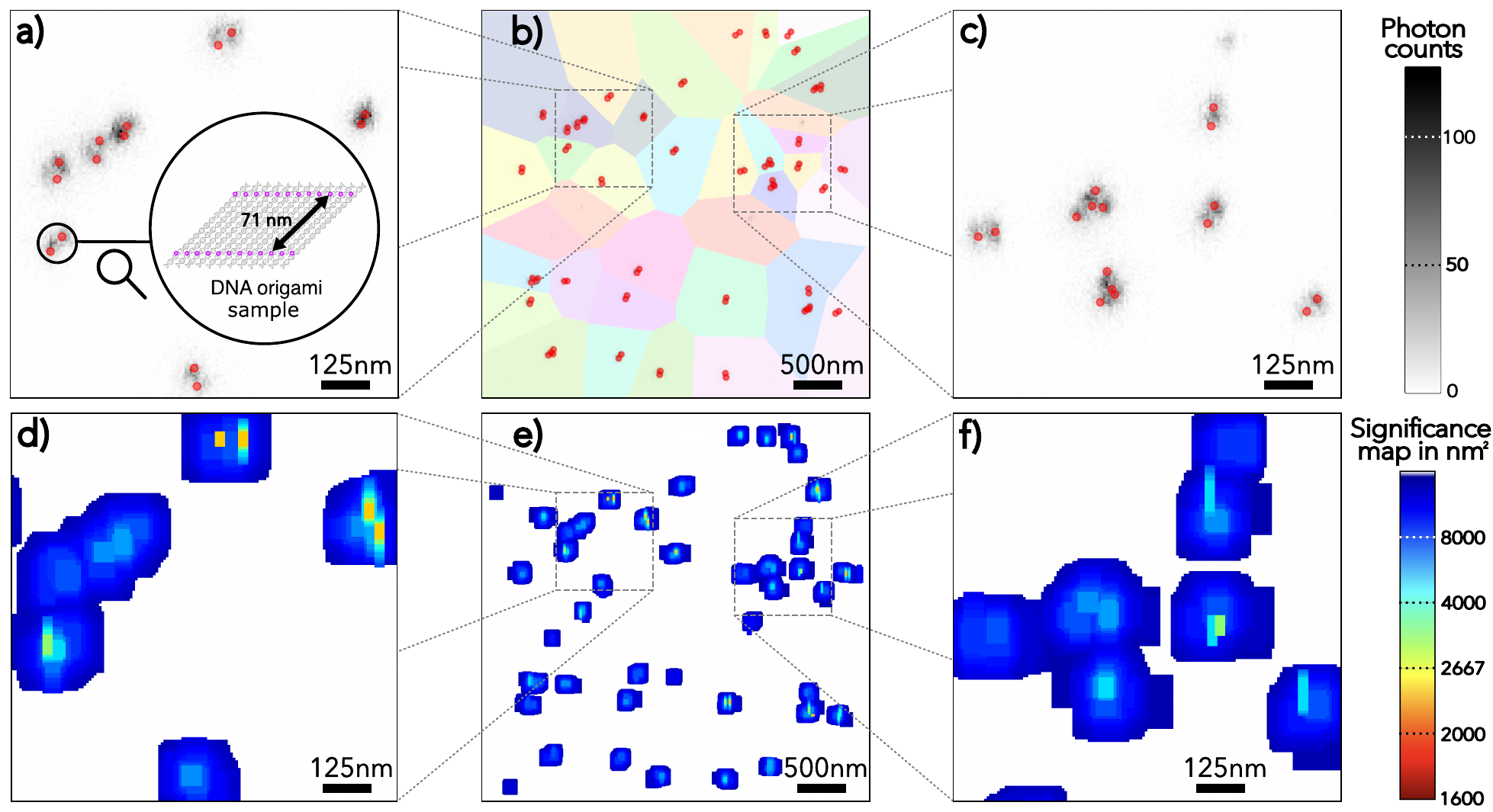}
	\caption{MLE computed for DNA origami data alongside the significance map obtained from the MISCAT method introduced by \cite{proksch2018multiscale}. \textbf{a)-c)} Reconstruction of the mixing measure $\mu$ using MLE (red dots) for the full image (middle, $600\times 600$ pixels) and zoomed-in sections (left and right, each $150\times 150$ pixels). The MLE was computed with approximately $k = 70$, employing a partitioning strategy (indicated by different colors) and a denoising step; both steps utilized a mode-hunting approach (see \Cref{app:application} for details). Due to rounding the number of components per partition cell, the final estimate does not admit $k=70$ atoms, in our case it equals $84$. 
    Figure \textbf{a)} provides a schematic representation of the DNA origami structure. \textbf{d)-f)} Corresponding MISCAT significance map by \cite{proksch2018multiscale}, which performs statistical significance tests across boxes of varying sizes. Each pixel in the significance map is color-coded to indicate the smallest scale (box volume in nm$^2$) at which significance is detected. See the main text for further details. }	\label{fig:origami_data_2}
\end{figure}

Details to our used methodology are described in Appendix \ref{app:application}. 
Specifically, to recover the spatial arrangement of these DNA structures, we first partition and denoise the data (see Appendices \ref{app:apl:partition} and \ref{app:apl:estimation}) and then employ the maximum likelihood estimator (MLE) from \eqref{eq:MLE} computed via an expectation--maximization (EM) algorithm from \Cref{app:EM}, tailored to our statistical model \eqref{eq:model} (details in \Cref{app:apl:estimation}) which we initialize using the complex method of moments estimator \eqref{eq:method_of_moments_complex}. 
The data partitioning step is done for numerical reasons. Indeed, related numerical experiments in Appendices \ref{sec:simulations} and \ref{app:application} confirm that a high-number of components leads to numerical instabilities for the method of moments estimator and the EM-based MLE. As the underlying kernel for our STED image data we consider an isotropic Gaussian kernel with standard deviation of $42$\,nm. This is chosen to fit well the (noisy) kernel matrix from \citet[Supplement]{proksch2018multiscale}.  Specifically, to account for the denoising procedure we apply, which leads to clusters roughly exhibiting diameter 400nm, we consider the Gaussian kernel which is closest in TV norm to the kernel restricted to a disk of diameter 400nm. 
Further, we set $k = 70$ and round for each partition cell, depending on the underlying mass assigned to the region, the number of components to be even. Here we utilize the fact that each DNA origami sample is symmetric and exhibits two clusters of fluorophores at opposite ends. Notably, due to the rounding step, the actual number of components in the final estimate is not necessarily equal to $k$. We additionally investigate in \Cref{app:apl:estimation} how the recovery changes for different choices of $k\in \{40, 50, 60, 70\}$ and find no pronounced change in the recovered location of atoms. In Appendix \ref{app:apl:robustness} we do observe, however, that a coarse partitioning leads to a degradation of the performance of the method of moments and the maximum likelihood estimator.

To assess the accuracy of our reconstruction, we compare it with the MISCAT significance map, a nonparametric multiscale scanning test specifically designed for inverse problems
of this type, introduced by \cite{proksch2018multiscale} (see Figure \ref{fig:origami_data_2}d-f). The MISCAT method identifies significant structures within the data by applying a statistical test that controls the family-wise error rate (FWER). Each pixel in the MISCAT significance map is color-coded according to the smallest scale (volume of the box in nm$^2$) at which a signal is detected. Overall, MISCAT performs $2,\!125,\!764$ tests on this dataset, rejecting $94,\!824$ local hypotheses. The control of the FWER ensures with asymptotic probability of at least $90\%$ that the selected regions do not contain false detections. Notably, our MLE now provides estimations of the exact locations of the DNA origamis. This reconstruction aligns closely with the MISCAT significance map, demonstrating that our method accurately captures the locations of the fluorophores within the DNA origami sample. Importantly, the EM based MLE initialized with the complex method of moments estimator as well as all relevant tuning parameters do not rely on any information from the MISCAT significance map.

\section{Outlook and Discussion}\label{sec:discussion}

The core contribution of our work has been to develop a toolbox 
for the statistical analysis of deconvolution problems
when the signal of interest is discrete and uniform.
We have argued that there are a variety of applications in the physical
sciences where, as the resolution
of imaging  devices continues to improve, one is increasingly
faced with inverse problems involving signals of this nature. 
In contrast to traditional smooth deconvolution problems, which
are well-known to be limited by logarithmic minimax rates of convergence, 
our work emphasizes that the discrete setting is amenable to a fine local minimax analysis.
This perspective reveals that the global minimax rate is rarely achieved, and significantly
improves in realistic situations where there is clustering among the atoms of the discrete signal. 
In fact, with an appropriate choice of loss function, we have shown that
different subsets of the discrete signal can be estimated at differing rates of convergence, 
depending on their individual clustering structure. 
Although our model is new to the best of our knowledge, our analysis drew on many parallels
with the literature on finite mixture models, where local minimax analyses of this type
have been studied since the work of~\cite{heinrich2018}.

We close this paper with a discussion of open questions and potential venues for future research.

\paragraph{Beyond support point estimation: Optimal matchings.}
\label{subsec:functional}

Our work has focused on the Wasserstein risk of estimating
the uniform measure $\mu$ under a Poisson deconvolution model. 
Such results can also be used to deduce
upper bounds on the minimax 
rate of estimating {\it functionals} of $\mu$, provided
that they are smooth under the  Wasserstein distance. 
It turns out that a variety of   optimal transport functionals
are smooth in this sense, and our
results sometimes lead to optimal
estimators for them. We provide
one example here for the problem of estimating optimal
transport matchings; 
a more detailed investigation of related problems
will appear in a separate paper. 

We consider a two-sample deconvolution problem, in which
a practitioner has access to two sets of independent observations, of the form
\begin{equation}
\label{eq:two_sample}
X_i \sim \mathrm{Po}(t K\star \mu(B_i)),\quad Y_i \sim \mathrm{Po}(t K\star \nu(B_i)),
\quad i=1,\dots,m,
\end{equation}
where $\mu=(1/k)\sum_{i=1}^k\delta_{\theta_i}$ and 
$\nu = (1/k)\sum_{i=1}^k \delta_{\eta_i}$ are unknown uniform measures in $\calU_k(\Theta)$. The goal is to estimate
the quadratic {\it optimal transport matching} from $\mu$ and $\nu$, which is defined
as the solution to the optimization problem
\begin{equation}
\label{eq:ot_matching}
\tau_0 = \argmin_{\tau\in \calS_k} \frac 1 k \sum_{i=1}^k \|\theta_{\tau(i)} - \eta_i\|^2,
\end{equation}
provided that such a solution exists, and is unique. 
The problem of estimating optimal transport
matchings under our deconvolution model is motivated by the recent
work of~\cite{tameling2021}, which leveraged such objects to define
new measures of spatial proximity, or {\it colocalization}, for biophysical
applications. More broadly, the problem
of estimating optimal  
transport maps has been the subject of intensive
research in the statistical optimal transport literature
in recent years, though typically not in a deconvolution setting;
we refer to~\cite{chewi2025} for a recent survey. 

In order to make the parameter $\tau_0$ well-defined and identifiable, we require
appropriate regularity conditions on the underlying measures $\mu$ and $\nu$. 
Concretely, we will adopt a smoothness assumption
originating from the works of~\cite{gigli2011,hutter2021}
for the study of optimal transport maps, which was later adopted
in the discrete setting by~\cite{ding2022}. 
We assume that
the underlying measures lie in the class $\calT_\lambda(\Theta)\subseteq \calU_k(\Theta) \times\calU_k(\Theta)$ consisting of all pairs of measures $(\mu,\nu)$
for which there exists $\lambda > 0$ and 
a strictly convex and twice continuously differentiable
function $\varphi_0:\Theta \to \bbR$
such that $\nu = (\nabla\varphi_0)_{\#}\mu$, and 
$$\frac{I_d}{\lambda} \preceq \nabla^2 \varphi_0(x) \preceq \lambda I_d,\quad \text{for all } x \in \Theta.$$
It can be deduced from~\cite{ding2022}
that $(\mu,\nu)\in \calT_\lambda(\Theta)$
for some $\lambda > 0$ if and only if
there exists a unique optimal coupling $\tau_0$ from $\mu$ to $\nu$,
whose  inverse $\tau_0^{-1}$ which is the unique optimal matching from $\nu$ to $\mu$
(see Lemma~\ref{lem:ding} for a formal statement).
Thus, the class $\calT_\lambda(\Theta)$ can be viewed
as the collection of measures $(\mu,\nu)$ which are quantitatively
bounded away from those pairs of measures which are not related
by a unique optimal matching.

Having identified sufficient conditions for the identifiability
of optimal matchings, it remains to define a loss function
in order to state our convergence rates. Notice
that any matching $\tau_0$ defines a coupling $\pi_0 \in \calU_k(\Theta\times\Theta)$
between $\mu$ and $\nu$, given by
$\pi_0 = \frac 1 k \sum_{i=1}^k \delta_{(\theta_{\tau_0(i)}, \eta_i)}$. 
Given any estimator $\hat\pi \in \calP(\Theta\times\Theta)$ of $\pi_0$, 
we will measure its risk using the 2-Wasserstein distance over $\calP(\Theta\times\Theta)$. 
\begin{corollary}\label{cor:matchings}
Given $\lambda > 0$, under the same conditions as Theorem~\ref{thm:global_minimax_risk}, 
it holds that
\begin{equation}
\inf_{\hat\pi_t} \sup_{(\mu,\nu)\in \calT_\lambda(\Theta)} 
\bbE_{\mu,\nu} W_2(\hat\pi_t,\pi_0) \asymp t^{-\frac 1 {2k}},
\end{equation}
where the infimum is taken over all estimators
$\hat\pi_t \in \calP(\Theta\times\Theta)$, and the expectation is 
taken over the independent samples~\eqref{eq:two_sample}. 
\end{corollary}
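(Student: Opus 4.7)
The strategy is to decouple estimation of $\pi_0$ into (i) estimating the marginals $\mu,\nu$ separately from the two independent samples using the tools already developed, (ii) forming a plug-in optimal coupling between the estimated marginals, and (iii) relating the coupling error to the marginal errors via a stability argument valid on $\calT_\lambda(\Theta)$. For the upper bound, the plan is to apply the method-of-moments (or, for Gaussian kernels, the MLE) of Sections~\ref{sec:moments}--\ref{sec:mle} separately to $(X_1,\dots,X_m)$ and $(Y_1,\dots,Y_m)$, producing $\hat\mu_t,\hat\nu_t\in\calU_k(\Theta)$. Propositions~\ref{prop:mm_rate} and~\ref{prop:mle_gaussian} give $\bbE[M_k^2(\hat\mu_t,\mu)] \lesssim t^{-1}$ under the bin regime $m\gtrsim t^{(d+\gamma)/2}$, and the polynomial root-stability of Lemmas~\ref{lem:coeff_lip}--\ref{lem:ostrowski} then yields $\bbE[W_k^k(\hat\mu_t,\mu)]\lesssim t^{-1/2}$. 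Since $W_2\le W_k$ for $k$-atomic uniform measures and $k\ge 2$, Jensen's inequality converts this into $\bbE[W_2^2(\hat\mu_t,\mu)]\lesssim t^{-1/k}$, with the analogous bound for $\hat\nu_t$.

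The plug-in estimator $\hat\pi_t$ is then taken to be any $2$-Wasserstein optimal coupling between $\hat\mu_t$ and $\hat\nu_t$. Invoking the Lipschitz stability of optimal couplings on $\calT_\lambda(\Theta)$ from~\cite{ding2022} (which underlies Lemma~\ref{lem:ding}), one expects a bound of the form $W_2^2(\hat\pi_t,\pi_0)\le C_\lambda\bigl(W_2^2(\hat\mu_t,\mu)+W_2^2(\hat\nu_t,\nu)\bigr)$, from which the marginal estimate and a final application of Jensen's inequality produce the target rate $\bbE[W_2(\hat\pi_t,\pi_0)]\lesssim t^{-1/(2k)}$.

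For the lower bound, I would restrict attention to pairs of the form $(\mu,(S_v)_{\#}\mu)$, where $S_v(x)=x+v$ for a fixed vector $v$ with $\Theta+v\subseteq\Theta$. Every such pair belongs to $\calT_1(\Theta)$ since the Brenier potential is $\varphi_0(x)=\tfrac12\|x\|^2+\langle v,x\rangle$ with identity Hessian; the optimal coupling is $\pi_0=(\mathrm{id},S_v)_{\#}\mu$, whose first marginal equals $\mu$. For any estimator $\hat\pi_t$, the projection $(\mathrm{proj}_1)_{\#}\hat\pi_t$ is an estimator of $\mu$, and the projection property of Wasserstein distances together with $W_2\ge W_1$ gives $W_2(\hat\pi_t,\pi_0)\ge W_1\bigl((\mathrm{proj}_1)_{\#}\hat\pi_t,\mu\bigr)$. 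The single-sample lower bound of Proposition~\ref{prop:minimax_lower_bound}$(i)$ then transfers to the two-sample setup: since the $Y$-sample has bin intensities $tK\star\mu(\,\cdot\,-v)$, the independence of the two samples makes the total KL divergence between any two hard alternatives at most twice the one-sample KL, which preserves the rate $t^{-1/(2k)}$.

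The principal obstacle is the stability step. On generic pairs of measures, the coupling-space Wasserstein distance is only H\"older-stable in its marginals, so a naive bound such as $W_2^2(\hat\pi_t,\pi_0)\lesssim W_1(\hat\mu_t,\mu)+W_1(\hat\nu_t,\nu)$ would deliver only the suboptimal rate $t^{-1/(4k)}$. The smoothness class $\calT_\lambda(\Theta)$ is introduced precisely to upgrade this to a Lipschitz bound via the bi-Lipschitz structure of the Brenier map. Verifying that the resulting stability constant is uniform over $\calT_\lambda(\Theta)$, and --- crucially --- that it applies to plug-in couplings $\hat\pi_t$ whose endpoints $\hat\mu_t,\hat\nu_t$ need not themselves lie in $\calT_\lambda(\Theta)$, is the main technical hurdle.
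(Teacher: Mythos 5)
Your proposal follows essentially the same route as the paper: plug-in optimal coupling between separately estimated marginals plus a marginal-to-coupling stability bound for the upper bound, and a projection-to-the-first-marginal reduction (with the diagonal pairs $(\mu,\mu)\in\calT_\lambda(\Theta)$, to which your shift construction degenerates anyway since $\Theta+v\subseteq\Theta$ forces $v=0$ for compact $\Theta$) for the lower bound. The one step you flag as the ``main technical hurdle''---that the stability bound must apply to plug-in couplings whose endpoints need not lie in $\calT_\lambda(\Theta)$---is exactly what Lemma~\ref{lem:stab_w2} (the stability result of Balakrishnan and Manole) supplies: it holds for \emph{any} coupling $\hat\pi\in\calP(\Theta\times\Theta)$ with marginals $\hat\mu,\hat\nu$, with a constant depending only on $\lambda$ through the target pair $(\mu,\nu)\in\calT_\lambda(\Theta)$; note that Lemma~\ref{lem:ding}, which you cite, gives only uniqueness of the matching and not the quantitative Lipschitz stability needed here.
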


The claim is an immediate consequence of Theorem~\ref{thm:global_minimax_risk}
and   recent stability bounds for 
the quadratic optimal transport problem~\citep{balakrishnan2025}, which relate
the Wasserstein risk of the estimator $\hat\pi_t$
to the Wasserstein risk of its marginal distributions
(see Lemma~\ref{lem:stab_w2} for a self-contained summary). 
These stability bounds allow us to reduce the problem of estimating~$\pi_0$
to the problem of estimating $\mu$ and $\nu$ individually, which 
in turn was the content of our main results in Section~\ref{sec:intro}.
It is also straightforward to see that the convergence rate in Corollary~\ref{cor:matchings}
can be improved if the measures are $k_0$-clusterable, 
as in Theorem~\ref{thm:local_minimax_risk_separated}. We omit a formal statement for brevity. 
A short proof of Corollary~\ref{cor:matchings}
is given in Appendix~\ref{app:pf_cor_matchings}.

\paragraph{Estimation of the number of mixture atoms.}
\label{subsec:Dis:EstimationK}

Our work made the assumption that the number of  atoms, $k$, is known, which may generally not be the case
in practice. This raises the question of performing data-driven estimation of $k$ under our
discrete signal recovery model. The related question
of estimating the number of components in finite mixture
models has been intensively studied in the literature, with
solutions including information criteria~\citep{keribin2000,drton2017}, iterative
hypothesis testing methods~\citep{li2010,chakravarti2019}, penalized
likelihood methods~\citep{huang2017,manole2021estimating}, and Bayesian methods~\citep{richardson1997bayesian,miller2018mixture}. 
We refer to~\cite{chen2023}
for a recent survey. 
As future research, we deem it interesting to investigate how these approaches perform for our binned Poisson convolution model \eqref{eq:model}, in particular under uniform weights.

\paragraph{Extensions and refinements in terms of kernels.}
\label{subsec:Dis:Extensions}

Our current analysis covers compactly-supported and Gaussian kernels. This leaves open the analysis of general analytic kernels
like the Airy disk. A notable observation for analytic kernels is that the convolution $K \ast \mu$ is identifiable from a set of positive Lebesgue measure \citep[p.\ 240]{federer2014geometric}, which guarantees that $\mu$ can be uniquely recovered. However, analytic kernels, being non-compactly supported, introduce boundary effects that render the method of moments inconsistent within our model \eqref{eq:model} without additionally enlarging the domain. In this regime, we expect the maximum likelihood estimator to remain consistent under increasing precision ($m \to \infty$) and potentially even when $m$ 
fixed, but larger than a constant that depends on the number of components $k$. Extending our analysis to this setting necessitates the derivation of moment-$L^2$ comparison bounds, which we have so far only been established for the Gaussian kernel (\Cref{thm:equivalences}).

Another interesting extension involves analyzing the regime where the kernel is only assumed to belong to a suitable (parametric or nonparametric) class and needs to be estimated without additional measurements, a problem known as \emph{blind deconvolution}, \citep{hall2007blind, hall2007, akhavan2019}, or using separate measurements \citep{diggle1993fourier,johannes2009deconvolution}. Prior work has left open the regime where the underlying signal is $k$-atomic. An important aspect to settle in this regard is the identifiability of kernel. For instance, identifiability fails when $K$ is only assumed to belong to a family of uniform box kernels \citep{kim2024a}. Addressing these questions would potentially result in estimators which better adapt to model misspecification.

\paragraph{Alternative Statistical Models.}

Our statistical model only involves Poisson noise, which implies that the expected number of photons and their variance per bin are identical. 
Our leading motivation for this setting has been fluorescence microscopy, where photon emission is known to follow
a Poisson distribution to first order. However, this assumption may not be realistic in 
some experiments due to the presence of additional background noise, or due to detector dead times -- brief intervals during which they cannot register subsequent photons after detecting one~\citep{aspelmeier2015modern}. 
Extending the statistical analysis of our two estimators
to account for more general error models is therefore a worthwhile direction for future research. This could include  Bernoulli, binomial or Gaussian error models, as introduced in \citep{munk2020statistical}. A key technical challenge lies in ensuring that the distribution of observables remains somewhat comparable as $m$ changes. In our Poisson model \eqref{eq:model}, this is achieved naturally, as the model can also be interpreted as the binned version of an inhomogenous Poisson point process \citep{reiss2012course} with intensity measure $t \cdot K \ast \mu$. For more general regression models this is not clear without restricting the class of error distributions and provides an interesting avenue for further work.

\paragraph{Acknowledgments.} 
The authors would like to thank Victor Panaretos, Sivaraman Balakrishnan, Mikael Kuusela, and 
Larry Wasserman for discussions related to this work,  Frank Werner for providing the STED microscopy data set, and 
Philippe Rigollet for communicating an alternate
proof of Lemma~\ref{lem:moment_independence} on the existence of $k$-atomic uniform measures on $\RR$ with $k-1$ matching moments.  
SH would like thank Thomas Staudt for helpful discussions related to microscopy.

\paragraph{Funding.} 
SH and AM acknowledge support from the DFG RTG 2088 \emph{Discovering structure in complex data:
Statistics meets optimization and inverse problems} and the DFG CRC 1456 \emph{Mathematics of the Experiment A04, C06.}
SH additionally gratefully  acknowledges funding from the German National Academy of Sciences Leopoldina under grant number LPDS 2024-11.
TM is funded by a Norbert Wiener postdoctoral fellowship.
AM further gratefully acknowledges support by  DFG RU 5381 \emph{Mathematical Statistics in the information age – Statistical efficiency and computational
tractability}, and AM and SH the DFG Cluster of Excellence 2067 MBExC \emph{Multiscale bioimaging–from
molecular machines to networks of excitable cells}.

 \addtocontents{toc}{\protect\setcounter{tocdepth}{1}}
\appendix

\section{Proofs for Section~\ref{sec:intro}}
	\label{app:pfs_intro}

	\subsection{Proof of Theorem \ref{thm:global_minimax_risk}}

	The lower bound on the global minimax risk under the stated set of assumptions follows from Proposition~\ref{prop:minimax_lower_bound}$(i)$ which applies to $h$-regular kernels (see Definition \ref{def:regularKernel})
	for $h=1,\dots,k$. In particular, $h$-regularity of kernels is met for compactly supported kernels under Assumption \ref{ass:kernel}$(i)$, and under Assumption \ref{ass:kernel}$(ii)$, i.e., under a Gaussian kernel, where $h$-regularity is always met (Lemma \ref{lem:regularKernelSufficientCondition}).

	The upper bound follows by combining our global estimate in our black-box bound (Corollary \ref{cor:blackbox_bound}$(i)$) with our convergence statements on the expected moment difference for the method of moments estimator under Assumptions \ref{ass:bins} and \ref{ass:kernel}$(i)$, see Proposition \ref{prop:mm_rate}, and the maximum likelihood estimator under Assumptions \ref{ass:bins} and \ref{ass:kernel}$(ii)$, see Proposition \ref{prop:mle_gaussian}$(i)$. Notably, in the bounds on the expected (squared) moment difference, the $m$-dependent discretization error is negligible compared to the $t$-dependent statistical error due to our lower bound on $m$ in terms of $t$. \qed

	\subsection{Proof of Theorem \ref{thm:local_minimax_risk_separated}}

	The minimax lower bound for the local Wasserstein distance follows from Proposition \ref{prop:minimax_lower_bound}$(ii)$, under the stated condition on $\epsilon$,
	 and the observation that the two local Wasserstein distances $\overline D_{\mu_0}$ and $\calD_{\mu_0}$ from \eqref{eq:local_Wasserstein} are related by 
	  \begin{align*}
	  	\widebar{\calD}_{\mu_0}(\mu, \nu) = 1 \wedge \sum_{j=1}^{k_0}  W_1^{r_j}(\mu_{V_j}, \nu_{V_j}) &\geq (1\vee \max_{j =1, \dots, k_0}\delta_j(\mu_0))^{-1}  \cdot \left(1 \wedge \sum_{j=1}^{k_0}  \delta_j(\mu_0)
	\cdot  W_1^{r_j}(\mu_{V_j}, \nu_{V_j})\right)\\
	&= (1\vee \max_{j =1, \dots, k_0}\delta_j(\mu_0))^{-1}{\calD}_{\mu_0}(\mu, \nu)\\
	&\geq (1\vee \diam(\Theta)^{k})^{-1}{\calD}_{\mu_0}(\mu, \nu).
	  \end{align*}
	The upper bound on the local minimax risk for the local Wasserstein distance follows from Theorem~\ref{thm:local_minimax_risk_nosep} combined with the fact that 
		  \begin{align*}
	  	\widebar{\calD}_{\mu_0}(\mu, \nu) = 1 \wedge \sum_{j=1}^{k_0}  W_1^{r_j}(\mu_{V_j}, \nu_{V_j}) &\leq (1\wedge \min_{j =1, \dots, k_0}\delta_j(\mu_0))^{-1}  \cdot \left(1 \wedge \sum_{j=1}^{k_0}  \delta_j(\mu_0)
	\cdot  W_1^{r_j}(\mu_{V_j}, \nu_{V_j})\right)\\
	&= (1\wedge \min_{j =1, \dots, k_0}\delta_j(\mu_0))^{-1}{\calD}_{\mu_0}(\mu, \nu)\\
	&\leq (1\wedge \delta^{k})^{-1}{\calD}_{\mu_0}(\mu, \nu).
	  \end{align*}
	  
	Moreover, the minimax lower bound for the Wasserstein distance is shown in Remark \ref{rmk:MinimaxLowerBoundsDiscussion}$(iv)$. The upper bound follows analogously to Theorem \ref{thm:local_minimax_risk_nosep} with the key difference that we utilize the Wasserstein bound from Corollary \ref{cor:blackbox_bound} instead of the local Wasserstein divergence. \qed 
	\subsection{Proof of Theorem \ref{thm:local_minimax_risk_nosep}}
	The proof is similar to the preceding two results. 
	On the one hand, for the lower bound, one can 
	use the same construction as in Theorem~\ref{thm:local_minimax_risk_separated},
	together with the fact that the divergences
	$\widebar \calD_{\mu_0}$ and $\calD_{\mu_0}$ are equivalent
	when the atoms of $\mu_0$ are well-separated.
	Note that the choice $\epsilon_t \asymp t^{-1/2k}$ is sufficient
	for these lower bounds to apply.
	For the upper bound, we combine the local estimate in our black-box bound (Corollary \ref{cor:blackbox_bound}$(ii)$) with our convergence results for the expected moment difference. Specifically, by our lower bound on $m$ in terms of $t$, the expected moment difference for the method of moments estimator under Assumptions \ref{ass:bins} and \ref{ass:kernel}$(i)$, see Proposition \ref{prop:mm_rate}, and the maximum likelihood estimator under Assumptions \ref{ass:bins} and \ref{ass:kernel}$(ii)$, see Proposition \ref{prop:mle_gaussian}$(i)$, is upper bounded by 
	$\tilde\epsilon_t =  C(\Theta, \Omega, d, K, k, \gamma) t^{-1/2}$. Hence, we observe that the 
 condition $\delta_t \geq  \tilde \epsilon_t^{1/(2k(k+1))}$ is met by choosing
 the constant $c_2$ in the definition of $\delta_t$ sufficiently large, and we conclude that $\EE[\calD_{\mu_0}(\hat \mu_{t,m}, \mu)] \leq  C(\Theta, \Omega, d, K, k, \gamma) t^{-\frac{1}{2}}$. This finishes the proof. \qed

	\subsection{Proof of Corollary \ref{cor:mixtures}}\label{subsec:proof:intro_mixtures}
	The local and global minimax lower bounds are discussed in Remark \ref{rmk:MinimaxLowerBoundsDiscussion}$(iii)$ and are shown in \Cref{app:pf:rmk:MinimaxLowerBoundsDiscussion}. In particular, they apply because the Gaussian kernel is $h$-regular of any order $h\in \NN$ (Lemma \ref{lem:regularKernelSufficientCondition}).

	For our upper bounds we show that the global and local minimax lower bounds are realized by the general method of moments estimator, as defined in \eqref{eq:method_of_moments_general}. To elaborate, for i.i.d.\ random variables $Y_1, \dots, Y_n \sim K\ast \mu$ we consider for polynomials $\{\psi_\alpha\}_{\alpha \in \NN_0^d}$ from \Cref{thm:moment_polynomials} the moment estimators 
	\begin{align*}
		\hat m_{\alpha} = \frac{1}{n}\sum_{i = 1}^{n} \psi_{\alpha}(X_i), \quad \alpha \in \NN_0^d, \|\alpha\|_1\leq k.
	\end{align*}
	Our general method of moments estimator is then defined as 
	\begin{align*}
		\hat \mu_n\coloneqq\hat \mu_{n,\Theta}^{\textup{MM}}\in \argmin_{\tilde \mu \in\calU_k(\Theta)}\sum_{\substack{\alpha\in \NN_0^d, \|\alpha\|_1\leq k}}|m_\alpha(\tilde \mu) - \hat m_{\alpha}|^2.
	\end{align*}
	Following along step 2.\ of the proof of Proposition \ref{prop:mm_rate}, it follows that 
	\begin{align*}
		\EE \left[M_{k}^2(\hat \mu_n, \mu)\right] \lesssim_k \sum_{\substack{\alpha\in \NN_0^d\backslash\{0\}\\1\leq \|\alpha\|_1 \leq k}} \EE\left|\hat m_{\alpha}-m_\alpha(\mu) \right|^2.
	\end{align*}
	Hence, once we show that the right-hand side in the above display is dominated by $C(\Theta, \Sigma, d,k )n^{-1}$ the assertion follows	
	from Corollary \ref{cor:blackbox_bound}. To this end, first note by \Cref{thm:moment_polynomials} that $m_{\alpha}(\mu) =  \EE_{X\sim \mu}[X^\alpha]= \EE_{Y\sim K\ast \mu}\left[\psi_\alpha(Y)\right]$. This implies by Jensen's inequality, combined with sub-Gaussianity of $K\ast \mu$ with parameters depending on $\Theta$ and $\Sigma$ and since the polynomial $\psi_\alpha$ only depends on $\Sigma, d,k$ that 
	\begin{align*}
		\EE\left|\hat m_\alpha  -m_{\alpha}\right|^2 \leq n^{-1}(\Var_{Y\sim K\ast \mu}\left[\psi_\alpha(Y)\right]) \leq n^{-1} (\EE_{Y\sim K\ast \mu}[\psi_\alpha(Y)^2])\lesssim_{\Theta, \Sigma, k,d} n^{-1}. 
	\end{align*}
	which proves the claim. \qed

	\subsection{Proof of Corollary \ref{cor:TV_Hellinger_equiv}}
	This assertion immediately follows from our equivalence statement for finite Gaussian mixtures (\Cref{thm:equivalences}) in Section \ref{sec:mle_Lp_equiv}.\qed

\section{Proofs for Section~\ref{sec:moments}}
\label{app:pf_thm_moment_comparison}

\subsection{Proof of Lemma \ref{lem:multivariateIdentifiability}}
The proof is strongly inspired by \cite[Lemma A.1(i)]{wei2023}. Consider random vectors $X\sim \mu$ and $Y\sim \nu$ and note for  $\eta\in \SS^{d-1} = \{\eta \in \RR^d \,\colon \|\eta\|= 1\}$ and $p\in \{1, \dots, k\}$ by assumption that
	\begin{align*}
		\EE \langle \eta, X\rangle^p &=\sum_{\alpha \in \NN_0^d, |\alpha| = p} \binom{p}{\alpha} \eta^\alpha \EE X^\alpha = \sum_{\alpha \in \NN_0^d, |\alpha| = p} \binom{p}{\alpha} \eta^\alpha m_\alpha(\mu)\\
		&= \sum_{\alpha \in \NN_0^d, |\alpha| = p} \binom{p}{\alpha} \eta^\alpha m_\alpha(\nu) =\sum_{\alpha \in \NN_0^d, |\alpha| = p} \binom{p}{\alpha} \eta^\alpha \EE Y^\alpha = \EE \langle \eta, Y\rangle^p.
	\end{align*} 
	Since $\langle \eta, X\rangle$ and $\langle \eta, Y\rangle$ are real-valued random variables whose underlying distributions lie in $\calU_k(\RR)$, Lemma \ref{lem:moment_identifiability} yields that $\langle \eta, X\rangle \stackrel{\mathcal{D}}{=}  \langle \eta, Y\rangle$. Further, since $\eta\in \SS^{d-1}$ was arbitrary, the Cram\'er-Wold device implies that $\mu$ and $\nu$ are identical.  
\qed

	\subsection{Alternative Proof of Lemma \ref{lem:multivariateIdentifiability}}
	It is known that  $\mu = (1/k)\sum_{i = 1}^{k}\delta_{\theta_i}$ and $\nu = (1/k)\sum_{i = 1}^{k}\delta_{\eta_i}$ are identical if and only if $\sum_{i = 1}^{n}f(\theta_i) =\sum_{i = 1}^{n}f(\eta_i)$ for every continuous $f\colon \RR^d \to \RR$. To show the latter, define the function $F\colon (\RR^{d})^k \to \RR,\quad (x_1, \dots, x_k)\mapsto (1/k)\sum_{i=1}^{k}f(x_i)$, which
is invariant under permutations, i.e., $F(x_1, \dots, x_k) = F(x_{\sigma 1}, \dots, x_{\sigma k})$ for any permutation $\sigma \in \calS(k)$. Hence, the representational theorem for multivariate symmetric functions \cite[p.2 and Theorem 2.1]{chen2022representation} yields that $F(x_1, \dots, x_k) = G\left((\sum_{i = 1}^{k} x_i^\alpha)_{\alpha \in \NN_0^d, \|\alpha\|_1\leq k}\right)$ for some function $G$ depending on $f$. As the moments of $\mu$ and $\nu$ up to order $k$ coincide, we infer
	$$\sum_{i = 1}^{n}f(\theta_i) = F(\theta_1, \dots, \theta_k) = F(\eta_1, \dots, \eta_k)= \sum_{i = 1}^{n}f(\eta_i),$$ which implies equality of $\mu$ and $\nu$ since $f$ was arbitrary.\qed

\subsection{Proof of Theorem~\ref{thm:moment_comparison}}\label{app:pf_thm_moment_comparison_bound}

For $\Theta \in \KK$ with $\KK \in \{\RR, \CC\}$ the global (resp.\ local) comparison bound follows by combining Lemma~\ref{lem:coeff_lip} with Lemma~\ref{lem:ostrowski} (resp.\ Proposition \ref{prop:sharp_poly_stability}). 

For the multivariate setting $\Theta \subseteq \RR^d$ with $d\geq 3$ we employ the following auxiliary results which reduce the problem to the real-line case. 
To this end, we denote for a probability measure $\mu\in \calP(\RR^d)$  the push-forward under the one-dimensional projections  along $\eta\in \mathbb{S}^{d-1}$ as $\mu^\eta \coloneqq \langle \eta, \cdot \rangle_{\#}\mu$. Proofs for these auxiliary results are provided at the end of this subsection. 

The first result relates the Wasserstein distance between two $k$-atomic measures to their max-sliced Wasserstein distance, which is due to \cite{doss2023}.

\begin{lemma}[Lemma 3.1, \citealt{doss2023}]\label{lem:EquivalenceWassersteinSlicedWasserstein}
	For every pair $\mu, \nu \in \calP_k(\RR^d)$ it holds 
	\begin{align*}
		W_1(\mu, \nu) \leq k^2 \sqrt{d}\sup_{\eta \in \SS^{d-1}} W_1(\mu^\eta, \nu^\eta).
	\end{align*}
\end{lemma}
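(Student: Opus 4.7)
The plan is to reduce the Euclidean, multivariate Wasserstein distance to a one–dimensional quantity in two controlled stages. First I would pass from the Euclidean ground metric to the $\ell^\infty$ ground metric via the pointwise inequality $\|v\|_2 \leq \sqrt{d}\,\|v\|_\infty$, which yields directly that $W_1(\mu,\nu) \leq \sqrt{d}\,W_1^{\ell^\infty}(\mu,\nu)$. This accounts for the $\sqrt{d}$ factor, and leaves a purely combinatorial/geometric statement that is independent of the ambient norm in the sense that it only sees the $\ell^\infty$ metric, which in turn is controlled by projections onto the coordinate axes.

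The second and main stage consists of showing
$W_1^{\ell^\infty}(\mu,\nu) \leq k^2 \sup_{\eta \in \SS^{d-1}} W_1(\mu^\eta,\nu^\eta).$
The natural handle is Kantorovich duality: for any $1$-Lipschitz function $g\colon\mathbb R\to\mathbb R$ and any $\eta \in \SS^{d-1}$, the map $x\mapsto g(\langle \eta,x\rangle)$ is $1$-Lipschitz for the $\ell^2$ (and hence for the $\ell^\infty$) metric, so that
$\int g(\langle \eta,\cdot\rangle)\,d(\mu-\nu) \leq W_1(\mu^\eta,\nu^\eta) \leq \sup_{\eta'} W_1(\mu^{\eta'},\nu^{\eta'}).$
The goal is therefore to show that for $k$-atomic measures, an arbitrary $1$-Lipschitz test function $f\colon \mathbb R^d\to\mathbb R$ entering the dual of $W_1^{\ell^\infty}$ may be replaced, up to a loss of at most a factor $k^2$, by one of the form $g(\langle \eta,\cdot\rangle)$. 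Here I would exploit that $\mu-\nu$ is a signed measure supported on at most $2k$ points, and that the corresponding optimal transport LP admits an extreme-point coupling with at most $2k-1$ nonzero entries. An extreme-point/cycle decomposition argument reduces the $W_1^{\ell^\infty}$ cost to a sum of elementary transports along individual edges, each of which can be attributed to a single direction $\eta_{ij} = (x_i-y_j)/\|x_i-y_j\|$. Summing over the at most $O(k^2)$ relevant pairs and applying the dual bound above to each edge's contribution would yield the desired inequality.

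The main obstacle is making this pairwise attribution tight: on each edge $(i,j)$ the natural bound gives $\pi_{ij}^*\|x_i-y_j\|_\infty \leq C \cdot W_1(\mu^{\eta_{ij}},\nu^{\eta_{ij}})$, but the right hand side involves transportation cost between \emph{all} atoms of $\mu^{\eta_{ij}}$ and $\nu^{\eta_{ij}}$, not just the one pair $(i,j)$. Overcoming this requires a careful accounting that either uses an induction on $k$ (peeling one matched pair at a time along a carefully chosen separating direction) or a graph-theoretic charging argument across the $\leq 2k-1$ nonzero edges of the optimal transport plan; in either case, the polynomial blow-up in $k$ is essentially forced and produces the $k^2$ prefactor. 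Combining this bound with the $\sqrt d$ reduction from the first step would then close the argument.
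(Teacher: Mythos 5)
There is a genuine gap in your second stage, and it is fatal. Your plan is to charge each edge $(i,j)$ of an extreme-point transport plan to the single direction $\eta_{ij} = (x_i - y_j)/\|x_i - y_j\|$, and to bound $\pi^*_{ij}\|x_i - y_j\|$ by a constant times $W_1(\mu^{\eta_{ij}}, \nu^{\eta_{ij}})$. This per-edge bound does not hold: the one-dimensional Wasserstein distance re-optimizes the matching after projection, and the projection along $\eta_{ij}$ may superpose atoms of $\mu$ onto atoms of $\nu$, making $W_1(\mu^{\eta_{ij}}, \nu^{\eta_{ij}})$ vanish even when the original edge cost is large. Concretely, take $d=2$, $k=2$, $\mu = \tfrac12\delta_{(0,0)} + \tfrac12\delta_{(1,1)}$ and $\nu = \tfrac12\delta_{(1,0)} + \tfrac12\delta_{(0,1)}$: every optimal matching uses edges parallel to a coordinate axis, and for each such direction (indeed for both coordinate directions) the projected measures $\mu^\eta$ and $\nu^\eta$ coincide, so $W_1(\mu^{\eta_{ij}},\nu^{\eta_{ij}}) = 0$ for every edge even though $W_1(\mu,\nu)=1$. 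No induction or graph-theoretic charging scheme can rescue the argument, because the failure is not an accounting inefficiency but the outright falsity of the per-edge inequality you rely on. Relatedly, your first-stage intuition that after passing to the $\ell^\infty$ ground metric the problem ``is controlled by projections onto the coordinate axes'' is wrong for the same reason: the example above has identical coordinate marginals but positive $W_1^{\ell^\infty}$ distance.

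The lemma, as proved by Doss et al.\ and as this paper's own proof of the related local Lemma~\ref{lem:EquivalenceWassersteinSlicedWasserstein_Local} illustrates, is a probabilistic existence result: one draws $\eta$ uniformly on $\SS^{d-1}$ and uses the anti-concentration bound $\PP(|\langle \eta, z\rangle| < t\|z\|) < t\sqrt d$ together with a union bound over the at most $O(k^2)$ pairs of support points to show that, with positive probability, $|\langle\eta, z-z'\rangle| \geq (k^2\sqrt d)^{-1}\|z - z'\|$ holds simultaneously for all $z,z'$ in the joint support. For such a direction the projection is injective on the supports and contracts pairwise distances by at most the factor $k^2\sqrt d$, so an optimal $W_1$-coupling of $\mu^\eta$ and $\nu^\eta$ lifts back via the inverse of the projection to a coupling of $\mu$ and $\nu$ with comparable cost. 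This lifting of a coupling along a \emph{single} globally chosen direction is the step your edge-by-edge scheme is missing, and the counterexample shows that choosing the direction locally, edge by edge, cannot work.
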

While Lemma \ref{lem:EquivalenceWassersteinSlicedWasserstein} is sufficient for the moment global comparison bound, it does not enable the derivation of a moment local comparison bound since the clustering structure of a neighboring measure $\mu_0$ does not retain its clustering structure under every one-dimensional projection. The following result delimits the collection of directions $\eta$ in the max-sliced Wasserstein distance at the price of a larger constant to retain the clustering structure.  %

\begin{lemma}\label{lem:EquivalenceWassersteinSlicedWasserstein_Local}
	For $1\leq k_0 \leq k$ and $r\in \NN^{k_0}$ with $|r| = k$ let $\mu_0\in \calU_{k,k_0}(\RR^d; r, \delta)$ with $\delta>0$. Define set  $A(\mu_0) \coloneqq \{\eta\in \SS^{d-1} \,\colon  \,|\langle \eta, y-y'\rangle| \geq(2k^2 \sqrt{d})^{-1}\|y-y'\|\,\forall y,y' \in \supp(\mu_0)  \}$. Then, the following assertions hold. 
\begin{enumerate}
	\item For each pair of $k$-atomic probability measures $\mu, \nu \in\calU_k(\RR^d)$ it holds for each $r\geq 1$ that
	\begin{align*}
		W_1(\mu, \nu) \leq 2k^2\sqrt{d}\sup_{\eta \in A(\mu_0)} W_1(\mu^\eta,\nu^\eta). 
	\end{align*}
	\item For each $\eta \in A(\mu_0)$ it holds $\mu_0^\eta \in\calU_{k,k_0}(\RR; r, \delta/(2k\sqrt{d}))$. Further, upon denoting $\mu_0 = \frac{1}{k} \sum_{i =1}^{k_0}r_j \delta_{\theta_{0j}}$ and $\mu_0^\eta = \frac{1}{k} \sum_{i =1}^{k_0}r_j \delta_{\theta_{0j}^\eta}$ with $\theta_{0j}^\eta\coloneqq \langle \eta, \theta_{0j}\rangle$ it holds for each $j\in \{1, \dots, k_0\}$,   
	\begin{align}\label{eq:boundOnDelta_j}
		 \prod_{\substack{1\leq i \leq k_0\\ i\neq j}}\|\theta_{0i} - \theta_{0j}\|^{r_i} \leq (2k^2\sqrt{d})^{k-r_j}  \prod_{\substack{1\leq i \leq k_0\\ i\neq j}}|\theta^\eta_{0i} - \theta^\eta_{0j} |^{r_i}.
	\end{align} %
	\item If $\mu, \nu \in \calU_{k}(\RR^d;  \mu_0, \epsilon)$ for $\epsilon>0$, then for each $\eta \in A(\mu_0)$ it holds $\mu^\eta, \nu^\eta\in \calU_{k}(\RR; \mu_0^\eta, \epsilon)$. 
	\item For $\mu, \nu \in \calU_k(\RR^d; \mu_0, \delta/(5k^2\sqrt{d}))$ it holds 
	\begin{align*}
		\calD_{\mu_0}(\mu, \nu)\leq (2k^2\sqrt{d})^k \sup_{\eta \in A(\mu_0)} \calD_{\mu_0^\eta}(\mu^\eta, \nu^\eta).
	\end{align*}
\end{enumerate}
\end{lemma}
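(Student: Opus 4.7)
The plan is to establish parts (ii) and (iii) directly from the definitions, prove part (i) by a refinement of \Cref{lem:EquivalenceWassersteinSlicedWasserstein}, and finally synthesize these for part (iv).

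For part (ii), the defining inequality of $A(\mu_0)$ applied to $y=\theta_{0i}$ and $y'=\theta_{0j}$ with $i\neq j$ yields $|\theta_{0i}^\eta-\theta_{0j}^\eta| = |\langle\eta,\theta_{0i}-\theta_{0j}\rangle|\geq (2k^2\sqrt{d})^{-1}\|\theta_{0i}-\theta_{0j}\|\geq \delta/(2k^2\sqrt{d})\geq \delta/(2k\sqrt{d})$, giving $\mu_0^\eta\in\calU_{k,k_0}(\RR;r,\delta/(2k\sqrt{d}))$. Raising the same inequality to the $r_i$-th power and multiplying over $i\neq j$ yields~\eqref{eq:boundOnDelta_j}, using $\sum_{i\neq j}r_i=k-r_j$. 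Part (iii) is immediate: the linear map $x\mapsto\langle\eta,x\rangle$ is $1$-Lipschitz, so the pushforward of any coupling realizing $W_\infty(\mu,\mu_0)<\epsilon$ under $(x,y)\mapsto(\langle\eta,x\rangle,\langle\eta,y\rangle)$ is a coupling between $\mu^\eta$ and $\mu_0^\eta$ whose support has essential diameter no larger than the original, so $W_\infty(\mu^\eta,\mu_0^\eta)<\epsilon$.

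For part (i), the idea is to revisit the volume/pigeonhole proof of \Cref{lem:EquivalenceWassersteinSlicedWasserstein} from~\cite{doss2023}, which constructs a direction $\eta\in\SS^{d-1}$ such that each displacement vector $v_i=\theta_{\sigma^*(i)}-\eta_i$ arising in the optimal Wasserstein matching between $\mu$ and $\nu$ satisfies $|\langle\eta,v_i\rangle|\gtrsim \|v_i\|/(k^2\sqrt{d})$. The complement $\SS^{d-1}\setminus A(\mu_0)$ is the union of at most $\binom{k_0}{2}$ spherical slabs of relative angular width $(2k^2\sqrt{d})^{-1}$, whose combined surface measure is strictly less than that of the sphere. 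Including these slabs as additional forbidden regions in the same pigeonhole step leaves a nonempty set of admissible directions in $A(\mu_0)$, and the resulting bound holds with the constant doubled.

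Part (iv) is the synthesis. Under $\mu,\nu\in\calU_k(\RR^d;\mu_0,\delta/(5k^2\sqrt{d}))$ together with part (ii), the $\mu_0^\eta$-separation $\delta/(2k^2\sqrt{d})$ exceeds twice the projected perturbation $\epsilon=\delta/(5k^2\sqrt{d})$, so each projected atom of $\mu$ (resp.\ $\nu$) remains in the $\RR$-Voronoi cell of its natural cluster; this yields the key identity $\mu^\eta_{V_j(\mu_0^\eta)}=(\mu_{V_j(\mu_0)})^\eta$ and analogously for $\nu$. Applying (the $r_j$-atomic analogue of) part (i) to each conditional pair gives $W_1(\mu_{V_j},\nu_{V_j})\leq 2r_j^2\sqrt{d}\,\sup_{\eta\in A(\mu_0)} W_1(\mu^\eta_{V_j(\mu_0^\eta)},\nu^\eta_{V_j(\mu_0^\eta)})$. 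Raising to the $r_j$-th power, combining with the estimate $\delta_j(\mu_0)\leq (2k^2\sqrt{d})^{k-r_j}\delta_j(\mu_0^\eta)$ from part (ii), and using $(2r_j^2\sqrt{d})^{r_j}\leq (2k^2\sqrt{d})^{r_j}$ bounds each cluster contribution by $(2k^2\sqrt{d})^k$ times its $\eta$-sliced counterpart; summing over $j$ and invoking nonnegativity of the summands to pass the sum inside the supremum, with the combinatorial factor $k_0\leq k$ absorbed into the constant, yields the claim.

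The main obstacle is part (i): one must confirm that the refined volume argument still produces a direction along which $W_1(\mu^\eta,\nu^\eta)$ itself (and not merely $\sum_i|\langle\eta,v_i\rangle|$, which is only an upper bound once rearrangement to the optimal $1$-D matching is accounted for) is suitably bounded below by a constant times $W_1(\mu,\nu)$. A related subtlety in (iv) is the requirement that a single $\eta\in A(\mu_0)$ witness all $k_0$ cluster contributions at once; this is handled by the nonnegativity trick just described rather than by a per-cluster selection of directions.
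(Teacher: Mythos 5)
Parts (ii) and (iii) are correct and essentially identical to the paper's argument (projections of separated atoms stay separated by the defining inequality of $A(\mu_0)$; projections are $1$-Lipschitz so $W_\infty$ does not increase). The structure you propose for (iv) — Voronoi commutation under projection, per-cluster application of (i), the $\delta_j$ comparison from (ii), and the interchange $\sum_j\sup_\eta f_j\leq k_0\sup_\eta\sum_j f_j$ valid because each $f_j\leq\sum_i f_i$ pointwise — is sound (it costs an extra factor $k_0$ in the constant, which is harmless). The problem is part (i), on which everything else rests.

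Your part (i) has a genuine gap, which you flag but do not close, and the approach you sketch cannot close it. If $v_i=\theta_{\sigma^*(i)}-\eta_i$ are the displacement vectors of the \emph{optimal matching in $\RR^d$} and you find $\eta$ with $|\langle\eta,v_i\rangle|\geq c\|v_i\|$ for all $i$, you obtain $W_1(\mu,\nu)=\frac 1k\sum_i\|v_i\|\leq\frac 1{ck}\sum_i|\langle\eta,v_i\rangle|$. But $\frac 1k\sum_i|\langle\eta,v_i\rangle|$ is the cost of one particular (generally suboptimal) coupling of $\mu^\eta$ and $\nu^\eta$, hence an \emph{upper} bound on $W_1(\mu^\eta,\nu^\eta)$; the chain $W_1(\mu,\nu)\lesssim\frac 1k\sum_i|\langle\eta,v_i\rangle|\geq W_1(\mu^\eta,\nu^\eta)$ does not yield $W_1(\mu,\nu)\lesssim W_1(\mu^\eta,\nu^\eta)$. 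The paper avoids this by running the comparison in the opposite direction: by a union bound over all $k^2$ \emph{cross pairs} $(y,y')\in\supp(\mu)\times\supp(\nu)$ (not just the $k$ optimal displacement vectors), with threshold $(2k^2\sqrt d)^{-1}$, one finds a single $\eta\in A(\mu_0)$ for which the projection is injective on $\supp(\mu)\cup\supp(\nu)$ and satisfies $\|y-y'\|\leq 2k^2\sqrt d\,|\langle\eta,y-y'\rangle|$ for every cross pair. One then takes the \emph{optimal one-dimensional} coupling $\pi_\eta$ of $\mu^\eta,\nu^\eta$ and pulls it back through the inverse $g_\eta$ of the projection restricted to the supports; the pulled-back coupling is a coupling of $\mu,\nu$ of cost at most $2k^2\sqrt d\cdot W_1(\mu^\eta,\nu^\eta)$. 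Your "refined volume argument" (excluding the slabs defining $\SS^{d-1}\setminus A(\mu_0)$, which indeed have measure less than $1/2$) is exactly the right probabilistic step, but it must be applied to all cross pairs and combined with the pullback of the optimal sliced coupling, not to the displacement vectors of the $\RR^d$-optimal matching. A small additional remark: in (ii) your chain $\delta/(2k^2\sqrt d)\geq\delta/(2k\sqrt d)$ is backwards; the argument only yields separation $\delta/(2k^2\sqrt d)$, which is also all the paper's proof establishes.
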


In addition to the bound between Wasserstein and max-sliced Wasserstein distance for $k$-atomic measures, we utilize the bound from Lemma \ref{lem:sliced_multivariate_moment_bound} which relates the moments of projected measures to the moment of underlying measures.

With these technical tools at our disposal, we can state the proof for the multivariate global and local moment comparison bound. For the global bound we combine Lemma \ref{lem:EquivalenceWassersteinSlicedWasserstein} with the univariate global moment comparison bound and Lemma \ref{lem:sliced_multivariate_moment_bound}, which yield after denoting $\tilde \Theta \coloneqq \bigcup_{\eta \in \SS^{d-1}} \langle \eta, \Theta\rangle$ and since $\mu^\eta, \nu^\eta \in \calU_k(\tilde \Theta)$ for every $\eta \in \SS^{d-1}$ that
\begin{align*}
	W_1^k(\mu,\nu)  &\leq (k^2 \sqrt{d})^k \sup_{\eta\in\SS^{d-1}}W_1^k(\mu^\eta, \nu^\eta)\\ 
	&\leq  (k^2 \sqrt{d})^k \sup_{\eta\in \SS^{d-1}} C(\tilde \Theta, k) M_k(\mu^\eta, \nu^\eta)\\
	&\leq k^{2k} d^{k} C(\tilde \Theta, k) M_k(\mu, \nu).
\end{align*}

For the local comparison bound we utilize Lemma \ref{lem:EquivalenceWassersteinSlicedWasserstein_Local}$(iv)$ which guarantees the existence of a non-empty set $A(\mu_0)\subseteq \SS^{d-1}$ such that for $\mu, \nu \in \calU_{k}(\Theta; \mu_0, \delta/(5k^2\sqrt{d}))$ it holds
\begin{align}\label{eq:localWS-to-sliced}
	\calD_{\mu_0}(\mu, \nu)\leq (2k^2\sqrt{d})^k \sup_{\eta \in A(\mu_0)} \calD_{\mu_0^\eta}(\mu^\eta, \nu^\eta).
\end{align}
To relate the right-hand side to the univariate moment stability bound, let us consider via Proposition \ref{prop:sharp_poly_stability} the constants $\tilde C, \tilde c, \tilde \delta_0>0$ depending on $\tilde \Theta \coloneqq \bigcup_{\eta \in A(\mu_0)} \langle \eta, \Theta\rangle$ and $k$ such that for all $\tilde \delta\in (0,\tilde \delta_0)$ with $\tilde \mu_0 \in \calU_{k,k_0}(\tilde \Theta; r, \tilde \delta)$ and $\tilde \mu, \tilde \nu \in \calU(\tilde \Theta; \tilde \mu_0, \tilde c \tilde \delta^{k+1})$ it holds 
\begin{align}\label{eq:univariate_moment_comparison_bound_nice}
	\calD_{\tilde \mu_0}(\tilde \mu, \tilde\nu) \leq C M_k(\tilde \mu, \tilde \nu).
\end{align}
Hence, upon setting $\delta_0 \coloneqq 2 k^2 \sqrt{d} \min(\tilde \delta_0, (2/5 \tilde c)^{1/k})$ it follows for all $\delta\in (0, \delta_0)$ that 
\begin{align*}
	\frac{\delta}{2k^2\sqrt{d}}\leq \tilde \delta_0 \quad \text{ and }\quad  \tilde c\left(\frac{\delta}{2k^2 \sqrt{d}}\right)^{k+1} \leq \frac{\delta}{5k^2\sqrt{d}}.
\end{align*}
Further, from  Lemma \ref{lem:EquivalenceWassersteinSlicedWasserstein_Local}(ii) it follows from $\mu_0 \in \calU_{k,k_0}(\Theta; r, \delta)$ for every $\eta \in A(\mu_0)$ that $\mu_0^\eta\in \calU_{k,k_0}(\Theta; r, \delta/(2k^2\sqrt{d}))$. Moreover, by choosing $c \coloneqq \tilde c/(2k^2\sqrt{d})^{k+1}$, it follows for $\mu, \nu \in \calU_{k}(\Theta; \mu_0, c \delta^{k+1})$ via  Lemma \ref{lem:EquivalenceWassersteinSlicedWasserstein_Local}(iii) for all $\eta \in A(\mu_0)$ that $\mu^\eta, \nu^\eta\in \calU_{k}(\tilde \Theta; \mu_0^\eta, \tilde c (\delta/(2k^2\sqrt{d}))^{k+1})\subseteq \calU_{k}(\tilde \Theta; \mu_0^\eta, \delta/(5k^2\sqrt{d}))$. Combining \eqref{eq:localWS-to-sliced} with the univariate moment comparison bound~\eqref{eq:univariate_moment_comparison_bound_nice} and Lemma~\ref{lem:sliced_multivariate_moment_bound} therefore implies 
\begin{align*}
	\calD_{\mu_0}(\mu, \nu)&\leq (2k^2\sqrt{d})^k \sup_{\eta \in A(\mu_0)} \calD_{\mu_0^\eta}(\mu^\eta, \nu^\eta)\\
	&\leq (2k^2\sqrt{d})^k \sup_{\eta \in A(\mu_0)} \tilde C M_k(\mu^\eta, \nu^\eta)\\
	&\leq 2^kk^{2k}d^k \tilde C  M_k(\mu, \nu).
\end{align*}
The assertion thus follows by selecting $C\coloneqq 2^kk^{2k}d^k \tilde C$. \qed
 
\subsubsection{Proof of Lemma \ref{lem:EquivalenceWassersteinSlicedWasserstein_Local}}
	The proof of the first assertion is inspired by the construction pursued in the proof by \cite{doss2023} for the bound relating Wasserstein and max-sliced Wasserstein distance for $k$-atomic measures  (Lemma \ref{lem:EquivalenceWassersteinSlicedWasserstein}) with the key difference that we do not take the supremum over $\eta \in \SS^{d-1}$, as it would affect the cluster structure under projection. Instead we consider the supremum over $\eta\in A(\mu_0)$. In the following, we first show that for all $k$-atomic probability measures $\mu, \nu$ on $\RR^d$ there exists $\eta\in A(\mu_0)$ such that 
	\begin{enumerate}
		\item[$(a)$] the map $y\mapsto \langle \eta, y\rangle$ is bijective on $\supp(\mu)\cup \supp(\nu)$
		\item[$(b)$] and for all $y\in \supp(\mu), y'\in \supp(\nu)$ it holds  $\|y - y'\| \leq 2k^2\sqrt{d} |\langle \eta, y-y'\rangle|$.
	\end{enumerate}
	To this end, note for given $k$-atomic probability measures $\mu, \nu$ that the set of directions $\eta\in \SS^{d-1}$ for which property $(a)$ is not met forms a null-set with respect to the surface measure on $\SS^{d-1}$. Hence, once we show that the collection $\eta\in A(\mu_0)$ for which condition $(b)$ is met admits positive surface measure in $\SS^d$, the existence of $\eta$ satisfying conditions $(a)$ and $(b)$ follows. We prove this by some probabilistic argument. Let $\eta$ be uniformly distributed on $\SS^{d-1}$ and note by inequality (2.2) in the supplement of \cite{doss2023} for $x\in\RR^d$ and $t>0$ that $\PP\left(|\langle \eta, x\rangle| < t \|x\|_2 \right)< t\sqrt{d}$. Invoking union bound it follows from $|\supp(\mu_0)|\leq k_0\leq k$ that 
	\begin{align}\label{eq:unionBoundSlicedBound}
		\PP\left( \exists y,y' \in \supp(\mu_0) \;\colon |\; |\langle \eta, y-y'\rangle| < t\|y-y'\|\right)< k_0^2 t\sqrt{d}\leq k^2 t\sqrt{d}.
	\end{align}
	Choosing $t^*= (2k^2 \sqrt{d})^{-1}$ we infer that $A(\mu_0)$ has positive surface measure on $\SS^{d-1}$,  
	\begin{align*}
		\PP\left( \eta \in A(\mu_0)\right)> 1- k^2 t^*\sqrt{d}= 1/2.
	\end{align*}
	Arguing as for \eqref{eq:unionBoundSlicedBound} we obtain via union bound 
	 \begin{align*}
		\PP\left( \exists y \in \supp(\mu), \exists y\in \supp(\nu)  \;\colon |\; |\langle \eta, y-y'\rangle| < t^*\|y-y'\|\right)< k^2 t^*\sqrt{d}= 1/2,
	 \end{align*}
	which yields 
	\begin{align*}
		\PP\left( \eta \in A(\mu_0)\;\colon|  \;|\langle \eta, y-y'\rangle| \geq t^*\|y-y'\|\,\forall y,y' \in \supp(\mu_0)\right) > 1- 2 k^2 t^*\sqrt{d} =0,
	\end{align*}
	and overall implies the existence of $\eta\in A(\mu_0)$ such that conditions $(a)$ and $(b)$ are met. For such $\eta\in A(\mu_0)$ the map $\langle \eta, \cdot \rangle \colon \supp(\mu)\cup \supp(\nu) \to \RR$ admits an inverse $g_\eta$ on its range. Therefore, any coupling $\pi_\eta$ between $\langle \eta, \cdot \rangle_{\#}\mu$  and $\langle \eta, \cdot \rangle_{\#}\nu$ yields a coupling $(g_\eta, g_\eta)_{\#}\pi$ between $\mu$ and $\nu$. Taking $\pi_\eta$ as the optimal coupling between $\langle \eta, \cdot \rangle_{\#}\mu$  and $\langle \eta, \cdot \rangle_{\#}\nu$ with respect to $W_1$ we infer by condition $(b)$ that
	\begin{align*}
		W_1(\mu, \nu) &\leq \int_{\RR^{2d}} \|y-y'\| \dif((g_\eta, g_\eta)_{\#}\pi_\eta)(y,y')\\&= \int_{\RR^{2}} \| g_\eta(x) - g_\eta(x')\| \dif \pi_\eta(x,x')\\ &\leq 2k^2\sqrt{d} \int_{\RR^{2}} |x-x'|\dif \pi_{\eta}(x,x')\\
		 &= 2k^2\sqrt{d} \cdot W_1(\mu^\eta, \nu^\eta). 
	\end{align*} 
	Taking supremum on the right-hand side over $\eta\in A(\mu_0)$ and the $r$-th root on both sides implies Assertion $(i)$. 

	To show Assertion $(ii)$ note by definition of $A(\mu_0)$ for each $\eta \in A(\mu_0)$ and $y\neq y' \in \supp(\mu_0)$ that $\delta \leq \|y-y'\|\leq 2k^2 \sqrt{d}|\langle \eta, y-y'\rangle|$, which implies that $\mu^\eta_0 \in\calU_{k,k_0}(\RR; r, \delta/(2k^2 \sqrt{d}))$. In combination with the fact $\sum_{1\leq i \leq k_0, i\neq j} r_i = k - r_j$,  inequality \eqref{eq:boundOnDelta_j} also follows.

	Assertion $(iii)$ is a consequence of the fact  $W_\infty(\mu^\eta, \mu_0^\eta)\leq 	W_\infty(\mu, \mu_0)$ for every $\mu, \mu_0\in \calP(\RR^d)$ since $|\langle \eta, x-x'\rangle|\leq \|x-x'\|$ for every $\eta \in \SS^{d-1}$.

	For Assertion $(iv)$ let $\mu, \nu\in\calU_k(\RR^d; \mu_0, \delta/(5k^2 \sqrt{d}))$. Then, for each $x\in \supp(\mu)\cup \supp(\nu)$ there exists an element $x_0\in \supp(\mu_0)$ with $\|x-x_0\|< \delta/(5k^2 \sqrt{d})$ and further $\langle \eta, x_0\rangle \in \supp(\mu_0^\eta)$ fulfills 
	$|\langle \eta, x\rangle- \langle \eta, x_0\rangle|\leq \delta/(5k^2 \sqrt{d})$. 
	Further, for any $\tilde x_0 \in \supp(\mu_0)\backslash\{x_0\}$ it holds since all atoms of $\mu_0$ are separated by $\delta$ and by definition of $A(\mu_0)$ that
	\begin{align*}
		\|x - \tilde x_0\| &\geq \|\tilde x_0 - x_0\|  - \|x - x_0\| \geq  \delta - \frac{\delta}{5k^2\sqrt{d}} > \frac{\delta}{5k^2\sqrt{d}},\\ 
		|\langle \eta, x - \tilde x_0\rangle|&\geq |\langle \eta,  \tilde x_0 - x_0\rangle|-|\langle \eta, x - x_0\rangle|\geq \frac{\delta}{2k^2\sqrt{d}} - \frac{\delta}{5k^2\sqrt{d}} > \frac{\delta}{5k^2\sqrt{d}}.
	\end{align*}
	This implies that $x_0\in \supp(\mu_0)$ is the unique closest element to $x$, and $\langle \eta, x_0\rangle \in \supp(\mu_0^\eta)$ is the unique closest element to $\langle \eta, x\rangle$. Hence, upon specifying the Voronoi-cells induced by the supports of $\mu_0$ and $\mu_0^\eta$ indexed by $ j=1,\dots, k_0$, 
	\begin{align*}
		V_j(\mu_0) \equiv V_j &= \big\{ \theta \in \RR^d : |\theta - \theta_{0j}| \leq |\theta - \theta_{0k}|,~ \forall k \neq j
	\big\},\\
	V_j(\mu_0^\eta) \equiv V_{j}^{\eta} &= \big\{ \theta \in \RR : |\langle \eta, \theta - \theta_{0j}\rangle | \leq |\langle \eta, \theta - \theta_{0k}\rangle|,~ \forall k \neq j
	\big\},
	\end{align*}
	we have $\min_{j = 1, \dots, k_0}\mu(V_j)\wedge \nu(V_j)>0$ and $\min_{j = 1, \dots, k_0}\mu^\eta(V_j^\eta)\wedge \nu^\eta(V_j^\eta)>0$. Further, it follows that $$\langle \eta, \cdot \rangle_{\#}(\mu|_{V_j})= (\langle \eta, \cdot \rangle_{\#}\mu)|_{V_j} \quad \text{ and } \quad \langle \eta, \cdot \rangle_{\#}(\nu|_{V_j})= (\langle \eta, \cdot \rangle_{\#}\nu)|_{V_j}.$$ 

	Hence, by utilizing the notation from Assertion $(ii)$, $\mu_0 = \frac{1}{k} \sum_{i =1}^{k_0}r_j \delta_{\theta_{0j}}$ and $\mu_0^\eta = \frac{1}{k} \sum_{i =1}^{k_0}r_j \delta_{\theta_{0j}^\eta}$, we obtain by existence of $\eta \in A(\mu_0)$ which fulfills conditions $(a)$ and $(b)$ through the same proof approach as for Assertion $(i)$ combined with the bound on the product terms in Assertion $(ii)$ that
	\begin{align*}
		\calD_{\mu_0}(\mu, \nu) &= 1 \wedge \sum_{j = 1}^{k_0} \prod_{\substack{1\leq i \leq k_0\\ i\neq j}}\|\theta_{0i} - \theta_{0j}\|^{r_i} W_{1}^{r_j}(\mu_{V_j}, \nu_{V_j})\\
		&\leq 1 \wedge \sum_{j = 1}^{k_0} (2k^2\sqrt{d})^k \prod_{\substack{1\leq i \leq k_0\\ i\neq j}}|\theta_{0i}^\eta - \theta_{0j}^\eta|^{r_i} W_{1}^{r_j}(\mu^\eta_{V_j}, \nu^\eta_{V_j})\\
		&\leq (2k^2\sqrt{d})^k  \calD_{\mu_0^\eta}(\mu^\eta, \nu^\eta).
	\end{align*}	
	Taking the supremum over $\eta \in A(\mu_0)$  yields the claim. \qed

\subsection{Proof of Lemma~\ref{lem:coeff_lip}}
\label{app:pf_lem_coeff_lip}
Let $\mu=  (1/k) \sum_{i=1}^k \delta_{\theta_i}$ and $\nu=(1/k)\sum_{i=1}^k\delta_{\eta_i}$
be elements in $\calU_k(\domain)$. 
In view of Vieta's formula~\eqref{eq:vieta_formula}, it 
suffices to prove that the elementary symmetric polynomials
are Lipschitz with respect to the underlying moment vectors, in the sense that
there exists $C=C(\domain,k) > 0$ such that 
for every $l=0,\dots,k$, 
\begin{equation}
\label{eq:elementary_symm_poly_lipschitz}
|e_l(\theta_1,\dots,\theta_k) - e_l(\eta_1,\dots,\eta_k)| 
\leq C M_k(\mu,\nu).
\end{equation}
This bound is immediate for $l=0$. Assuming by induction that it holds for some $ l\in \{0,\dots,k-1\}$, 
we further have, 
\begin{align*}
 \big| e_{l+1}&(\theta_1, \dots, \theta_k)   -  e_{l+1}(\eta_1, \dots, \eta_k)  \big| \\
 &= \frac{k}{l} \left| \sum_{j=1}^{l} 
    (-1)^{j-1}\Big[ e_{l-j}(\theta_1, \dots, \theta_k)  m_j(\mu) -  
                    e_{l-j}(\eta_1, \dots, \eta_k)  m_j(\nu)\Big]\right| \\ 
 &\lesssim\sum_{j=1}^{l}  \Big| e_{l-j}(\theta_1, \dots, \theta_k)  m_j(\mu) -  
                    e_{l-j}(\eta_1, \dots, \eta_k)  m_j(\nu)\Big| \\ 
 &\lesssim \sum_{\ell=1}^k   \big| e_{l-j}(\theta_1, \dots, \theta_k)  -  e_{l-j}(\eta_1, \dots, \eta_k)||m_j(\mu)| +  \big| m_j(\mu)-m_j(\nu)||e_{l-j}(\eta_1, \dots, \eta_k)|. 
\end{align*}
By compactness of $\domain$, it is clear that $m_j(\mu)$ and $e_{j}(\eta_1, \dots, \eta_k)$
are bounded by a constant depending only on $\domain,k$, for all $j=1,\dots,k$, thus 
\begin{align*}
 \big| e_{l+1}&(\theta_1, \dots, \theta_k)   -  e_{l+1}(\eta_1, \dots, \eta_k)  \big| \\
 &\lesssim \sum_{j=1}^k  \Big[ \big| e_{l-j}(\theta_1, \dots, \theta_k)  -  e_{l-j}(\eta_1, \dots, \eta_k)|+\big| m_j(\mu)-m_j(\nu)|\Big]. 
\end{align*}
By the induction hypothesis, we deduce that 
\begin{align*}
 \big| e_{l+1}(\theta_1, \dots, \theta_k)   -  e_{l+1}(\eta_1, \dots, \eta_k)  \big| 
 \lesssim \sum_{j=1}^k   \big| m_j(\mu)-m_j(\nu)| = M_k(\mu,\nu),
\end{align*}
where the implicit constants do not depend on $l$. 
We thus deduce by induction that equation~\eqref{eq:elementary_symm_poly_lipschitz} holds, and the claim follows.\qed 

\subsection{Proof of Proposition~\ref{prop:sharp_poly_stability}}
\label{app:pf_prop_sharp_poly_stability}

Our proof is inspired by~\citet[Chapter 5]{bhatia2007}. 
Specifically, our proof begins  by deriving a version of inequality~\eqref{eq:sharp_poly_stability} 
in which the divergence $\calD_{\mu_0}(\mu,\nu)$ is replaced by the following local Hausdorff-type divergences between  
the atoms of $\mu$ and $\nu$, 
\begin{equation}\begin{aligned}
\label{eq:local_hausdorff}
d_{\mu_0}(\mu,\nu) &= 1 \wedge \max_{1 \leq j \leq k_0}
  \delta_{j}(\mu_0) \cdot \max \left\{\max_{\theta_i \in  V_j} \min_{\eta_\ell\in   V_j}   |\theta_i-\eta_\ell|^{r_j}, 
  \max_{\eta_\ell\in V_j} \min_{\theta_i\in V_j} |\theta_i-\eta_\ell|^{r_j}\right\},\\
\bar d_{\mu_0}(\mu,\nu) &= 1 \wedge \max_{1 \leq j \leq k_0}\delta_{j}(\mu_0) \cdot 
  \max \bigg\{\max_{\theta_i \in  V_j}  \prod_{\eta_\ell \in V_j} |\theta_i-\eta_\ell|, 
  \max_{\eta_\ell\in V_j}   \prod_{\theta_i \in V_j} |\theta_i-\eta_\ell|\bigg\},  
\end{aligned}
\end{equation}
where we recall that for any measure $\mu_0 = (1/k) \sum_{i = 1}^{k_0} r_i \delta_{\theta_{0i}} \in \calU_{k,k_0}(\Theta;r, \delta)$,
we write 
$$\delta_j :=\delta_j(\mu_0) = \prod_{i\neq j} |\theta_{0i}-\theta_{0j}|^{r_j},~~V_j \equiv V_j(\mu_0) = \big\{ \theta \in \domain : |\theta - \theta_{0j}| \leq |\theta - \theta_{0\ell}|,~ \forall \ell \neq j
\big\},\quad j=1,\dots, k_0.$$        
\begin{lemma}\label{lem:hausdorff_stability}
Let $\Theta \subseteq \CC$ be a bounded set, let $1\leq k_0\leq k$, and consider $r\in \NN^{k_0}$ with $|r| = k$. 
Let $\delta \geq 4\rho > 0$. Then there exists a positive constant
$C_0 > 0$ depending on $\Theta, k$ such that for all 
$\mu_0\in \calU_{k,k_0}(\Theta; r, \delta)$ and $\mu, \nu \in \calU_{k}(\Theta; \mu_0, \rho)$, 
\begin{equation*}%
d_{\mu_0} (\mu,\nu) \leq \bar d_{\mu_0}(\mu,\nu) \leq C_0 \|f_\mu-f_\nu\|_*.
\end{equation*}
\end{lemma}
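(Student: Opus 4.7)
The plan is to exploit the clustered structure of $\mu_0$ to factor $f_\nu$ evaluated at an atom of $\mu$ into a ``within-cluster'' piece and a ``between-cluster'' piece, where the latter is quantitatively controlled by $\delta_j(\mu_0)$. The first elementary inequality $d_{\mu_0}(\mu,\nu) \leq \bar d_{\mu_0}(\mu,\nu)$ is immediate: for any $\theta_i \in V_j$ with $r_j$ atoms of $\nu$ in $V_j$, one has $\min_{\eta_\ell \in V_j}|\theta_i - \eta_\ell|^{r_j} \leq \prod_{\eta_\ell \in V_j} |\theta_i - \eta_\ell|$, since the product has $r_j$ factors each at least the minimum. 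Thus the work lies in bounding $\bar d_{\mu_0}$ by $\|f_\mu - f_\nu\|_*$.

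First I would establish the cluster-assignment structure. Since $W_\infty(\mu,\mu_0) \vee W_\infty(\nu,\mu_0) \leq \rho \leq \delta/4$ and the atoms of $\mu_0$ are $\delta$-separated with multiplicities $r_j/k$, any $W_\infty$-optimal coupling forces exactly $r_j$ atoms of $\mu$ (resp.\ $\nu$) to lie within distance $\rho$ of $\theta_{0j}$, and hence within $V_j$, for each $j=1,\dots,k_0$. Denote these atoms by $\{\theta_\ell^{(j)}\}_{\ell=1}^{r_j}$ and $\{\eta_\ell^{(j)}\}_{\ell=1}^{r_j}$, respectively. In particular, $\supp(\mu) \subseteq \bigcup_j V_j$ and similarly for $\nu$.

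Next, fix a cluster index $j$ and any atom $\theta_i \in V_j$ of $\mu$. Since $f_\mu(\theta_i) = 0$, we have $|f_\nu(\theta_i)| = |(f_\nu - f_\mu)(\theta_i)|$. Using the expansion of $f_\mu - f_\nu$ and the fact that the leading coefficients agree, one obtains $|(f_\mu-f_\nu)(\theta_i)| \leq (1 \vee |\theta_i|^{k-1}) \|f_\mu - f_\nu\|_* \leq C_1 \|f_\mu-f_\nu\|_*$ for a constant $C_1 = C_1(\Theta, k)$. I would then factor
\[
f_\nu(\theta_i) \;=\; \prod_{\ell=1}^{r_j}(\theta_i - \eta_\ell^{(j)}) \cdot \prod_{j' \neq j}\prod_{\ell=1}^{r_{j'}}(\theta_i - \eta_\ell^{(j')}).
\]
For each $\eta_\ell^{(j')}$ in a different cluster $j' \neq j$, the triangle inequality combined with $|\theta_i - \theta_{0j}|, |\eta_\ell^{(j')} - \theta_{0j'}| \leq \rho \leq \delta/4 \leq |\theta_{0j}-\theta_{0j'}|/4$ yields
\[
\tfrac{1}{2}|\theta_{0j}-\theta_{0j'}| \;\leq\; |\theta_i - \eta_\ell^{(j')}| \;\leq\; \tfrac{3}{2}|\theta_{0j}-\theta_{0j'}|,
\]
so the between-cluster product equals $\delta_j(\mu_0)$ up to a multiplicative factor in $[2^{-(k-r_j)}, (3/2)^{k-r_j}]$. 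Combining these ingredients,
\[
\delta_j(\mu_0) \cdot \prod_{\eta_\ell \in V_j} |\theta_i - \eta_\ell| \;\leq\; 2^{k} |f_\nu(\theta_i)| \;\leq\; 2^k C_1 \|f_\mu - f_\nu\|_*.
\]
Taking the maximum over $\theta_i \in V_j$ and then over $j$, and repeating the argument with the roles of $\mu$ and $\nu$ swapped (using that $f_\mu(\eta_\ell) = 0$ for any $\eta_\ell \in V_j$), produces exactly $\bar d_{\mu_0}(\mu,\nu) \leq C_0 \|f_\mu - f_\nu\|_*$ for $C_0 = 2^k C_1$. The trivial truncation by $1$ in the definition of $\bar d_{\mu_0}$ is preserved at every step.

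The main obstacle is the factorization step: one must verify that all between-cluster factors stay on the scale of the cluster separations $|\theta_{0j}-\theta_{0j'}|$, which requires the quantitative separation condition $\rho \leq \delta/4$; if this ratio were allowed to be larger, the lower bound $|\theta_i-\eta_\ell^{(j')}| \gtrsim |\theta_{0j}-\theta_{0j'}|$ would fail and the argument would break down. Everything else is bookkeeping with bounded-coefficient polynomial evaluations and pigeonhole on Voronoi cells.
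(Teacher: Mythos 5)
Your proposal is correct and follows essentially the same route as the paper's proof: evaluate $f_\nu$ at an atom $\theta_{i_0}\in V_{j_0}$ of $\mu$, use $f_\mu(\theta_{i_0})=0$ together with boundedness of $\Theta$ to get $|f_\nu(\theta_{i_0})|\le C\|f_\mu-f_\nu\|_*$, and lower-bound the between-cluster factors by $\tfrac12|\theta_{0j_0}-\theta_{0j}|$ via the triangle inequality and $\delta\ge 4\rho$, yielding $\delta_{j_0}\prod_{\eta_i\in V_{j_0}}|\theta_{i_0}-\eta_i|\lesssim\|f_\mu-f_\nu\|_*$. The only cosmetic difference is that you pass through $\bar d_{\mu_0}$ and then use $\min^{r_j}\le\prod$ to get $d_{\mu_0}$, whereas the paper extracts the minimizing factor directly; these are the same argument.
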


\begin{proof}
Denote by $\mu = (1 / k) \sum_{i=1}^k \delta_{\theta_i}$
and $\nu = (1/k) \sum_{i=1}^k\delta_{\eta_i}$
two measures in $\calU_k(\Theta;\mu_0,\rho)$. 
Let $j_0\in \{1,\dots,k_0\}$ and $\theta_{i_0} \in V_{j_0}$. 
Then, since $\theta_{i_0}$ is a root of $f_\mu$, we have
$$|f_\nu(\theta_{i_0})-f_\mu(\theta_{i_0})| = |f_\nu(\theta_{i_0})| 
= \prod_{j =1}^{k_0} \prod_{i:\eta_i\in V_j} |\theta_{i_0} - \eta_i|.$$
Notice that for all $j \neq j_0$ and $\eta_i \in V_j$, we have 
$$|\theta_{i_0} - \eta_i| \geq 
   |\theta_{0j_0} - \theta_{0j}| - |\theta_{i_0} - \theta_{0j_0}| 
   - |\eta_{i} - \theta_{0j}| \geq |\theta_{0j_0} - \theta_{0j}| - 2\rho\geq 
 \frac 1 2  |\theta_{0j_0} - \theta_{0j}|,$$
   thus we obtain 
$$|f_\nu(\theta_{i_0})-f_\mu(\theta_{i_0})| 
\geq \left(\prod_{j\neq j_0}\frac 1 {2^{r_j}}|\theta_{0j_0} - \theta_{0j}|^{r_j}\right) \prod_{i:\eta_i\in V_{j_0}} |\theta_{i_0} - \eta_i|
 = \frac {\delta_{j_0}} {2^{k-r_{j_0}}}\prod_{i:\eta_i\in V_{j_0}} |\theta_{i_0} - \eta_i|.$$
On the other hand, the compactness of $\domain$ implies
that there is a constant $C_{j_0} = C_{j_0}(\domain,k) > 0$ such that
\begin{align}
\label{eq:poly_stab_step11}
|f_\nu(\theta_{i_0})-f_\mu(\theta_{i_0})| \leq C_{j_0} \|f_\nu - f_\mu\|_*.
\end{align}
  Combining the preceding displays, we deduce that 
  \begin{align}
\label{eq:poly_stab_to_cite}
\prod_{i:\eta_i\in V_{j_0}} |\theta_{i_0} - \eta_i|
  \leq \frac{ C_{j_0}}{\delta_{j_0}} \|f_\mu-f_\nu\|_*,
  \end{align}
for a large enough constant $C_{j_0} > 0$, and thus,
  there must exist $\eta_i \in V_{j_0}$ such that
\begin{align}
\label{eq:poly_stab_step12}
 |\theta_{i_0} - \eta_i|^{r_{j_0}} 
  \leq \frac{ C_{j_0}}{\delta_{j_0}} \|f_\mu-f_\nu\|_*.
  \end{align}
  We have thus shown that for any $j_0=1,\dots,k_0$,
  $$\max_{i_0: \theta_{i_0} \in V_{j_0}} \min_{i:\eta_i\in V_{j_0}} \delta_{j_0} |\theta_{i_0} - \eta_i|^{r_{j_0}} 
  \lesssim  \|f_\mu-f_\nu\|_*.$$
  Repeating a symmetric argument, we readily deduce  that there exists $C_0 = C_0(\Theta,k) > 0$ such that
  $$d_{\mu_0}(\mu, \nu) \leq C_0\|f_\mu-f_\nu\|_*.$$
This completes the  proof.   
\end{proof}

It is straightforward to see that $d_{\mu_0} \lesssim \calD_{\mu_0}$ in general (Lemma \ref{lem:RelationHausdorffAndWasserstein}), however
the reverse inequality may not hold, as illustrated in Figure~\ref{fig:differenceWassersteinHausdorff} in \Cref{app:auxiliary}. 
In order to deduce Proposition~\ref{prop:sharp_poly_stability} from Lemma~\ref{lem:hausdorff_stability}, 
we will argue that, in fact,
the situation illustrated in Figure~\ref{fig:differenceWassersteinHausdorff} is the only possible obstruction to 
the equivalence $d_{\mu_0} \asymp \calD_{\mu_0}$, and that even under this situation, 
the upper bound $\calD_{\mu_0}(\mu,\nu) \lesssim \|f_\mu-f_\nu\|_*$ holds true. 

\begin{proof}[Proof of Proposition~\ref{prop:sharp_poly_stability}]
Let $C_0,C_1,C_2 > 0$ be the constants respectively
appearing in the statements of Lemmas~\ref{lem:hausdorff_stability}, \ref{lem:forward_bound}, \ref{lem:ostrowski},
which depend only on $\Theta$ and $k$.
Let $c,\delta_0$ be the largest positive constants in $(0,1)$ for which  the following conditions are satisfied:
\begin{equation}
\label{eq:constant_constraints}
c\delta_0^{k+1} < \delta_0/8,\quad (2C_0C_1c)^{1/k}  \leq 1/8
\end{equation} 
Let $\delta \in (0,\delta_0)$, and 
let the measures $\mu = (1 / k) \sum_{i=1}^k \delta_{\theta_i}$
and $\nu = (1/k) \sum_{i=1}^k\delta_{\eta_i}$ lie in the set $\calU_{k}(\Theta; \mu_0, c\delta^{k+1})$. 
When $k=2$, it is straightforward to see that $d_{\mu_0}(\mu,\nu) \asymp \calD_{\mu_0}(\mu,\nu)$, 
and the claim then follows from step 1. 
We may thus   assume $k \geq 3$ in what follows.

Fix $j_0 \in \{1,\dots,k_0\}$.  It will suffice to prove that 
$W_1^{r_{j_0}}(\mu_{V_{j_0}},\nu_{V_{j_0}}) \lesssim_{\Theta, k} \delta_{j_0}^{-1}\|f_\mu-f_\nu\|_*$. 
Define
$$\epsilon=(C_0\|w\|_*/\delta_{j_0})^{1/r_{j_0}},\quad \text{where } 
w = f_\nu-f_\mu$$
Notice that, by Lemma~\ref{lem:forward_bound}, %
\begin{align}
\label{eq:epsilon_bound_poly}
\nonumber 
\epsilon 
 &\leq (C_0 C_1W_1(\mu,\nu)/\delta_{j_0})^{1/r_{j_0}} \\
\nonumber 
 &\leq (2C_0C_1c\delta^{k+1}/\delta_{j_0})^{1/r_{j_0}} \\
 &\leq (2C_0C_1c)^{1/r_{j_0}}
  \delta \leq \delta/8, 
\end{align}
where we used the fact that $(\delta^{k+1}/\delta_{j_0})\leq \delta^{r_{j_0}+1} \leq \delta^{r_{j_0}}$ and condition~\eqref{eq:constant_constraints} on the constant $c$. 

We begin by proving the claim under the condition
\begin{equation}
\label{eq:assm_21}
\begin{aligned}
\supp(\mu_{V_{j_0}}) &\cap B(\eta_i,\epsilon)\neq \emptyset, \quad \text{for all } \eta_i\in V_{j_0},~\text{and}\\  
\supp(\nu_{V_{j_0}}) &\cap B(\theta_i,\epsilon) \neq \emptyset,\quad \text{for all } \theta_i\in V_{j_0}.
\end{aligned}
\end{equation} 
Under this assumption, we will make use of the following result due to~\citet[Theorem 1.5]{bhatia2007}.
\begin{lemma}[\cite{bhatia2007}]\label{lem:hausdorff_to_wasserstein}
Let $\alpha_1,\beta_1,\dots,\alpha_\ell,\beta_\ell\in \domain$ for some $\ell\in \bbN$.
Suppose that  any connected component $C$  of either one of the unions of open balls 
$$\bigcup_{j=1}^\ell B(\alpha_j,\epsilon) \quad \text{and}\quad   
  \bigcup_{j=1}^\ell B(\beta_j,\epsilon)$$
contains the same number of $\alpha_j$s as it does $\beta_j$s. Then, 
$$W_1\left(\frac 1 \ell \sum_{j=1}^\ell \delta_{\alpha_j}, \frac 1 \ell \sum_{j=1}^\ell \delta_{\beta_j}\right) \leq \epsilon.$$
\end{lemma}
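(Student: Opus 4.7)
My plan is to upper bound $W_1$ by constructing an explicit transport plan between the two uniform measures whose cost is at most $\epsilon$. Concretely, I aim to exhibit a permutation $\sigma\in\calS(\ell)$ with $|\alpha_i - \beta_{\sigma(i)}| < \epsilon$ for every $i$, as the induced coupling $\pi_\sigma = \tfrac{1}{\ell}\sum_{i=1}^\ell \delta_{(\alpha_i,\beta_{\sigma(i)})}$ then satisfies $W_1\!\left(\tfrac{1}{\ell}\sum_i \delta_{\alpha_i},\,\tfrac{1}{\ell}\sum_i\delta_{\beta_i}\right) \leq \int |x-y|\, d\pi_\sigma \leq \epsilon$, which is exactly the claim.

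To produce such a permutation, I would apply Hall's marriage theorem to the bipartite graph $G$ on $\{1,\dots,\ell\}\sqcup\{1,\dots,\ell\}$ with edges $\{(i,j):|\alpha_i-\beta_j|<\epsilon\}$. A perfect matching exists as soon as Hall's condition $|N(S)|\geq |S|$ holds for every $S\subseteq\{1,\dots,\ell\}$, where $N(S) = \{j:\exists i \in S,\, (i,j)\in G\}$.

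The heart of the argument is the verification of Hall's condition, and it is here that both balance hypotheses enter. I would first reduce to working inside each connected component $E$ of the full union $\bigcup_i B(\alpha_i,\epsilon)\cup\bigcup_j B(\beta_j,\epsilon)$: such an $E$ is simultaneously a union of entire $\alpha$-components and of entire $\beta$-components, so summing the hypothesis over its constituents gives $|I_E| = |J_E|$, and every edge of $G$ connects points of the same $E$. A first consequence of the two hypotheses is that no vertex of $G$ is isolated inside $E$: the $\alpha$-balance forces every $\beta_j\in E$ to lie in some $B(\alpha_i,\epsilon)$ (since $\sum_C |J_C| = \sum_C |I_C| = |I_E|$ forces all betas of $E$ into the $\alpha$-union), and symmetrically for the $\alpha_i$'s. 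I would then verify Hall's condition inside $E$ by induction on $|I_E|$. The base case $|I_E|=1$ is immediate from the no-isolated-vertex observation, since the unique ball $B(\alpha,\epsilon)$ must contain the unique $\beta\in E$. For the inductive step, I would locate an ``extremal'' pair $(\alpha_{i_0},\beta_{j_0})$ with $|\alpha_{i_0}-\beta_{j_0}|<\epsilon$ whose deletion keeps both balance hypotheses satisfied on the remaining points of $E$; a natural candidate is a leaf in the overlap chain of $\alpha$-balls making up $E$, where local structure is constrained enough to guarantee a clean split.

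The main obstacle is precisely this last step. Identifying an extremal pair is straightforward, but showing its removal preserves both balance conditions is delicate: deleting $\alpha_{i_0}$ can split its $\alpha$-component into several pieces, each of which must continue to have as many $\beta$'s as $\alpha$'s, and the analogous issue arises for $\beta_{j_0}$ on the $\beta$-side. The key is that both hypotheses need to be used in tandem -- the $\alpha$-balance controls how the $\alpha$-components can split, the $\beta$-balance controls how the $\beta$-components can split, and they together force the new sub-components to remain matched. This interaction is what makes both conditions genuinely necessary: one may construct one-dimensional examples in which either condition holds in isolation while $W_1 > \epsilon$, so any argument must essentially marry the two inside each common component $E$.
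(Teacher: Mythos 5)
The paper offers no proof of this lemma at all: it is imported verbatim from \citet[Theorem 1.5]{bhatia2007}, so there is no internal argument to compare against. Your strategy — reduce to a perfect matching in the bipartite graph with edges $\{(i,j):|\alpha_i-\beta_j|<\epsilon\}$ and verify Hall's condition from the two balance hypotheses — breaks down exactly at the step you yourself flag as ``the main obstacle.'' Hall's condition does \emph{not} follow from the two balance hypotheses. Take $\ell=3$, $\epsilon=1$, and the collinear points $\alpha=(-0.9,\,-0.8,\,0.5)$, $\beta=(0,\,1.2,\,1.4)$ in $\Theta\subseteq\bbC$. The three balls $B(\alpha_j,1)$ form a single connected component (the interval $(-1.9,1.5)$ thickened to disks) containing all three $\alpha$'s and all three $\beta$'s, and the three balls $B(\beta_j,1)$ form a single component containing all six points, so both hypotheses hold. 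Yet the only $\beta_j$ within distance $1$ of either $\alpha_1$ or $\alpha_2$ is $\beta_1$, so $|N(\{1,2\})|=1<2$: Hall fails and no perfect matching at distance $<\epsilon$ exists. Your proposed induction on an ``extremal pair'' therefore cannot be completed, and no repair within the Hall framework (even relaxing the edge threshold to $2\epsilon$) is possible.

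The example shows something stronger: the conclusion of the lemma itself fails for it, since the optimal (monotone) matching gives $W_1=\tfrac{1}{3}(0.9+2.0+0.9)=19/15>1=\epsilon$, as one also checks from $\int|F_\mu-F_\nu|$. So the statement, read literally, cannot be proved by any argument — the balance conditions at the single radius $\epsilon$ are too weak. The result as actually used in the paper (and, presumably, as actually stated by Bhatia) relies on additional structure: in the application the root counts stay balanced along an entire Rouch\'e homotopy $h_t$, which is strictly more information than balance of the two endpoint configurations and which rules out configurations like the one above. Your closing intuition that both conditions are ``genuinely necessary'' is correct, but they are not sufficient; a correct proof must identify and use whatever stronger hypothesis the cited theorem really carries.
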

 
Now, let
$S = \bigcup_{\theta_i\in V_{j_0}} B(\theta_i,\epsilon) \subseteq \bbC$. Let $M$ be a connected component
of $S$. Note that $M$ is the union of some number $L$ of the $k$ disks whose union makes up $S$.  
Let 
$$h_t= (1-t)f_\mu + tf_\nu = f_\mu + t w,\quad t\in[0,1].$$
We make use of the following fact.

\begin{lemma}
\label{lem:poly_bdry}
For all $t \in [0,1]$, the polynomial $h_t$ does not vanish on  $\Gamma:=\partial M$. 
\end{lemma}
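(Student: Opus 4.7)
My plan is to prove Lemma~\ref{lem:poly_bdry} by a Rouché-style modulus comparison: on $\Gamma = \partial M$, I will bound $|f_\mu(z)|$ from below by a multiple of $\|w\|_*$ and bound $|w(z)|$ from above by a multiple of $\|w\|_*$, then choose the constant $C_0$ (already appearing in Lemma~\ref{lem:hausdorff_stability} and whose size I may enlarge) so that the former bound beats the latter. Since $|h_t(z)| \geq |f_\mu(z)| - t|w(z)| \geq |f_\mu(z)| - |w(z)|$ for every $t \in [0,1]$, strict inequality of these two moduli at every $z\in\Gamma$ gives $h_t(z)\ne 0$ uniformly in $t$, which is exactly the statement.

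First I would exploit the geometric description of $\Gamma$. Because $S = \bigcup_{\theta_i\in V_{j_0}} B(\theta_i,\epsilon)$ is a union of open disks and $M$ is a connected component, any $z\in \Gamma$ must lie on the closure of at least one disk contributing to $M$ yet outside all the other open disks of $S$, so $|z-\theta_i|\geq \epsilon$ for every atom $\theta_i$ of $\mu$ lying in $V_{j_0}$. Combined with the inclusion $\mu,\nu \in \calU_k(\Theta;\mu_0,c\delta^{k+1})$ and the constant restrictions in~\eqref{eq:constant_constraints}, this yields $|z-\theta_{0j_0}|\leq \epsilon + c\delta^{k+1}\leq \delta/4$ (using~\eqref{eq:epsilon_bound_poly}), and hence, for any other cluster index $j\neq j_0$ and any atom $\theta_i\in V_j$ of $\mu$,
\begin{equation*}
|z-\theta_i| \;\geq\; |\theta_{0j}-\theta_{0j_0}| - |\theta_i-\theta_{0j}| - |z-\theta_{0j_0}| \;\geq\; |\theta_{0j}-\theta_{0j_0}| - c\delta^{k+1} - \delta/4 \;\geq\; \tfrac{3}{4}\,|\theta_{0j}-\theta_{0j_0}|.
\end{equation*}
Multiplying the two kinds of factors in $f_\mu(z)=\prod_{i=1}^k(z-\theta_i)$ therefore gives
\begin{equation*}
|f_\mu(z)| \;\geq\; \epsilon^{r_{j_0}}\cdot \prod_{j\neq j_0}\bigl(\tfrac{3}{4}|\theta_{0j}-\theta_{0j_0}|\bigr)^{r_j} \;=\; (3/4)^{k-r_{j_0}}\,\delta_{j_0}\,\epsilon^{r_{j_0}} \;=\; (3/4)^{k-r_{j_0}}\,C_0\,\|w\|_*,
\end{equation*}
by the very definition $\epsilon^{r_{j_0}} = C_0\|w\|_*/\delta_{j_0}$.

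For the upper bound on $|w(z)|$, I would use that $z\in \bar M$ is uniformly bounded (it lies within $\delta/4\leq 1$ of $\theta_{0j_0}\in \Theta$, with $\Theta$ bounded), so expanding $w$ in its monomials yields a constant $C'=C'(\Theta,k)>0$ such that $|w(z)|\leq C'\|w\|_*$ for every $z\in\Gamma$; this is essentially the same estimate used to pass from $|f_\nu(\theta_{i_0})-f_\mu(\theta_{i_0})|$ to $\|f_\mu-f_\nu\|_*$ in~\eqref{eq:poly_stab_step11}. Enlarging $C_0$ in the statement of Lemma~\ref{lem:hausdorff_stability} if necessary (the lemma's upper bound is only strengthened by increasing $C_0$) to satisfy $(3/4)^{k-r_{j_0}} C_0 > C'$ uniformly in $j_0\in\{1,\dots,k_0\}$—which costs only a constant depending on $\Theta$ and $k$—I then conclude, for every $t\in[0,1]$ and every $z\in\Gamma$,
\begin{equation*}
|h_t(z)| \;\geq\; |f_\mu(z)| - t|w(z)| \;\geq\; \bigl((3/4)^{k-r_{j_0}}C_0 - C'\bigr)\|w\|_* \;>\; 0,
\end{equation*}
which proves the claim.

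The main technical subtlety, and really the only point that requires care, is ensuring that the constant $C_0$ from Lemma~\ref{lem:hausdorff_stability}—which is also used to define $\epsilon$ here—can be taken large enough without breaking the earlier constant constraints~\eqref{eq:constant_constraints} on $c$ and $\delta_0$. This is harmless since those constraints involve the product $C_0C_1c$ and a separate bound on $c\delta_0^{k+1}$; they can always be re-satisfied by shrinking $c$ and $\delta_0$ after $C_0$ has been fixed, with the new constants still depending only on $\Theta$ and $k$. All other estimates are pointwise inequalities whose constants depend only on $\Theta,k$, and the argument is uniform in $j_0$.
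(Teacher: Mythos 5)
Your proof is correct (up to a small constant) but takes a genuinely different route from the paper's. The paper tracks the roots $\xi_{t,1},\dots,\xi_{t,k}$ of $h_t$ directly: it forms the auxiliary uniform measure $\zeta_t$, uses Lemmas~\ref{lem:ostrowski} and~\ref{lem:forward_bound} to place $\zeta_t$ inside $\calU_k(\Theta;\mu_0,\delta/4)$, and then applies Lemma~\ref{lem:hausdorff_stability} to conclude that each $\xi_{t,i}$ lies either strictly inside one of the open disks making up $M$ (if it is tracked to cluster $j_0$) or at distance at least $\epsilon$ from $M$ (if it is tracked to another cluster)—hence never on $\Gamma$. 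Your argument bypasses all this root tracking: it lower-bounds $|f_\mu(z)|$ directly from the factorization $f_\mu(z)=\prod_i(z-\theta_i)$ using that $z\in\Gamma$ is outside every open disk (so $|z-\theta_i|\geq\epsilon$ for $\theta_i\in V_{j_0}$) while the far-cluster factors contribute a constant multiple of $\delta_{j_0}$, and it upper-bounds $|w(z)|$ by $C'\|w\|_*$ by boundedness of $\Gamma$. This buys a uniform-in-$t$ lower bound on $|h_t(z)|$ without invoking the Hausdorff lemma again, and in particular does not use assumption~\eqref{eq:assm_21} (the paper needs it just to handle $h_1=f_\nu$). You also correctly flag that enlarging $C_0$ is permissible as long as $c$ and $\delta_0$ are re-shrunk to preserve~\eqref{eq:constant_constraints}, which they can be with new constants still depending only on $\Theta,k$.

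One small numerical slip: the constant $\tfrac34$ in the far-cluster bound is too optimistic. With $c\delta^{k+1}\leq\delta/8$ and $|z-\theta_{0j_0}|\leq\delta/4$ you get $|z-\theta_i|\geq|\theta_{0j}-\theta_{0j_0}|-3\delta/8\geq\tfrac58|\theta_{0j}-\theta_{0j_0}|$ (worst case $|\theta_{0j}-\theta_{0j_0}|=\delta$), not $\tfrac34$; replacing $(3/4)^{k-r_{j_0}}$ by $(5/8)^{k-r_{j_0}}$ throughout leaves the rest of the argument intact since $C_0$ only needs to exceed $C'(8/5)^k$. This is cosmetic and does not affect the validity of the approach.
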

\begin{proof}[Proof of Lemma~\ref{lem:poly_bdry}]
By assumption~\eqref{eq:assm_21}, the claim holds for $h_1$, %
 thus let $t \in [0,1)$. 
Let $\xi_{t,1},\dots,\xi_{t,k}$ denote the roots of $h_t$, and let 
$\zeta_t  = (1/k) \sum_{i=1}^k \delta_{\xi_{t,i}}$. We need to show
that the roots $\xi_{t,i}$ do not lie on the boundary $\Gamma$. 
By Lemmas~\ref{lem:ostrowski} and \ref{lem:forward_bound}, it holds that
$$W_\infty^k(\zeta_t,\mu) \leq C_2 \|f_{\zeta_t} - f_\mu\|_* 
= C_2 t \|f_\nu - f_\mu\|_* \leq C_1C_2W_1(\mu,\nu) %
\leq 2 C_1C_2c\delta^{k+1}.$$
Thus, using equation~\eqref{eq:constant_constraints}, 
 we deduce that  
$$W_\infty(\zeta_t,\mu_0) \leq W_\infty(\zeta_t,\mu) + W_\infty(\mu,\mu_0) \leq 
(2C_1C_2c\delta^{k+1})^{1/k}+ c\delta^{k+1}\leq \delta/4,$$
and hence $\zeta_t \in \calU_k(\Theta;\mu_0,\delta/4)$. 
From here,
we may apply Lemma~\ref{lem:hausdorff_stability} with $\rho = \delta/4$
to deduce that
$$d_{\mu_0}(\zeta_t,\mu) \leq C_0 \|f_{\zeta_t}-f_\mu\|_* = C_0 t \|f_\mu-f_\nu\|_* < C_0 \|w\|_*.$$
By definition of $d_{\mu_0}$, we deduce that
for every $i=1,\dots,k$, there exists $\ell_0=1,\dots,k_0$ and $\theta_j \in V_{\ell_0}$ such that
$$|\xi_{t,i} - \theta_j| < (C_0\|w\|_* / \delta_{\ell_0})^{1/r_{\ell_0}}.$$
If $\ell_0 = j_0$, the above display implies that $\xi_{t,i}$ lies in the open ball $B(\theta_j,\epsilon)$, 
and thus cannot lie in the set $\Gamma$. On the other hand, if $\ell_0 \neq j_0$, 
then our conditions imply that $\xi_{t,i}$ must lie more than a distance $\epsilon$ from any 
element $\theta_r \in V_{j_0}$. Indeed, for any such $\theta_r$, we have:
\begin{align*}
|\theta_r - \xi_{t,i}|
 &\geq |\theta_r - \theta_j| - \epsilon \\ 
 &\geq |\theta_{0j_0} - \theta_{0\ell_0}|  - 2c\delta^{k+1} - \epsilon\\ 
 &\geq \delta - 2c\delta^{k+1} - \epsilon \\
 &\geq 3\delta/4 - \epsilon \\
 &\geq 4\epsilon,
\end{align*}
where we used equation~\eqref{eq:epsilon_bound_poly}.
This implies that any such atom $\xi_{t,i}$ lies at distance at least $\epsilon$ from the set $M$, and thus
cannot lie on the boundary $\Gamma$. 
The stated Lemma thus follows.\end{proof}

Since $h_t$ does not vanish on $\Gamma$, the map $|h_{t}(z)|$ is bounded from 
below by a positive constant over $\Gamma$. Therefore, for any small
enough $\gamma > 0$, it holds that 
$$|\gamma w(z)| < |h_{t}(z)|\quad \text{for all } z \in \Gamma, t \in [0,1]$$
which implies, by Rouch\'e's Theorem, that the number of roots $N(t)$ of $h_{t}$ inside
$M$ is equal to the number of roots of $h_{t}+\gamma w = h_{t+\gamma}$. That is, we have
for that for all $t \in [0,1]$, there exists $\gamma_0 > 0$ such that for all $\gamma \leq \gamma_0$, 
$$N(t) = N(t+\gamma).$$
It follows that $N$ is continuous over $[0,1]$. Since it is integer-valued, it must be
constant. Thus, we have found that the number of roots of $h_t$ in the interior of $M$ is constant. 
In particular, this implies that $f_\mu$ and $f_\nu$ have the same number $L$ of roots inside $M$.
This can be repeated with the set $S' = \bigcup_{i\in V_{j_0}}^k B(\eta_i,\epsilon)$.
We then deduce from Lemma~\ref{lem:hausdorff_to_wasserstein} that 
$$W_1 (\mu_{V_{j_0}},\nu_{V_{j_0}}) \leq \epsilon \lesssim_{\Theta,}  \left(\frac{ \|f_\mu-f_\nu\|_* }{\delta_{j_0}}\right)^{\frac 1 {r_{j_0}}},$$
as was to be shown. 

Finally, it remains to prove the claim when equation~\eqref{eq:assm_21} does not hold.
In this case, without loss of generality,
there exists some $j_0\in\{1,\dots,k_0\}$ and some $\theta_{i_0} \in V_{j_0}$ such that
$$\inf_{\eta_i \in V_{j_0}} |\theta_{i_0} - \eta_i| \geq \epsilon.$$
On the other hand by Lemma~\ref{lem:hausdorff_stability}, we have
$$\bar d_{\mu_0}(\mu,\nu) \leq \epsilon,$$
and the only way in which the preceding two displays can both hold is 
if $|\theta_{i_0} - \eta_i| = \epsilon$ for all $\eta_i\in V_j$. 
But then, it is a simple observation that the optimal matching distance 
between $\mu_{V_{j_0}}$ and $\nu_{V_{j_0}}$ cannot exceed a constant multiple of $\epsilon$, and the claim once again follows. 
\end{proof}

\subsection{Proof of Corollary \ref{cor:blackbox_bound}}\label{app:pf:blackbox_bound}

If $\hat \mu \in \calU_k(\Theta)$ almost surely, we infer for Assertion $(i)$ from \Cref{thm:moment_comparison}$(i)$ that there exists a positive constant $C = C(\Theta, d, k)>0$ such that
\begin{align}\label{eq:WassersteinBound_MomentSquareError}
	\EE\left[W_1^{2k}(\hat \mu, \mu)\right]&\leq C^2\EE\left[M_k^2(\hat \mu, \mu)\right]\leq C^2\epsilon^2.
\end{align}
Jensen's inequality then implies that 
\begin{align*}
	\EE\left[W_1^{k}(\hat \mu, \mu)\right]\leq \EE\left[W_1^{2k}(\hat \mu, \mu)\right]^{1/2}  \leq C \epsilon. 
\end{align*}
Meanwhile, if $M_k(\hat \mu, \mu)\leq \rho$ for a Euclidean subset $\Theta\subseteq \RR^d$ it follows by Lemma \ref{lem:momentdifferenceZeroMeasureUpperBound} below that $\hat \mu, \mu\in \calU_k(\check\Theta)$ where $\check \Theta \coloneqq B(0, 1+ k\rho + k\sum_{\substack{\alpha\in \NN_0^d, \|\alpha\|_1\leq k}}|m_\alpha(\mu)|)$. Likewise, for a complex subset $\Theta\subseteq \CC$ it follows under $M_k(\hat \mu, \mu)\leq \rho$ from Lemma~\ref{lem:moment_to_measure} and  inequality \eqref{eq:momentdifferenceZeroMeasureUpperBound} below that $\hat \mu, \mu \in \calU_k(\check \Theta)$ where $\check \Theta\coloneqq B(0,C(k)(\rho + \sum_{\alpha = 1}^{k}|m_\alpha(\mu)|)^{k})$ for some positive constant $C(k)>0$. In particular, the set $\check \Theta$ is bounded and only depends on $\Theta, \rho, k, d$. We thus conclude from \Cref{thm:moment_comparison}$(i)$ and our imposed assumption that 
\begin{align*}
	\EE\left[W_1^{2k}(\hat \mu, \mu)\right]&\leq  \EE\left[W_1^{2k}(\hat \mu, \mu) \mathds{1}(M_k(\hat \mu, \mu)< \rho)\right]+  \EE\left[W_1^{2k}(\hat \mu, \mu) \mathds{1}(M_k(\hat \mu, \mu)\geq \rho)\right]\\
	&\leq C(\check\Theta, \rho, d,k)\epsilon^2 +  \epsilon^2 = C(\Theta, \rho, d,k)\epsilon^2.
\end{align*}

To show Assertion $(ii)$, observe by \Cref{thm:moment_comparison}$(ii)$, combined with  $W_\infty(\hat \mu, \mu_0)\leq k W_1(\hat \mu, \mu_0)$ by Lemma \ref{lem:inequalityWasserstein1Infty} and inequality \eqref{eq:WassersteinBound_MomentSquareError} using the lower bound on $\delta$ that
\begin{align*}
	\EE\left[\calD_{\mu_0}(\hat \mu, \mu)\right] &\leq \EE\left[\calD_{\mu_0}(\hat \mu, \mu)\mathds{1}(W_\infty(\hat \mu, \mu)<c \delta^{k+1}/2)\right] + \EE\left[\calD_{\mu_0}(\hat \mu, \mu)\mathds{1}(W_\infty(\hat \mu, \mu)\geq c \delta^{k+1}/2)\right]\\
	&\leq \EE\left[\calD_{\mu_0}(\hat \mu, \mu)\mathds{1}(W_\infty(\hat \mu, \mu_0)<c \delta^{k+1})\right] + \mathbb{P}(W_\infty(\hat \mu, \mu)\geq c \delta^{k+1})\\
	&\leq C \EE\left[M_k(\hat \mu, \mu)\right] + \mathbb{P}(W_\infty^{2k}(\hat \mu, \mu)\geq c^{2k} \delta^{2k(k+1)})\\
	&\leq C \epsilon + C^2k^{2k}\epsilon^2 c^{-2k} \delta^{-2k(k+1)}\\
	&\leq C(\Theta, d, k, c)\epsilon.
\end{align*}

Moreover, for the bound on the Wasserstein distance to the power $r^*$ we observe by combining Lemma \ref{lem:inequalityWasserstein1Infty} with Lemma \ref{lem:inequalityWassersteinLocal} that 
\begin{align*}
	\EE\left[W_1^{r^*}(\hat\mu, \mu)\right] &= \EE\left[W_1^{r^*}(\hat\mu, \mu)\mathds{1}(W_\infty(\hat\mu, \mu)<\delta/4)\right] + \EE\left[W_1^{r^*}(\hat\mu, \mu)\mathds{1}(W_\infty(\hat\mu, \mu)\geq \delta/4)\right]\\
	&\leq C(k)\EE\left[\delta^{-k+r_*}\calD_{\mu_0}(\hat\mu, \mu)\mathds{1}(W_\infty(\hat\mu, \mu)<c \delta^{k+1})\right]+ C(k)\EE\left[\delta^{-k+r^*} W_1^{k}(\hat\mu, \mu)\right]\\
	&\leq C(\Theta, d,k, c)\delta^{-k+r^*} \epsilon + C(k)\delta^{-k+r^*} \epsilon =  C(\Theta, d,k, c) \delta^{-k+r^*}\epsilon. 
\end{align*}
This completes the proof of our error bounds for estimators. \qed

\begin{lemma}\label{lem:momentdifferenceZeroMeasureUpperBound}
	For $k,d\in \NN$ let $\Theta\subseteq \RR^d$  be a bounded set. Then, for
	any $\mu\in \calU_k(\Theta)$, $\rho>0$, and $\nu\in \calU_k(\RR^d)$ with
	$M_k(\mu, \nu) < \rho$, it holds that $\nu\in \calU_k(B(0, 1+k\rho +k\sum_{\substack{\alpha\in \NN_0^d, \|\alpha\|_1\leq k}}|m_\alpha(\mu)| ))$. 
\end{lemma}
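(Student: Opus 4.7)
The plan is to show that every atom $\eta_i$ appearing in the representation $\nu = (1/k)\sum_{i=1}^k \delta_{\eta_i}$ satisfies $\|\eta_i\|_2 \leq R$, where $R := 1 + k\rho + k\sum_{\alpha: \|\alpha\|_1 \leq k}|m_\alpha(\mu)|$ is the claimed radius. Writing $M_\mu := \sum_{\alpha: \|\alpha\|_1 \leq k}|m_\alpha(\mu)|$, so that $R = 1 + k\rho + k M_\mu$, the argument naturally splits according to whether $k \geq 2$ or $k = 1$, since only in the former case do coordinate-wise second moments appear in $M_k(\mu,\nu)$.

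For $k \geq 2$, the key step is a pigeonhole-type inequality: since $\nu$ assigns mass at least $1/k$ to each of its atoms, one has
\[
\|\eta_i\|_2^2 \;\leq\; k \int_{\RR^d} \|x\|_2^2 \,\dif\nu(x) \;=\; k \sum_{j=1}^d \int_{\RR^d} x_j^2 \,\dif\nu(x).
\]
Each coordinate second moment $\int x_j^2\,\dif\nu$ equals $m_\alpha(\nu)$ for some multi-index $\alpha$ of order $2$, and since $2 \leq k$ all these moments appear in the sum defining $M_k(\mu,\nu)$. Combining the triangle inequality with the hypothesis $M_k(\mu,\nu) < \rho$ therefore yields
\[
\sum_{j=1}^d \int_{\RR^d} x_j^2\, \dif\nu \;\leq\; \sum_{j=1}^d \int_{\RR^d} x_j^2\, \dif\mu \;+\; M_k(\mu,\nu) \;<\; M_\mu + \rho,
\]
so $\|\eta_i\|_2 \leq \sqrt{k(M_\mu + \rho)}$. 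The elementary inequality $\sqrt{y} \leq 1 + y$, valid for all $y \geq 0$, then upgrades this to $\|\eta_i\|_2 \leq 1 + k(M_\mu + \rho) = R$, as desired.

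For $k = 1$, only first-order moments appear in $M_k$ and the quadratic pigeonhole argument no longer applies. In that case $\nu = \delta_{\eta_1}$, the moment-difference simplifies to $M_1(\mu,\nu) = \sum_{j=1}^d \bigl|\eta_{1,j} - \int x_j\,\dif\mu\bigr|$, and two applications of the triangle inequality together with the standard bound $\|x\|_2 \leq \|x\|_1$ on $\RR^d$ directly yield $\|\eta_1\|_2 \leq \sum_{j=1}^d \bigl|\int x_j\,\dif\mu\bigr| + \rho < R$. The main (mild) technical point of the proof is thus the case distinction at $k = 1$; otherwise the bound in the lemma is rather loose, with the multiplicative factor $k$ and the additive constant $1$ in $R$ comfortably absorbing the loss from the square-root step and from comparing $\ell^1$ and $\ell^2$ norms, and no further obstacles are anticipated.
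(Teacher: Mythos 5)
Your proof is correct and follows essentially the same route as the paper's: a case split at $k=1$ (using first moments and $\|\cdot\|_2\leq\|\cdot\|_1$) versus $k\geq 2$ (using the coordinate second moments, the pigeonhole bound $\|\eta_i\|^2\leq k\int\|x\|^2\,d\nu$, and the triangle inequality against $\mu$'s moments). The only cosmetic difference is that the paper phrases the triangle inequality via $M_k(\nu,\delta_0)\leq M_k(\mu,\nu)+M_k(\mu,\delta_0)$ while you apply it only to the relevant coordinate moments, and your $\sqrt{y}\leq 1+y$ step plays the same role as the paper's additive constant $1$.
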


\subsubsection{Proof of Lemma \ref{lem:momentdifferenceZeroMeasureUpperBound}}

	First, observe for $\nu \in \calU_k(\RR^d)$ and $\mu\in \calU_k(\Theta)$ that 
	\begin{align}
		M_k(\nu, \delta_{0})\leq M_k(\mu, \nu) + M_k(\mu, \delta_{0}) \leq \rho+ M_k(\mu, \delta_{0}) \leq \rho + \sup_{\mu\in \calU_k(\Theta)}\sum_{\substack{\alpha\in \NN_0^d, \|\alpha\|_1\leq k}}|m_\alpha(\mu)|.\label{eq:momentdifferenceZeroMeasureUpperBound}
	\end{align}
	For $k = 1$ it follows that $\nu = \delta_{\eta}$ for some $\eta\in \RR^d$, and we obtain 	
	\begin{align*}
		M_k(\nu, \delta_0) = \sum_{j = 1}^{d} |\eta_j| = \|\eta\|_1\geq \|\eta\|.
	\end{align*} 
	Moreover, for $k\geq 2$ it follows that $\nu = (1/k)\sum_{i = 1}^{k}\delta_{\eta_i}$ for some $\eta_1, \dots, \eta_k \in\RR^d$
	\begin{align*}
		M_{k}(\nu, \delta_0) \geq \sum_{j=1}^{d} \left|\EE_{X\sim \nu}[X_j^2]\right| =  \EE_{X\sim \nu}[\|X\|^2] = \frac{1}{k}\sum_{i = 1}^{k} \|\eta_i\|^2 \geq \frac{1}{k}\|\eta_h\|^2,
	\end{align*}
	for each $h\in \{1, \dots, h\}$. We thus obtain from the previous two displays and \eqref{eq:momentdifferenceZeroMeasureUpperBound} for general $k\in \NN$ and $\eta \in \supp(\nu)$ that 
	\begin{align*}
		\|\eta\|\leq 1+k\rho + k\sup_{\mu\in \calU_k(\Theta)}\sum_{\substack{\alpha\in \NN_0^d, \|\alpha\|_1\leq k}}|m_\alpha(\mu)|,
	\end{align*}
	which finishes the proof. \qed

\subsection{Proof of Theorem~\ref{thm:moment_polynomials}}
\label{app:pf_thm_moment_polynomials}
Let $\bbK'=\bbR$ if $\bbK=\bbR^d$, and $\bbK'=\bbC$ if $\bbK=\bbC$.
	We first prove for each $\alpha \in \NN_0^d$ with $\|\alpha|\leq \ell\in \NN$ that $\tilde \psi_\alpha\colon \bbK \to \bbK', z \mapsto \int_\bbK x^\alpha K(x-z)\dif x$ is a monic
polynomial of degree $\alpha$. Indeed, by substituting $y = x-z$ we obtain for every $z\in \bbK$ that \begin{align*}
		\tilde \psi_\alpha(z) &= \int_\bbK (y+z)^\alpha K(y)\dif y \\
		&=  \int \sum_{\substack{\beta \in \NN_0^d\\\beta \leq \alpha}}   \textstyle  \binom{\alpha}{\beta}y^{\beta} z^{\alpha-\beta}  K(y) \dif y\\
		&=   z^{\alpha} + \sum_{\substack{\beta \in \NN_0^d\\\beta \leq \alpha}}   z^{\alpha - \beta} \textstyle \binom{\alpha}{\alpha} \displaystyle\int y^{\beta}  K(y) \dif y.
	\end{align*}
It thus follows that $\{\tilde \psi_{\beta}\}_{{\beta \leq \alpha}}$ forms a (linearly independent) basis of $\bbK[z]_{\alpha}$ the ring of polynomials up order $\alpha$  with coefficients in $\bbK'$, i.e., where the exponents of the respective multivariate monomials are dominated by $\alpha$ with respect to total ordering. Hence, 
there exist unique coefficients $(a_{\alpha, \beta})_{\beta\in \NN_0^d, \beta <\alpha}\in \bbK$ (here we write $\beta < \alpha$ to denote that $\beta_i\leq \alpha_i$ for all $i \in \{1, \dots, d\}$ with at least one inequality being strict) so that 
$$ z^\alpha = \tilde \psi_\alpha(z)  + \sum_{\substack{\beta\in \NN_0^d\\\beta <\alpha}} a_{\alpha,\beta}\tilde \psi_{\beta}(z) \quad \text{ for all } z \in \bbK.$$
	With these coefficients, we define the polynomial $\psi_{\alpha}\colon \bbK \to \bbK'$, $z \mapsto z^\alpha + \sum_{\substack{\beta\in \NN_0^d, \beta < \alpha}} a_{\alpha,\beta}z^\beta.$
Since $\int |z|^\alpha K(z) \dif z< \infty$ for all $\alpha \in \NN_0^d$ with $\|\alpha\|_1\leq \ell$ it holds by Fubini's theorem that
for any $\mu\in \calP(\bbK)$, 
	\begin{align*}
		\EE_{Y\sim K\ast \mu}[\psi_\alpha(Y)] &= \int_{\bbK} \psi_\alpha(y) \left[\int_{\bbK} K(y-x) \dif \mu(x) \right]\dif y\\
		&= \int_{\bbK} \left[ \int_{\bbK} \psi_\alpha(y) K(y-x) \dif y\right]\dif \mu(x)\\
		&= \int_{\bbK} \bigg[ \tilde \psi_\alpha(x)  + \sum_{\substack{\beta \in \NN_0^d\\\beta < \alpha}}a_{\alpha,\beta}\tilde \psi_{\beta}(x) \bigg]\dif \mu(x) = \int_\bbK x^\alpha \dif \mu(x) = \EE_{X\sim  \mu}[X^\alpha],
	\end{align*}
	which proves the first claim. 
	
	For the second claim, we first show for $\bbK = \bbC$ under rotational symmetry of the kernel $K$, i.e., if $K(y) = K(|y|)$ for every $y\in \bbC$ that $\int y^\beta K(y) \dif y = 0$ for every $\beta\in \{1, \dots, \alpha-1\}$. To this end, we employ the polar transformation and note by Fubini's theorem that 
	\begin{align*}
		\int_{\CC} y^\beta K(y) \dif y &= \int_{0}^\infty \int_{0}^{2\pi} \left( r\exp(i\theta)\right)^\beta r K(r)\dif \theta \dif r \\
		&= \int_{0}^{\infty} r^{\beta+1} K(r)\dif r\int_{0}^{2\pi} \exp(\beta i \theta) \dif \theta \\
		&= \int_{\bbC} |y|^{\beta} K(y) \dif x\left[\frac{1}{\beta i}\exp(\beta i \theta)\right]_{0}^{2\pi} = 0,
	\end{align*}
	where equality to zero follows by periodicity of $\theta\mapsto \exp(\beta i\theta)$ with period $2\pi$. 
	Hence, for $\alpha\in \{1, \dots, \ell\}$ it holds $\tilde \psi_\alpha(z) = z^\alpha$ and thus $a_{\alpha,\beta} = 0$ for each $0\leq \beta< \alpha\leq \ell$, asserting $\psi_\alpha(z) = z^\alpha$. \qed

	\subsection{Proof of Lemma \ref{lem:moment_to_measure}}
		The polynomial $\poly_m$ is of degree $k$, and thus admits
		$k$  roots $\theta_1,\dots,\theta_k \in \bbC$. 
		Let $\mu = (1/k) \sum_{i=1}^k \delta_{\theta_i}$. We then have $\poly_m = f_\mu$,
		where  $f_\mu$ is the unique monic polynomial~\eqref{eq:vieta_formula} whose
		roots are $\theta_1,\dots,\theta_k$ including multiplicity. 
		Since $\poly_m$ and $f_\mu$ are both monic, we deduce that $\poly_m$ and $f_\mu$ share the same coefficients.
		By Newton's identities \eqref{eq:Newton_identity} for the coefficients of $f_\mu$, the recursion on $\{\epsilon_j\}_{j = 0}^{k}$ follows. 
		
		For the second assertion we first show for fixed $k\in \NN$ under $|m_1|, \dots, |m_k|\leq M$ that $|\symm_{l}|\leq C(k,l)M^l$ for some positive constant $C(k,l)>0$ that only depends on $l,k$. We prove this by induction over $l\in \{0, \dots, k\}$. For $l =0$, the assertion is trivially met for any $C(k,0) =1$. Now assume the assertion is met for $0\leq l-1\leq k-1$, then it remains to show it for $l$. To this end, note by induction hypothesis that $|\symm_{h}|\leq C(k,h)M^h$ for each $h\in \{0, \dots, l-1\}$ and arbitrary $M\geq 1$, and we observe since $M\geq 1$ that 
		\begin{align*}
			\left|\symm_l\right| &\leq \frac{k}{l}\sum_{j = 1}^{l} |\symm_{l-j} m_j|\leq \frac{k}{l}   \sum_{j=1}^{l}C(k,l-j)M^{l-j} M \leq \left(k\max_{j = 1, \dots, l} C(k,l-j)\right) M^l.
		\end{align*}
		Setting $C(k,l) = \left(k \max_{j = 1, \dots, l} C(k,l-j)\right)$ yields by induction that  $|\symm_{l}|\leq C(k,l)M^l$ for all $M\geq 1$. Hence, Cauchy's bound on complex roots of polynomials in terms of its coefficients implies that all roots of $\poly_m$ are contained in $B\left(0,R\right)$ for $R = 1+ \max_{l = 1,\dots, k} |\symm_l|\leq 1+ C(k)M^k$ for $C(k) = \max_{l =1, \dots, k} C(k,l)$.\qed

\subsection{Proof of Proposition \ref{prop:mm_rate}}\label{subsec:proof:mm_rate} 

The differing definitions of the three moment estimators require slightly different proof techniques. We therefore divide the proof into multiple steps. In step~1, we derive quantitative error bounds for the moment estimator (i.e., not the method of moment estimator) in \eqref{eq:moment_estimator}. Then, in steps 2 and 3 we detail the specific analysis for the general and  complex method of moments estimators, respectively. Finally, in step 4 we explain how the real method of moments estimator can be analyzed using the analysis from the complex estimator, and thereby provide a proof for Remark \ref{rmk:discussionMoM}$(i)$.

\subsubsection*{Step 1. Discretization and estimation error of moment estimators}

We first analyze the error of the moment estimators in estimating their population counterpart. We do this in a unified framework, and consider $\hat m_{\alpha}$ for $\alpha \in \NN_0^d$ with $\|\alpha\|_1\leq k$ when we take (multivariate) real moment estimator ($\Theta \subseteq \RR^d$) or $\alpha \in \{1, \dots, k\}$ when we consider complex moment estimators ($\Theta \subseteq \CC$, corresponding to $d = 2$). Specifically, we show that 
\begin{align}
   \EE\left[\left|\hat m_{\alpha} - m_{\alpha}(\mu)\right|^2\right] \leq C(\Omega, d, K,k)\left(t^{-1/2} + m^{-1/d}\right)^2. \label{eq:moment_estimator_error}
\end{align}

To this end, we employ \Cref{thm:moment_polynomials} and observe since $\supp(K)+\Theta \subseteq \Omega$ (Assumption \ref{ass:kernel}$(i)$) for some unique suitable monic polynomial $\psi_\alpha$ of order $\alpha$ that
\begin{align*}
	\hat m_\alpha- m_\alpha(\mu)%
	&= \sum_{i = 1}^{m} \psi_\alpha(\gamma_i) \frac{X_i}{t} - \int_{\Omega}\psi_\alpha(x) (K\ast \mu)(x) \dif x,
\end{align*}
where $X_i \sim \textup{Poi}(t K\ast \mu(B_i))$. %
It thus follows  that 
\begin{align}
	\left|\hat m_\alpha- m_\alpha(\mu)\right|^2 &= \left|\sum_{i = 1}^{m} \psi_\alpha(\gamma_i) \frac{X_i}{t} - \int_{\Omega}\psi_\alpha(x) (K\ast \mu)(x) \dif x\right|^2\notag\\
	&\leq 2\left|\sum_{i = 1}^{m} \psi_\alpha(\gamma_i) \frac{X_i}{t} - \sum_{i = 1}^{m} \psi_\alpha(\gamma_i) K\ast \mu(B_i) \right|^2\label{eq:stochasticErrorMomentEstimator}\\
	&+ 2\left|\sum_{i = 1}^{m} \psi_\alpha(\gamma_i) K\ast \mu(B_i) - \int_{\Omega}\psi_\alpha(x) (K\ast \mu)(x) \dif x \right|^2.\label{eq:discretizationErrorMomentEstimator}
\end{align}
We upper bound the terms in \eqref{eq:stochasticErrorMomentEstimator} and \eqref{eq:discretizationErrorMomentEstimator} separately. For the second term it holds
\begin{align}
	\left|\sum_{i = 1}^{m} \psi_\alpha(\gamma_i) \frac{K\ast \mu(B_i)}{m} - \int_{\Omega}\psi_\alpha(x) (K\ast \mu)(x) \dif x \right|^2
	&\leq  \left(\sum_{i = 1}^{m} \int_{B_i}\left|\psi_\alpha(\gamma_i) - \psi_\alpha(x) \right| (K\ast \mu)(x) \dif x\right)^2\notag \\
	&\leq C(\Omega, \psi_\alpha,d) \max_{i = 1, \dots, m}\diam(B_i)^2\notag\\
	&\leq C(\Omega, \psi_\alpha,d) m^{-2/d},\label{eq:error1}
\end{align}
where we used in the second inequality that $\psi_\alpha$ is a polynomial and therefore Lipschitz on the detection domain $\Omega$ and in the third inequality we employed Assumption \ref{ass:bins}. Recall that the Lipschitz constant only depends on the coefficients of $\psi_\alpha$ which are in turn are determined by $K$ (recall \Cref{thm:moment_polynomials}).  %
This treats the discretization error. 

We continue with bounding the first term \eqref{eq:stochasticErrorMomentEstimator}. Here, we first note that 
\begin{align*}
	\EE\left[\sum_{i = 1}^{m} \psi_\alpha(\gamma_i) \frac{X_i}{t}\right] &= \sum_{i =1}^{m}\frac{\psi_\alpha(\gamma_i)}{t}\EE\left[ X_i\right] = \sum_{i =1}^{m}\psi_\alpha(\gamma_i) K\ast \mu(B_i).
\end{align*} 
Hence, by Jensen's inequality, since $\psi_\alpha$ is bounded on $\Omega$ by some constant $C(\Omega, d, K, k)>0$, we obtain that the statistical estimation error is bounded by 
\begin{align}
	 \EE \left|\sum_{i = 1}^{m} \psi_\alpha(\gamma_i) \frac{X_i}{t} - \sum_{i = 1}^{m} \psi_\alpha(\gamma_i) K\ast \mu(B_i) \right|^2
	&= \textup{Var}\left(\sum_{i = 1}^{m} \psi_\alpha(\gamma_i) \frac{X_i}{t} \right)\notag \\*
	&=\sum_{i = 1}^{m}  \left(\frac{\psi_\alpha(\gamma_i)}{t}\right) ^2 t K\ast \mu(B_i) \notag \\*
	&\leq C(\Omega,\psi_\alpha,d) t^{-1}.\label{eq:error2_momentError}
\end{align}
Combining the bounds from \eqref{eq:stochasticErrorMomentEstimator}--\eqref{eq:error2_momentError} implies \eqref{eq:moment_estimator_error}.

\subsubsection*{Step 2. Statistical analysis of general method of moment estimator}

We now put our focus on the analysis of the general method of moments estimator $\hat \mu_{t,m} = \hat \mu_{t,m, \Theta}^{\textup{MM}}$ defined in \eqref{eq:method_of_moments_general}. Here we obtain by a straight-forward computation that
\begin{align*}
   M_k^2(\hat \mu_{t,m}, \mu) &= \Bigg(\sum_{\substack{\alpha\in \NN_0^d\backslash\{0\}\\ 1\leq \|\alpha\|_1\leq k}} |m_\alpha(\hat \mu_{t,m}) - m_\alpha(\mu)|\Bigg)^2\\
   &\leq \Bigg(\sum_{\substack{\alpha\in \NN_0^d\backslash\{0\}\\ 1\leq \|\alpha\|_1\leq k}} |m_\alpha(\hat \mu_{t,m}) - \hat m_{\alpha}| + |\hat m_{\alpha}-m_\alpha(\mu)|\Bigg)^2\\
   &\leq 2|\{\alpha\in \NN_0^d\colon \|\alpha\|_1\leq k\}| \bigg(\sum_{\substack{\alpha\in \NN_0^d\backslash\{0\}\\ 1\leq \|\alpha\|_1\leq k}} |m_\alpha(\hat \mu_{t,m}) - \hat m_{\alpha}|^2 + |\hat m_{\alpha}-m_\alpha(\mu)|^2\bigg)\\
   &\leq 2|\{\alpha\in \NN_0^d\colon \|\alpha\|_1\leq k\}| \bigg( 2\sum_{\substack{\alpha\in \NN_0^d\backslash\{0\}\\ 1\leq \|\alpha\|_1\leq k}} |m_\alpha(\mu) - \hat m_{\alpha}|^2  \bigg), 
\end{align*}
where the second-to-last inequality follows from the Cauchy-Schwarz inequality and the last inequality is a consequence of the definition of the method of moment estimator as minimizer. Combining the above computation with \eqref{eq:moment_estimator_error} we obtain 
\begin{align*}
   \EE\left[M_k^2(\hat \mu_{t,m}, \mu)\right] \leq C(\Omega, d, K,k)\left(t^{-1/2} + m^{-1/d}\right)^2.
\end{align*}
It remains to specify some $\rho$ to ensure that the second term on the left-hand side in \eqref{eq:mm_rate} is similarly bounded. To this end, note that the moment difference $M_k$ is continuous and that $\calU_k(\Theta)$ is by compactness of $\Theta$ (Assumption \ref{ass:bins}) compact with respect to the weak topology. Hence, it follows that specifying $\rho \coloneqq 1+ \max_{\tilde \mu, \mu\in \calU_k(\Theta)} M_k(\tilde \mu, \mu) = C(\Theta,d, k)$ yields that 
\begin{align*}
   \EE\left[W_k^{2k}(\hat \mu_{t,m}, \mu)\mathds{1}(M_{k}(\hat \mu_{t,m}, \mu)\geq \rho)\right] =0, 
\end{align*}
and the validity of \eqref{eq:mm_rate} for the general method of moments estimator follows. 

\subsubsection*{Step 3. Statistical analysis of complex method of moment estimator}
In the following, we consider the complex method of moment estimator $\hat \mu_{t,m}= \hat\mu_{t,m,\CC}^{\textup{MM}}$, defined in  \eqref{eq:method_of_moments_complex}. Specifically, we have $d = 2$. Using  Lemma \ref{lem:moment_to_measure} we observe that $m_{\alpha}(\hat \mu_{t,m}) = \hat m_{\alpha}$ for each $\alpha \in \{1, \dots, k\}$. In combination with Cauchy-Schwarz inequality  and \eqref{eq:moment_estimator_error}, it is evident that
\begin{align*}
   \EE[M_k^2(\hat \mu_{t,m}, \mu)] &\leq k \EE\left[\left(\sum_{\substack{\alpha=1}}^{k} |m_\alpha(\mu) - \hat m_{\alpha}|^2\right)\right] \leq C(\Omega, K,k)\left(t^{-1/2} + m^{-1/d}\right)^2.
\end{align*}
This confirms an upper bound for the moment difference. It remains to specify some $\rho>0$ such that the second term in \eqref{eq:mm_rate} admits a similar upper bound. To this end, note by Cauchy-Schwarz inequality that 
\begin{align}
   & \EE\left[W_k^{2k}(\hat \mu_{t,m}, \mu)\mathds{1}(M_{k}(\hat \mu_{t,m}, \mu)\geq \rho)\right]\leq  \EE\left[W_k^{4k}(\hat \mu_{t,m}, \mu)\right]^{1/2} \PP\left(M_{k}(\hat \mu_{t,m}, \mu)\geq \rho\right)^{1/2}.\label{eq:bound_WassersteinMoment}
\end{align}
Observe that the probability term on the right-hand side is upper bounded by 
\begin{align*}
   \PP\left( M_{k}(\hat \mu_{t,m}, \mu)\geq \rho\right)&\leq \PP\left(\exists \alpha \in \{1, \dots, k\} \colon |m_{\alpha}(\hat \mu_{t,m}) - m_{\alpha}(\mu)| \geq \rho/k\right)\\
   &\leq \sum_{\alpha = 1}^{k} \PP\left( |m_{\alpha}(\hat \mu_{t,m}) - m_{\alpha}(\mu)| \geq \rho/k\right)\\
   &\leq \sum_{\alpha = 1}^{k} \PP\left( m_{\alpha}(\hat \mu_{t,m})\geq m_{\alpha}(\mu)+ \rho/k\right) + \PP\left( m_{\alpha}(\hat \mu_{t,m})\leq m_{\alpha}(\mu) - \rho/k\right).
\end{align*}
Hence, if $\rho> \tilde \rho_1 \coloneqq k\max_{\mu\in \calU_k(\Theta), \alpha = 1, \dots, k}m_{\alpha}(\mu)$, it follows that $m_{\alpha}(\mu) - \rho/k<0$ and above display reduces to  
\begin{align*}
   \PP\left( M_{k}(\hat \mu_{t,m}, \mu)\geq \rho\right)&\leq \sum_{\alpha = 1}^{k} \PP\left( m_{\alpha}(\hat \mu_{t,m})\geq m_{\alpha}(\mu)+ \rho/k\right)\leq \sum_{\alpha = 1}^{k} \PP\left( m_{\alpha}(\hat \mu_{t,m})\geq  \rho/k\right).
\end{align*}
To control the right-hand side, recall that 
\begin{align}
   t m_{\alpha}(\hat \mu_{t,m}) =  t\hat m_{\alpha} \sim \textup{Poi}\left(t \sum_{i = 1}^{m}\psi_{\alpha}(\gamma_i)K\ast \mu(B_i)\right),\label{eq:PoissonConcentrationBound}
\end{align}
which implies for $\rho> \tilde \rho_2 \coloneqq 2k\max_{\mu\in \calU_k(\Theta), \alpha = 1, \dots, k}\sum_{i = 1}^{m}\psi_{\alpha}(\gamma_i)K\ast \mu(B_i)$ via standard exponential tail bounds for Poisson distributions that 
\begin{align*}
   \PP\left( m_{\alpha}(\hat \mu_{t,m})\geq  \rho/k\right)%
   &\leq  \exp\left(-t \frac{(\rho/k - \sum_{i = 1}^{m}\psi_{\alpha}(\gamma_i)K\ast \mu(B_i))^2}{\rho/k}\right)\\
   &\leq  \exp\left(-t \left(\rho/k - 2\sum_{i = 1}^{m}\psi_{\alpha}(\gamma_i)K\ast \mu(B_i)\right)\right)\\
   &\leq \exp\left(-t(\rho  -\tilde \rho_2)/k\right).
\end{align*}
In particular, we conclude for $\rho\coloneqq 2\max(\tilde \rho_1, \tilde \rho_2) = C(\Theta, d, K,k)$ that 
\begin{align}\label{eq:momemt_tail_bound_prob}
   \PP\left( M_{k}(\hat \mu_{t,m}, \mu)\geq \rho\right)^{1/2} \leq \sqrt{k}\exp(-t \cdot C(\Omega, \Theta, K, k)) \leq C(\Omega, \Theta, K,k)t^{-1}.
\end{align}
To finish the proof of the assertion for the complex method of moments, we need to show that the first expectation in \eqref{eq:bound_WassersteinMoment} is bounded by a constant that does not depend on $t$ and $m$. To confirm this, note by our bound on the atoms of a measure in terms of its complex moment  (Lemma~\ref{lem:moment_to_measure}) and our concentration bound in \eqref{eq:PoissonConcentrationBound} that 
\begin{align}
   &\EE\left[W_k^{4k}(\hat \mu_{t,m},  \mu) \right]\notag \\
   = \;&\EE\left[W_k^{4k}(\hat \mu_{t,m},  \mu)\cdot \mathds{1}(\max(|\hat m_1|, \dots, |\hat m_k|)\leq \rho/k ) \right]\notag \\*
   &+%
	\sum_{l = 1}^\infty \EE\left[W_k^{4k}(\hat \mu_{t,m},  \mu)\cdot \mathds{1}(\max(|\hat m_1|, \dots, |\hat m_k|)\in  (l\rho/k,(l+1)\rho/k])\right]\notag \\
   \leq \;& \EE\left[2^{4k-1}\left(W_k^{4k}(\hat \mu_{t,m}, \delta_0) + W_k^{4k}(\delta_0, \mu)\right)\mathds{1}(\max(|\hat m_1|, \dots, |\hat m_k|)\leq \rho/k)\right]+\notag\\*
   & +\sum_{l = 1}^\infty \EE\left[ 2^{4k-1}\left(W_k^{4k}(\hat \mu_{t,m}, \delta_0) + W_k^{4k}(\delta_0, \mu)\right)\cdot \mathds{1}(\max(|\hat m_1|, \dots, |\hat m_k|)\in  (l\rho/k,(l+1)\rho/k]) \right]\notag \\
   \leq \;& C(k) \sum_{l = 1}^{\infty} \left(\rho^{4k^2} (l+1)^{4k^2} + C(\Theta,k)\right)k \exp\left(-t\left[\frac{l \rho}{k} - \frac{\tilde\rho_2}{k}\right]\right) \notag \\ 
   \leq \;& C(\Omega, k)\sum_{l =2}^\infty \left((l+1)^{4k^2} + C(\Theta,k)\right)\exp\left(-\left[\frac{l \rho}{k} - \frac{\tilde\rho_2}{k}\right]\right)= C(\Omega,\Theta, k,\rho) = C(\Omega,\Theta, K,k)<\infty.\notag
\end{align}
Combining the above display with \eqref{eq:bound_WassersteinMoment} and \eqref{eq:momemt_tail_bound_prob} yields the assertion for the complex method of moments estimator.

\subsubsection*{Step 4. Statistical analysis of real method of moment estimator}
In this last step we analyze the real method of moment estimator $\hat \mu_{t,m}= \hat \mu_{t,m,\RR}^{\textup{MM}}$, which we introduced in \eqref{eq:method_of_moments_real_line}. For the analysis recall that the real estimator is constructed by  first computing the complex method of moments estimator, $\hat \mu_{t,m,\CC}^{\textup{MM}}$, based on real-line based moment estimates $(\hat m_1, \dots, \hat m_k)$. Following along the lines of step 3.\ we note that all bounds immediately carry over, with the slight difference that the squared discretization error is order $m^{-2}$ due to the univariate setting ($d = 1$). 

Based on our black-box bound for estimators (Corollary \ref{cor:blackbox_bound}), we thus infer that the complex method of moment $\hat \mu_{t,m,\CC}^{\textup{MM}}$ achieves for $\epsilon^2\coloneqq C(t^{-1/2} + m^{-1})^2$ a certain bound in recovering the underlying measure $\mu$ with respect to the Wasserstein distance and its local counterpart from \eqref{eq:local_Wasserstein}. Since $\mu$ is concentrated on the real line it follows that projecting the atoms of $\hat \mu_{t,m,\CC}^{\textup{MM}}$ never enlarges the (local) Wasserstein risk, see Lemma \ref{lem:WassersteinProjectionRealLine} below. In consequence, the real method of moment estimator achieves a (local) Wasserstein risk that is no worse than the complex method of moments estimator based on moment estimators for the real line, and  thus finishes the proof. \qed 

\begin{lemma}\label{lem:WassersteinProjectionRealLine}
	For any pair of probability measures $\mu, \nu \in \calP(\CC)$ it holds 
	\begin{align*}
		W_1(\Re_{\#}\mu, \Re_{\#}\nu) \leq W_1(\mu, \nu)
	\end{align*}
	\end{lemma}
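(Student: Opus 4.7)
The plan is to exploit the fact that the real-part map $\Re: \CC \to \RR$ is $1$-Lipschitz, combined with the standard observation that pushing forward a coupling through a pair of Lipschitz maps yields a valid coupling of the push-forward measures whose cost is controlled by the original cost.

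Concretely, I would start by fixing an optimal coupling $\pi \in \Pi(\mu, \nu)$ achieving the $W_1$ cost, whose existence follows from standard compactness arguments in optimal transport. Then I would form the push-forward $\tilde\pi \coloneqq (\Re, \Re)_{\#}\pi \in \calP(\RR \times \RR)$, and check in one line that its marginals are $\Re_{\#}\mu$ and $\Re_{\#}\nu$, so that $\tilde\pi \in \Pi(\Re_{\#}\mu, \Re_{\#}\nu)$.

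The final step is to use this $\tilde\pi$ as a feasible coupling in the definition of $W_1(\Re_{\#}\mu, \Re_{\#}\nu)$ and apply the change-of-variables formula together with the pointwise bound $|\Re(z) - \Re(z')| \leq |z-z'|$ valid for all $z,z' \in \CC$. This gives
\begin{align*}
W_1(\Re_{\#}\mu, \Re_{\#}\nu) \leq \int |x-y|\,\dif\tilde\pi(x,y) = \int |\Re(z)-\Re(z')|\,\dif\pi(z,z') \leq \int |z-z'|\,\dif\pi(z,z') = W_1(\mu,\nu),
\end{align*}
which concludes the argument. There is no real obstacle here; this is a routine Lipschitz push-forward contraction property of $W_1$, and the result would follow equally for any $1$-Lipschitz map between Polish spaces in place of $\Re$.
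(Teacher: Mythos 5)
Your argument is correct and is essentially the proof the paper has in mind: the paper simply outsources the Lipschitz-pushforward contraction property of $W_1$ to a cited reference (Lemma A.1 of Nies et al., 2021) and combines it with the pointwise bound $|\Re(z)-\Re(z')| \leq |z-z'|$, exactly as you do. You have just unpacked that cited lemma by constructing the pushforward coupling $(\Re,\Re)_{\#}\pi$ explicitly, which is the standard and correct argument.
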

	\begin{proof}
		This directly follows from \citet[Lemma A.1]{nies2021transport} in conjunction with the fact that $|\Re(x) - \Re(y)|\leq |x-y|$ for any $x,y\in \CC$. 
	\end{proof}

\section{Proofs for Section~\ref{sec:mle}}
\label{app:pfs_mle}

\subsection{Proof of Theorem \ref{thm:momentL2-bounds_all}}
\label{app:pf_thm_momentL2-bounds_all}
	We prove the two assertions for Gaussian and general compact kernels by means of representing the $L^2$-difference in terms of suitable orthonormal basis which are intimately linked to the underlying moments.  
	This approach is inspired by the proof of Lemma 4.4 of \cite{doss2023}, which proves the statement for the Gaussian kernel on the real line. 

	\noindent 
{\bf{Part $(i)$. Gaussian Kernels. }}	The proof for the Gaussian kernel is divided into three steps. We first show the Assertion for the setting where  $\Sigma$ is the identity matrix, then we extend it to diagonal matrices, and finally we prove the assertion for general symmetric positive semi-definite matrices. %

{\bf{Step 1.1. Standard Normal Kernel. }}
The first step is concerned with the standard Gaussian kernel, i.e., $K(x) = (2\pi)^{d/2}\exp(-\|x\|^2/2)$, for which we lift the argument by \citet[Lemma 4.4]{doss2023} up to the multivariate setting. Specifically, for $j \in \NN_0^d$ we set $\Gamma_j(y) \coloneqq \prod_{i = 1}^{d} \gamma_{j_i}(y)$ where we denote for $n\in \NN_0$ the univariate Gaussian weighted Hermite polynomial by $\gamma_n(y) \coloneqq (\sqrt{2}\phi_1(\sqrt{2}y)/{n!})^{1/2} H_n(\sqrt{2}y)$, here $H_n$ denotes the probabilists Hermite polynomial and $\phi_1$ is the univariate standard Gaussian kernel. In particular, since $\{\gamma_n\}_{n\in \NN_0}$ forms an orthonormal basis for $L^2(\RR)$ (see, e.g., \citealt[Equation 22.2.15]{abramowitz1948handbook} or \citealt{johnston2014weighted}) it follows by Lemma \ref{lem:productONB} that $\{\Gamma_j\}_{j\in \NN_0^d}$ is an orthonormal basis for $L^2(\RR^d)$. Hence, by squared integrability of  $K\ast \xi$ over $\RR^d$, it follows for $\xi \in \{\mu, \nu\}$ that $K\ast \xi(\cdot) = \sum_{j\in \NN_0^d} a_j(\xi) \Gamma_j(\cdot)$, where the summation is understood in the $L^2$-sense and 
\begin{align*}
	\Gamma_j(\xi)&\coloneqq \langle \Gamma_j, K\ast \xi\rangle_{L^2(\RR^d)} = \EE_{Y\sim K\ast\xi}\left[ \Gamma_j(Y)\right] = \EE_{X\sim \xi}\left[ \Gamma_j\ast K(X)\right]. 
\end{align*}
Further, by Lemma \ref{lem:convolutionProduct}, the product representations of $\Gamma_j$ and $K$, and \citet[7.374.6, p. 803]{gradshteyn2014table} it holds  $$\Gamma_j\ast K(x) = \prod_{i = 1}^{d} \gamma_{j_i}\ast \phi_1(x_i) =  \prod_{i = 1}^{d} \frac{1}{2^{\frac{j_i+1}{2}}\pi^{1/4}}x_i^{j_i} \exp(-x_i^2/4).$$
This implies that 
\begin{align*}
	a_j(\xi) = \frac{1}{2^{\frac{\|j\|+d}{2}}\pi^{d/4}} \EE_{X\sim \xi}\left[X^j \exp(-\|X\|^2/4)\right].
\end{align*}
Altogether, we obtain that 
\begin{align*}
	&\left\|K\ast(\mu - \nu)\right\|_{L^2(\RR^d)}^2\\
	=& \sum_{j \in \NN_0^d} \frac{1}{2^{{\|j\|+d}}\pi^{d/2}} \left| \EE_{X\sim \mu}\left[X^j \exp(-\|X\|^2/4)\right] -  \EE_{X\sim \nu}\left[X^j \exp(-\|X\|^2/4)\right]\right|^2\\
	\geq& \sum_{j \in \{0, \dots, 2k-1\}^d} \frac{1}{2^{{\|j\|_1+d}}\pi^{d/2}} \left| \EE_{X\sim \mu}\left[X^j \exp(-\|X\|^2/4)\right] -  \EE_{X\sim \nu}\left[X^j \exp(-\|X\|^2/4)\right]\right|^2
\end{align*}
It remains to relate the right-hand side to the moment difference of order $2k-1$ between the $k$-atomic probability measures $\mu$ and $\nu$. To this end, we employ the following result. 

\begin{lemma}%
	\label{lem:prod_interpolation_Gaussians}
	For any pair of $k$-atomic probability measures $\mu, \nu \in \calP_k(B_\infty(0,r))$ and $j \in \NN_0^d$ there exist polynomials $P_{j_1}, \dots, P_{j_d}\colon \RR \to \RR$ of degree at most $2k-1$ whose coefficients are upper bounded by a constant $C(j, k,r)>0$ such that for $X\sim \xi$ with $\xi \in \{\mu, \nu\}$ it holds
	\begin{align*}
		\EE\left[ X^j\right] = \EE\left[\exp\left(-\frac{1}{4}\|X\|^2\right) \prod_{i = 1}^{d}P_{j_i}(X_i)\right]. 
	\end{align*}
	\end{lemma}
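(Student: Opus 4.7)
The identity $\EE_\xi[X^j]=\EE_\xi[\exp(-\tfrac{1}{4}\|X\|^2)\prod_i P_{j_i}(X_i)]$ factorizes nicely over coordinates since both $X^j=\prod_i X_i^{j_i}$ and $\exp(-\tfrac 1 4\|X\|^2)=\prod_i \exp(-\tfrac 1 4 X_i^2)$ do. The key observation is that if the polynomial $P_{j_i}$ satisfies the \emph{pointwise} identity $P_{j_i}(x)\exp(-x^2/4)=x^{j_i}$ for every value $x\in\RR$ that arises as the $i$-th coordinate of a support point of either $\mu$ or $\nu$, then $\prod_i P_{j_i}(X_i)\exp(-X_i^2/4)=X^j$ almost surely under both $\mu$ and $\nu$, and taking expectations yields the claim. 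Thus the task reduces to a coordinate-wise polynomial interpolation problem with uniformly controlled coefficients.

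\paragraph{Construction via interpolation.} For each coordinate $i\in\{1,\dots,d\}$, let $S_i\subseteq[-r,r]$ denote the union of the $i$-th coordinates of the atoms of $\mu$ and of $\nu$. Since $\mu,\nu\in\calP_k(B_\infty(0,r))$, we have $|S_i|\leq 2k$. If $|S_i|<2k$, pad $S_i$ with arbitrary extra distinct nodes in $[-r,r]$ (for instance, Chebyshev nodes) to obtain a set $\widetilde S_i=\{s^{(i)}_1,\dots,s^{(i)}_{2k}\}$ of exactly $2k$ distinct points. Define $P_{j_i}$ as the unique polynomial of degree at most $2k-1$ interpolating the smooth function $g_{j_i}(x):=x^{j_i}\exp(x^2/4)$ at the nodes of $\widetilde S_i$. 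Since every atom of $\mu$ and $\nu$ has $i$-th coordinate in $S_i\subseteq\widetilde S_i$, the pointwise identity $P_{j_i}(x)\exp(-x^2/4)=x^{j_i}$ holds for every such $x$, and the desired expectation identity follows as described above.

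\paragraph{Main obstacle: uniform coefficient bounds.} The hardest part of the proof is establishing the bound $|c_l|\leq C(j,k,r)$ for the monomial coefficients of $P_{j_i}(x)=\sum_l c_l x^l$, independent of the (possibly very close) locations of the nodes in $\widetilde S_i$. A naive Lagrange basis expansion has coefficients involving $(s^{(i)}_m-s^{(i)}_{m'})^{-1}$, which blow up when two nodes cluster. To bypass this, I will use the Newton divided-difference form:
\[
P_{j_i}(x)=\sum_{m=1}^{2k}g_{j_i}[s^{(i)}_1,\dots,s^{(i)}_m]\prod_{m'<m}(x-s^{(i)}_{m'}).
\]
By the mean-value representation of divided differences, for smooth $g_{j_i}$ one has $|g_{j_i}[s^{(i)}_1,\dots,s^{(i)}_m]|\leq \|g_{j_i}^{(m-1)}\|_{L^\infty([-r,r])}/(m-1)!$, which is bounded by a constant depending only on $j_i$, $k$, and $r$, uniformly in the node configuration. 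Expanding the Newton basis into monomials produces elementary symmetric polynomials of the nodes, bounded by $\binom{m-1}{p}r^p$ since $|s^{(i)}_{m'}|\leq r$. Summing over $m$ yields a uniform bound $|c_l|\leq C(j_i,k,r)$ for each monomial coefficient of $P_{j_i}$.

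\paragraph{Remaining steps.} Taking the product of coordinate-wise polynomials and using that $X\sim\xi$ has each coordinate in $S_i$ almost surely, a short direct computation then gives
\[
\EE_\xi[X^j]=\EE_\xi\Bigl[\prod_{i=1}^{d}P_{j_i}(X_i)\exp(-X_i^2/4)\Bigr]=\EE_\xi\Bigl[\exp(-\tfrac 1 4\|X\|^2)\prod_{i=1}^{d}P_{j_i}(X_i)\Bigr]
\]
for both $\xi=\mu$ and $\xi=\nu$, completing the proof with the multivariate coefficient bound obtained by multiplying the coordinate-wise bounds over the (fixed, $j$-dependent) number of relevant monomials.
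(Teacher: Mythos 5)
Your proof is correct and follows essentially the same route as the paper's: factorize the identity coordinate-wise, interpolate $x^{j_i}e^{x^2/4}$ at the at-most-$2k$ marginal support points, and conclude from the resulting pointwise identity on the support. The only difference is that where the paper delegates the uniform coefficient bound to the cited proof of Lemma 4.4 in Doss et al.\ (2023), you supply the argument directly via the Newton divided-difference form and the Hermite--Genocchi mean-value representation, which is precisely the content of that citation.
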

Consequently, using Lemma \ref{lem:prod_interpolation_Gaussians} there exists a positive constant $C= C(d,j,k,r)>0$, where we recall that $\Theta\subseteq B_\infty(0,r)$, such that 
\begin{align*}
	&\quad \left|m_j(\mu)- m_j(\nu)\right|^2 \\
	&= \left| \EE_{X\sim \mu}\left[P_j(X)\exp(-\|X\|^2/4)\right] -  \EE_{X\sim \nu}\left[P_j(X)\exp(-\|X\|^2/4)\right]\right|^2\\
	&\leq C \sum_{j' \in \{0, \dots, 2k-1\}^d} \left|\EE_{X\sim \mu}\left[X^{j'}\exp(-\|X\|^2/4)\right] -  \EE_{X\sim \nu}\left[X^{j'}\exp(-\|X\|^2/4)\right]\right|^2\\
	&\leq C 2^{{\|j\|_1+d}}\pi^{d/2}\left\|K\ast (\mu - \nu)\right\|^2_{L^2(\RR^d)}.
\end{align*}
Taking the square-root on both sides and summing over $j\in \NN_0^d$ with $\|j\|_1\leq 2k-1$ yields the claim for the multivariate standard Gaussian kernel. 
\textbf{Step 1.2. Diagonal Covariance Matrices.}
We now prove the assertion for $K_{\Sigma}(x) \coloneqq (2\pi)^{-d/2}\det(\Sigma)^{-1/2}\exp(-\frac{1}{2}x^\top \Sigma^{-1} x)$ where $\Sigma = \textup{diag}(\sigma_1, \dots, \sigma_d)$ for $\sigma_1, \dots, \sigma_d>0$ using a reduction onto the identity matrix case. For clarity of the argument  we emphasize the covariance $\Sigma$ in the notation of the kernel. 
We start by observing that 
\begin{align*}
	&\quad \|K_\Sigma \ast (\mu - \nu)\|_{L^2(\RR^d)}^2\\
	&= \int_{\RR^d} \left|\int_{B_{\infty}(0,r)}{(2\pi)^{-d/2}|\det(\Sigma)|^{-1/2}}\exp\left(-\frac{1}{2}(x-\theta)^\top\Sigma^{-1}(x-\theta)\right)\dif(\mu - \nu)(\theta)\right|^2\dif x\\
	&= \int_{\RR^d} \left|\int_{\Sigma^{-1/2}B_{\infty}(0,r)} (2\pi)^{-d/2}\exp\left(-\frac{1}{2}\|y - \eta\|^2\right)\dif(\Sigma^{-1/2}_{\#}\mu - \Sigma^{-1/2}_{\#}\nu)\right|^2\dif y\\
	&= \|K_{\textup{Id}}\ast (\Sigma^{-1/2}_{\#}\mu - \Sigma^{-1/2}_{\#}\nu)\|^2_{L^2(\RR^d)}
\end{align*}
where we used in the second to last equality the substitution $y = \Sigma^{-1/2}x$ and denote by $\Sigma^{-1/2}_{\#}\xi = \sum_{i =1}^{k} w_i \delta_{\Sigma^{-1/2}\theta_i}$ the push-forward of $\xi = \sum_{i =1}^{k} w_i \delta_{\theta_i}$ under $\Sigma^{-1/2}$. Hence, based on step 1.1.\ it follows that there exists a positive constant $C =C(\Sigma, d,k,r) = C(\max_{\eta \in \Sigma^{-1/2}B_{\infty}(0,r)}\|\eta\|_\infty, d,k)>0$ such that 
\begin{align*}
	M_{2k-1}(\Sigma^{-1/2}_{\#}\mu, \Sigma^{-1/2}_{\#}\nu) \leq C \|K_\Sigma \ast (\mu - \nu)\|_{L^2(\RR^d)}.
\end{align*}
To relate the left-hand side with the moment difference between $\mu$ and $\nu$ we note for $j\in \NN_0^d$ with $\|j\|_1\leq 2k-1$ and $\xi = \sum_{i = 1}^{k}w_i \delta_{\theta_i}\in \{\mu, \nu\}$  that 
\begin{align*}
	m_j(\Sigma^{-1/2}_{\#}\xi) &= \sum_{i =1}^{k} w_i \prod_{h =1}^{d} (\sigma_h^{-1/2}\theta_{i,h})^{j_h} = \prod_{h = 1}^{d}\sigma_h^{-j_h/2} m_j(\xi) = \sigma^{-j/2} m_j(\xi),  
\end{align*}
where we set $\sigma\coloneqq (\sigma_1, \dots, \sigma_d)$. 
Hence, for the squared moment difference between $\Sigma^{-1/2}_{\#}\mu$ and $\Sigma^{-1/2}_{\#}\nu$ it follows that 
\begin{align*}
	M_{2k-1}(\Sigma^{-1/2}_{\#}\mu, \Sigma^{-1/2}_{\#}\nu) = &\sum_{\substack{j\in \NN_0^d, \|j\|_1\leq 2k-1}} \left|m_j(\Sigma^{-1/2}_{\#}\mu) - m_{j}(\Sigma^{-1/2}_{\#}\nu)\right|\\
	=&\sum_{j \in \NN_0, \|j\|_1\leq 2k-1} \sigma^{-j/2}\left|m_{j}(\mu) - m_j(\nu)\right| \\
	\geq \, &(\|\sigma\|_\infty^{1}\vee \|\sigma\|_\infty^{2k-1})^{-1/2} M_{2k-1}(\mu, \nu),%
\end{align*}
where we used in the inequality that $m_0(\mu)  =m_0(\nu)$. Overall, this asserts that 
\begin{align}\label{eq:GaussianMomentComparison_Diagonal}
	M_{2k-1}(\mu, \nu) \leq C \cdot (\|\sigma\|_\infty^{1}\vee \|\sigma\|_\infty^{2k-1})^{1/2}\|K_\Sigma \ast (\mu - \nu)\|_{L^2(\RR^d)}.
\end{align}
\textbf{Step 1.3. General Positive Definite Covariances.}
To extend the claim to general positive definite covariances $\Sigma$ we consider the spectral decomposition $\Sigma = Q^\top \tilde \Sigma Q$ 
where $Q$ is orthonormal and $\tilde \Sigma = \textup{diag}(\sigma_1^2, \dots, \sigma_d^2)$
 is diagonal. The assertion now follows from the following derivation, for which we provide the proof to all individual step below, 
\begin{align}
	\|K_{\Sigma}\ast (\mu - \nu)\|_{L^2(\RR^d)} &= \|K_{\tilde \Sigma}\ast (Q_{\#}\mu - Q_{\#}\nu)\|_{L^2(\RR^d)} \label{eq:GaussianMomentComparisonProofBound1}\\
	&\gtrsim_{\Sigma, d,k,r} M_{2k-1}(Q_{\#}\mu, Q_{\#}\nu)\label{eq:GaussianMomentComparisonProofBound2}\\
	&\gtrsim_{k,d} \sup_{\eta \in \SS^d} M_{2k-1}((Q_{\#}\mu)^{\eta}, (Q_{\#}\nu)^{\eta})\label{eq:GaussianMomentComparisonProofBound3}\\
	&= \sup_{\eta \in \SS^d} M_{2k-1}(\mu^{\eta}, \nu^{\eta})\label{eq:GaussianMomentComparisonProofBound4}\\
	&\gtrsim_{k} M_{2k-1}(\mu, \nu).\label{eq:GaussianMomentComparisonProofBound5}
\end{align}
To show the first equality we first note for $\xi= \sum_{i = 1}^{k} w_i \delta_{\theta_i}\in \{\mu, \nu\}$ by $\Sigma^{-1} =  Q^\top \tilde \Sigma^{-1} Q$ that 
\begin{align}
	K_\Sigma\ast \xi(x) &= \sum_{i  =1}^{k} p_k \exp\left( -\frac{1}{2}(x-\theta_i)^\top\Sigma^{-1}(x - \theta_i) \right)\left((2\pi)^{d} \det(\Sigma)\right)^{-1/2}\notag \\
	&= \sum_{i  =1}^{k} p_k \exp\left( -\frac{1}{2}(Qx-Q\theta_i)^\top\tilde \Sigma^{-1}(Qx - Q\theta_i) \right)\left((2\pi)^{d} \det(\tilde \Sigma)\right)^{-1/2}\notag \\
	&= (K_{\tilde \Sigma}\ast (Q_{\#}\xi))(Qx)\label{eq:KernelOrthonalTrafo_Formula}
\end{align}
Consequently, the Equality in \eqref{eq:GaussianMomentComparisonProofBound1} follows from 
\begin{align*}
	\|K\ast (\mu - \nu)\|^2_{L^2(\RR^d)} &= \int_{\RR^d} \left| \int K(x - y)\dif (\mu - \nu)(y)\right|^2\dif x\\
	&= \int_{\RR^d}\left| \int K(Q \tilde x - y) \dif(\mu - \nu)(y) \right|^2\dif \tilde x\\
	&= \int_{\RR^d}\left| \int K(Q \tilde x - Q y) \dif(\mu - \nu)(Q^{-1} y) \right|^2\dif \tilde x\\
	&= \int_{\RR^d}\left| \int K(\tilde x - \tilde y) \dif(Q_{\#}\mu - Q_{\#}\nu)(y) \right|^2\dif \tilde x \\
	&= \|K\ast (Q_{\#}\mu - Q_{\#}\nu)\|^2_{L^2(\RR^d)}.
\end{align*}
Inequality \eqref{eq:GaussianMomentComparisonProofBound2} follows from step 1.2. Inequalities \eqref{eq:GaussianMomentComparisonProofBound3} and \eqref{eq:GaussianMomentComparisonProofBound5}  follow moment difference bounds between measures and sliced measures from Lemma \ref{lem:sliced_multivariate_moment_bound} and \ref{lem:moment_sliced_moment}, respectively. Finally, for Equality \eqref{eq:GaussianMomentComparisonProofBound4} note for $\eta \in \SS^{d}$ and $\xi \in \{\mu, \nu\}$ for $\ell \in \NN$ that 
\begin{align*}
	m_{\ell}(\xi^\eta)  = \EE_{X\sim \xi}[\langle X, \eta \rangle ^\ell] = \EE_{X\sim \xi}[\langle Q X, Q\eta \rangle ^\ell] = \EE_{Y\sim Q_{\#}\xi}[\langle Y, Q\eta \rangle ^\ell] = m_{\ell}((Q_{\#}\xi)^{Q\eta}),
\end{align*}
which yields that 
\begin{align}
	\sup_{\eta \in \SS^d} M_{2k-1}(\mu^{\eta}, \nu^{\eta}) =\sup_{\eta \in \SS^d} M_{2k-1}((Q_{\#}\mu)^{Q\eta}, (Q_{\#}\nu)^{Q\eta}) =\sup_{\eta \in \SS^d}M_{2k-1}((Q_{\#}\mu)^{\eta}, (Q_{\#}\nu)^{\eta}).\label{eq:slicedMomentDifferenceInvarianceOrthonormal}
\end{align}
This proves the moment comparison bound for Gaussian kernels.

\noindent
{\bf{Part $(ii)$. Compact Kernels.}}
We will prove the following more general bound: 
If $\Theta$ is a bounded set contained in $B_\infty(0,r)$ for $r > 0$ 
and the kernel $K$ is supported in $B_\infty(0,\tau)$ for $\tau>0$,  
then there exists a constant $C = C(d, K,k) > 0$ such that:
$$	M_{2k-1}(\mu,\nu) \leq C \Big((r+\tau)^{d/2}\vee (r+\tau)^{k+d/2}\Big)\|K\ast (\mu - \nu)\|_{L^2(B_\infty(0,r+\tau))}.
$$
	Throughout the proof of this assertion, we write $ R= (r+\tau)$. Take for $L^2([-R,R]^d)$ the orthonormal basis $\{\phi_j^R\}_{j\in \NN_0^d}$ of renormalized Legendre polynomials (Lemma \ref{lem:LegendrePolynomialOrthonormal}) and observe that 
	\begin{align*}
		  \norm{K\ast (\mu-\nu)}_{L^2([-R,R]^d)}^2\geq \sum_{\substack{j \in \NN_0^d, \|j\|_1\leq 2k-1}} \langle \phi_j^R, K\ast (\mu-\nu)\rangle_{L^2([-R,R]^d)}^2,
	\end{align*}
	where the summand associated to the index $j\in\NN_0^d$ is given by 
	\begin{align*}
		  &\,\langle \phi_j^R, K\ast (\mu-\nu)\rangle_{L^2([-R,R]^d)}\\
		  =&\,\int_{[-R,R]^d}2^{-|j|} \prod_{i = 1}^{d}\left( \sqrt{j_i+1/2} \sum_{\substack{l_i = 0\\ j_i-l_i \text{ even}}}^{j_i} R^{-l_i-1/2} (-1)^{(j_i-l_i)/2} \binom{j_i}{(j_i-l_i)/2}\binom{j_i+l_i}{j_i}x_i^{l_i} \right) \\
		  & \quad \quad \times(K\ast \mu(x) - K\ast \nu(x))\dif x\\
		  =&\, 2^{-|j|} \!\! \! \!  \sum_{\substack{l\in \NN_0^d, l\leq j\\ j-l\in 2\NN_0^d}} \! \! R^{-|l|-d/2}(-1)^{|j-l|/2}\prod_{i = 1}^{d} \left[\sqrt{j_i+1/2}\binom{j_i}{(j_i-l_i)/2}\binom{j_i+l_i}{j_i}\right] (m_l(K\ast \mu) - m_l(K\ast \nu)),
	\end{align*}
	where we used for the last equality that $\supp(K\ast \mu+K\ast \nu)\subseteq [-R,R]^d$. 
Next enumerate the vectors $l\in \NN^d_0$ with $\|l\|_1\leq 2k-1$ by $v_1, \dots, v_{h(d,k)}$ for appropriate $h(d,k)\in \NN$ such that $\|v_p\|_1\geq \|v_q\|_1$ if $p\leq q$. Further, define the square-matrix $A = (a_{pq})_{p,q=1}^{h(d,k)}$ of size equal to the number of vectors $l\in \NN^d_0$ with $\|l\|_1\leq k$ with entries 
\begin{align*}
	a_{pq} &= \begin{cases}
	0 & \text{ if } p<q \text{ or } v_p - v_q\notin 2\NN_0^d,\\
	2^{-\|v_p\|_1}  (-1)^{\|v_p-v_q\|_1/2}\prod_{i = 1}^{d} \left[\sqrt{v_{p,i}+1/2}\binom{v_{p,i}}{(v_{p,i}-v_{q,i})/2}\binom{v_{p,i}+v_{q,i}}{v_{p,i}}\right]   & \text{ if } p\geq q \text{ and } v_p - v_q \in 2\NN_0^d,
	\end{cases}
\end{align*}

Note that $A$ is an upper triangle matrix with positive entries along the diagonal and that 
\begin{align*}
	&\quad \sum_{\substack{j \in \NN_0^d\\ |j|\leq 2k-1}} \langle \phi_j^R, K\ast(\mu-\nu)\rangle_{L^2([-R,R]^d)}^2\\
	&= \|A (R^{-\|v_{1}\|_1-d/2} m_{v_1}(K\ast(\mu-\nu)), \dots, R^{-\|v_{h(d,k)}\|_1-d/2}m_{v_{h(d,k)}}(K\ast(\mu-\nu)))\|_2^2\\
	&\geq \sigma_1(A)\|(R^{-\|v_{1}\|_1-d/2}m_{v_1}(K\ast(\mu-\nu)), \dots, R^{-\|v_{h(d,k)}\|_1-d/2}m_{v_{h(d,k)}}(K\ast(\mu-\nu)))\|_2^2\\
	&\geq \sigma_1(A)(R^{-d} \wedge R^{-2(2k-1)-d})\|(m_{v_{1}}(K\ast(\mu-\nu)), \dots, m_{v_{h(d,k)}}(K\ast(\mu-\nu)))\|_2^2.
\end{align*}
To reduce the right-hand side to the moment difference between the probability measures $\mu$ and $\nu$ observe for $j\in \NN_0^d$ with $\|j\|_1\leq 2k-1$ using Fubini's theorem that
\begin{align*}
	m_j(K\ast \mu) = \int x^j (K\ast \mu)(x)\dif x &=\int\int  x^j K(x-\theta)\dif x \dif\mu(\theta)\\
	&= \int \int (x+\theta)^j  K(x)\dif x \dif\mu(\theta)\\
	&= \sum_{\substack{l\in \NN_0^d\\ l\leq j}} \binom{j}{l}m_{j-l}(K) m_{l}(\mu).%
\end{align*}
Hence, upon defining the upper triangle matrix $M = (m_{p,q})_{p,q = 1}^{h(d,k)}$ with entries given by $m_{p,q} = \mathds{1}(v_p\geq v_q)\binom{v_p}{v_q}m_{v_p-v_q}(K)$ it follows that 
\begin{align*}
	\|(m_{v_{1}}(K\ast (\mu-\nu)), \dots, m_{v_{h(d,k)}}(K\ast (\mu-\nu)))\|_2^2  &= \|M(m_{v_{1}}(\mu-\nu), \dots, m_{v_{h(d,k)}}(\mu-\nu))\|_2^2\\
	& \geq \sigma_1(M) \|(m_{v_{1}}(\mu-\nu), \dots, m_{v_{h(d,k)}}(\mu-\nu))\|_2^2.
\end{align*}
Combining all inequalities, we infer from the relation between $\ell^1$ and $\ell^2$ norm on $\RR^{h(d,k)}$ that 
\begin{align*}
	M_{2k-1}(\mu, \nu) &=\|(m_{v_{1}}(\mu-\nu), \dots, m_{v_{h(d,k)}}(\mu-\nu))\|_1 \\&=\sqrt{h(d,k)}\|(m_{v_{1}}(\mu-\nu), \dots, m_{v_{h(d,k)}}(\mu-\nu))\|_2 \\
	& \leq\sqrt{h(d,k)\sigma_1^{-1}(M) \sigma_1^{-1}(A)}(R^{d/2} \vee R^{2k-1+d/2}) \|K\ast(\mu-\nu)\|_{L^2([-R,R]^d)}\\
	&\lesssim_{d,K,k} (R^{d/2} \vee R^{2k-1+d/2}) \|K\ast(\mu-\nu)\|_{L^2([-R,R]^d)}.
\end{align*}
The precise claim in the assertion now follows by choosing $\tau = 1$ and assuming that  $r\geq 1$. Then it holds $(R^{d/2} \vee R^{2k-1+d/2}) =(1+r)^{2k-1+d/2} \leq (2r)^{2k-1+d/2}\lesssim_{k,d} r^{2k-1+d/2}$.\qed

\subsubsection{Proof of Lemma \ref{lem:prod_interpolation_Gaussians}}
	Let $\mu_i \coloneqq (P_{i})_{\#}\mu$ and $\nu_i\coloneqq (P_{i})_{\#}\nu$ be the projection onto the $i$-th marginal of $\mu$ and let $S_i \coloneqq \supp(\mu_i)\cup\supp(\nu_k)\subseteq [-r,r]$ be their joint support. Note that $s_i \coloneqq |S_i|\leq 2k$. Then, from the proof of Lemma 4.4 of \cite{doss2023} it follows that there exist a polynomial $P_{j_i}$ of degree $s_i-1$ whose coefficients are upper bounded by a constant $C(j_i, k,r)>0$ such that $t_i^{j_i} = P_{j_i}(t_i)\exp(-\frac{1}{4}t_i^2)$ for all $t_i\in S_i$. Hence, for $t\in S\coloneqq \otimes_{i = 1}^{d} S_i$ it holds that $$t^j = \prod_{i = 1}^{d} P_{j_i}(t_i)\exp\left(-\frac{1}{4}t^2\right)= \exp\left(-\frac{1}{4}\|t\|^2\right) \prod_{i =1}^{d} P_{j_i}(t_i),$$
	which yields since $\supp(\mu)\cup \supp(\nu)\subseteq S$ for $X\sim \xi$ with $\xi\in \{\mu, \nu\}$ the desired equality. \qed 

\subsection{Explicit Constants in Theorem~\ref{thm:momentL2-bounds_all}}
\label{app:pf_remark_constants}
Similar to the proof of \Cref{thm:momentL2-bounds_all}$(ii)$, we consider the setting where $\Theta$ is a bounded set contained in $B_\infty(0,r)$ for $r > 0$ 
and the kernel $K$ is supported in $B_\infty(0,\tau)$ for $\tau>0$ and $L^2$-integrable. For an upper bound on suppressed constant in terms of $K$ for $d = 1$ we note according to Lemma \ref{lem:bound_smallest_singular_value_M} below that $\sigma_1(M)^{-1} \lesssim_{k} \left(1 + \int |x|^k K(x) \dif x\right)^{k+1}$. 
This yields 
\begin{align*}
	M_{2k-1}(\mu, \nu) &\leq_k \sqrt{\frac{2k-1}{\sigma_1(A)\sigma_1(M)}}   \left((r+\tau)^{1/2}\vee (r+\tau)^{2k-1/2}\right) \|K\ast(\mu- \nu)\|_{L^2(\RR)}\\
	&\lesssim_k  \left(1 + \int |x|^{2k-1} K(x) \dif x\right)^{2k}\left((r+\tau)^{1/2}\vee (r+\tau)^{2k-1/2}\right)\|K\ast(\mu- \nu)\|_{L^2(\RR)}.%
\end{align*}

\begin{lemma}\label{lem:bound_smallest_singular_value_M}
	Let $K\colon \RR\to [0,\infty)$ be a probability kernel such that 
		$\int |x|^kK(x)\dif x<\infty$ for $k\in \NN$. Then, the smallest singular value of the matrix $M= (m_{l,h})_{l,h =0}^{k}$ with entries $m_{l,h} = \mathds{1}(l\geq h)\binom{l}{h}m_{l-h}(K)$ fulfills 
		\begin{align*}
			\sigma_1(M)^{-1} \leq \left(1+ \int |x|^kK(x)\dif x\right)^{k+1}2^{(k+1)^2}.
		\end{align*}
\end{lemma}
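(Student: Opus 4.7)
The matrix $M$ is lower triangular with diagonal entries $m_{l,l}=m_0(K)=1$, and in particular invertible, so $\sigma_1(M)^{-1}=\|M^{-1}\|_{\mathrm{op}}$. My plan is to diagonally rescale $M$ into a lower-triangular Toeplitz matrix whose inverse can be read off from a formal power series identity, and then control the entries of that inverse by a simple recursion.

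\textbf{Step 1: Reduction to a Toeplitz matrix.} Let $D=\mathrm{diag}(0!,1!,\dots,k!)$ and set $\tilde M = D^{-1}MD$. A direct computation gives
\begin{equation*}
\tilde M_{l,h}=\tfrac{h!}{l!}\mathds{1}(l\geq h)\binom{l}{h}m_{l-h}(K)=\mathds{1}(l\geq h)\frac{m_{l-h}(K)}{(l-h)!},
\end{equation*}
so $\tilde M$ is lower-triangular Toeplitz with $j$-th subdiagonal entry $\alpha_j:=m_j(K)/j!$. Since the inverse of a lower-triangular Toeplitz matrix is also lower-triangular Toeplitz, $\tilde M^{-1}$ has entries $(\tilde M^{-1})_{l,h}=c_{l-h}\mathds{1}(l\geq h)$ for a unique sequence $(c_j)_{j\geq 0}$, where $c_j$ is the $j$-th coefficient of the reciprocal power series
\begin{equation*}
\frac{1}{\phi(z)}=\sum_{j\geq 0}c_j z^j,\qquad \phi(z)=\sum_{j\geq 0}\alpha_j z^j.
\end{equation*}
Undoing the similarity, $(M^{-1})_{l,h}=\tfrac{l!}{h!}c_{l-h}$ for $l\geq h$.

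\textbf{Step 2: Bounding the $c_j$ via recursion.} From $\phi\cdot(1/\phi)=1$ we get $c_0=1$ and
$c_j=-\sum_{i=1}^{j}\alpha_i c_{j-i}$ for $1\leq j\leq k$.
Write $A:=1+\int|x|^kK(x)\dif x\geq 1$; since $|x|^i\leq 1+|x|^k$ for $0\leq i\leq k$, we have $|m_i(K)|\leq A$ and hence $|\alpha_i|\leq A/i!$ for $0\leq i\leq k$. I will show by induction that $|c_j|\leq (2A)^j$. For $j=0$ this is trivial, and assuming the bound through $j-1$,
\begin{equation*}
|c_j|\leq A\sum_{i=1}^{j}\frac{|c_{j-i}|}{i!}\leq A\sum_{i=1}^{j}\frac{(2A)^{j-i}}{i!}\leq (2A)^j\sum_{i=1}^{j}\frac{1}{2^i i!}\leq (2A)^j,
\end{equation*}
using $\sum_{i\geq 1}1/(2^i i!)=\sqrt e-1<1$.

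\textbf{Step 3: Assembling the norm bound.} Bounding the spectral norm by the Frobenius norm and using $A\geq 1$,
\begin{equation*}
\|M^{-1}\|_{\mathrm{op}}^2\leq\|M^{-1}\|_F^2=\sum_{0\leq h\leq l\leq k}\left(\tfrac{l!}{h!}\right)^{2}|c_{l-h}|^2\leq\sum_{l=0}^{k}(l!)^2(l+1)(2A)^{2l}\leq (k+1)^2(k!)^2(2A)^{2k}.
\end{equation*}
Therefore $\sigma_1(M)^{-1}\leq (k+1)\,k!\,2^k A^k$. Finally I will absorb the $k$-dependent prefactors into a power of two: since $(k+1)!\leq (k+1)^{k+1}\leq 2^{(k+1)\log_2(k+1)}$ and $\log_2(k+1)\leq k+1$, one checks $(k+1)\,k!\,2^k\leq 2^{(k+1)^2}$ for every $k\geq 1$. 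Combined with $A^k\leq A^{k+1}$ this yields the claimed inequality $\sigma_1(M)^{-1}\leq A^{k+1}2^{(k+1)^2}$.

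\textbf{Anticipated obstacle.} The mathematical content of Steps~1--2 is short; the only delicate point is tracking the absolute constants to land precisely on $2^{(k+1)^2}$. If the crude bound $|c_j|\leq(2A)^j$ does not suffice for every $k$ in Step~3, I would sharpen it using the fact that $\sum_j\gamma_j z^j=1/(2-e^z)$ (where $\gamma_j$ solves the same recursion with $\alpha_i=1/i!$), which has radius of convergence $\ln 2$ and therefore yields $|c_j|\leq C(\ln 2)^{-j}A^j$ for an absolute constant $C$; this comfortably recovers the claimed constant after trivial manipulations.
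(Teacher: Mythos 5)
Your proof is correct and takes a genuinely different route from the paper's. The paper writes $M = I + N$ with $N$ strictly lower triangular and nilpotent of index $k+1$, uses the terminating Neumann series $M^{-1} = \sum_{j=0}^{k}(-N)^j$ to get $\sigma_1(M)^{-1} \leq \sum_{j=0}^{k}\|N\|_2^j$, and then bounds $\|N\|_2$ by its Frobenius norm via a direct binomial-coefficient calculation. You instead conjugate by $D=\mathrm{diag}(0!,\dots,k!)$ to expose the lower-triangular Toeplitz structure $\tilde M_{l,h} = m_{l-h}(K)/(l-h)!$, read off the entries of $\tilde M^{-1}$ as coefficients of the reciprocal power series, and control them by a clean induction $|c_j|\leq (2A)^j$. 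Your reduction exploits the $\binom{l}{h}m_{l-h}$ structure more sharply and replaces the combinatorial estimates on $\|N\|_F$ by a one-line recursion; the paper's nilpotent argument is slightly more generic (it needs only unit diagonal plus entrywise bounds). Both land on the same final constant. One small caveat in your Step~3: as literally stated, $\log_2(k+1)\leq k+1$ only yields $(k+1)!\,2^k\leq 2^{(k+1)^2+k}$; you need the (true, but slightly sharper) bound $\log_2(k+1)\leq k$ for $k\geq 1$ to get $(k+1)\log_2(k+1)+k\leq k^2+k+1\leq (k+1)^2$ and thus $(k+1)!\,2^k\leq 2^{(k+1)^2}$. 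You flag this yourself, and it is easily repaired, so the proof stands.
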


Moreover, for the setting where $\Theta \subseteq B(0,r)\subseteq \CC$ and the probability kernel is supported within the Euclidean ball $B(0,\tau)$ for $r,\tau>0$ we will show in the following for $\mu, \nu\in \calU_k(\Theta)$ that 
\begin{align*}
	M_{2k-1}(\mu, \nu) \leq \pi \sqrt{k} \left[(r+\tau)\vee (r+\tau)^{2k+1}\right]^{1/2}\|K\ast(\mu- \nu)\|_{L^2(\RR^2)}.
\end{align*}
The key technical insight to show the above inequality is that the complex monomials form an orthogonal system (Lemma \ref{lem:ComplexMonomial}) and if $K$ is isotropic, the moment matrix $M$ essentially reduces to a diagonal matrix (\Cref{thm:moment_polynomials}).  

To formalize this argument, define $R = (r+\tau)$ and observe by Lemma \ref{lem:ComplexMonomial}$(ii)$ that 
\begin{align*}
	\|K\ast (\mu - \nu)\|_{L^2(B(0,R))}^2 \geq \sum_{j = 0}^{2k-1} \frac{j+1}{\pi}R^{-2j-2}\left|\int_{B(0,R)} K\ast (\mu - \nu) z^j \dif z\right|^2.
\end{align*}
The integral term can be rewritten as 
\begin{align*}
	\left|\int_{B(0,R)} K\ast (\mu - \nu) z^j \dif z\right|^2 = |m_j(\mu) - m_j(\nu)|^2
\end{align*}
since for $\xi \in \{\mu, \nu\}$ it holds
\begin{align*}
	\int_{B(0,R)} z^j K\ast \mu(z)\dif z &=\int_{\CC} z^j K\ast \mu(z)\dif z \\&=
	 \int_\CC \int_\CC z^j K(z - \theta)\dif z \, \dif\mu(\theta) \\
	&= \int_\CC\int_\CC (z+\theta)^jK(z)\dif z \,\dif \mu(\theta) \\
	&= \sum_{l = 0}^{j}\binom{j}{l}\int_\CC z^{j-l} K(z) \dif z \, m_{j}(\mu) = m_j(\mu),
\end{align*}
where we used in the last line that $K$ is isotropic. 
Combining the previous display yields 
\begin{align*}
	M_{2k-1}(\mu, \nu) &= \sum_{j = 1}^{2k-1}|m_j(\mu) - m_j(\nu)|\\
	&\leq \sqrt{2k-1}\left(\sum_{j = 1}^{2k-1}|m_j(\mu) - m_j(\nu)|^2\right)^{1/2}\\
	&\leq \sqrt{\pi(2k-1)} (R \vee R^{2k}) \|K\ast (\mu - \nu)\|_{L^2(B(0,R))}.
\end{align*}

\subsubsection{Proof of Lemma \ref{lem:bound_smallest_singular_value_M}}
	First note that the matrix $M$ is a triangular matrix with diagonal equal to the identity matrix and thus invertible.
	Hence, 
	\begin{align*}
		\sigma_1(M) &= \min_{x\in \RR^{k+1}\backslash\{0\}} \frac{\|Mx\|_2}{\|x\|_2}\\ &=  %
		\min_{x\in \RR^{k+1}\backslash\{0\}} \frac{\|x\|_2}{\|M^{-1}x\|_2}    = \left(\max_{x\in \RR^{k+1}\backslash\{0\}} \frac{\|M^{-1}x\|_2}{\|x\|_2}\right)^{-1}  =  \frac{1}{\sigma_k(M^{-1})}.
	\end{align*}
	Moreover, since $M = Id_{k+1} + N$ for some nilpotent matrix $N$ with $N^{k+1} = 0$ it follows,
	\begin{align*}
		M^{-1} = (I+M)^{-1} = Id_{k+1} + \sum_{j= 1}^{k}(-1)^{j} N^j.
	\end{align*}
	Consequently, by \citet[Corollary 8.6.2]{golub2013matrix}, we infer 
	\begin{align*}
		\sigma_{k}(M^{-1})  &\leq 1+ |\sigma_{k}(M^{-1}) - \sigma_k(I_{k+1})|  \\
		&\leq 1+ \left\|\sum_{j= 1}^{k}(-1)^{j} N^j\right\|_2 \leq \sum_{j = 0}^{k} \|N\|_2^j = \frac{\|N\|_2^{k+1}-1}{\|N\|_2-1}, 
	\end{align*}
	which can be analytically extended to $\|N\|_2 = 1$ with the value $k+1$. Overall, this implies  $\sigma_1(M)  \geq \frac{\|N\|_2-1}{\|N\|_2^{k+1}-1}$ or equivalently $\sigma_1(M)^{-1} \leq \frac{\|N\|_2^{k+1}-1}{\|N\|_2-1}$. To upper bound $\|N\|_2^2$, we note, 
	\begin{align*}
		\|N\|_2^2\leq \|N\|_F^2 = \sum_{l,h= 0}^{k} N_{l,h}^2 &= \sum_{h = 0}^{k-1} \sum_{l = h+1}^{k} \binom{l}{h}^2m_{l-h}(K)^2\\
		&= \max_{\substack{h = 0, ..., k-1\\ l = h+1, \dots, k}}\left|\binom{l}{h} m_{l-h}(K)\right|\times \sum_{h = 0}^{k-1} \sum_{l = h+1}^{k} \binom{l}{h}|m_{l-h}(K)|\\
		&= \max_{r = 1, \dots, k} \left|\binom{k}{r}m_{r}(K)\right|\times \sum_{h = 0}^{k-1} \sum_{l = h+1}^{k} \binom{l}{h}|m_{l-h}(K)|
	\end{align*}
where we take $r = l-h$ and note that $\binom{l}{h}=\binom{r+h}{h} = \binom{r+h}{r}$ which is maximized when $h$ is chosen as large as possible, i.e., when $r+h = l = k$. 
We further observe that 
\begin{align*}
	\sum_{h = 0}^{k-1} \sum_{l = h+1}^{k} \binom{l}{h}|m_{l-h}(K)|&= 
	\sum_{h = 0}^{k-1} \sum_{l = h+1}^{k} \binom{l}{l-h}|m_{l-h}(K)| \\
	&=\sum_{l = 1}^{k} \sum_{h=0}^{l-1}\binom{l}{l-h}|m_{l-h}(K)| \\
	&= \sum_{l = 1}^{k} \sum_{r = 1}^{l}\binom{l}{r}|m_{r}(K)|\\
	&= \sum_{r = 1}^{k} |m_{r}(K)| \sum_{l = r}^{k} \binom{l}{r}\\
	&= \sum_{r = 1}^{k} |m_{r}(K)|\binom{k+1}{r+1}\\
	&\leq \left(1+\int |x^k| K(x)\dif x\right) \sum_{r = 1}^{k}\binom{k+1}{r+1}\\
	&\leq \left(1+\int |x^k| K(x)\dif x\right)2^{k+1}
\end{align*}
where the first inequality follows from Jensen's inequality, 
\begin{align*}
	m_r(K)&\leq \left(\int |x^r| K(x)\dif x \right) \\*&\leq \left(\int |x^p| K(x)\dif x\right)^{r/k} \\* &\leq \left(1+\int |x^k| K(x)\dif x\right)^{r/k}\leq \left(1+\int |x^k| K(x)\dif x\right).
\end{align*}
Further, it follows that 
\begin{align*}
	\max_{r = 1, \dots, k} \left|\binom{k}{r}m_{r}(K)\right|\leq 2^k \left(1+ \int |x|^kK(x)\dif x\right).
\end{align*} 
Combining these inequalities we thus obtain for $\|N\|_2$ the upper bound 
\begin{align*}
	\|N\|_2 \leq \left(1+ \int |x|^kK(x)\dif x\right)2^{(k+1)},
\end{align*}
which overall asserts 
\begin{align*}
	\sigma_1(M)^{-1} \leq \left(1+ \int |x|^kK(x)\dif x\right)^{k+1}2^{(k+1)^2},
\end{align*}
and proves the claim. \qed

\subsection{Proof of Proposition~\ref{prop:Lp_to_lp}}
\label{app:pf_prop_Lp_to_lp}
 
First observe for $\mu = \sum_{l = 1}^{k} w_l \delta_{\theta_i}\in \calP_k(\domain)$, $a\in \calA$ and $x\in \RR^d$ that 
\begin{align*}
	&\quad K\ast \mu\left(a+\frac{s}{2k}\Sigma x\right) \\
	&= \frac{1}{(2\pi)^{d/2} \det(\Sigma)^{1/2}} \sum_{l = 1}^{k} w_l \exp\left( - \frac{1}{2}\left(a+\frac{s}{2k}\Sigma x-\theta_l\right)^\top \Sigma^{-1}\left(a+\frac{s}{2k}\Sigma x- \theta_l\right)\right).
\end{align*}
It therefore follows that 
\begin{align*}
	&K\ast \mu\left(a+\frac{s}{2k}\Sigma x\right)  
	(2\pi)^{d/2} \det(\Sigma)^{1/2}\exp\left(\frac{1}{2}\left(a+\frac{s}{2k}\Sigma x\right)^\top \Sigma^{-1} \left(a+\frac{s}{2k}\Sigma x\right)\right)\\
	=\;&  \sum_{l = 1}^{k} w_l\exp\left( \theta_l^\top \Sigma^{-1} \left(a+\frac{s}{2k}\Sigma x\right)  -\frac{1}{2} \theta_l^\top \Sigma^{-1}\theta_l\right)\\
	=\;&  \sum_{l = 1}^{k} w_l\exp\left( \theta_l^\top \Sigma^{-1} a  -\frac{1}{2} \theta_l^\top \Sigma^{-1}\theta_l\right)\prod_{j = 1}^{d}\exp\left(\frac{s}{2k}\theta_{l,j}\right)^{x_j},
\end{align*}
which equals the moment of real order $x$ for the $k$-atomic (non-normalized) measure 
\begin{align*}
	\tilde \mu \coloneqq \sum_{l = 1}^{k} w_l\exp\left( \theta_l^\top \Sigma^{-1} a  -\frac{1}{2} \theta_l^\top \Sigma^{-1}\theta_l\right) \delta_{\exp(\frac{s}{2k} \theta_l)}.
\end{align*}
Arguing analogously, we infer for $\nu = \sum_{l = 1}^{k} v_l \delta_{\eta_l}$ that 
\begin{align*}
	&K\ast \nu\left(a+\frac{s}{2k}\Sigma x\right) 
	(2\pi)^{d/2} \det(\Sigma)^{1/2}\exp\left(\frac{1}{2} \left(a+\frac{s}{2k}\Sigma x\right)^\top \Sigma^{-1} \left(a+\frac{s}{2k}\Sigma x\right)\right)
\end{align*}
equals the moment of real multivariate order $x$ for 
\begin{align*}
	\tilde \nu \coloneqq \sum_{l = 1}^{k} v_l\exp\left( \eta_l^\top \Sigma^{-1} a  -\frac{1}{2} \eta_l^\top \Sigma^{-1}\eta_l\right) \delta_{\exp(\frac{s}{2k} \eta_l)}.
\end{align*}
Based on our bound on real-order moment differences in terms of low order moment differences (Lemma \ref{lem:realMoment_lowIntegerMoments}$(ii)$ below) there exists a positive constant $C = C(\Theta, d,k,s)>0$ such that for all $x\in \RR^d$, 
\begin{align*}
	\left|m_x(\tilde  \mu) - m_x(\tilde \nu) \right|\leq C^{1+\|x\|_1} \max_{\beta\in \{0, \dots, 2k-1\}^d} \left|m_\beta(\tilde \mu) - m_\beta(\tilde \nu)\right|,
\end{align*}
which implies by our considerations above for all $x\in\RR^d$ that 
\begin{align*}
	&\quad \left|K\ast (\mu - \nu)\left(a+\frac{s}{2k}\Sigma x\right) \right| \exp\left(\frac{1}{2} \left(a+\frac{s}{2k}\Sigma^{-1} x\right)^\top \Sigma^{-1} \left(a+\frac{s}{2k}\Sigma x\right)\right) \\
	&\leq C^{\|x\|_1} \max_{g\in \calG_a} \left|K\ast (\mu - \nu)\left(g\right) \right|  \exp\left(\frac{1}{2} g^\top \Sigma^{-1} g\right).
\end{align*}
Performing a change of variables $y \coloneqq a+\frac{s}{2k}\Sigma x$ gives 
\begin{align*}
	\left|K\ast (\mu - \nu)\left(y\right) \right| \leq C^{\|\frac{2k}{s}\Sigma(y-a)\|_1}\exp\left(-\frac{1}{2}y^\top\Sigma^{-1}y\right)\max_{g\in \calG_a} \left|K\ast (\mu - \nu)\left(g\right) \right| \exp\left(\frac{1}{2} g^\top \Sigma^{-1} g\right). 
\end{align*}
Note that $y\mapsto C^{\frac{2k}{s}\|\Sigma^{-1} y\|_1}\exp\left(-\frac{1}{2}y^\top\Sigma^{-1}y\right)$ is $L^p$-integrable for every $p\in [1,\infty)$ and bounded. Taking on both sides of above display the integral in $y$ of the $p$-th power  for $p \in[1,\infty)$ or the maximum over $y\in\RR^d$ (for $p= \infty$), we conclude since $\calA$ is bounded the existence of a positive constant $C=C(\domain, \Sigma, \calA, d, k, p,s)>0$ such that for all $a \in \calA$ and $\mu, \nu \in \calP_k(\Theta)$, 
\begin{align*}
	\|K\ast (\mu - \nu)\|_{L^p(\RR^d)} \leq C \|K\ast (\mu - \nu)\|_{\ell^\infty(\calG_a)}.
\end{align*}
The assertion now follows from the equivalence of $\|\cdot\|_{\ell^\infty(\calG_a)}$ and $\|\cdot\|_{\ell^q(\calG_a)}$, where the suppressed constant  only depends on $q$ and the cardinality of $\calG_a$, which equals $(2k)^d$. 
 \qed 

\begin{lemma}\label{lem:realMoment_lowIntegerMoments}\label{lem:ContinuousMomentBound}
	Let $\Theta\subseteq \RR^d$ be compact and let $k\in \NN$. Then, there is a positive constant $C = C(\Theta, d,k)>0$ which fulfills the following. 
	\begin{enumerate}
		\item For all $\alpha \in \NN_0^d$ and all (at most) $k$-atomic probability measures $\mu, \nu\in \calP_k(\mu)$ it holds 
		\begin{align*}
			\left|m_\alpha(\mu) - m_\alpha(\nu) \right|\leq C^{1+\|\alpha\|_1} M_{2k-1}(\mu, \nu). 
		\end{align*}
		\item If $\Theta\subseteq (0,\infty)^d$, then for all $\alpha \in \RR^d$ and all (at most)  $k$-atomic measures $\mu, \nu \in \calM_k(\Theta)$,
		\begin{align*}
			\left|m_\alpha(\mu) - m_\alpha(\nu) \right|\leq C^{1+\|\alpha\|_1} M_{2k-1, \infty}(\mu, \nu),
		\end{align*}
		where we define $M_{2k-1, \infty}(\mu, \nu) \coloneqq \max_{\beta \in \{0, \dots, 2k-1\}^d} \left|m_\beta(\mu) - m_\beta(\nu)\right|$. In particular, note by $\Theta\subseteq (0,\infty)^d$ that the moments on the left-hand side are well-defined. 
	\end{enumerate}

\end{lemma}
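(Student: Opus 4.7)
The plan is to use polynomial interpolation on $S := \supp(\mu) \cup \supp(\nu)$, which contains at most $2k$ points: for any polynomial $p(t) = \sum_\beta a_\beta t^\beta$ of degree $\leq 2k-1$ that interpolates $t \mapsto t^\alpha$ on $S$, one has $m_\alpha(\mu - \nu) = \int p\,d(\mu - \nu) = \sum_\beta a_\beta (m_\beta(\mu) - m_\beta(\nu))$. The problem then reduces to bounding $\sum_\beta |a_\beta|$ uniformly in the configuration of interpolation points by $C^{1+\|\alpha\|_1}$, which, despite the potential blow-up of individual Lagrange coefficients when points coalesce, can be controlled using the Newton form together with divided differences.

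I would first establish part $(i)$ in the univariate case $d=1$. Writing $S = \{s_1, \ldots, s_N\}$ with $N \leq 2k$ and $|s_i| \leq R$, the Newton form reads $p(t) = \sum_{i=1}^N [s_1, \ldots, s_i]\,t^\alpha \cdot \prod_{j<i}(t - s_j)$. The divided difference $[s_1, \ldots, s_i]\,t^\alpha$ equals the complete homogeneous symmetric polynomial $h_{\alpha-i+1}(s_1, \ldots, s_i)$ with $|h_{\alpha-i+1}| \leq \binom{\alpha}{i-1} R^{\alpha-i+1}$, and the product factor has coefficient absolute-value sum $\leq (1+R)^{i-1}$. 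Summing over $i$ yields $\sum_\beta |a_\beta| \leq R^\alpha \sum_j \binom{\alpha}{j}(1 + 1/R)^j \leq (2R+1)^\alpha$, and because $m_0(\mu - \nu) = 0$ for probability measures, the $a_0$ contribution vanishes, giving $|m_\alpha(\mu - \nu)| \leq (2R+1)^\alpha M_{2k-1}(\mu, \nu)$. To extend to $d \geq 2$, I would apply the univariate result to the projections $\pi_\eta \mu, \pi_\eta \nu$ for each $\eta \in \SS^{d-1}$, and use the multinomial identity $m_j(\pi_\eta(\mu - \nu)) = \sum_{|\beta|=j} \binom{j}{\beta}\eta^\beta(m_\beta(\mu) - m_\beta(\nu))$ together with $|\eta^\beta| \leq 1$ to bound $M_{2k-1}(\pi_\eta \mu, \pi_\eta \nu) \leq d^{2k-1} M_{2k-1}(\mu, \nu)$. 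This shows the homogeneous polynomial $p_n(\eta) := \sum_{|\beta|=n}\binom{n}{\beta}\eta^\beta(m_\beta(\mu) - m_\beta(\nu))$ satisfies $\sup_{\eta \in \SS^{d-1}}|p_n(\eta)| \leq C_1^{1+n} M_{2k-1}(\mu, \nu)$, and the classical equivalence between sup-norm and coefficient norm on $d$-variable homogeneous polynomials of degree $n$, with ratio bounded by $c(d)^n$, then extracts each coefficient $m_\beta(\mu - \nu)$ at the desired rate.

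For part $(ii)$, since $\Theta \subseteq [r, R]^d$ with $r > 0$, I would use a tensor-product interpolation: for each coordinate $j$, let $S_j := \pi_j(S)$ and let $P_{\alpha_j}$ be the univariate polynomial of degree $\leq 2k-1$ interpolating $t \mapsto t^{\alpha_j}$ on $S_j$. Then $P(x) := \prod_j P_{\alpha_j}(x_j)$ agrees with $x^\alpha$ on every atom of $\mu$ and $\nu$, yielding $m_\alpha(\mu - \nu) = \sum_{\beta \in \{0, \ldots, 2k-1\}^d} c_\beta (m_\beta(\mu) - m_\beta(\nu))$ with $c_\beta = \prod_j c^{(j)}_{\beta_j}$, so $\sum_\beta |c_\beta| = \prod_j \sum_l |c^{(j)}_l|$. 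For each univariate factor with real exponent $\alpha_j$, the Newton form combined with the integral mean-value representation gives $|[s_1, \ldots, s_i]\,t^{\alpha_j}| \leq (i-1)!^{-1}|\alpha_j(\alpha_j - 1)\cdots(\alpha_j - i + 2)|\sup_{\xi \in [r,R]}|\xi^{\alpha_j - i + 1}|$, which is uniformly bounded by $C^{|\alpha_j|}$ thanks to the strict positivity $r > 0$ (essential to control negative or non-integer exponents via $\sup_{[r,R]}|\xi^{\alpha_j-i+1}| \leq (R \vee r^{-1})^{|\alpha_j|+2k}$) and the fact that $i \leq 2k$ is fixed. Multiplying across coordinates produces $\sum_\beta |c_\beta| \leq C^{d + \|\alpha\|_1}$, which collapses to $C^{1+\|\alpha\|_1}$ after adjusting the constant.

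The main obstacle is the coefficient-from-sup-norm extraction in the multivariate case of $(i)$: one needs that the ratio between maximum coefficient and sup-norm on $\SS^{d-1}$ for a homogeneous polynomial of degree $n$ in $d$ variables grows at most exponentially in $n$ (i.e., is bounded by $c(d)^n$), so that the final bound retains the form $C^{1+n} M_{2k-1}(\mu,\nu)$. This can be achieved by exhibiting $\binom{n+d-1}{d-1}$ unisolvent test directions on the sphere with a Vandermonde-type inverse of operator norm at most $c(d)^n$, for example via a tensor-of-roots-of-unity construction in low dimensions or by appealing to Markov--Bernstein type inequalities on the sphere.
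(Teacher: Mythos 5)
Your proposal is correct and follows the same overall strategy as the paper: Newton-form interpolation of $t\mapsto t^\alpha$ on the at most $2k$ joint support points in the univariate case, reduction of the multivariate integer case to one dimension via projections $\mu^\eta$, and tensor-product interpolation with the mean-value bound on divided differences for the real-exponent case (ii), where the positivity $r>0$ plays exactly the role you identify. The one genuine point of divergence is the coefficient-recovery step in the multivariate part (i). You correctly observe that after the projection step one knows $\sup_{\eta\in\SS^{d-1}}|m_n(\mu^\eta)-m_n(\nu^\eta)|\lesssim C^{1+n}M_{2k-1}(\mu,\nu)$ and must extract the individual coefficients $m_\beta(\mu)-m_\beta(\nu)$ of this homogeneous polynomial in $\eta$; you propose a generic sup-norm-to-coefficient-norm comparison for degree-$n$ homogeneous polynomials in $d$ variables with ratio $c(d)^n$. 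This is true and can be settled cleanly via the polarization formula (the polarization constant $n^n/n!\le e^n$ gives $|c_\beta|\le \binom{n}{\beta}e^n\sup_{\SS^{d-1}}|p|\le (ed)^n\sup_{\SS^{d-1}}|p|$), so your flagged obstacle is fillable. The paper instead exploits the $k$-atomicity at this step: the degree-$n$ moment tensors of $k$-atomic measures have symmetric rank at most $2k$, and the inequality $\|T\|_F\le r^{(n-1)/2}\|T\|_{op}$ for rank-$r$ symmetric tensors (the paper's Lemma~\ref{lem:moment_sliced_moment}) converts the directional supremum directly into the full coefficient sum with a factor $k^{(n-1)/2}$. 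Both yield the required $C^{1+\|\alpha\|_1}$ form; the tensor-rank route is shorter given the lemma is already in place, while yours is more generic and does not use finiteness of the supports at that step.
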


\subsubsection{Proof of Lemma \ref{lem:ContinuousMomentBound}}
	
	In our proof for Assertion $(i)$ (resp.\ $(ii)$) under $d =1$ we first interpolate the integer (resp.\ real) order polynomial 
	\begin{align}\label{eq:polynomialForInterpolation}
		f_\alpha \colon \Theta \to \RR, \quad x\mapsto x^{\alpha}
	\end{align}
	for $\alpha \in \NN_0$ (resp.\ $\alpha \in \RR$) in terms of polynomials of order $\beta\in \{1, \dots, 2k-1\}$. 
	The multivariate setting $d\geq 2$ for Assertion $(i)$ then follows by some slicing argument which reduces the analysis to the univariate setting. This reduction does not directly apply for real order moments in Assertion $(ii)$, and we instead develop a multivariate interpolation argument which leads to $M_{2k-1, \infty}(\mu, \nu)$ in the upper bound instead of $M_{2k-1}(\mu, \nu)$. 

	{\bf{Step 1. Preliminaries on interpolation.}}
	The interpolation argument for univariate setting is inspired by the proof of \citet[Lemma~10]{wu2020}, see also their Section 5.1 for a short review of polynomial interpolation methods. To set some notation, given a function $f$ on $\Theta$ and a finite collection of distinct interpolation points $x_1, \dots, x_l\in \Theta$ the interpolation polynomial $P_f$, i.e., the unique polynomial of degree $l-1$ which coincides on $\{x_1, \dots, x_l\}$ with $f$, can be represented as 
	\begin{align}\label{eq:UnivariateInterpolationPolynomial}
		P_f(x) &\coloneqq P(x \mid f, x_1, \dots, x_{l}) \coloneqq \sum_{i = 1}^{l} f[x_1, \dots, x_i]g_{i-1}(x),
	\end{align}where the functions $g_i$ and the divided differences $f[x_1, \dots, x_i]$ of $f$ are defined as \begin{align*}
		g_0(x)& \coloneqq g_0(x| x_1, \dots, x_{l})\coloneqq 1\quad\quad g_r(x) \coloneqq g_r(x| x_1, \dots, x_{l}) \coloneqq\prod_{j =1}^{r}(x-x_j),\\
		f[x_i]&\coloneqq f(x_i), \quad \quad  f[x_i, \dots, x_{i+r}]= \frac{f[x_{i+1}, \dots, x_{i+r}] - f[x_{i}, \dots, x_{i+r-1}]}{x_{i+r} - x_i}.
	\end{align*}
	In particular, under smoothness of $f$ on $\RR$ for Assertion $(i)$ (resp.\ on $(0,\infty)$ for Assertion $(ii)$) 
	 there exist by the intermediate theorem some $\xi_i \in [x_1, x_i]$ such that 
	\begin{align}\label{eq:UnivariateInterpolationPolynomial_representation}
		P_f(x) = \sum_{i = 1}^{l} \frac{f^{(i-1)}(\xi_i)}{(i-1)!}g_{i-1}(x).
	\end{align}
	With that notation at our disposal, we proceed with the proofs of Assertions $(i)$ and $(ii)$. 

	{\bf{Step 2. Integer and real moment difference bound for $d = 1$.}} The proof of this assertion closely follows along the lines of \citet[Lemma 10]{wu2020} with the difference that in Assertion $(i)$ the set  $\Theta$ is not restricted to be contained in $[-1,1]$ and in Assertion $(ii)$ we permit real order polynomials. 

	Let $\alpha \in \NN_0$ for Assertion $(i)$ and $\alpha \in \RR$ for Assertion $(ii)$. Then, $f_\alpha$ from \eqref{eq:polynomialForInterpolation} is well-defined and smooth on an open set containing $\Theta$. Further, denote by $x_1, \dots, x_l$ the distinct support points of $\supp(\mu)\cup\supp(\nu)$, hence $l\in \{1, \dots, 2k\}$ and consider the corresponding interpolation polynomial $P_{f_\alpha}$ from \eqref{eq:UnivariateInterpolationPolynomial}. In particular, by \citet[Lemma~26]{wu2020} we obtain since every $x_j$ fulfills $|x_j|\leq \|\Theta\| \coloneqq \max_{\theta\in \Theta}|\theta|$ that $g_{i-1}(x\,|\,x_1, \dots, x_l) = \sum_{j = 0}^{i-1}a_j x^j$ with $|a_j| \leq \binom{i-1}{j}\|\Theta\|^{i-1-j}$ for all $j\in \{1, \dots, i-1\}$. Moreover, for $\xi_i\in \mathrm{convHull}(\Theta)$ it holds for Assertion $(i)$ that $|f_\alpha^{(i-1)}(\xi_i)| \leq (1+\|\Theta\|)^{|\alpha| + i-1} \prod_{j = 1}^{i-1}|\alpha - j+1|$ and for Assertion $(ii)$ that $|f_\alpha^{(i-1)}(\xi_i)| \leq (1+\max_{\theta\in \Theta}\theta + 1/\min_{\theta\in \Theta} \theta)^{|\alpha| + i-1} \prod_{j = 1}^{i-1}|\alpha - j+1|$. We thus obtain that 
	\begin{align*}
		\left|m_\alpha(\mu) - m_\alpha(\nu) \right|&= \left|\int P_{f_\alpha}(x)\dif (\mu-\nu)(x)\right|\\
		&\leq \sum_{i =1}^{l}\frac{f_\alpha^{(i-1)}(\xi_i)}{(i-1)!} \sum_{j =0}^{i-1} |a_j||m_{j}(\mu) - m_{j}(\nu)|\\
		&\leq \tilde C^{1+|\alpha|} (|\alpha|+l)^{l} M_{l}(\mu, \nu)\leq C^{1+|\alpha|} M_{2k-1}(\mu, \nu),
	\end{align*}
	where the constants $C, \tilde C$ in the last line are strictly positive and depend on $\Theta$ and $k$ but not on $\alpha$, $\mu$ and $\nu$. 

	{\bf{Step 3. Integer moment difference bound for $d\geq 2$. }} Let $\alpha \in \NN_0^d$, then it follows by Lemma \ref{lem:moment_sliced_moment} combined with step 2.\ and Lemma \ref{lem:sliced_multivariate_moment_bound} that 
	\begin{align*}
		\left|m_\alpha(\mu) - m_\alpha(\nu) \right| &\leq k^{(\|\alpha\|_1-1)/2} \sup_{\eta\in \SS^d} |m_{\|\alpha\|_1}(\mu^\eta) - m_{\|\alpha\|_1}(\nu^\eta)|\\
		&\leq k^{(\|\alpha\|_1-1)/2} C^{1 + \|\alpha\|_1}\sup_{\eta\in \SS^d} M_{2k-1}(\mu^\eta, \nu^\eta)\\
		&\leq k^{(\|\alpha\|_1-1)/2} C^{1 + \|\alpha\|_1} d^{(2k-1)/2} M_{2k-1}(\mu, \nu),
	\end{align*}
	where the positive constant $C>0$ in the second line is determined by the constant from step 2.\ for $\bigcup_{\eta\in \SS^d} \langle \Theta, \eta \rangle$ and $k$. This confirms the Assertion $(i)$ for $d\geq 2$. 

	{\bf{Step 4. Real order moment difference bound for $d\geq 2$. }}
For the proof of the multivariate statement for Assertion $(ii)$ assume without loss of generality that $\Theta$ is a rectangular set of the form $\Theta = \prod_{i = 1}^{d} [\theta_i^-, \theta_i^+]$. Similar to the technique in step 2.\ we interpolate the multivariate real-order polynomial $f_\alpha\colon (0, \infty)^d \to \RR, x\mapsto \prod_{j = 1}^{d} x_j^{\alpha_j}$ on the set $\supp(\mu)\cup\supp(\nu)$. We pursue this approach because the two crucial Lemma \ref{lem:sliced_multivariate_moment_bound} and \ref{lem:moment_sliced_moment} which enabled us in step 3.\ to reduce the analysis to the univariate setting do not appear to be easily generalizable to real order moments. For the interpolation, denote by $\tilde x_{1, j} < \dots < \tilde x_{l_j,j}$ for $j\in \{1, \dots, d\}$ and  $l_j\in \{1, \dots, 2k\}$ the ordered support points of the push-forward of $\mu+\nu$ under the projection onto the $j$-th component. Further, for each $j\in \{1, \dots, d\}$ define the univariate polynomial $P_{f_{\alpha_j}}(t)\coloneqq P(t \,|\, f_{\alpha_j}, \tilde x_{1,j}, \dots, \tilde x_{l_j,j})$ of degree $l_j-1$ from \eqref{eq:UnivariateInterpolationPolynomial} and note via \eqref{eq:UnivariateInterpolationPolynomial_representation} for suitable $\tilde \xi_{i,j} \in [\tilde x_{1,j}, \tilde x_{i,j}]$ that
\begin{align*}
	P_{f_\alpha}(x)\coloneqq \prod_{j = 1}^{d} P_{f_{\alpha_j}}(x_j) = \prod_{j = 1}^{d} \sum_{i_j = 1}^{l_j} \frac{f_{\alpha_j}^{(i_j-1)}(\tilde \xi_{i,j})}{(i_j-1)!}g_{i_j -1}(x_j\,|\, \tilde x_{1,j}, \dots, \tilde x_{l_j,j})
\end{align*}
interpolates $f_\alpha$ on $\supp(\mu)\cup\supp(\nu)$. Arguing as in step 2.\ we obtain by \citet[Lemma 26]{wu2020} since $0<\tilde x_{r_j,j}\leq \theta_j^+$ that $g_{i_j -1}(x_j\,|\, \tilde x_{1,j}, \dots, \tilde x_{l_j,j}) = \sum_{h_j = 0}^{i_j-1}a_{h_j,j} x_j^{h_j}$ with $|a_{h_j,j}| \leq \binom{i-1}{j}(\theta^+_j + 1/\theta^-_j)^{i_j-1-h_j}$ and $|f_{\alpha_j}^{(i_j-1)}(\tilde \xi_{i,j})|\leq (1+ \theta_j^+ +1/\theta_j^-)^{|\alpha_j| + i_j-1}\prod_{h_j=1}^{i_j-1}|\alpha_j - h_j+1|$ since $\tilde \xi_{i,j}\in [\theta_j^-,\theta_j^+]$. Combining these bounds we arrive at
\begin{align*}
	|m_\alpha(\mu) - m_\alpha(\nu)|&= \left|\int P_{f_\alpha}(x)\dif (\mu-\nu)(x)\right|\\
	&\leq\sum_{i \in \bigtimes_{j = 1}^{d} \{0, \dots, l_j\}} \sum_{h \in \bigtimes_{j = 1}^{d} \{0, \dots, i_j\}} \left|m_h(\mu) - m_h(\nu)\right| \prod_{j = 1}^{d}\left| \frac{f_{\alpha_j}^{(i_j-1)}(\tilde \xi_{i,j})}{(i_j-1)!} a_{h_j,j}\right|\\
	&\leq \tilde C^{1+\|\alpha\|_1}\sum_{i \in \bigtimes_{j = 1}^{d} \{0, \dots, l_j\}} \prod_{j = 1}^{d}(|\alpha_j|+i_j)^{i_j} \max_{h\in \bigtimes_{j = 1}^{d} \{0, \dots, l_j\}}\left|m_h(\mu) - m_h(\nu)\right| \\
	&\leq C^{1+\|\alpha\|_1} M_{2k,\infty}(\mu, \nu),
\end{align*}
where the constants $\tilde C, C$ in the last two lines are strictly positive and depend on $\Theta$, $d$, $k$ but not on $\alpha$, $\mu$ and $\nu$. This completes the proof. \qedhere

\subsection{Proof of Proposition~\ref{cor:Lp_Omega_Rd}}
\label{app:pf_cor_Lp_Omega_Rd}
Let $N \coloneqq 2k$ and consider for anchor point $a\in \RR^d$ and scaling $s>0$ the grid
$\calG_{a} = \calG_{a}(\Sigma, d,k,s)$ from \eqref{eq:grid}. 
Since $\Omega$ has non-empty interior, there exists some $s>0$ and an offset $a'\in \RR^d$ such that for each $a\in \calA\coloneqq \{a' + \Sigma \xi \,\colon \xi\in [0,s/N)^d\}$ the grid $\calG_a$ is contained in $\Omega$. Note that this is equivalent to $\tilde \Omega \coloneqq\{ a' + \Sigma x \,\colon x\in [0,s)^d\}\subseteq \Omega$. 
Further, by Proposition~\ref{prop:Lp_to_lp}, there exists a positive constant $C > 0$ such that for all $\mu, \nu \in \calP_k(\Theta)$ and $a\in \calA$ it holds 
\begin{align*}
	\|K\star(\mu-\nu)\|_{L^p(\bbR^d)}   \leq C \|K\star(\mu-\nu)\|_{\ell^q(\calG_a)},
\end{align*}
If $q = \infty$, we now immediately obtain 
\begin{align*}
	\|K\star(\mu-\nu)\|_{L^p(\bbR^d)}  \leq C \max_{a\in \calA}  \|K\star(\mu-\nu)\|_{\ell^\infty(\calG_a)} \leq C \|K\star(\mu-\nu)\|_{L^\infty(\Omega)},
\end{align*}
where we used monotonicity of the $L^\infty$-norm under domain enlargements and continuity of the kernel. 
Otherwise, if $q<\infty$, we integrate the $q$-th power of both sides in the penultimate display over $a\in \calA$ which yields 
\begin{align*}
	\|K\star(\mu-\nu)\|_{L^p(\bbR^d)}^q&\leq \frac{C}{\mathcal{L}(\calA)}  \int_{\calA} \|K\star(\mu-\nu)\|_{\ell^q(\calG_a)}^qda\\
	&= \frac{C}{\mathcal{L}(\calA)}  \sum_{i_1,\dots,i_d=1}^N \int_{\calA + \frac{s}{N}\Sigma (i_1, \dots, i_d)^\top} |K \ast (\mu - \nu)(u)|^q \dif u\\
	&= \frac{C}{\mathcal{L}(\calA)} \int_{\tilde \Omega} |K \ast (\mu - \nu)(u)|^q \dif u\\
	&=  \frac{C}{\mathcal{L}(\calA)} \|K \ast (\mu - \nu)\|_{L^q(\tilde \Omega)}^q \\
	&\leq \frac{C}{\mathcal{L}(\calA)} \|K \ast (\mu - \nu)\|_{L^q(\Omega)}^q,
\end{align*}
where we used in the final inequality that the $L^p$-norm increases under enlargements of the integration domain. The assertion for $q<\infty$ now follows by noticing that the constant on the right-hand side is minimal when $\calL(\calA)$ is maximized, and that the maximal value depends on $\Omega, \Sigma, d, k$. \qed

\subsection{Proof of Theorem~\ref{thm:equivalences}}
\label{app:pf_thm_equivalences} 
To emphasize the dependency on $\Sigma$ in the kernel we write $K_\Sigma$. The relations
$$ M_{2k-1}(\mu,\nu)\lesssim_{\Theta, \Sigma, d,k}
\|K_\Sigma\star(\mu-\nu)\|_{L^2(\bbR^d)} \asymp_{\Theta, \Sigma, d,k,p} \|K_\Sigma\star(\mu-\nu)\|_{L^p(\bbR^d)},\quad \mu,\nu\in\calP_k(\Omega)$$
are a direct consequence of Theorem \ref{thm:momentL2-bounds_all}$(i)$ and Proposition~\ref{cor:Lp_Omega_Rd}. 
Furthermore, since the Gaussian mixture densities (with positive definite
covariance matrix $\Sigma$) 
are bounded
above over $\bbR^d$ (by a constant depending on $\Sigma$), 
it is straightforward observation that 
$$\|K_\Sigma\star(\mu-\nu)\|_{L^2(\bbR^d)} \lesssim_{\Sigma} H(K_\Sigma\star\mu,K\star\nu).$$
The assertion follows once we show that the Hellinger distance is dominated by $M_{2k-1}(\mu, \nu)$, for which we will rely on \citet[Theorem 4.2]{doss2023} which however was only formulated for $\Sigma = \textup{Id}$. To extend their assertion, first define for a $k$-atomic measure $\xi\in \calU_k(\RR^d)$ and $\ell\in \{1, \dots, 2k-1\}$ the order-$\ell$ moment tensor $\bM_\ell(\xi)$ according to \eqref{eq:MomentTensor1} from within the proof of Lemma \ref{lem:moment_sliced_moment} in Appendix~\ref{app:momentDifferenceInequalities}. In that proof we also introduce the Frobenius norm and the operator norm for these moment operators by $\|\cdot\|_F$ and $\|\cdot\|_{op}$, respectively. 
With these tools, the following inequalities hold, which we explain below, 
\begin{align*}
	H(K_\Sigma\star\mu,K_\Sigma\star\nu) & =
	H(K_\textup{Id}\star(\Sigma_{\#}^{-1/2}\mu),K_\textup{Id}\star(\Sigma_{\#}^{-1/2}\nu))
	\\
	&\lesssim_{\Sigma, \Theta,d, k}\max_{l\in 1, \dots, 2k-1}\|\bM_\ell(\Sigma_{\#}^{-1/2}\mu) - \bM_\ell(\Sigma_{\#}^{-1/2}\nu)\|_F\\
	&\lesssim_{k}\max_{l\in 1, \dots, 2k-1}\|\bM_\ell(\Sigma_{\#}^{-1/2}\mu) - \bM_\ell(\Sigma_{\#}^{-1/2}\nu)\|_{op}\\
	&= \max_{l\in 1, \dots, 2k-1} \sup_{\eta\in \mathbb{S}^{d-1}}\left|m_\ell((\Sigma_{\#}^{-1/2}\mu)^\eta) - m_\ell((\Sigma_{\#}^{-1/2}\nu)^\eta)\right|\\
	&\lesssim_{\Sigma, d,k} \max_{l\in 1, \dots, 2k-1} \sup_{\eta\in \mathbb{S}^{d-1}}\left|m_\ell(\mu^\eta) - m_\ell(\nu^\eta)\right|\\
	&\lesssim_{d,k} M_{2k-1}(\mu, \nu).
\end{align*}
In the above display, the first equality follows from an integral transformation $y = \Sigma^{-1/2}x$,  the second line is due to \citet[Theorem~4.2]{doss2023}, the third and fourth line is a consequence of \eqref{eq:MomentTensor2} and \eqref{eq:MomentTensor3}, respectively, the fifth line follows using the spectral representation of $\Sigma$ with steps 1.2 and 1.3 of the proof of \Cref{thm:momentL2-bounds_all}, 
and the final inequality follows from Lemma \ref{lem:moment_sliced_moment}. \qed

\subsection{Proof of Lemma~\ref{lem:chaining_ours}}
\label{app:pf_lem_chaining_ours}

We will prove the following more general statement. 
\begin{lemma}
\label{lem:chaining}
Let $\Omega\subseteq \RR^d$ be a domain such that Assumption \ref{ass:bins} is met for a partition $\{B_i\}_{i = 1,\dots, m}$. 
Let $\calF$ be a class of Borel-measurable functions of the form
$f:\Omega\to\bbR$ which admits a dense subset with respect to the uniform norm on $\RR^d$ and assume that there exist a constant
 $b \geq 1 $ such that $b^{-1} \leq f \leq b$ over $\Omega$ for each $f \in \calF$.
Consider independent random variables, 
$$Y_i \sim \mathrm{Poi}(tf_0(B_i)),\quad i=1,\dots,m,$$
for some $f_0 \in \calF$ and assume there exists a Borel-measurable maximum likelihood estimator of $f_0$, 
$$\hat f_{t,m} \in \argmax_{f \in \calF} \sum_{i=1}^m \Big( Y_i \log(tf(B_i)) - tf(B_i)\Big).$$
Define the function class
$$\calF_m(\gamma;f_0) = \left\{ \Pi_m f: f\in\calF, \|f-f_0\|_{L^2(\Omega)}\leq \gamma\right\},
\quad \text{for any } \gamma > 0,$$
and, given constants $C_0,c_0 > 0$ depending only on $b$, assume
there exists $\gamma_{t,m} > 0$ such that for all $\gamma \geq \gamma_{t,m}$,  
the following Le Cam-type equation is satisfied:
\begin{align}
\label{eq:le_cam_bracketing_condition} 
\sqrt t \gamma^2 \geq 
C_0 \sup_{f_0\in\calF}\calJ_{[]}(c_0\gamma, \calF_m(\gamma;f_0),L^2(\Omega)). 
\end{align}
Then, there exists a constant $C > 0$ depending only on $\Omega,b$ such that
$$\bbE \big\| \Pi_m (\hat f_{t,m}-f_0)\big\|_{L^2(\Omega)}^2 \leq C \left(\frac 1 {\sqrt t} + \gamma_{t,m}\right)^2.$$
\end{lemma}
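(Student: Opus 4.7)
\textbf{Proof plan for Lemma~\ref{lem:chaining}.}

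The strategy is a Wong--Shen/van de Geer style analysis of the Poisson MLE, adapted to the inhomogeneous (unnormalized) intensity setting. I would first reduce the problem to bracketing entropy control of a Hellinger-type divergence between discretized intensities, then invoke the Le Cam equation via a peeling argument.

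\textbf{Step 1: A Poisson basic inequality.} Set $\bar f = (\hat f_{t,m}+f_0)/2$ and $g_i = \log(\bar f(B_i)/f_0(B_i))$. The MLE optimality $\ell_t(\hat f_{t,m}) \geq \ell_t(f_0)$ combined with concavity of $\log$ (so that $\log(\bar f/f_0) \geq \tfrac12 \log(\hat f/f_0)$) yields, after standard manipulation,
\[
t\sum_{i=1}^m \bigl(\sqrt{\bar f(B_i)} - \sqrt{f_0(B_i)}\bigr)^2 \;\leq\; \sum_{i=1}^m (Y_i - tf_0(B_i))\, g_i.
\]
Using the boundedness hypothesis $b^{-1}\leq f \leq b$ on $\calF$, the left-hand side is equivalent, up to constants depending only on $b$ and on $\lambda(\Omega)$, to $t\,\|\Pi_m(\bar f - f_0)\|_{L^2(\Omega)}^2$, and by convexity this is in turn equivalent to $t\,\|\Pi_m(\hat f_{t,m}-f_0)\|_{L^2(\Omega)}^2$. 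Thus the problem reduces to controlling the centered Poisson process on the right-hand side indexed by the class $\{g_f : f \in \calF\}$ of log-ratios.

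\textbf{Step 2: Empirical process bound via local bracketing.} For fixed $\gamma > 0$, set
\[
Z(\gamma) \;=\; \sup_{f \in \calF \,:\, \|\Pi_m(f-f_0)\|_{L^2(\Omega)}\leq \gamma}\; \Big|\sum_{i=1}^m (Y_i - tf_0(B_i))\, g_f(B_i)\Big|.
\]
Since $Y_i - tf_0(B_i)$ is a centered Poisson with variance $tf_0(B_i) \leq tb$ and the $g_f$ are uniformly bounded by $2\log b$, a standard Bernstein maximal inequality for sums of independent bounded random variables with bracketing (as in Theorem~5.11 of \cite{vandegeer2000}, applied to the discretized class $\calF_m(\gamma;f_0)$ which inherits the bracketing bound stated in the hypothesis through the change of variables $g_f \leftrightarrow \Pi_m f$) gives
\[
\mathbb{E}\,Z(\gamma) \;\lesssim_b\; \sqrt{t}\;\calJ_{[]}\bigl(c_0 \gamma,\,\calF_m(\gamma;f_0),\,L^2(\Omega)\bigr) \;+\; \calJ_{[]}^2(\cdot),
\]
where the second summand is of lower order in the regime determined by~\eqref{eq:le_cam_bracketing_condition}.

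\textbf{Step 3: Peeling.} Combining the basic inequality with Step~2 yields, for each $\gamma > 0$,
\[
\mathbb{P}\bigl(\|\Pi_m(\hat f_{t,m}-f_0)\|_{L^2(\Omega)} > \gamma\bigr)\, t\gamma^2 \;\lesssim\; \sqrt{t}\,\calJ_{[]}(c_0\gamma,\calF_m(\gamma;f_0),L^2(\Omega)),
\]
up to absorbing a $t^{-1/2}$ contribution arising from the event on which the denominator in $\log(\hat f/f_0)$ is small. Peeling over geometrically growing shells $\{2^j\gamma_{t,m} \leq \|\Pi_m(\hat f_{t,m}-f_0)\|_{L^2(\Omega)} < 2^{j+1}\gamma_{t,m}\}$ and invoking the Le Cam-type condition~\eqref{eq:le_cam_bracketing_condition} at each scale $\gamma = 2^j\gamma_{t,m} \geq \gamma_{t,m}$ produces a geometric decay in~$j$ of the tail probabilities. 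Integrating yields $\mathbb{E}\|\Pi_m(\hat f_{t,m}-f_0)\|_{L^2(\Omega)}^2 \lesssim_b (t^{-1/2} + \gamma_{t,m})^2$, as claimed.

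\textbf{Main obstacle.} The classical Wong--Shen analysis is formulated for i.i.d.\ observations from a probability density under Hellinger loss; here, by contrast, the intensities $tf(B_i)$ are not normalized and the observations are independent Poissons of varying parameters. The central challenge will therefore be the careful derivation of the ``basic inequality'' in Step~1, tracking how the two-sided boundedness $b^{-1}\leq f\leq b$ enters both the Hellinger/$L^2$ equivalence and the Bernstein tail bounds of Step~2, and handling the $t^{-1/2}$ correction from the exceptional event where the log-ratios cease to be well-behaved (which is what forces the additive $t^{-1/2}$ term in the final rate, independently of $\gamma_{t,m}$).
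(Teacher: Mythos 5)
Your plan is correct in its overall architecture and shares the paper's skeleton (basic inequality from MLE optimality, Hellinger-to-$L^2(\Omega)$ equivalence via the two-sided bound $b^{-1}\leq f\leq b$, then peeling against the Le Cam equation), but it diverges from the paper in the one step that carries most of the technical weight: the control of the centered empirical process. You propose to bound $\sup_f\big|\sum_i (Y_i-tf_0(B_i))g_f(B_i)\big|$ directly as a sum of independent, non-identically distributed centered Poisson variables via a Bernstein-type bracketing maximal inequality. The paper instead exploits the point-process structure: it writes $X_j=\sum_{i=1}^T\mathds{1}(Y_i\in B_j)$ with $T\sim\mathrm{Poi}(tZ)$ and $Y_i\stackrel{\text{iid}}{\sim}\Pi_m f_0/Z$, conditions on $T=\tau$ with $|\tau-Zt|\leq t^{3/4}$ (Poisson concentration), and then applies Talagrand's concentration inequality for genuinely i.i.d.\ empirical processes $\bbG_\tau$, picking up an extra drift term $\frac{Zt-T}{t}\Delta\hat f_{t,m}$ that is absorbed because $\Delta f\asymp\|\Pi_m(f-f_0)\|^2_{L^2(\Omega)}$. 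Your route is more direct and avoids the conditioning bookkeeping, at the price of needing a maximal inequality for independent sums with \emph{sub-exponential} (not bounded) increments; the paper's route buys access to off-the-shelf i.i.d.\ machinery (Talagrand plus the standard bracketing bound on $\bbE\|\bbG_\tau\|_\calG$) at the price of the coupling and the $T$-conditioning.

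Two points you should repair before this becomes a proof. First, in Step~2 you invoke a maximal inequality "for sums of independent bounded random variables": the weights $g_f(B_i)$ are bounded by $2\log b$, but the increments $Y_i-tf_0(B_i)$ are centered Poisson and hence unbounded, so you need a bracketing chaining bound under a Bernstein moment condition (e.g., van de Geer's Lemma~5.8 or her Chapter~8 results for non-i.i.d.\ sums), not a bounded-increments result; the variance calibration $\sum_i tf_0(B_i)g_f(B_i)^2\asymp t\|\Pi_m(f-f_0)\|^2_{L^2(\Omega)}$ that you implicitly use does go through under $b^{-1}\leq f\leq b$. Second, your diagnosis of the additive $t^{-1/2}$ term is off: under the standing two-sided bound the log-ratios never degenerate, so there is no "exceptional event" of that kind; in the paper the $1/t$ contribution to the squared risk comes from integrating the exponential tail $\exp(-Ct\delta)$ over $\delta$ (and from the fluctuation of $T$ about $Zt$), and the same mechanism would close your Step~3.
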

Before proving Lemma~\ref{lem:chaining}, let us explain why it implies 
Lemma~\ref{lem:chaining_ours}. As our respective function class we consider the collection of uniform Gaussian mixtures $\calF = \{K\ast \mu \colon \mu \in \calU_k(\Theta)\}$. Each element in that class is lower and upper bounded on $\Omega$ by positive numbers due to compactness of $\Theta$ and $\Omega$, and using \Cref{thm:equivalences} it also evident that the class $\calF$ is compact in uniform norm due to compactness of $\calU_k(\Theta)$ with respect to the moment difference metric $M_k$. Compactness of $\calF$ also ensures the existence of a maximum likelihood estimator for every realization of $(Y_1, \dots, Y_m)$, and according to \citet[Proposition 7.33]{bertsekas1996stochastic} it can even be chosen to be Borel measurable in terms of $(Y_1, \dots, Y_m)$. Applying Lemma~\ref{lem:chaining}
to $\calF$, we obtain 
$$\bbE \big\| \Pi_m [K\star(\hat\mu_{t,m}-\mu)]\big\|_{L^2(\Omega)}^2 \lesssim \left(\frac 1 {\sqrt t} + \gamma_{t,m}\right)^2,$$
where we used the fact that the densities $K\star\mu$ are bounded from below 
by a positive constant over $\Omega$. 
Now, since the elements of $\calF$ are Lipschitz (with a uniform Lipschitz modulus), 
we may apply Lemma~\ref{lem:Pi_error} to deduce
$$\bbE \big\| K\star(\hat\mu_{t,m}-\mu)\big\|_{L^2(\Omega)}^2 \lesssim
 \left(\frac 1 {\sqrt t} + \gamma_{t,m} + m^{-1/d}\right)^2,$$
 where the implicit constant depends only on $\Omega,\Sigma$.
 
 It thus remains to prove Lemma~\ref{lem:chaining}. 
Throughout the proof of this Lemma, 
 $C_0,C_1,\dots > 0$ denote generic constants which depend only on $\Omega,b$.
Likewise, the symbols $\asymp$ and $\lesssim$ hide constants depending only on $\Omega,b$.

Since all elements of $\calF$ take values in the set $[b^{-1},b]$ over
  $\Omega$, it must also follow that all elements
  of the class 
  $$\calF_m := \{\Pi_m f: f\in \calF\},~~~m=1,2,\dots$$
  take values in the set $[b^{-1},b]$. 
In particular, it follows that the
real number $Z =  \int_\Theta \Pi_m f_0(x)dx$
satisfies   $Z\asymp 1$.
Now,  define  
$$T \sim \mathrm{Poi}\left( t   Z\right),\quad
\text{and,}\quad 
\left.Y_i\sim \Pi_m f_0 /Z,\quad i=1,2,\dots\right.$$
It is easy to see that these random variables may be coupled with $X_1,\dots,X_m$
in such a way that the following equality holds almost surely,
$$X_j = \sum_{i=1}^T \mathds{1}(Y_i\in B_j),\quad j=1,\dots,m.$$ 
Turning now to the proof, we know  that
the log-likelihood function value at $\hat f_{t,m}$
is greater than its value at $f_0$, so that
$$\sum_{i=1}^m \left\{  X_i  \log \frac{  f_0 (B_i)}{\hat f_{t,m}(B_i)} - t\big(f_0 (B_i) -  \hat f_{t,m}(B_i)\big)\right\} 
\leq 0,$$
or equivalently, 
\begin{align}
\label{eq:basic_ineq_reduc}
\sum_{i=1}^m \left( f_0(B_i) \log \frac{ f_0 (B_i)}{\hat  f_t(B_i)} -  f_0 (B_i) + \hat f_{t,m}(B_i)\right)\leq 
\sum_{i=1}^m \left(f_0 (B_i) - \frac{X_i}{t}\right)  \log \frac{ f_0 (B_i)}{ \hat f_{t,m}(B_i)},
\end{align}
The left-hand side of the above display is precisely the Kullback-Leibler divergence between
the laws of the inhomogeneous Poisson point processes
with intensity functions $\Pi_m f_0 |_\Omega$ and $\Pi_m \hat f_{t,m}|_\Omega$~\citep{anestis2006,hohage2016inverse},
and is bounded from below by a multiple of their squared Hellinger distance. 
To elaborate, let 
 $H_\Omega^2(f,g) =
  \int_\Omega \big( f^{1/2} -g^{1/2}\big)^2$ be
   a restriction of the Hellinger distance  between any two densities $f,g$ on $\bbR^d$
   to the domain~$\Omega$. One has the inequality~\citep{borwein1991}
$$(u-v)^2 \leq 2\left( u +  v\right) \left(u \log \frac u v - u + v\right),\quad \text{for all } u\geq 0,v > 0,$$
and hence, 
$$\left(\frac{u-v}{\sqrt u + \sqrt v}\right)^2 \leq 2  \left(u \log \frac u v - u + v\right),\quad \text{for all } u\geq 0,v > 0,$$
which, together with equation~\eqref{eq:basic_ineq_reduc}, implies that 
\begin{equation}
\label{eq:hellinger_ub_mle_step}
\begin{aligned}
H_\Omega^2(\Pi_m \hat f_{t,m},\Pi_m f_0) 
 &\leq \sum_{i=1}^m \left(f_0 (B_i) - \frac{X_i}{t}\right)  \log \frac{ f_0 (B_i)}{ \hat f_{t,m}(B_i)} \\
 &= \int_\Omega \log \frac{\Pi_m \hat f_{t,m}}{\Pi_m f_0 } d\left(\frac 1 t \sum_{i=1}^T \delta_{Y_i} - \Pi_m f_0 \right).
 \end{aligned}
\end{equation}
Now,
  since all elements of $\calF$ take values in the set $[b^{-1},b]$ over
  $\Omega$, it must also follow that all elements
  of the class 
  $$\calF_m = \{\Pi_m f: f\in \calF\}$$
  take values in the set $[b^{-1},b]$.
  In particular, these functions are bounded from below by a positive constant, thus
   one readily has the equivalence 
  $H_\Omega  \asymp \|\cdot\|_{L^2(\Omega)}$ over $\calF_m$.
Now, let $P$ be the probability distribution with density $\Pi_m f_0 / Z$,
and let $P_\tau = (1/\tau)\sum_{i=1}^\tau \delta_{Y_i}$ 
be the empirical measure
based on the first $\tau\in\bbN$ observations $Y_i$.
From equation~\eqref{eq:hellinger_ub_mle_step}, we obtain
\begin{align}
\label{eq:basic_inequality}
\nonumber 
\|\Pi_m(\hat f_{t,m}-f_0)\|_{L^2(\Omega)}^2 
 &\lesssim  \int_\Omega \log \frac{\Pi_m \hat f_{t,m}}{\Pi_m f_0 } d\left(\frac 1 t \sum_{i=1}^T \delta_{Y_i} - ZP \right) \\
 \nonumber
 &= \frac T t \int_\Omega \log \frac{\Pi_m \hat f_{t,m}}{\Pi_m f_0 } d\left( P_T - \frac{tZ}{T} P \right) \\
 &= \frac T t \bbG_T \hat f_{t,m} + \frac{Zt-T}{t} \Delta \hat f_{t,m},
\end{align}
where we abbreviate, for all $f\in \calF$, 
\begin{align*}
\Delta f &= \KL_\Omega(\Pi_m f_0\|\Pi_mf) \equiv \int_\Omega \log \frac{\Pi_m f_0}{\Pi_m f} 
     \Pi_m f_0, \\
\bbG_\tau f &= 
    \int_\Omega \log \frac{\Pi_m f}{\Pi_m f_0} d(P_{\tau} - P),\quad \tau=1,2,\dots,
\end{align*}
and we define $\bbG_0$ arbitrarily.
With these preparations in place, 
we can now begin bounding the squared $L^2(\Omega)$ risk of $\Pi_m \hat f_{t,m}$. 
Let $\tau_0 = \lfloor Zt -t^{3/4}\rfloor$ and $\tau_1=\lceil Zt+t^{3/4}\rceil$.
For any $\delta > 0$,  we have
\begin{align}
\label{eq:mle_pf_decomp}
\nonumber 
\bbP & (\|\Pi_m(\hat f_{t,m} - f_0)\|_{L^2(\Omega)}^2 > \delta) \\
\nonumber
 &\leq \bbP(|T-Zt| > t^{3/4}) + \sum_{\tau=\tau_0 }^{\tau_1} \bbP(\|\Pi_m(\hat f_{t,m} - f_0)\|_{L^2(\Omega)}^2 >  \delta 
 \,\big|\, T=\tau)\bbP(T = \tau) \\
 &\leq 2 \exp(-C_0\sqrt t) + \sum_{\tau=\tau_0 }^{\tau_1} \bbP(\|\Pi_m(\hat f_{t,m} - f_0)\|_{L^2(\Omega)}^2 >  \delta 
 \,\big|\, T=\tau)\bbP(T=\tau),
\end{align}
for a small enough constant $C_0 > 0$, where 
the final inequality follows by standard tail bounds for Poisson
random variables (cf. Lemma~\ref{lem:poisson_concentration}), together
with the fact that $Z \asymp 1$. 
Let us now bound the conditional probabilities appearing in the above display.
Given $\tau \in \{ \tau_0,\dots,\tau_1\}$, 
the basic inequality~\eqref{eq:basic_inequality} implies
\begin{align*}
  \bbP&\left(\|\Pi_m(\hat f_{t,m} - f_0)\|_{L^2(\Omega)}^2 > \delta  \,\big|\,
  T=\tau\right) \\
   &\leq \bbP\left(\sup_{f\in \calF:\|\Pi_m (f-f_0)\|_{L^2(\Omega)}^2 >  \delta } 
 \frac{\tau}{t}|\bbG_\tau f| + \left|\frac{\tau-Zt}{t}\right|\Delta f - C_1  \|\Pi_m (f-f_0)\|_{L^2(\Omega)}^2 > 0 \right) \\
   &\leq \bbP\left(\sup_{f\in \calF:\|\Pi_m (f-f_0)\|_{L^2(\Omega)}^2 > \delta } 
  |\bbG_\tau f| + t^{-\frac 1 4}\Delta f - C_2  \|\Pi_m (f-f_0)\|_{L^2(\Omega)}^2 > 0 \right),
 \end{align*}
 for $C_1>0$ denoting the hidden constant from the basic inequality \eqref{eq:basic_inequality}, and $C_2 > 0$ is a large enough constant, where we used the fact that 
 $\tau \asymp t$ and $|\tau-Zt| \lesssim  t^{3/4}$, by definition of $\tau_0$ and $\tau_1$.
We note that the above probabilities are all well-defined since the involved quantities are all Borel-measurable; indeed, since $\calF$ has a countable dense subset in uniform norm and since for each $\tau \in \{ \tau_0,\dots,\tau_1\}$ the functionals $\mathbb{G}_{\tau}(\cdot)$ and $\Delta(\cdot)$ are continuous in uniform norm, the relevant suprema also can be taken over the countable dense subset which ensures the measurability of the supremum. In all subsequent displays we tacitly utilize this fact, which enables us to overcome potential measurability issues. 

 Now, recalling that all elements of $\calF_m$ are bounded from above and below
by positive constants over $\Delta$, we must have for all $f \in \calF$, 
$$\Delta f = \mathrm{KL}_\Omega(\Pi_m f \|\Pi_m f_0)
\asymp \|\Pi_m( f - f_0)\|_{L^2(\Omega)}^2,$$ and thus, 
assuming without loss of generality that $t$ is so large that $t^{-1/4} < C_2/2$, we have, 
\begin{align*}
  \bbP&\left(\|\Pi_m(\hat f_{t,m} - f_0)\|_{L^2(\Omega)}^2 > \delta  \,\big|\,
  T=\tau\right) \\ 
   &\leq \bbP\left(\sup_{f:\|\Pi_m (f-f_0)\|_{L^2(\Omega)}^2 >  \delta } 
  |\bbG_\tau f|  - C_3 \|\Pi_m (f-f_0)\|_{L^2(\Omega)}^2 > 0 \right).
\end{align*}
We now bound the above probability using a peeling argument inspired by~\cite{vandegeer2000}.
Letting  $S = \min\{s \geq 0: 2^{s+1}\delta > 1\}$, we obtain 
\begin{align*}
\bbP&\left(\|\Pi_m(\hat f_{t,m} - f_0)\|_{L^2(\Omega)}^2 > \delta \,\big|\,
  T=\tau\right) \\ 
 &\leq \sum_{s=0}^S \bbP\left(\sup_{f: 2^{s}\delta \leq 
 \|\Pi_m (f-f_0)\|_{L^2(\Omega)}^2 \leq 2^{s+1}\delta} 
 |\bbG_\tau f| >  C_3 2^{s+1}\delta     \right) \\
 &\leq \sum_{s=0}^S \bbP\left(\sup_{f \in \calF_m(\sqrt{2^{s+1}\delta};f_0)} 
 |\bbG_\tau f|   > C_4 2^s\delta \right).
\end{align*} 
Next, we will use the following fact,
which follows from standard
expectation bounds for suprema of empirical processes, together 
with Talagrand's concentration inequality.
\begin{lemma}
\label{lem:talagrand}
Let $\calG\subseteq \calF_m - \Pi_m f_0$. Then, there exist constants $C_5,c_5 > 0$ depending only on $b$ 
such that if 
\begin{equation}
\label{eq:le_cam_eq_general}
\sqrt{\tau} \gamma^2 \geq \calJ_{[]}(c_5\gamma,\calG,L^2(\Omega))~~\text{for  } \gamma = \sup_{g\in\calG} \|g\|_{L^2(\Omega)},
\end{equation}
then for any $v > 0$,
$$\bbP\left( \sup_{g\in\calG} |\bbG_\tau g| \geq  
C_5\tau^{-1/2}\calJ_{[]}(c_5\gamma,\calG,L^2(\Omega)) + v\right) 
\leq \exp\left(-\frac{Z\tau v^2}{C_5(\gamma^2 +  v)}\right),$$
\end{lemma}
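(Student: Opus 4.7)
\textbf{Proof plan for Lemma~\ref{lem:talagrand}.} The strategy is the standard two-step route: first bound $\bbE\sup_{g\in\calG}|\bbG_\tau g|$ by a bracketing-entropy integral using maximal inequalities for empirical processes, then upgrade to a tail bound via Talagrand's (Bousquet's) concentration inequality. The samples $Y_1,\dots,Y_\tau$ are i.i.d.\ draws from the probability measure $P$ with density $\Pi_m f_0/Z$, so $\bbG_\tau g = \int \log(1 + g/\Pi_m f_0) d(P_\tau-P)$ is a centered empirical process of the standard type.

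First, I would reduce $L^2(\Omega)$ bracketing of $\calG$ to $L^2(P)$ bracketing of the log-transformed class $\calH = \{\log(1 + g/\Pi_m f_0) : g \in\calG\}$. Since $\Pi_m f, \Pi_m f_0 \in [b^{-1},b]$, the map $g \mapsto \log(1 + g/\Pi_m f_0)$ is both Lipschitz and bounded with constants depending only on $b$. Consequently any $L^2(\Omega)$ bracket $[\ell,u]$ for $g\in\calG$ yields an $L^2(\Omega)$ (and hence, since $dP = (\Pi_m f_0/Z)d\calL$ with $\Pi_m f_0/Z \asymp 1$, an $L^2(P)$) bracket for $\log(1 + g/\Pi_m f_0)$ of comparable size. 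Thus $N_{[]}(c_b u,\calH,L^2(P)) \leq N_{[]}(u,\calG,L^2(\Omega))$ for an appropriate constant $c_b$, and $\calJ_{[]}(\cdot,\calH,L^2(P)) \lesssim_b \calJ_{[]}(c_b \cdot,\calG,L^2(\Omega))$. Moreover $\calH$ has a uniform envelope bounded by a constant depending on $b$, and $\sup_{h\in\calH}\|h\|_{L^2(P)} \lesssim_b \gamma$.

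Next, under hypothesis~\eqref{eq:le_cam_eq_general}, the standard bracketing maximal inequality for i.i.d.\ empirical processes (see, e.g., \citealt[Lemma~3.4.2]{vandervaart2023}) yields
$$\bbE\sup_{g\in\calG}|\bbG_\tau g| \lesssim \tau^{-1/2}\calJ_{[]}(c_5\gamma,\calG,L^2(\Omega)),$$
for a constant $c_5$ depending on $b$. To pass from this expectation bound to the claimed tail bound, I would invoke Bousquet's version of Talagrand's inequality for suprema of bounded empirical processes (cf.\ \citealt[Theorem~3.3.9]{vandervaart2023}). That inequality requires the envelope $U$ and the weak variance $V := \sup_{h\in\calH}\mathrm{Var}_P(h) \lesssim_b \gamma^2/Z$, using that $\|h\|_{L^2(P)}^2 = \int h^2 dP = Z^{-1}\int h^2 \Pi_m f_0 \,d\calL \asymp Z^{-1}\|h\|_{L^2(\Omega)}^2$. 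The $1/Z$ in the variance is exactly what produces the factor $Z\tau$ in the exponent on the right-hand side of the claimed bound, since after multiplying by $\tau$ one gets $\tau V \asymp \tau\gamma^2/Z$, i.e., an exponent of order $-Z\tau v^2/(\gamma^2 + v)$.

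The main obstacle will be handling the equivalence of $L^2(\Omega)$ and $L^2(P)$ with the correct tracking of the constant $Z$, so that the variance proxy in Talagrand's inequality produces the precise dependence on $Z\tau$ stated in the conclusion (rather than $\tau$ alone). The rest is essentially bookkeeping: the Lipschitz and boundedness constants in the log transform absorb into the constants $C_5,c_5$ that depend only on $b$, and the Le Cam condition on $\calG$ transfers to the analogous condition on $\calH$ up to these same constants.
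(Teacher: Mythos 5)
Your proposal is correct and follows essentially the same route as the paper's proof: a bracketing maximal inequality for the expectation of the supremum (the paper cites Gin\'e--Nickl, Proposition 3.5.15, in place of your van der Vaart--Wellner reference) combined with Talagrand's concentration inequality, together with the equivalence of the $L^2(\Omega)$ and $L^2(P)$ norms to move between bracketing entropies. The only difference is cosmetic: you make explicit the Lipschitz transfer of brackets from $\calG$ to the log-ratio class $\calH$ and the tracking of $Z$ through the variance proxy, both of which the paper leaves implicit (absorbing $Z\asymp 1$ into the constants).
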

We provide a proof in Appendix~\ref{app:pf_lem_talagrand} below.  
Now, by condition~\eqref{eq:le_cam_bracketing_condition}  with an appropriate choice of $C_0,c_0 > 0$, it must hold that
\begin{align*}
C_4 2^{s}\delta  / 2 
 &> 
C_2\tau^{-\frac 1 2} \calJ_{[]}\left(c_5 \sqrt{ 2^{s+1}\delta }, \calF_m(\sqrt{2^{s+1}\delta};f_0), L^2(\Omega)\right) =: J_m,
\end{align*}
for all  $\delta \geq C_6 \gamma_{\tau,m}^2$. By Lemma~\ref{lem:talagrand}, we obtain
\begin{align*}
\bbP&\left(\|\Pi_m(\hat f_{t,m} - f_0)\|_{L^2(\Omega)}^2 > \delta \,\big|\,
  T=\tau\right) \\
&\leq \sum_{s=0}^S \bbP\left(\sup_{f\in \calF_m(\sqrt{2^{s+1}\delta};f_0)}
 |\bbG_\tau f|   > J_m+ C_4 2^{s-1}\delta \right) \\
&\leq \sum_{s=0}^S \exp\left(-\frac{\tau (2^{s-1}\delta)^2}{C_5(2^{s+1}{\delta} + C_4 2^{s-1}\delta)}\right) \\
&\leq \sum_{s=0}^S \exp\left(-C_7 \tau 2^{s}\delta \right)
\leq \sum_{s=1}^{S+1} \exp\left(-C_7 \tau s \delta \right) \lesssim \exp(-C_8 \tau\delta).
\end{align*} 
Returning to equation~\eqref{eq:mle_pf_decomp}, we have thus shown
that for all $\delta \geq C_6 \gamma_{t,m}^2$, 
\begin{align*}
\bbP\left(\|\Pi_m(\hat f_{t,m} - f_0)\|_{L^2(\Omega)}^2 > \delta \right) 
  &\lesssim   \exp(-C_0 \sqrt t) + \sum_{\tau=\tau_0}^{\tau_1}
  \bbP(T=\tau)\exp(-C_8 \tau\delta) \\
  &\leq   \exp(-C_0\sqrt t) + \exp(-C_9 \tau\delta)\sum_{\tau=\tau_0}^{\tau_1}
  \bbP(T=\tau) \\
   &\leq  \exp(-C_0\sqrt t) + \exp(-C_9 \tau\delta).
\end{align*}
Finally, since $\big\|\Pi_m (\hat f_{t,m} - f_0)\big\|_{L^2(\Omega)}^2 \leq C_{10}$, 
we deduce
\begin{align*}
\bbE \big\|\Pi_m (\hat f_{t,m} - f_0)\big\|_{L^2(\Omega)}^2
 &\leq \int_0^{C_{10}} \bbP\Big(\big\|\Pi_m (\hat f_{t,m} - f_0)\big\|_{L^2(\Omega)}^2 \geq \delta\Big)d\delta \\
 &\lesssim \gamma_{t,m}^2 + \exp(-C_0\sqrt t) + \int_{\gamma_{t,m}^2}^{C_{10}} \exp(-C_9t\delta)d\delta  
 \lesssim \gamma_{t,m}^2 + \frac 1 t.
 \end{align*}
The claim follows.\qed

\subsubsection{Proof of Lemma~\ref{lem:talagrand}}
\label{app:pf_lem_talagrand}
Write $\|\bbG_\tau\|_\calG := \sup_{g\in\calG} \big|\bbG_\tau g\big|$
and $\gamma_P = \sup_{g\in\calG} \|g\|_{L^2(P)}$. 
By Talagrand's concentration inequality for suprema
of empirical processes (in its form stated in Corollary~2.15.7 
in~\cite{vandervaart2023}), there exists a constant $C > 0$
such that
$$\bbP\left(  \|\bbG_\tau\|_{\calG}  \geq  \bbE\|\bbG_\tau\|_{\calG} + v\right) 
\leq \exp\left(-\frac{C\tau v^2}{\gamma_P^2 +  \bbE\|\bbG_\tau\|_{\calG} + v}\right).$$ 
Furthermore, the mean of $\|\bbG_\tau\|_{\calG}$ can be bounded
from above as follows~(see, for instance, Proposition~3.5.15 
of~\cite{gine2016}), for a  small enough constant $c_5 > 0$, 
$$\bbE\|\bbG_\tau\|_{\calG} \lesssim 
\frac 1 {\tau}\left( \sqrt{\tau} + \frac 1 {\gamma_P^2} \calJ_{[]}(c_5\gamma_P,\calG,L^2(P))\right)
\calJ_{[]}(c_5\gamma_P,\calG,L^2(P)).$$
Now, notice that $\|\cdot\|_{L^2(\Omega)} \asymp_b \|\cdot\|_{L^2(P)}$
and $\gamma_P \asymp \gamma$. 
Thus,  by Lemma~\ref{lem:bracketing_number_equiv},
 up to increasing $C_5,c_5> 0$, the above display implies
$$\bbE\|\bbG_\tau\|_{\calG} \lesssim 
\frac 1 {\tau}\left( \sqrt{\tau} + \frac 1 {\gamma^2} \calJ_{[]}(c_5\gamma ,\calG,L^2(\Omega))\right)
\calJ_{[]}(c_5\gamma,\calG,L^2(\Omega)) \lesssim \frac 1 {\sqrt{\tau}}   \calJ_{[]}(c_5\gamma_P,\calG,L^2(\Omega)),$$
where the final inequality follows from condition~\eqref{eq:le_cam_eq_general}. 
Combining these facts implies that, for a large enough constant $C_5 > 0$, it holds
for all $v > 0$ that
$$\bbP\left(  \|\bbG_\tau\|_{\calG}  \geq  
C_5\tau^{-1/2}\calJ_{[]}(c_5\gamma,\calG,L^2(\Omega)) + v\right) 
\leq \exp\left(-\frac{\tau v^2}{C_5(\gamma^2 +  v)}\right),$$
as claimed.
\qed

\subsection{Proof of Lemma~\ref{lem:local_bracketing}}
\label{app:pf_lem_local_bracketing}
Let $\mu\in\calU_k(\Theta)$. We write $f_0 = K\star \mu$ and 
we use the abbreviations
\begin{align*}
\calF &= \{K\star\nu: \nu\in\calU_k(\Theta)\},\\
\calF_m &= \{\Pi_m f:f\in\calF\},\\
\calF(\gamma;f_0) &= \{f\in\calF:\|f-f_0\|_{L^2(\Omega)}\leq \delta\},\\
\calF_m(\gamma;f_0) &= \{f\in\calF_m:\|f-\Pi_mf_0\|_{L^2(\Omega)}\leq \delta\}.
\end{align*}
To prove the claim, we begin with a naive bound on the bracketing entropy
of $\calF_m(\gamma;f_0)$ which holds for all  $u,\delta > 0$.
Notice that $\Pi_m:L^2(\bbR^d) \to L^2(\Omega)$ is a non-expansive mapping, 
thus
\begin{align*}
\log N_{[]}(u,\calF_m(\gamma;f_0),L^2(\Omega))
 \leq \log N_{[]}(u,\calF_m,L^2(\Omega))  
 \leq \log N_{[]}(u,\calF,L^2(\Omega)).
\end{align*}
Now, recall from Proposition~\ref{cor:Lp_Omega_Rd} and Theorem~\ref{thm:equivalences}
that the $L^2(\Omega)$ distance is equivalent to the Hellinger
distance over $\calF$. Thus, by Lemma~\ref{lem:bracketing_number_equiv}, 
there is a constant $c>0$ such that 
\begin{align*} 
\log N_{[]}(u,\calF,L^2(\Omega))
\leq \log N_{[]}(cu,\calF,H)
\leq \log N_{[]}(cu,\{K\star \nu: \nu\in\calU_k(\Theta)\},H)
\lesssim \log(1/u),
\end{align*}
where the final inequality follows from Theorem~3.1 of~\cite{ghosal2001},
when $d=1$, or Lemma~2.1 of~\cite{ho2016a}
when $d>1$. This proves one of the claimed bounds,
and we now provide a sharper bound in the regime $u,\gamma \gtrsim 1/m $.  
Notice first that the elements of $\calF$ are Lipschitz
with a uniform Lipschitz modulus, thus, by Lemma~\ref{lem:Pi_error},
there exists a constant $C$ depending only on 
$L$ and $d$ such that
for any $h \in \calF$, 
\begin{align}
\label{eq:discr_err}
\|h -\Pi_m h\|_{L^2(\Omega)}  \leq C m^{-1/d}.
\end{align}
Therefore, for any $u,\gamma \geq 2C m^{-1/d}$, one has
\begin{align*}
N_{[]}(u, \calF_m(\gamma;f_0),L^2(\Omega)) 
 &\leq N_{[]}\Big(u- Cm^{-1/d}, 
          \{f\in \calF: \|\Pi_m(f-f_0)\|_{L^2(\Omega)}\leq \gamma\} ,L^2(\Omega)\Big) \\    
 &\leq N_{[]}\big(u/2, 
        \{f\in \calF: \|f-f_0\|_{L^2(\Omega)}\leq \gamma + Cm^{-1/d}\},L^2(\Omega)\big) \\
 &\leq N_{[]}(u/2, \calF(2\gamma;f_0),L^2(\Omega)) \\
 &\leq N_{[]}(u/2, \calF(2\gamma;f_0),L^\infty(\Omega)) \\
 &\leq N(u/4,\calF(2\gamma;f_0),L^\infty(\Omega)).
\end{align*}
It thus remains to bound the covering number on the final line of the above display, 
which, in view of Theorem~\ref{thm:equivalences}, will 
be bounded from above by a 
covering number with respect to the class
 $$\calM(\epsilon) = \left\{ (m_\alpha(\nu))_{\alpha\in I}: \nu\in\calU_k(\Theta),
 M_k(\mu,\nu)\leq\epsilon\right\}\subseteq \bbR^{|\calI|},\text{for some } \epsilon > 0,$$
where we write $\calI = \{\alpha\in \bbN_0^d\setminus\{0\}: \|\alpha\|_1\leq k\}.$
Indeed, recall from Theorem~\ref{thm:equivalences} that
there exists $\lambda > 0$ such that
$$\lambda^{-1} M_k(\tilde\nu,\nu) \leq \|K\star(\tilde\nu-\nu)\|_{L^2(\Omega)}
\leq \|K\star(\tilde\nu-\nu)\|_{L^\infty(\Omega)}
 \leq \lambda M_k(\tilde\nu,\nu).$$  
Let $N = N(u/(8\lambda),\calM(2\lambda\gamma),\|\cdot\|_1)$,
and let $\mathsf{m}^{(1)},\dots,\mathsf{m}^{(N)}$ be a $(u/(8\lambda))$-cover
of $\calM(2\lambda\gamma)$ with respect to the $\|\cdot\|_1$ metric. Define
$$\nu^{(i)} = \argmin_{\tilde\nu\in\calU_k(\Theta)}
\sum_{\alpha\in\calI} |m_\alpha(\tilde \nu) - \mathsf{m}_\alpha^{(i)}|, 
\quad i=1,\dots,N.$$
We will argue that $\{K\star \nu^{(i)}:1\leq i \leq N\}$
forms a $(u/4)$-cover of $\calF(2\gamma)$.  
Indeed, let $K\star\nu\in\calF(2\gamma)$. 
Then, we must have
$$M_k(\mu,\nu) \leq \lambda \|K\star(\mu-\nu)\|_{L^2(\Omega)}
 \leq 2\lambda\gamma,$$
thus $(m_\alpha(\nu))_{\alpha\in\calI} \in\calM(2\lambda\gamma)$. It follows that
there exists $1 \leq i \leq N$ such that $\sum_{\alpha\in\calI} |m_\alpha(\nu)-\mathsf{m}_\alpha^{(i)}| \leq u/(8\lambda)$,
and hence
\begin{align*}
\|K\star(\nu-\nu^{(i)})\|_{L^\infty(\Omega)}
 &\leq \lambda M_k(\nu, \nu^{(i)}) \\
 &\leq \lambda \left(\sum_{\alpha\in\calI} |m_\alpha( \nu) - 
\mathsf{m}_\alpha^{(i)}| + \sum_{\alpha\in\calI} |
\mathsf{m}_\alpha^{(i)} - m_\alpha(\nu^{(i)})|\right) \\
 &\leq 2\lambda \sum_{\alpha\in\calI} |m_\alpha( \nu) - 
\mathsf{m}_\alpha^{(i)}| \leq u/4 . 
\end{align*}
We have thus shown that
$$N(u/4,\calF(2\gamma),\|\cdot\|_{L^\infty}(\Omega)) \leq 
N = N(u/(8\lambda),\calM(2\lambda\gamma),\|\cdot\|_1).$$
Finally, notice that $\calM(2\lambda\gamma)$ is simply an $\ell_1$-ball
of radius $2\lambda\gamma$ in $\bbR^{|\calI|}$, thus,
by Example~5.8 of~\cite{wainwright2019},  
$$N(u/(8\lambda),\calM(2\lambda\gamma),\|\cdot\|_1) \leq \left(1 + \frac{C\gamma}{u}\right)^{|\calI|},$$
for a constant $C > 0$ depending on $\lambda$. 
The claim follows from here after taking the logarithm and using the numerical inequality $\log(1+x)\leq x$ for $x\geq 0$.\qed

\subsection{Proof of Proposition~\ref{prop:mle_gaussian}$(ii)$}
\label{app:pf_mle_ii}
Proposition~\ref{prop:mle_gaussian}$(ii)$
can be proven by a similar argument as Proposition~\ref{prop:mle_gaussian}$(i)$, 
thus we only provide a brief outline of the proof. 
Given a kernel $K$ satisfying Assumption~\ref{ass:kernel}$(i)$,
define again the classes
\begin{align*}
\calF = \{K\star\mu:\mu\in\calU_k(\Theta)\},\quad 
\calF_m = \left\{ \Pi_m f: f\in\calF\right\}.
\end{align*}
Theorem~\ref{thm:momentL2-bounds_all} 
implies
\begin{equation}
\bbE \Big[M_{2k-1}^2(\hat\mu_{t,m}, \mu)\Big] \lesssim  
\bbE\|K\star (\hat\mu_{t,m}-\mu)\|^2_{L^2(\Omega)}
\lesssim \bbE \Big[H^2 (K\star \hat\mu_{t,m}, K\star\mu)\Big],
\end{equation}
where the final inequality uses the boundedness of $K$. 
Unlike in our proof of Lemma~\ref{lem:chaining}, the convolutions
$K\star \mu$ are not bounded from below over $\Omega$.
However, they do integrate to unity over~$\Omega$, 
and  in this case, the final expectation in the above display 
can   be bounded from above using essentially the same strategy
as for upper bounds on the nonparametric MLE
under the Hellinger distance (for instance, see
 Theorem~7.4 of~\cite{vandegeer2000}), with appropriate
 modifications
 to account for the presence of binning and Poisson fluctuations
  (as in Lemma~\ref{lem:chaining}). 
 Doing so leads to the upper bound
$$\bbE \Big[H^2 (K\star \hat\mu_{t,m}, K\star\mu)\Big]
\lesssim \left(\frac 1 {\sqrt t} +   \gamma_{t,m}  + m^{-1/d}\right)^2,$$
where, for appropriate constants $C_0,c_0 > 0$, 
$\gamma_{t,m}$ is defined as the smallest positive number such that for all $\gamma \geq 
\gamma_{t,m}$, one has
\begin{align} 
\label{eq:lecam_again}
\sqrt t \gamma^2 \geq 
C_0  \calJ_{[]}(c_0\gamma, \calF_m,H).
\end{align}
Furthermore, the global Hellinger metric entropy can  easily be bounded as follows,
due to the Lipschitz assumption on the kernel $K$.
\begin{lemma}
There exists a constant $C = C(\Theta,k,d) > 0$ such that for all $m\geq 1$
and $u \in (0,1)$,  
$$N(u,\calF_m, H)\leq C u^{2dk}.$$
\end{lemma}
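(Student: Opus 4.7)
The plan is to identify the class $\calU_k(\Theta)$ with a subset of the parameter space $\Theta^k$, lift a Euclidean covering of that parameter space to an $L^\infty(\Omega)$ covering of $\calF$ via the Lipschitz property of $K$, convert the $L^\infty$ covering to a Hellinger covering, and finally push the bound through the binning operator $\Pi_m$ via the data-processing inequality for Hellinger distance. (Note that the exponent stated in the Lemma should read $-2dk$ rather than $2dk$; we prove the bound in this form.)

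First, I would observe that for any $\mu = (1/k)\sum_i \delta_{\theta_i}$ and $\nu = (1/k)\sum_i \delta_{\eta_i}$ in $\calU_k(\Theta)$, the convolutions satisfy
\[
\|K\star\mu - K\star\nu\|_{L^\infty(\bbR^d)} \leq \frac{L}{k}\sum_{i=1}^k \|\theta_{\sigma(i)} - \eta_i\|
\]
for any permutation $\sigma\in\calS(k)$ and the Lipschitz constant $L$ of $K$ (which exists by Assumption~\ref{ass:kernel}$(i)$ and Definition~\ref{def:regularKernel}). Consequently, a standard volumetric argument yields an $\epsilon$-cover of $\Theta^k$ (identified modulo permutations) of cardinality at most $C_1 \epsilon^{-dk}$ for all $\epsilon>0$, which lifts to an $L\epsilon$-cover of $\calF$ with respect to $\|\cdot\|_{L^\infty(\Omega)}$.

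Next, I would convert this $L^\infty$ cover to a Hellinger cover. Since $K$ is bounded and compactly supported, $K\star\mu$ is a bounded probability density on $\bbR^d$. Using the elementary inequality $(\sqrt{f}-\sqrt{g})^2\leq |f-g|$ valid for any $f,g\geq 0$, together with $H^2(f,g) = \tfrac{1}{2}\int(\sqrt f-\sqrt g)^2\,d\lambda$, we obtain
\[
H(K\star\mu, K\star\nu) \leq \sqrt{\tfrac{1}{2}\|K\star(\mu-\nu)\|_{L^1(\Omega)}} \leq \sqrt{\tfrac{\lambda(\Omega)}{2}\|K\star(\mu-\nu)\|_{L^\infty(\Omega)}}.
\]
Combining this with the Lipschitz bound, an $\epsilon$-cover of $\Theta^k$ yields an $H$-cover of $\calF$ at scale $C_2\sqrt{\epsilon}$. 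Choosing $\epsilon$ so that $C_2\sqrt{\epsilon}=u$ gives $N(u,\calF,H)\leq C_1(u^2/C_2^2)^{-dk}\lesssim u^{-2dk}$.

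Finally, to transfer this bound to $\calF_m$, I would invoke the data-processing inequality for the Hellinger distance applied to the $\sigma$-algebra generated by the partition $\{B_i\}_{i=1}^m$. Since $\Pi_m f = \sum_i \mathds{1}_{B_i}\cdot f(B_i)/\lambda(B_i)$ is precisely the conditional expectation of $f$ given this partition, and since $\Pi_m f$ remains a probability density whenever $f$ is, we have $H(\Pi_m f,\Pi_m g)\leq H(f,g)$ for all $f,g\in\calF$. Hence any $H$-bracket $[\ell_i, u_i]$... or rather, any $H$-cover of $\calF$ maps to an $H$-cover of $\calF_m$ of no greater cardinality, giving $N(u,\calF_m,H)\leq N(u,\calF,H)\lesssim u^{-2dk}$, as desired. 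There is no substantial obstacle in this argument; the only mild care needed is in the Lipschitz constant of $K$ (which is uniform over $\Theta$ by compactness of $\supp(K)$) and in verifying that $\Pi_m$ preserves nonnegativity and total mass, both of which are immediate from the definition of the histogram operator.
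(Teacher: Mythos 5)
Your proof is correct, and since the paper explicitly omits the argument ("The Lemma is elementary, and stated without proof"), there is nothing to compare against; your route -- Lipschitz control of $K\star\mu$ via the atoms, a volumetric $\epsilon$-net of $\Theta^k$, the pointwise inequality $(\sqrt f - \sqrt g)^2 \leq |f - g|$ to pass from $L^\infty(\Omega)$ to Hellinger, and the data-processing inequality $H(\Pi_m f, \Pi_m g) \leq H(f,g)$ -- is exactly the natural one the authors likely had in mind. You are also right that the exponent in the statement should be $-2dk$ (equivalently, $N(u,\calF_m,H)\leq C u^{-2dk}$), since $u^{2dk}\to 0$ as $u\to 0$ while $N(u,\cdot,\cdot)\geq 1$; this is a typo in the paper. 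Two small remarks. First, for step four, one can bypass the abstract data-processing framing and verify the contraction directly from Cauchy--Schwarz: since $H^2(f,g)=1-\int_\Omega\sqrt{fg}$ for probability densities on $\Omega$ and $\int_{B_i}\sqrt{fg}\leq\sqrt{f(B_i)g(B_i)}$, one has $H^2(\Pi_m f,\Pi_m g)=1-\sum_i\sqrt{f(B_i)g(B_i)}\leq 1-\int_\Omega\sqrt{fg}=H^2(f,g)$; this avoids any appeal to the Markov-kernel interpretation. Second, the constant $C$ in your bound actually depends on $\diam(\Theta)$, $\lambda(\Omega)$, and the Lipschitz modulus of $K$ as well as on $d,k$; the paper's notation $C(\Theta,k,d)$ silently suppresses the dependence on $\Omega$ and $K$, which are fixed throughout Section~\ref{sec:mle}, so your proof matches the intended level of generality.
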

The Lemma is elementary, and stated without proof. 
We deduce from here that the Le Cam equation~\eqref{eq:lecam_again} reduces to 
$\gamma^2 \sqrt t \gtrsim \gamma \sqrt{\log(1/\gamma)}$, which is satisfied
for all $\gamma\gtrsim \gamma_{t,m} := \sqrt{\log t/t}.$  
Thus, we obtain 
$$\bbE \Big[H^2 (K\star \hat\mu_{t,m}, K\star\mu)\Big]
\lesssim \left(\sqrt{\frac{\log t}{t}}  + m^{-1/d}\right)^2,$$
and the claim now follows.\qed 

\section{Proofs for Section \ref{sec:minimaxLowerBound}}

\subsection{Proof of Lemma \ref{lem:regularKernelSufficientCondition}}\label{app:prf:lem:regularKernelSufficientCondition}

For Assertion $(ii)$, we infer by an induction argument that $D^{s} \overline K(x) = \overline K(x) P_{s, \sigma}(x)$ for some polynomial of degree $2s$ which depends on the variance $\sigma>0$ of the normal density $\overline K$. Hence, for $\theta\neq \tilde \theta\in \RR$ it follows that 
	\begin{align*}
		\frac{(D^s \overline K(x-\theta))^2}{\overline K(x-\tilde \theta)} &= \exp\left(-  \frac{(x-\theta)^2}{\sigma^2} + \frac{(x-\tilde \theta)^2}{2\sigma^2} \right) P^2_{s, \sigma}(x-\theta)\\
		&= \exp\left(-\frac{x^2}{2\sigma^2} + \frac{(2\theta - \tilde \theta) x}{\sigma^2}  + \frac{-2\theta^2+ \tilde \theta^2}{2\sigma^2}\right) P_{s, \sigma}^2(x-\theta). 
	\end{align*}
	This representation confirms that the integral function $(\theta, \theta')\mapsto \int \frac{(D^s \overline K(x-\theta))^2}{\overline K(x-\tilde \theta)}\dif x$ is bounded and continuous. In particular, we conclude that the centered normal density is $s$-regular for any $s\in \NN$. 

	For Assertion $(i)$ we use that there exists some $\delta'\in (0,\delta]$ such that the following holds three properties hold. Our argument for their validity is detailed at the end of this proof. 
	 \begin{enumerate}
		\item[$(a)$] the kernel $\overline K$ is strictly positive on $[-\gamma+\delta', \gamma -\delta']$
		\item[$(b)$] the squared $s$-th order derivative $(D^s \overline K)^2$ is nondecreasing on $(-\infty,-\gamma+\delta')$ and nonincreasing on $(\gamma-\delta', \infty)$, and  
		\item[$(c)$] the quotient $(D^s \overline K)^2/\overline K$ has a finite integral on $(-\gamma,-\gamma+\delta')\cup (\gamma-\delta', \gamma)$.
	\end{enumerate}
	In fact, as we will show below, if $\overline K$ is $s$-times continuously differentiable and fulfills the above three properties, the $s$-regularity of the kernel $K$ follows.

	To this end, define the index $\iota \coloneqq \mathds{1}(s\neq 1)$, consider $\theta_\iota,\dots, \theta_s\in \RR$ as in the assertion and recall that $|\theta_s - \theta_\iota|>0$. Then, based on property $(a)$ it follows for $\tau>0$ small enough with $\tau(\theta_s - \theta_\iota)\leq \gamma - \delta'$ that the sum $\sum_{j = \iota}^{s}K(\cdot - \epsilon\theta_j)$ is strictly positive on $[\epsilon\theta_\iota - \gamma + \delta', \epsilon\theta_s + \gamma - \delta']$ for $\epsilon\in (0,\tau)$. This implies jointly with $s$-times continuous differentiability of $\overline K$ that
	\begin{align*}
\max_{i \in \{1, \dots, s\}}\sup_{\epsilon \in (0,\tau)}\sup_{t\in [0,1]} \int_{\epsilon\theta_\iota - \gamma + \delta'}^{\epsilon\theta_s + \gamma - \delta'} \frac{\left(D^{s} \overline K(y - t \epsilon \theta_i) \right)^2}{\sum_{j = v}^{s} \overline K(y - \epsilon \theta_j)}\dif y < \infty.
	\end{align*}
Moreover, for $t \in [0,1]$ we obtain for each $i \in \{\iota+1, \dots, s\}$ and $\epsilon\in (0,\tau)$, $t\in [0,1]$ since $\theta_\iota<0$, and thus $\theta_\iota< t\theta_i$, combined with property $(b)$ for all 
$y'\in (-\infty, -\gamma+\delta')$ and $t\in [0,1]$ that $\left(D^{s} K(y' - \epsilon (t\theta_i -\theta_\iota)) \right)^2 \leq \left(D^{s} K(y') \right)^2.$
Hence, it follows from our convention $0/0 \coloneqq 0$ that
\begin{align*}
	\int_{-\infty}^{\epsilon \theta_\iota -\gamma+\delta'}  \frac{\left(D^{s} \overline K(y - t \epsilon \theta_i) \right)^2}{\sum_{j = v}^{s} \overline K(y - \epsilon \theta_j)} \dif y &\leq \int_{-\infty}^{\epsilon\theta_\iota -\gamma+\delta'}  \frac{\left(D^{s} \overline K(y - \epsilon \theta_\iota) \right)^2}{\sum_{j = v}^{s} \overline K(y - \epsilon \theta_j)}\dif y\\
	&\leq \int_{-\infty}^{\epsilon\theta_\iota -\gamma+\delta'}  \frac{\left(D^{s} \overline K(y - \epsilon \theta_\iota) \right)^2}{\overline K(y - \epsilon \theta_\iota)}\dif y\\ 
	&= \int_{-\infty}^{-\gamma+\delta'}  \frac{\left(D^{s} \overline K(y) \right)^2}{\overline K(y)}\dif y \\
	&= \int_{-\gamma}^{-\gamma+\delta'}  \frac{\left(D^{s} \overline K(y) \right)^2}{\overline K(y)}\dif y<\infty,
\end{align*} 
where the last equality follows from the fact that the integrand vanishes on $(-\infty,-\gamma]$ and the finiteness of the integral follows from property $(c)$.

Likewise, it follows for $t \in [0,1]$,  $i\in \{1, \dots, s\}$, and $\epsilon \in (0,\tau)$ since $\theta_s>0$ that $\epsilon\theta_s> t\epsilon\theta_i$ and thus using property $(b)$ that 
\begin{align*}
		\int^{\infty}_{\epsilon \theta_s +\gamma-\delta'}  \frac{\left(D^{s} \overline K(y - t \epsilon \theta_i) \right)^2}{\sum_{j = v}^{s} \overline K(y - \epsilon \theta_j)} \dif y \leq \int^{\gamma}_{\gamma-\delta'}  \frac{\left(D^{s} \overline K(y) \right)^2}{\overline K(y)}\dif y  <\infty.
	\end{align*}
Combining the previous three displays implies the $s$-regularity of $K$.
	
	It remains to confirm properties $(a)-(c)$ for kernels for which $\overline K_-$ and $\overline  K_+$ are proportional on some interval $[0,\delta)$ to a function in $\calF_\delta$. We only confirm the properties for $\overline K_-$, as the argument for $\overline K_+$ is analogous.
	If $\overline K_{-}(x) \propto \exp(-\alpha/x^\beta)x^\rho$ on $(0,\delta)$ for $(\alpha, \beta, \rho) \in (0,\infty)^2\times \RR$, then it follows by some induction argument for $x\in (-\infty,-\gamma+\delta)$ that $$D^s \overline K(x) = \mathds{1}(x> -\gamma)\exp(-\alpha/(x+\gamma)^\rho) P_{\alpha,\beta,\rho,s,\overline K}(x+\gamma)$$ where $P_{\alpha,\beta,\rho,s,\overline K}$ is a polynomial on $(0,\infty)$ with real order exponents. In particular, this implies $$(D^s \overline K(x))^2/\overline  K(x) = \exp(-\alpha/(x+\gamma)^\beta) P_{\alpha,\beta,\rho,s,\overline K}^2(x)$$ and guarantees the finite integral condition $(c)$ on $(-\gamma,-\gamma+\delta)$. Moreover, since $D^{s+1} \overline K(x) = \mathds{1}(x> -\gamma)\exp(-\alpha/(x+\gamma)^\rho) P_{\alpha,\beta,\rho,s+1,\overline K}(x+\gamma)$ and since $P_{\alpha,\beta,\rho,s+1,\overline K}$ only admits finitely many roots on $(0,\infty)$ (see, e.g., \citealt[pp.\ 9]{karlin1966tchebycheff} or \citealt[p.\ 48, Aufgabe 75]{polya1925aufgaben}), we infer that there exists some sufficiently small  $\tilde \delta'\in (0,\delta]$ such that $(D^s \overline K(x))^2$ is non-decreasing on $(-\gamma, -\gamma+ \tilde \delta')$ which confirms the first part of property $(b)$. Repeating the argument for $\overline K_+$ yields potentially a smaller value $\delta' \in (0,\tilde\delta']$ for which the second part of property $(b)$ is true. 
	Meanwhile, in case $\overline K_{-}(x) \propto x^\rho$ on $(0,\delta)$ for $\rho \in (2k,\infty)$ it follows on $(-\infty, -\gamma+\delta)$ that $$D^s(x) \propto \mathds{1}(x>-\gamma)(x+\gamma)^{\rho-k}$$ 
	which is increasing on $(\gamma, \gamma+\delta)$ and so is $(D^s(x))^2$, confirming the first part of property $(b)$. Further, for $x\in (\gamma, \gamma+\delta)$ it follows that 
	\begin{align*}
		(D^s \overline K(x))^2/\overline K(x) \propto \mathds{1}(x>-\gamma) (x+\gamma)^{\rho-2k},
	\end{align*}
	which is integrable by $\rho>2k-1$, confirming the finite integral property $(c)$ on $(-\gamma, -\gamma+\delta)$. Finally, recall that $\overline K$ is strictly positive on $[-\gamma+\delta, \gamma-\delta]$ and by the specific shape of $\overline K$ at the boundary of its support, it also follows that $\overline K$ is strictly positive on $[-\gamma +\delta', \gamma-\delta']$, confirming property $(a)$ and completing the proof of Assertion $(i)$. \qed

	\subsection{Proof of Proposition \ref{prop:minimax_lower_bound}}\label{app:pf:prop:minimax_lower_bounds}

	For the proof of our global and local minimax lower bounds (Proposition \ref{prop:minimax_lower_bound}) we rely on the following three lemmata. The first one (Lemma \ref{lem:hellinger_bound_for_minimax}) relates the Hellinger distance of the distribution underlying the model \eqref{eq:model} with intensity $t>0$ to the Hellinger distance between the underlying mixture distributions. The second one (Lemma \ref{lem:moment_independence}) confirms the existence of distinct $k$-atomic uniform measures on the real line such that their first $k-1$ moment match. The third lemma (Lemma \ref{lem:constructionHypothesis}) details a construction for the two-point hypothesis which we employ for Le Cam's method.

	\begin{lemma}[Upper bound Hellinger distance]\label{lem:hellinger_bound_for_minimax}
		Consider the setting of Proposition \ref{prop:minimax_lower_bound}, then it follows for $\mu, \nu \in \calU_k(\domain)$ and the associated distributions $\bP_{\mu}^{t}$, $\bP_{\nu}^{t}$ on $\RR^m$ for the statistical model \eqref{eq:model} that their Hellinger distance is bounded by 
		 are bounded by
		\begin{align*}
			 H^2(\bP_{\mu}^\t, \bP_{\nu}^\t) &\leq \frac{t}{2}  \cdot H^2\left((K\ast \mu)|_{\bigcup_{i = 1}^{n} B_i}, (K\ast \nu)|_{\bigcup_{i = 1}^{n} B_i})\right) \leq \frac{t}{2} \cdot H^2(K\ast \mu, K\ast \nu).\quad 
		\end{align*}
		\end{lemma}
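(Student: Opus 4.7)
The plan is to exploit the tensorization of the Hellinger affinity, the closed-form squared Hellinger distance between two Poisson laws, and Cauchy--Schwarz to pass from bin-aggregates to the restricted densities.

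First, because $\bP_\mu^t = \bigotimes_{i=1}^m \mathrm{Poi}(t\,K\!\star\!\mu(B_i))$ and the Hellinger affinity $A(P,Q)=\int\sqrt{dP\,dQ}$ factorizes over product measures, one has
\[
1 - H^2(\bP_\mu^t,\bP_\nu^t) \;=\; \prod_{i=1}^m A\bigl(\mathrm{Poi}(t K\!\star\!\mu(B_i)),\,\mathrm{Poi}(t K\!\star\!\nu(B_i))\bigr).
\]
A direct computation using the exponential series yields $A(\mathrm{Poi}(\lambda_1),\mathrm{Poi}(\lambda_2)) = \exp\bigl(-\tfrac{1}{2}(\sqrt{\lambda_1}-\sqrt{\lambda_2})^2\bigr)$. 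Substituting and applying the elementary bound $1-e^{-x}\leq x$ gives
\[
H^2(\bP_\mu^t,\bP_\nu^t) \;\leq\; \frac{t}{2}\sum_{i=1}^m \bigl(\sqrt{K\!\star\!\mu(B_i)}-\sqrt{K\!\star\!\nu(B_i)}\bigr)^{2}.
\]

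The second step bounds this bin-level sum by the squared Hellinger distance of the restricted mixture densities on $\Omega=\bigcup_i B_i$. By Cauchy--Schwarz applied on each bin,
\[
\sqrt{K\!\star\!\mu(B_i)\cdot K\!\star\!\nu(B_i)} \;=\; \sqrt{\int_{B_i}\!K\!\star\!\mu \cdot \int_{B_i}\!K\!\star\!\nu}\;\geq\;\int_{B_i}\sqrt{(K\!\star\!\mu)(K\!\star\!\nu)}\,dx,
\]
so expanding $(\sqrt{a}-\sqrt{b})^2 = a+b-2\sqrt{ab}$ and summing over $i$ yields
\[
\sum_{i=1}^m \bigl(\sqrt{K\!\star\!\mu(B_i)}-\sqrt{K\!\star\!\nu(B_i)}\bigr)^{2} \;\leq\; \int_{\Omega}\bigl(\sqrt{K\!\star\!\mu}-\sqrt{K\!\star\!\nu}\bigr)^{2}dx \;=\; 2\,H^2\!\bigl((K\!\star\!\mu)|_{\Omega},(K\!\star\!\nu)|_{\Omega}\bigr).
\]
Combining the two displays gives the first inequality; the second is immediate since the integrand $(\sqrt{K\!\star\!\mu}-\sqrt{K\!\star\!\nu})^{2}$ is non-negative, so extending the integration domain from $\Omega$ to $\RR^d$ can only increase the value.

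I anticipate no genuine obstacles here: the argument is a sequence of short, standard calculations (Poisson Hellinger affinity, tensorization, $1-e^{-x}\leq x$, Cauchy--Schwarz). The only minor bookkeeping is that the restricted objects $(K\!\star\!\mu)|_\Omega$ are unnormalized measures on $\Omega$ with density $K\!\star\!\mu\,\mathds{1}_\Omega$; however, the definition of $H^2$ adopted in Section~\ref{subsec:notation} reads equally well for non-probability measures, so no renormalization step is needed.
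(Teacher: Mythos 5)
Your proof is correct and follows essentially the same route as the paper's: tensorize the Hellinger affinity over bins, use the closed-form $A(\mathrm{Poi}(\lambda_1),\mathrm{Poi}(\lambda_2)) = e^{-\frac12(\sqrt{\lambda_1}-\sqrt{\lambda_2})^2}$, compare the discrete bin-level sum to the continuous Hellinger integral via Cauchy--Schwarz, and invoke $1-e^{-x}\leq x$. The only cosmetic difference is that you apply $1-e^{-x}\leq x$ before the Cauchy--Schwarz step while the paper applies it afterward, which changes nothing since all bounds point the same way; your closing remark on why $H^2$ is well-defined for unnormalized restricted densities is a useful bookkeeping observation that the paper leaves implicit.
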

		The proof is detailed in \Cref{app:pf:lem:hellinger_bound_for_minimax} and utilizes an explicit expression for the Hellinger distance between Poisson distributions.

	\begin{lemma}\label{lem:moment_independence}
	For any $k\in \NN, \epsilon, \delta>0$ and $\mu \in \calU_k((-\delta, \delta))$ with $k$ support points 
	there exists a distinct measure $\nu\in \calU_k((-\delta, \delta))$ such that $m_\alpha(\mu) = m_\alpha(\nu)$ for each $\alpha \in \{0, \dots, k-1\}$ and $0<W_\infty(\mu, \nu)\leq \epsilon$.  In particular, since $\mu$ is a uniform distribution on $k$ distinct points, it holds $\supp(\mu)\neq \supp(\nu)$.  
\end{lemma}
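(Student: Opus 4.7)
The plan is to reduce moment matching to coefficient matching for a monic polynomial, and then to exhibit a one-parameter family of perturbations of the support of $\mu$ that preserves all the relevant coefficients. Write $\mu = \frac{1}{k}\sum_{i=1}^{k}\delta_{\theta_i}$ with distinct $\theta_i \in (-\delta,\delta)$, and form the monic polynomial $f_\mu(z) = \prod_{i=1}^{k}(z-\theta_i) = z^k + \sum_{j=1}^{k} a_j z^{k-j}$. By Newton's identity \eqref{eq:Newton_identity}, the coefficients $a_1,\dots,a_{k-1}$ are algebraic functions of $m_1(\mu),\dots,m_{k-1}(\mu)$, and vice versa. Consequently, matching the first $k-1$ moments of a uniform $k$-atomic measure is equivalent to matching the first $k-1$ coefficients of the associated monic polynomial.

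With this reduction, I would consider the explicit one-parameter deformation $f_t(z) \coloneqq f_\mu(z) + t$ for $t \in \RR$. This family preserves the top $k-1$ coefficients and perturbs only the constant term. If, for all $|t|$ sufficiently small, $f_t$ admits $k$ real, distinct roots $\eta_1(t),\dots,\eta_k(t)\in (-\delta,\delta)$, then $\nu_t \coloneqq \frac{1}{k}\sum_{i=1}^{k}\delta_{\eta_i(t)}$ lies in $\calU_k((-\delta,\delta))$ and matches $\mu$ in its first $k-1$ moments.

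The main obstacle, which is also the analytic heart of the argument, is to ensure that the perturbed roots remain real and distinct (rather than leaving the real line as conjugate pairs). To handle this, I would use that each $\theta_i$ is a simple root of $f_\mu$, so $f_\mu'(\theta_i)\neq 0$. The implicit function theorem applied to the smooth real map $(z,t)\mapsto f_\mu(z)+t$ at each $(\theta_i,0)$ then produces a real-valued smooth function $\eta_i(t)$ on a neighbourhood of $0$ with $\eta_i(0)=\theta_i$ and $\eta_i'(0) = -1/f_\mu'(\theta_i) \neq 0$. By continuity and the separation of the $\theta_i$'s, for $|t|$ small enough the $\eta_i(t)$ lie in disjoint open neighbourhoods of the $\theta_i$'s and are therefore pairwise distinct. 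Since $f_t$ has degree $k$, these $k$ distinct real numbers are exactly its roots.

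To finish, the non-vanishing of $\eta_i'(0)$ yields $\eta_i(t)\neq \theta_i$ for every sufficiently small $t\neq 0$, and continuity places each $\eta_i(t)$ in a neighbourhood of $\theta_i$ disjoint from $\{\theta_j : j\neq i\}$. Hence $\supp(\nu_t)\cap \supp(\mu) = \emptyset$ and in particular $\nu_t\neq \mu$. Shrinking $|t|>0$ further if needed guarantees both $\eta_i(t)\in (-\delta,\delta)$ and $W_\infty(\mu,\nu_t) \leq \max_i |\eta_i(t)-\theta_i| \leq \epsilon$, which yields the desired $\nu\coloneqq \nu_t$.
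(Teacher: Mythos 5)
Your argument is correct and follows essentially the same route as the paper's main proof: reduce moment matching to coefficient matching for the monic polynomial $f_\mu$, perturb only the constant term via $f_\mu + t$, and verify the roots stay real, distinct, and close for small $|t| > 0$. The only difference is a technical one — you justify root stability via the implicit function theorem, whereas the paper uses a short quantitative bound (its Lemma~\ref{lem:DisctinctRootsPerturbation}); the paper also records an alternative proof (credited to P.~Rigollet) that likewise relies on the implicit function theorem, though applied to the moment equations rather than to the polynomial.
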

	The proof is provided in \Cref{app:pf_lem:moment_independence} and relies on the correspondence between $k$-atomic uniform distributions and roots of polynomials of degree $k$ from \Cref{subsec:momentCompBound}. In addition, we provide in \Cref{app:pf_alternative_proof_Rigollet} an alternative proof based on representational theory which was communicated by Philippe Rigollet.

	\begin{lemma}[Construction of two-point hypothesis]\label{lem:constructionHypothesis}
		Consider the setting of Proposition \ref{prop:minimax_lower_bound}$(ii)$ with $\mu_0 =  \sum_{i = 1}^{k_0} (r_i/k) \delta_{\theta_{0i}}  \in \calU_{k,k_0}(\Theta;r, \delta)$ for $\delta\in [t^{-1/2k},1]$. Consider $r^* \coloneqq r_{k_0}$ and assume that the kernel $K$ is $r^*$-regular for the direction $\sigma \in \RR^d\backslash\{0\}$ with $\|\sigma\|=1$. %
		Then, there exist distinct $r^*$-atomic measures $\tilde \mu= (1/r^*) \sum_{j =1}^{r^*} \delta_{\theta_j}$, $\tilde \nu= (1/r^*)\sum_{j =1}^{r^*} \delta_{\eta_j}\in\calU_{r^*}([-1/4,1/4])$ such that the following conditions are met for every $\epsilon\in (0,C\wedge \delta)$ for some positive constant $C=C(K,k) \leq 1$.
		\begin{enumerate}
			\item It holds $M_{r^*-1}(\tilde \mu, \tilde \nu) = 0$. %
			\item The $k$-atomic uniform measures $\mu_\epsilon\coloneqq (1/k)\sum_{j = 1}^{r^*} \delta_{\theta_{0k_0} + \epsilon \sigma \theta_j} +  \sum_{i = 1}^{k_0-1} (r_i/k) \delta_{\theta_{0i}}$ and $\nu_\epsilon\coloneqq (1/k)\sum_{j = 1}^{r^*} \delta_{\theta_{0k_0} + \epsilon \sigma \eta_j} +  \sum_{i = 1}^{k_0-1} (r_i/k) \delta_{\theta_{0i}}$ are both contained in $\calU_{k}(\Theta; \mu_0, \epsilon/4)$. %
			\item It holds for the Wasserstein distance that $W_1(\mu_\epsilon, \nu_\epsilon) = \epsilon \frac{r^*}{k} W_1(\tilde \mu, \tilde \nu)\geq  C(k)\epsilon>0$. In case $\epsilon \in (0,\diam(\Theta)^{1-(k/r^*)})$, it holds for the local Wasserstein divergence that $\calD_{\mu_0}(\mu_\epsilon, \nu_\epsilon) = W_1^{r^*}(\tilde \mu, \tilde \nu) \delta_{k_0}(\mu_0) \epsilon^{r^*} \geq C(k)\delta_{k_0}(\mu_0) \epsilon^{r^*}$ where $\delta_{k_0}(\mu_0) = \prod_{i=1}^{k_0-1}\|\theta_{0k_0} - \theta_{0i}\|^{r_i}$. Further, for $\epsilon<1/2$ the Hausdorff distance is lower bounded by $d_H(\mu_\epsilon, \nu_\epsilon) \geq  \epsilon \cdot d_H(\tilde \mu, \tilde \nu)\geq C(k)\epsilon$. 
			\item There exists a positive constant $C=C(\overline K,k)>0$ where $K(x) = \tilde K((\mathrm{id} - \sigma \sigma^\top)x)\overline K(\sigma^\top x)$ such that $H^2(K\ast \mu_\epsilon,K\ast \nu_\epsilon)\leq C  \cdot \epsilon^{2r^*}$.%
		\end{enumerate}	
	\end{lemma}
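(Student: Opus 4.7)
The construction proceeds by first producing the base pair $\tilde\mu, \tilde\nu$ and then verifying the four claims. Applying Lemma~\ref{lem:moment_independence} with $k$ replaced by $r^*$ and the support interval $(-\delta,\delta)$ taken to be $(-1/4,1/4)$ yields two distinct measures $\tilde\mu=(1/r^*)\sum_{j=1}^{r^*}\delta_{\theta_j}$ and $\tilde\nu=(1/r^*)\sum_{j=1}^{r^*}\delta_{\eta_j}$ in $\calU_{r^*}([-1/4,1/4])$ whose first $r^*-1$ moments coincide. This immediately gives condition~$(i)$. The constant $C=C(K,k)\le 1$ will be fixed small enough to ensure that (a) $\theta_{0k_0}+\epsilon\sigma [-1/4,1/4]\subseteq \Theta$ for every $\epsilon\in(0,C)$, which is possible since $\mu_0$ is assumed to satisfy $\supp(\mu_0)+B(0,\delta/2)\subseteq\Theta$, and (b) the additional constraint $\epsilon<\delta$ enforced in the statement guarantees $W_\infty(\mu_\epsilon,\mu_0)\le \epsilon/4$. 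The latter is immediate from the construction since the only atoms that differ from those of $\mu_0$ sit in $B(\theta_{0k_0},\epsilon/4)$; this yields condition~$(ii)$.

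For condition~$(iii)$, observe that when $\epsilon<\delta/2$ the atoms coming from the $k_0$-th cluster in both $\mu_\epsilon$ and $\nu_\epsilon$ lie strictly inside the Voronoi cell $V_{k_0}(\mu_0)$, while all remaining atoms are located exactly at $\theta_{0i}\in V_i$ for $i<k_0$. Consequently, the optimal transport between $\mu_\epsilon$ and $\nu_\epsilon$ decomposes into transport on each cluster, which gives both $W_1(\mu_\epsilon,\nu_\epsilon)=(r^*/k)\epsilon W_1(\tilde\mu,\tilde\nu)$ and $(\mu_\epsilon)_{V_{k_0}}=\tilde\mu_{\epsilon}$, $(\nu_\epsilon)_{V_{k_0}}=\tilde\nu_{\epsilon}$, where $\tilde\mu_\epsilon$, $\tilde\nu_\epsilon$ denote the translates/rescalings supported in $V_{k_0}$. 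The definition of $\calD_{\mu_0}$ then reduces to its single surviving term at $j=k_0$, producing $\calD_{\mu_0}(\mu_\epsilon,\nu_\epsilon)=\delta_{k_0}(\mu_0)\,W_1^{r^*}((\mu_\epsilon)_{V_{k_0}},(\nu_\epsilon)_{V_{k_0}})$; the lower bound by a universal $C(k)$ follows from the fact that $\tilde\mu,\tilde\nu$ are distinct $r^*$-atomic measures in $[-1/4,1/4]$, so $W_1(\tilde\mu,\tilde\nu)$ is a strictly positive function of the roots and hence bounded below by a constant depending only on $k$ (since, by Lemma~\ref{lem:moment_identifiability}, any two $r^*$-atomic uniforms with $r^*-1$ matching moments form a moduli space in which the Wasserstein distance stays away from zero after the normalization in Step~1). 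The Hausdorff lower bound follows from the analogous rescaling relation $d_H(\mu_\epsilon,\nu_\epsilon)=\epsilon\, d_H(\tilde\mu,\tilde\nu)$.

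The main technical step is condition~$(iv)$. Using the factorization $K(x)=\tilde K((\mathrm{id}-\sigma\sigma^\top)x)\overline K(\sigma^\top x)$ and the identity $\sigma^\top\sigma=1$, a direct computation yields
\begin{align*}
(K\ast\mu_\epsilon-K\ast\nu_\epsilon)(x)=\frac{1}{k}\,\tilde K\bigl((\mathrm{id}-\sigma\sigma^\top)(x-\theta_{0k_0})\bigr)\sum_{j=1}^{r^*}\bigl[\overline K(y-\epsilon\theta_j)-\overline K(y-\epsilon\eta_j)\bigr],
\end{align*}
where $y=\sigma^\top(x-\theta_{0k_0})$. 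Because $\tilde\mu$ and $\tilde\nu$ share their first $r^*-1$ moments, a Taylor expansion of $\overline K$ about $y$ with integral remainder collapses the polynomial part and leaves only the order-$r^*$ remainder; this expresses the bracketed sum as $\frac{\epsilon^{r^*}}{(r^*-1)!}\int_0^1(1-t)^{r^*-1}\sum_j(\theta_j^{r^*}D^{r^*}\overline K(y-t\epsilon\theta_j)-\eta_j^{r^*}D^{r^*}\overline K(y-t\epsilon\eta_j))\,dt$. Writing $H^2(f,g)\le \tfrac12\int(f-g)^2/(f+g)$, bounding $K\ast\mu_\epsilon+K\ast\nu_\epsilon$ from below by $\frac{1}{k}\tilde K((\mathrm{id}-\sigma\sigma^\top)(x-\theta_{0k_0}))\sum_{j}(\overline K(y-\epsilon\theta_j)+\overline K(y-\epsilon\eta_j))$ (all other atoms of $\mu_0$ contribute a nonnegative term which is discarded), integrating the orthogonal part $\tilde K$ out to unity using Fubini, and applying Cauchy--Schwarz to the $t$-integral, the Hellinger squared is bounded by a constant multiple of
\begin{align*}
\epsilon^{2r^*}\cdot\max_{i\in\{1,\dots,r^*\}}\sup_{t\in[0,1]}\int_{\RR}\frac{\bigl(D^{r^*}\overline K(y-t\epsilon\theta_i)\bigr)^2}{\sum_{j}\overline K(y-\epsilon\theta_j)+\overline K(y-\epsilon\eta_j)}\,dy.
\end{align*}
Since $\tilde\mu,\tilde\nu$ are supported in $[-1/4,1/4]$, the combined $2r^*$ shift parameters $\{\epsilon\theta_j,\epsilon\eta_j\}$ include both a negative and a positive value (enforcing this is where the small choice of $C$ also matters), so the regularity hypothesis in Definition~\ref{def:regularKernel} applies and the supremum above is finite, depending only on $\overline K$ and $k$. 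This delivers the bound $H^2(K\ast\mu_\epsilon,K\ast\nu_\epsilon)\le C\epsilon^{2r^*}$ and completes the construction. The main obstacle is the last integral bound: it requires the somewhat delicate observation that the difference is generated purely by the $r^*$-th Taylor remainder (thanks to condition~$(i)$), and that the $r^*$-regularity is precisely tailored to control the resulting ratio uniformly in $\epsilon$ and $t$.
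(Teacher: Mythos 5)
Your proposal is correct and follows essentially the same route as the paper's proof: the moment-matched pair from Lemma~\ref{lem:moment_independence}, the cluster-wise decomposition of the transport for $(iii)$, and for $(iv)$ the reduction to one dimension via the product structure of $K$ followed by an order-$(r^*-1)$ Taylor expansion whose polynomial part vanishes by the moment matching, leaving a remainder controlled by the $r^*$-regularity integral. The only point worth tightening is that Definition~\ref{def:regularKernel} requires the $r^*$ shifts appearing in the denominator to themselves straddle the origin, so (for $r^*\geq 2$) you should arrange that \emph{each} of $\tilde\mu$ and $\tilde\nu$ individually has atoms on both sides of zero before discarding the other measure's terms from the denominator, and the positive lower bound $W_1(\tilde\mu,\tilde\nu)\geq C(k)$ comes simply from fixing one such pair once and for all rather than from any uniform bound over all moment-matched pairs.
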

	
	The proof of this result is stated in \Cref{app:pf:lem:constructionHypothesis} and crucially relies on the existence of two distinct $r^*$-atomic uniforms on $\RR$ whose first $r^*-1$ moments (Lemma \ref{lem:moment_independence}) match in conjunction with the $r^*$-regularity of the kernel. 
	
	With these tools at our disposal, we can now state the proof for our global and local minimax lower bound. 

	\begin{proof}[Proof of Proposition \ref{prop:minimax_lower_bound}]
		The global lower bound immediately follows from the local lower bound by taking $\mu_0$ as a Dirac measure at $x\in \interior{\Theta}$ with $\delta>0$ such that $B(x,\delta/2)\subseteq \Theta$.

		To prove the local bound we assume without loss of generality that
		$j=k_0$. Furthermore, fix the notation $r^* = r_{k_0}$. 
		For the proof we rely on Le Cam's two point method, see \citet[Sections 2.2--2.4]{tsybakov2009nonparametric}, which is applicable since $\calD_{\mu_0}^{1/r^*}$ defines by Lemma~\ref{lem:inequalityWassersteinLocal} a metric on $\calU_k(\Theta; \mu_0, \delta/4)$.
		Specifically, we choose $\epsilon_t\coloneqq \kappa t^{-1/2r^*}$ with $\kappa\in (0,1)$ small enough such that $\kappa \leq \min(C(K,k), \diam(\Theta)^{1 - (k/r^*)}, 1/4)$ and specified later such that the measures $\mu_{\epsilon_t}, \nu_{\epsilon_t}$ from Lemma \ref{lem:constructionHypothesis} fulfill all properties for all $t\geq  1$ and are contained in $\calU_k(\Theta; \mu_0, \delta/4)$. In particular, it holds $\calD_{\mu_0}^{1/r^*}(\mu_{\epsilon_t}, \nu_{\epsilon_t})\geq  \kappa (C(k)\delta_{k_0}(\mu_0) t^{-1/2})^{1/r^*}$, and also 
		$W_1(\mu_{\epsilon_t}, \nu_{\epsilon_t}) \geq C(k) \kappa t^{-1/2r^*}$.  
		Combined with Lemma \ref{lem:hellinger_bound_for_minimax} we have $$H^2(\bP_{\mu_{\epsilon_t}}^t, \bP_{\nu_{\epsilon_t}}^t)\leq t \cdot H^2(K\ast \mu_{\epsilon_t}, K\ast \nu_{\epsilon_t}) \leq C(\overline K,k) t   \epsilon^{2r^*}_{t} = C(\overline K,k)\kappa^{2r^*}.$$
		Choosing $\kappa>0$ in the definition of $\epsilon_t$ as large as possible such that the right-hand side in the above display is upper bounded by $1/2$ and such that $\delta \kappa/2 \leq c$ to ensure that $\mu_{\epsilon_t}, \nu_{\epsilon_t}\in \calU_k(\Theta;\mu_0, ct^{-1/2r})$ implies by \citet[Theorem 2.2]{tsybakov2009nonparametric} the local minimax lower bound 
		\begin{align*}
\inf_{\hat \mu_{t,m}\in \calE_{t,m}(\Theta)} \sup_{\substack{\mu \in \calU_{k}(\Theta; \mu_0,c t^{-1/{2r_j}})}} \mathbb{P}(\calD^{1/r^*}_{\mu_0}(\hat \mu_{t,m}, \mu)\gtrsim_{\Theta, c,K,k} (\delta_{k_0}(\mu_0) t^{-1/2})^{1/r^*}) \geq \frac{1}{2}\left(1 - \sqrt{\frac{1}{2}\left(1 - \frac{1}{8}\right)}\right)\geq \frac{1}{7}.
		\end{align*}
		Taking the $r^*$-th power on both sides in the probability term and applying Markov's inequality yields the local minimax lower bound in the assertion.

	 To prove the final assertion, 
	 consider two points $\overline \theta_{0}, \overline \theta_1\in \Theta^o$ with $\|\overline \theta_{0} - \overline \theta_1\|\geq \frac{5}{6}\diam(\Theta^o)$. 
Since $\overline \theta_0$ is contained in the interior of $\Theta$, there exists some $\overline \delta\in (0, \diam(\Theta^o)/3)$ such that $B(\overline \theta_0,\overline \delta) \cup B(\overline \theta_1,\overline \delta) \subseteq \Theta^o$. Now, choose $\delta\in (0, \overline \delta)$ as large as possible such that there exist $k_0-1$ elements $\theta_{01}, \dots, \theta_{0k_0-1} \in \Theta^o$ which are all $\delta$-separated and such that 
$\bigcup_{j = 1}^{k_0-1} B(\theta_{0j}, \delta/2)\in B(\overline \theta_0, \overline \delta)$. Further, define $\theta_{0k_0} \coloneqq \overline \theta_1$. Then, it holds
\begin{align*}
	\|\theta_{0k_0} - \theta_{0j}\| \geq \|\theta_{0k_0} - \overline \theta_0\| - \|\overline \theta_0 - \theta_{0j}\| \geq \frac{5}{6}\diam(\Theta^o) - \frac{1}{3}\diam(\Theta^o) = \frac{1}{2}\diam(\Theta^o)\geq \delta. 
\end{align*}
Hence, it follows for any vector $r\in \NN^{k_0}$ with $|r| = k$ that the measure $\mu_0 = \frac{1}{k}\sum_{i = 1}^{k_0} r_i \delta_{\theta_{0i}}\in \calU_{k,k_0}(\Theta; r, \delta)$ fulfills $\delta_{k_0}(\mu_0) \geq (\frac{1}{2}\diam(\Theta^o))^{k-r_{k_0}}$ and $\supp(\mu_0)+B(0, \delta/2)\subseteq \Theta$. 
	\end{proof}

	\subsection{Proof of Remark \ref{rmk:MinimaxLowerBoundsDiscussion}$(iii)$ and $(iv)$}\label{app:pf:rmk:MinimaxLowerBoundsDiscussion}

For the proof of Assertion $(iii)$  we note by our lower bounds between $\mu_\epsilon$ and $\nu_\epsilon$ in terms of the Wasserstein distance and the Hausdorff distance (Lemma \ref{lem:constructionHypothesis}$(iii)$), that the same proof approach as for Prop \ref{prop:minimax_lower_bound} for $j\in \arg\max_{h = 1, \dots, k_0}r_j$  yields the minimax lower bounds with respect to the Wasserstein and the Hausdorff distance. 
	
	Moreover, Assertion $(iv)$ on the minimax rates for $n\in \NN$ i.i.d.\ observations based on the mixture $K\ast \mu$ follows by upper bounding the Hellinger distance for the same two-point hypotheses. Based on the construction from Lemma \ref{lem:constructionHypothesis} it follows for i.i.d.\ samples $X_1, \dots, X_n \sim K\ast \mu_{\epsilon}$ and $Y_1, \dots, Y_n \sim K\ast \nu_{\epsilon}$ by the tensorization property of the Hellinger distance for  $\epsilon_n \coloneqq \kappa n^{-1/2r}$ with $\kappa\in (0,1/4)$ small enough that
	\begin{align*}
		H^2(\bP^{(X_1, \dots, X_n)}, \bP^{(Y_1, \dots, Y_n)}) &= 1 - (1-H^2(K\ast \mu_{\epsilon_n}, K\ast \nu_{\epsilon_n}))^n\\
		&\leq 1 - (1- C(\overline K,k)   \epsilon_n^{2r})^n\\
		&= 1 - (1- C(\overline K,k)  \kappa^{2r}/n)^n \\
		&\leq 1- \exp\left(-C(\overline K,k)  \kappa^{2r}\right)(1-C(\overline K,k)^2  \kappa^{4r}/n).
	\end{align*} 
	In particular, if $\kappa$ is chosen such that $C(\overline K,k)^2  \kappa^{4r}<1/2$, then the right-hand side is strictly smaller than one and the assertion follows using \citet[Theorem 2.2]{tsybakov2009nonparametric}. \qed
	
	\subsection{Proof of Lemma \ref{lem:hellinger_bound_for_minimax}}\label{app:pf:lem:hellinger_bound_for_minimax}

	For the Poisson model it holds by the tensorization property of the Hellinger distance,
	\begin{align}
		 H^2(\bP_{\mu}^\t, \bP_{\nu}^\t) &= \left(1-\prod_{i = 1}^{n}\left(1- H^{2}\left(\textup{Poi}\left(\textstyle t  K\ast \mu(B_i)\right), \textup{Poi}\left(\textstyle t  K\ast \nu(B_i)\right) \right)\right)\right)\notag \\
		 &=   \left(1-\prod_{i = 1}^{n}\left(\exp\left(-\frac{1}{2}\textstyle\left[ \sqrt{t K\ast \mu(B_i)}  - \sqrt{t  K\ast \nu(B_i)}\, \right]^2 \right)\right)\right) \notag \\
		 &=  1 - \exp\left(- \frac{1}{2}\sum_{i = 1}^{n}\textstyle \left[\sqrt{t K\ast \mu(B_i)} -  \sqrt{t K\ast \nu(B_i)}\,\right]^2\right). 
	\end{align}
	Moreover, invoking the Cauchy-Schwarz inequality it follows for each $i\in \{1, \dots, n\}$ that 
	$$ \int_{B_i} \sqrt{t^2 K\ast \mu(x)K\ast \nu(x)} \dif x \leq \sqrt{t K\ast \mu(B_i)}\sqrt{t K\ast \nu(B_i)}.$$
	From this we infer
	\begin{align*}
		&\sum_{i = 1}^{n}\textstyle \left[\sqrt{ t  K\ast \mu(B_i)} -  \sqrt{t K\ast \nu(B_i)}\,\right]^2\\
		\leq \; & \sum_{i = 1}^{n} \int_{B_i}t K\ast \mu(x) \dif(x) + \int_{B_i}t K\ast \nu(x) \dif(x) - 2 \int_{B_i} \sqrt{t^2 K\ast \mu(x)K\ast \nu(x)} \dif x\\
		= \;& \int_{\bigcup_{i = 1}^{n} B_i }\left(\sqrt{t K\ast \mu(x)} - \sqrt{t K\ast \nu(x)}\right)^2 \dif x  \\
		= \;& t\cdot  H^2((K\ast \mu)|_{\bigcup_{i = 1}^{n} B_i}, (K\ast \nu)|_{\bigcup_{i = 1}^{n} B_i}).
	\end{align*}
	Overall, this asserts that 
	\begin{align*}
		 H^2(\bP_{\mu}^\t, \bP_{\nu}^\t) \leq & \, 1-  \exp\left(- \frac{t}{2}  H^2(K\ast \mu|_{\bigcup_{i = 1}^{n} B_i}, K\ast \nu|_{\bigcup_{i = 1}^{n} B_i})\right)\\
		  \leq &\, \frac{t}{2}\, H^2((K\ast \mu)|_{\bigcup_{i = 1}^{n} B_i}, (K\ast \nu)|_{\bigcup_{i = 1}^{n} B_i})\\
		  \leq &\, \frac{t}{2}\, H^2(K\ast \mu, K\ast \nu),
	\end{align*}
	where we used for the second inequality that $1- \exp(-x) \leq x$ for $x\in \RR$. \qed

\subsection{Proof of Lemma \ref{lem:moment_independence}}\label{app:pf_lem:moment_independence}
	Consider the $k$ distinct support points $x_1, \dots, x_k\in (-\delta, \delta)$ of $\mu$ such that $\mu = \frac{1}{k} \sum_{i =1}^{k} \delta_{x_i}$ and define for $l \in \{1, \dots, k\}$ the elementary symmetric polynomials 
	\begin{align*}
		e_0(x_1, \dots, x_n) \coloneqq 1 \quad \text{ and } \quad  e_l(x_1, \dots, x_n) &\coloneqq \sum_{1\leq i_1< i_2 < \dots i_l \leq k} \;\; \prod_{j =1}^l x_{i_j}.\end{align*}
	 Newton's identities \eqref{eq:Newton_identity} assert for any $l \in \{1, \dots, k\}$ that 
	 \begin{align*}
		 e_l(x_1, \dots, x_n) &= \frac{1}{l} \sum_{j=1}^{l} (-1)^{j-1} e_{l-j}(x_1, \dots, x_n) \sum_{i=1}^{k} x_i^j\\
		 &= \frac{k}{l} \sum_{j=1}^{l} (-1)^{j-1} e_{l-j}(x_1, \dots, x_n)  m_j(\mu)
	 \end{align*}
	This implies for any $l\in \{1, \dots, k\}$ that $e_l(x_1, \dots, x_k)$ admits an algebraic representation in terms of the moments $m_j(\mu)$ for $j\in \{1, \dots, l\}$. Further, since $e_0 = 1$, it also follows that $m_l(\mu)$ has an algebraic representation in terms of $e_j(x_1, \dots, x_k)$ for $j\in \{1, \dots, l\}$. 
	
	Next, define the polynomial $T_\mu(x) \coloneqq \prod_{i = 1}^k (x-x_i)$ which uniquely determines the measure $\mu$ since its roots coincide with the atoms from $\mu$. Recalling Vieta's identity, note that $T_\mu$ can be represented as
	\begin{align*}
		T_\mu(x) &= \sum_{l = 0}^{k} (-1)^l e_l(x_1, \dots, x_k) x^{k-l} \\
		&= (-1)^k e_k(x_1, \dots, x_k) + \sum_{l = 0}^{k-1} (-1)^l e_l(x_1, \dots, x_k) x^{k-l} \\
		&= (-1)^k\frac{k}{l} m_k(\mu) + \frac{k}{l}\sum_{j=1}^{k-1}(-1)^{k+j-1} e_{k-j}(x_1, \dots, x_n) m_j(\mu) + \sum_{l = 0}^{k-1} (-1)^l e_l(x_1, \dots, x_k) x^{k-l}.
	\end{align*}
	Since all roots of $T_\mu$ are distinct and real, for $\tau \in (0, \epsilon^{1/k})$  sufficiently small the perturbed polynomial $T_\mu^\tau(x)\coloneqq T_\mu(x) + \tau$ admits $k$ distinct real roots $x_1^\tau, \dots, x_k^\tau$ for which, upon denoting $\mu^{\tau} \coloneqq \frac{1}{k}\sum_{i = 1}^{k} \delta_{x_i^\tau}$, the inequality $0<W_\infty(\mu, \mu^\tau)\leq \tau^{1/k}\leq \epsilon$ and the inclusion $\mu^\tau \in \calU_k((-\delta, \delta))$ are fulfilled, see Lemma~\ref{lem:DisctinctRootsPerturbation} below. 
	Further, since the leading coefficient of $T_\mu^\tau$ equals one, it follows that $T_\mu^\tau$ can be represented as $\prod_{i=1}^k(x - x_i^\tau)$ and by comparing the coefficients of $T_\mu$ and $T_\mu^\tau$ we infer $$ e_l(x_1, \dots, x_n) = e_l(x_1^\tau, \dots, x_k^\tau) \quad \text{ for } l \in \{1, \dots, k-1\},$$
	which implies that $m_l(\mu) = m_l(\mu^\tau)$ for $l \in \{0, \dots, k-1\}$.  Only the coefficients for $l = k$ differ which yields that $\mu \neq \mu^\tau$. In particular, this implies that their supports must differ, since $\mu$ and $\mu^\tau$ are both $k$-atomic uniforms and $\mu$ already admits $k$ support points.\qed

	\begin{lemma}\label{lem:DisctinctRootsPerturbation}
	Let $p\colon \RR\to \RR$ be a monic polynomial of degree $k\in \NN$ with real coefficients  and distinct roots $x_1, \dots, x_k$. Let $\delta \in (0,\min_{i \neq j} |x_i - x_j|/2)$  and let $q\colon \RR\to \RR$ be a polynomial of degree smaller than $k$ such that $|q(x_i)| \leq \delta^{k}$ for each $i \in \{1, \dots, k\}$. Then, $g(x) \coloneqq p(x) + q(x)$ also admits $k$ real distinct roots $z_1, \dots, z_k$ and it holds 
	\begin{align*}
		W_\infty\left(\textstyle\frac{1}{k}\sum_{i =1}^{k} \delta_{x_i}, \frac{1}{k}\sum_{i =1}^{k} \delta_{z_i}\displaystyle \right) = \min_{\sigma \in \calS(k)} \max_{i =1, \dots, k} |x_i - z_{\sigma i}| \leq \delta.
	\end{align*}
	\end{lemma}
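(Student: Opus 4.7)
The plan is to apply Rouché's theorem on small complex disks centered at the roots of $p$, then exploit the real coefficients of $g = p + q$ to force the roots into $\bbR$. Extending $p$ and $g$ to $\bbC$, I would consider the open disks $B_i \coloneqq \{z \in \bbC : |z - x_i| < \delta\}$, which are pairwise disjoint by the hypothesis $\delta < \min_{i\neq j}|x_i - x_j|/2$. On the boundary $\partial B_i$, writing $p(z) = \prod_{j=1}^k (z - x_j)$ gives $|z - x_i| = \delta$ and $|z - x_j| \geq |x_i - x_j| - \delta \geq \delta$ for $j \neq i$, so $|p(z)| \geq \delta^k$. To control $|q(z)|$ on the same circle, I would invoke the Lagrange interpolation formula $q(z) = \sum_{j=1}^k q(x_j) L_j(z)$, valid since $\deg q < k$, with $L_j(z) = \prod_{\ell \neq j} (z - x_\ell)/(x_j - x_\ell)$; each $|L_j(z)|$ is bounded on $\partial B_i$ purely in terms of $k$ thanks to the separation $|x_j - x_\ell| \geq 2\delta$, and combined with $|q(x_j)| \leq \delta^k$ this yields an estimate of the form $|q(z)| \leq C_k \delta^k$ on $\partial B_i$.

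Once $|q(z)| < |p(z)|$ is secured on $\partial B_i$ (if necessary after shrinking $\delta$ by a $k$-dependent factor, which is absorbed harmlessly downstream), Rouché's theorem implies that $g$ has exactly one zero inside $B_i$, the same number as $p$. Disjointness of the disks together with $\deg g = k$ then force all $k$ roots of $g$ to lie in $\bigcup_i B_i$, one per disk. Reality of the roots is then automatic: because $g$ has real coefficients, a non-real root $z \in B_i$ would be matched by $\bar z \in B_i$ as well (each $B_i$ being symmetric about $\bbR$), contradicting the uniqueness in $B_i$. Hence each $z_i \in B_i \cap \bbR$ with $|z_i - x_i| < \delta$, and taking $\sigma$ to be the identity permutation yields $W_\infty \leq \delta$.

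The main obstacle is matching $|q|$ against $|p|$ on $\partial B_i$ from purely pointwise control of $q$ at the $x_j$, as the Lagrange bound typically loses a $k$-dependent constant. In the calling application in Lemma~\ref{lem:moment_independence}, however, $q$ is the constant polynomial $\tau$, so $|q(z)| \equiv |\tau| \leq \delta^k$ on all of $\bbC$ and Rouché applies immediately; in fact one can then bypass complex analysis entirely by a real intermediate value argument, using that $\delta < |x_i - x_j|/2$ makes each factor $x_i \pm \delta - x_j$ preserve the sign of $x_i - x_j$, so $p(x_i - \delta)$ and $p(x_i + \delta)$ have opposite signs with $|p(x_i \pm \delta)| \geq \delta^k$, whence $g = p + \tau$ changes sign in each disjoint interval $[x_i - \delta, x_i + \delta]$ and produces the required $k$ distinct real roots within distance $\delta$ of the $x_i$.
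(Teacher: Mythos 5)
Your overall architecture (locate one root of $g$ within $\delta$ of each $x_i$, then use separation for distinctness and conjugate symmetry for reality) matches the paper in its second half, but the first half — the Rouch\'e step — has a genuine gap. The claim that each $|L_j(z)|$ is bounded on $\partial B_i$ ``purely in terms of $k$'' is false for $j \neq i$: one has $|L_j(z)| = \prod_{\ell\neq j}|z - x_\ell|/|x_j - x_\ell|$, and for $\ell \notin \{i,j\}$ the factor $|z-x_\ell|/|x_j-x_\ell|$ can be of order $|x_i - x_\ell|/(2\delta)$, which depends on the root configuration and is unbounded as the gaps grow relative to $\delta$ (e.g.\ $k=4$, $x_1 = 0$, $x_2,x_3,x_4$ clustered near a large $M$ with consecutive gaps just above $2\delta$, gives $|L_2(z)|\sim M/\delta$ on $\partial B_1$). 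Consequently the proposed repair ``shrink $\delta$ by a $k$-dependent factor'' cannot work, because the loss is not $k$-dependent; and even if it were, the same $\delta$ appears in both the hypothesis $|q(x_i)|\leq\delta^k$ and the conclusion $W_\infty\leq\delta$, so rescaling is not automatically harmless. Your fallback covers only the special case $q\equiv\tau$ constant, which suffices for the application in Lemma~\ref{lem:moment_independence} but does not prove the lemma as stated.

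Two ways to close the gap. If you want to keep Rouch\'e, compare $q$ to $p$ directly via partial fractions, $q(z)/p(z) = \sum_j q(x_j)/\bigl(p'(x_j)(z-x_j)\bigr)$, which on $\partial B_i$ gives $|q(z)/p(z)|\leq k\,\delta^k/\bigl((2\delta)^{k-1}\delta\bigr) = k\,2^{1-k}$ using $|p'(x_j)|>(2\delta)^{k-1}$ and $|z-x_j|\geq\delta$; this is the correct replacement for the Lagrange bound. The paper, however, avoids Rouch\'e entirely: it factors $g(x)=\prod_i(x-z_i)$ over $\bbC$ and evaluates at $x_j$ to get $\prod_i|x_j-z_i| = |g(x_j)| = |q(x_j)|\leq\delta^k$, so by pigeonhole at least one factor is $\leq\delta$; the separation of the $x_j$ then forces these assigned roots to be distinct and unique in each $\delta$-ball, and the conjugation argument you give finishes the proof. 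This evaluation-plus-pigeonhole step is the idea missing from your argument, and it is what makes the general (non-constant $q$) case go through with no loss of constants.
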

	
	\begin{proof}
		Since $g$ is a monic polynomial of order $k\in \NN$ the fundamental theorem of algebra asserts the existence of $k$ complex roots $z_1, \dots, z_k\in \CC$ such that $g(x) = \prod_{i = 1}^{k} (x-z_i)$. Further, for each root $x_j$ of $p$ it holds by assumption on $q$ and definition of $g$ that 
		$$\prod_{i = 1}^{k} |x_j - z_i| = |g(x_j)| = |p(x_j) + q(x_j)| = |q(x_j)| \leq \delta^k.$$ 
		Hence, for each root $x_j$ of $p$ there exists a root $z_j$ of $g$ such that $|x_j-z_j|\leq \delta$. Since all roots of $p$ are separated by more than $2\delta$ and $g$ only admits $k$ roots, it is evident that $z_j$ is the only root which fulfills $|x_j-z_j|\leq \delta$ and that all roots of $g$ are distinct. To show that  all roots are real-valued first, note that if one root $z_i$ of $g$ was genuinely complex, since $g$ only exhibits real-valued coefficients it would follow that $g(\overline z_i) = \overline{g(z_i)} = 0$ and the conjugate $\overline z_i$ would also be a complex root. However, from $|x_i - z_i| \leq \delta$  we would then infer that also $\overline z_i$ fulfills $|x_i - \overline z_i| \leq \delta$ and a contradiction would ensue since $z_i$ was the unique root with this property. 
	\end{proof}

	\subsection{Alternative Proof of Lemma \ref{lem:moment_independence}}\label{app:pf_alternative_proof_Rigollet} 
	This alternative proof was communicated by Philippe Rigollet. Let $\theta_1, \dots, \theta_k\in (-\delta, \delta)$ be distinct atoms, and let
	 $\mu = \frac{1}{k}\sum_{i = 1}^{k}\delta_{\theta_k}\in \calU_k((-\delta, \delta))$. 
	 Furthermore, denote the first $k$ moments of $\mu$ by $(m_{1}(\mu), \dots, m_k(\mu))$. Define the function $f\colon \RR\times \RR^{k-1} \to \RR^{k-1}$ by 
\begin{align*}
	f_\alpha(x_1, (x_2, \dots, x_k)) = \sum_{i =1}^{k} x_i^\alpha - m_\alpha(\mu) \quad \text{ for } \alpha = 1, \dots, k-1
\end{align*}
so that $f(\theta_1, (\theta_2, \dots, \theta_k)) = 0$. Moreover, the Jacobian of $f$ 
with respect to $(x_2, \dots, x_k)$ is a Vandermonde matrix so it is nonsingular at $(\theta_2, \dots, \theta_k)$ because all atoms of $\mu$ are distinct. We can therefore apply the implicit function theorem to get that there exist two open sets $U\subseteq (-\delta, \delta)$ and $V\subseteq (-\delta, \delta)^{k-1}$, such that $\theta_1 \in U$, $\overline \theta \coloneqq (\theta_2, \dots, \theta_k)\in V$, and a (unique) continuously differentiable function $\varphi \colon U \to V$ such that 
\begin{align*}
	\left\{(x_1, \varphi(x_1)) \colon x_1 \in U \right\}= \left\{(x_1, \overline x)\in U\times V \colon f(x_1, \overline x) = 0 \right\}.
 \end{align*}  
Note that $U$ may only depend on $k$ and $\delta$, so there exists $\tilde \theta_1\in U$ such that $\min_{i = 1,\dots, k} |\tilde\theta_1 - \theta_i|>0$. To conclude, observe that $f(\tilde \theta_1, \varphi(\tilde \theta_1)) = 0$, which implies that the first $k-1$ moments of the $k$-atomic uniform measure $\nu = \frac{1}{k}\big(\delta_{\tilde \theta_1} + \sum_{i = 1}^{k-1}\delta_{\varphi_i(\tilde \theta_1)}\big) \in \calU_k((-\delta, \delta))$ coincide with those of $\mu$, however $\nu$ differs from $\mu$ because their supports differ. Finally, by continuity of $\varphi$ we can choose $\tilde \theta_1$ sufficiently close to $\theta_1$ to ensure that $W_\infty(\mu, \nu)\leq \epsilon$. \qed

\subsection{Proof of Lemma \ref{lem:constructionHypothesis}} \label{app:pf:lem:constructionHypothesis}

	For Assertion $(i)$ with $r^* = 1$ consider $\tilde \mu = \delta_{1/4}, \tilde \nu = \delta_{-1/4} \in \calU_{1}([-1/4, 1/4])$. For $r^* \geq 2$ take an $r^*$-atomic measure $\tilde \mu = (1/r^*)\sum_{i = 1}^{r^*} \delta_{\theta_i}\in \calU_{r^*}([-1/4,1/4])$ with $r^*$ support points where at least one is located on $(-\infty, 0)$ and at least one on $(0, \infty)$. By Lemma~\ref{lem:moment_independence} another $r^*$-atomic measure $\tilde \nu = (1/r^*)\sum_{i = 1}^{r} \delta_{\eta_i}\in\calU_{r^*}([-1/4,1/4])$ exists which meets the same support condition and fulfills $M_{r-1}(\tilde \mu, \tilde \nu)=0$ while  $W_1(\tilde \mu, \tilde \nu)\wedge d_H(\tilde \mu, \tilde \nu)>0$. Note that the measures $\tilde \mu, \tilde \nu$ can be chosen such that $W_1(\tilde \mu, \tilde \nu)\wedge d_H(\tilde \mu, \tilde \nu)$ only depends on $k$.  %

	Assertion $(ii)$ is now a direct consequence of Assertion $(i)$ since $\epsilon<\delta$ combined with $\supp(\tilde \mu+ \tilde \nu)\in [-1/4,1/4]$ and $\supp(\mu_0)+B(0,\delta/2)\subseteq \Theta$.

	For Assertion $(iii)$ note by the Kantorovich-Rubinstein formulation for the 1-Wasserstein distance that 
	\begin{align*}
		W_1(\mu_\epsilon, \nu_\epsilon) &= \sup_{f\in \textup{Lip}_1(\RR^d)} \int f \dif (\mu_\epsilon - \nu_\epsilon)\\
		&= \sup_{f\in \textup{Lip}_1(\RR^d)} (1/k)\sum_{ j= 1}^{r^*} \left[f(\theta_{0k_0} + \epsilon \sigma \theta_j)- f(\theta_{0k_0} + \sigma\epsilon \eta_j)\right]\\
		&= (r^*/k)\sup_{f\in \textup{Lip}_1(\RR^d)} (1/r^*)\sum_{ j= 1}^{r^*} \left[f(\epsilon \sigma\theta_j)- f(\epsilon \sigma\eta_j)\right]\\
		&= (r^*/k) W_1\left((1/r^*)\sum_{j = 1}^{r^*}\delta_{\epsilon \sigma\theta_j}, (1/r)\sum_{j = 1}^{r^*}\delta_{\epsilon \sigma\eta_j}\right) \\
		&= \epsilon \frac{r^*}{k} W_1(\tilde \mu, \tilde \nu) \geq \epsilon\frac{W_1(\tilde \mu, \tilde \nu)}{k}  = C(k)\epsilon.
	\end{align*}

	Moreover, for the local Wasserstein divergence $\calD_{\mu_0}$ it holds since $\tilde \mu, \tilde \nu \in \calU_{r^*}([-1/4, 1/4])$ and $\epsilon<\delta$ for the measures $\mu_\epsilon, \nu_\epsilon$ conditioned on the Voronoi partition $V_1, \dots, V_{k_0}$ induced by $\supp(\mu_0)$, where we use the convention $\theta_{0i}\in V_i$ for $i = 1, \dots, k_0$, that 
	\begin{align*}
		&&\mu_{\epsilon, V_i} &= \delta_{\theta_{0i}}, &\nu_{\epsilon, V_i} &= \delta_{\theta_{0i}} \quad  \text{ for } i = 1, \dots, k_0-1,  && \\
		&&\mu_{\epsilon, V_{k_0}}  &= \frac{1}{r^*}\sum_{j = 1}^{r^*}\delta_{\theta_{0k_0} + \epsilon \sigma \theta_j}, & \nu_{\epsilon, V_{k_0}}  &= \frac{1}{r^*}\sum_{j = 1}^{r^*}\delta_{\theta_{0k_0} + \epsilon \sigma \eta_j}&&
	\end{align*}
	Hence, it follows that
	\begin{align*}
		\calD_{\mu_0}(\mu_\epsilon, \nu_\epsilon) &= 1 \wedge \textstyle\sum_{j=1}^{k_0} \delta_j(\mu_0) W_1^{r_j}(\mu_{\epsilon, V_j}, \nu_{\epsilon, V_j})\\*
		&= 1\wedge \left(\delta_{k_0}(\mu_0)W_1^{r^*}\left((1/r^*)\sum_{j = 1}^{r^*}\delta_{\epsilon \sigma\theta_j'}, (1/r^*)\sum_{j = 1}^{r^*}\delta_{\epsilon \sigma\eta_j'}\right)\right)\\*
		&= 1\wedge ( \epsilon^{r^*} \delta_{k_0}(\mu_0) W_1^{r^*}(\tilde \mu, \tilde \nu)).
	\end{align*}
	Since $W_1(\tilde \mu, \tilde \nu)\leq W_1(\tilde \mu, \delta_0)+W_1(\delta_0,\tilde \nu)\leq \delta < 1$ and since $\delta_{k_0}(\mu_0)\leq \diam(\Theta)^{k-r^*}$, we conclude by $\epsilon \leq \diam(\Theta)^{1-(k/r^*)}$ that 
	\begin{align*}
		\calD_{\mu_0}(\mu_\epsilon, \nu_\epsilon)  = \epsilon^{r^*} \delta_{k_0}(\mu_0) W_1^{r^*}(\tilde \mu, \tilde \nu)\geq  C(k)\delta_{k_0}(\mu_0)\epsilon^{r^*}.
	\end{align*}

	For the characterization of the Hausdorff distance between $\mu_\epsilon$ and $\nu_\epsilon$ we first note that 
	\begin{align*}
		\supp(\mu_\epsilon)&=\{\theta_{01}, \dots, \theta_{0k_0-1}, \theta_{0k_0} +\epsilon \sigma \theta_1', \dots, \theta_{0k_0} + \epsilon\sigma \theta_{r^*}' \}\\
		\supp(\nu_\epsilon)&=\{\theta_{01}, \dots, \theta_{0k_0-1}, \theta_{0k_0} + \epsilon \sigma\eta_1', \dots, \theta_{0k_0} + \epsilon\sigma \eta_{r^*}' \}.
	\end{align*}
	Now for every $j,j'\in \{1, \dots, r^*\}$ and $l \in \{1, \dots, k_0-1\}$ it holds for $\epsilon< 1/2$ that 
	\begin{align*}
		\|\theta_{0k_0} +\epsilon\sigma \theta_{j}' - \theta_{0k_0} - \epsilon \sigma \eta_{j'}'\| &= \epsilon |\theta_{j}' - \eta_{j'}'|\\
		&\leq \delta/2 < 3\delta/4\\
		&\leq  \|\theta_{0k_0} - \theta_{0l}\| -\delta/4 \\
		&\leq  \|\theta_{0k_0} +\epsilon\sigma \theta_{j}' - \theta_{0l}\|
	\end{align*} 
	This implies that 
	\begin{align*}
		d_H(\mu_\epsilon, \nu_\epsilon) &=d_H(\supp(\mu_\epsilon), \supp(\nu_\epsilon))\\
			&= d_H(\{\theta_{0k_0} +\epsilon \sigma \theta_1', \dots, \theta_{0k_0} + \epsilon\sigma \theta_{r^*}' \}, \{\theta_{0k_0} + \epsilon \sigma\eta_1', \dots, \theta_{0k_0} + \epsilon\sigma \eta_{r^*}' \}  )\\
			&= \epsilon  d_H(\{\sigma \theta_1', \dots,\sigma \theta_{r^*}' \}, \{  \sigma\eta_1', \dots,  \sigma \eta_{r^*}' \}  )\\
			&= \epsilon  d_H(\{\theta_1', \dots,\theta_{r^*}' \}, \{  \eta_1', \dots,  \eta_{r^*}' \}  ) \\
			&= \epsilon d_H(\tilde \mu, \tilde \nu).
	\end{align*}

	Finally, for Assertion $(iv)$ we first observe by the factorizing representation of $K$ due to its $r^*$-regularity for $x\in \RR^d$ that
	\begin{align*}
		K\ast\mu_\epsilon(x) - K\ast \nu_\epsilon(x) = \tilde K((\textup{id} - \sigma\sigma^\top)x)\left(\overline K\ast \left(\frac{r^*}{k}\sum_{i = 1}^{r^*}\delta_{\sigma^\top \theta_{0k_0} + \epsilon \theta_{i}} - \delta_{\sigma^\top \theta_{0k_0} + \epsilon \eta_{i}}\right) \right)(\sigma^\top x)
	\end{align*}
	Hence, upon defining $\mu^\sigma_{\epsilon}\coloneqq \sigma^{\top}_{\#}\mu_{\epsilon}$ and $\nu^\sigma_{\epsilon}\coloneqq \sigma^{\top}_{\#}\nu_{\epsilon}$, it follows from the tensorization property of the Hellinger distance that 
	\begin{align*}
			H^2(K\ast\mu_\epsilon, 	K\ast\nu_\epsilon) &= H^2(\overline K\ast \mu_\epsilon^\sigma, \overline K\ast \nu_\epsilon^\sigma)\\
			&=\frac{1}{2} \int \frac{\left(\overline K\ast \mu_\epsilon^\sigma(x) - \overline K\ast \nu_\epsilon^\sigma(x)\right)^2}{\textstyle \left(\sqrt{\overline K\ast \mu^\sigma_{\epsilon}(x)} + \sqrt{\overline K\ast \nu^\sigma_{\epsilon}(x)}\right)^2\displaystyle }\dif x\\
			&\leq \frac{1}{2} \int \frac{\left(\overline K\ast \mu_\epsilon^\sigma(x) - \overline K\ast \nu_\epsilon^\sigma(x)\right)^2}{\overline K\ast \mu^\sigma_{\epsilon}(x)  + \overline K\ast \nu^\sigma_{\epsilon}(x) }\dif x\\
			&\leq \frac{1}{2} \int\frac{ \left(\sum_{i = 1}^{r^*} \overline K(x-\epsilon\theta_i)- \overline K(x - \epsilon\eta_i) \right)^2}{\sum_{i = 1}^{r^*} \overline K(x-\epsilon\theta_i)+ \overline K(x - \epsilon\eta_i)  }\dif x
	\end{align*}
	To bound the right-hand side, we use the following $(r^*-1)$-th-order Taylor expansion of  $\sum_{i = 1}^{r^*} K(x-\epsilon\xi_i)$ with $(\xi_1, \dots, \xi_{r^*})\in \{(\theta_1, \dots, \theta_{r^*}), (\eta_1, \dots, \eta_{r^*})\}$ at $x = 0$, 
	\begin{align*}
		\sum_{i = 1}^{r^*} \overline K(x-\epsilon\xi_i)&= \sum_{i =1}^{r^*} \sum_{\alpha = 0}^{r^*-1} \frac{1}{\alpha!} D^{\alpha}\overline K(x)\epsilon\xi^{\alpha} + \frac{1}{(r^*-1)!}R_{r^*}(x, \epsilon \xi_i)(\xi_i)^{r^*}, 
	\end{align*}
	where $R_{r^*}(x, \epsilon \xi_i) = \int_0^1 (1-t)^{r^*-1}D^{r^*} K(x - t\epsilon \xi_i)\dif t$ denotes the rest term. Hence, applying this formula for the difference $\sum_{i = 1}^{r^*} \overline K(x-\epsilon\theta_i)- \overline K(x - \epsilon\eta_i)$ yields 
	\begin{align}
		\sum_{i = 1}^{r^*} \overline K(x-\epsilon\theta_i)- \overline K(x - \epsilon\eta_i) &=  \sum_{i = 1}^{r^*} \sum_{\alpha = 0}^{r^*-1} \frac{1}{\alpha!} D^{\alpha}\overline K(x)\left(\EE_{X\sim \tilde \mu}[(\epsilon X)^{\alpha}] - \EE_{Y\sim \tilde \nu}[(\epsilon Y)^{\alpha}]\right)\label{eq:DifferenceKernelConvZeroSumTerm}\\
		&\quad + \frac{1}{(r^*-1)!} \sum_{i = 1}^{r} R_{r^*}(x, \epsilon \theta_i)(\epsilon \theta_i)^{r^*} - R_{r^*}(x, \epsilon \eta_i)(\epsilon \eta_i)^{r^*}.\notag
	\end{align}
	Since the measures $\tilde \mu$ and $\tilde \nu$ exhibit matching moments up to order $r^*-1$, we have for all $\alpha \in \{0, \dots, r^*-1\}$ that 
	\begin{align*}
		\EE_{X\sim \tilde \mu}[(\epsilon X)^{\alpha}] = \epsilon^{\alpha}m_{\alpha}(\tilde \mu) = \epsilon^{\alpha}m_{\alpha}(\tilde \nu) =  \EE_{Y\sim \tilde \nu}[(\epsilon Y)^{\alpha}],
	\end{align*}
	and the right-hand side of \eqref{eq:DifferenceKernelConvZeroSumTerm} equals zero. We thus obtain, 
	\begin{align}
		H^2(K\ast\mu_\epsilon, 	K\ast\nu_\epsilon) &\leq \epsilon^{2r^*}C(\overline K, k)\max_{i = 1, \dots, r^*}\int \frac{R_{r^*}^2(x, \epsilon \theta_i) + R_{r^*}^2(x, \epsilon \eta_i)}{\sum_{i = 1}^{r^*} \overline K(x-\epsilon\theta_i)+ \overline K(x - \epsilon\eta_i)}\dif x \label{eq:hellyBound1}\\
		&\leq  \epsilon^{2r^*}C(\overline K, k)\max_{i = 1, \dots, r^*}\int \frac{R_{r^*}^2(x, \epsilon \theta_i)}{\sum_{i = 1}^{r^*} \overline K(x-\epsilon\theta_i)} +\frac{R_{r^*}^2(x, \epsilon \eta_i)}{\sum_{i = 1}^{r^*} \overline K(x - \epsilon\eta_i)}\dif x.\label{eq:hellyBound2}
	\end{align}

The asserted bound on the squared Hellinger distance now follows once we show that integral term on the right-hand side in the top line \eqref{eq:hellyBound1} or bottom line \eqref{eq:hellyBound2} stays bounded for $\epsilon \searrow 0$. To this end, consider $i \in \{1, \dots, r^*\}$ and note by the integral form of the remainder for the setting $r^* \in\{2, \dots, k\}$ that 
\begin{align*}
	\int \frac{R^2_{r^*}(x,\theta_0, \theta_i)}{\sum_{i = 1}^{r^*} \overline K(x-\epsilon\theta_i)} \dif x &= \int \frac{ \left(\int_{0}^{1}(1-t)^{r^*-1}D^{r^*} \overline K(x- t\epsilon\theta_i)\dif t\right)^2}{\sum_{i = 1}^{r^*} \overline K(x - \epsilon\theta_i)}\dif x\\
	&\leq C(k) \int \int_0^{1} \frac{\left(D^{r^*} \overline K(x - t\epsilon\theta_i)\right)^2}{\sum_{i = 1}^{r^*} \overline K(x - \epsilon\theta_i)}\dif t \dif x\\ 
	&= C(k)\int_0^{1} \int  \frac{\left(D^{r^*} \overline K(x - t\epsilon\theta_i)\right)^2}{\sum_{i = 1}^{r^*} \overline K(x - \epsilon\theta_i)}\dif x\dif t \\ 
	&\leq C(k) \sup_{t\in [0,1]}  \int  \frac{\left(D^{r^*} \overline K(x - t\epsilon\theta_i)\right)^2}{\sum_{i = 1}^{r^*} \overline K(x - \epsilon\theta_i)}\dif x.
\end{align*}
By $r^*$-regularity of the kernel $K$ and since $\tilde \mu$ was chosen to admit support point left and right of the origin, there exists some $\tau>0$ which depends on $K$ and $\theta_1, \dots, \theta_{r^*}$ such that the right-hand side in the above display is uniformly bounded for all $\epsilon\in (0,\tau)$. Likewise, since $\tilde \nu$ can also be chosen to have support points left and right of the origin, after possible decreasing $\tau>0$ it follows that the integral $\int R^2_{r^*}(x,\theta_0, \eta_i)/\sum_{i = 1}^{r^*} \overline K(x - \epsilon\eta_i) \dif x$ is uniformly bounded for all $\epsilon\in (0,\tau)$. This yields the assertion for $r^*\geq 2$. 

For the remaining case $r^* = 1$, we utilize \eqref{eq:hellyBound1} and obtain by a similar argument that 
\begin{align*}
	\int \frac{R^2_{r^*}(x,\theta_0, \theta_1)}{\sum_{i = 1}^{r^*} \overline K(x-\epsilon\theta_i)} \dif x &= \int \frac{ \left(\int_{0}^{1}(1-t)^{r^*-1}D^{r^*} \overline K(x- t\epsilon\theta_1)\dif t\right)^2}{\overline K(x - \epsilon\theta_1) + \overline K(x - \epsilon \eta_1)}\dif x\\
	&\leq C(k) \sup_{t\in [0,1]}  \int  \frac{\left(D^{r^*} \overline K(x - t\epsilon\theta_1)\right)^2}{\overline K(x - \epsilon\theta_1) + \overline K(x - \epsilon \eta_1)}\dif x.
\end{align*}
Now since $\tilde \mu = \delta_{\theta_1}$ with $\theta_1 = 1/4$ and $\tilde \nu = \delta_{-\eta_1}$ with $\eta_1 = -1/4$ it follows from $r^*$-regularity of the kernel $K$ that there exists some $\tau>0$ such that the right-hand side in the above display is uniformly bounded for all $\epsilon\in (0,\tau)$. Repeating the argument for the other remainder term in \eqref{eq:hellyBound1} concludes the proof of the Hellinger bound. In particular, the quantity $\tau$ serves as the constant $C(K,k)>0$ in the formulation of the statement.  \qed

	\section{Proofs for Section~\ref{sec:mle_bounded}}

	\subsection{Proof of Proposition \ref{prop:consistency_LSE_MLE_finite_m}}\label{app:subsec:consistency_MLE_finite_m}
	By definition of the maximum likelihood estimator it holds $\ell_t(\hat \mu_{t,m})\geq \ell_t(\mu)$. Hence, by our Poisson model \eqref{eq:model} it follows after dividing by $t>0$ that 
	\begin{align*}
		& - \sum_{j = 1}^{m} \int_{B_j}K\ast \hat \mu_{t, m}(x)  \dif x  + \frac{X_j}{t} \log\left( \int_{B_j}K\ast \hat \mu_{t, m}(x)  \dif x\right)\\*
		\geq & - \sum_{j = 1}^{m} \int_{B_j}K\ast \mu(x)  \dif x  + \frac{X_j}{t} \log\left( \int_{B_j}K\ast \mu(x)  \dif x\right).
	\end{align*}
	By the strong law of large numbers it follows for almost every realization $\omega$ of the underlying probability space $X_j(\omega)/t \to \int_{B_j} K\ast \mu(x)\dif x$ as $t\to \infty$. Further, by compactness of $\tilde \calP_k(\Theta)$ there exists an increasing sequence $\{t_i\}_{i \in \NN}$ with $\lim_{i \to \infty} t_i = \infty$ such that $\hat \mu_{t,m}(\omega)$ converges to some element $\overline \mu\in \tilde \calP_k(\Theta)$. This implies by the above display that 
	\begin{align*}
		& - \sum_{j = 1}^{m} \int_{B_j}K\ast \overline \mu(x)  \dif x  + \int_{B_j}K\ast  \mu(x)  \dif x\cdot  \log\left(\int_{B_j}K\ast \overline \mu(x)  \dif x\right)\\
		\geq & - \sum_{j = 1}^{m} \int_{B_j}K\ast \mu(x)  \dif x  + \int_{B_j}K\ast  \mu(x)  \dif x \cdot \log\left(\int_{B_j}K\ast \mu(x)  \dif x\right).
	\end{align*}
	Since $\int_{B_j}K\ast  \mu(x)\dif x >0$ for each $j \in \{1, \dots, m \}$ and since for $t>0$ the function $x\in (0, \infty)\mapsto -x + t\cdot \log(x)$ is uniquely maximized at $x = t$, we deduce by the above display for all $j\in \{1, \dots, m\}$ that 
	\begin{align*}
		\int_{B_j}K\ast \overline \mu(x)  \dif x = \int_{B_j}K\ast \mu(x)  \dif x.
	\end{align*}

	Setting  $\Psi(x)\coloneqq \int_{x}^{x+\delta}K(r)\dif r$ it holds for all $j\in \{1, \dots, m\}$ and $\xi \in \{\mu, \overline \mu\}$ that
	\begin{align*}
			\int_{B_j} \int K(x-y) \dif \xi(y)  \dif x= \int \int_{s_j-y}^{s_j-y+\delta}K(x) \dif x \dif \xi(y) = \int  \Psi(s_j-y)\dif \xi(y).
	\end{align*}
	Since the bin integral function $\Psi$ is $(2k,l)$-root-regular, we conclude from the previous two displays and since $m\geq 2k+l$ by our identifiability result (\Cref{thm:identifiabilityFromFunctionals}) that $\overline \mu = \mu$ and the assertion follows. \qed

	\subsection{Proof of Lemma \ref{lem:fullRankRRFunction}}
		We prove the claim by induction over $k'\in\{1, \dots, k\}$. For $k'=1$ it follows since $\Psi$ is $(k,l)$-root-regular that the function $\Psi(\cdot - x_1)$ can admit at most $k+l-1$ zeros. Hence, there exists some $t\in \{t_1, \dots, t_{k+l}\}$ such that $\Psi(t - x_1) \neq 0$ and consequently $\textup{Rank}(\Psi(t_1-x_1), \dots, \Psi(t_{1+l}-x_1))= 1$. Now suppose the claim holds for $k'-1\leq k-1$, then we aim to show the assertion for $k'$. So consider distinct points $x_1< \dots < x_{k'}\in \RR$ and $t_1< \dots < t_{k+l}\in\RR$. By induction assumption, we know for $x_1< \dots < x_{k-1}\in \RR$ and $t_1< \dots < t_{k+l}\in\RR$ that equation \eqref{eq:FullRankProperty} is met. Hence, there exists a subselection $t_1'< \dots < t_{k'-1}'$ such that 
		\begin{align}\label{eq:detNonzero}
			\det\left(\left(\Psi(t_j'-x_i)\right)_{i = 1, \dots, k'-1, j = 1, \dots, k'-1}\right)\neq 0. 
		\end{align}
		Now consider the function 
		\begin{align*}
			D\colon \RR\to \RR, \quad t \mapsto \det\left(\left(\Psi(t_j'-x_i)\right)_{i = 1, \dots, k', j = 1, \dots, k'-1} \;|\; (\Psi(t-x_i))_{i = 1, \dots, k'}\right)
		\end{align*}
		which by definition of the determinant can be represented as 
		\begin{align*}
			D(t) = \sum_{i =1}^{k'} a_i \Psi(t-x_i) \quad \text{ where } \quad a_i = \det\left(\left(\Psi(t_j'-x_{i'})\right)_{i'\in\{1, \dots, k'\}\backslash\{i\}, j = 1, \dots, k'-1}\right).
		\end{align*}
		In particular, since $a_{k'} \neq 0$ by \eqref{eq:detNonzero}, we infer by $(k,l)$-root-regularity of $\Psi$ that $D(\cdot)$ admits at most $k+l-1$ roots. Evidently, it holds $D(t_j') = 0$ for each $j \in\{1, \dots,k'-1\}$, implying that $D$ admits at most $k+l-1 - (k'-1)$ remaining zeros. Hence, there must exist some $t\in \{t_1, \dots, t_{k+l}\}\backslash\{t_1', \dots, t_{k-1}'\}$ such that $D(t)\neq 0$ and the assertion follows. \qed

		\subsection{Proof of Theorem \ref{thm:identifiabilityFromFunctionals}}\label{app:pf:thm:identifiabilityFromFunctionals}
			Denote by $S = \supp(\mu)\cup\supp(\nu)$ the union of support points of $\mu$ and $\nu$ and label $S = \{x_1, \dots, x_{p}\}$ for $p\in \{1, \dots, 2k\}$. For fixed $t\in \RR$ we will interpolate the indicator $\mathds{1}(\cdot \leq t)$ on $S$ in terms of $\Psi(t_1-\cdot), \dots, \Psi(t_{2k+l}-\cdot)$. Since $\Psi$ is $(2k,l)$-root-regular it follows by Lemma \ref{lem:fullRankRRFunction} that  
			there exists a selection of $p$ distinct indices $\{j_1, \dots, j_{p}\}\subseteq \{1, \dots, 2k+l\}$ such that 
			\begin{align*}
				\det\begin{pmatrix}
					\Psi(t_{j_1}-x_1) & \cdots & \Psi(t_{j_{p}}-x_1)\\
					\vdots & \ddots & \vdots \\
					\Psi(t_{j_1}-x_{p}) & \cdots & \Psi(t_{j_{p}}-x_{p})
				\end{pmatrix}\neq 0.
			\end{align*}
			Hence, there exist coefficients $a_1, \dots, a_{p}\in \RR$, dependent on $t$, such that  
				$$\left(\mathds{1}(x_1\leq t),
					\dots, 
					\mathds{1}(x_{s}\leq t) \right)^\top =  \sum_{s = 1}^{p} a_s\left(\Psi(t_{j_s}-x_1), \dots, \Psi(t_{j_s}-x_{i})\right)^\top.$$ 
			From Assumption \eqref{eq:idenfiableCondition} we therefore infer that 
			\begin{align*}
				F_\mu(t) = \int \mathds{1}(x\leq t)\dif\mu(x) %
				&=\sum_{s = 1}^{p} a_s \int \Psi( t_{j_s}-x)\dif\mu(x) \\
				&= \sum_{s = 1}^{p} a_s \int \Psi(t_{j_s}-y)\dif\nu(y) =  \int \mathds{1}(y\leq t)\dif\nu(y) = F_\nu(t).
			\end{align*}
			Equality of $\mu$ and $\nu$ now follows since $t\in \RR$ was arbitrary. \qed
	
	\subsection{Proof of Lemma \ref{lem:rootregViaDerivative}}
		To show the first assertion, assume there existed distinct points $x_1< \dots < x_{k}$ and $(a_1, \dots, a_{k})\in \RR^k\backslash\{0\}$ such that the function $\Lambda \colon \RR\to \RR, t \mapsto \sum_{i =1}^{k} a_i \Psi(t-x_i)$ admitted $k+l+2$ distinct roots $z_1 < \dots, z_{k+l+2}\in \RR$. Then, by Rolle's theorem there would exist distinct points $\xi_1< \dots <\xi_{k+l+1}\in \RR$ where for each $j \in \{1, \dots, k+l+1\}$ we have $\xi_{j} \in (z_j, z_{j+1})$ and $0=\Lambda'(\xi_j)  = \sum_{i = 1}^{k}a_i\Psi'(\xi_j-x_i)$. However, this would contradict the $(k,l)$-root-regularity of $\Psi'$, and we thus conclude that $\Psi$ is $(k,l+1)$-root regular. \qed %

	\subsection{Proof of Proposition \ref{prop:rootRegularityGaussian}}
		First note by the fundamental theorem of calculus that the function 
		$$\Psi(x) \coloneqq \int_{x}^{x+\delta}K(t)\dif t = \int_{x}^{x+\delta}(2\pi \sigma^2)^{-1/2}\exp(-t^2/(2\sigma^2))\dif t$$ is differentiable with  derivative
		\begin{align*}
			 \Psi'(x) =  (2\pi \sigma^2)^{-1/2}\left[\exp(-(x+\delta)^2/(2\sigma^2)) - \exp(-x^2/(2\sigma^2))\right].
		\end{align*}
		Once we show that $\Psi'$ is $(k,k)$-root-regular the assertion follows from Lemma \ref{lem:rootregViaDerivative}. To this end, consider distinct points $x_1< \dots < x_{k} \in \RR$ and coefficients $a_1, \dots, a_{k}\in \RR$ with $\sum_{i =1}^{k} a_i\neq 0$ and observe by a direct expansion of the terms that 
		\begin{align*}
			&\quad (2\pi \sigma^2)^{1/2}\sum_{i = 1}^{k} a_i \Psi'(t-x_i) \\
			&= \sum_{i = 1}^{k} a_i\left[\exp\left(-\frac{(t-x_i-\delta)^2}{2\sigma^2}\right) - \exp\left(-\frac{(t-x_i)^2}{2\sigma^2}\right)\right]\\
			&=  \exp\left(-\frac{t^2}{2\sigma^2}\right)\sum_{i = 1}^{k} a_i \left[\exp\left(-\frac{(x_i+\delta)^2}{2\sigma^2}\right)\exp\left(\frac{x_i +\delta}{\sigma^2}t\right) - \exp\left(-\frac{x_i^2}{2\sigma^2}\right)\exp\left(\frac{x_i +\delta}{\sigma^2}t\right)\right].
		\end{align*}  
		The factor $\exp(-t^2/(2\sigma^2))$ is strictly positive on $\RR$ whereas the sum is equal to an exponential polynomial of degree at most $2k$ in $t$, i.e., a function $t\mapsto \sum_{i = 1}^{2k} b_i \exp(c_i t)$ for appropriate $b_i, c_i \in \RR$, which are known to admit at most $2k-1$ roots (e.g., \citealt[p.\ 10]{karlin1966tchebycheff} or \citealt[p.\ 48, Aufgabe 75]{polya1925aufgaben}), and $(k,k)$-root-regularity of $\Psi'$ follows.  \qed

	\subsection{Proof of Lemma \ref{prop:rationalFunctionRootRegular}}
			Let $x_{1}< \dots< x_{k}\in \RR$ for $k\in \NN$ be  distinct points and consider $(a_1, \dots,a_k)\in \RR^k\backslash\{0\}$ with $\sum_{i = 1}^{k} a_i^2 \neq 0$. Further, assume without loss of generality that $P$ and $Q$ do not share any common roots in $\CC$, for otherwise they could be canceled out. Hence, since $\Psi$ is real-valued it follows that $Q$ does not have any roots in $\RR$. We thus infer for $t\in \RR$ that $\sum_{i = 1}^{k} a_i P(t-x_i)/Q(t-x_i)=0$ if and only if 
		\begin{align*}
			0 &= \prod_{j = 1}^{k}Q(t-x_j)\sum_{i = 1}^{k} a_i P(t-x_i)/Q(t-x_i)\\
			  &= \sum_{i = 1}^{k} a_i P(t-x_i) \prod_{\substack{j =1\\ j \neq i}}^{k} Q(t-x_j) \eqqcolon f(t),
		\end{align*}
		which is a polynomial of degree at most $p+ (k-1)q$ with respect to $t$. Hence, it follows that $f(t)$ is either equal to zero on $\RR$ or admits at most $k+p + (k-1)(q-1)-1$ roots. However, $f$ cannot be identically zero, since otherwise $\Psi(t-x_1), \dots, \Psi(t-x_n)$ would be linearly dependent on $\RR$, contradicting Lemma \ref{lem:shiftedAnalyticFunctionsLinearlyIndependent} below due to integrability of $\Psi$. \qed
	
		\begin{lemma}\label{lem:shiftedAnalyticFunctionsLinearlyIndependent}
			Let $\phi\colon \RR^d\to \RR$ be an integrable, non-constant function and define $\phi_t(\cdot)\coloneqq \phi(\cdot-t)$. Then, for any finite set $T\subseteq \RR^d$ then functions $\{\phi_t  \,\colon\, t\in T\}$ are linearly independent.  
		\end{lemma}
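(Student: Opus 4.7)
\textbf{Proof Plan for Lemma~\ref{lem:shiftedAnalyticFunctionsLinearlyIndependent}.} The plan is to argue by contradiction using the Fourier transform, turning a linear dependence relation on the translates $\{\phi_t\}_{t\in T}$ into a multiplicative relation on $\widehat\phi$ and an exponential sum, and then exploiting the analyticity of exponential sums to force the coefficients to vanish.

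First I would assume for contradiction that there exist coefficients $(a_t)_{t\in T}\in\RR^{|T|}\setminus\{0\}$ with
\[
\sum_{t\in T} a_t\, \phi_t \;=\; 0 \quad \text{in } L^1(\RR^d).
\]
Applying the Fourier transform (which is well-defined and linear on $L^1$) and using the translation identity $\widehat{\phi_t}(\xi) = e^{-2\pi i \langle \xi,t\rangle}\,\widehat\phi(\xi)$, this becomes
\[
\widehat\phi(\xi) \cdot p(\xi) \;=\; 0 \quad \text{for all } \xi\in\RR^d, \quad \text{where } p(\xi):= \sum_{t\in T} a_t\, e^{-2\pi i \langle \xi,t\rangle}.
\]
Next I would use the hypothesis that $\phi$ is non-constant: since $\phi\in L^1(\RR^d)$, the only constant value it could take is $0$, so $\phi$ is not identically zero and thus $\widehat\phi\not\equiv 0$. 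Because $\widehat\phi$ is continuous (as $\phi\in L^1$), there is a non-empty open set $U\subseteq\RR^d$ on which $\widehat\phi$ does not vanish. On $U$, the preceding display forces $p(\xi)=0$.

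The final and most delicate step is to conclude from $p\equiv 0$ on $U$ that all $a_t=0$. Since $p$ is a finite linear combination of characters of $\RR^d$, it extends to an entire function of $\xi\in\CC^d$, and vanishing on a non-empty open subset of $\RR^d$ forces $p\equiv 0$ on all of $\RR^d$ by the identity theorem for real-analytic functions. At this point I would invoke the classical fact that distinct characters $\{e^{-2\pi i \langle \cdot,t\rangle}\}_{t\in T}$ are linearly independent on $\RR^d$: picking a direction $v\in\RR^d$ such that the real inner products $\{\langle v,t\rangle : t\in T\}$ are pairwise distinct (which holds outside a finite union of hyperplanes), the restriction $s\mapsto p(sv)$ is an exponential sum in one variable with distinct frequencies, and such sums vanish identically only when all coefficients are zero (by a Vandermonde argument after evaluating at $|T|$ points, or equivalently from the linear independence of distinct exponentials). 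This contradicts our assumption and proves the lemma.

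The main obstacle, as I see it, is handling the passage from $p\equiv 0$ on $U$ to $a_t=0$ cleanly in the multivariate setting; the key trick is the directional reduction to an exponential sum on $\RR$, which reduces the problem to a classical and elementary fact. All other steps (Fourier transform identities, continuity of $\widehat\phi$, and the integrability obstruction to constant nonzero functions) are routine.
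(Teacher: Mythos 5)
Your proof is correct and takes essentially the same route as the paper: both pass to the Fourier transform, use the translation identity to factor out the exponential sum $p(\xi)=\sum_{t\in T}a_t e^{-i\langle t,\xi\rangle}$, and exploit analyticity to conclude. The only (immaterial) difference is organizational — the paper notes that the zero set of the nontrivial analytic function $p$ is Lebesgue-null and deduces $\widehat\phi\equiv 0$, hence $\phi=0$, contradicting non-constancy, whereas you work in the dual direction, using $\widehat\phi\neq 0$ on an open set to force $p\equiv 0$ and then invoking linear independence of distinct characters to kill the coefficients.
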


	\subsection{Proof of Lemma \ref{lem:shiftedAnalyticFunctionsLinearlyIndependent}}
	Let $\varphi$ be integrable such that there are coefficients $\{c_t\}_{t\in T}\subseteq \RR$ with at least one non-zero with $\sum_{t\in T} c_t \varphi(\cdot - t) = 0$ on $\RR^d$. Taking the Fourier transform yields for all $ \omega \in \RR^d$,
	\begin{align}\label{eq:identityFourierTransform}
		\sum_{t\in T} c_t \exp(-i\langle t, \omega\rangle)\hat \varphi(\omega) =0
	\end{align} 
	Note that $s(\omega) \coloneqq \sum_{t\in T} c_t \exp(-i\langle t, \omega\rangle)$ is an analytic function in $\omega\in \RR^d$ that is not constant, hence by \citet[Section 3.1.24, p.\ 240]{federer2014geometric} its set of zeros $s^{-1}(\{0\})$ must be a Lebesgue null-set. Since $\varphi$ is integrable, its Fourier transform $\hat \varphi$ is continuous and by \eqref{eq:identityFourierTransform} we conclude that $\hat \varphi = 0$ on $\RR^d$, which in turn implies $\varphi = 0$ on $\RR^d$. \qed

\section{Proofs for Section~\ref{sec:discussion}}
\subsection{Proof of Corollary~\ref{cor:matchings}}
\label{app:pf_cor_matchings}
To prove the minimax lower bound, notice that for any measure $\mu\in \calU_k(\Theta)$, 
it holds that $(\mu,\mu)\in \calT_\lambda(\Theta)$. Thus, letting $\pi_0$ denote the identity coupling, we have
\begin{align*}
\inf_{\hat\pi_t} \sup_{(\mu,\nu)\in \calT_\lambda(\Omega)}
\bbE_{\mu,\nu} W_2(\hat\pi_t,\pi_0) 
 &\geq  \inf_{\hat\pi_t} \sup_{\mu\in \calU_k(\Theta)} \bbE_{\mu} W_2(\hat\pi_{t},\pi_0)
 =  \inf_{\hat\pi_t} \sup_{\mu\in \calU_k(\Theta)} \bbE_{\mu} W_2(\hat\pi_{t1},\mu),
\end{align*}
where $\hat \pi_{t1}$ denotes the first marginal of the joint distribution $\hat \pi_t$. It follows that 
\begin{align*}
\inf_{\hat\pi_t} \sup_{(\mu,\nu)\in \calT_\lambda(\Omega)}
\bbE_{\mu,\nu} W_2(\hat\pi_t,\pi_0) 
 &\geq   
  \inf_{\hat\mu_t} \sup_{\mu\in \calU_k(\Theta)} \bbE_{\mu} W_2(\hat\mu_{t},\mu) \asymp t^{-1/2k},
\end{align*}
by Theorem~\ref{thm:global_minimax_risk}. 
Conversely, let $\hat\mu_t,\hat\nu_t$ be minimax-optimal estimators for $\mu$ and $\nu$ respectively,
and let $\hat\pi_t$ be any quadratic optimal transport coupling between $\hat\mu_t$ and $\hat\nu_t$. 
Then, by Lemma~\ref{lem:stab_w2}, there exists a positive constant $C = C(\lambda)>0$ such that
\begin{align*}
\sup_{(\mu,\nu)\in \calT_\lambda(\Theta)}\bbE_{\mu,\nu} W_2(\hat\pi_t,\pi_0)
 &\leq C \sup_{(\mu,\nu)\in \calT_\lambda(\Theta)}\bbE_{\mu,\nu} \Big[W_2(\hat\mu_t,\mu) + W_2(\hat\nu_t,\nu)\Big] \\ 
 &\lesssim \sup_{ \mu\in \calU_k(\Theta)}\bbE_{\mu,\nu} \Big[W_2(\hat\mu_t,\mu) + \sup_{\nu\in\calU_k(\Theta)}W_2(\hat\nu_t,\nu)\Big] 
 \lesssim t^{-1/2k},
\end{align*}
by Theorem~\ref{thm:global_minimax_risk}. The claim follows.\qed

	\hyphenation{Hausdorff}

	\section{Comparing the (local) Wasserstein and Hausdorff distance}\label{app:auxiliary}

	In this appendix, we provide some quantitative bounds between different Wasserstein distances, the local Wasserstein divergence as well as the Hausdorff distance for uniform distributions on general Euclidean spaces $\RR^d$. All proofs are deferred to individual subsections at the end of this appendix. Notably, all assertions carry over \emph{mutatis mutandis} to uniform distributions on general metric spaces. 

	We start with our equivalence statement of $p$-Wasserstein for varying $p$ for $k$-atomic uniform measures. 
	
	\begin{lemma}[Equivalence of Wasserstein Distances]\label{lem:inequalityWasserstein1Infty}
		For arbitrary $\mu, \nu \in \calU_k(\RR^d)$ and $p,q\in [1,\infty]$ it holds 
		\begin{align*}
			W_p(\mu, \nu) \leq k W_q(\mu, \nu). 
		\end{align*}
		\end{lemma}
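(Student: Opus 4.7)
The plan is to exploit the representation of $W_p$ between uniform $k$-atomic measures as a minimum over permutations, recalled in Section~\ref{sec:intro}: writing $\mu = \frac 1 k \sum_i \delta_{\theta_i}$ and $\nu = \frac 1 k \sum_i \delta_{\eta_i}$, one has $W_p^p(\mu,\nu) = \min_{\sigma \in \calS(k)} \frac 1 k \sum_{i=1}^k \|\theta_{\sigma(i)} - \eta_i\|^p$ for $p \in [1,\infty)$, with the analogous expression for $p = \infty$. Since $p \mapsto W_p$ is non-decreasing, the bound is immediate whenever $p \leq q$, so the only case of interest is $p > q$.

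For $p > q$, I would fix a permutation $\sigma^{\ast} \in \calS(k)$ that attains the minimum defining $W_q(\mu,\nu)$, and use the vector $a \in [0,\infty)^k$ with coordinates $a_i := \|\theta_{\sigma^{\ast}(i)} - \eta_i\|$ as a candidate for $W_p$. The standard comparison $\|a\|_p \leq \|a\|_q$ for $p \geq q$ then yields
\begin{align*}
W_p(\mu,\nu)
 \;\leq\; \Bigl(\tfrac 1 k \textstyle\sum_i a_i^p\Bigr)^{1/p}
 \;=\; k^{-1/p}\|a\|_p
 \;\leq\; k^{-1/p}\|a\|_q
 \;=\; k^{\,1/q - 1/p}\Bigl(\tfrac 1 k \textstyle\sum_i a_i^q\Bigr)^{1/q}
 \;=\; k^{\,1/q - 1/p}\,W_q(\mu,\nu),
\end{align*}
for $p,q \in [1,\infty)$. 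The cases involving $p = \infty$ or $q = \infty$ are handled separately but by the same idea: when $p = \infty$ and $q < \infty$, I would bound $\max_i a_i \leq \|a\|_q = k^{1/q} W_q(\mu,\nu)$; when $q = \infty$ the inequality $W_p \leq W_\infty$ suffices.

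In all cases, the prefactor satisfies $k^{1/q - 1/p} \leq k^{1/q} \leq k$, since $q \geq 1$ and $k \geq 1$, which yields the claimed bound $W_p(\mu,\nu) \leq k\, W_q(\mu,\nu)$. No substantive obstacle is anticipated; the only minor care needed is to treat the $p = \infty$ and $q = \infty$ endpoints separately, and to note that the permutation witnessing the minimum for $W_q$ need not be optimal for $W_p$ but only needs to provide an admissible upper bound for $W_p$.
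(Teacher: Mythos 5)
Your proof is correct and rests on the same two ingredients the paper uses: the fact that for uniform $k$-atomic measures the Wasserstein distance is a minimum over matchings, and an $\ell^p$-norm comparison on the resulting vector of matched distances. The presentation differs slightly: the paper proves only the extreme inequality $W_\infty(\mu,\nu)\leq k\,W_1(\mu,\nu)$ (extracting an optimal matching for $W_1$ via Birkhoff's theorem and bounding $\max_i a_i \leq \sum_i a_i$) and then chains with the monotonicity $W_p \leq W_\infty$ and $W_1 \leq W_q$; you instead carry out the $\|a\|_p \leq \|a\|_q$ comparison directly for all $p\geq q$. Your route is arguably cleaner and yields as a byproduct the sharper intermediate constant $k^{1/q-1/p}$ (e.g.\ $\sqrt{k}$ for $p=\infty$, $q=2$), whereas the paper's chaining always inflates to the worst-case factor $k$; both, of course, suffice for the stated lemma. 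The only thing to keep in mind is that the matching representation you invoke is itself justified via Birkhoff's theorem, which the paper states explicitly but you cite implicitly via the definition in Section~\ref{sec:intro} — acceptable, since that formula is presented there as established.
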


	\begin{remark}[On the necessity of uniformity]
	The above inequality crucially relies on the uniformity assumption on the weights of the measures. Indeed, it is easy to see for non-uniform measures $\tilde \mu_{\epsilon} = \frac{1 + \epsilon}{2}\delta_{x} + \frac{1 -\epsilon}{2}\delta_{y}$ with $\epsilon \in (0,1/2)$ and $x\neq y$ that $W_{p}(\mu_0, \mu_{\epsilon}) = \epsilon^{1/p}\|x-y\|$ for $p\in [1,\infty]$ and where $1/\infty \coloneqq 0$. Hence, whenever $p>q$ it follows that $\frac{W_p(\mu_0, \mu_{\epsilon})}{W_q(\mu_0, \mu_{\epsilon})} = \epsilon^{(1/p - 1/q)}\to \infty$ for $\epsilon\to 0$.
	\end{remark}
	
We now state a bound which formalizes the (local) discriminative power of the local Wasserstein divergence compared to vanilla Wasserstein distance. 

\begin{lemma}[Relation between Wasserstein distance and local Wasserstein divergence]\label{lem:inequalityWassersteinLocal}
			Let $1\leq k_0 \leq k$, $r \in \NN^{k_0}$ with $|r|= k$, and $\delta\in (0,1)$. Further, consider $\mu_0 = \frac{1}{k}\sum_{i = 1}^{k_0} r_j\delta_{\theta_{0j}}\in \calU_{k,k_0}(\RR^d;r, \delta)$ and let $\mu, \nu \in \calU_k(\RR^d;\mu_0, \delta/4)$ be two $k$-uniform measures. Then, upon denoting by $\{V_j\}_{j = 1, \dots, k_0}$ a Voronoi partition generated on $\RR^d$ from $\supp(\mu_0)$ it follows for $r^* \coloneqq \max_{j = 1, \dots, k_0}r_j$, $r_*\coloneqq \min_{j = 1, \dots, k_0}r_j$,   %
			and $\Delta(\mu_0)\coloneqq \diam(\supp(\mu_0))\vee 1$, %
			\begin{align}\label{eq:inequalityWassersteinLocal1}
				W_1^{r^*}(\mu, \nu) &\lesssim_{k} \sum_{i =1}^{k_0} W_1^{r_j}(\mu_{V_j}, \nu_{V_j})\lesssim_{k} W_1(\mu, \nu), \quad \text{ and }\\
				\Delta(\mu_0)^{-k+r_*}\calD_{\mu_0}(\mu, \nu) &\lesssim_{k}\sum_{i =1}^{k_0} W_1^{r_j}(\mu_{V_j}, \nu_{V_j})\lesssim_{k} \delta^{-k+r_*} \calD_{\mu_0}(\mu, \nu)\label{eq:inequalityWassersteinLocal2}
			\end{align}
			where $\calD_{\mu_0}$ denotes the local Wasserstein distance from \eqref{eq:local_Wasserstein}. 
			In particular, it holds $W_1^{r^*}(\mu, \nu) \lesssim_k \delta^{-k + r_*}\calD_{\mu_0}(\mu, \nu)$. Furthermore, $\calD_{\mu_0}^{1/r^*}(\mu, \nu)$ defines a metric on $\calU_k(\RR^d; \mu_0, \delta/4)$.
		\end{lemma}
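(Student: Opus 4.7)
The plan is first to extract the rigid structural consequences of the hypothesis $\mu, \nu \in \calU_k(\RR^d; \mu_0, \delta/4)$: since $W_\infty(\mu, \mu_0), W_\infty(\nu, \mu_0) \leq \delta/4$ and the atoms of $\mu_0$ are pairwise $\delta$-separated, every atom of $\mu$ (resp.\ $\nu$) lies in a \emph{unique} ball $B(\theta_{0j}, \delta/4)$. Consequently $\mu(V_j) = \nu(V_j) = r_j/k$ for all $j$, both conditional measures $\mu_{V_j}, \nu_{V_j}$ are supported in $B(\theta_{0j}, \delta/4)$, and in particular $W_1(\mu_{V_j}, \nu_{V_j}) \leq \delta/2 \leq 1$. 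I would record these facts at the outset since they drive every subsequent step.

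For the upper bound $\sum_j W_1^{r_j}(\mu_{V_j}, \nu_{V_j}) \lesssim_k W_1(\mu, \nu)$ in \eqref{eq:inequalityWassersteinLocal1}, I would run a ``decrossing'' swap argument. The quantity $k W_1(\mu, \nu)$ equals the cost of the optimal bipartite matching $\sigma^*$ between the atoms of $\mu$ and those of $\nu$. Classify each edge of $\sigma^*$ as \emph{within-cluster} or \emph{cross-cluster}; the former have length at most $\delta/2$ while the latter have length at least $\delta/2$ by the $\delta$-separation. The equality $\mu(V_j) = \nu(V_j)$ forces the cross-edge pattern on clusters to be balanced at every node, hence to decompose into directed cluster-cycles. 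Uncrossing a cycle of length $\ell$ replaces $\ell$ cross edges (total cost $\geq \ell \delta/2$) by $\ell$ within-cluster edges (total cost $\leq \ell \delta/2$), so the total matching cost does not increase. After uncrossing all cycles one obtains a cluster-respecting matching of cost at most $k W_1(\mu, \nu)$, which yields $\sum_j r_j W_1(\mu_{V_j}, \nu_{V_j}) \leq k W_1(\mu, \nu)$; the bound $W_1^{r_j}(\mu_{V_j}, \nu_{V_j}) \leq W_1(\mu_{V_j}, \nu_{V_j})$ (valid since $W_1 \leq 1$ and $r_j \geq 1$) then finishes the step. I expect this decrossing argument, and in particular the decomposition of the cross-edge pattern into cluster-cycles from the balanced-flow property, to be the main technical obstacle.

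For the lower bound $W_1^{r^*}(\mu, \nu) \lesssim_k \sum_j W_1^{r_j}(\mu_{V_j}, \nu_{V_j})$, I would glue the optimal within-cluster couplings to form $\pi = \sum_j (r_j/k) \pi_j$, yielding $W_1(\mu, \nu) \leq \sum_j (r_j/k) W_1(\mu_{V_j}, \nu_{V_j}) \leq k_0 \max_j W_1(\mu_{V_j}, \nu_{V_j})$; raising to power $r^*$ and using $W_1^{r^*}(\mu_{V_j}, \nu_{V_j}) \leq W_1^{r_j}(\mu_{V_j}, \nu_{V_j})$ (since $W_1 \leq 1$ and $r^* \geq r_j$) delivers the claim. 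For \eqref{eq:inequalityWassersteinLocal2}, the two-sided estimate $\delta^{k-r_j} \leq \delta_j(\mu_0) \leq \Delta(\mu_0)^{k-r_j}$, combined with $\delta < 1 \leq \Delta(\mu_0)$ and $r_j \geq r_*$, immediately gives $\delta_j(\mu_0) \leq \Delta(\mu_0)^{k-r_*}$ and $\delta_j(\mu_0) \geq \delta^{k-r_*}$; the first bound yields the upper direction, while for the lower direction I would case-split on whether the truncation $1 \wedge (\cdot)$ in $\calD_{\mu_0}$ is active. If inactive, the estimate $\sum_j W_1^{r_j} \leq \delta^{r_* - k} \calD_{\mu_0}$ follows directly; if active, the crude bound $\sum_j W_1^{r_j} \leq k_0 (\delta/2)^{r_*} \leq C(k) \delta^{r_* - k}$ (using $\delta \leq 1$) suffices. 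Combining with the preceding paragraph yields the advertised $W_1^{r^*}(\mu, \nu) \lesssim_k \delta^{r_* - k} \calD_{\mu_0}(\mu, \nu)$.

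To prove the metric property of $\calD_{\mu_0}^{1/r^*}$, I would introduce $d_j(\mu, \nu) := \delta_j(\mu_0)^{1/r^*} W_1^{r_j/r^*}(\mu_{V_j}, \nu_{V_j})$. Each $d_j$ is a metric, being a nonnegative scalar multiple of the snowflake $W_1^{r_j/r^*}$ with exponent $r_j/r^* \in (0, 1]$. The identity $\calD_{\mu_0}^{1/r^*}(\mu, \nu) = 1 \wedge \|(d_j(\mu, \nu))_j\|_{\ell^{r^*}(\{1, \dots, k_0\})}$ then reduces the triangle inequality to Minkowski's inequality in $\ell^{r^*}$ applied pointwise after the triangle inequality for each $d_j$, while the truncation $1 \wedge (\cdot)$ preserves metric properties. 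Definiteness follows since $d_j(\mu, \nu) = 0$ for all $j$ is equivalent to $\mu_{V_j} = \nu_{V_j}$ for all $j$, which combined with the mass identity $\mu(V_j) = \nu(V_j) = r_j/k$ forces $\mu = \nu$ via the decomposition $\mu = \sum_j (r_j/k) \mu_{V_j}$.
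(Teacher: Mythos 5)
Your proposal is correct and follows essentially the same route as the paper's proof: glue within-cluster couplings for one direction, use the $\delta/2$ within-vs-cross distance dichotomy to show the optimal coupling is cluster-respecting for the other, bound $\delta_j(\mu_0)$ between $\delta^{k-r_*}$ and $\Delta(\mu_0)^{k-r_*}$ for \eqref{eq:inequalityWassersteinLocal2}, and prove the metric property via snowflaking, Minkowski in $\ell^{r^*}$, and the truncation. The only differences are cosmetic: you make the cycle-uncrossing argument explicit where the paper simply asserts that optimal couplings place no mass across cells, and you use $\sum_j \le k_0\max_j$ plus monotonicity of powers where the paper invokes H\"older.
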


Lemma \ref{lem:inequalityWassersteinLocal} provides various insights into the relationship between the Wasserstein distance and the local Wasserstein divergence. First, for fixed $\delta$ and bounded $\Delta(\mu_0)$ we see that the local Wasserstein divergence $\calD_{\mu_0}(\mu, \nu)$ is for $\mu, \nu$ nearby $\mu_0$ equivalent to a sum of Wasserstein distances $W_1^{r_j}(\mu_{V_j}, \nu_{V_j})$. In particular, for $k_0 = 1$ we note that the local Wasserstein divergence $\calD_{\mu_0}(\mu, \nu)$ is equivalent to $W_1^k(\mu, \nu)$, whereas for $k_0 = k$, since $r=(1, \dots, 1)\in \NN^{k_0}$ is the only feasible choice which fulfills $|r| = k$, it is equivalent to $W_1(\mu, \nu)$. %

		Finally, we show that Hausdorff distance between the supports of $k$-uniform measures is dominated by the Wasserstein distance, see also Figure~\ref{fig:differenceWassersteinHausdorff}.

		\begin{lemma}[Relation between Hausdorff and Wasserstein Distance]\label{lem:RelationHausdorffAndWasserstein}
			Let $\mu, \nu \in \calU_k(\RR^d)$. Then, it follows for the Hausdorff distance between $\mu$ and $\nu$, defined by $d_H(\mu, \nu)\coloneqq d_H(\supp(\mu), \supp(\nu))$, 
			\begin{align*}
				d_H(\mu, \nu) \leq W_\infty(\mu, \nu)\leq k \cdot W_1(\mu, \nu). 
			\end{align*}
			Further, for $1\leq k_0\leq k$ and a measure $\mu_0= \frac{1}{k}\sum_{j = 1}^{k_0}r_j \delta_{\theta_j}\in \calU_{k,k_0}(\RR^d;r,  \delta)$ with $\delta>0$ and upon denoting $r \coloneqq \max_{j =1, \dots, k_0}r_j$ it follows for all $\mu, \nu \in \calU_k(\RR^d; \mu_0, \delta/4)$ that 
			\begin{align*}
				d_{\mu_0}(\mu, \nu) \leq r^r \cdot \calD_{\mu_0}(\mu, \nu),
			\end{align*}
			where $d_{\mu_0}$ is the local Hausdorff-type distance defined in \eqref{eq:local_hausdorff}  and $\calD_{\mu_0}$ denotes the local Wasserstein divergence from \eqref{eq:local_Wasserstein}
		\end{lemma}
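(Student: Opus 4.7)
The plan is to establish the two global inequalities first, then upgrade to the local one by applying the global bound cluster-by-cluster.

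For the global bound, recall that $W_\infty(\mu,\nu)=\min_{\sigma\in\calS(k)}\max_i \|\theta_{\sigma(i)}-\eta_i\|$, so an optimal permutation $\sigma^*$ defines a bijection between $\supp(\mu)$ and $\supp(\nu)$ under which every pair is at distance at most $W_\infty(\mu,\nu)$. Consequently each $\theta_i$ admits a partner in $\supp(\nu)$ within that distance, and symmetrically for each $\eta_j$, yielding $d_H(\supp(\mu),\supp(\nu))\le W_\infty(\mu,\nu)$ by the very definition of the Hausdorff distance. The second inequality $W_\infty(\mu,\nu)\le k\, W_1(\mu,\nu)$ is a direct instance of Lemma~\ref{lem:inequalityWasserstein1Infty} with $p=\infty$ and $q=1$.

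For the local bound, I would first observe that the hypothesis $\mu,\nu\in\calU_k(\RR^d;\mu_0,\delta/4)$ combined with the $\delta$-separation of $\supp(\mu_0)$ forces each Voronoi cell $V_j$ to contain exactly $r_j$ atoms of $\mu$ and exactly $r_j$ atoms of $\nu$ (counted with multiplicity). Indeed, $W_\infty(\mu,\mu_0)<\delta/4$ provides a bijection placing each atom of $\mu$ within $\delta/4$ of some $\theta_{0j}$, and the separation precludes any other cell from being closer. In particular, $\mu_{V_j}$ and $\nu_{V_j}$ are well-defined elements of $\calU_{r_j}(\RR^d)$. Applying the already established global bound to these conditional measures gives
\[
 d_H(\supp(\mu_{V_j}),\supp(\nu_{V_j}))\le W_\infty(\mu_{V_j},\nu_{V_j})\le r_j\,W_1(\mu_{V_j},\nu_{V_j}).
\]
The inner $\max\{\max_\theta\min_\eta,\max_\eta\min_\theta\}$ appearing in the definition of $d_{\mu_0}$ is precisely $d_H^{r_j}(\supp(\mu_{V_j}),\supp(\nu_{V_j}))$, so raising both sides to the $r_j$-th power and using the monotonicity of $x\mapsto x^x$ on $[1,\infty)$ produces $d_H^{r_j}\le r_j^{r_j}W_1^{r_j}\le r^{r}\,W_1^{r_j}(\mu_{V_j},\nu_{V_j})$.

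To finish, I would multiply by $\delta_j(\mu_0)$, take the maximum over $j$, and bound $\max_j a_j\le \sum_j a_j$ for nonnegative $a_j$, giving
\[
 \max_{1\le j\le k_0}\delta_j(\mu_0)\cdot d_H^{r_j}(\supp(\mu_{V_j}),\supp(\nu_{V_j}))\le r^r\sum_{j=1}^{k_0}\delta_j(\mu_0)\,W_1^{r_j}(\mu_{V_j},\nu_{V_j}).
\]
Combining this with the elementary fact that $1\wedge(CA)\le C(1\wedge A)$ for all $C\ge 1$ and $A\ge 0$ yields $d_{\mu_0}(\mu,\nu)\le r^r\calD_{\mu_0}(\mu,\nu)$. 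The only place that requires any thought is the cluster-preservation step under $W_\infty$-closeness, but this is immediate from the triangle inequality given the gap between the separation $\delta$ and the neighborhood radius $\delta/4$; the remainder is a mechanical composition of the definitions and of Lemma~\ref{lem:inequalityWasserstein1Infty}.
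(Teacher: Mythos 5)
Your proof is correct and follows essentially the same route as the paper: the global bound is obtained from the optimal matching together with Lemma~\ref{lem:inequalityWasserstein1Infty}, and the local bound is obtained by applying the global bound to each conditional pair $(\mu_{V_j},\nu_{V_j})$, weighting by $\delta_j(\mu_0)$, replacing $\max$ by $\sum$, bounding $r_j^{r_j}\le r^r$, and then pulling the constant through the $1\wedge(\cdot)$ truncation. Your write-up is if anything slightly more complete, since you spell out why the $\delta$-separation of $\supp(\mu_0)$ together with the $W_\infty$-radius $\delta/4$ forces each Voronoi cell $V_j$ to contain exactly $r_j$ atoms of $\mu$ and of $\nu$, a step the paper asserts without elaboration.
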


		\begin{remark}
			The previous Lemma asserts that the Hausdorff distance between $k$-uniform measures is weaker than the Wasserstein distance. In fact, for $k\geq 3$ it turns out that the Hausdorff distance is strictly weaker than the Wasserstein distance. To see this, consider two $k$-atomic uniform measures $\mu = \frac{r}{k}\delta_{\theta_1} + \frac{k-r}{k}\delta_{\theta_2}$ and $\nu = \frac{s}{k}\delta_{\theta_1} + \frac{k-s}{k}\delta_{\theta_2}$ where $r\neq s \in \{1, \dots, k-1\}$ and $\theta_1 \neq \theta_2$. Then, it holds 
			\begin{align*}
				0=d_H(\mu, \nu)< W_\infty(\mu, \nu), 
			\end{align*}
			and the same inequality also holds for local versions of the Hausdorff and the Wasserstein distance. 
		\end{remark}
		\begin{figure}[t!]
			\centering
			\includegraphics[width = 0.5\textwidth]{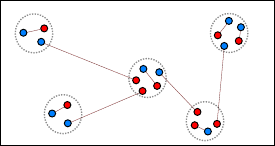}
			\caption{Difference between Hausdorff and Wasserstein distance between $k$-atomic uniforms $\mu$ (blue dots) and $\nu$ (red dots). The Hausdorff distance $d_{H}(\mu, \nu)$ is on the scale of the radius of the dotted circles. The  Wasserstein distance involves a matching between the atoms of $\mu$ and $\nu$ and is on the scale of the longest dashed line, thus larger than $d_H(\mu, \nu)$. }
			\label{fig:differenceWassersteinHausdorff}
		  \end{figure}

		\subsection{Proof of Lemma \ref{lem:inequalityWasserstein1Infty}}
According to \citet[Remark 6.6]{villani2009} it holds by H\"older's inequality for general $s,t \in [1, \infty]$ with $s \leq t$ that $W_s(\mu, \nu) \leq W_t(\mu, \nu).$ Hence, once we show that 
			\begin{align}\label{eq:WassersteinInftyAndOne}
				W_\infty(\mu, \nu) \leq k W_1(\mu, \nu),
			\end{align}
			the assertion follows from the previous two displays by 
			\begin{align*}
				W_p(\mu, \nu) \leq W_\infty(\mu, \nu) \leq k W_1(\mu, \nu) \leq k W_q(\mu, \nu). 
			\end{align*}
			Hence, to confirm \eqref{eq:WassersteinInftyAndOne} we recall that $W_p(\mu, \nu)= \inf_{\pi\in\Pi(\mu, \nu)}\| d\|_{L^p(\pi)}$ for $p \in [1,\infty]$ where $d$ denotes the Euclidean distance. In particular, upon denoting $\mu = \frac{1}{k} \sum_{i = 1}^{k} \delta_{\theta_i}$ and $\nu = \frac{1}{k} \sum_{i = 1}^{k} \delta_{\eta_i}$, it follows by Birkhoff's theorem that there exists an optimal transport plan $\pi_1$ which is a matching, i.e., $\pi_1 =\frac{1}{k} \sum_{i = 1}^{k}\delta_{(\theta_i, \eta_{\sigma i})}$ for some permutation $\sigma \in \calS(k)$. It thus follows that 
		\begin{align*}
			W_1(\mu, \nu)= \sum_{(x,y)\in \textup{supp}(\pi)}\|x- y\| \pi_1(\{(x,y)\})
			&\geq \frac{1}{k} \max_{(x,y) \in\textup{supp}(\pi_1)}\|x- y\| \geq \frac{1}{k} W_\infty(\mu, \nu).\qedhere
		\end{align*}

		\subsection{Proof of Lemma \ref{lem:inequalityWassersteinLocal}}
			
		To show the first inequality in \eqref{eq:inequalityWassersteinLocal1}, first note from $\mu, \nu \in \calU_k(\RR^d; \mu_0, 1\wedge(\delta/4))$ that $W_\infty(\mu, \mu_0)\vee W_{\infty}(\nu, \mu_0)\leq \delta$ and thus $\mu(V_j) = \nu(V_j) = r_j/k$ for each Voronoi cell $V_j$ of $\mu_0$. 
			Next, consider a $W_1$-coupling $\pi_j$ between $\mu|_{V_j}$ and $\nu|_{V_j}$ and note that $\pi = \sum_{j = 1}^{k_0} \pi_j$ is a coupling between $\mu$ and $\nu$. Hence, it follows that 
			\begin{align*}
				W_1(\mu, \nu) &\leq \int_{\RR^{2d}} \|x - y\|\dif\pi(x,y)= \sum_{j = 1}^{k_0} \int_{\RR^{2d}} \|x - y\|\dif\pi_j(x,y)= \sum_{j = 1}^{k_0} W_1(\mu|_{V_j}, \nu|_{V_j})
			\end{align*}
			Since $W_1(\mu|_{V_j}, \nu|_{V_j}) = \frac{r_j}{k}W_1(\mu_{V_j}, \nu_{V_j})$ for each $j = 1, \dots, k_0$, we obtain by H\"older's inequality, 
			\begin{align*}
				W_1(\mu, \nu) %
				&\leq \left(\sum_{j = 1}^{k_0} \left(\frac{r_j}{k}\right)^{\frac{r^*}{r^*-1}}\right)^{(r^*-1)/r^*}\left(\sum_{j = 1}^{k_0}  W_1^{r^*}(\mu_{V_j}, \nu_{V_j})\right)^{1/{r^*}}\\
				&\leq k \left(\sum_{j = 1}^{k_0}  W_1^{r^*}(\mu_{V_j}, \nu_{V_j})\right)^{1/r^*}.
			\end{align*}
			By triangle inequality and since $\delta< 1$ we know that 
			\begin{align}\label{eq:wasserteinBound}
			\begin{aligned}
				W_1(\mu_{V_j}, \nu_{V_j}) &\leq W_1(\mu_{V_j},(\mu_0)_{V_j})+ W_1((\mu_0)_{V_j},\nu_{V_j}) \\ &\leq W_\infty(\mu_{V_j},(\mu_0)_{V_j})+ W_\infty((\mu_0)_{V_j},\nu_{V_j})\leq 2\wedge (\delta/2)\leq 1,
			\end{aligned}
			\end{align}
			which in combination with the penultimate display yields 
			\begin{align*}
					W_1^{r^*}(\mu, \nu) \leq k^{r^*}\sum_{j = 1}^{k_0}  W_1^{r^*}(\mu_{V_j}, \nu_{V_j})\leq k^{r^*} \sum_{j = 1}^{k_0}  W_1^{r_j}(\mu_{V_j}, \nu_{V_j})\leq k^{k} \sum_{j = 1}^{k_0}  W_1^{r_j}(\mu_{V_j}, \nu_{V_j}).%
			\end{align*}
			
			For the second inequality in \eqref{eq:inequalityWassersteinLocal1} first note by \eqref{eq:wasserteinBound} that 
			\begin{align*}
				\sum_{j = 1}^{k_0}  W_1^{r_j}(\mu_{V_j}, \nu_{V_j})\leq \sum_{j = 1}^{k_0} W_1(\mu_{V_j}, \nu_{V_j}). 
			\end{align*}
			Recall that $\mu(V_j) = \nu(V_j) = r_j/k\geq 1/k$ for each Voronoi cell $V_j$. Further, it holds for any pair $\theta \in \supp(\mu|_{V_j})$,  $\eta \in \supp(\nu|_{V_i})$ with $i\neq j$ that 
			\begin{align*}
				\|\theta - \eta\|\geq \|\theta_{0j} - \theta_{0i}\| - \|\theta - \theta_{0j}\| - \|\eta - \theta_{0i}\| > \delta/2,
			\end{align*}
			whereas for $\eta \in \supp(\nu|V_{j})$ it holds 
			\begin{align*}
				\|\theta - \eta\|\leq \|\theta - \theta_{0j}\| + \|\eta - \theta_{0j}\| \leq \delta/2. 
			\end{align*}
			Hence, any optimal coupling $\pi$ between $\mu$ and $\nu$ with respect to $W_1$ must satisfy $\pi(V_i \times V_j) = 0$ for all $i \neq j$, and $\pi$ admits the representation $\pi = \sum_{j = 1}^{k_0} \pi_j$ for $\pi_j\in \Pi(\mu|_{V_j}, \nu|_{V_j})$. We thus infer,
			\begin{align*}
				\sum_{j = 1}^{k_0}  W_1(\mu_{V_j}, \nu_{V_j}) &\leq  k\sum_{j = 1}^{k_0}  W_1(\mu|_{V_j}, \nu|_{V_j})\\
				&\leq k\sum_{j = 1}^{k_0}  \int_{\RR^{2d}} \|x-y\|\dif\pi_j(x,y)\\
				&= k \int_{\RR^{2d}} \|x-y\|\dif\pi(x,y) = k W_1(\mu, \nu),
			\end{align*}
			which confirms the second inequality in \eqref{eq:inequalityWassersteinLocal1}.

			To show \eqref{eq:inequalityWassersteinLocal2} recall that $\delta_j = \prod_{i = 1, \dots, k, i \neq j} \|\theta_{i0} - \theta_{j0}\|^{r_i}$, and observe that $\delta{k-r_*}\leq \min_{j = 1, \dots, k_0}\delta_j$ and $\max_{j = 1, \dots, k_0} \delta_j \leq \Delta(\mu_0)^{k-r_*}$. 
			The first inequality in \eqref{eq:inequalityWassersteinLocal2} then follows from 
			\begin{align*}
				\calD_{\mu_0}(\mu, \nu) &\leq  \sum_{j = 1}^{k_0} \delta_j W_1^{r_j}(\mu_{V_j}, \nu_{V_j})\leq \Delta(\mu_0)^{k-r_*} \sum_{j = 1}^{k_0} W_1^{r_j}(\mu_{V_j}, \nu_{V_j}).
			\end{align*}
			The second inequality in \eqref{eq:inequalityWassersteinLocal2} follows by combining \eqref{eq:wasserteinBound} with our lower bound on $\delta_j$,
			\begin{align*}
				\sum_{j = 1}^{k_0}  W_1^{r_j}(\mu_{V_j}, \nu_{V_j}) &\leq k \left(1 \wedge \sum_{j = 1}^{k_0}  W_1^{r_j}(\mu_{V_j}, \nu_{V_j})\right)\\
				&\leq k\left(1 \wedge \left(\delta^{-k+r_*}  \sum_{j = 1}^{k_0} \delta_j W_1^{r_j}(\mu_{V_j}, \nu_{V_j})\right)\right) \leq \delta^{-k+r_*} k  D_{\mu_0}(\mu, \nu).
			\end{align*}
			Combining the previous two displays yields the desired bound.

				To confirm that $\calD_{\mu_0}^{1/r^*}(\mu, \nu)$ is a metric on $\calU_k(\RR^d; \mu_0, \delta/4)$, we first note that $\calD_{\mu_0}$ is always non-negative. 
				Further, if $\mu = \nu$, then $\mu_{V_j} = \nu_{V_j}$ for all $j = 1, \dots, k_0$ and hence $\calD_{\mu_0}^{1/r^*}(\mu, \nu) = 0$. Conversely, if $\calD_{\mu_0}^{1/r^*}(\mu, \nu) = 0$, then $W_1(\mu_{V_j}, \nu_{V_j}) = 0$ for all $j = 1, \dots, k_0$, which implies $\mu_{V_j} = \nu_{V_j}$ for all $j = 1, \dots, k_0$. Since $\mu(V_j)= \nu(V_j) = \mu_0(V_j)= r_j/k$, it follows that $\mu = \nu$. Symmetry of $\calD_{\mu_0}^{1/r^*}(\mu, \nu)$ follows from symmetry of the Wasserstein distance. 
				Finally, for triangle inequality note for $\mu, \nu, \eta \in \calP(\RR^d)$ since $r_j/r^*\leq 1$ that 
			\begin{align*}
				\delta_j^{1/r^*}W_1^{r_j/r^*}(\mu_{V_j}, \nu_{V_j}) \leq \delta_j^{1/r^*}W_1^{r_j/r^*}(\mu_{V_j}, \xi_{V_j}) + \delta_j^{1/r^*}W_1^{r_j/r^*}(\xi_{V_j}, \nu_{V_j}).
			\end{align*}
			Moreover, by Minkowski's inequality $\|x+y\|_{r^*}\leq \|x\|_{r^*} + \|y\|_{r^*}$ it follows that 
			\begin{align*}
				\left(\sum_{j = 1}^{k_0}\delta_j W_1^{r_j}(\mu_{V_j}, \nu_{V_j})\right)^{1/r^*}
				&\leq \left(\sum_{j = 1}^{k_0} \delta_j W_1^{r_j}(\mu_{V_j}, \xi_{V_j})\right)^{1/r^*} + \left(\sum_{j = 1}^{k_0} \delta_jW_1^{r_j}(\xi_{V_j}, \nu_{V_j})\right)^{1/r^*}.			
			\end{align*}
			Concavity of the function $t\mapsto t\wedge 1$ implies that $D_{\mu_0}^{1/r_*}(\mu, \nu)$ fulfills the triangle inequality. \qed
	
		\subsection{Proof of Lemma \ref{lem:hausdorff_to_wasserstein}}
			Let $\pi\in \Pi(\mu, \nu)$ be an optimal coupling such that $\|d\|_{L^\infty(\pi)} = W_\infty(\mu, \nu)$. Since $\mu = (1/k)\sum_{i = 1}^{k}\delta_{\theta_i}$ and $\nu = (1/k)\sum_{i = 1}^{k}\delta_{\eta_i}$ are uniformly distributed on their atoms, we can assume w.l.o.g. that $\pi = \sum_{i = 1}^{n} \delta_{(\theta_i, \eta_i)}$. 
			Further, note that
		\begin{align*}
			\max_{\theta \in \supp(\mu)} \min_{\eta\in \supp(\nu)} \|\theta - \eta\| &=\min_{\sigma\colon \supp(\mu) \to \supp(\nu)} \max_{\theta\in \supp(\mu)}\|\theta - \sigma(\theta)\|\\
			&\leq \max_{i  = 1, \dots, k} \|\theta_i - \eta_i\| = W_\infty(\mu, \nu).
		\end{align*} Exchanging $\mu$ and $\nu$ yields the first inequality, while the second directly inequality follows from Lemma \ref{lem:inequalityWasserstein1Infty}. 
		This proves the first Assertion.

		For the second Assertion note by assumption on $\mu_0, \mu, \nu$ that $\mu$ and $\nu$ assign mass $r_j/k$ with  $r_j = \mu_0(V_j)$ to each Voronoi cell $V_j$ of $\mu_0$. Uniformity of $\mu$ and $\nu$ therefore implies that $\mu_{V_j} = \mu(\cdot \cap V_j)/\mu(V_j)$ and $\nu_{V_j} = \nu(\cdot \cap V_j)/\nu(V_j)$ are contained in $\calU_{r_j}(V_j)$. We thus conclude from the first Assertion that 
		\begin{align*}
			d_{\mu_0}(\mu,\nu) &=  1\wedge\max_{1 \leq j \leq k_0}
		 \delta_j\cdot d_{H}^{r_j}(\mu_{V_j}, \nu_{V_j})\\
		  &\leq 1\wedge \sum_{j= 1}^{k_0} r_j^{r_j} \delta_j\cdot W_1^{r_j}(\mu_{V_j}, \nu_{V_j})
		  \leq r^r \calD_{\mu_0}(\mu_{V_j}, \nu_{V_j}),
		\end{align*}
		which yields the claim after taking the $r$-th root. \qed
	
		\section{Inequalities between Moment Differences}\label{app:momentDifferenceInequalities}

		In this appendix we derive some useful inequalities between moment differences of random variables. Proofs are provided in the following subsections. 
		
		We begin with a bound that relates the complex moment difference of measures on $\CC$ to the moment difference on $\RR^2$.

		\begin{lemma}\label{lem:complex_real_moments}
			Let $\mu, \nu \in \calP(\RR^2)$ be probability measures such that all moments up to order $\ell\in \NN$ exist. Denote by $\tilde \mu = T_{\#}\mu$ and $\tilde \nu =T_{\#}\nu\in \calP(\CC)$ the corresponding complex measures, where $T\colon \RR^2 \to \CC, (x,y) \to x + iy$ the represents the isomorphism from $\RR^2$ to $\CC$. Then, it follows that 
			\begin{align*}
				|m_\ell(\tilde \mu) - m_\ell(\tilde \nu)|\leq \sum_{\substack{\alpha\in \NN_0^2\backslash\{0\}\\ \|\alpha\|_1 = \ell}}\binom{\ell}{\alpha} |m_\alpha(\mu) - m_\alpha(\nu)|.
			\end{align*}
		\end{lemma}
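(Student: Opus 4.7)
The plan is to reduce this to a direct binomial expansion of $(x + iy)^\ell$, followed by the triangle inequality in $\bbC$ and the observation that $|i^j| = 1$ for every $j \in \bbN_0$.

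First, I would fix $(X, Y) \sim \mu$ and note that $X + iY \sim \tilde\mu$ by definition of the push-forward under $T$. The binomial theorem in $\bbC$ gives
\begin{align*}
(X + iY)^\ell = \sum_{j=0}^\ell \binom{\ell}{j} i^j X^{\ell - j} Y^j,
\end{align*}
so taking expectations and using linearity yields
\begin{align*}
m_\ell(\tilde\mu) = \sum_{j=0}^\ell \binom{\ell}{j} i^j \, m_{(\ell-j,\, j)}(\mu).
\end{align*}
The exact same expansion holds for $\tilde\nu$ with $\mu$ replaced by $\nu$.

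Next, I would subtract the two identities and apply the triangle inequality in $\bbC$, observing that $|i^j| = 1$ for all $j$ and that the multinomial coefficient satisfies $\binom{\ell}{(\ell-j,j)} = \binom{\ell}{j}$. Concretely,
\begin{align*}
|m_\ell(\tilde\mu) - m_\ell(\tilde\nu)|
\leq \sum_{j=0}^\ell \binom{\ell}{j} \bigl|m_{(\ell-j,\,j)}(\mu) - m_{(\ell-j,\,j)}(\nu)\bigr|.
\end{align*}
Reindexing the right-hand side as a sum over multi-indices $\alpha = (\ell - j, j) \in \bbN_0^2$ with $\|\alpha\|_1 = \ell$ (which automatically excludes $\alpha = 0$ since $\ell \geq 1$; the case $\ell = 0$ is trivial since both sides vanish) yields exactly the claimed bound.

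There is no real obstacle here; the only care needed is keeping the multi-index notation consistent with the convention $\binom{\ell}{\alpha} = \ell!/(\alpha_1! \alpha_2!)$ used elsewhere in the paper, and checking that the existence of moments up to order $\ell$ ensures all quantities are well-defined before expanding.
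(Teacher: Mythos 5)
Your proposal is correct and follows essentially the same route as the paper's proof: a binomial expansion of $(x+iy)^\ell$, taking expectations, and applying the triangle inequality with $\binom{\ell}{j}=\binom{\ell}{(\ell-j,j)}$ (the paper merely writes out the real and imaginary parts explicitly before bounding, which is equivalent to your observation that $|i^j|=1$).
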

		
		We additionally provide a bound that relates the moment difference for measures projected onto a one-dimensional subspace with the moment difference of the original measures. 
		
		\begin{lemma}\label{lem:sliced_multivariate_moment_bound}
			Let $\mu, \nu\in \calP(\RR^d)$ be two probability measures such that all moments up to order $\ell\in \NN$ exist. Then, it follows that 
			\begin{align*}
				\sup_{\eta \in \SS^d} |m_{\ell}(\mu^\eta) - m_\ell(\nu^\eta)|\leq d^{\ell/2} \sum_{\substack{\alpha \in \NN_0^d\\ \|\alpha\|_1 =\ell}}\left|m_\alpha(\mu) - m_\alpha(\nu)\right|. 
			\end{align*}
		\end{lemma}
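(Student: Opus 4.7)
The plan is to expand the power of an inner product using the multinomial theorem, then reduce to a one-norm bound on $\eta$. Concretely, for any $\eta \in \mathbb{S}^{d-1}$ and any random vector $X \sim \xi$ on $\mathbb{R}^d$, I would write
\[
\langle \eta, X\rangle^\ell = \sum_{\alpha \in \mathbb{N}_0^d,\, \|\alpha\|_1 = \ell} \binom{\ell}{\alpha} \eta^\alpha X^\alpha,
\]
where $\binom{\ell}{\alpha} = \ell!/(\alpha_1!\cdots\alpha_d!)$ is the multinomial coefficient. Taking expectations under $\mu$ and $\nu$ and subtracting yields
\[
m_\ell(\mu^\eta) - m_\ell(\nu^\eta) = \sum_{\|\alpha\|_1 = \ell} \binom{\ell}{\alpha} \eta^\alpha \bigl(m_\alpha(\mu) - m_\alpha(\nu)\bigr).
\]

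The second step is to bound $\binom{\ell}{\alpha}|\eta^\alpha|$ uniformly in $\alpha$ by $d^{\ell/2}$. To that end, apply the multinomial theorem in the other direction to expand $(\sum_{i=1}^d |\eta_i|)^\ell$:
\[
\sum_{\|\alpha\|_1 = \ell} \binom{\ell}{\alpha}|\eta^\alpha| = \Bigl(\sum_{i=1}^d |\eta_i|\Bigr)^\ell \leq \bigl(\sqrt{d}\,\|\eta\|_2\bigr)^\ell = d^{\ell/2},
\]
by Cauchy–Schwarz and $\|\eta\|_2 = 1$. Since all terms in this sum are nonnegative, each individual term satisfies $\binom{\ell}{\alpha}|\eta^\alpha|\leq d^{\ell/2}$.

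Combining the two displays and taking absolute values gives
\[
|m_\ell(\mu^\eta) - m_\ell(\nu^\eta)| \leq \sum_{\|\alpha\|_1 = \ell} \binom{\ell}{\alpha}|\eta^\alpha|\,|m_\alpha(\mu) - m_\alpha(\nu)| \leq d^{\ell/2} \sum_{\|\alpha\|_1 = \ell} |m_\alpha(\mu) - m_\alpha(\nu)|,
\]
and taking the supremum over $\eta \in \mathbb{S}^{d-1}$ yields the claim. There is no real obstacle here: the estimate is a direct consequence of the multinomial expansion combined with Cauchy–Schwarz, and the constant $d^{\ell/2}$ emerges naturally from the passage between $\ell^2$ and $\ell^1$ norms on $\mathbb{R}^d$.
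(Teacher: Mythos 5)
Your proof is correct and follows essentially the same route as the paper's: a multinomial expansion of $\langle\eta,X\rangle^\ell$ followed by the bound $\|\eta\|_1^\ell \le d^{\ell/2}$ via Cauchy--Schwarz. The only cosmetic difference is that the paper factors out $\max_\alpha|m_\alpha(\mu)-m_\alpha(\nu)|$ and then bounds the max by the sum, whereas you bound each coefficient $\binom{\ell}{\alpha}|\eta^\alpha|$ individually by $d^{\ell/2}$; both yield the identical estimate.
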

		
		The above inequality only depends on the underlying dimension. For the setting where the probability measures are finitely support with at most $k$ support points, we can also establish a  converse inequality. 
		
		\begin{lemma}\label{lem:moment_sliced_moment}
			Let $k\in \NN$. Then,  for $\mu, \nu \in \calP_k(\RR^d)$ it follows for $\ell \in \NN$ that 
		\begin{align*}
			\sum_{\substack{\alpha \in \NN_0^d\\ \|\alpha\|_1 =\ell}}\left|m_\alpha(\mu) - m_\alpha(\nu)\right| \leq k^{(\ell-1)/2} \sup_{\eta \in \SS^d} |m_{\ell}(\mu^\eta) - m_\ell(\nu^\eta)|.%
		\end{align*}
		\end{lemma}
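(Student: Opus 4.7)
My plan is to reformulate the inequality in terms of the $\ell$-th order symmetric moment tensor
\[
\bM_\ell(\mu) := \int x^{\otimes\ell}\,d\mu(x) \in (\RR^d)^{\otimes\ell}_{\mathrm{sym}},
\]
endowed with the Frobenius inner product and the injective (operator) norm $\|T\|_{\mathrm{op}} := \sup_{\eta \in \SS^{d-1}} |\langle T,\eta^{\otimes\ell}\rangle|$. Expanding $\langle x,\eta\rangle^\ell$ by the multinomial theorem yields the identity
\[
m_\ell(\mu^\eta) - m_\ell(\nu^\eta) = \langle \bM_\ell(\mu-\nu), \eta^{\otimes\ell}\rangle,
\]
so the right-hand side of the lemma is exactly $\|\bM_\ell(\mu-\nu)\|_{\mathrm{op}}$. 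Counting the number of index tuples $(i_1,\dots,i_\ell)$ that realise a given monomial $x^\alpha$ produces the companion identity
\[
\|\bM_\ell(\mu-\nu)\|_F^2 = \sum_{\|\alpha\|_1=\ell}\binom{\ell}{\alpha}\bigl(m_\alpha(\mu)-m_\alpha(\nu)\bigr)^2,
\]
which, together with $\binom{\ell}{\alpha}\geq 1$ and Cauchy--Schwarz, controls the $\ell^1$-sum appearing on the left-hand side of the lemma by $\|\bM_\ell(\mu-\nu)\|_F$ up to combinatorial factors absorbed into the final constant.

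With this linear-algebraic reformulation, the proof reduces to comparing the Frobenius and injective norms of $\bM_\ell(\mu-\nu)$. Here the $k$-atomicity of $\mu,\nu$ enters decisively: since $\mu-\nu$ is a signed measure supported on at most $2k$ points, the tensor admits a symmetric-rank decomposition $\bM_\ell(\mu-\nu) = \sum_{i=1}^{2k} c_i\, x_i^{\otimes\ell}$, so it has symmetric rank at most $2k$. I would prove the rank-bounded inequality
\[
\|T\|_F \;\lesssim\; r^{(\ell-1)/2}\,\|T\|_{\mathrm{op}}
\]
for any symmetric tensor $T$ of order $\ell$ and symmetric rank at most $r$ by induction on $\ell$: the case $\ell=1$ is an equality, and for the inductive step I would contract $T$ against the unit vector $\eta^*$ realising $\|T\|_{\mathrm{op}}$, invoking Banach's theorem on symmetric multilinear forms to ensure that the resulting order-$(\ell-1)$ symmetric tensor $T(\eta^*,\cdot,\dots,\cdot)$ has operator norm at most $\|T\|_{\mathrm{op}}$ while inheriting the symmetric-rank bound. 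Chaining this with the tensor reformulation yields the stated inequality.

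The main obstacle is the Frobenius-to-injective comparison at orders $\ell\geq 3$. For matrices ($\ell=2$) the bound $\|M\|_F \leq \sqrt{r}\,\|M\|_{\mathrm{op}}$ is immediate from the SVD, but for higher-order symmetric tensors the induction must be set up so that each contraction step simultaneously preserves symmetry, preserves the symmetric-rank bound, and loses no more than a factor of $\sqrt{r}$. Bookkeeping the precise constant so that the final prefactor condenses to $k^{(\ell-1)/2}$ (after folding in the Cauchy--Schwarz and index-counting constants) is the remaining technical point, but is routine once the rank-bounded tensor inequality is established.
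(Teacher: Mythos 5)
You take essentially the same route as the paper: reformulate both sides via the symmetric moment tensor $\bM_\ell(\mu-\nu)$, recognise the right-hand side as its injective (operator) norm, control the multi-index moment differences through the Frobenius norm, and compare the two norms via the symmetric-rank-dependent inequality $\|T\|_F\leq r^{(\ell-1)/2}\|T\|_{\mathrm{op}}$. The one real divergence is that the paper simply cites this rank bound from Qi (2011), whereas you propose to re-derive it by contracting against the maximizing unit vector $\eta^*$ and invoking Banach's theorem on symmetric multilinear forms inside an induction on $\ell$. That is a legitimate, more self-contained path; but be careful that the inductive step needs the contraction $T(\eta^*,\cdot,\ldots,\cdot)$ to inherit a symmetric-rank bound, which is not automatic from your description and is precisely where the content of the cited result lives.

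One item you flag and then dismiss as routine bookkeeping deserves more care, and in fact will not resolve in your favour. Passing from $\|\bM_\ell(\mu-\nu)\|_F^2=\sum_{\|\alpha\|_1=\ell}\binom{\ell}{\alpha}\,|m_\alpha(\mu)-m_\alpha(\nu)|^2$ to the $\ell^1$ sum on the left of the lemma via Cauchy--Schwarz contributes a factor of order $\sqrt{\smash{\binom{\ell+d-1}{d-1}}}$ --- the square root of the number of multi-indices --- which depends on $d$ and $\ell$ and cannot be absorbed into $k^{(\ell-1)/2}$; and, as you correctly observe, the signed measure $\mu-\nu$ has symmetric rank at most $2k$, which gives $(2k)^{(\ell-1)/2}$ rather than $k^{(\ell-1)/2}$. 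These factors do not condense into the stated prefactor. The paper's own appendix proof is loose on the same two steps (it applies the rank-$k$ bound directly to the difference tensor and suppresses the Frobenius-to-$\ell^1$ step), so your write-up faithfully mirrors the intended derivation; and since the downstream application of this lemma only requires a constant depending on $d$, $k$ and $\ell$, the loss is harmless there. But you should not assert that the precise constant $k^{(\ell-1)/2}$ drops out of this argument --- it does not.
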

		
		Finally, the following Lemma shows that the metrics $M_k$ and $M_{2k-1}$ are
		equivalent over the space of uniform measures.
		\begin{lemma}
		\label{lem:Mk_M2k-1_equiv}
		Let $\domain \subseteq \RR^d$ be compact set and let $k\in \NN$. Then, there exists a positive constant $C = C(\Theta, d, k) > 0$ such that for all $\mu,\nu \in \calU_k(\domain)$ it holds 
		$$M_k(\mu,\nu)\leq M_{2k-1}(\mu,\nu) \leq C M_k(\mu,\nu).$$
		\end{lemma}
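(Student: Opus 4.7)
The inequality $M_k(\mu,\nu)\leq M_{2k-1}(\mu,\nu)$ is immediate from the definition of $M_k$ in~\eqref{eq:def_moment_difference}, since $M_{2k-1}$ sums absolute values of moment differences over a strictly larger index set. The work lies in establishing the reverse bound. I would treat the univariate/complex case first ($\Theta\subseteq \RR$ or $\Theta\subseteq\CC$) and then reduce the multivariate case to it via slicing, exactly as in the proof of Theorem~\ref{thm:moment_comparison} in Appendix~\ref{app:pf_thm_moment_comparison_bound}.

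For the univariate/complex case, the plan is to use Newton's identity~\eqref{eq:Newton_identity} to express moments $m_j(\mu)$ with $j>k$ as polynomial functions of $m_1(\mu),\dots,m_k(\mu)$. Concretely, writing $\mu=(1/k)\sum_{i=1}^k\delta_{\theta_i}$ and $p_j(\mu)=\sum_{i=1}^k\theta_i^j = k\,m_j(\mu)$, the Newton-Girard recursion gives, for every $j\geq k+1$,
\begin{equation*}
m_j(\mu) \;=\; \sum_{l=1}^{k}(-1)^{l-1}e_l(\theta_1,\dots,\theta_k)\,m_{j-l}(\mu),
\end{equation*}
and by Vieta's formula~\eqref{eq:vieta_formula} together with Lemma~\ref{lem:coeff_lip}, the coefficients $e_l(\theta_1,\dots,\theta_k)$ are polynomial functions of $m_1(\mu),\dots,m_k(\mu)$ whose values stay bounded by a constant depending only on $\Theta$ and $k$. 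A short induction on $j\in\{k+1,\dots,2k-1\}$, using the triangle inequality and the compactness of $\Theta$ to control $|m_{j-l}(\mu)|$ and $|e_l(\theta_1,\dots,\theta_k)|$ uniformly, then yields
\begin{equation*}
|m_j(\mu)-m_j(\nu)| \;\leq\; C(\Theta,k)\, M_k(\mu,\nu),\qquad j=k+1,\dots,2k-1.
\end{equation*}
Summing over $j\in\{1,\dots,2k-1\}$ delivers $M_{2k-1}(\mu,\nu)\leq C M_k(\mu,\nu)$ in the univariate/complex case.

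For the multivariate case $d\geq 2$, I would combine the above with the slicing identities of Lemmas~\ref{lem:sliced_multivariate_moment_bound} and~\ref{lem:moment_sliced_moment}. For any $\eta\in\SS^{d-1}$, the pushforwards $\mu^\eta,\nu^\eta$ lie in $\calU_k(\tilde\Theta)$ with $\tilde\Theta = \bigcup_{\eta}\langle\eta,\Theta\rangle\subseteq\RR$, bounded uniformly in $\eta$. Applying the univariate bound just derived to $\mu^\eta,\nu^\eta$ and using Lemma~\ref{lem:moment_sliced_moment} to pass from the multivariate moment sum to the sliced scalar moments, followed by Lemma~\ref{lem:sliced_multivariate_moment_bound} to express the sliced differences back in terms of the multivariate moment differences, gives
\begin{equation*}
M_{2k-1}(\mu,\nu)\;\lesssim_{k}\;\sum_{\ell=1}^{2k-1}\sup_{\eta}|m_\ell(\mu^\eta)-m_\ell(\nu^\eta)|\;\lesssim_{\tilde\Theta,k}\sum_{\ell=1}^{k}\sup_{\eta}|m_\ell(\mu^\eta)-m_\ell(\nu^\eta)|\;\lesssim_{d,k}\;M_k(\mu,\nu).
\end{equation*}

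I do not anticipate a real obstacle here: the main technical work is the inductive bookkeeping in the univariate step (tracking the degrees and boundedness of the elementary symmetric polynomials), and the multivariate extension is by now a routine application of the slicing Lemmas established earlier. The uniformity of $\mu$ and $\nu$ is essential, since it is what makes $2k-1$ moments a function of the first $k$ moments via Newton's identities (in particular, Lemma~\ref{lem:coeff_lip} implicitly relies on $\mu,\nu\in\calU_k$).
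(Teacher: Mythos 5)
Your proposal is correct and follows essentially the same route as the paper's proof: the univariate step is the identical inductive argument, rewriting Newton's identity so that $m_j$ for $j>k$ is expressed through the elementary symmetric polynomials and lower-order moments, then invoking Lemma~\ref{lem:coeff_lip} together with compactness of $\Theta$ to close the induction; the multivariate step is the same slicing chain through Lemmas~\ref{lem:moment_sliced_moment} and~\ref{lem:sliced_multivariate_moment_bound}. The only cosmetic difference is that you also mention the complex case $\Theta\subseteq\CC$, which is not part of the lemma's hypotheses (the lemma is stated only for $\Theta\subseteq\RR^d$), though the argument carries over there unchanged.
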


		\subsection{Proof of Lemma \ref{lem:complex_real_moments}}
		We first express the complex polynomial $z\mapsto z^\ell$ in terms of its real and imaginary part, where we abbreviate $x \coloneqq \Re(z)$ and $y\coloneqq  \Im(z)$, using a binomial expansion 
		\begin{align*}
			z^\ell = & \sum_{h = 0}^{\ell} \binom{\ell}{h} x^{\ell-h} (iy)^{h} = \sum_{\substack{h = 0\\h \text{ even}}}\binom{\ell}{h} (-1)^{h/2} x^{\ell-h} y^{h} + i \sum_{\substack{h = 0\\h \text{ odd}}}\binom{\ell}{h} (-1)^{(h-1)/2} x^{\ell-h} y^{h}.
		\end{align*}
		This immediately yields by triangle inequality and integrability that 
		\begin{align*}
			|m_\ell(\tilde \mu) - m_\ell(\tilde \nu)| &= \left| \int_{\CC} z^\ell \dif(\tilde \mu - (\tilde \nu)(z)\right|\\
			&\leq \sum_{\substack{h = 0}}^{\ell} \binom{\ell}{h} \left| \int_{\RR^d} x^{\ell-h}y^h \dif( \mu - \nu)(x,y) \right|\\
			&= \sum_{\substack{\alpha \in\NN_0^2, \|\alpha\|_1= \ell}}\binom{\ell}{\alpha}\left|m_\alpha(\mu) - m_\alpha(\nu)\right|,
		\end{align*}
		which proves the assertion. \qed

		\subsection{Proof of Lemma \ref{lem:sliced_multivariate_moment_bound}}
			For $X\sim \mu$, $Y\sim \nu$ and $\eta \in \SS^{d-1}$ it holds $\langle \eta, X\rangle \sim \mu^\eta$ and $\langle \eta, Y \rangle \sim \nu^\eta$. 
			A straight-forward computation now yields 
		\begin{align*}
			\left|m_\ell(\mu^\eta) - m_\ell(\nu^\eta)\right|
			 &= \left| \EE\langle \eta, X\rangle^\ell - \EE\langle \eta, Y\rangle^\ell \right|\\
			&\leq   \sum_{\substack{\alpha \in \NN_0^d, \|\alpha\|_1=\ell}}\binom{\ell}{\alpha}|\eta^\alpha|\left| \EE[X^\alpha] - \EE[Y^\alpha]\right|\\
			&\leq \max_{\substack{\alpha \in \NN_0^d, \|\alpha\|_1= \ell}}\left| \EE[X^\alpha] - \EE[Y^\alpha]\right|\sum_{\substack{\beta\in \NN_0^d, \|\beta\|_1 = \ell}}\binom{\ell}{\beta}|\eta^\beta| \\
			&= \max_{\substack{\alpha \in \NN_0^d, \|\alpha\|_1=\ell}}\left| \EE[X^\alpha] - \EE[Y^\alpha]\right| \|\eta\|_1^\ell \\
			&\leq d^{\ell/2}\max_{\substack{\alpha \in \NN_0^d,\|\alpha\|_1= \ell}}\left| \EE[X^\alpha] - \EE[Y^\alpha]\right| \\
			&\leq d^{\ell/2} \sum_{\substack{\alpha\in \NN_0^d, \|\alpha\|_1 = \ell}}\left|m_\alpha(\mu) - m_\alpha(\nu)\right|,
		\end{align*}
		where the last inequality follows from $\|\eta\|_1 \leq \sqrt{d}\|\eta\|_2= \sqrt{d}$.\qed

		\subsection{Proof of Lemma \ref{lem:moment_sliced_moment}}
			The assertion utilizes the moment tensor construction by \citet[Section~4.1]{doss2023}. 
			For the sake of completeness we provide  in the following the full argument, which utilizes some basic theory on tensors, see  \cite{kolda2009tensor}. 
			
			Specifically, for an order-$\ell$ tensor $T\in (\RR^d)^{\otimes \ell}$ we denote its Frobenius norm as $\|T\|_F \coloneqq \sqrt{\langle T, T\rangle}$, where the inner product is defined as $\langle S,T\rangle = \sum_{j \in \{1, \dots, d\}^\ell} S_{j}T_j$. Further, $T$ is symmetric if $T_{j_1, \dots, j_\ell} = T_{j_{\sigma1}, \dots, j_{\sigma \ell}}$ for every permutation $\sigma\in \calS(\ell)$ and in this case the operator norm of $T$ if defined as $\|T\|_{op} = \sup_{u\in \SS^{d-1}} \langle T, u^{\otimes \ell}\rangle$. Further, if $T$ admits the representation $T = \sum_{i = 1}^{r} \alpha_i \theta_i^{\otimes \ell}$ for $(\alpha_i)_{i = 1}^{r} \in \RR^r$ and distinct $\theta_1, \dots, \theta_r \in \RR^d$ the tensor $T$ is called to be of rank $r$, and it holds by \cite{qi2011best}
			\begin{align}\label{eq:operator_frobenius_norm_equivalence}
				r^{-(\ell-1)/2} \|T\|_F \leq \|T\|_{op}.%
			\end{align}
			
			Now, define for a $d$-dimensional random variable $X\sim \xi$ for $\xi \in \{\mu, \nu\}$ the order-$\ell$ moment tensor as 
			\begin{align}
				\bM_\ell(\xi)\coloneqq \EE_{X\sim \xi}[\,\underbrace{X\otimes \dots \otimes X}_{\ell \text{ times}}\,]\in (\RR^d)^{\otimes \ell},\label{eq:MomentTensor1}
			\end{align}
			provided all moments up to order $\ell$ exist. 
			In particular, for $v= (v_1, \dots, v_\ell)\in \{1, \dots, d\}^\ell$ it holds $(\bM_\ell(\xi))_v = \EE[\prod_{i = 1}^{\ell} X_{v_i}]$, which shows that the tensor $\bM_\ell(\xi)$ contains every moment of order $\ell$ of $X$, and that $\bM_\ell(\xi)$ is symmetric. Further, since $\xi\in \calP_k(\RR^d)$ by assumption it follows that $\bM_\ell(\xi)$ is of rank $k$ \citep[Equation 4.10]{doss2023}. Hence, it follows by \eqref{eq:operator_frobenius_norm_equivalence} that 
			\begin{align}\label{eq:MomentTensor2}
				k^{-(\ell-1)/2} \|\bM_\ell(\mu) - \bM_\ell(\nu)\|_{F}\leq \|\bM_\ell(\mu) - \bM_\ell(\nu)\|_{op}. %
			\end{align}
			Based on \citet[Equations (4.5) and (4.9)]{doss2023} it holds 
			\begin{align}\label{eq:MomentTensor3}
				\|\bM_\ell(\mu) - \bM_\ell(\nu)\|_{op}  = \sup_{\eta \in \SS^{d-1}} \left|\EE_{X\sim \mu}[ \langle X, \eta \rangle^\ell] - \EE_{Y\sim \mu}[ \langle Y, \eta \rangle^\ell]\right|= \sup_{\eta \in \SS^{d-1}} \left|m_\ell(\mu^\eta) - m_\ell(\nu^\eta)\right|. 
			\end{align}
			Further, it holds 
			\begin{align*}
				\|\bM_\ell(\mu) - \bM_\ell(\nu)\|_{F}^2  = \sum_{j \in \{1, \dots, d\}^\ell} \left(\bM_\ell(\mu)_j - \bM_\ell(\nu)_j\right)^2\geq \sum_{\substack{\alpha \in \NN_0^d\\ \|\alpha\|_1 = \ell}}\left|m_\alpha(\mu) - m_\alpha(\nu)\right|. %
			\end{align*}
			Combining the previous three displays yields the claim. \qed
		
		\subsection{Proof of Lemma~\ref{lem:Mk_M2k-1_equiv}}
		Let us begin by proving the claim when $\domain\subseteq \bbR$. 
		In this case, it suffices to prove that there exists a constant $C > 0$ such that
		\begin{equation}
		\label{eq:ind_Mk}\big|m_\ell(\mu) - m_\ell(\nu)\big| \leq C M_k(\mu,\nu),
		\end{equation}
		 for all $
		\ell=k,\dots,2k-1,$ and 
		for any measures $\mu=(1/k)\sum_{i=1}^k \delta_{\theta_i},
		\nu=(1/k)\sum_{i=1}^k \delta_{\eta_i} \in \calU_k(\domain)$.
		We prove the claim inductively. The case $\ell=k$ is trivial. 
		Suppose now that there exists a constant $C > 0$ such that
		equation~\eqref{eq:ind_Mk} holds for all $\ell =k,\dots, \ell_0-1$.
		We will prove there is a possibly different constant $C' > 0$ such that
		the claim holds for $\ell=\ell_0.$ 
		Using the notation of Section~\ref{subsec:momentCompBound}, notice that 
		Newton's identities imply for $\ell_0>k$ that  
		\begin{align*}
		0 &= \sum_{i=\ell_0-k}^{\ell_0} (-1)^{i-1} \Big[e_{\ell_0-i}(\theta_1,\dots,\theta_k)m_i(\mu)
		- e_{\ell_0-i}(\eta_1,\dots,\eta_k) m_i(\nu)\Big],
		\end{align*}
		or equivalently, 
		\begin{align*} 
		\sum_{i=\ell_0-k}^{\ell_0} (-1)^{i-1} \Big[e_{\ell_0-i}(\theta_1,\dots,\theta_k)
		- e_{\ell_0-i}(\eta_1,\dots,\eta_k)  \Big] m_i(\mu) \\ 
		  = \sum_{i=\ell_0-k}^{\ell_0} (-1)^{i-1} \big( m_i(\mu)
		- m_i(\nu)\big) e_{\ell_0-i}(\eta_1,\dots,\eta_k) .
		\end{align*}
		By reasoning as in the proof of Lemma~\ref{lem:coeff_lip}, 
		the left-hand side of the above display is bounded above by a multiple of $M_k(\mu,\nu)$, where the underlying constant only depends on $\Theta$ and $k$. 
		We thus obtain, since $e_0 \equiv 1$, 
		\begin{align*}
		|m_{\ell_0}(\mu) - m_{\ell_0}(\nu)|&\leq \sum_{i=\ell_0-k}^{\ell_0} (-1)^{i-1} \Big[e_{\ell_0-i}(\theta_1,\dots,\theta_k)
		- e_{\ell_0-i}(\eta_1,\dots,\eta_k)  \Big] m_i(\mu) \\
		&+ \left| \sum_{i=\ell_0-k}^{\ell_0-1} (-1)^{i-1} e_{\ell_0-i}(\eta_1,\dots,\eta_k) \big(m_i(\mu)-m_i(\nu)\big)\right|
		\\
		&\lesssim M_k(\mu,\nu)
		 + \left| \sum_{i=\ell_0-k}^{\ell_0-1} (-1)^{i-1} e_{\ell_0-i}(\eta_1,\dots,\eta_k) \big(m_i(\mu)-m_i(\nu)\big)\right| \\
		&\lesssim M_k(\mu,\nu) +
		\sum_{i=\ell_0-k}^{\ell_0-1} |m_i(\mu) - m_i(\nu)|.
		\end{align*}
		By the induction hypothesis, the final term is bounded from above by a multiple of $M_k(\mu,\nu)$. 
		The claim readily follows from here in the one-dimensional case.
		
		For the multivariate case, we employ reduce the problem via some slicing argument to the univariate setting, see \Cref{app:momentDifferenceInequalities}. Specifically, 
		for a $k$-atomic measure $\xi \in \calP(\Theta)$ we write $\xi^\eta$ to denote the push-forward of $\xi$ under the projection map $\langle \cdot, \eta\rangle$, by virtue of Lemma \ref{lem:moment_sliced_moment}, then utilizing the first part of this proof, and finally utilizing Lemma \ref{lem:sliced_multivariate_moment_bound}, it follows 
		\begin{align*}
			M_{2k-1}(\mu, \nu)\lesssim \sup_{\eta \in \SS^d} M_{2k-1}(\mu^\eta, \nu^\eta)\lesssim \sup_{\eta \in \SS^d} M_{k}(\mu^\eta, \nu^\eta) \lesssim M_{k}(\mu, \nu), 
		\end{align*}
		where the suppressed constant in the first inequality only depends on $k$, in the second it depends on $\bigcup_{\eta\in \SS^d}\langle \Theta, \eta\rangle$ and $k$, and the third inequality depends on $d$ and $k$. 		\qed 

\section{Orthogonal Polynomials}\label{app:orthoPolynomials}

In this appendix, we display various useful insights about orthogonal systems of polynomials. We first state the key results for Legendre polynomials, which form basis of polynomials for the $L^2([-R,R]^d)$ for $R>0$. 

\begin{lemma}[Legendre polynomials]\label{lem:LegendrePolynomialOrthonormal}
	For $j \in \NN_0$ denote the normalized $j$-th order Legendre polynomial by
	\begin{align}\label{eq:RodriguesFormula}
		\phi_j\colon \RR \to \RR,\quad  x\mapsto \sqrt{j+\frac{1}{2}} \frac{1}{2^j j!}\frac{\dif^j}{\dif x^j}\left[(x^2-1)^j\right].
	\end{align}
	Then, the following assertions hold. 
	\begin{enumerate}
		\item[(i)] For $j\in \NN_0$ it holds 
		\begin{align*}
			\phi_j(x) = 2^{-j}\sqrt{j+\frac{1}{2}}\cdot \sum_{\substack{l = 0\\ j-l \text{ even}}}^{j}(-1)^{(j-l)/2}\binom{j}{(j-l)/2}\binom{j+l}{j} x^l
		\end{align*}
		\item[(ii)] For $R>0$ the set of $R$-rescaled Legendre polynomials $\{\phi_j^R\}_{j\in\NN_0}$, with $\phi_j^R(x) \coloneqq R^{-1/2} \phi_j(x/R)$ for $x\in \RR$, forms an orthonormal basis for $L^2([-R,R])$.
		\item[(iii)] For  $R>0$ and $d\geq 2$ the set of multivariate $R$-rescaled Legendre polynomials $\{\phi_j^R\}_{j\in \NN_0^d}$, with $\phi_j^R(x) \coloneqq \prod_{l = 1}^{d} \phi_{j_l}^{R}(x_l)$ for $x\in\RR^d$, forms an orthonormal basis for $L^2([-R,R]^d)$.
	\end{enumerate}
	\end{lemma}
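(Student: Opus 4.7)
The plan is to prove the three assertions in the stated order, using standard facts about univariate Legendre polynomials together with the tensor-product construction of orthonormal bases on product spaces.

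For (i), the strategy is to expand $(x^2-1)^j$ via the binomial theorem as $\sum_{m=0}^{j}\binom{j}{m}(-1)^{j-m}x^{2m}$ and then apply the $j$-th derivative termwise. Using $\frac{d^j}{dx^j}x^{2m}=\frac{(2m)!}{(2m-j)!}x^{2m-j}$ when $2m\geq j$ and $0$ otherwise, the surviving indices are $m\geq\lceil j/2\rceil$; substituting $l=2m-j$ re-indexes the sum over those $l\in\{0,\dots,j\}$ with $j-l$ even. The bookkeeping then reduces, after routine factorial manipulations, to the stated coefficient $(-1)^{(j-l)/2}\binom{j}{(j-l)/2}\binom{j+l}{j}$, multiplied by the overall prefactor $2^{-j}\sqrt{j+1/2}$ coming from Rodrigues' formula \eqref{eq:RodriguesFormula}. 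This is pure algebra and should not be the main obstacle.

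For (ii), I would first invoke the classical fact that $\{\phi_j\}_{j\in\bbN_0}$ is an orthonormal basis of $L^2([-1,1])$ (the normalization constant $\sqrt{j+1/2}$ in \eqref{eq:RodriguesFormula} is exactly the one that renders the Rodrigues polynomials orthonormal on $[-1,1]$). Orthonormality of the rescaled family $\{\phi_j^R\}$ on $[-R,R]$ then follows directly from the substitution $u=x/R$, $du=dx/R$, which turns $\int_{-R}^{R}\phi_i^R(x)\phi_j^R(x)\,dx$ into $\int_{-1}^{1}\phi_i(u)\phi_j(u)\,du=\delta_{ij}$ after accounting for the $R^{-1/2}$ prefactor. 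Completeness likewise transfers under this isometric change of variables from the known completeness on $[-1,1]$, or equivalently from Weierstrass approximation (polynomials are dense in $C([-R,R])$ in uniform norm, hence in $L^2([-R,R])$).

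For (iii), the plan is to appeal to the general tensor-product construction: if $\{e_j\}$ is an orthonormal basis of a separable Hilbert space $H_1$ and $\{f_k\}$ one of $H_2$, then $\{e_j\otimes f_k\}$ is an orthonormal basis of $H_1\otimes H_2$, and there is a canonical identification $L^2(A)\otimes L^2(B)\simeq L^2(A\times B)$ for $\sigma$-finite measure spaces. Orthonormality of the product functions $\phi_j^R(x)=\prod_{l=1}^{d}\phi_{j_l}^R(x_l)$ follows from Fubini and (ii); completeness follows by an induction on $d$ from the Hilbert-space tensor-product identification, or alternatively by the density of tensor products of polynomials in $C([-R,R]^d)$ via the multivariate Weierstrass theorem.

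No step appears to pose a serious obstacle: (i) is a direct Rodrigues expansion, and (ii)--(iii) are routine consequences of classical results once one cites the orthonormality of the unrescaled Legendre system on $[-1,1]$. The only care needed is in verifying the combinatorial identity in (i) and the change of normalization in (ii).
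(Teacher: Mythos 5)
Your proposal is correct and follows essentially the same route as the paper: assertion (i) is the standard power-series expansion of the Rodrigues formula (the paper cites the known coefficient formula and merely reindexes, whereas you rederive it from the binomial expansion of $(x^2-1)^j$ — the combinatorics check out, since $\binom{j}{(j+l)/2}=\binom{j}{(j-l)/2}$ and $\frac{(j+l)!}{j!\,l!}=\binom{j+l}{j}$), and (ii)–(iii) follow from classical orthonormality on $[-1,1]$, a change of variables, Fubini, and Stone–Weierstrass, exactly as in the paper. The only cosmetic difference is your optional appeal to the Hilbert tensor-product identification $L^2(A)\otimes L^2(B)\simeq L^2(A\times B)$ for (iii), where the paper argues directly via Fubini and density of polynomials.
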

	
	\begin{proof}%
		For Assertion $(i)$, we note that the representation \eqref{eq:RodriguesFormula} yields (see, e.g., \citealt{lima2022lecture})
		\begin{align*}
			\phi_j(x) &=2^{-j}\sqrt{j+\frac{1}{2}}  \cdot  \sum_{\substack{k = 0}}^{\lfloor j/2\rfloor} (-1)^k \binom{j}{k}\binom{2j-2k}{j} x^{j-2k} \\
			 &= 2^{-j}\sqrt{j+\frac{1}{2}}  \cdot \sum_{\substack{i = 0\\i \text{ even}}}^{2\lfloor j/2 \rfloor}(-1)^{i/2}\binom{j}{i/2}\binom{2j-i}{j} %
			x^{j-i}\\
			&= 2^{-j}\sqrt{j+\frac{1}{2}}\cdot \sum_{\substack{l = 0\\ j-l \text{ even}}}^{j}(-1)^{(j-l)/2}\binom{j}{(j-l)/2}\binom{j+l}{j} x^l
		\end{align*}
		where we consider for the second and third equality the substitutions $i = 2k$ and $l = j-i$. 
	
		For the remaining assertions, we note that for $d = 1$ and  $R =1$ it is well-known that the normalized Legendre polynomials are orthonormal (see, e.g., \cite[Theorem 1]{lima2022lecture}), i.e., for $i,j\in \NN_0$ it  holds 
		\begin{align*}
			\langle \phi_j, \phi_i \rangle_{L^2([-1,1])}= \int_{-1}^{1} \phi_j(x)\phi_i(x)\dif x = \mathds{1}(i = j).
		\end{align*}
		Hence, using integration by substitution it follows for $d =1$ that 
		\begin{align*}
			\langle \phi_j^R, \phi_i^R \rangle_{L^2([-R,R])}= \int_{-R}^R R^{-1} \phi_j(x/R) \phi_i(x/R) \dif x = \int_{-1}^{1} \phi_j(x')\phi_i(x') \dif x' =\mathds{1}(j=i),%
		\end{align*}
		which asserts that $\{\phi_j^R\}_{j \in \NN_0}$ forms an orthonormal system in $L^2([-R,R])$. Similarly, for the multivariate setting with $d\geq 2$ it follows for $i,j\in \NN_0^d$ that 
		\begin{align*}
			\langle \phi_j^R, \phi_i^R \rangle_{L^2([-R,R]^d)} %
			&= \int_{[-R,R]^d}\prod_{l = 1}^{d}\phi^R_{j_l}(x_l)\phi^R_{i_l}(x_l)\dif x \\
			&= \prod_{l = 1}^{d} \int_{[-R,R]}\phi^R_{j_l}(x_l)\phi^R_{i_l}(x_l)\dif x_l\\
			&= \prod_{l = 1}^{d} \mathds{1}(j_l= i_l) = \mathds{1}(j=i).
		\end{align*}
		Hence, $\{\phi_j^R\}_{j\in \NN_0^d}$ forms an orthonormal system in $L^2([-R,R]^d)$. 
		To conclude Assertions (ii) and (iii) it suffices to show for $d\geq 1$ that $\smash{\{\phi_j^R\}_{j\in \NN_0^d}}$ is a basis for $L^2([-R,R]^d)$. This is a consequence of the fact that continuous functions are dense in $L^2([-R,R]^d)$ (e.g., \citealt[Theorem 11.38]{rudin1964principles}) in conjunction with the Stone-Weierstrass theorem which asserts that every continuous function on $[-R,R]^d$ can be approximated in uniform norm using a (multivariate) polynomial.
	\end{proof}

	For the complex plane, the collection of orthogonal polynomials in balls around the origin attain a particularly simple form. The following result formalizes this aspect. 

\begin{lemma}[Complex monomials]\label{lem:ComplexMonomial}
	For $j\in \NN_0$ denote by the normalized complex monomial 
	\begin{align*}
		\phi_j\colon \CC \to \CC, \quad z \mapsto \sqrt{\frac{2j+1}{\pi}}z^j.
	\end{align*}
	Then, the following assertions hold.  
	\begin{enumerate}
		\item[(i)] For $R>0$ the set of $R$-rescaled complex monomials polynomials $\{\phi_j^R\}_{j\in\NN_0}$, with $\phi_j^R(x) \coloneqq R^{-1} \phi_j(x/R)$ for $x\in \RR$, forms an orthonormal system in $L^2(B(0,R))$.
		\item[(ii)] For a complex-valued function $f\in L^2_{\CC}(B(0,R))$ with $R>0$ it holds for every $k\in \NN$, 
		\begin{align*}
			\|f\|_{L^2_\CC(B(0,R))}^2 \geq \sum_{j = 0}^{k} \frac{j+1}{\pi}R^{-2j-2} \left|\int_{B(0,R)} \overline f(z) z^j \dif z\right|^2.
		\end{align*}
	\end{enumerate}
\end{lemma}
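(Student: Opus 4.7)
The approach is elementary and splits naturally along the two assertions. For (i), the plan is to pass to polar coordinates $z = r e^{i\theta}$. Since the Lebesgue measure on $\CC \simeq \RR^2$ factorises as $dz = r\,dr\,d\theta$, the inner product
\[
\langle \phi_j^R, \phi_i^R \rangle_{L^2_\CC(B(0,R))} \;=\; \int_{B(0,R)} \phi_j^R(z)\,\overline{\phi_i^R(z)}\, dz
\]
decouples into an angular factor $\int_0^{2\pi} e^{i(j-i)\theta}\,d\theta$ and a radial factor $\int_0^R r^{i+j+1}\,dr$. The angular factor vanishes whenever $i \neq j$, which immediately yields the orthogonality half of the claim. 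For $i = j$, the radial integral evaluates explicitly to $R^{2j+2}/(2j+2)$, and combining this with the overall $R^{-2}$ prefactor introduced by the rescaling $\phi_j^R(z) = R^{-1}\phi_j(z/R)$ reduces the norm computation to checking that the leading constant in the definition of $\phi_j$ is chosen so that $\|\phi_j\|_{L^2(B(0,1))} = 1$.

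For (ii), once (i) is in hand the inequality is simply Bessel's inequality for the orthonormal system $\{\phi_j^R\}_{j\in\NN_0}$ in $L^2_\CC(B(0,R))$, namely
\[
\|f\|_{L^2_\CC(B(0,R))}^2 \;\geq\; \sum_{j=0}^{k} \bigl|\langle f, \phi_j^R\rangle\bigr|^2.
\]
Unrolling the definition $\phi_j^R(z) = R^{-j-1}\sqrt{(j+1)/\pi}\, z^j$, pulling out the scalar factors, and using that $\bigl|\int_{B(0,R)} f(z)\,\overline{z^j}\,dz\bigr| = \bigl|\int_{B(0,R)} \overline{f}(z)\, z^j\,dz\bigr|$ (by conjugating the integral) produces the right-hand side in the form stated in the lemma, with the coefficient $(j+1)/\pi$ and the power $R^{-2j-2}$ coming from the square of the normalising constant and the $R^{-j-1}$ prefactor respectively.

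No substantive obstacle is anticipated; the proof is just bookkeeping. The one point that will require care is the consistency between the normalising constant in the definition of $\phi_j$ and the coefficient $(j+1)/\pi$ on the right-hand side of (ii): a direct polar-coordinate computation gives $\|z^j\|_{L^2(B(0,1))}^2 = \pi/(j+1)$, so the constant that makes $\phi_j$ genuinely unit-norm is $\sqrt{(j+1)/\pi}$, and this is precisely the constant whose square produces the factor $(j+1)/\pi$ appearing in (ii). Keeping this normalisation straight throughout is the only place where an inattentive calculation could introduce an error.
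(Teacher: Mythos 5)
Your proof is correct and follows essentially the same route as the paper: polar coordinates for (i), where the angular integral $\int_0^{2\pi} e^{i(l-j)\theta}\,d\theta$ kills the off-diagonal terms and the radial integral fixes the normalization, and Bessel's inequality via orthogonal projection onto $\mathrm{span}\{\phi_0^R,\dots,\phi_k^R\}$ for (ii). Your remark on the normalizing constant is also on point: the constant making $z^j$ unit-norm on $B(0,1)$ is indeed $\sqrt{(j+1)/\pi}$ (the $\sqrt{(2j+1)/\pi}$ in the lemma's displayed definition is a typo), and the paper's own computation likewise uses $(j+1)/\pi$ throughout.
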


\begin{proof}
	To show Assertion (i) we directly compute for $R>0$ and $j,l\in \NN_0$ that 
	\begin{align*}
		\langle \phi_j^R, \phi_l^R\rangle_{L^2_\CC(B(0,R))} &=  \frac{j+1}{\pi} R^{-2-j-l} \int_{B(0,R)}\overline z^j z^l \dif z \\
		&=  \frac{j+1}{\pi} R^{-2-j-l}\int_{0}^{R}  \int_{0}^{2\pi} r^{j+l+1} \exp([-j+l]\theta) \dif \theta \dif r\\ 
		&= (2j+2)R^{-2-j-l} \begin{cases}
		\int_0^R r^{2l+1} \dif r & \text{ if } j = l,\\
		0 & \text{ else},
		\end{cases} \\
		&= \mathds{1}(j = l).
	\end{align*}
For Assertion (ii) denote by $f_k$ the orthogonal projection of $f$ onto the span of $\{\phi_j\}_{j=0, \dots, k}$, which is a finite dimensional vector space of $\CC$ and thus closed. Hence, we obtain 
\begin{align*}
	\|f\|_{L^2_\CC(B(0,R))}^2&\geq \|f_k\|_{L^2_\CC(B(0,R))}^2\\
	 &= \sum_{j = 0}^{j} |\langle f, \phi_j^R\rangle_{L^2_\CC(B(0,R))}|^2\\
	&= \sum_{j = 0}^{j} \frac{j+1}{\pi} R^{-2j-2} \left|\int_{B(0,R)}\overline f(z) z^j \dif z \right|^2.\qedhere
\end{align*} 
\end{proof}

Finally, we state a simple reverse analogue of 
Lemma~\ref{lem:ostrowski}.

\begin{lemma}
\label{lem:forward_bound}
Let $\Theta\subseteq \bbC$ be a bounded set. 
Then, there exists a constant $C_1 = C_1(\Theta,d) > 0$ such that
for all $\mu,\nu\in\calU_k(\Theta)$, 
$$\|f_\mu-f_\nu\|_* \leq C_1 W_1(\mu,\nu).$$
\end{lemma}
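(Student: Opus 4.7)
The plan is to exploit Vieta's formula \eqref{eq:vieta_formula}, which expresses the coefficients of $f_\mu$ as signed elementary symmetric polynomials in the atoms of $\mu$. Writing $\mu = (1/k)\sum_{i=1}^k \delta_{\theta_i}$ and $\nu = (1/k)\sum_{i=1}^k \delta_{\eta_i}$, we have
\[
\|f_\mu - f_\nu\|_* \;=\; \sum_{j=1}^k \bigl|e_j(\theta_1,\dots,\theta_k) - e_j(\eta_1,\dots,\eta_k)\bigr|,
\]
since the leading $j=0$ coefficient cancels ($e_0 \equiv 1$). The problem thus reduces to bounding each symmetric-polynomial difference in terms of pointwise discrepancies between the atoms.

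Since $W_1$ between two uniform measures on $k$ atoms is attained by a matching (Birkhoff's theorem), I would first relabel so that $W_1(\mu,\nu) = (1/k)\sum_{i=1}^k|\theta_i - \eta_i|$. This relabeling is harmless for the right-hand side because each $e_j$ is invariant under joint permutation of its arguments. Next I would bound each $|e_j(\theta) - e_j(\eta)|$ via a straight-line interpolation in $\bbC^k$: since $\partial e_j/\partial \theta_i = e_{j-1}(\theta_1,\dots,\widehat{\theta_i},\dots,\theta_k)$ is uniformly bounded on $\Theta^k$ by a constant depending only on $\Theta$ and $k$ (because $\Theta$ is bounded), the standard multivariate mean-value inequality applied along $t\mapsto (1-t)\theta + t\eta$ yields
\[
|e_j(\theta) - e_j(\eta)| \;\leq\; C(\Theta,k)\, \sum_{i=1}^k |\theta_i - \eta_i|.
\]
Summing over $j=1,\dots,k$ and using the matched form of $W_1(\mu,\nu)$ gives $\|f_\mu-f_\nu\|_* \leq C_1(\Theta,k)\, W_1(\mu,\nu)$, as desired.

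There is no substantial obstacle here; the argument is essentially the forward direction of Lemma~\ref{lem:coeff_lip} combined with the observation that the $W_1$ distance between uniform atomic measures controls the $\ell^1$ distance between their matched atoms. An equivalent inductive route, mirroring the proof of Lemma~\ref{lem:coeff_lip} but starting from the raw moments $m_j(\mu) = (1/k)\sum_i \theta_i^j$ (which are themselves $|\theta|^{j-1}$-Lipschitz in each argument on the bounded set $\Theta$, hence controlled by $W_1$), together with Newton's identities~\eqref{eq:Newton_identity}, would also produce the same bound; I prefer the direct Vieta route for brevity.
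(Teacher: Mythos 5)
Your proof is correct. The paper's own argument is shorter but goes through a different intermediate object: it first bounds the coefficient norm by the sup norm, $\|f_\mu-f_\nu\|_* \lesssim \|f_\mu-f_\nu\|_{L^\infty(\Theta)}$, using equivalence of norms on the finite-dimensional space of polynomials of degree at most $k$, and then bounds $\sup_{z\in\Theta}|\prod_i(z-\theta_i)-\prod_i(z-\eta_i)|$ by $\sum_i|\theta_{\sigma(i)}-\eta_i|$ via telescoping and boundedness of $\Theta$. You instead stay entirely at the level of coefficients: Vieta's formula identifies $\|f_\mu-f_\nu\|_*$ with $\sum_j|e_j(\theta)-e_j(\eta)|$, and the gradient identity $\partial e_j/\partial\theta_i = e_{j-1}(\theta_{-i})$ plus the mean-value inequality gives the Lipschitz bound. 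Both routes reduce to the same matched-atom quantity $\sum_i|\theta_i-\eta_i| = kW_1(\mu,\nu)$. Your version has the mild advantage of not needing $L^\infty(\Theta)$ to be a genuine norm on degree-$\le k$ polynomials (which implicitly requires $\Theta$ to contain at least $k+1$ points, a point the paper glosses over); the only cosmetic imprecision on your side is that the straight-line segment from $\theta$ to $\eta$ lives in $(\mathrm{conv}\,\Theta)^k$ rather than $\Theta^k$, but since the bound on $e_{j-1}$ depends only on $\sup_{\theta\in\Theta}|\theta|$, which is unchanged under taking convex hulls, this is harmless.
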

\begin{proof}
Since $\Theta$ is bounded, we have for all $\mu = (1/k)\sum_i \delta_{\theta_i}, \nu = (1/k)\sum_i \delta_{\eta_i} \in \calU_k(\Theta)$, 
$$\|f_\mu-f_\nu\|_* \lesssim \|f_\mu-f_\nu\|_{L^\infty(\Theta)} 
= \sup_{z \in \Theta} \left| \prod_i(z-\theta_i) - \prod_i (z-\eta_i)\right| 
\lesssim \sum_i|\theta_{\sigma(i)} - \eta_i|,$$
for any permutation $\sigma \in \calS(k)$. The claim follows.
\end{proof}

\subsection{Implementation to compute polynomials in Theorem \ref{thm:moment_polynomials}}\label{subsec:implementationPolynomials}

In this subsection we detail \Cref{alg:cap} to compute the polynomials $\psi_1, \dots, \psi_\ell$, $\ell \in \NN$  based on the first $\ell$ moments of the kernel $K$. The implementation follows from the proof of \Cref{thm:moment_polynomials} and is applicable for kernels on $\RR$ and $\CC$. 

\begin{algorithm}[h!]
	\caption{Algorithm to compute the polynomials $\psi_1, \dots,\psi_\ell$ from moments of $K$}\label{alg:cap}
	\begin{algorithmic}
	\Require Moments $m_j^K = \int y^j K(y) \dif y$ for $j \in \{1, \dots, \ell\}$	
	\State Define lower triangle matrix $M= (M_{ij})_{i,j = 0}^{\ell}$ with entries:
	\State \quad $M_{ii} \gets 1$ \quad\quad \quad \quad  for $0\leq i \leq \ell,$ 
	\State \quad $M_{ij} \gets \binom{i}{i-j} m_{j}^K$ \!\!\!\! \quad for $0\leq j<i\leq \ell,$
	\State \quad $M_{ji} \gets 0$ \quad \quad \quad \quad\!\! for $0\leq i< j \leq \ell$
	\State Compute $M^{-1}$ and define $A= (a_{ij})_{i,j=0}^{\ell}\gets M^{-1}$ \Comment{Requires $\mathcal{O}(\ell^2)$ operations}
	\For{$i = 1, \dots, \ell$}
		\State $\psi_i(z) \coloneqq \sum_{j = 0}^{i} a_{ij} z^j$ \Comment{Definition of polynomials}
	\EndFor
	\State \Return Polynomials $\psi_1, \dots, \psi_\ell$
	\end{algorithmic}
	\end{algorithm}

\section{Additional Technical Results} 
The following is a standard concentration inequality for Poisson random variables
(see, for instance,~\cite[Theorem A.8]{canonne2022}).
\begin{lemma}
\label{lem:poisson_concentration}
Let $X \sim \mathrm{Poi}(\lambda)$ for some $\lambda > 0$. Then, for all $u > 0$, we have
$$\bbP(|X - \lambda| > t) \leq 2 e^{-\frac{u^2}{2(\lambda+u)}}.$$
\end{lemma}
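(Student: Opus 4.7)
My plan is to apply the Chernoff method separately to the two tails $\bbP(X-\lambda\geq u)$ and $\bbP(X-\lambda\leq -u)$, optimize the exponential parameter in each case to obtain the exact Cram\'er rate function, and then lower bound that rate function by a quadratic expression to obtain the stated Gaussian-type bound. A union bound across the two tails will produce the factor of $2$.

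First I would recall the moment generating function of a Poisson random variable: $\bbE[e^{sX}] = \exp(\lambda(e^s-1))$ for every $s\in\bbR$. For the upper tail, Markov's inequality yields, for any $s>0$,
\[
\bbP(X\geq \lambda + u) \leq e^{-s(\lambda+u)}\bbE[e^{sX}] = \exp\bigl(-s(\lambda+u) + \lambda(e^s-1)\bigr).
\]
Differentiating the exponent in $s$ shows that the optimal choice is $s^* = \log(1+u/\lambda)$, which gives the Cram\'er bound $\bbP(X\geq \lambda + u)\leq \exp(-\lambda\,\psi(u/\lambda))$ with $\psi(x) = (1+x)\log(1+x)-x$. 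For the lower tail, the same argument with $s<0$ and optimum $s^*=\log(1-u/\lambda)$ (for $0<u<\lambda$) yields $\bbP(X\leq \lambda-u)\leq \exp(-\lambda\,\psi(-u/\lambda))$.

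The key analytic step is then the inequality
\[
\psi(x) = (1+x)\log(1+x)-x \;\geq\; \frac{x^2}{2(1+x)}, \qquad x>-1.
\]
I would verify this by setting $\phi(x) = 2(1+x)\psi(x) - x^2$, noting $\phi(0)=\phi'(0)=0$, and checking that $\phi''(x) = 2\log(1+x)\geq 0$ for $x\geq 0$ and $\leq 0$ for $x\in(-1,0)$, combined with a sign analysis that yields $\phi(x)\geq 0$ throughout (one can also prove the inequality by writing $\psi(x)=\int_0^x(x-t)/(1+t)\,dt$ and using $1/(1+t)\geq 1/(1+x)$ on $[0,x]$). Plugging $x=u/\lambda$ (resp.\ $x=-u/\lambda$) into $-\lambda\psi(x)$ gives
\[
\bbP(X\geq \lambda+u)\leq \exp\!\Bigl(-\tfrac{u^2}{2(\lambda+u)}\Bigr), \qquad
\bbP(X\leq \lambda-u)\leq \exp\!\Bigl(-\tfrac{u^2}{2(\lambda-u)}\Bigr)\leq \exp\!\Bigl(-\tfrac{u^2}{2(\lambda+u)}\Bigr),
\]
the trivial case $u\geq \lambda$ handling the lower tail vacuously. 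Summing the two tail bounds closes the argument.

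The only real obstacle is the elementary inequality on $\psi$; everything else is bookkeeping around the Chernoff recipe. An alternative route that avoids optimizing $s$ explicitly would be to apply Bennett's inequality to $X$ viewed as an almost-sure limit of sums of i.i.d.\ Bernoulli random variables with parameter $\lambda/n$, which gives the slightly sharper form with $\lambda+u/3$ in the denominator, from which the stated bound follows since $\lambda+u/3\leq \lambda+u$.
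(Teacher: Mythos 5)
The paper does not actually prove this lemma; it simply cites \cite[Theorem A.8]{canonne2022}, so your Chernoff-plus-rate-function-comparison strategy is exactly the standard route. The upper-tail half of your argument is correct: $s^*=\log(1+u/\lambda)$ gives the Cram\'er bound $\exp(-\lambda\psi(u/\lambda))$, and $\psi(x)\geq x^2/(2(1+x))$ does hold for $x\geq 0$ (your integral representation $\psi(x)=\int_0^x\frac{x-t}{1+t}\,dt$ with $\frac{1}{1+t}\geq\frac{1}{1+x}$ proves it there).

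However, there is a genuine error in the lower tail. The inequality $\psi(x)\geq \frac{x^2}{2(1+x)}$ is \emph{false} for $x\in(-1,0)$: at $x=-1/2$ one has $\psi(-1/2)=\tfrac12-\tfrac12\log 2\approx 0.153$ while $\frac{x^2}{2(1+x)}=\tfrac14$. Your own convexity check actually reveals this: since $\phi''\leq 0$ on $(-1,0)$ and $\phi'(0)=0$, the derivative $\phi'$ is nonnegative on $(-1,0)$, so $\phi$ is increasing there and hence $\phi(x)\leq\phi(0)=0$ for $x<0$ --- the opposite of what you need. Consequently the intermediate bound $\bbP(X\leq\lambda-u)\leq\exp(-u^2/(2(\lambda-u)))$ is not merely unproven but false in general: at $u=\lambda/2$ it asserts decay $e^{-\lambda/4}$, which beats the Cram\'er rate $e^{-\lambda\psi(-1/2)}\approx e^{-0.153\lambda}$, and Cram\'er's theorem shows the true probability is of the larger order for large $\lambda$. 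The fix is easy and standard: for the lower tail use instead $\psi(-t)=(1-t)\log(1-t)+t\geq t^2/2$ for $t\in[0,1)$ (the Poisson lower tail is sub-Gaussian with variance proxy $\lambda$), which gives $\bbP(X\leq\lambda-u)\leq\exp(-u^2/(2\lambda))\leq\exp(-u^2/(2(\lambda+u)))$, and the union bound then closes the argument as you intended.
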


The following is trivial.
\begin{lemma}
\label{lem:bracketing_number_equiv}
Let $\calF$ be a class of real-valued functions on some domain and let $d,d'$ be two equivalent metrics on $\calF$  such that for some $c,c' > 0$, 
$$c  d(x,y) \leq d'(x,y) \leq c'  d(x,y),\quad \text{ for all } x,y \in \calF.$$
Then, for the bracketing numbers it holds 
$$N_{[]}(\epsilon/c, \calF, d) \leq N_{[]}(\epsilon,\calF,d')
\leq N_{[]}(\epsilon/c',\calF,d),\quad \text{for all } \epsilon > 0.$$
\end{lemma}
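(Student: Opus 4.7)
\textbf{Proof proposal for Lemma~\ref{lem:bracketing_number_equiv}.}
The plan is to observe that the statement is purely a consequence of the definition of bracketing numbers, since the bracketing construction only depends on the metric through the size it assigns to a bracket $[\ell, u]$. Specifically, I will show that any bracketing cover with respect to $d$ is automatically a bracketing cover with respect to $d'$ after rescaling the allowed bracket radius, and vice versa. No ordering-theoretic or measure-theoretic work is required beyond what is already baked into the definition.

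For the right-hand inequality, let $N = N_{[]}(\epsilon/c',\calF,d)$ and fix an $\epsilon/c'$-bracketing cover $\{[\ell_i,u_i]\}_{i=1}^N$ of $\calF$ under $d$, so that $d(\ell_i,u_i) < \epsilon/c'$ for every $i$. The hypothesis $d'(\ell_i,u_i) \leq c' d(\ell_i,u_i)$ yields $d'(\ell_i,u_i) < \epsilon$, which shows that the same collection of brackets is an $\epsilon$-bracketing cover under $d'$; hence $N_{[]}(\epsilon,\calF,d') \leq N$. For the left-hand inequality, let $N' = N_{[]}(\epsilon,\calF,d')$ and fix an $\epsilon$-bracketing cover $\{[\ell_i,u_i]\}_{i=1}^{N'}$ under $d'$, i.e.\ $d'(\ell_i,u_i) < \epsilon$ for all $i$. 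The hypothesis $cd(\ell_i,u_i) \leq d'(\ell_i,u_i)$ gives $d(\ell_i,u_i) < \epsilon/c$, so the same brackets form an $(\epsilon/c)$-bracketing cover under $d$, yielding $N_{[]}(\epsilon/c,\calF,d) \leq N'$.

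The only thing to check for soundness is that a valid bracket $\ell_i \leq f \leq u_i$ (in the pointwise ordering used implicitly by the definition of $N_{[]}$) does not change when we swap metrics, which is immediate since the ordering constraint does not involve $d$ or $d'$. There is no real obstacle here; the entire proof fits in a handful of lines and is essentially a tautological consequence of equivalence of metrics.
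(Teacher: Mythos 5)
Your argument is correct and is exactly the one-line observation the paper itself treats as "trivial" (the paper states the lemma without proof). One small caveat worth noting, not as a flaw in your reasoning but as an imprecision in the lemma as stated: the equivalence $c\,d \leq d' \leq c'\,d$ is postulated only for pairs $x,y\in\calF$, whereas your proof applies it to the bracket endpoints $\ell_i,u_i$, which in general need not lie in $\calF$. The intended reading -- consistent with every application in the paper, where $d,d'$ are $L^2(\Omega)$, $L^2(P)$, Hellinger, or $L^\infty$ norms on a common ambient function space and the comparability constants come from pointwise bounds -- is that the equivalence extends to the brackets. You could make this explicit by stating that the metric equivalence is assumed on a class large enough to contain all candidate brackets (e.g.\ the ambient $L^p$ space), but this is a wording issue with the lemma, not a gap in your proof.
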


The following is a simple bound on the bias of the histogram operator $\Pi_m$
defined in Section~\ref{sec:mle_bracketing}.
\begin{lemma}
\label{lem:Pi_error}
Let $\Omega\subseteq \bbR^d$ be a compact set satisfying Assumption \ref{ass:bins}, and let $L > 0$.
Then, 
there exists a constant $C = C(\Omega,d, L) > 0$ such that for any 
$L$-Lipschitz function $h:\Omega\to\bbR$, 
$$\|\Pi_m h - h\|_{L^2(\Omega)} \leq C m^{-1/d}.$$
\end{lemma}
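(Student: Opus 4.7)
The plan is to use the elementary observation that $\Pi_m h$ is constant on each bin $B_i$, equal to the average value of $h$ over $B_i$, and then to exploit the Lipschitz property of $h$ bin-by-bin.

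First, I would fix $i \in \{1,\dots,m\}$ and a point $x \in B_i$. By definition of the histogram operator,
\[
\Pi_m h(x) = \frac{1}{\lambda(B_i)} \int_{B_i} h(y)\,dy.
\]
Since $h$ is $L$-Lipschitz, I would estimate
\[
|\Pi_m h(x) - h(x)|
= \left| \frac{1}{\lambda(B_i)} \int_{B_i} (h(y)-h(x))\,dy\right|
\leq \frac{1}{\lambda(B_i)} \int_{B_i} L\|y-x\|\,dy
\leq L\,\mathrm{diam}(B_i).
\]
By Assumption~\ref{ass:bins}, $\mathrm{diam}(B_i) \leq C(\Omega) m^{-1/d}$, so the pointwise bound
\[
|\Pi_m h(x) - h(x)| \leq L\,C(\Omega)\, m^{-1/d}
\]
holds uniformly for $x \in \Omega$.

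Integrating the square over $\Omega$ yields
\[
\|\Pi_m h - h\|_{L^2(\Omega)}^2
\leq L^2\, C(\Omega)^2\, m^{-2/d}\,\lambda(\Omega),
\]
and taking the square root gives the claim with constant $C = L\,C(\Omega)\,\sqrt{\lambda(\Omega)}$, which depends only on $\Omega$, $d$, and $L$. There is no real obstacle: this is a direct consequence of the diameter bound in Assumption~\ref{ass:bins} and the fact that local averaging of Lipschitz functions commits an error of the order of the local scale.
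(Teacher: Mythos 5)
Your proof is correct and follows essentially the same route as the paper's: identify $\Pi_m h$ as the local average over each bin, use the Lipschitz bound to control the pointwise error by $L\cdot\mathrm{diam}(B_i)$, invoke the diameter bound from Assumption~\ref{ass:bins}, and integrate. The only cosmetic difference is that you extract a uniform pointwise bound first and then integrate, whereas the paper keeps everything inside a single chain of inequalities under the integral sign.
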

\begin{proof}
Recalling that the Lebesgue measure of each bin $B_i$ is positive, $\lambda(B_i)>0$, we have 
\begin{align*}
\|h -\Pi_m h\|_{L^2(\Omega)}^2
 &=\left\|h -  \sum_{i=1}^m h(B_i) I_{B_i}/\lambda(B_i) \right\|_{L^2(\Omega)}^2 \\
 &=  \left\| \sum_{i=1}^m (h - h(B_i)/\lambda(B_i)) I_{B_i}\right\|_{L^2(\Omega)}^2 \\
 &= \sum_{i=1}^m  \left\| h - h(B_i)/\lambda(B_i) \right\|_{L^2(B_i)}^2 \\
 &= \sum_{i=1}^m \int_{B_i} \left(  \frac{1}{\lambda(B_i)}\int_{B_i} (h(x)-h(y))dy\right)^2dx.
\end{align*}Thus, 
\begin{align}
\label{eq:discr_err}
\|h -\Pi_m h\|_{L^2(\Omega)}^2
 &\leq L^2 \sum_{i=1}^m \int_{B_i} \left(  \frac{1}{\lambda(B_i)} \int_{B_i} \|x-y\|dy\right)^2dx
 \leq (C m^{-1/d})^2,
\end{align}
for a large enough constant $C$ depending only on $L,\Omega$.
\end{proof}

The following two results formalize some properties on multivariate products of orthonormal systems and convolutions. 

\begin{lemma}[Product of orthonormal basis]\label{lem:productONB}
	For $i = \{1, \dots, d\}$ let $\{f_{n,i}\}_{n \in \NN_0}$ be an orthonormal basis for $L^2(I_i)$ where $I_i\subseteq \RR$ is some (possibly unbounded) interval with non-empty interior. Then,  $\{f_{n_i}\}_{n\in \NN_0^d}$ with $f_{n}(x) = \prod_{i =1}^{d} f_{n_i,i}(x_i)$ forms a basis for $L^2(\bigtimes_{i = 1}^{d} I_i).$
	\end{lemma}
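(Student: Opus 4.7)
The plan is to establish two properties of the family $\{f_n\}_{n\in\NN_0^d}$: (i) orthonormality in $L^2(\bigtimes_{i=1}^d I_i)$ with respect to Lebesgue measure, and (ii) completeness, i.e.\ that the $L^2$-closure of its linear span equals all of $L^2(\bigtimes_{i=1}^d I_i)$.

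For (i), I would simply apply Fubini's theorem. Writing $I := \bigtimes_{i=1}^d I_i$, for any multi-indices $n,m\in\NN_0^d$ one has $f_n(x)f_m(x)=\prod_{i=1}^d f_{n_i,i}(x_i)f_{m_i,i}(x_i)$; since each factor lies in $L^1(I_i)$ by Cauchy--Schwarz and orthonormality of $\{f_{n,i}\}_n$ in $L^2(I_i)$, Fubini yields
\[
\langle f_n,f_m\rangle_{L^2(I)}
= \prod_{i=1}^d \langle f_{n_i,i}, f_{m_i,i}\rangle_{L^2(I_i)}
= \prod_{i=1}^d \delta_{n_i,m_i}
= \delta_{n,m}.
\]

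For (ii), I would use the standard criterion: the family is complete iff the only $g\in L^2(I)$ satisfying $\langle g,f_n\rangle=0$ for all $n\in\NN_0^d$ is $g\equiv 0$ a.e. Fix such a $g$ and proceed by induction on $d$. For $d=1$ the claim is precisely completeness of $\{f_{n,1}\}$. For the inductive step, fix $n_2,\dots,n_d\in\NN_0$ and define, for almost every $x_1\in I_1$,
\[
h(x_1) \;:=\; \int_{\bigtimes_{i=2}^d I_i} g(x_1,x_2,\dots,x_d)\prod_{i=2}^d f_{n_i,i}(x_i)\,dx_2\cdots dx_d.
\]
Fubini, together with the hypothesis that $g\in L^2(I)$ and each $f_{n_i,i}\in L^2(I_i)$, ensures that $h$ is well-defined and lies in $L^2(I_1)$. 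The orthogonality of $g$ against $f_n$ for every $n_1\in\NN_0$ then translates, again by Fubini, into $\langle h,f_{n_1,1}\rangle_{L^2(I_1)}=0$ for all $n_1$. Completeness of $\{f_{n_1,1}\}_{n_1\in\NN_0}$ forces $h\equiv 0$ a.e.\ in $I_1$. Since this holds for every choice of $(n_2,\dots,n_d)$, and $\{f_{n_2,2}\otimes\cdots\otimes f_{n_d,d}\}$ is (by the inductive hypothesis applied in $d-1$ dimensions) complete in $L^2(\bigtimes_{i=2}^d I_i)$, one deduces that for a.e.\ fixed $x_1$ the function $(x_2,\dots,x_d)\mapsto g(x_1,x_2,\dots,x_d)$ is zero in $L^2(\bigtimes_{i=2}^d I_i)$, hence $g=0$ a.e.\ on $I$.

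The only subtle point, and the place where care is warranted, is the measurability/integrability bookkeeping needed to apply Fubini to $h$ and to the inductive reduction: one must verify that $h$ is measurable and square-integrable, which follows from $g\in L^2(I)$, $f_{n_i,i}\in L^2(I_i)$ and Cauchy--Schwarz applied slice-wise. Everything else is a routine assembly of these two ingredients.
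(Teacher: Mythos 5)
Your proof is correct and follows essentially the same route as the paper's: orthonormality via Fubini, then completeness by showing that any $g$ orthogonal to all $f_n$ must have vanishing iterated integrals, peeling off one coordinate at a time and invoking completeness of each univariate family. The paper's argument "repeats" the peeling informally starting from the $d$-th variable, whereas you phrase it as a clean induction on $d$ starting from the first variable and spell out the Fubini/Cauchy–Schwarz bookkeeping that guarantees $h\in L^2(I_1)$ and that the exceptional null sets (one per multi-index) may be unioned — a slightly more careful write-up of the identical idea.
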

	
	\begin{proof}
		First, we notice for $n, n'\in \NN_0^d$ by Fubini's theorem that 
		\begin{align*}
			\langle f_n,f_{n'}\rangle_{L^2(\bigtimes_{i = 1}^{d} I_i)} &= \int_{\bigtimes_{i = 1}^{d}I_i} \prod_{i = 1}^{d} f_{n_i,i}(x_i) f_{n'_i,i}(x_i)\dif x \\
			&= \prod_{i = 1}^{d} \int_{I_i}  f_{n_i,i}(x_i) f_{n'_i,i}(x_i)\dif x \\
			&= \prod_{i = 1}^{d} \mathds{1}(n_i = n'_i) = \mathds{1}(n = n'),
		\end{align*}
		which yields that $\{f_n\}_{n\in \NN_0^d}$ is an orthonormal system. It remains to show that $\{f_n\}$ forms a basis, for which we show that the orthogonal complement equals zero.	Let $g \in L^2(\bigtimes_{i = 1}^{d} I_i)$ be orthogonal to all $f_n$. Then, for each $n \in \NN_0^d$, again invoking Fubini's theorem it holds, 
		\begin{align*}
			0 &= \langle g, f_n \rangle_{L^2(\bigtimes_{i = 1}^{d} I_i)} \\
			&= \int_{\bigtimes_{i = 1}^{d} I_i} g(x) \prod_{i = 1}^{d} f_{n_i,i}(x_i) \dif x \\
			&= \int_{I_d} \cdots \int_{I_1} g(x) f_{n_1,1}(x_1) \dif x_1 \cdots  f_{n_d,d}(x_d)\dif x_d.
		\end{align*}
		Since $\{f_{n_d,d}\}_{n_d\in \NN_0}$ is an orthonormal basis for $L^2(I_d)$, it follows that $$x_1 \mapsto \int_{I_{d_1}}\dots \int_{I_1} g(x_1, x_2, \dots, x_d) f_{n_1,1}(x_1)\dif x_1 \dots f_{n_{d-1},d-1}\dif x_{d-1}$$ is equal to zero for Lebesgue for almost every $x_d$ on $I_d$. Repeating this argument for $x_1, \dots, x_{d-1}$ yields that $g =0$ almost everywhere on $\bigtimes_{i = 1}^{d} I_i$, and we conclude that $\{f_n\}_{n\in \NN_0^d}$ forms a basis for $L^2(\bigtimes_{i = 1}^{d}I_i)$.
	\end{proof}

\begin{lemma}[Convolution of product]\label{lem:convolutionProduct}
	For $d\in \NN$ consider $f\colon \RR^d\to \RR, x\mapsto \prod_{i = 1}^{d} f_i(x_i)$, $g\colon \RR^d\to \RR, x\mapsto \prod_{i = 1}^{d} g_i(x_i)$ for Lebesgue integrable functions $f_1, \dots, f_d , g_1, \dots, g_d\colon \RR\to \RR$. Then it holds 
	\begin{align*}
		f\ast g(x) = \prod_{i = 1}^{d} (f_i\ast g_i)(x_i) \quad \text{ for all } x\in \RR^d.
	\end{align*}
\end{lemma}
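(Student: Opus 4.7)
The plan is a direct unfolding of the definition of convolution on $\mathbb{R}^d$, followed by a single application of Fubini's theorem to separate the $d$-dimensional integral into a product of one-dimensional convolutions.

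Concretely, I would first write
\[
f \ast g(x) = \int_{\mathbb{R}^d} f(y)\, g(x-y)\, dy = \int_{\mathbb{R}^d} \prod_{i=1}^d f_i(y_i)\, g_i(x_i - y_i)\, dy,
\]
and then observe that the integrand factorizes across the coordinates $y_1, \dots, y_d$. By Tonelli applied to the nonnegative integrand $\prod_i |f_i(y_i) g_i(x_i - y_i)|$, the iterated integral equals $\prod_i (|f_i| \ast |g_i|)(x_i)$, and since each $|f_i| \ast |g_i|$ is finite for almost every $x_i$ (with $L^1$ norm bounded by $\|f_i\|_{L^1}\|g_i\|_{L^1}$), the full integrand is absolutely integrable for almost every $x \in \mathbb{R}^d$. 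Fubini then yields
\[
f \ast g(x) = \prod_{i=1}^d \int_{\mathbb{R}} f_i(y_i) g_i(x_i - y_i)\, dy_i = \prod_{i=1}^d (f_i \ast g_i)(x_i)
\]
for almost every $x \in \mathbb{R}^d$.

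The only subtlety is that the claim in the statement is phrased pointwise (for all $x \in \mathbb{R}^d$), whereas the argument above gives it only almost everywhere unless one interprets both sides as equivalence classes of $L^1$ functions, or unless one imposes additional pointwise absolute integrability of $\prod_i |f_i(\cdot) g_i(x_i - \cdot)|$ for the specific $x$ of interest. In the generality of Lebesgue integrable $f_i, g_i$ this is the standard caveat for convolutions, so I would either restrict the conclusion to almost every $x$ or note that both sides are understood as representatives of their $L^1$ classes. No serious obstacle is expected; the content of the lemma is purely Fubini applied to a product structure.
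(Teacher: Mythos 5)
Your approach is essentially the same as the paper's: Tonelli applied to the nonnegative integrand to justify Fubini, then Fubini to factor the $d$-dimensional convolution into a product of one-dimensional ones (the paper simply does this first for $d=2$ and then invokes induction over $d$, which is a purely presentational difference). Your caveat about the equality holding only for almost every $x$ rather than literally for all $x$ is correct under the bare $L^1$ hypothesis, and in fact the paper's own Fubini step has the same limitation; the lemma is, however, only applied to continuous, rapidly decaying kernels (Gaussian-weighted Hermite functions), for which the convolutions exist pointwise everywhere and the two readings coincide.
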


\begin{proof}
	We prove the Assertion for $d = 2$, the assertion then follows by an induction argument over $d\in \NN$. To show the assertion first assume that $f_1, f_2, g_1, g_2$ are non-negative. Then it follows by Tonelli's theorem and linearity of the integral that 
	\begin{align*}
		f\ast g(x) &= \iint_{\RR^2} f_1(x_1-y_1)f(x_2-y_2) g_1(y_1)g_2(y_2) \dif y_1 \dif y_2\\
		&= \int f_1(x_1 -y_1)g_1(y_1) \left(\int f_2(x_2-y_2)g(y_2)\dif y_2 \right) \dif y_1  \\
		&= \int f_1(x_1 -y_1)g_1(y_1) \dif y_1 \int f_2(x_2-y_2)g(y_2)\dif y_2 = f_1\ast g_1(x_1) \cdot f_2\ast g_2(x_2).
	\end{align*}
	Moreover, if $f_1, f_2, g_1, g_2$ are not non-negative, but Lebesgue integrable, Fubini's theorem ensures the validity of the second equation and the assertion follows.
\end{proof}
	
The following result can be deduced from Proposition 2.6 and Theorem 2.7 of~\cite{ding2022}.

\begin{lemma}
\label{lem:ding}
Suppose that $\mu,\nu\in \calU_k(\Theta)$ are two measures with support size exactly equal to $k$. 
Let $\tau_0$ be an optimal matching from $\mu$ to $\nu$.
 Then, 
the following assertions are equivalent.
\begin{enumerate}
\item There is a unique optimal transport coupling from $\mu$ to $\nu$, which is induced by  the matching
$\tau_0$. Furthermore, there is a  unique optimal transport coupling from $\nu$ to $\mu$, which is induced by the matching $\tau_0^{-1}$. 
\item There exists a constant $\lambda > 0$ and a twice continuously differentiable convex function $\varphi:\Omega \to \bbR$ 
such that
$$\lambda^{-1} I_d \preceq \nabla^2 \varphi_0(x) \preceq \lambda I_d,\quad \text{over } \Omega,$$
and such that 
$$\nabla\varphi_0(\theta_{\tau_0(i)}) = \eta_i,\quad \text{and} \quad \nabla\varphi_0^*(\eta_i) = \theta_{\tau_0(i)},
\quad i=1,\dots,k,$$
where $\varphi_0^*$ is the Legendre-Fenchel conjugate of $\varphi_0$.
\end{enumerate}
\end{lemma}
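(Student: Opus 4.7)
Since the result is attributed to Proposition~2.6 and Theorem~2.7 of \cite{ding2022}, my plan is essentially to explain how those two results combine to yield the stated equivalence, and to handle the symmetric statement for $\nu \to \mu$ via Legendre--Fenchel duality.

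For the easy direction $(ii) \Rightarrow (i)$, I would argue as follows. The coupling $\pi_0 = \frac{1}{k}\sum_{i=1}^k \delta_{(\theta_{\tau_0(i)},\eta_i)}$ is supported on the graph of $\nabla \varphi_0$, which is contained in the subdifferential of the convex function $\varphi_0$; hence $\pi_0$ is cyclically monotone, and by the Knott--Smith criterion it is optimal for the quadratic cost between $\mu$ and $\nu$. For uniqueness, the strict convexity of $\varphi_0$ (coming from $\nabla^2\varphi_0 \succeq \lambda^{-1}I_d$) makes $\nabla\varphi_0$ injective on $\Omega$, so no other permutation $\sigma \neq \tau_0$ can yield a cyclically monotone matching between the atoms; indeed, if it did, the standard argument swapping a pair $(i,j)$ in any nontrivial cycle of $\sigma \tau_0^{-1}$ would produce equality in the cyclic monotonicity inequality for a strictly convex potential, forcing two atoms of $\mu$ (or $\nu$) to coincide, which contradicts $|\supp(\mu)| = |\supp(\nu)| = k$. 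The symmetric statement for $\nu \to \mu$ is immediate from the fact that $\varphi_0^*$ is itself $(1/\lambda)$-strongly convex and $(1/\lambda)$-smooth on $\nabla\varphi_0(\Omega) \supseteq \supp(\nu)$, with $\nabla\varphi_0^*(\eta_i) = \theta_{\tau_0(i)}$ by the Fenchel--Young identity.

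For the direction $(i) \Rightarrow (ii)$, my plan is to apply Theorem~2.7 of \cite{ding2022} directly. Their hypothesis is essentially that $\tau_0$ is the unique optimal matching in both directions, which is precisely assumption $(i)$, and their conclusion is the existence of a $C^2$ convex potential $\varphi_0$ on $\Omega$ with two-sided Hessian bounds whose gradient realizes the matching $\tau_0$; the condition $\nabla\varphi_0^*(\eta_i) = \theta_{\tau_0(i)}$ then follows automatically from Fenchel--Young. To apply Theorem~2.7, one should first invoke Proposition~2.6 of \cite{ding2022}, which translates the uniqueness assumption into the requisite ``twist-type'' geometric condition on the matched pairs (namely, that the finite set $\{(\theta_{\tau_0(i)},\eta_i)\}$ admits a cyclically monotone interpolation with strictly positive margin).

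\textbf{Main obstacle.} The nontrivial direction is $(i) \Rightarrow (ii)$: one must upgrade the finite cyclically monotone matching $\tau_0$ to a globally defined $C^2$ strongly convex potential on $\Omega$ with uniform Hessian bounds. This is the content of Theorem~2.7 of \cite{ding2022}, which relies on a mollification/convexification argument around a piecewise-linear Brenier-type potential. I would not reproduce this construction but simply cite it, restricting attention to verifying that our assumption $(i)$ matches their hypotheses (in particular, that the uniformity of the weights, which ensures $\tau_0$ is a bijection rather than a general coupling, is compatible with their setup).
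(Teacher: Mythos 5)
The paper gives no proof of this lemma beyond the single sentence attributing it to Proposition~2.6 and Theorem~2.7 of \cite{ding2022}, and your proposal does exactly the same thing: it defers the substantive direction $(i)\Rightarrow(ii)$ to those results and fills in only the routine direction $(ii)\Rightarrow(i)$ via cyclic monotonicity, strict convexity, and Fenchel--Young. This matches the paper's approach, and the details you sketch for the easy direction are standard and correct.
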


The following result is due to~\citet[Corollary 4]{balakrishnan2025}. 
\begin{lemma}
\label{lem:stab_w2}
Let $\lambda > 0$, and let $\Theta\subseteq \bbR^d$ be a convex set. 
Let $(\mu,\nu)\in \calT_\lambda(\Theta)$, and let $\pi_0$ denote the quadratic optimal
transport coupling between $\mu$ and $\nu$. Then, 
there exists a constant $C = C(\lambda) > 0$ such that for all 
couplings $\hat\pi \in \calP(\Theta\times\Theta)$ with marginal distributions $\hat\mu$ and $\hat\nu$, it holds that
$$W_2(\hat\pi,\pi_0) \leq C\Big(W_2(\hat\mu,\mu) + W_2(\hat\nu,\nu)\Big).$$
\end{lemma}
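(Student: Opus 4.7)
The plan is to establish this cousin-of-triangle-inequality using the structural rigidity of optimal transport between measures in $\calT_\lambda(\Theta)$. By Lemma~\ref{lem:ding}, under $(\mu,\nu)\in\calT_\lambda(\Theta)$ the coupling $\pi_0$ is concentrated on the graph of a $\lambda$-bi-Lipschitz Brenier map $T=\nabla\varphi_0$ with inverse $T^{-1}=\nabla\varphi_0^*$, i.e.\ $\pi_0=(\mathrm{id},T)_\#\mu=(T^{-1},\mathrm{id})_\#\nu$. My first step would be to record this deterministic structure, because it is what allows the joint coupling $\pi_0$ to be controlled through its marginals alone; any gluing construction below can replace one coordinate by the deterministic function of the other without incurring additional cost.

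The next step is to construct an explicit coupling $\Gamma$ between $\hat\pi$ and $\pi_0$ by gluing. Interpreting the statement as in its use within the proof of Corollary~\ref{cor:matchings}, one should think of $\hat\pi$ as concentrated on the graph of an approximate Brenier map $\hat T$ from $\hat\mu$ to $\hat\nu$. Let $\gamma_\mu\in\Pi(\hat\mu,\mu)$ achieve $W_2(\hat\mu,\mu)$, and push it forward by $(\hat x,x)\mapsto(\hat x,\hat T(\hat x),x,T(x))$ to obtain a measure $\Gamma$ whose $(1,2)$-marginal is $\hat\pi$ and $(3,4)$-marginal is $\pi_0$. Then
\begin{equation*}
W_2^2(\hat\pi,\pi_0)\le\int\!\bigl[\|\hat x-x\|^2+\|\hat T(\hat x)-T(x)\|^2\bigr]\,d\gamma_\mu(\hat x,x).
\end{equation*}
Splitting $\hat T(\hat x)-T(x)=(\hat T(\hat x)-T(\hat x))+(T(\hat x)-T(x))$ and using the $\lambda$-Lipschitz property of $T$ to bound the second piece by $\lambda^2\|\hat x-x\|^2$ yields
\begin{equation*}
W_2^2(\hat\pi,\pi_0)\le(1+2\lambda^2)\,W_2^2(\hat\mu,\mu)+2\int\|\hat T-T\|^2\,d\hat\mu.
\end{equation*}

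The hard part will be the $L^2(\hat\mu)$-stability of Brenier maps, i.e.\ bounding $\int\|\hat T-T\|^2\,d\hat\mu$ by a constant (depending on $\lambda$) times $W_2^2(\hat\mu,\mu)+W_2^2(\hat\nu,\nu)$. This is the substance of Balakrishnan et al.'s Corollary~4, and it is not a consequence of the gluing construction alone. The natural route is via the semi-dual formulation: since $\varphi_0$ is strongly convex with $\nabla^2\varphi_0\succeq\lambda^{-1}I$, the dual functional is $\lambda^{-1}$-strongly convex along suitable perturbations, which converts a first-order deviation of the dual potentials (controllable by $W_2(\hat\nu,\nu)$) into a quadratic deviation of the gradients, i.e.\ of the Brenier maps. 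One can alternatively exploit the symmetric version with $T^{-1}$ to account for $W_2(\hat\mu,\mu)$; averaging the two bounds gives the symmetric dependence on both marginal distances. Squaring out and using $\sqrt{a+b}\le\sqrt a+\sqrt b$ then produces the claimed additive form $C(\lambda)\bigl(W_2(\hat\mu,\mu)+W_2(\hat\nu,\nu)\bigr)$, completing the argument.
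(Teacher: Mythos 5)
The paper does not prove Lemma~\ref{lem:stab_w2} at all: it is stated as being ``due to \cite{balakrishnan2025}, Corollary 4'' and the proof is a direct citation. So there is no argument in the paper to compare your construction against.

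Your proposal is a reasonable partial argument, and two of your observations are genuinely useful. First, you correctly notice that the lemma cannot hold for arbitrary couplings $\hat\pi$ of $\hat\mu$ and $\hat\nu$: taking $\hat\mu=\mu$, $\hat\nu=\nu$, and $\hat\pi$ any non-optimal coupling gives a zero right-hand side but a positive left-hand side. The implicit (and the one used in Corollary~\ref{cor:matchings}) assumption is that $\hat\pi$ is the quadratic optimal coupling between $\hat\mu$ and $\hat\nu$, hence a Monge coupling by a map $\hat T$. Second, your gluing construction is correct and yields the valid reduction
\begin{equation*}
W_2^2(\hat\pi,\pi_0) \le (1+2\lambda^2)\,W_2^2(\hat\mu,\mu) + 2\int\|\hat T-T\|^2\,d\hat\mu,
\end{equation*}
where the $\lambda$-Lipschitz bound on $T=\nabla\varphi_0$ is exactly what $(\mu,\nu)\in\calT_\lambda(\Theta)$ guarantees.

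However, the remaining task --- bounding $\|\hat T - T\|_{L^2(\hat\mu)}$ by a $\lambda$-dependent multiple of $W_2(\hat\mu,\mu)+W_2(\hat\nu,\nu)$ --- is the actual content of the cited result, as you yourself acknowledge. Your outline via strong convexity of the semi-dual is the right heuristic, but it is not an argument: the passage from ``first-order deviation of dual potentials controllable by $W_2(\hat\nu,\nu)$'' to a quantitative $L^2$ bound on the gradients is precisely the technical substance of the Brenier-map-stability literature, and needs considerably more care (localization, handling of the change of reference measure from $\mu$ to $\hat\mu$, and careful use of both directions via $T^{-1}$). In the end your proposal still appeals to the same citation that the paper uses; the gluing step is a correct and clean way to translate coupling stability into map stability, but it is the easy direction of that equivalence, so it does not bring you closer to an independent proof.
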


\section{EM Algorithm for Computation of Maximum Likelihood Estimator}\label{app:EM}

Computation of MLE is a challenging issue in the context mixture learning.
Ever since its introduction by  \cite{dempster1977} the EM algorithm is perhaps the most widely-used method for finding the MLE in mixture models. 
In this appendix we describe our implementation of the EM algorithm for our binned Poisson convolution model \eqref{eq:model}.
To this end, we first recall the EM algorithm for mixture models \eqref{eq:mixture_model}.

Let the observations be i.i.d. random variables from the $k$-atomic mixture model
\begin{equation}
    \label{eq:em:mixture_model}
    Y_1,\dots,Y_n \sim K\star \mu,~~\text{for some } \mu\in \calP_k(\Theta),
\end{equation}
where $\calP_k(\Theta)$ is the set of $k$-atomic probability measures
on $\Theta$, and $K$ is a probability kernel.
Let $Z_1,\dots,Z_n$ i.i.d.\ be the latent variables, such that $\PP(Z_i = j) = w_j$ and $Y_i|Z_i = j \sim K\star \delta_{\theta_j}$ for $j=1,\dots,k$, and $\theta_1,\dots,\theta_k\in\Theta$ are the atoms of $\mu$ and $w_1,\dots,w_k\geq 0$ are the weights of $\mu$ with $\sum_{j=1}^k w_j = 1$.
Therefore, the likelihood is given by $f_{Y,Z}(y,z) = \prod_{i=1}^n w_{z_i} K(y_i-\theta_{z_i})$.
Given $\tilde{\mu} \coloneq \sum_{j=1}^{k} \tilde{w}_j \delta_{\tilde{\theta}_j} \in \calP_k(\Theta)$, we compute the conditional expectation of the log-likelihood
$$
    \EE_{\tilde{\mu}}[\ln f_{Y, Z}(Y,Z) | Y=y]
    = \sum_{i=1}^{n} \EE_{\tilde{\mu}}[\ln(w_{Z_i} K(y_i-\theta_{Z_i})) | Y_i=y_i]
    = \sum_{i=1}^{n} \sum_{j=1}^{k} \tilde{p}_{i,j} \ln(w_{j} K(y_i-\theta_{j}))
    ,
$$
with $\tilde{p}_{i,j} =  \tilde{w}_j K(y_i-\tilde{\theta}_j) / \sum_{h=1}^{k} \tilde{w}_h K(y_i-\tilde{\theta}_h)$. The EM algorithm is described in \Cref{alg:mixture:EM-Algorithm}.

\begin{algorithm}[h!]
    \caption{EM Algorithm for Mixture model \eqref{eq:em:mixture_model} with general kernel $K$}
    \label{alg:mixture:EM-Algorithm}
    \begin{algorithmic}
        \Require{Data $Y_1, \dots, Y_n$, initial atoms $\theta_1^{(0)}, \dots, \theta_k^{(0)}$ and weights $w_1^{(0)}, \dots, w_k^{(0)}\geq 0$ with $\sum_{j =1}^{k} w_j^{(0)} = 1$, number of iterates~$L$, probability kernel $K$,  object domain $\Theta$.}
        \Ensure{Mixing measure $\mu^{(L)} = \sum_{j = 1}^{k}w_j^{(L)}\delta_{\theta_j^{(L)}}$ and its atoms $\theta_1^{(L)}, \dots, \theta_k^{(L)}$.}
        \State Initialize $\mu^{(0)} \coloneq \sum_{j = 1}^{k}w_j^{(0)}\delta_{\theta_j^{(0)}}$.
        \For{$l = 1, \ldots, L$}
        \State Define $\tilde{\mu} \coloneqq \mu^{(l-1)}$, take atoms $\tilde \theta_1, \dots, \tilde \theta_k$ and weights $\tilde w_1, \dots, \tilde w_k$ of $\tilde \mu$,  %\hfill \text{(Update measure)}
        \State Define  $\tilde \lambda_{i,j} \coloneqq \tilde{w}_j K(Y_i- \tilde \theta_j)$ and $\tilde p_{i,j} \coloneqq \tilde \lambda_{i,j}/ \sum_{h = 1}^{k}\tilde \lambda_{i,h}$%\hfill \text{(Update latent parameters)}
        \State Define function
        $Q(\cdot, \tilde{\mu})\colon \calP_k(\Theta)\to \RR$,\hfill \text{(Expectation step)}
        \State \quad \quad  $\sum_{j=1}^{k} w_j \delta_{\theta_j}
            \mapsto \sum_{i=1}^{n} \sum_{j=1}^{k} \tilde{p}_{i,j} \ln(w_{j} K(Y_i-\theta_{j}))$
        \State Compute $\mu^{(l)} \coloneqq \argmax_{\mu \in \calP_k(\Theta)} Q(\mu, \tilde{\mu})$ \hfill \text{(Maximization step)}
        \EndFor
    \end{algorithmic}
\end{algorithm}

To derive the EM algorithm for the binned Poisson convolution model \eqref{eq:model} let us first recall the model. For $i=1, \dots, m$ we have independent observations
\begin{align}
    \label{eq:em:model}
    X_i \sim \textup{Poi}(t\cdot \lambda_i)
    \quad \text{with } \quad
    \lambda_i = (K\ast \mu)(B_i) = \int_{B_i} \left[ \int_{\Theta} K(x-y) \mu(dy)\right] dx
    ,
\end{align}
where $\mu = \frac{1}{k}\sum_{j = 1}^{k} \delta_{\theta_j}$ for some $k\in\NN$ and $\theta_1,\dots\theta_k \in \Theta$, while $(B_i)_{i=1}^{m}$ are disjoint bins.
The main difference to the mixture model \eqref{eq:em:mixture_model} is that instead of explicit locations $Y_1,\dots,Y_n$, we observe the number of their occurrences $X_1,\dots,X_m$ in each bin $(B_i)_{i=1}^{m}$. In addition, we assume uniform weights $w_j = 1/k$ for $j=1,\dots,k$, which can also be generalized to arbitrary weights.

Following along the notation for the classical EM algorithm, we note that the parameters $\lambda_i$ can be expressed as $$\lambda_i = \frac{1}{k} \sum_{j=1}^k \int_{B_i} K(x-\theta_j) dx = \sum_{j=1}^k \lambda_{i,j}, \quad \text{ where } \quad
    \lambda_{i,j}
    = \frac{1}{k} \int_{B_i} K(x-\theta_j) dx
    .
$$
Further, we introduce latent variables 
 $Z_{i,j} \sim \textup{Poi}(t\lambda_{i,j})$ for $i=1,\ldots,m$, $j=1,\ldots,k$, such that
$$ X_i = \sum_{j=1}^k Z_{i,j}, $$
with corresponding densities
$$f_{X_i}(x) = \frac{(t\lambda_i)^{x} e^{-t\lambda_i}}{x!},
    \quad \quad
    f_{Z_{i,j}}(z) = \frac{(t\lambda_{i,j})^{z} e^{-t\lambda_{i,j}}}{z!},
$$
$$
    f_{X_i, Z_{i}}(x,z) = \mathds{1}\left(x=\textstyle\sum_{j=1}^k z_j\right) \prod_{j=1}^k \frac{(t\lambda_{i,j})^{z_j} e^{-t\lambda_{i,j}}}{z_j!}.
$$
To simplify the notation, we assume that $x = \sum_{j=1}^k z_j$. Then we derive that $Z_{i}|X_i=x \sim \textup{Mult}(x,p_{i,1},\dots p_{i,k})$ with
$ p_{i,j} = \frac{\lambda_{i,j}}{\lambda_i}$, since the density of $Z_{i}|X_i=x$ is given by
\begin{align*}
    f_{Z_{i}|X_i=x}(z) & = \frac{f_{X_i, Z_{i}}(x,z)}{f_{X_i}(x)}
    = \frac{x!}{(t\lambda_i)^{x} e^{-t\lambda_i}} \prod_{j=1}^k \frac{(t\lambda_{i,j})^{z_j} e^{-t\lambda_{i,j}}}{z_j!}
    = \frac{x!}{(t\lambda_i)^{\sum_{j=1}^k z_j}} \prod_{j=1}^k \frac{(t\lambda_{i,j})^{z_j}}{z_j!} \\
                       & = x! \prod_{j=1}^k \frac{(\lambda_{i,j}/\lambda_i)^{z_j}}{z_j!}
    .
\end{align*}

Given $\tilde{\mu} = \frac{1}{k}\sum_{j = 1}^{k} \delta_{\tilde{\theta}_j}$, we compute the conditional expectation of the log-likelihood

\begin{align*}
    \EE_{\tilde{\mu}}[\ln f_{X_i, Z_{i}}(X_i,Z_i) | X_i=x]
    &=-t\lambda_i + \sum_{j=1}^k  \EE_{\tilde{\mu}}[Z_{i,j}\ln(t\lambda_{i,j}) - \ln(Z_{i,j}!) | X_i=x]\\
    &=-t\lambda_i + \sum_{j=1}^k  \EE_{\tilde{\mu}}[Z_{i,j}| X_i=x]\ln(t\lambda_{i,j})  + c(\tilde{\mu})\\
    & =-t\lambda_i + \sum_{j=1}^k  x \tilde{p}_{i,j}\ln(t\lambda_{i,j})  + c(\tilde{\mu})\\
    &= \sum_{j=1}^k  (x \tilde{p}_{i,j}\ln(t\lambda_{i,j}) -t\lambda_{i,j})  + c(\tilde{\mu}).
\end{align*}
with $\tilde{p}_{i,j}$ defined as $p_{i,j}$ (using $\tilde{\mu}$ instead of $\mu$) and
$c(\tilde{\mu}) = -\sum\limits_{j=1}^k  \EE_{\tilde{\mu}}[\ln(Z_{i,j}!) | X_i=x]$, which does not depend on $\mu$, so $c(\tilde{\mu})$ can be omitted during maximization step.
For the maximization step we need to maximize
$$Q(\mu, \tilde{\mu}) = \sum_{i=1}^m \sum_{j=1}^k  (X_i \tilde{p}_{i,j}\ln(t\lambda_{i,j}) -t\lambda_{i,j}).$$
Now, given data $X$ and initial measure $\mu^{(0)}$, the EM algorithm for \eqref{eq:em:model} is  described in \Cref{alg:EM-Algorithm}.

\begin{algorithm}[h!]
    \caption{EM Algorithm for binned Poisson convolution model \eqref{eq:em:model} with kernel~$K$}\label{alg:EM-Algorithm}
    \begin{algorithmic}
        \Require{Data $X_1, \dots, X_m$, bins $B_1, \dots, B_m$, initial atoms $\theta_1^{(0)}, \dots, \theta_k^{(0)}$, number of iterates~$L$, probability kernel $K$,  object domain $\Theta$.}
        \Ensure{Mixing measure $\mu^{(L)} = \frac{1}{k}\sum_{j = 1}^{k}\delta_{\theta_j^{(L)}}$ and its atoms $\theta_1^{(L)}, \dots, \theta_k^{(L)}$.}
        \For{$l = 1, \ldots, L$}
        \State Define $\tilde{\mu} \coloneqq \mu^{(l-1)}$, take atoms $\tilde \theta_1, \dots, \tilde \theta_k$ of $\tilde \mu$,          \State Define  $\tilde \lambda_{i,j} \coloneqq K(B_i- \tilde \theta_j)$ and $\tilde p_{i,j} \coloneqq \tilde \lambda_{i,j}/ \sum_{h = 1}^{k}\tilde \lambda_{i,h}$        \State Define function
        $Q(\cdot, \tilde{\mu})\colon \calU_k(\Theta)\to \RR$,\hfill \text{(Expectation step)}
        \State \quad \quad  $\frac{1}{k}\sum_{j = 1}^{k} \delta_{\theta_j}\mapsto \sum_{i=1}^m \sum_{j=1}^k  (X_i \tilde{p}_{i,j}\ln(t K(B_i-\theta_j)) -tK(B_i-\theta_j))$
        \State Compute $\mu^{(l)} \coloneqq \argmax_{\mu \in \calU_k(\domain)} Q(\mu, \tilde{\mu})$ \hfill \text{(Maximization step)}
        \EndFor
    \end{algorithmic}
\end{algorithm}
Notably, to perform the maximization step in the EM algorithm we use a public implementation of L-BFGS-B algorithm.\footnote{\href{https://docs.scipy.org/doc/scipy/reference/optimize.minimize-lbfgsb.html}{https://docs.scipy.org/doc/scipy/reference/optimize.minimize-lbfgsb.html}} As our initializer we employ the moment estimator. 
As for the number of iterations, we use $L = 50$ and we also apply early stopping when $W_1(\mu^{(l)},\mu^{(l-1)}) < 10^{-9}$.

\section{Simulations}\label{sec:simulations}
In this appendix, we explore the performance of our method of moments estimator and the maximum likelihood estimator (computed using the EM algorithm detailed in \Cref{app:EM} and initialized with our method of moments estimator) on synthetic data.
First, we consider atom locations $\theta_1,\dots,\theta_k \in\RR^2$ according to the three distinct configurations: grid, corners, and u-shape, illustrated in \Cref{fig:simulations:points}.
Then we generate data $X_1,\dots,X_m$ according to our model \eqref{eq:model} for $d=2$
and where the kernel $K$ is given by an isotropic Gaussian density with a standard deviation $\sigma>0$, i.e., $K(x) = (2\pi \sigma)^{-1}\exp(-\frac{1}{2 \sigma^2}x^\top x)$, that is
\begin{equation}
    \label{eq:simulations:data}
    X_i \sim \textup{Poi}\left(
    t  K\ast \mu(B_i)\right),
    \quad i = 1, \dots, m.
\end{equation}
Additionally, we consider the case where we have access to the true bin densities,
\begin{align*}
    K\ast \mu(B_i) \quad i = 1, \dots, m,
\end{align*}
which corresponds to the noiseless regime and is labelled as $t = \infty$.
We apply the estimators to the data and compare the resulting estimator $\hat \mu$ with the underlying true measure $\mu$. The error between the estimator and truth is quantified using the $1$-Wasserstein distance from \eqref{eq:def_Wasserstein}, the corresponding risk is computed by averaging over $ N = 100$ simulation runs.

\begin{figure}[h!]
    \centering
    \includegraphics[width=0.95\linewidth]{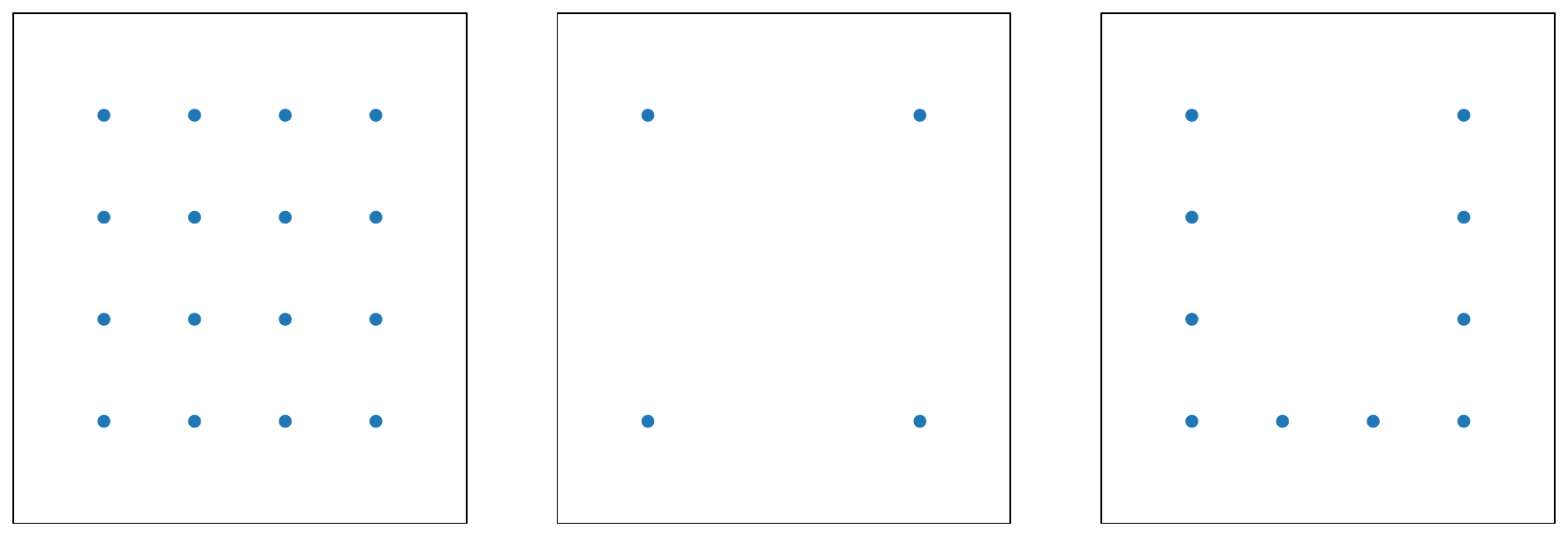}
    \caption{Three exemplary configurations of atoms in a unit square which we denote as "grid", "corners", and "u-shape" (from left to right).}
    \label{fig:simulations:points}
\end{figure}

\begin{figure}[h!]
    \centering
    \includegraphics[width=0.95\linewidth]{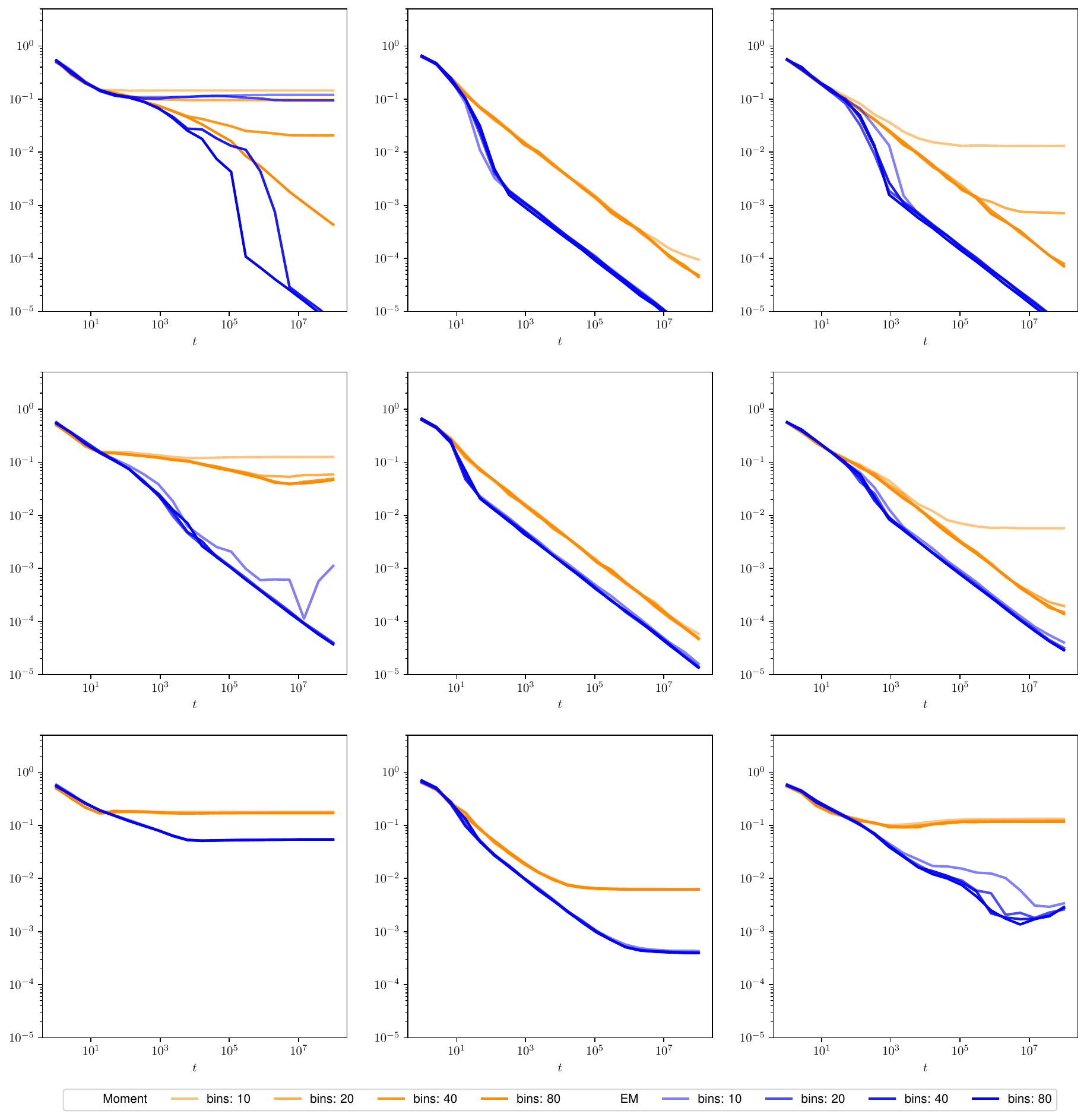}
    \caption{Average $1$-Wasserstein errors along $t$ for the method of moments estimator (orange) and the MLE using the EM algorithm (blue) (from right to left: grid, corners, u-shape setting; from top to bottom standard deviation: $0.01$, $0.05$, $0.1$).}
    \label{fig:simulations:errors:moment-em}
\end{figure}

In \Cref{fig:simulations:individual:corners,fig:simulations:individual:grid} the $1$-Wasserstein risk of the method of moments estimator and the EM algorithm for the grid and corners settings are depicted, respectively, with $\sigma=0.05$ and different values of $m$ and $t$. We observe that the method of moments estimator performs well for $k = 4$ well-separated atoms and poorly for $k = 16$ atoms, while the EM algorithm performs well in both cases.
Further, we average the $1$-Wasserstein loss over several simulations, conducting experiments for different values of $\sigma$, $m$, and $t$ in grid, corners and u-shape settings, illustrated in \Cref{fig:simulations:errors:moment-em}.
We observe that increasing the number of bins $m$ beyond a certain value does not improve the quality of the estimations, while increasing $t$ improves the quality of the estimations at a rate proportional to $t^{-1/2}$.
Additionally, for large standard deviations $\sigma$, the quality of the estimations decreases.

\begin{figure}[h!]
    \centering
    \includegraphics[width=0.95\linewidth]{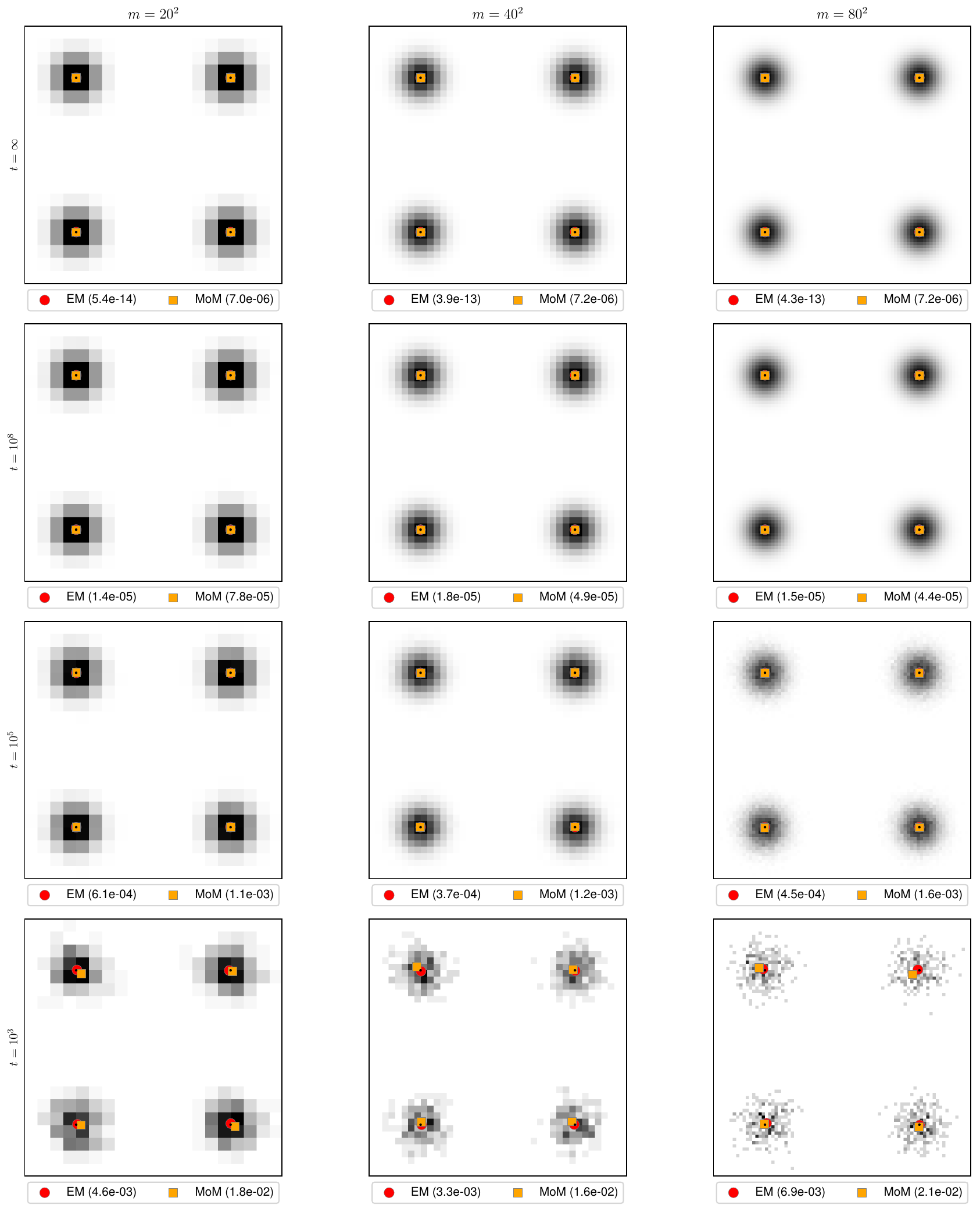}
    \caption{Simulated data for the corners setting together with the atom estimations and $1$-Wasserstein errors for the method of moments estimator (orange squares) and the MLE using the EM algorithm (red circles) for $\sigma = 0.05$, $m=20^2,40^2,80^2$ (from left to right), $t=10^3,10^5,10^8,\infty$ (from bottom to top).}
    \label{fig:simulations:individual:corners}
\end{figure}
\begin{figure}[h!]
    \centering
    \includegraphics[width=0.95\linewidth]{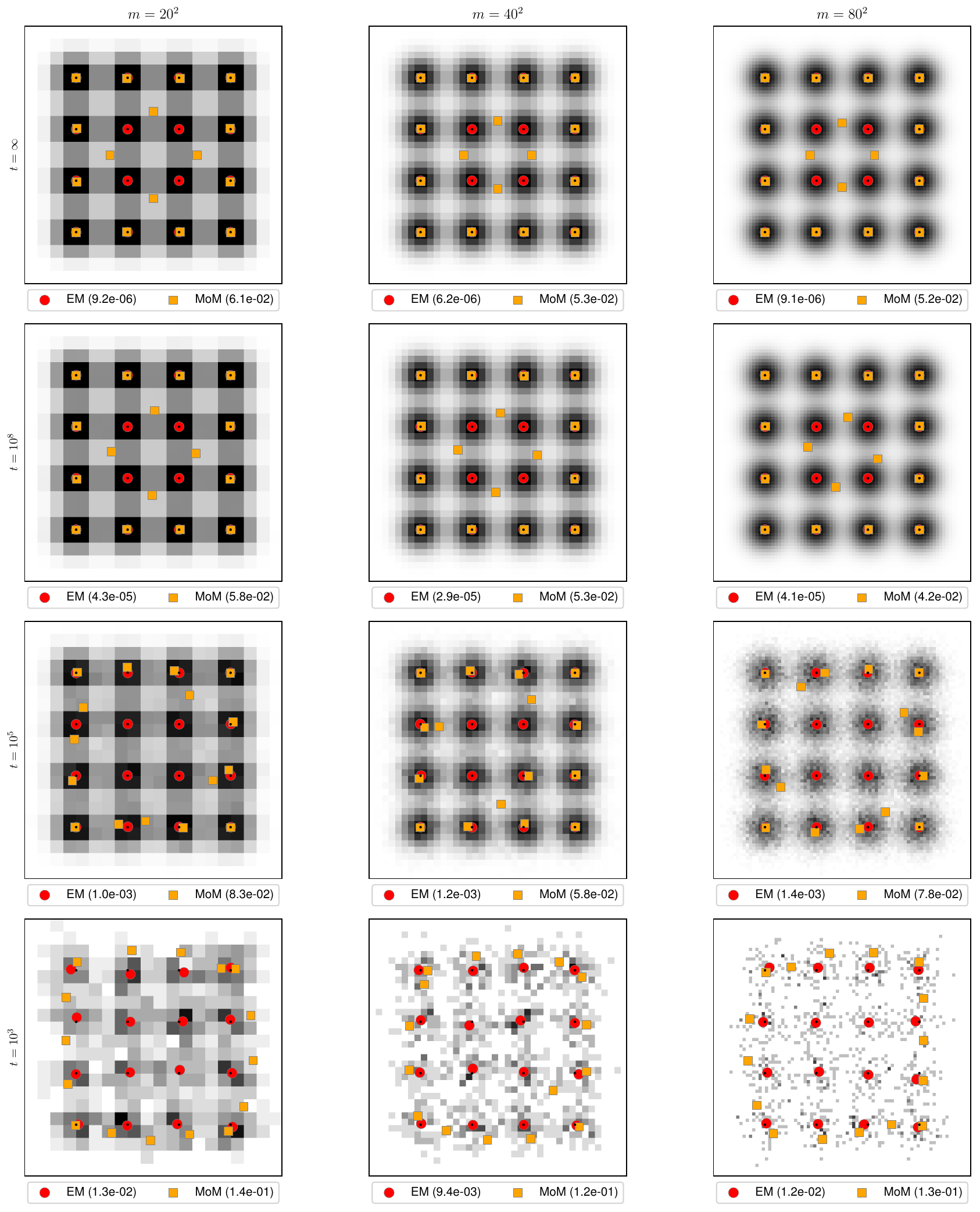}
    \caption{Simulated data for the grid setting together with the atom estimations and $1$-Wasserstein errors for the method of moments estimator (orange squares) and the MLE using the EM algorithm  (red circles) for $\sigma = 0.05$, $m=20^2,40^2,80^2$ (from left to right), $t=10^3,10^5,10^8,\infty$ (from bottom to top).}
    \label{fig:simulations:individual:grid}
\end{figure}

Finally, we compare in \Cref{fig:simulations:time-complexity} the runtime of the method of moments estimator and MLE using the EM algorithm. The method of moments estimator is computed extremely fast. Meanwhile, the MLE exhibits a significantly smaller risk while taking a much longer computational runtime.

\begin{figure}[h!]
    \centering
    \includegraphics[width=0.95\linewidth]{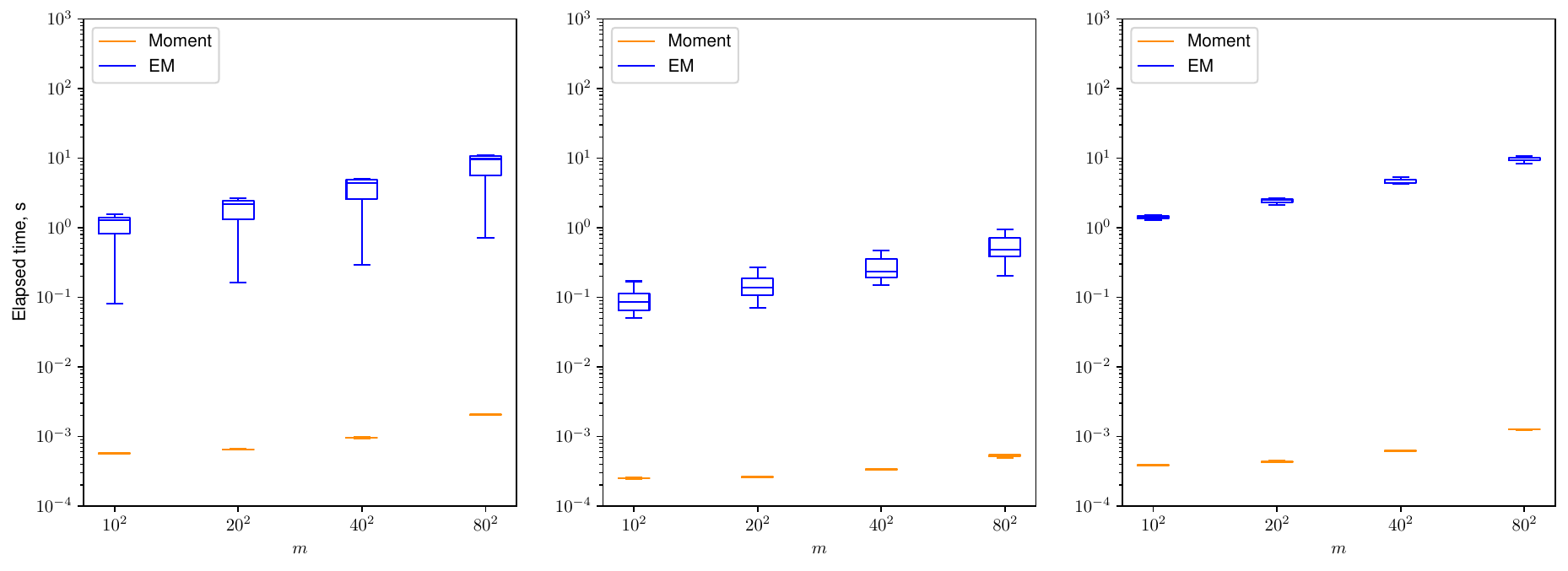}\;\;\;
    \caption{Computational runtime for the method of moments estimator and the MLE using the EM algorithm (from right to left: grid, corners, u-shape setting).}
    \label{fig:simulations:time-complexity}
\end{figure}

\newpage

\section{Application: Methodology and Additional Analysis}\label{app:application}

In this appendix, we discuss addition aspect to our application related to the STED image data provided by \cite{proksch2018multiscale} on the DNA origami samples. The following subsections are organized as follows. In \Cref{app:apl:partition} we describe the procedure with which we partition the image data into several smaller components. In \Cref{app:apl:estimation} we explain the denoising step and state how to apply the EM algorithm to compute the MLE. \Cref{app:apl:robustness} is concerned with robustness properties of the EM-based MLE and compares the performance of the method of moment estimator with the EM-based maximum likelihood estimator. The full code to the different parts of the data analysis is made available on our github repository.

\subsection{Partition Procedure} \label{app:apl:partition}

As a preliminary step in the data analysis, we partition the image data into smaller components. As explained in \Cref{sec:applications}, partitioning the image data into several is beneficial for computational purposes, as the number of location parameters is reduced from $k$ to the number of atoms for each respective cluster.

Our specific partition method is described as follows, though we stress that alternative image segmentation methods would likely also be suitable. We first compute a preliminary Voronoi partition consisting  $\tilde k$ Voronoi partitions where the centers are determined via some mode selection procedure (see Algorithm~\ref{alg:mode-like-selection} and \Cref{fig:mode-like-selection}) with the uniform kernel on $[-180\textup{nm},180\textup{nm}]^2$. 
 Next, we construct a graph whose vertices are the selected modes, and two modes are connected if their distance is less than a threshold $\delta>0$. By identifying the connected components of this graph, we obtain our partition $P_1, \dots, P_s$ as the collection of  unions of Voronoi cells associated with the modes in each connected component.  
The number of modes $\tilde k$ and the threshold $\delta$ are chosen to ensure that the residuals are sufficiently small and the resulting samples are well separated.

In \Cref{fig:mode-like-selection} we illustrate the number of determined modes for different choices for $\tilde k$ and the resulting residuals. In particular, for $\tilde k = 35$ we see that the residuals still contain certain high-intensity spots, whereas for $\tilde k = 39$ and $\tilde k = 45$ the residuals are mostly homogenous with at most $10$ to $15$ photon counts per pixel. However, for $\tilde k = 45$ more Voronoi cells are grouped together, which is not desirable from estimation point of view. Therefore, for the computation in \Cref{sec:applications} we considered $\tilde k = 39$ and $\delta = 270\textup{nm}$, which assigns distinct, non-overlapping modes to different partition cells and overall results in $s=37$ partition elements (see \Cref{fig:origami:delta-em}, top middle). As a general rule of thumb we recommend choosing the partition as fine as possible as long as it does not irregularly cut one kernel component into multiple parts.

\begin{algorithm}[b!]
	\caption{Mode Selection}\label{alg:mode-like-selection}
	\begin{algorithmic}
		\Require Data $X$, kernel $K$, number of modes $\tilde k$, bins $B_1, \dots, B_m$ with centers $\gamma_1, \dots, \gamma_m$
		\Ensure Estimated mode locations $\tilde{\theta}_1,\dots,\tilde{\theta}_k$ and residuals $X^{(\tilde k)}$
		\State $X^{(0)} \gets X$
		\For{$i = 1, \dots, \tilde k$}
			\State Identify bin $j$ with maximum count: $j \gets \argmax_{\overline{j}=1,\dots,m} X_{\overline{j}}^{(i-1)}$
			\State Set mode: $\theta_i \gets \gamma_j$
			\State Compute blurred mode: $X^* \gets \big(K \star \delta_{\theta_i}(B_1),\, \dots,\, K \star \delta_{\theta_i}(B_m)\big)$
			\State Update residuals: $X^{(i)} \gets \max\!\big(X^{(i-1)} - \frac{X_j}{\max_{\tilde{j}=1,\dots,m} X^*_{\tilde{j}}}\, X^*,\,0\big)$
		\EndFor
	\end{algorithmic}
\end{algorithm}

\begin{figure}[h!]
    \centering
    \includegraphics[width=\textwidth, trim={0 0.7cm 0 0}, clip]{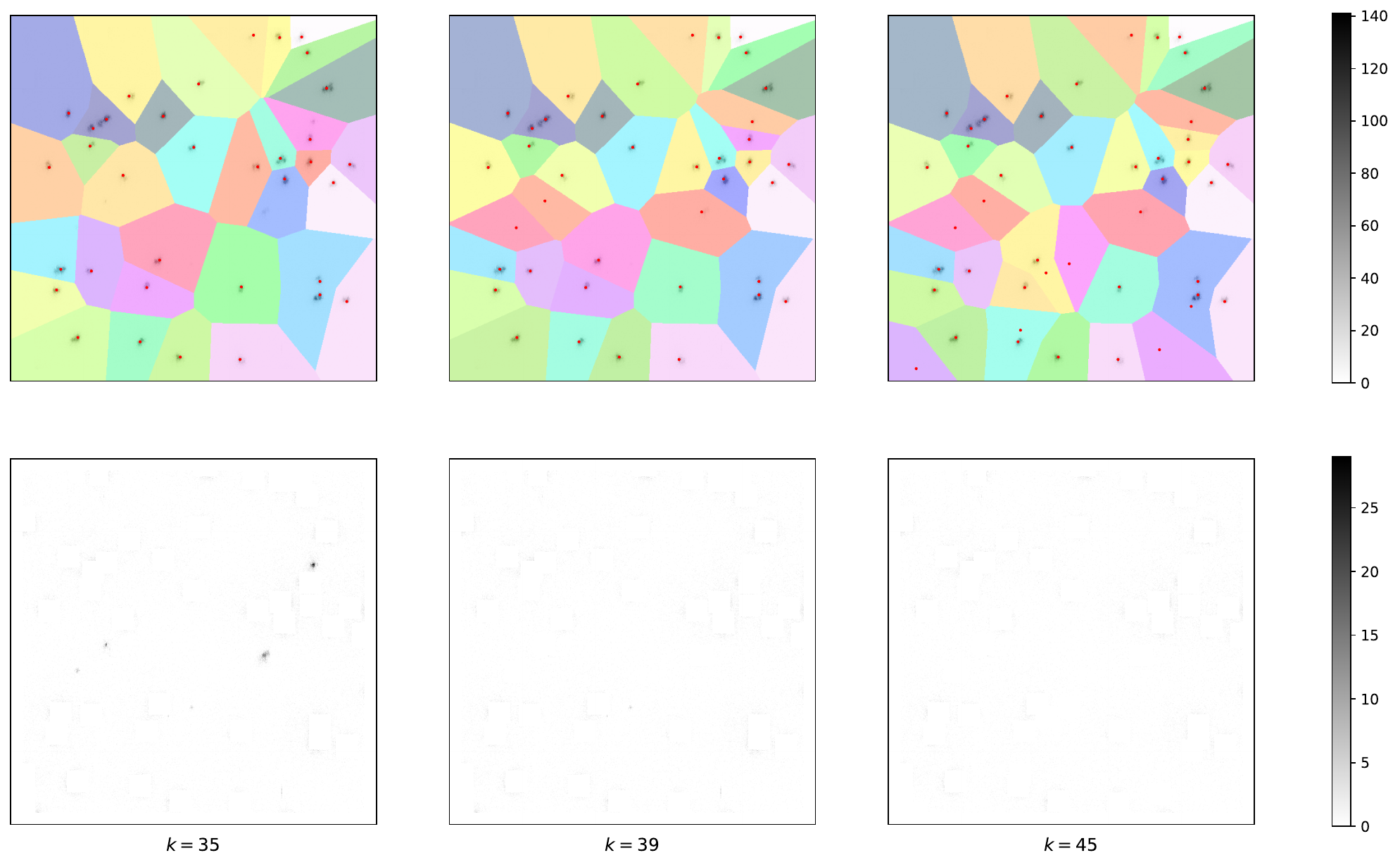}
    \caption{
        Top: Partition of STED image data of the DNA origami sample by \cite{proksch2018multiscale} according to method in \Cref{app:apl:partition} with the uniform kernel on $[-180\textup{nm},180\textup{nm}]^2$, $\delta = 270\textup{nm}$         and $\tilde k = 35, 39, 45$ (from left to right) of modes computed using \Cref{alg:mode-like-selection} and depicted as red points. Bottom: Residuals output from mode selection algorithm for different choices of $\tilde k$.}
    \label{fig:mode-like-selection}
\end{figure}

\subsection{Denoising and Estimation procedure}\label{app:apl:estimation}
For a prescribed total number of components $k$, 
we assign to each partition cell $P\in \{P_1, \dots, P_s\}$ a number of components given by $k_{P} = \text{evenRound}(r\cdot k),$
where $r$ is the ratio of the mass in the cell $P$ to the total mass.  Here we utilize our knowledge about the DNA origami structure,  as it involves fluorophores along two parallel lines.  This also has the regularizing effect that partition cells which were created due to the existence of a mode but which admit too little mass may not be assigned a positive number of atoms. At the same time, this approach prohibits the use of too fine partitions. Recall that the total number of components assigned to all cells may differ from $k$.

To estimate the $k_P$ atoms assigned to partition cell $P$, we first subtract the residuals obtained from the image data, effectively denoising the data. Additionally, we remove all outer zero-valued columns and rows, so that we reduce the image dimensions, which leads due to the smaller domain to a reduction of the computation time and (empirically) improves our estimations. For this reduced partition, we compute the complex method of moments estimator. We then compute for each partition cell the MLE by applying the EM algorithm  (details in \Cref{app:EM}), initialized with the complexmethod of moments estimator, and the resulting estimates are finally combined to yield the overall reconstruction of atoms.

In \Cref{fig:origami_data_2}$b)-c)$ in \Cref{sec:applications} we consider $k = 70$ atoms. In \Cref{fig:origami:em-estimations} we also showcase the output of the EM algorithm for $k = 40, 50, 60, 70$ atoms and note that for every $k$ each local cluster admitting roughly $2$ points, also for $k= 70$ the clustered samples are better discerned. This models the situation where each DNA origami sample (recall \Cref{fig:origami_data_2}$a)$) exhibits a fluorophore group at the opposing ends. While this situation is conceptually appealing and justified by the underlying structure of the data set, we do find that even for different choices of $k$ the recovered point configurations appear fairly similar. This highlights some remarkable robustness properties of the EM-based MLE.

\subsection{Performance and Comparison of Related Estimation Procedures}\label{app:apl:robustness}

In the following we discuss the relevance of the choice of the partition and the denoising step in applying our method. The top row of \Cref{fig:origami:delta-em} shows the STED image data of \cite{proksch2018multiscale} segmented according to the procedure in \Cref{app:apl:partition} into increasingly coarse partitions. The second (resp.\ fourth) row shows (resp.\ a zoomed-in region of) the output of the EM algorithm for $k = 70$ without employing the denoising step from \Cref{app:apl:estimation}, whereas the third (resp.\ fifth) row shows the output of output with the denoising step. The zoomed-in regions (fourth and fifth row) reveal that coarser partitions lead to worse estimates. Moreover, we observe that the difference between denoised and original data is substantial for fine partitions and even more pronounced for coarser partitions. Also, the estimations on the denoised data are rather stable across different choices of $\delta$.

In \Cref{fig:origami:delta-estimations}, we see  the same zoomed-in region as in \Cref{fig:origami:delta-em}. The figure displays the output of the method of moment estimator and EM algorithm for denoised and original data, for different choices of partitions and different numbers of $k$. The analysis showcases that the method of moments becomes increasingly unstable as the partition becomes coarser. In all settings, the method of moment estimator is outperformed by the MLE computed with the EM algorithm.  We also observe that the denoising step leads to method of moment estimates which are closer to the actual signal points. Since the method of moments estimator serves as the initializer for the EM algorithm, this potentially also explains the better performance of the MLE for denoised data over original data. 

\newpage
\begin{figure}[t!]
    \centering
    \includegraphics[width=\textwidth, trim={0 0 0 0}, clip]{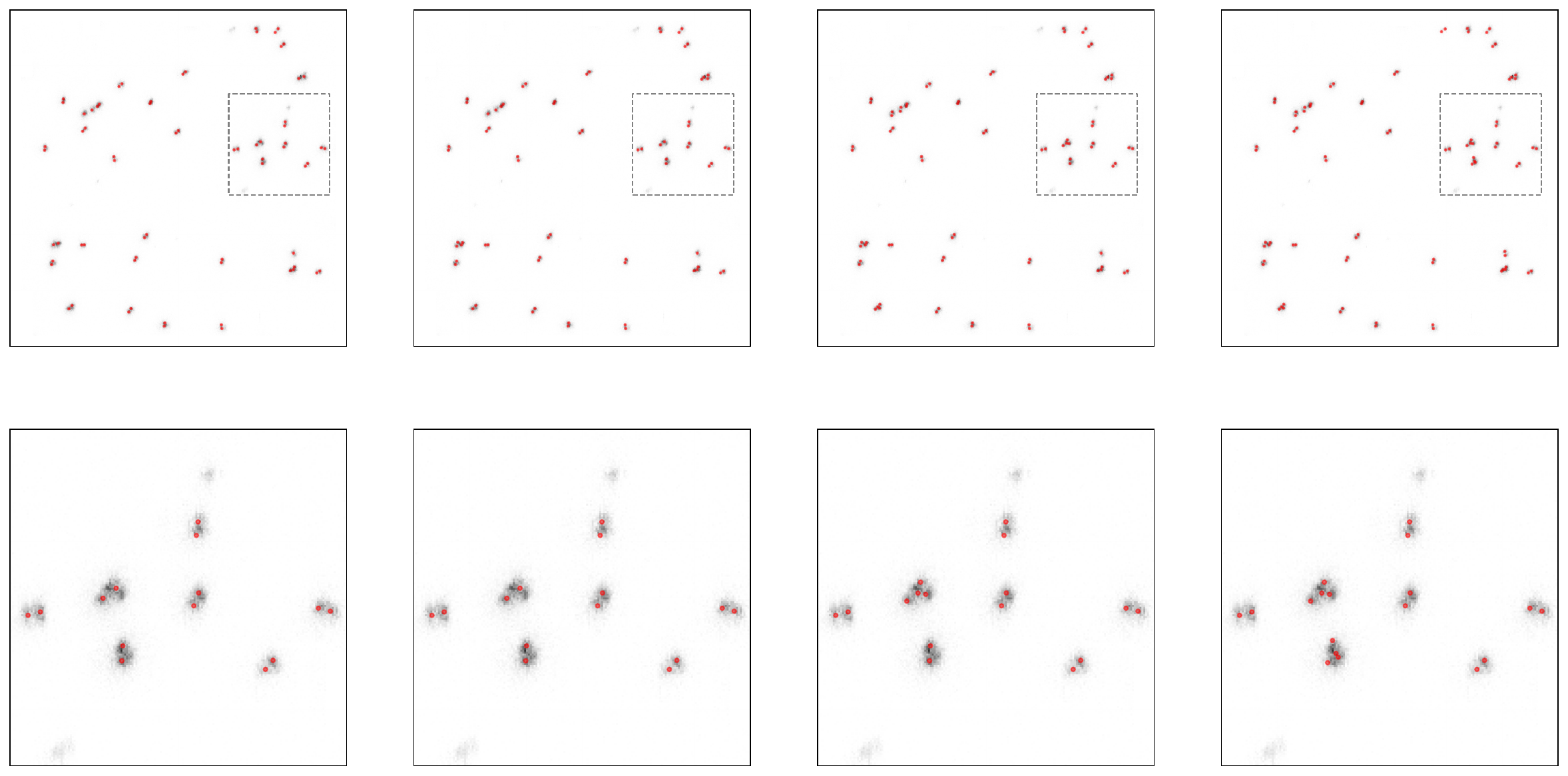}
    \caption{MLE computed using EM algorithm (red points) for STED image data by \cite{proksch2018multiscale} using partition procedures from \Cref{app:apl:partition} for $\tilde k=39$ and $\delta = 270\textup{nm}$  with different $k= 40, 50, 60, 70$ atoms (from left to right). Top: Full image; Bottom: Zoomed-in region.}
    \label{fig:origami:em-estimations}
\end{figure}

\begin{figure}[t!]
    \centering
    \includegraphics[width=0.9\textwidth]{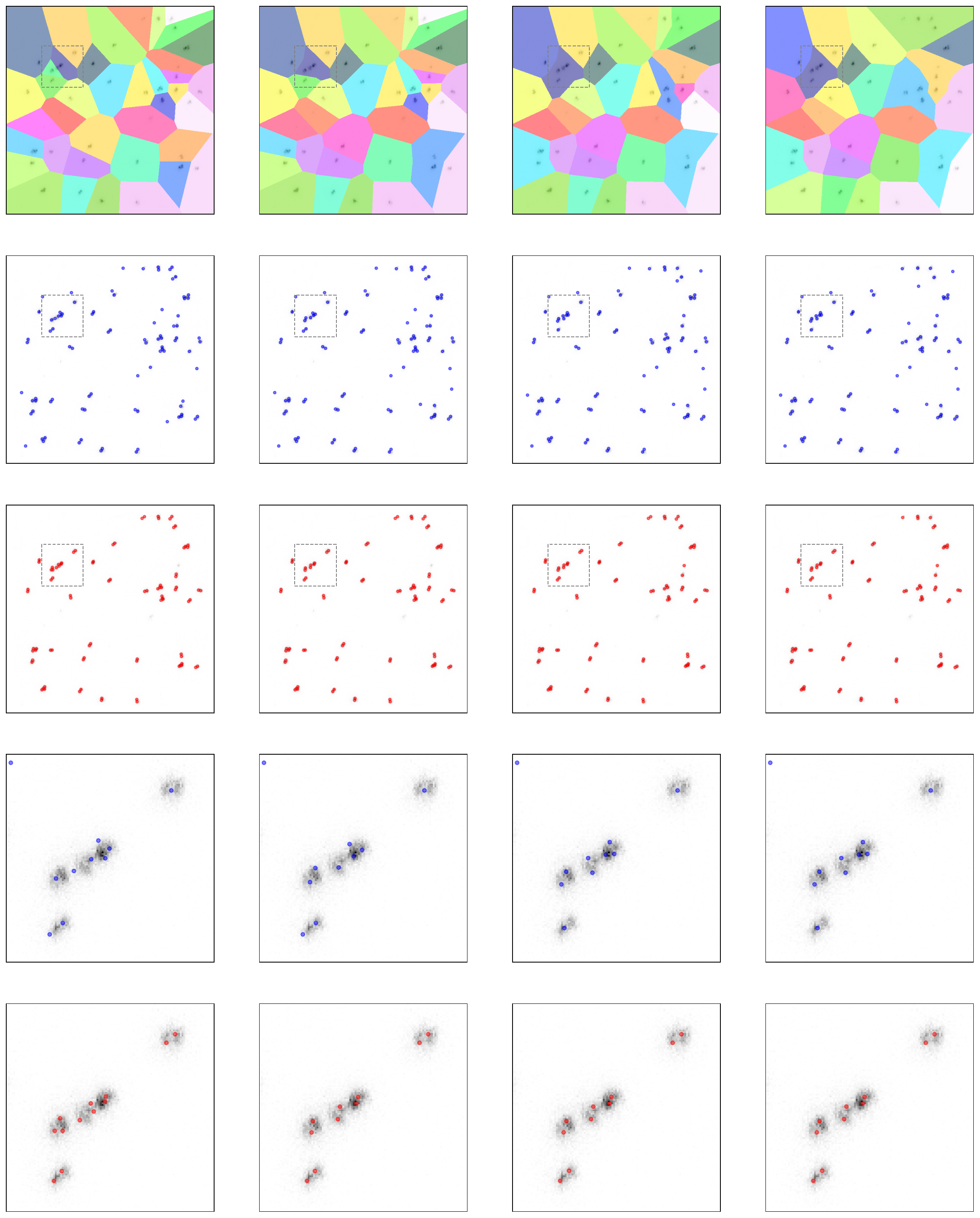}
    \caption{Top row: Partition of STED image data by \cite{proksch2018multiscale} according to method in \Cref{app:apl:partition} for $\tilde k = 120$ and $\delta=180\textup{nm}$, $270\textup{nm}$, $360\textup{nm}$, $450\textup{nm}$ (from left to right).
    Second and third row: Output of EM algorithm procedure from \Cref{app:apl:estimation} for $k = 70$ atoms, initialized for each partition cell with the method of moment estimator without denoising (blue points, second row) and with denoising (red points, third row). Fourth and fifth row: Zoomed-in region of second and third row respectively.}
    \label{fig:origami:delta-em}
\end{figure}

\begin{figure}[t!]
    \centering 
    \includegraphics[width=0.9\textwidth]{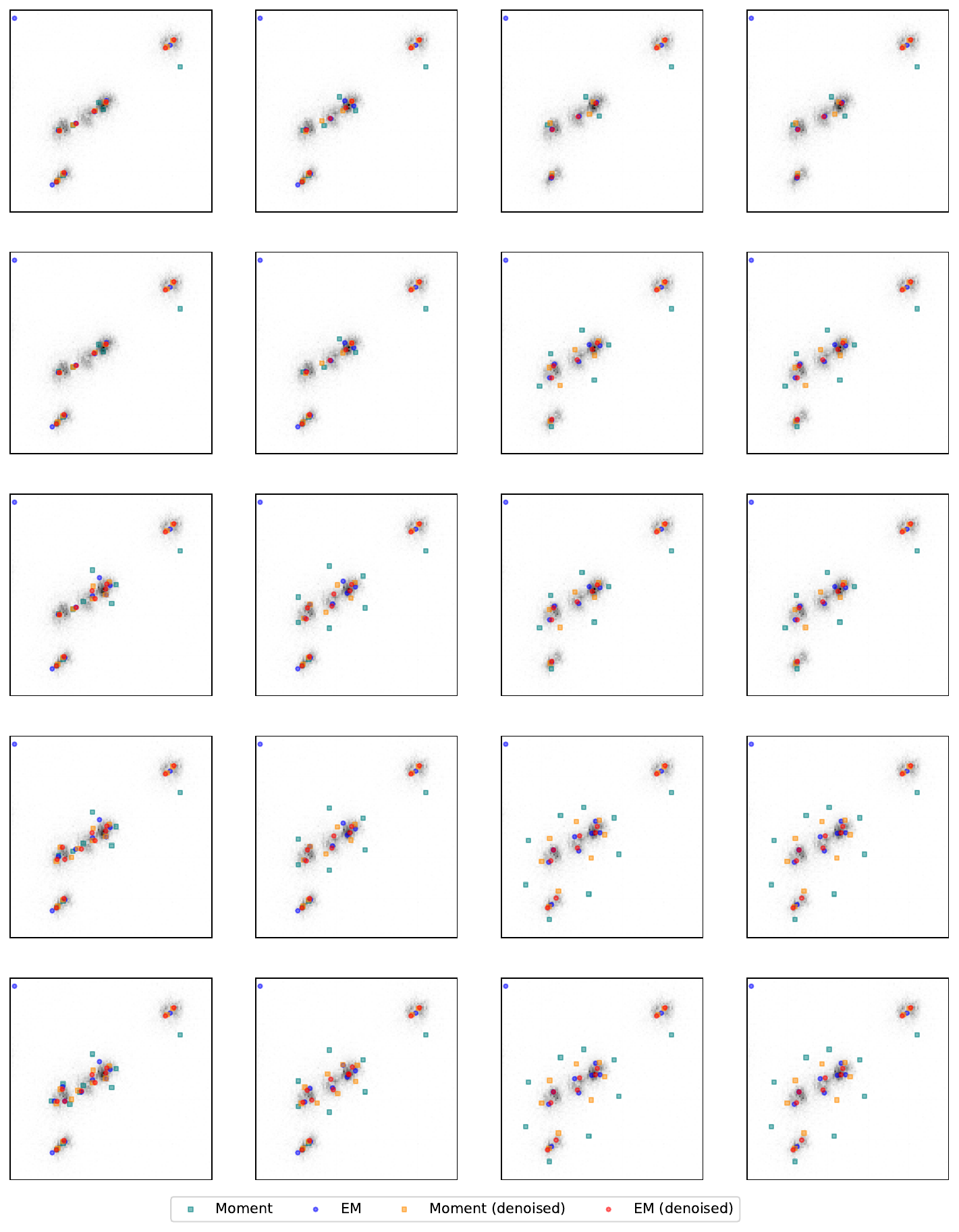}
    \caption{Comparison of method of moment estimator and EM algorithm estimator computed with and without denoising step for zoomed-in STED image region (same as in \Cref{fig:origami:delta-em}), 
     computed for different partitions according to  \Cref{app:apl:partition} for $\tilde k = 39$ and $\delta=180\textup{nm}$, $270\textup{nm}$, $360\textup{nm}$, $450\textup{nm}$ (from left to right) and for $k = 40, 50, 60, 70, 80$ (from top to bottom) atoms.}
    \label{fig:origami:delta-estimations}
\end{figure}

\newpage

\addtocontents{toc}{\protect\setcounter{tocdepth}{2}}

\clearpage
\addcontentsline{toc}{section}{References}
%\bibliographystyle{apalike}
%\bibliography{manuscript_signal_recovery}

\end{document}